\newcolumntype{L}{>{\arraybackslash}X}
\theoremstyle{plain}
\newtheorem{theorem}{Theorem}[section]
\theoremstyle{remark}
\newtheorem{remark}[theorem]{Remark}
\newtheorem{roadmap}[theorem]{\em \textbf{Roadmap}}
\theoremstyle{plain}
\newtheorem{corollary}[theorem]{Corollary}
\newtheorem{lemma}[theorem]{Lemma}
\newtheorem{proposition}[theorem]{Proposition}
\newtheorem{definition}[theorem]{Definition}
\newtheorem{assumption}[theorem]{Assumption}
\numberwithin{equation}{section}
\def\N{{\mathbb N}}
\def\R{{\mathbb R}}
\newcommand{\E}{{\mathbb E}}
\renewcommand{\P}{{\mathbb P}}
\newcommand{\F}{{\mathscr F}}
\newcommand{\Filtr}{\mathbb{F}}
\newcommand{\g}{\gamma}
\renewcommand{\d}{\delta}
\newcommand{\om}{\omega}
\renewcommand{\O}{\Omega}
\newcommand{\Ito}{{\hbox{\rm It\^o}}}
\renewcommand{\a}{\kappa}
\newcommand{\aaa}{\alpha}
\newcommand{\Dom}{\mathscr{O}}
\newcommand{\U}{\mathcal{U}}
\newcommand{\V}{\mathcal{V}}
\newcommand{\I}{I}
\newcommand{\Tor}{\mathbb{T}}
\newcommand{\W}{\mathcal{O}}
\newcommand{\ee}{\mathcal{I}}
\newcommand{\A}{{\mathcal A}}
\newcommand{\X}{\mathfrak{X}}
\newcommand{\y}{\mathcal{Z}}
\newcommand{\Y}{\mathcal{Y}}
\newcommand{\loc}{\mathrm{loc}}
\newcommand{\calL}{{\mathscr L}}
\newcommand{\z}{\mathcal{Z}}
\newcommand{\yT}{\Upsilon}
\newcommand{\hz}{\prescript{}{0}{H}}
\newcommand{\Sz}{\prescript{}{0}{\mathsf{MR}}_X}
\newcommand{\Wz}{\prescript{}{0}{W}}
\newcommand{\K}{\wt{K}}
\newcommand{\ez}{\prescript{}{0}{\mathsf{E}}}
\newcommand{\Sol}{\mathscr{R}}
\DeclareMathOperator*{\esssup}{\textup{ess\,sup}}
\newcommand{\Do}{\mathsf{D}}
\newcommand{\wt}{\widetilde}
\renewcommand{\ll}{\llbracket}
\newcommand{\rr}{\rrbracket}
\newcommand{\rParen}{\rrparenthesis}
\newcommand{\lParen}{\llparenthesis}
\newcommand{\llo}{\lParen}
\newcommand{\rro}{\rParen}
\newcommand{\rhos}{\rho^{\star}}
\newcommand{\MRtas}{\mathcal{SMR}_{p,\a}^{\bullet}(s,T)}
\newcommand{\MRtasj}{\mathcal{SMR}_{p,\a}^{\bullet}(s_j,T)}
\newcommand{\MRta}{\mathcal{SMR}_{p,\a}^{\bullet}(T)}
\newcommand{\MRts}{\mathcal{SMR}_p^{\bullet}(s,T)}
\newcommand{\MRtasigma}{\mathcal{SMR}_{p,\a}^{\bullet}(\sigma,T)}
\newcommand{\MRttwosigma}{\mathcal{SMR}_{2,0}^{\bullet}(\sigma,T)}
\newcommand{\MRttwosigmaz}{\mathcal{SMR}_{2,0}(\sigma,T)}
\newcommand{\MRtatau}{\mathcal{SMR}_{p,\a}^{\bullet}(\tau,T)}
\newcommand{\MRttau}{\mathcal{SMR}_{p}^{\bullet}(\tau,T)}
\newcommand{\MRtasigmaz}{\mathcal{SMR}_{p,\a}(\sigma,T)}
\newcommand{\MRtsigma}{\mathcal{SMR}^{\bullet}_p(\sigma,T)}
\newcommand{\MRtmu}{\mathcal{SMR}^{\bullet}_p(\mu,T)}
\newcommand{\MRtsigmazero}{\mathcal{SMR}^{\bullet}_{p,0}(\sigma,T)}
\newcommand{\MRtsigmaz}{\mathcal{SMR}_p(\sigma,T)}
\newcommand{\MRtsigmazeroz}{\mathcal{SMR}_{p,0}(\sigma,T)}
\newcommand{\Tr}{\mathsf{Tr}}
\newcommand{\Xap}{X^{\mathsf{Tr}}_{\a,p}}
\newcommand{\Xellp}{X^{\mathsf{Tr}}_{\ell,p}}
\newcommand{\Xapcrit}{X^{\mathsf{Tr}}_{\a_{\crit},p}}
\newcommand{\Xp}{X^{\mathsf{Tr}}_{p}}
\newcommand{\Xr}{X^{\mathsf{Tr}}_{r}}
\newcommand{\Xzp}{X^{\mathsf{Tr}}_{0,p}}
\newcommand{\Yaaar}{Y^{\mathsf{Tr}}_{\aaa,r}}
\newcommand{\Yr}{Y^{\mathsf{Tr}}_{r}}
\newcommand{\one}{{{\bf 1}}}
\newcommand{\embed}{\hookrightarrow}
\newcommand{\dps}{\displaystyle}
\newcommand{\B}{B}
\newcommand{\p}{r}
\newcommand{\deter}{\mathsf{det}}
\newcommand{\crit}{\mathsf{crit}}
\newcommand{\stoc}{\mathsf{sto}}
\newcommand{\norm}[1]{{\left\vert\kern-0.25ex\left\vert\kern-0.25ex\left\vert #1
    \right\vert\kern-0.25ex\right\vert\kern-0.25ex\right\vert}}
\newcommand{\Hip}{\normalfont{\textbf{(H)}}}
\renewcommand{\emptyset}{\varnothing}
\renewcommand{\ggg}{g_{B}}
\newcommand{\ff}{f_{A}}
\newcommand{\Progress}{\mathscr{P}}
\newcommand{\MeasurableP}{\mathscr{A}}
\newcommand{\wh}{\widehat}
\newcommand{\maxSigma}{\Xi}
\newcommand{\Borel}{\mathscr{B}}
\newcommand{\Constant}{R}
\newcommand{\pownoise}{\eta}
\newcommand{\vone}{v}
\newcommand{\vtwo}{v'}
\def\XXint#1#2#3{{\setbox0=\hbox{$#1{#2#3}{\int}$ }
\vcenter{\hbox{$#2#3$ }}\kern-.6\wd0}}
\newcommand{\stopp}{\lambda}
\newcommand{\nonlinearity}{\mathcal{N}}
\newcommand{\closed}{\mathcal{C}}
\begin{document}

\author{Antonio Agresti}
\address{Institute of Science and Technology Austria (IST Austria)\\ Am Campus 1\\ 3400 Klosterneuburg\\ Austria.} \email{antonio.agresti92@gmail.com}

\author{Mark Veraar}
\address{Delft Institute of Applied Mathematics\\
Delft University of Technology \\ P.O. Box 5031\\ 2600 GA Delft\\The
Netherlands.} \email{M.C.Veraar@tudelft.nl}

\thanks{The second author is supported by the VIDI subsidy 639.032.427 of the Netherlands Organisation for Scientific Research (NWO)}

\date\today

\title[parabolic stochastic evolution equations in critical spaces II]{Nonlinear parabolic stochastic evolution \\ equations in critical spaces part II \\\small{\emph{B\lowercase {low-up criteria and instantaneous regularization}}}}

\keywords{quasilinear, semilinear, stochastic evolution equations, stochastic maximal regularity, blow-up criteria, regularity, weights.}

\subjclass[2010]{Primary: 60H15, Secondary: 35B65, 35K59, 35K90, 35R60, 35B44, 35A01, 58D25}

\begin{abstract}
This paper is a continuation of Part I of this project, where we developed a new local well-posedness theory for nonlinear stochastic PDEs with Gaussian noise. In the current Part II we consider blow-up criteria and regularization phenomena. As in Part I we can allow nonlinearities with polynomial growth, and rough initial values from critical spaces.

In the first main result we obtain several new blow-up criteria for quasi- and semilinear stochastic evolution equations. In particular, for semilinear equations we obtain a Serrin type blow-up criterium, which extends a recent result of Pr\"uss-Simonett-Wilke (2018) to the stochastic setting. Blow-up criteria can be used to prove global well-posedness for SPDEs. As in Part I, maximal regularity techniques and weights in time play a central role in the proofs.

Our second contribution is a new method to bootstrap Sobolev and H\"older regularity in time and space, which does not require smoothness of the initial data. The blow-up criteria are at the basis of these new methods. Moreover, in applications the bootstrap results can be combined with our blow-up criteria, to obtain efficient ways to prove global existence. This gives new results even in classical $L^2$-settings, which we illustrate for a concrete SPDE.

In future works in preparation we apply the results of the current paper to obtain global well-posedness results, and regularity for several concrete SPDEs. These include stochastic Navier-Stokes equations, reaction diffusion equations and the Allen-Cahn equation. Our setting allows to put these SPDEs into a more flexible framework, where less restrictions on the nonlinearities are needed, and we are able to treat rough initial values from critical spaces. Moreover, we will obtain higher order regularity results.
\end{abstract}

\maketitle
\newpage
\tableofcontents

\newpage

\section{Introduction}

The field of stochastic evolution equations has attracted a lot of attention in the past decades. Adding noise to existing models can provide a lot of new insights. At the same time there is still a large gap between the understanding of the deterministic theory and the stochastic theory. Broadly speaking, we and many others are trying to close and understand this gap, and this paper can be seen as another step in this direction.

The paper is a continuation of our recent work \cite{AV19_QSEE_1}, which we refer to as Part I. In Part I we obtained a systematic treatment of nonlinear stochastic evolution equations by means of maximal regularity techniques, and the main result in Part I was a new local well-posedness theory which we successfully applied to classes of quasi- and semilinear SPDEs.
The aim of the current paper
is to obtain blow-up criteria, global existence, and instantaneous regularization. As far as possible we made Part II independent of Part I. However, for this the reader has to take the local well-posedness theorem of Part I  (see Theorem \ref{thm:wellintro} below) for granted.

The equations we consider are of the following form:
\begin{equation}
\label{eq:QSEE_intro}
\begin{cases}
du + A(\cdot, u) u \,dt = F(\cdot, u)dt + (B(\cdot, u) u +G(\cdot, u)) dW,\\
u(0)=u_{0}.
\end{cases}
\end{equation}
Here $(A,B)$ are the leading operators and are of quasilinear type which means that for each $v$ in a suitable interpolation space $A(v):X_1\to X_0$ and $B(v):X_{1}\to X_{1/2}$ are bounded linear operators.
In the above $X_0$ and $X_1$ are Banach spaces such that $X_1$ densely embeds into $X_0$, and for $\theta\in (0,1)$, $X_{\theta} = [X_0, X_1]_{\theta}$ denotes the complex interpolation space. In applications all of these spaces will be suitable Sobolev and Besov spaces.
In the semilinear case $(A(u) u, B(u)u)$ is replaced by $(\bar{A} u, \bar{B} u)$, where $(\bar{A}, \bar{B})$ does not depend on $u$. The noise term $W$ is a cylindrical Brownian motion. The nonlinearities $F$ and $G$ are of semilinear type which means they are defined on suitable interpolation spaces. Many examples of SPDEs fit in the above framework.

As in Part I when comparing
to other papers we would like to point out the main novelties of our theory:
\begin{enumerate}[(a)]
\item\label{it:noveltya} it is an $L^p(\text{time};L^q(\text{space}))$-theory;
\item\label{it:noveltyb} ellipticity or coercivity conditions are formulated for the couple $(A,B)$;
\item\label{it:noveltyc} no smallness assumption on $B$ is needed;
\item\label{it:noveltyd} measurable dependence in $(t,\om)$ for the couple $(A,B)$ is allowed;
\item\label{it:noveltye} rough initial data $u_0\in (X_0,X_1)_{1-\frac{1+\a}{p},p}$ with $\a\in [0,\frac{p}{2}-1)$ are allowed;
\item\label{it:noveltyf} $F,G$ can be defined in $[X_0,X_1]_{1-\varepsilon}$ for $\varepsilon>0$ with polynomial growth;
\item\label{it:noveltyg} we can identify intrinsically defined critical spaces which reflect the scaling properties of concrete SPDEs.
\end{enumerate}
Individual cases of these points have been addressed in several papers, but having the combination of all of them is one of the strengths of our theory. Moreover, our paper seems to be the first to deal with \eqref{it:noveltye} and \eqref{it:noveltyg}, but also the first to deal with \eqref{it:noveltyb} and \eqref{it:noveltyc} in an abstract $L^p$-framework. Let us give some comparison for some of the classical theories for SPDEs. In the monotone operator approach to SPDEs (see \cite{LR15} and references therein) \eqref{it:noveltyb}-\eqref{it:noveltyd} are included as well, but a variational Hilbert space framework is required. On the other hand, with the monotone operator approach to SPDEs many types of nonlinearities can be treated which are not of quasi-linear type and it gives global existence immediately. In Krylov's $L^p$-theory \cite{Kry} \eqref{it:noveltya} with $p=q$, \eqref{it:noveltyb}-\eqref{it:noveltyd} are included, but only in the case $A$ is a linear second order differential operator. Moreover, the nonlinearities are assumed to be Lipschitz and of semilinear type. Although some of the initial ideas of our theory can be traced back to the semigroup approach to SPDEs \cite{Brz2, DPZ, Hornung, NVW11eq}, many of the above points, such as \eqref{it:noveltyb}-\eqref{it:noveltyd}, cannot be addressed if one is limited to this method in its pure form. Finally, the rough path theory approach to SPDEs is developing quickly and is the only available theory in the case renormalization is required \cite{GubImkPer, Hairer}, but at the moment we think it is unnatural to compare it to our theory.

Our work can be seen as a stochastic version of the theory of evolution equations in critical spaces of Pr\"{u}ss, Simonett and Wilke \cite{CriticalQuasilinear} (see also \cite{PrussWeight1,addendum}). As is clear from \cite{AV19_QSEE_1} it is nontrivial to extend the deterministic theory to the stochastic setting. From the results of this paper it turns out that it is also highly involved to extend blow-up criteria to the stochastic setting. In addition, we also obtain several results which are even new in the deterministic setting.

In the theory of evolution equation it is standard to combine blow-up criteria with energy estimates to obtain global existence (see the monographs \cite{Lun, pruss2016moving, TayPDE3, Yagibook}). These methods usually rely on Sobolev embeddings combined with $L^p$-energy estimates. In the stochastic case $L^p$-theory is much less developed than in the deterministic case, and this might be one of the reasons that blow-up criteria have not played a major role in the theory yet. Of course another advantage of $L^p$-theory is that it allows to study Navier-Stokes equations, Allen-Cahn equation, reaction-diffusion equations, with gradient nonlinearities, and in higher dimensions, which seems not possible when only $L^2$-theory is available.

The results of the current paper and \cite{AV19_QSEE_1} have already been applied in several situations and have already provided many new results and insights. In \cite{AV20_NS} we develop a new $L^p$-theory for stochastic Navier-Stokes equations with transport noise. In particular, for $d=2$ new global well-posedness and regularity properties have been obtained using our new temporal weighted methods, and we do not know how to derive these results without our new methods. In \cite{AHHS21} global well-posedness for the stochastic primitive equations with transport noise in $3d$ is obtained by applying our local existence theory from \cite{AV19_QSEE_1} and blow-up criteria Theorem \ref{thm:semilinear_blow_up_Serrin_refined}. In \cite{AV19_QSEE_3,AV22_reaction_diffusion} we will use our theory to build an $L^p$-theory for stochastic reaction-diffusion equations with transport noise. Moreover, we do expect that our theory will be useful for many other SPDEs. 

We kindly invite the interested readers to apply our theory to their favorite quasi-linear or semi-linear SPDE. After rewriting it as \eqref{eq:QSEE_intro} one can try to check the mapping properties of the nonlinearities stated in Section \ref{ss:quasi_revised} by taking $X_0$ and $X_1$ suitable Sobolev spaces. Here one has to try out which nonlinearities to put in which terms. Several classical examples can already be found in \cite{AV19_QSEE_1}. In many cases (and at first we would recommend to skip checking this), the abstract ``stochastic maximal regularity'' condition on the leading operators of the SPDE satisfy the conditions of Theorem \ref{t:local_s}, and thus local well-posedness follows. Quite likely this will already shine new light on the SPDE: more flexibility concerning initial values and nonlinearities, new regularity properties of the solution, and a greater variety of function spaces in which the SPDE can be analyzed. After that the doors to our theory are completely open, and the reader can try to follow Roadmap \ref{roadcomplete} to prove higher order regularity of local solutions and try to establish global well-posedness by showing energy estimates which are strong enough to apply one of our blow-up criteria. At the moment we have several papers in preparation where this program is followed for several classes of SPDEs, including some of the examples in \cite{AV19_QSEE_1}.

In the next subsections we give a laymen's summary of the results of our paper. We will start by recalling some of the local well-posedness theory of \cite{AV19_QSEE_1}.

\subsection{Local well-posedness and critical spaces\label{ss:introlocal}}
As already mentioned, the main result of Part I (see \cite{AV19_QSEE_1}) is a local well-posedness result, which we will now briefly recall in an informal way.  Details can be found in Section \ref{ss:quasi_revised}, and, in particular, the precise statement can be found in Theorem \ref{t:local_s} below. The result gives suffices conditions for existence and uniqueness of maximal solutions $(u,\sigma)$. Here $\sigma$ is a certain maximal stopping time, and $u$ a stochastic process on $[0,\sigma)$.

\begin{theorem}\label{thm:wellintro}
Let $p\in [2, \infty)$ and $w_{\a}(t) =  t^{\a}$ with $\a\in [0,\frac{p}{2}-1)$ (set $\a=0$ if $p=2$). Under maximal $L^p$-regularity assumptions on the pair $(A,B)$, and local Lipschitz conditions and polynomial growth conditions on $A$, $B$, $F$ and $G$, and assuming $u_0\in \Xap$ a.s., there exists a unique $L^p_{\a}$-maximal solution $(u,\sigma)$ to \eqref{eq:QSEE_intro}, and the paths of $u$ almost surely satisfy
\begin{equation}\label{eq:introreg}
u\in L^p_{\rm loc}([0,\sigma),w_{\a};X_{1}) \cap C([0,\sigma);\Xap)\cap C((0,\sigma);\Xp).
\end{equation}
\end{theorem}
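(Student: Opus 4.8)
The plan is to recast \eqref{eq:QSEE_intro} as a fixed point problem for a nonlinear map on a weighted maximal $L^p_\a$-regularity path space, with the frozen leading pair $\big(A(0,u_0),B(0,u_0)\big)$ as the principal part and everything else (the \emph{non}principal part of the leading operators, together with $F,G$) treated as a perturbation. This mirrors the deterministic scheme of Pr\"uss--Simonett--Wilke \cite{CriticalQuasilinear}, the essential new point being that no smallness of $B$ is at hand: the underlying linear theory must be stochastic maximal $L^p_\a$-regularity for the \emph{pair} $(A,B)$ rather than a perturbation of the deterministic maximal regularity of $A$.

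First, after localizing so that $u_0$ is bounded in $\Xap$, set $\tA := A(0,u_0)$, $\tB := B(0,u_0)$, write $A(\cdot,u)u = \tA u + (A(\cdot,u)-\tA)u$ and $B(\cdot,u)u = \tB u + (B(\cdot,u)-\tB)u$, and move $\tA u$ to the left, so that \eqref{eq:QSEE_intro} reads
\begin{equation*}
du + \tA u\,dt = \big[F(\cdot,u) + (\tA - A(\cdot,u))u\big]\,dt + \big[\tB u + (B(\cdot,u)-\tB)u + G(\cdot,u)\big]\,dW,\qquad u(0)=u_0.
\end{equation*}
By the maximal regularity hypothesis on $(A,B)$, the linear problem $du+\tA u\,dt = f\,dt+(\tB u+g)\,dW$ with $u(0)=u_0$ has a unique solution, and the associated solution operator $\mathbf{R}$ is bounded, uniformly for $T\le T_0$ on the subspace of zero initial value, into the maximal regularity space $\mathbf{E}_T$. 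Because $\a\in[0,\frac p2-1)$, this space embeds continuously into $L^p((0,T),w_\a;X_1)\cap C([0,T];\Xap)$ and, for each $\delta>0$, into $C([\delta,T];\Xp)$; these embeddings, together with H\"older's inequality in time against the weight $w_\a$, are what will produce both the contraction and the regularity assertion.

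Second, set $\mathbf{L}(u) := \mathbf{R}\big(u_0,\,F(\cdot,u)+(\tA-A(\cdot,u))u,\,G(\cdot,u)+(B(\cdot,u)-\tB)u\big)$ and seek a fixed point in a small closed ball around $\mathbf{R}(u_0,0,0)$ in $\mathbf{E}_T$. Using the local Lipschitz and polynomial growth assumptions on $F,G,A,B$, their mapping properties (schematically $F:X_{1-\varepsilon}\to X_0$ and $G:X_{1-\varepsilon}\to X_{1/2}$), multiplicative/interpolation estimates for the quasilinear remainders $(A(\cdot,u)-\tA)u$ and $(B(\cdot,u)-\tB)u$, and the embeddings above, one verifies that $\mathbf{L}$ maps the ball into itself and is a $\tfrac12$-contraction there once the radius $R$ and then the time $T$ are chosen small in terms of $\|u_0\|_{\Xap}$; for the subcritical terms the smallness comes from a power of $T$, while for the critical leading remainders it comes from the closeness of $u$ to $u_0$ near $t=0$, i.e.\ from $R$. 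Banach's theorem then gives a unique local solution; concatenating local solutions (legitimate by uniqueness) and passing to the supremal existence time yields the $L^p_\a$-maximal solution $(u,\sigma)$, and gluing over a random partition of $\Xap$ into bounded pieces removes the localization. The bulk of \eqref{eq:introreg} is then built into the definition of $\mathbf{E}_T$, while the instantaneous gain $u\in C((0,\sigma);\Xp)$ follows by restarting the equation at an arbitrary $\delta\in(0,\sigma)$ with the weight exponent reset to $0$ — allowed since $w_\a$ is then bounded above and below — and using uniqueness to identify the two solutions.

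The main obstacle is the contraction estimate for the two genuinely critical terms $(\tA-A(\cdot,u))u$ and $(\tB-B(\cdot,u))u$: these do not become small as $T\downarrow0$, so the required smallness must instead be extracted from $\|u(t)-u_0\|_{\Xap}$ being small for small $t$, which rests delicately on the continuity $\mathbf{E}_T\hookrightarrow C([0,T];\Xap)$ and on the continuity of $v\mapsto(A(\cdot,v),B(\cdot,v))$ at $u_0$. Carrying this out while keeping $\tB u$ inside the principal part — so that the stochastic maximal regularity of the pair $(A,B)$, and not any smallness of $B$, drives the argument — and simultaneously matching all interpolation exponents, time weights and polynomial growth exponents, is the technical core of the proof; it is precisely this bundle of estimates that is established in detail in Part I \cite{AV19_QSEE_1}.
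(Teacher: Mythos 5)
Your fixed-point scheme — freeze the leading pair $(A(0,u_0),B(0,u_0))$, solve the linear problem by stochastic maximal $L^p_\a$-regularity of the \emph{pair} (not smallness of $B$), treat $(A(\cdot,u)-\tA)u$, $(B(\cdot,u)-\tB)u$, $F$, $G$ as perturbations, close the contraction by taking the radius small to control the critical leading remainders via $\|u(t)-u_0\|_{\Xap}$ and the time small for the subcritical ones, then concatenate and glue over a partition of $\{\|u_0\|_{\Xap}\leq n\}$ — is the argument of Part I (Theorem 4.8 there, restated here as Theorem \ref{t:local_s}), and is essentially the same as what is cited.

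One point is off, however: your route to the instantaneous regularization $u\in C((0,\sigma);\Xp)$ by ``restarting the equation at $\delta$ in an unweighted setting and identifying the solutions by uniqueness'' is both unnecessary and, as written, incomplete. Restarting at $\delta$ gives a solution $(v,\tau_\delta)$ with $v\in C([\delta,\tau_\delta);\Xp)$ and $\tau_\delta\le\sigma$, but to conclude continuity on all of $(0,\sigma)$ you must know $\tau_\delta=\sigma$, and that identity is precisely a blow-up statement of the type developed in Section \ref{s:regularization} of the present paper, not something available at this stage. In fact no restart is needed: the fixed-point solution already lies in $H^{\theta,p}(0,\sigma_n,w_\a;X_{1-\theta})\cap L^p(0,\sigma_n,w_\a;X_1)$ for $\theta\in[0,\tfrac12)$, and Proposition \ref{prop:continuousTrace}\eqref{it:trace_without_weights_Xp} (the unweighted trace embedding away from $t=0$) yields $u\in C((0,\sigma_n];\Xp)$ directly; taking the union over $n$ gives $u\in C((0,\sigma);\Xp)$. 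Replace the restart step by this trace embedding and the proposal is correct and faithful to Part I.
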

In the above $\sigma>0$ a.s., and $\Xp = (X_0,X_1)_{1-\frac{1}{p},p}$ and $\Xap := (X_0,X_1)_{1-\frac{1+\a}{p},p}$. A loose introduction to maximal regularity can be found in \cite[Section 1.2]{AV19_QSEE_1}.
By analyzing the precise polynomial growth conditions of $F$ and $G$ we obtain conditions on $(p,\kappa)$ for criticality of the space $\Xap$. Of course, this condition also depends on the choice of the spaces $X_0$ and $X_1$. However, the corresponding `trace space' $\Xap$ in the critical case is usually independent of the choice of the scale (see \cite[Section 2.4]{CriticalQuasilinear} and the applications in \cite[Section 5-7]{AV19_QSEE_1}).
A brief  introduction to criticality in this context can be found in \cite[Section 1.1]{AV19_QSEE_1}.

The well-posedness result of Theorem \ref{thm:wellintro} is easy to state, and in \cite{AV19_QSEE_1} we showed that it leads to new results for many of the classical SPDEs, including Burger’s equation, the Allen-Cahn equation, the Cahn-Hilliard equation, reaction–diffusion
equations, and the porous media equation.
In the case of additive noise it is often possible to reduce \eqref{eq:QSEE_intro} to the deterministic setting of \cite{CriticalQuasilinear} which in turn can be analyzed pathwise. This is carried out for the so-called primitive equation in fluid dynamics in \cite{hieber2020primitive}.

\subsection{Blow-up criteria}
Next we state one of our blow-up criteria for quasilinear equations. More details and other criteria can be found in Theorem \ref{t:blow_up_criterion}.
To formulate our blow-up criteria we introduce the following notation:
$$
\nonlinearity^{\a}(u;t):=\|F(\cdot,u)\|_{L^p(0,t,w_{\a};X_0)}+\|G(\cdot,u)\|_{L^p(0,t,w_{\a};\g(H,X_{1/2}))}.
$$
\begin{theorem}[Quasi-linear case]\label{thm:blowupintro}
Under suitable conditions, the maximal solution $(u,\sigma)$ of Theorem \ref{thm:wellintro} satisfies
\begin{enumerate}[{\rm(1)}]
\item\label{it:blowintroquas1} $\dps\P\Big(\sigma<\infty,\,\lim_{t\uparrow \sigma}u(t) \text{ exists in }\Xap,\,\nonlinearity^{\a}(u;\sigma)<\infty\Big)=0$;
\item\label{it:blowintroquas_not_critical} $\dps \P\Big(\sigma<\infty,\,\lim_{t\uparrow \sigma} u(t)\text{ exists in }\Xap\Big)=0$ if $\Xap$ is not critical for \eqref{eq:QSEE_intro};
\item\label{it:blowintroquas2} $\dps\P\Big(\sigma<\infty,\,\lim_{t\uparrow \sigma} u(t)\text{ exists in}\;\Xap,\,
\|u\|_{L^p(0,\sigma;X_{1-\frac{\a}{p}})}<\infty\Big)=0$.
\end{enumerate}
\end{theorem}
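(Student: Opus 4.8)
The plan is to argue by contradiction via a ``restart'' (concatenation) argument: on the event where all the stated conditions hold one shows the solution can be continued strictly beyond $\sigma$, contradicting its maximality. I would prove assertion \ref{it:blowintroquas1} first (under the hypotheses of Theorem \ref{t:blow_up_criterion}), and then reduce \ref{it:blowintroquas_not_critical} and \ref{it:blowintroquas2} to it by verifying that, under their extra hypotheses, one automatically has $\nonlinearity^{\a}(u;\sigma)<\infty$ on the relevant event.

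For \ref{it:blowintroquas1}, suppose for contradiction that $\Gamma:=\{\sigma<\infty,\ \lim_{t\uparrow\sigma}u(t)=:u_{*}\text{ exists in }\Xap,\ \nonlinearity^{\a}(u;\sigma)<\infty\}$ has $\P(\Gamma)>0$. Since $u\in C([0,\sigma);\Xap)$, on $\Gamma$ the path extends to $C([0,\sigma];\Xap)$, so $\sup_{[0,\sigma]}\|u\|_{\Xap}<\infty$; writing $\Gamma$ as a countable union over $n$ of the $\F_{\sigma}$-measurable subevents on which moreover $\sigma\leq n$, $\sup_{[0,\sigma]}\|u\|_{\Xap}\leq n$ and $\nonlinearity^{\a}(u;\sigma)\leq n$, it suffices to reach a contradiction on one such $\Gamma_n$ with $\P(\Gamma_n)>0$. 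On $\Gamma_n$ I freeze the leading operators along the solution and regard $u$, restricted to a fixed interval $[\tau,\sigma]$ with $\tau\in(0,\sigma)$, as the solution of the non-autonomous \emph{linear} problem with coefficients $(A(\cdot,u(\cdot)),B(\cdot,u(\cdot)))$ and inhomogeneities $(F(\cdot,u),G(\cdot,u))$. The frozen pair has stochastic maximal $L^p$-regularity on $[\tau,\sigma]$: near $\sigma$ it is a small perturbation of the fixed pair $(A(\cdot,u_{*}),B(\cdot,u_{*}))$, which enjoys $\mathcal{SMR}$ by the standing assumptions, because $u(t)\to u_{*}$ in $\Xap$; on the remaining compact piece, uniform continuity of $u$ into $\Xap$ lets one cover by finitely many perturbative subintervals and glue. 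Away from $t=0$ the weight $w_{\a}$ is comparable to $1$, so the relevant $L^p$-norms of $F(\cdot,u)$ and $G(\cdot,u)$ are finite by $\nonlinearity^{\a}(u;\sigma)\leq n$, and $u(\tau)\in\Xp$ by \eqref{eq:introreg}. Hence the maximal regularity estimate yields $u\in C([\tau,\sigma];\Xp)$, so $u_{*}\in\Xp$ and $u_{*}$ is $\F_{\sigma}$-measurable. I then apply the local well-posedness theorem (Theorem \ref{thm:wellintro}, in the precise form of Theorem \ref{t:local_s}) to \eqref{eq:QSEE_intro} started at the stopping time $\sigma$ with initial value $u_{*}$ on the shifted filtration, obtaining an $L^p_{\a}$-local solution on $[\sigma,\sigma+\delta)$ with $\delta>0$ a.s.\ on $\Gamma_n$. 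Concatenating with $(u,\sigma)$ produces a solution on $[0,\sigma+\delta)$ with the regularity \eqref{eq:introreg} (the two pieces match in $\Xap$ at $\sigma$), which strictly extends $(u,\sigma)$ on a set of positive probability — contradicting maximality.

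For \ref{it:blowintroquas_not_critical} and \ref{it:blowintroquas2} it then suffices, by \ref{it:blowintroquas1}, to show $\nonlinearity^{\a}(u;\sigma)<\infty$ a.s.\ on the event under consideration; note that \eqref{eq:introreg} only provides $u\in L^p_{\loc}([0,\sigma),w_{\a};X_1)$, so this finiteness up to time $\sigma$ must be earned. Here the polynomial growth/Lipschitz structure of $F,G$ and the notion of criticality enter: each component obeys an estimate of the form $\|F_j(\cdot,v)\|_{X_0}\lesssim(1+\|v\|_{X_{\beta_j}})^{\rho_j}\|v\|_{X_{\beta_j}}$ with $\beta_j<1$ and the exponents tied to the criticality balance (and similarly for $G$ with values in $\g(H,X_{1/2})$). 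In the subcritical case of \ref{it:blowintroquas_not_critical} these exponents leave a strict gap, so the bound $\sup_{[0,\sigma]}\|u\|_{\Xap}<\infty$ (available since the limit exists) can be fed into the same fixed-point/Gr\"onwall scheme as in the local theory to bootstrap $\|u\|_{L^p(0,\sigma,w_{\a};X_1)}<\infty$, whence $\nonlinearity^{\a}(u;\sigma)<\infty$. In the critical case treated in \ref{it:blowintroquas2} there is no gap, and the correct scaling-invariant quantity is precisely $\|u\|_{L^p(0,\sigma;X_{1-\frac{\a}{p}})}$: splitting $u=u_{*}+(u-u_{*})$, using the weighted embedding of the maximal-regularity class into $L^p(0,\sigma;X_{1-\frac{\a}{p}})$ to absorb the contribution near $t=0$, and using continuity of $u$ at $\sigma$ together with the assumed finiteness of $\|u\|_{L^p(0,\sigma;X_{1-\frac{\a}{p}})}$, one closes the estimate and again finds $\nonlinearity^{\a}(u;\sigma)<\infty$.

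The main obstacle is the restart at the random time $\sigma$. Before it can be performed one must boost convergence from $\Xap$ to $\Xp$ by a parabolic regularization/bootstrap argument carried out near the \emph{random} singular time $\sigma$ rather than near $t=0$, which forces one to establish $\mathcal{SMR}$ for a linearization whose coefficients are only progressively measurable in $(t,\om)$ and are themselves built from $u$; keeping all constants uniform through the stopping-time localization, running the local existence machinery with $\F_{\sigma}$-measurable, merely $\Xp$-valued initial data on the shifted filtration, and checking that the concatenated process is genuinely adapted with the path regularity \eqref{eq:introreg}, is where the real work lies. For \ref{it:blowintroquas2} the additional difficulty is making the scaling-invariant nonlinear estimate sharp enough that, in the critical case, no loss in the exponents occurs.
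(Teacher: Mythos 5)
Your overall architecture --- argue by contradiction, upgrade convergence at $\sigma$ from $\Xap$ to $\Xp$ via stochastic maximal regularity, restart the evolution at the random time $\sigma$, and reduce parts (2) and (3) to part (1) by showing $\nonlinearity^{\a}(u;\sigma)<\infty$ --- matches the paper's. Two of your three reductions, however, do not close as sketched, and part (1) contains an inefficiency that the paper carefully avoids. Rather than trying to establish stochastic maximal regularity for the time-dependent linearization $(A(\cdot,u(\cdot)),B(\cdot,u(\cdot)))$ by ``covering by finitely many perturbative subintervals and gluing'' (which would force you to track adapted subinterval endpoints and uniform constants through the stopping-time localization --- precisely the delicacy handled in Propositions \ref{prop:start_at_sigma_random_time} and \ref{prop:local_sigma}), the paper freezes the quasilinear coefficient at a single adapted value, uses maximal regularity only for that constant-in-time operator, and moves the discrepancy $(A(\cdot,u_\varepsilon)-A(\cdot,u))u$ into the forcing where it is absorbed by the smallness of $\|u-u_\varepsilon\|_{\Xap}$ near $\sigma$. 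You should adopt this single-freeze structure; the gluing is not worked out and not obviously fixable.

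The larger gaps are in your reductions of (2) and (3). For (2), ``feed $\sup_{[0,\sigma]}\|u\|_{\Xap}<\infty$ into the same fixed-point/Gr\"onwall scheme'' does not yield $\|u\|_{L^p(0,\sigma,w_\a;X_1)}<\infty$: the nonlinearities are polynomial, so a pointwise $\Xap$-bound provides no linear feedback and Gr\"onwall cannot close. The mechanism the paper actually uses is that noncriticality makes the Lipschitz constant $C(\delta,N)$ of $F_c,G_c$ in the $\X$-norm tend to zero as the interval length $\delta\to 0$ (Lemma \ref{l:F_G_bound_N}), and the contradiction is extracted by a stopping-time argument built around the concave function $\varphi_\varepsilon(x)=x-C_\varepsilon x^{\zeta}$, whose maximum diverges as $\varepsilon\downarrow0$. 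For (3), the split $u=u_*+(u-u_*)$ together with ``absorbing near $t=0$'' misses the point: the difficulty sits near $t=\sigma$, not $t=0$. The indispensable ingredient is the interpolation inequality of Lemma \ref{lem:interpolationineqMR0}, which bounds each monomial piece of $F_c,G_c$ by a product of powers of $\|u\|_{L^\infty(\Xap)}$, the maximal-regularity norm $\|u\|_{\Sz^{\theta,\a}}$ and the Serrin norm $\|u\|_{L^p(X_{1-\a/p})}$, with exponents calibrated to the criticality balance so that the Serrin factor carries strictly positive weight. Only then can the assumed $L^p(X_{1-\a/p})$-finiteness --- made small near $\sigma$ by Egorov --- absorb the $\Sz^{\theta,\a}$-contribution and deliver $\nonlinearity^{\a}(u;\tau,\sigma)<\infty$. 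Without this interpolation there is no way to close the estimate in the critical case, and without the $C(\delta,N)\to0$ mechanism there is no way to close it in the subcritical one.
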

The blow-up criteria of Theorem \ref{thm:blowupintro} can often be used to prove global existence. For instance, in case $\Xap$ is not critical, for this one needs a suitable a priori bound for the solution which implies the existence of the limit $\lim_{t\uparrow \sigma} u(t)$ in the `trace space' $\Xap$ on the set $\{\sigma<\infty\}$. According to Theorem \ref{thm:blowupintro}\eqref{it:blowintroquas_not_critical} this can only happen if $\P(\sigma<\infty) =0$, and thus $\sigma = \infty$ a.s. Similar considerations hold for Theorem \ref{thm:blowupintro}\eqref{it:blowintroquas1} and \eqref{it:blowintroquas2}.
Of course to obtain an a priori bound or energy estimate we need to use structural properties of a given SPDE. To obtain such bounds, one can typically use It\^o's formula, combined with one-sided growth conditions of $F$, and subtle regularity results for linear SPDEs.

In the semilinear case much more can be said (see Theorems \ref{thm:semilinear_blow_up_Serrin} and \ref{thm:semilinear_blow_up_Serrin_refined} for the precise statements).
\begin{theorem}[Semi-linear case]\label{thm:semilinblow}
Under suitable conditions, the maximal solution $(u,\sigma)$ of Theorem \ref{thm:wellintro} satisfies
\begin{enumerate}[{\rm(1)}]
\item\label{it:introuXap2} $\dps \P\Big(\sigma<\infty,\,\sup_{t\in [0,\sigma)}\|u(t)\|_{\Xap}<\infty\Big)=0$ if $\Xap$ is non-critical for \eqref{eq:QSEE_intro};
\item\label{it:introuXap3} $\dps \P\Big(\sigma<\infty,\,\sup_{t\in [0,\sigma)}\|u(t)\|_{\Xap}+\|u\|_{L^p(0,\sigma;X_{1-\frac{\a}{p}})}<\infty\Big)=0$;
\item\label{it:introuXap4}
$\dps \P\Big(\sigma<\infty,\,\|u\|_{L^p(0,\sigma;X_{1-\frac{\a}{p}})}<\infty\Big)=0$ under extra conditions on $\a$.
\end{enumerate}
\end{theorem}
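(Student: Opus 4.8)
The precise statements and their proofs are Theorems~\ref{thm:semilinear_blow_up_Serrin} and~\ref{thm:semilinear_blow_up_Serrin_refined}; here is the strategy. Since the semilinear problem is the special case $(A(\cdot,v),B(\cdot,v))=(\tA,\tB)$ of~\eqref{eq:QSEE_intro}, the quasilinear criterion Theorem~\ref{thm:blowupintro}\ref{it:blowintroquas1} is available and reads
\[
\P\Big(\sigma<\infty,\ \lim_{t\uparrow\sigma}u(t)\ \text{exists in}\ \Xap,\ \nonlinearity^{\a}(u;\sigma)<\infty\Big)=0.
\]
So it is enough to show that on the event $\Gamma$, defined as $\{\sigma<\infty\}$ intersected with the a priori bound of statement~\ref{it:introuXap2}, \ref{it:introuXap3} or~\ref{it:introuXap4}, one has almost surely both $\nonlinearity^{\a}(u;\sigma)<\infty$ and that $\lim_{t\uparrow\sigma}u(t)$ exists in $\Xap$. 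In the semilinear case the two are linked: on $[0,\sigma)$ the process $u$ solves the \emph{linear} equation $du+\tA u\,dt=f\,dt+(\tB u+g)\,dW$ with $f:=F(\cdot,u)$ and $g:=G(\cdot,u)$, and $\nonlinearity^{\a}(u;\sigma)<\infty$ says exactly that $f$ and $g$ lie in $L^p(0,\sigma,w_{\a};X_0)$ and $L^p(0,\sigma,w_{\a};\g(H,X_{1/2}))$; since $(\tA,\tB)$ has stochastic maximal $L^p_{\a}$-regularity and this property is stable under localisation at the stopping time $\sigma$, the unique solution of this linear problem with data $(f,g,u_0)$ lies in the weighted maximal regularity class on $(0,\sigma)$, which embeds into $C([0,\sigma];\Xap)$. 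By uniqueness for the linear problem on each $[0,t]$, $t<\sigma$ (where $u$ does have maximal regularity, by~\eqref{eq:introreg}), this solution coincides with $u$, so $\lim_{t\uparrow\sigma}u(t)$ exists in $\Xap$. The whole proof thus reduces to showing $\nonlinearity^{\a}(u;\sigma)<\infty$ on $\Gamma$.

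To obtain pathwise estimates, fix $N,M\in\N$ and let $\tau_{N,M}$ be the first time $t<\sigma$ at which the relevant norm of $u$ over $[0,t]$ reaches the level $N$, truncated at $\sigma\wedge M$; this is a stopping time because $u$ is adapted with paths continuous in $\Xap$ and $t\mapsto\|u\|_{L^p(0,t;X_{1-\a/p})}$ is continuous, and on $\Gamma$ one has $\tau_{N,M}=\sigma$ for all large $N,M$. For statement~\ref{it:introuXap2}, non-criticality of $\Xap$ means that the scaling inequality controlling the admissible growth of $F,G$ holds with room to spare, and this slack allows $F(\cdot,\cdot)$, $G(\cdot,\cdot)$ to be estimated from the trace space itself, $\|F(\cdot,v)\|_{X_0}+\|G(\cdot,v)\|_{\g(H,X_{1/2})}\lesssim 1+\|v\|_{\Xap}^{m}$ for some $m\ge1$. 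On $[0,\tau_{N,M})$ the right-hand side is $\le 1+N^{m}$, and integrating against the weight $w_{\a}$, which is bounded on $[0,M]$, gives $\nonlinearity^{\a}(u;\tau_{N,M})\lesssim_{N,M}1$. For statement~\ref{it:introuXap3} one may no longer use $\Xap$ alone in the critical case, and uses the additional bound $\|u\|_{L^p(0,\sigma;X_{1-\a/p})}\le N$. The space $X_{1-\a/p}$ is precisely the one along which~\eqref{eq:QSEE_intro} is scaling invariant, and the critical nonlinearity maps the critical solution space $C([0,\sigma);\Xap)\cap L^p(0,\sigma;X_{1-\a/p})$ boundedly into $L^p(0,\sigma,w_{\a};X_0)\times L^p(0,\sigma,w_{\a};\g(H,X_{1/2}))$; quantitatively this rests on an interpolation inequality $\|v\|_{X_{1-\varepsilon}}\lesssim\|v\|_{\Xap}^{1-\theta}\|v\|_{X_{1-\a/p}}^{\theta}$ in which, by the very identity defining criticality, the product of $\theta$ with the critical growth exponent makes the $X_{1-\a/p}$-factor integrate (against $w_{\a}$) to a constant times $\|u\|_{L^p(0,\sigma;X_{1-\a/p})}^{p}$, the $\Xap$-factor being absorbed into $N$. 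The analogous bound for $g=G(\cdot,u)$ uses the same inequality in the $\g(H,\cdot)$-valued scale (complex interpolation commutes with $\g(H,\cdot)$). Either way $\nonlinearity^{\a}(u;\tau_{N,M})<\infty$ on $\Gamma$.

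For statement~\ref{it:introuXap4} only $\|u\|_{L^p(0,\sigma;X_{1-\a/p})}<\infty$ is assumed, and the plan is to recover the sup-bound $\sup_{[0,\sigma)}\|u(t)\|_{\Xap}<\infty$ and fall back on statement~\ref{it:introuXap3}. For this one restarts~\eqref{eq:QSEE_intro} at a positive deterministic time $s$: for a.e.\ $s\in(0,\sigma)$ the state $u(s)$ lies in the \emph{unweighted} trace space $\Xp$ — this requires $X_{1-\a/p}\embed\Xp$, which (roughly, $\a$ not too large) is exactly the ``extra condition on $\a$'' — after which unweighted stochastic maximal regularity of the fixed pair $(\tA,\tB)$ on $(s,\sigma)$, with right-hand sides controlled through the $L^p(X_{1-\a/p})$-bound and the polynomial growth of $F,G$, places $u$ in the unweighted maximal regularity class on $(s,\sigma)$; hence $u\in C([s,\sigma];\Xp)\embed C([s,\sigma];\Xap)$, which together with $u\in C([0,s];\Xap)$ from~\eqref{eq:introreg} yields the desired sup-bound. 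In all three cases one concludes $\P(\Gamma)=0$ on letting $N,M\to\infty$.

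The main obstacle is the critical nonlinear estimate underlying statement~\ref{it:introuXap3}: establishing the interpolation inequality with the sharp matching of exponents — and its $\g$-radonifying analogue for the \emph{stochastic} term $G$, which does not follow from Hölder's inequality alone — requires careful tracking of the scaling parameters of the scale $(X_0,X_1)$, the weight $w_{\a}$, and the growth exponents; the companion difficulty is pinning down, in statement~\ref{it:introuXap4}, the precise hypotheses on $\a$ under which the restart at $s>0$ in the unweighted class is admissible so that the bootstrap closes.
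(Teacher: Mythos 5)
Your overall strategy of reducing everything to the ``nonlinearity stays finite'' criterion is reasonable in spirit, and you correctly identify the role of the critical-exponent bookkeeping, but several of the actual reductions are broken. In the non-critical case (your item (1)) you claim that $\|F(\cdot,v)\|_{X_0}+\|G(\cdot,v)\|_{\g(H,X_{1/2})}\lesssim 1+\|v\|_{\Xap}^m$. This is false: Hypothesis (HF)/(HG) forces $\varphi_j,\beta_j>1-\frac{1+\a}{p}$, so the nonlinearity is always measured in spaces strictly more regular than $\Xap$, critical or not. Non-criticality does not buy a bound by the trace norm alone; it buys that the constant in the nonlinear estimate of Lemma~\ref{l:F_G_bound_N} tends to $0$ as the length of the time interval shrinks. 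The paper's proof of Theorem~\ref{t:blow_up_criterion}\eqref{it:blow_up_non_critical_Xap} (and hence Theorem~\ref{thm:semilinear_blow_up_Serrin}\eqref{it:blow_up_semilinear noncritical_stochastic}) exploits exactly this: a contradiction argument built around the auxiliary function $\varphi_\varepsilon(x)=x-C_\varepsilon x^\zeta$ and a carefully constructed nested family of stopping times, not a direct finiteness argument for $\nonlinearity^{\a}$.

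In the critical case with the sup-bound (your item (2)) your two-factor interpolation $\|v\|_{X_{1-\varepsilon}}\lesssim\|v\|_{\Xap}^{1-\theta}\|v\|_{X_{1-\a/p}}^{\theta}$ with the matching value of $\theta$ forces the $X_{1-\a/p}$-factor to appear with total exponent $\theta(\rho_j+1)=1+\a$ per term; for $\a>0$ this cannot be summed against $\|u\|_{L^p(0,\sigma;X_{1-\a/p})}<\infty$, so the estimate simply does not close. The paper's Lemma~\ref{lem:interpolationineqMR0} is a \emph{three}-factor interpolation against $L^{\infty}(\Xap)$, the maximal-regularity norm $\Sz^{\theta,\a}$, and $L^p(X_{1-\a/p})$: the extra $\Sz^{\theta,\a}$-factor has exponent $\leq 1$ by design and is absorbed on the left via the maximal regularity estimate, which is what makes the scheme close for $\a>0$. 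For the Serrin criterion (your item (3)) the proposed restart at a deterministic $s$ is circular --- to place $u$ in the unweighted maximal regularity class on $(s,\sigma)$ you would need $F(\cdot,u),G(\cdot,u)\in L^p(s,\sigma;\cdots)$, which is the bound you are trying to produce; $\|u\|_{L^p(0,\sigma;X_{1-\a/p})}<\infty$ does not give polynomial nonlinearities this integrability. Moreover, the extra condition on $\a$ ($\rho_j<1+\a$, or $\rho_j\leq1$ if $\a=0$; see~\eqref{eq:condSerrin}) is not about an embedding $X_{1-\a/p}\hookrightarrow\Xp$: it is exactly what makes $\phi_{1,j}=\phi_{2,j}=1$ in Lemma~\ref{lem:interpolationineqMR0}\eqref{it:interpolationineqMR01} so that the $L^{\infty}(\Xap)$-factor drops out of the interpolation entirely and no sup-bound on $u$ is needed.
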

The above results extend the blow-up criteria in \cite[Corollaries 2.2, 3.3 and Theorem 2.4]{CriticalQuasilinear} to the stochastic setting. The criterium \eqref{it:introuXap4} is a Serrin type blow-up condition, and probably the deepest of the criteria stated here. It seems that our results are the first systematic approach to blow-up criteria in the stochastic case. The global existence results for stochastic Navier-Stokes equations in $d=2$, and equations of reaction diffusion type in \cite{AV20_NS, AV19_QSEE_3}, will all be based on these new criteria. Let us mention that some of the criteria we obtain are even new in the deterministic setting.

The advantage of our approach is that for a given concrete SPDE, the local well-posedness theory, and blow-up criteria can be used as a black box. So to prove global existence one only needs to prove energy estimates (which can be hard). However, the rest of the argument can be completed in a rather soft way. We summarize this in the following roadmap, of which a more extensive version can be found in Roadmap \ref{roadcomplete}. An illustration of the results will be discussed in Subsection \ref{ss:introillus}.
\begin{roadmap}
\
\begin{enumerate}[{\rm(a)}]
\item\label{it:roadmaplocsimple} Prove local well-posedness and regularity with Theorem \ref{thm:wellintro};
\item\label{it:roadmapenergysimple} prove an energy estimate;
\item\label{it:roadmapthmapplsimple} combine the energy estimate with Theorem \ref{thm:blowupintro} or \ref{thm:semilinblow} to prove $\sigma=\infty$.
\end{enumerate}
\end{roadmap}
Moreover, instantaneous regularization (see Subsection \ref{ss:introIR}) can help in the above scheme as often the extra regularity enables to prove energy estimates.

In \cite{Brz2, BrzElw, carroll1999stochastic, Hornung, NVW3, NVW11eq} an abstract settings appear in which global existence is proved using an argument which resembles a blow-up criterium.
Of these results the one in \cite[Theorem 4.3]{Hornung} comes closest to the one in Theorem \ref{thm:blowupintro}\eqref{it:blowintroquas2} if $\a=0$. However, in that case the result of Theorem \ref{thm:blowupintro}\eqref{it:blowintroquas1}-\eqref{it:blowintroquas_not_critical} is applicable and actually easier to check as we do not consider $\|u\|_{L^p(0,\sigma;X_1)}<\infty$ in the criteria. There are many other differences, and in particular, the assumptions on the nonlinearities and initial data in \cite{Hornung} are much more restrictive. We refer to the introduction of \cite{AV19_QSEE_1} for a detailed comparison.

\subsection{Instantaneous regularization}\label{ss:introIR}
In order to introduce the reader to instantaneous regularization, in this section we let $X_1\subsetneqq X_0$, which is usual in applications to SPDEs.
From \eqref{eq:introreg} one sees that if $\a>0$, then the solution to \eqref{eq:QSEE_intro} instantaneously regularizes `in space' as the regularity of $u$ for $t>0$ is better than the one in $t=0$ since $\Xp\subsetneqq \Xap$. This simple but central result is the key behind our new bootstrapping method. It we will now explain in the special setting of Corollary \ref{cor:regularization_X_0_X_1}, which requires the conditions $p>2$ and $\a>0$. The case $p=2$ or $\a=0$ can be studied as well, see Proposition \ref{prop:adding_weights} and the text below it.

Fix $s>0$ and $r\in (p,\infty)$. Since $\a>0$, we can choose $\alpha\in [0,\frac{r}{2}-1)$ such that $\frac{1}{p}<\frac{1+\alpha}{r}<\frac{1+\a}{p}$. By \eqref{eq:introreg} we have $u(s)\in \Xp\hookrightarrow X_{\alpha,r}^{\Tr}$ a.s.\ and one can construct a maximal local solution to \eqref{eq:QSEE_intro} starting at $s$ with initial data $u(s)\in X_{\alpha,r}^{\Tr}$ by Theorem \ref{thm:wellintro}. This gives a maximal local solution $(v,\tau)$ on $[s,\infty)$ and by \eqref{eq:introreg},
\begin{equation}\label{eq:introregv}
v\in L^r_{\rm loc}([s,\tau),w_{\alpha};X_{1}) \cap C([s,\tau);X^{\Tr}_{\alpha,r})\cap C((s,\tau);\Xr) \ \ \text{almost surely}.
\end{equation}
Since $r>p$, $\Xr\subsetneqq \Xp$ and hence the regularity of $v$ seems to be better than the one of $u$ in \eqref{eq:introreg}. Now if we could show that $\tau=\sigma$ and $u=v$ on $[s,\sigma)$, then this would improve the regularity of $u$ significantly. By choosing $r$ large one can even obtain H\"older regularity in time (see Corollary \ref{cor:regularization_X_0_X_1} for details).

To prove $u=v$, first note that by using the regularity of $v$ and the uniqueness of the maximal local solution $(u,\sigma)$, one can obtain $\tau\leq \sigma$ a.s.\ and $v=u$ on $[ s,\tau)$. This is not surprising since $v$ is `more regular' than $u$ and therefore one expects that $v$ blows-up before $u$. The key step is proving that $\sigma=\tau$.
To show this, note that on the set $\{\tau<\sigma\}$,
$
v=u\in C((s,\tau];\Xp) \subseteq C((s,\tau];X^{\Tr}_{\alpha,r}),
$
and hence
\begin{align*}
\P(\tau<\sigma)&=\P\Big(\{\tau<\sigma\}\cap \{\tau<\infty\}
\cap \Big\{\lim_{t\uparrow\tau} v(t) \text{ exists in }X^{\Tr}_{\alpha,r}\Big\}\Big)\\
&\leq
\P\Big(\tau<\infty,\, \lim_{t\uparrow\tau} v(t) \text{ exists in }X^{\Tr}_{\alpha,r}\Big)=0,
\end{align*}
which follows from the blow-up criterium of Theorem \ref{thm:blowupintro}\eqref{it:blowintroquas_not_critical} applied to $(v,\tau)$.

The above gives an abstract bootstrap mechanism to obtain time regularization of solutions to \eqref{eq:QSEE_intro}. A variation of this strategy can be used to bootstrap regularity in space. This requires two Banach couples $(Y_0,Y_1)$ and $(\wh{Y}_0,\wh{Y}_1)$ in which the equation \eqref{eq:QSEE_intro} can be considered as well. Next we state this result. The precise assumptions are too technical to state here, but the conditions to be checked seem to be natural in all examples we have considered in \cite{AV20_NS, AV19_QSEE_3}.
For details we refer to Theorem \ref{t:regularization_z}.
\begin{theorem}\label{thm:intregintro}
Let $(Y_0,Y_1)$ and $(\wh{Y}_0,\wh{Y}_1)$ be couples of Banach spaces such that \[Y_1\hookrightarrow Y_0, \qquad \wh{Y}_1\hookrightarrow \wh{Y}_0, \ \ \ \  \text{and}   \ \  \ \  \wh{Y}_i\hookrightarrow Y_i.\]
Let $\wh{r}\geq r\geq p>2$, $\alpha\in [0, \frac{r}{2}-1)$, and $\wh{\alpha}\in [0, \frac{\wh{r}}{2}-1)$. Let $(u,\sigma)$ be the $L^p_{\a}$-maximal solution of Theorem \ref{thm:wellintro}. Under suitable conditions, the following implication holds:
\[u\in \bigcap_{\theta\in [0,1/2)} H^{\theta,r}_{\emph{loc}}(0,{\sigma};Y_{1-\theta}) \ \ \text{a.s.} \ \Longrightarrow \ u\in \bigcap_{\theta\in [0,1/2)} H^{\theta,\wh{r}}_{\emph{loc}}(0,{\sigma};\wh{Y}_{1-\theta}) \ \ \text{a.s.}\]
\end{theorem}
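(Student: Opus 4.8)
The plan is to run an iterated localization-and-bootstrap argument in space, using the time-regularization mechanism already sketched for Theorem~\ref{thm:intregintro} as the engine and the non-criticality blow-up criterion of Theorem~\ref{thm:blowupintro}\eqref{it:blowintroquas_not_critical} as the obstruction to early blow-up. First I would fix $s>0$ and work on the stochastic interval $[s,\sigma)$. The hypothesis $u\in \bigcap_{\theta\in[0,1/2)}H^{\theta,r}_{\mathrm{loc}}(0,\sigma;Y_{1-\theta})$ says in particular that for suitable $\theta$ close to $1/2$, the trace $u(s)$ lies a.s.\ in a trace space associated to the couple $(Y_0,Y_1)$ with integrability $r$; by the embeddings $\wh{Y}_i\hookrightarrow Y_i$ this same point should, after possibly moving the starting time an infinitesimal amount and using the instantaneous-in-space gain built into \eqref{eq:introreg}, be seen to lie in the trace space $\wh{X}^{\Tr}_{\wh\alpha,\wh r}$ attached to $(\wh Y_0,\wh Y_1)$. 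The first key step is therefore to make this embedding of trace spaces precise: one needs $\wh{r}\geq r$, the weight exponents chosen so that $\tfrac1p<\tfrac{1+\wh\alpha}{\wh r}\le \tfrac{1+\alpha}{r}$, and a real-interpolation/reiteration computation showing $X^{\Tr}_{\alpha,r}(Y)\hookrightarrow \wh X^{\Tr}_{\wh\alpha,\wh r}(\wh Y)$, exactly as in the special case in the text where $\Xp\hookrightarrow X^{\Tr}_{\alpha,r}$.

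Next I would invoke Theorem~\ref{thm:wellintro} in the couple $(\wh Y_0,\wh Y_1)$ with exponents $(\wh r,\wh\alpha)$ and initial time $s$, initial datum $u(s)$, to produce a maximal local solution $(v,\tau)$ on $[s,\infty)$ with the regularity \eqref{eq:introregv} written for the hatted data, i.e.
\[
v\in L^{\wh r}_{\mathrm{loc}}([s,\tau),w_{\wh\alpha};\wh Y_1)\cap C([s,\tau);\wh X^{\Tr}_{\wh\alpha,\wh r})\cap C((s,\tau);\wh X^{\Tr}_{\wh r}),
\]
and in particular $v\in \bigcap_{\theta\in[0,1/2)}H^{\theta,\wh r}_{\mathrm{loc}}(s,\tau;\wh Y_{1-\theta})$ a.s. The second key step is the identification $v=u$ on $[s,\tau)$ and $\tau\le\sigma$. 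Here one uses that $v$, through the embeddings $\wh Y_i\hookrightarrow Y_i$, also solves \eqref{eq:QSEE_intro} in the couple $(Y_0,Y_1)$ with the weaker exponents, and that $(u,\sigma)$ is the \emph{maximal} $L^p_\a$-solution there; the uniqueness part of the local well-posedness theorem (applied after a routine localization/stopping-time argument to match the function spaces, since $v$ lives in an $L^{\wh r}_{w_{\wh\alpha}}$-space and $u$ in an $L^p_{w_\a}$-space) gives $v=u$ on the common interval and $\tau\le\sigma$ a.s.

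The third key step — and the one I expect to be the main obstacle — is upgrading $\tau\le\sigma$ to $\tau=\sigma$ a.s. As in the sketch preceding Theorem~\ref{thm:intregintro}, on the set $\{\tau<\sigma\}$ the hypothesis on $u$ forces $\lim_{t\uparrow\tau}u(t)$ to exist in $Y$-trace regularity, and via Step~1's embedding (applied at time $\tau$, using that on $(s,\tau)$ the solution already has the better regularity coming from $C((s,\tau);\wh X^{\Tr}_{\wh r})$) this limit exists in $\wh X^{\Tr}_{\wh\alpha,\wh r}$; then Theorem~\ref{thm:blowupintro}\eqref{it:blowintroquas_not_critical} applied to $(v,\tau)$ in the hatted setting yields $\P(\tau<\sigma)=0$, provided $\wh X^{\Tr}_{\wh\alpha,\wh r}$ is \emph{non-critical} for \eqref{eq:QSEE_intro} in the couple $(\wh Y_0,\wh Y_1)$. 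Verifying this non-criticality — i.e.\ checking that the polynomial growth exponents of $F,G$ leave room between the scaling-critical weight and the chosen $\wh\alpha$ — is the delicate point, and it is presumably where the ``suitable conditions'' of the theorem enter; one has freedom to take $\wh\alpha$ close to the critical value and $\wh r$ large, but one must confirm the strict inequality is attainable simultaneously with $\tfrac1p<\tfrac{1+\wh\alpha}{\wh r}$. Once $\tau=\sigma$ is established, $u=v$ on $[s,\sigma)$ gives $u\in\bigcap_{\theta\in[0,1/2)}H^{\theta,\wh r}_{\mathrm{loc}}(s,\sigma;\wh Y_{1-\theta})$ a.s.; since $s>0$ was arbitrary this is the desired conclusion on $(0,\sigma)$, and one finishes by a standard countable-intersection-over-$s\downarrow 0$ argument together with $C((s,\sigma);\wh X^{\Tr}_{\wh r})$ to patch the local statements into the global one. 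Throughout, the measurability of stopping times and the compatibility of the various maximal-regularity classes under restriction are routine and I would relegate them to lemmas already available from Part~I.
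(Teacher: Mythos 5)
Your proposal captures the overall strategy of the paper's proof (Theorem~\ref{t:regularization_z}): restart at a positive time, obtain a ``more regular'' maximal solution from Theorem~\ref{t:local_s}, identify it with $u$ by uniqueness/maximality, and then exclude early blow-up by a blow-up criterion. However, there is a genuine structural gap: you compare $u$ directly with the maximal solution $\wh v$ in the $(\wh Y_0,\wh Y_1,\wh r,\wh\alpha)$-setting, whereas the paper's argument is forced to pass through an \emph{intermediate} maximal solution $v$ in the $(Y_0,Y_1,r,\alpha)$-setting. The reason this intermediate stage cannot be skipped is twofold. First, to invoke the uniqueness/maximality of $(u,\sigma)$ you need $\wh v$ to be an $L^p_\a$-local solution in the $(X_0,X_1,p,\a)$-setting, which requires in particular $\wh Y_1\hookrightarrow X_1$; but the hypotheses of Theorem~\ref{t:regularization_z} only give $Y_1\hookrightarrow X_{1-\delta}$ for some $\delta\in[0,1-\max_j\varphi_j')$, so when $\delta>0$ (as in all $p=2$ applications, cf.\ Proposition~\ref{prop:adding_weights}) $\wh v$ is not even $X_1$-valued and your ``routine localization/stopping-time argument'' has no setting to take place in. The paper's $(Y_0,Y_1,r,\alpha)$-setting is precisely the neutral ground where both $u$ (by the regularity hypothesis in condition~\eqref{it:assumption_u_regular_Y1}) and $\wh v$ (via the embedding \eqref{eq:emb_uniqueness_Z}) are local solutions and can be compared.

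Second, and independently, you only invoke the non-criticality criterion (Theorem~\ref{t:blow_up_criterion}\eqref{it:blow_up_non_critical_Xap}). The paper's Step~1 — proving that the $(Y_0,Y_1,r,\alpha)$-maximal solution $v$ satisfies $\tau=\sigma$ — cannot use that criterion because the trace space $Y^{\Tr}_{\alpha,r}$ is allowed to be \emph{critical} in that setting (the paper explicitly permits this). Instead, the paper uses the $\nonlinearity_c^{\a}$-criterion of Theorem~\ref{t:blow_up_criterion}\eqref{it:blow_up_norm_general_case_F_c_G_c} in the original $(X_0,X_1,p,\a)$-setting: the point of Step~1a there is that the assumed regularity of $u$ in the $Y$-scale implies $\nonlinearity_c^{\a}(u;0,\sigma)<\infty$ on $\{\sigma<\tau\}$, so the first blow-up criterion (not the non-criticality one) excludes $\sigma<\tau$. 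Only after this does the paper compare $v$ with $\wh v$ in the hatted setting, where the non-criticality criterion is applicable by condition~\eqref{it:assumption_u_regular_Y2}. So your proposal is correct in spirit for the special case captured by Corollary~\ref{cor:regularization_X_0_X_1} (same couple, $Y_i=X_i$, no loss of space regularity), which is what the introductory sketch treats, but it misses the two-stage identification $u\leftrightarrow v\leftrightarrow\wh v$ and the use of the $\nonlinearity_c^{\a}$-criterion that are required for the general two-couple statement.
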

We emphasize that we do not need additional regularity for the initial data $u_0$, since the arguments all take place on $[s,\infty)$ with $s>0$.
The main idea of the theorem is that regularity in the $(Y_0, Y_1, r, \alpha)$-setting be transferred to the $(\wh{Y}_0,\wh{Y}_1,\wh{r}, \wh{\alpha})$-setting. Since we can freely choose the spaces $Y_i$ and $\wh{Y}_i$, we can iterate the above to gain regularity. The regularity class $\bigcap_{\theta\in [0,1/2)}  H^{\theta,r}_{\loc}(0,{\sigma};Y_{1-\theta})$ seems rather obscure at first sight. However, it is the one that contains all information concerning stochastic maximal $L^r$-regularity. Its deterministic analogue $L^r_{\loc}(0,\sigma;Y_1)\cap W^{1,r}_{\loc}(0,\sigma;Y_0)$ is much simpler to deal with.

The extra regularity obtained by bootstrapping, is, of course interesting from a theoretical point of view, but it can also assist in proving global existence. Indeed, due to the extra smoothness and integrability, often one can prove energy estimates on an interval $[s,\sigma)$ with $s>0$ by applying \Ito's formula and integration by parts.

In classical bootstrapping arguments, one argues in a completely different way. Given the maximal solution $(u,\sigma)$ one investigates what regularity $f:=F(\cdot, u)$ and $g:=G(\cdot, u)$ have, and combines this with regularity estimate for {\em linear equations} with inhomogeneities $f$ and $g$ to (hopefully) find more space and time regularity for $u$. With the new information on $u$, one can repeat this argument over and over again. This method is of course very important, but it also has some disadvantages. First of all it requires a smooth initial value. Moreover, in case of critical nonlinearities or unweighted situations, it is often not possible to use this argument as $F(\cdot, u)$ or $G(\cdot, u)$ does not have the right integrability/regularity properties. A 1d example where this occurs is discussed in Subsection \ref{ss:introillus}, and in \cite{AV20_NS} we showed that the same holds for 2d Navier-Stokes equations.

In order to deal with the critical and unweighted case (in particular, if $p=2$), we will prove a further variation of the bootstrapping result of Theorem \ref{thm:intregintro} in Proposition \ref{prop:adding_weights}. Here the idea is to exchange some of the space regularity to create a weighted setting out of an unweighted one. As soon as the weight is there, the loss of integrability and regularity can be recovered with Theorem \ref{thm:intregintro}.

Finally, we mention that in deterministic theory one can often use the implicit function theorem to prove higher order regularity in time and space. This method is referred to as the {\em parameter trick} and usually attributed to \cite{Angenent91,Angenent90Ann}. It can be used to prove differentiability and even real analyticity in time. For further details on this method we refer to \cite[Chapter 5]{pruss2016moving} and to the notes of that chapter for further historical accounts. Of course, differentiability in time is completely out of reach in the stochastic setting, since already Brownian motion itself is not differentiable. Therefore, it seems impossible to extend this method to the stochastic framework.

\subsection{Illustration}\label{ss:introillus}
In Section \ref{s:1D_problem} we illustrate our main results in a simple toy example. Here the point is not the strength of the results, but more the techniques to prove it. Indeed, the same methods can be used to study more serious examples (see \cite{AV20_NS,AV19_QSEE_3,AV22_reaction_diffusion}).

Consider the following stochastic PDE:
\begin{equation}
\label{eq:1D_problem_intro}
\begin{cases}
du-\partial_{x}^2 u\,dt =\partial_x( u^3 )dt+  |u|^{h}dw^c_t,& \ \ \text{on }\Tor,\\
u(0)=u_0,& \ \ \text{on }\Tor,
\end{cases}
\end{equation}
where $u:[0,\infty)\times \O\times \Tor \to \R$ is the unknown process and $w^c_t$ is a colored noise on $\Tor$, i.e.\ an $H^{\lambda}(\Tor)$-cylindrical Brownian motion with $\lambda\in (\frac{1}{2},1)$, and $h\in [1,3)$. The following is a special case of Theorems \ref{t:local_1D} and \ref{t:global_1D} below, where $u^3$ and $|u|^{h}$ are replaced by more general nonlinearities.

\begin{theorem}
\label{t:local_1D_intro}
Let Assumption \ref{ass:1D_stochastic} be satisfied. Then for any $u_0\in L^0_{\F_0}(\O;L^2(\Tor))$ the following holds.
\begin{enumerate}[{\rm(1)}]
\item\label{it:1D_problem_intro} There exists a unique maximal solution $(u,\sigma)$ to \eqref{eq:1D_problem} on $[0,\infty)$ such that
\begin{equation}\label{eq:uregintro}
u\in L^2_{\rm loc}([0,\sigma);H^{1}(\Tor))\cap C([0,\sigma);L^2(\Tor)) \ \ \text{ a.s. }
\end{equation}
\item\label{it:intro_regularity_L2}
(H\"older regularity) $u\in C_{{\rm loc}}^{\theta_1,\theta_2}((0,{\sigma})\times \Tor)$ for all $\theta_1\in (0,1/2)$ and $\theta_2\in (0,1)$.
\item\label{it:intro_global_theta_1} (Global existence) If $h=1$, then $\sigma=\infty$ a.s.
\end{enumerate}
\end{theorem}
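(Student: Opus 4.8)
The plan is to realise \eqref{eq:1D_problem_intro} as an instance of the abstract equation \eqref{eq:QSEE_intro} and to invoke the results recalled above. For the local well-posedness \eqref{it:1D_problem_intro} I would work in the couple $X_0 = H^{-1}(\Tor)$, $X_1 = H^1(\Tor)$ with $p = 2$, $\a = 0$, take the leading operator $A = 1 - \partial_x^2\colon X_1\to X_0$ (positive and self-adjoint on the Hilbert space $X_0$, hence enjoying stochastic maximal $L^2$-regularity), $B = 0$, $F(u) = u + \partial_x(u^3)$ (the lower-order term $u$ coming from the shift $-\partial_x^2 = A - 1$) and $G(u) = |u|^h$, with noise space $H = H^\lambda(\Tor)$. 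Then $\Xap = (X_0,X_1)_{1/2,2} = L^2(\Tor)$ and $X_{1/2} = [X_0,X_1]_{1/2} = L^2(\Tor)$, which matches \eqref{eq:uregintro}. The polynomial growth and local Lipschitz conditions on $F$ and $G$ follow from one-dimensional Sobolev embeddings: $H^{1/3}(\Tor)\hookrightarrow L^6(\Tor)$ gives $\|\partial_x(u^3)\|_{X_0}\lesssim\|u\|_{L^6}^3\lesssim\|u\|_{X_{2/3}}^3$, so $F$ has cubic growth on $X_{2/3}$, while $H^{1/2 - 1/(2h)}(\Tor)\hookrightarrow L^{2h}(\Tor)$ together with the fact that for $\lambda > 1/2$ the pointwise multiplication operator $\phi\mapsto g\phi$ is $\gamma$-radonifying from $H^\lambda(\Tor)$ into $L^2(\Tor)$ with norm $\lesssim\|g\|_{L^2}$ (as one sees by testing against the Fourier basis and using $\sum_k(1+k^2)^{-\lambda} < \infty$) gives $\|G(u)\|_{\gamma(H,X_{1/2})}\lesssim\||u|^h\|_{L^2}\lesssim\|u\|_{X_\beta}^h$ with $\beta = 3/4 - 1/(4h)\in[1/2,2/3)$; the Lipschitz estimates follow in the same way from $\bigl||u|^h - |v|^h\bigr|\leq h(|u|^{h-1} + |v|^{h-1})\,|u-v|$, valid for $h\geq1$, and H\"older's inequality. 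Since \eqref{eq:1D_problem_intro} is invariant under the parabolic scaling $u\mapsto\mu^{1/2}u(\mu^2\cdot,\mu\cdot)$, which leaves the $L^2(\Tor)$-norm invariant, $\Xap = L^2(\Tor)$ is the scaling-critical trace space, and the criticality bookkeeping as in \cite[Section~5]{AV19_QSEE_1}---checking that the interpolation exponents $\beta$ above do not exceed the threshold fixed by $(p,\a)$ and the growth rates, $h < 3$ being exactly what keeps the noise nonlinearity at most critical---confirms that $(p,\a) = (2,0)$ is admissible. Theorem \ref{thm:wellintro} then produces the maximal solution $(u,\sigma)$ with the regularity \eqref{eq:uregintro}, uniqueness being part of that statement.

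For the H\"older regularity \eqref{it:intro_regularity_L2} I would run the instantaneous-regularization scheme of Subsection \ref{ss:introIR}. Because we are in the unweighted critical regime $p = 2$, $\a = 0$, there is no direct gain from \eqref{eq:introreg}, so the first step is Proposition \ref{prop:adding_weights}: it trades a small amount of space regularity for a positive time weight, placing the solution restricted to $[s,\sigma)$, for arbitrary $s > 0$, into a setting with integrability exponent $r > 2$ and weight $\alpha > 0$. After that I would apply Theorem \ref{thm:intregintro} (equivalently Theorem \ref{t:regularization_z}) finitely many times, each time choosing the couple $(\wh{Y}_0,\wh{Y}_1)$ to be a pair of Sobolev spaces on $\Tor$ of higher order than the previous one. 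The iteration closes because $H^s(\Tor)$ is a Banach algebra for $s > 1/2$, so that $u\mapsto\partial_x(u^3)$ costs only one derivative that parabolic smoothing returns, and because for $h\in[1,3)$ the composition operator $u\mapsto|u|^h$ maps $H^s(\Tor)$ boundedly into itself for the relevant range of $s$ by Moser-type estimates. Each step retains the full stochastic maximal $L^r$-regularity class $\bigcap_{\theta\in[0,1/2)}H^{\theta,r}_{\loc}(s,\sigma;\wh{Y}_{1-\theta})$. Choosing $r$ arbitrarily large and the spaces $\wh{Y}_i$ of sufficiently high order, the Sobolev embeddings $H^{\theta,r}(I;Z)\hookrightarrow C^{\theta - 1/r}(\overline{I};Z)$ in time and $H^{\tau}(\Tor)\hookrightarrow C^{\tau - 1/2}(\Tor)$ for $\tau > 1/2$ in space, together with a covering of $(0,\sigma)$ by intervals $[s,\sigma)$, then yield $u\in C^{\theta_1,\theta_2}_{\loc}((0,\sigma)\times\Tor)$ for all $\theta_1 < 1/2$ and $\theta_2 < 1$.

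For the global existence \eqref{it:intro_global_theta_1}, with $h = 1$ the noise nonlinearity $G(u) = |u|$ has linear growth and the equation is semilinear (since $(A,B)$ does not depend on $u$), so it suffices to prove an $L^2$-energy estimate and feed it into Theorem \ref{thm:semilinblow}\eqref{it:introuXap3}. Applying It\^o's formula to $t\mapsto\|u(t)\|_{L^2(\Tor)}^2$ on $[0,\sigma\wedge\tau_n)$ for a localising sequence $\tau_n\uparrow\sigma$ (and after the standard truncation of $u_0$ on $\{\|u_0\|_{L^2}\leq m\}$), the crucial observation is that the transport-type term vanishes by periodicity, $\langle u,\partial_x(u^3)\rangle_{L^2} = -\tfrac14\int_{\Tor}\partial_x(u^4)\,dx = 0$, while $\langle u,\partial_x^2 u\rangle_{L^2} = -\|\partial_x u\|_{L^2}^2$ and $\bigl\||u|\bigr\|_{\gamma(H^\lambda,L^2)}^2\lesssim\|u\|_{L^2}^2$. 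Hence $d\|u\|_{L^2}^2 + 2\|\partial_x u\|_{L^2}^2\,dt\leq C\|u\|_{L^2}^2\,dt + dM_t$, where $M$ is a continuous local martingale whose bracket is controlled by $\int\bigl(\|u\|_{L^2}^3\|\partial_x u\|_{L^2} + \|u\|_{L^2}^4\bigr)\,dt$; Gronwall's lemma together with the Burkholder--Davis--Gundy and Young inequalities (to absorb the martingale contribution into the left-hand side) gives, for every $T < \infty$, that $\sup_{t\in[0,\sigma\wedge T)}\|u(t)\|_{L^2} < \infty$ and $\|u\|_{L^2(0,\sigma\wedge T;H^1)} < \infty$ almost surely. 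Since $X_{1-\a/p} = X_1 = H^1(\Tor)$ here, Theorem \ref{thm:semilinblow}\eqref{it:introuXap3} forces $\P(\sigma < T) = 0$ for every $T$, that is, $\sigma = \infty$ almost surely.

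The step I expect to be the main obstacle is \eqref{it:intro_regularity_L2}: one has to check the numerous technical hypotheses of Proposition \ref{prop:adding_weights} and Theorem \ref{thm:intregintro} at each rung of the bootstrap---in particular the mapping properties of the non-smooth Nemytskii operator $u\mapsto|u|^h$ on a Sobolev scale and the fact that the colored noise ($\lambda > 1/2$) supplies enough smoothing---and one has to keep track of the admissible pairs $(r,\alpha)$ so that criticality is not violated along the iteration. By comparison, \eqref{it:1D_problem_intro} is essentially a verification of the hypotheses of Theorem \ref{thm:wellintro}, and the energy estimate in \eqref{it:intro_global_theta_1} is routine once the vanishing of the cubic term has been noticed.
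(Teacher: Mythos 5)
Parts \eqref{it:1D_problem_intro} and \eqref{it:intro_global_theta_1} of your proposal match the paper's argument essentially step for step (the extra shift $A = 1-\partial_x^2$ with the linear term moved into $F_{\Tr}$ is a cosmetic variant of the paper's direct use of $A = -\partial_x^2$, and the energy estimate for \eqref{it:intro_global_theta_1} is, as in Lemma~\ref{l:1D_energy_estimates}, the cancellation $\int_{\Tor}f(u)\partial_x u\,dx=0$ fed into Theorem~\ref{thm:semilinear_blow_up_Serrin}\eqref{it:blow_up_semilinear_serrin_Pruss_modified}). One small but real point in \eqref{it:intro_global_theta_1}: the paper runs It\^o's formula on $[s,T]$ for $s>0$ and then sends $s\downarrow 0$, because the stopping time cutting $\|u(t)-u(s)\|_{C(\Tor)}$ and the justification of the integration by parts $\int_{\Tor}f(u)\partial_x u\,dx = \int_{\Tor}\partial_x[F(u)]\,dx$ both rely on the \emph{already proved} H\"older regularization of \eqref{it:intro_regularity_L2}; running the argument directly from $t=0$ with only \eqref{eq:uregintro} is not what the paper does and needs more care.

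The genuine gap is in \eqref{it:intro_regularity_L2}. You correctly identify the tools (Proposition~\ref{prop:adding_weights} to create the weight, then the iteration of Theorem~\ref{t:regularization_z} / Corollary~\ref{cor:regularization_X_0_X_1}), but you aim the bootstrap at the wrong scale and give the wrong mechanism for why it closes. The paper \emph{never} raises the spatial differentiability order above one: it keeps the couple at $(H^{-1+2i-\varepsilon},H^{1+2i-\varepsilon})$ or $(H^{-1,\zeta},H^{1,\zeta})$ throughout, first pushing the time integrability $r\to\infty$ (Steps 2a--2c of the proof of Theorem~\ref{t:local_1D}) and then pushing the spatial Lebesgue exponent $\zeta\to\infty$ (Steps 3--4), and obtains \eqref{eq:C_regularity_1D} via $H^{1,\zeta}(\Tor)\hookrightarrow C^{1-1/\zeta}(\Tor)$ rather than $H^{\tau}(\Tor)\hookrightarrow C^{\tau-1/2}(\Tor)$ with $\tau\to 3/2$. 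Your route through Sobolev spaces of increasing differentiability order would run directly into the obstacle you yourself flag: for non-integer $h\in[1,3)$ the Nemytskii map $u\mapsto|u|^h$ does not act on $H^{s}(\Tor)$ for $s$ beyond the smoothness of $|\cdot|^h$, so ``Moser-type estimates'' do not close the iteration at the level you need. Moreover, the closure of the bootstrap in Theorem~\ref{t:regularization_z} has nothing to do with a Banach-algebra or parabolic-smoothing argument; it comes from re-solving \eqref{eq:QSEE} in the new $(Y_0,Y_1,r,\alpha)$-setting, re-checking the growth condition \eqref{eq:HypCritical}--\eqref{eq:HypCriticalG} in that scale (which becomes \emph{subcritical} once $r>2$ and $\alpha>0$), and then matching the two blow-up times via Theorem~\ref{t:blow_up_criterion}. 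Indeed the introduction around \eqref{eq:critproblemex} explains precisely why the classical ``compute the regularity of $F(u)$ and feed it to the linear equation'' bootstrap, which is what your Banach-algebra reasoning describes, fails for this critical problem; the whole point of Section~\ref{s:regularization} is to replace it.
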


As in \cite[Theorem 2.7]{AV20_NS}, by further bootstrapping, one can even get $\theta_2\in (0,2)$ in \eqref{it:intro_regularity_L2}. Moreover, this can be further extended by (for instance) applying Schauder theory. Furthermore, a version of the above result also holds for $u_0$ with negative smoothness in a suitable Besov scale (see Theorem \ref{t:1d_rough_initial_data}).

\eqref{it:1D_problem_intro} follows from our local well-posedness theory of Part I \cite{AV19_QSEE_1}. It seems that it cannot be deduced from other standard results unless $h=1$. The proof of \eqref{it:intro_regularity_L2} requires the full strength of our bootstrapping results and is complicated because of the criticality of the nonlinearity. Some further explanations what this means can be found below. Using \eqref{it:intro_regularity_L2}, the proof of \eqref{it:intro_global_theta_1} follows by deriving an energy estimate from \Ito's formula and applying the blow-up criterium of Theorem \ref{thm:semilinblow}\eqref{it:introuXap3}.

It seems that Theorem \ref{t:local_1D_intro}\eqref{it:intro_regularity_L2} \emph{cannot} be proved by standard bootstrap methods even if $|u|^{h}d w^c$ is omitted. Indeed, following the usual strategy (viewing $\partial_x(u^3)$ as a inhomogeneity), one is tempted to prove that $\partial_x(u^3)\in L^{r}_{\rm loc}([0,\sigma);H^{-1,q}(\Tor))$, where at least one of the integrability exponents $r$ and $q$ is {\em strictly} larger than $2$. In turn, one has to prove that $u\in L^{3r}_{\rm loc}([0,\sigma);L^{3q}(\Tor))$ a.s.\ However, the regularity in \eqref{eq:uregintro} is not strong enough for this. Indeed, the following embeddings are sharp
\begin{align}\label{eq:critproblemex}
C([0,t];L^2(\Tor))\cap L^2(0,t;H^1( \Tor)) \ \hookrightarrow \
 L^{6}(0,t;H^{1/3}(\Tor))
\ \hookrightarrow \
L^{6}(0,t;L^6(\Tor)).
\end{align}
Therefore, using \eqref{eq:uregintro} we merely obtain $\partial_x(u^3)\in L^{2}_{\rm loc}([0,\sigma);H^{-1,2}(\Tor))$ which is useless if we want to improve regularity by standard estimates.

The above is related to the fact that $\partial_x(u^3)$ is a so-called {\em critical nonlinearity} for \eqref{eq:1D_problem}. Roughly speaking this means that the typical energy bound (see Lemma \ref{l:1D_energy_estimates}) and nonlinearity are of the same order, and this makes this type of bootstrapping argument impossible. Unfortunately, we can also not use our Theorem \ref{thm:intregintro} because it is not applicable in case $p=2$. Instead we use the previously mentioned variation of the latter (see Proposition \ref{prop:adding_weights}) where the key point in this example is to create a weighted setting out of the unweighted setting. After that we can apply Theorem \ref{thm:intregintro} to gain further regularity.

Let us stress that the way the results in Section \ref{s:regularization} are applied is summarized in Roadmap \ref{roadcriticalreg}. This Roadmap has some universality in the sense that it can be applied almost verbatim in other situations where a $L^2(L^2)$-theory is available. For instance, this appears in reaction-diffusion equations \cite{AV22_reaction_diffusion} and the 2d stochastic Navier-Stokes equations \cite{AV20_NS}.

\subsection{Other approaches to quasilinear evolution equations}
In this subsection we discuss a selection of other approaches to quasilinear stochastic evolution equations. Recall that our definition of quasilinearity is as explained below \eqref{eq:QSEE_intro}. We will restrict ourselves to the case of strong solutions (in the probabilistic sense) to parabolic equations with multiplicative colored or white Brownian noise. As general references we refer the reader to the monographs \cite{DPZ, LR15, Rozov}. Below we will not discuss the part of the literature where only additive noise, or where kinetic or weak (probabilistic) solutions are considered, as the list for this is too numerous, and these works seem less connected to our paper.

A well-known method to study quasi- and semilinear stochastic evolution equations is to approximate the original equations, derive uniform energy estimates, prove convergence of the approximate solutions, and identify the limit as the solution to the original equation. In particular, one could approximate the semilinear terms $F$ and $G$ by functions which are globally Lipschitz.

In a Hilbert space framework one can also project the SPDE into a finite-dimensional setting and use the above scheme, and in particular this is the general idea in the monotone operator approach (see \cite{LR15} and references therein). Another type of approximation appears in \cite{HZ17}, where the quasilinear equation is regularized by the heat semigroup, a version of the above approximation approach is carried out in $L^2$, and higher order regularity is bootstrapped via $L^p$-theory.

In \cite{KuehnNeamtu2020} the authors use the theory of pathwise mild solutions of \cite{PronkVer14}, to obtain local well-posedness of quasilinear parabolic stochastic evolution equations. This is done in a Hilbert space setting, but could also be done in an $L^p$-setting. One  difficulty with the approach is that one needs generation of evolution families for $A(u)\in \calL(X_1, X_0)$ for a large class of processes $u$.

\subsubsection*{Overview}

\begin{itemize}
\item In Section \ref{s:preliminaries} we discuss the relevant background including: weighted function spaces and stochastic analysis;
\item in Section \ref{s:stochastic_maximal_regularity} we present some extensions of results on stochastic maximal $L^p$-regularity where the initial time is random. Moreover, we prove a perturbation result which plays a central role in blow-up criteria;
\item in Section \ref{s:blow_up} we state our main results on blow-up criteria, and give a simple application to a problem with linear growth conditions;
\item in Section \ref{s:proofs_blow_up_criteria} we prove the blow-up criteria;
\item in Section \ref{s:regularization} we present our results on instantaneous regularization;
\item in Section \ref{s:1D_problem} we illustrate how the results of Sections \ref{s:blow_up} and \ref{s:regularization} can be used.
\end{itemize}

\subsubsection*{Notation}
\begin{itemize}
\item For any $T\in (0,\infty]$, we set $\I_T=(0,T)$. Thus, $\overline{I}_T=[0,T]$ if $T<\infty$ and $\overline{I}_{\infty}=[0,\infty)$;
\item $\R_+ =(0,\infty)$, $\N = \{0, 1, 2, \ldots\}$;
\item we write $A \lesssim_P B$ (resp.\ $A\gtrsim_P B$), whenever there is a constant $C$ only depending on the parameter $P$ such that $A\leq C B$ (resp.\ $A \geq C B$). Moreover, we write $A \eqsim_P B$ if $A \lesssim_P B$ and $A \gtrsim_P B$;
\item we write $C(q_1,\dots,q_N)>0$ or $C=C(q_1,\dots,q_N)>0$ if the positive constant $C$ depends only on the parameters $q_1,\dots,q_N$;
\item for any metric space $(Y,d_Y)$, $y\in Y$, $\eta>0$, we denote by $\B_{Y}(x,\eta):=\{y'\in Y\,:\,d_Y(y,y')<\eta\}$. If $Y$ is a vector space, then we set $\B_{Y}(\eta):=\B_{Y}(0,\eta)$;
\item if $X,Y$ is an interpolation couple of Banach space, we endow the intersection $X\cap Y$ with the norm $\|\cdot\|_{X\cap Y}:=\|\cdot\|_X+\|\cdot\|_Y$;
\item $(\cdot,\cdot)_{\theta,p}$, $[\cdot,\cdot]_{\theta}$ denotes the real and the complex interpolation method, respectively (see \cite{BeLo,InterpolationLunardi});
\item $X^{\mathsf{Tr}}_{\a,p} = (X_0, X_1)_{1-\frac{1+\a}{p},p}$, $X^{\mathsf{Tr}}_{p} = X^{\mathsf{Tr}}_{0,p}$, and $X_{\theta} = [X_0, X_1]_{\theta}$ (see Assumption \ref{ass:X});
\item $w_{\a}^a(t)=|t-a|^{\a}$ see \eqref{eq:weight_shifted};
\item $\A(I,w_{\a}^a;X)$ for $\A\in\{L^p,H^{\theta,p},W^{1,p},\hz^{\theta,p},L_{{\rm loc}},H^{\theta,p}_{{\rm loc}}\dots\}$ denote weighted function spaces (see Subsection \ref{ss:weighted_function_spaces});
\item $C^{\theta_1,\theta_2}$ anisotropic H\"{o}lder space (see the text before Theorem \ref{t:local_1D});
\item $\MRtasigma$ and $\MRtsigmazero=\MRtsigma$, etc.\ denote the couples with stochastic maximal $L^p$-regularity (see Definitions \ref{def:SMRgeneralized} and \ref{def:SMRthetacase});
\item Hypothesis \hyperref[H:hip]{$\Hip$}, see Subsection \ref{ss:quasi_revised};
\item \textit{Critical spaces} see the text below Hypothesis \hyperref[H:hip]{$\Hip$};
\item Hypothesis \hyperref[assum:HY]{\Hiep}$(Y_0,Y_1,\alpha,r)$, see Assumption \ref{assum:HY};
\item $\X$ space associated with the nonlinearities $F$ and $G$, see \eqref{eq:def_X_space}; 
\item the $(Y_0, Y_1, r, \alpha)$-setting, see below Assumption \ref{assum:HY}.
\end{itemize}

\subsubsection*{Acknowledgements}
The authors thank Emiel Lorist for helpful comments. The authors thank the anonymous referees for their helpful remarks to improve the presentation.

\section{Preliminaries}
\label{s:preliminaries}
In this section we collect several known results and fix our notation. Below $X$ denotes a Banach space, and further assumptions will be given when needed.

Let $\Dom\subseteq \R^d$ for some $d\geq 1$. For $k\in\N $,
$C^k(\Dom;X)$ denotes the set of all maps $f:\Dom\to X$ such that $\partial^{\alpha}f$ are continuous on $\Dom$ for all $\alpha\in \N^d$ satisfying $\sum_{j=1}^d \alpha_j=k$.
If $\Dom$ is compact, then $C^k(\Dom;X)$ is endowed with the norm $\|u\|_{C^k(\Dom;X)}:=\sum_{|\alpha|\leq k}\sup_{x\in \Dom}\|\partial^\alpha u(x)\|_X$.

\subsection{Weighted function spaces}
\label{ss:weighted_function_spaces}
Let $p\in (1,\infty)$, $\a\in (-1,p-1)$ and for $ a\geq 0$, we denote by $w_{\a}^a$ the shifted power weight
\begin{equation}
\label{eq:weight_shifted}
w_{\a}^a(t):=|t-a|^{\a}, \quad t\in \R,\quad\qquad w_{\a}:=w_{\a}^0.
\end{equation}

For $\I=(a,b)$ where $0\leq a<b\leq \infty$ and $\theta\in (0,1)$, define the following spaces:
\begin{itemize}
\item $L^p(\I,w_{\a}^a;X)$ is the set of all strongly measurable functions $f:\I\to X$ such that
$\|f\|_{L^p(\I,w_{\a}^a;X)}:=(\int_a^b \|f(t)\|_{X}^p w_{\a}^a(t)dt)^{1/p}<\infty$;
\item $W^{1,p}(\I,w_{\a}^a;X)$ is the set of all  $f\in L^p(\I,w_{\a}^a;X)$ such that the weak derivative satisfies $f'\in L^p(\I,w_{\a}^a;X)$. This space is endowed with the norm:
$$
\|f\|_{W^{1,p}(\I,w_{\a}^a;X)}:=\|f\|_{L^{p}(\I,w_{\a}^a;X)}+\|f'\|_{L^{p}(\I,w_{\a}^a;X)};
$$
\item $\Wz^{1,p}(\I,w_{\a}^a;X)= \{f\in W^{1,p}(\I,w_{\a}^a;X): f(a)=0\}$;
\item $H^{\theta,p}(\I,w_{\a}^a;X) = [L^p(\I,w_{\a}^a;X),W^{1,p}(\I,w_{\a}^a;X)]_{\theta}$ (complex interpolation);

\item $\hz^{\theta,p}(\I,w_{\a}^a;X) = [L^p(\I,w_{\a}^a;X_1),\Wz^{1,p}(\I,w_{\a}^a;X)]_{\theta}$;
\item For an interval $J\subseteq \overline{I}$ and $\A\in \{L^p,H^{\theta,p},W^{1,p}\}$, we denote by $\A_{{\rm loc}}(J,w_{\a}^{a};X)$ the set of all strongly measurable maps $f:J\to X$ such that $f\in\A(J',w_{\a}^{a};X)$ for all
bounded intervals $J'$ with $\overline{J'}\subseteq J$.
\end{itemize}
If $\a=0$, then the weight will be omitted from the notation.
For the definition of $H^{\theta,p}$ and $\hz^{\theta,p}$ we used complex interpolation. For details on interpolation theory we refer to \cite{BeLo,Tr1} and \cite[Appendix C]{Analysis1}.

In the case $\theta<\frac{1+\a}{p}$, the main result of \cite[Section 6.2]{LMV18} (see \cite[Theorem 2.6]{AV19_QSEE_1} for the case of bounded intervals) states that for all $0\leq a<b\leq \infty$,
\begin{equation}\label{eq:LMVresult}
\hz^{\theta,p}(a,b,w_{\a}^{a};X) = H^{\theta,p}(a,b,w_{\a}^{a};X) \ \ \
\end{equation}
with equivalent norms. This already played an important role in \cite{AV19_QSEE_1}. In the current paper it will play a key role in Subsection \ref{ss:thmblowuphardespart_Serrin}.

The following will be used many times in the manuscript. For each $f\in L^p(a,b,w_{\a}^a;X)$, $f$ is integrable on $(a,b)$ since $\kappa\in (-1,p-1)$. Moreover, in the case that $\a\geq 0$
for any $c\in (a,b)$ one has
\begin{equation*}
\|f\|_{L^p(c,b;X)}\leq |c-a|^{-\a/p} \|f\|_{L^p(c,b,w_{\a}^a;X)}.
\end{equation*}
Let us collect some useful results in the following proposition.

\begin{proposition}
\label{prop:change_p_q_eta_a}
Let $X$ be a Banach space and $p\in (1, \infty)$. Let $0\leq a\leq c<d\leq b< \infty$, $\a\in (-1,p-1)$, $\theta\in (0,1)$ and $\A\in \{\hz,H\}$. The following assertions hold.
\begin{enumerate}[{\rm(1)}]
\item\label{it:loc_embedding} If $\a\geq 0$, then for each $f \in \A^{\theta,p}(a,b,w_{\a}^a;X)$ the following estimates hold:
    \begin{align*}
\|f\|_{\A^{\theta,p}(c,d,w_{\a}^a;X)}&\leq \|f\|_{\A^{\theta,p}(a,b,w_{\a}^a;X)},\\
\|f\|_{\A^{\theta,p}(c,b;X)}&\leq (c-a)^{-\a/p}\|f\|_{\A^{\theta,p}(a,b,w_{\a}^a;X)},\\
\|f\|_{H^{\theta,p}(c,b;X)}&\leq (c-a)^{-\a/p} \|f\|_{\A^{\theta,p}(a,b,w_{\a}^a;X)}.
\end{align*}
In particular $H^{\theta,p}(a,b,w_{\a}^a;X)\hookrightarrow H^{\theta,p}_{\loc}(a,b;X)$;
\item\label{it:ext_by_0_h_z} let $a>0$. Let $\ez_{a}$ be the zero-extension operator from $\Wz^{1,p}(a,b,w_{\a}^a;X)$ to $\Wz^{1,p}(0,b,w_{\a}^a;X)$. Then $\ez_{a}$ induces a contractive mapping
$$
\ez_{a}: \hz^{\theta,p}(a,b,w_{\a}^a;X)\to\hz^{\theta,p}(0,b,w_{\a}^a;X);
$$
\item\label{it:change_a_p_eta_a} let $1<p\leq q<\infty$ and $\eta\in (-1,q-1)$. Assume that $\frac{1+\a}{p}>\frac{1+\eta}{q}$. Then
$
\A^{\theta,q}(a,b,w_{\eta}^a;X)\hookrightarrow \A^{\theta,p}(a,b,w_{\a}^a;X),
$
and for all $f\in \A^{\theta,p}(a,b,w_{\eta}^a;X)$,
$$
\|f\|_{\A^{\theta,p}(a,b,w_{\a}^a;X)}\lesssim |b-a|^{\left(\frac{\a+1}{p}-\frac{\eta+1}{q}\right)}
\|f\|_{\A^{\theta,q}(a,b,w_{\eta}^a;X)},
$$
where the implicit constant depends only on $p,q,\eta,\a$;
\item\label{it:Sob_embedding} let $1< p_0\leq p_1<\infty$, $\theta_0, \theta_1\in (0,1)$ and $\a_i\in (-1,p_i-1)$ for $i\in \{0,1\}$. Assume $\frac{\a_1}{p_1} \leq \frac{\a_0}{p_0}$ and $\theta_0-\frac{1+\a_0}{p_0}\geq \theta_1-\frac{1+\a_1}{p_1}$. Then for all $f\in \A^{\theta_0,p_0}(a,b,w_{\a_0}^a;X)$,
\[
\|f\|_{\A^{\theta_1,p_1}(a,b,w_{\a_1}^a;X)}\lesssim \|f\|_{\A^{\theta_0,p_0}(a,b,w_{\a_0}^a;X)}.
\]
Finally, if $\A=\hz$, then the implicit constant in the above estimate can be chosen independently of $b-a$. 
\end{enumerate}
\end{proposition}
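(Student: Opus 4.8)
The plan is to reduce all four assertions to elementary weight manipulations on the endpoint spaces $L^p$ and $W^{1,p}$, and then to pass to the Bessel-potential scales by complex interpolation. Indeed, by definition $H^{\theta,p}(\I,w_\a^a;X)=[L^p(\I,w_\a^a;X),W^{1,p}(\I,w_\a^a;X)]_\theta$ and $\hz^{\theta,p}(\I,w_\a^a;X)=[L^p(\I,w_\a^a;X_1),\Wz^{1,p}(\I,w_\a^a;X)]_\theta$, so any linear map that is bounded with norm $\le M$ on both endpoints is bounded with norm $\le M$ on the interpolation space (exactness of the complex method). For the $\hz$-cases I would additionally use \eqref{eq:LMVresult}, which in the range $\theta<\frac{1+\a}{p}$ identifies $\hz^{\theta,p}$ with $H^{\theta,p}$, and otherwise the zero-extension operator of part \eqref{it:ext_by_0_h_z}, so that the vanishing condition at the left endpoint never causes trouble.

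For \eqref{it:loc_embedding}, the restriction operator $\I=(a,b)\to(c,d)$ has norm $\le 1$ on $L^p(\I,w_\a^a;X)$ and, applying this to $f$ and $f'$, also on $W^{1,p}(\I,w_\a^a;X)$; interpolation gives the first estimate. For the second and third, I would use that on $(c,b)$ with $a\le c$ and $\a\ge 0$ one has $w_\a^a(t)=(t-a)^\a\ge(c-a)^\a$, hence $\|g\|_{L^p(c,b;X)}\le(c-a)^{-\a/p}\|g\|_{L^p(c,b,w_\a^a;X)}$ for any $g$; applying this to $f$ and $f'$ and interpolating yields the bound with the unweighted $\A^{\theta,p}(c,b;X)$, and combining with the first estimate and the continuous inclusion $\hz^{\theta,p}\hookrightarrow H^{\theta,p}$ gives the third. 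Assertion \eqref{it:ext_by_0_h_z} is immediate on the endpoints: $\ez_a$ is isometric from $L^p(a,b,w_\a^a;X_1)$ into $L^p(0,b,w_\a^a;X_1)$, and since $f(a)=0$ the zero-extension of $f\in\Wz^{1,p}(a,b,w_\a^a;X)$ lies in $\Wz^{1,p}(0,b,w_\a^a;X)$ with the weak derivative extended by $0$ (no jump at $a$), again isometrically; complex interpolation then gives contractivity on $\hz^{\theta,p}$.

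For \eqref{it:change_a_p_eta_a} I would again argue on the endpoints. On $L^p$, when $p<q$, split $(t-a)^\a=(t-a)^{\eta p/q}(t-a)^{\a-\eta p/q}$ and apply H\"older's inequality with exponents $q/p$ and $q/(q-p)$:
\[
\|f\|_{L^p(a,b,w_\a^a;X)}\le\|f\|_{L^q(a,b,w_\eta^a;X)}\Big(\int_a^b(t-a)^{\frac{\a q-\eta p}{q-p}}\,dt\Big)^{\frac{q-p}{pq}}.
\]
The exponent $\frac{\a q-\eta p}{q-p}$ exceeds $-1$ exactly because $\frac{1+\a}{p}>\frac{1+\eta}{q}$, so the integral equals $C(p,q,\eta,\a)\,|b-a|^{\frac{q(\a+1)-p(\eta+1)}{q-p}}$, and raising to the power $\frac{q-p}{pq}$ produces precisely the factor $|b-a|^{\frac{1+\a}{p}-\frac{1+\eta}{q}}$; the case $p=q$ is even easier, using $w_\a^a\le|b-a|^{\,\a-\eta}w_\eta^a$. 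Applying the same inequality to $f'$ handles $W^{1,p}$, and complex interpolation concludes the case $\A\in\{\hz,H\}$.

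Assertion \eqref{it:Sob_embedding} is the only one I would not prove from scratch, and I expect it to be the main obstacle. Since the power weights $w_{\a_i}^a$ with $\a_i\in(-1,p_i-1)$ lie in $A_{p_i}(\R)$, the spaces $H^{\theta_i,p_i}(\R,w_{\a_i};X)$ are the usual $X$-valued Bessel-potential spaces, and the asserted inclusion is exactly the sharp Sobolev embedding for such spaces, valid precisely under the differential-dimension inequality $\theta_0-\frac{1+\a_0}{p_0}\ge\theta_1-\frac{1+\a_1}{p_1}$ together with $\frac{\a_1}{p_1}\le\frac{\a_0}{p_0}$. I would transfer it to the interval $(a,b)$ by translating so that $a=0$ and composing a bounded extension operator $(0,b)\to\R$ with restriction; see \cite[Section~2]{AV19_QSEE_1} and the references therein (in particular \cite{LMV18}). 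The $\hz$-version then follows from \eqref{eq:LMVresult} in the range $\theta<\frac{1+\a}{p}$ and from the zero-extension of part \eqref{it:ext_by_0_h_z} otherwise. The point where genuine harmonic-analytic input enters is exactly this last embedding (boundedness of the relevant Fourier multipliers, or equivalently Hardy-type inequalities on weighted $L^p$), whereas \eqref{it:loc_embedding}–\eqref{it:change_a_p_eta_a} are soft consequences of weight estimates and interpolation.
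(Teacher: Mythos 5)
Your proposal is correct and follows essentially the same route as the paper. The paper's own proof of this proposition is in fact even more terse: it cites \cite[Proposition~2.3]{AV19_QSEE_1} for \eqref{it:loc_embedding}, cites \cite[Proposition~2.7]{AV19_QSEE_1} for \eqref{it:Sob_embedding}, and for \eqref{it:ext_by_0_h_z} and \eqref{it:change_a_p_eta_a} it does exactly what you do, namely proving the endpoint ($j=0,1$, i.e.\ $L^p$ and $W^{1,p}$/$\Wz^{1,p}$) estimates (H\"older's inequality with the same exponent splitting for \eqref{it:change_a_p_eta_a}, isometry of zero-extension for \eqref{it:ext_by_0_h_z}) and then invoking exactness of complex interpolation. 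Your write-up is a legitimate unpacking of the citations in \eqref{it:loc_embedding} and correctly identifies \eqref{it:Sob_embedding} as the one step requiring genuine harmonic-analytic input (the weighted Sobolev embedding), which the paper does not reprove here either.
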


\begin{proof}
\eqref{it:loc_embedding}: This follows by interpolation (see \cite[Proposition 2.3]{AV19_QSEE_1} for details).

\eqref{it:ext_by_0_h_z}: One can check that $\ez_{a}: \hz^{j,p}(a,b,w_{\a}^a;X)\to \hz^{j,p}(0,b,w_{\a}^a;X)$ is contractive for $j\in\{0,1\}$. Therefore, the claim follows by interpolation.

\eqref{it:change_a_p_eta_a}:
We may assume $a=0$ and $T:=b<\infty$. Then for $f\in L^q(\I_T,w_{\eta};X)$,
\begin{align*}
\|f\|_{L^p(\I_T,w_{\a};X)}^p&=\int_0^T \big(t^{\frac{\eta}{q}}\|f(t)\|_{X}\big)^p t^{\a-\eta\frac{p}{q}}  dt \\
&\leq \Big(\int_0^T t^{\frac{\a q-\eta p}{q-p}} dt\Big)^{\frac{q-p}{q}}
\Big( \int_0^T t^{\eta}\|f(t)\|^{q}_{X}dt\Big)^{\frac{p}{q}}\\
&= C_{p,q,\a,\eta} T^{p\left(\frac{1+\a}{p}-\frac{1+\eta}{q}\right)}\|f\|_{L^q(\I_T,w_{\eta};X)}^p,
\end{align*}
where we used the H\"{o}lder's inequality. Clearly, the same estimate holds for the first order Sobolev space. The general case follows by interpolation.

\eqref{it:Sob_embedding}: See \cite[Proposition 2.7]{AV19_QSEE_1}.
\end{proof}

\begin{remark}
\label{r:embedding_elementary_false_thresold}
Proposition \ref{prop:change_p_q_eta_a}\eqref{it:change_a_p_eta_a} is false in the limiting case $(1+\a)/p=(1+\eta)/q$. This can be seen by taking $f(t) = t^{-\alpha}$ for an appropriate $\alpha$.
\end{remark}
We state a simple consequence of the above result.
\begin{corollary}\label{cor:technicalweight}
Let $X$ be a Banach space and $1\leq q\leq p<\infty$. Let $0\leq a<b< \infty$.
Let $\varepsilon>0$ and suppose that $\a\in (-1,p-1)$, $\eta\in (-1,q-1)$ and $\frac{1+\a}{p}<\varepsilon+\frac{1+\eta}{q}$.
Then for each $\A\in \{\hz,H\}$,
\[\A^{\theta,p}(a,b,w_{\a}^a;X) \hookrightarrow \A^{\theta-\varepsilon,q}(a,b,w_{\eta}^a;X), \ \ \ \ \theta\in [\varepsilon,1].\]
The case $\frac{1+\a}{p}=\varepsilon+\frac{1+\eta}{q}$ is allowed provided $p=q$.
\end{corollary}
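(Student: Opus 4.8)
The plan is to deduce the embedding by composing two of the assertions of Proposition~\ref{prop:change_p_q_eta_a}: the genuine weighted Sobolev embedding \eqref{it:Sob_embedding} and the integrability/weight trade \eqref{it:change_a_p_eta_a}. The guiding idea is to first spend the $\varepsilon$ units of smoothness to push the ``scaling index'' $\frac{1+\a}{p}$ below $\frac{1+\eta}{q}$ while keeping the integrability exponent fixed equal to $p$, and only afterwards lower the integrability from $p$ down to $q$.

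Concretely, in the case of strict inequality $\frac{1+\a}{p}<\varepsilon+\frac{1+\eta}{q}$, I would first fix $\a'\in(-1,p-1)$ subject to
\[
\frac{1+\a}{p}-\varepsilon\;\le\;\frac{1+\a'}{p}\;\le\;\frac{1+\a}{p},
\qquad \frac{1+\a'}{p}\;<\;\frac{1+\eta}{q}.
\]
Such an $\a'$ exists: the first two requirements, together with $\a'>-1$ and $\a'<p-1$, single out a nonempty interval of admissible values for $\frac{1+\a'}{p}$, and since $\frac{1+\a}{p}-\varepsilon<\frac{1+\eta}{q}$ while $\frac{1+\eta}{q}<1$, this interval meets $\bigl(0,\frac{1+\eta}{q}\bigr)$ as well — this is exactly where the hypothesis on $(\a,\eta,\varepsilon,p,q)$ is used. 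With this choice, Proposition~\ref{prop:change_p_q_eta_a}\eqref{it:Sob_embedding} applied with $(p_0,\theta_0,\a_0)=(p,\theta,\a)$ and $(p_1,\theta_1,\a_1)=(p,\theta-\varepsilon,\a')$ — whose hypotheses $\frac{\a_1}{p_1}\le\frac{\a_0}{p_0}$ and $\theta_0-\frac{1+\a_0}{p_0}\ge\theta_1-\frac{1+\a_1}{p_1}$ reduce precisely to $\a'\le\a$ and $\frac{1+\a'}{p}\ge\frac{1+\a}{p}-\varepsilon$ — yields
\[
\A^{\theta,p}(a,b,w_{\a}^a;X)\;\hookrightarrow\;\A^{\theta-\varepsilon,p}(a,b,w_{\a'}^a;X).
\]
Then, since $q\le p$ and $\frac{1+\eta}{q}>\frac{1+\a'}{p}$, Proposition~\ref{prop:change_p_q_eta_a}\eqref{it:change_a_p_eta_a}, read with its quadruple $(p,q,\a,\eta)$ taken to be $(q,p,\eta,\a')$ so that the required inequality ``$\frac{1+\a}{p}>\frac{1+\eta}{q}$'' becomes $\frac{1+\eta}{q}>\frac{1+\a'}{p}$, gives $\A^{\theta-\varepsilon,p}(a,b,w_{\a'}^a;X)\hookrightarrow\A^{\theta-\varepsilon,q}(a,b,w_{\eta}^a;X)$. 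Composing the two embeddings proves the claim in the strict case. The borderline case $\frac{1+\a}{p}=\varepsilon+\frac{1+\eta}{q}$ with $p=q$ is even more direct: it forces $\eta=\a-p\varepsilon\le\a$, so Proposition~\ref{prop:change_p_q_eta_a}\eqref{it:Sob_embedding} applied with $(p,\theta,\a)$ and $(p,\theta-\varepsilon,\eta)$ gives the conclusion without any integrability step. The endpoint values $\theta=1$ and $\theta-\varepsilon=0$, which fall outside the open range of smoothness parameters used in Proposition~\ref{prop:change_p_q_eta_a}\eqref{it:Sob_embedding}, are covered by the corresponding limiting cases $\A^{1,p}=W^{1,p}$ (resp.\ $\Wz^{1,p}$) and $\A^{0,q}=L^q$ of that embedding together with the trivial interpolation inclusions $\A^{1,p}\hookrightarrow\A^{1-\delta,p}$ and $\A^{s,q}\hookrightarrow L^q$; in the strict case one may alternatively rerun the argument with a slightly smaller admissible $\varepsilon'<\varepsilon$ when $\theta-\varepsilon=0$. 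Note also that, since $q\le p$, no loss at the threshold is possible unless $p=q$, consistently with Remark~\ref{r:embedding_elementary_false_thresold}.

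The only point requiring genuine care is the feasibility bookkeeping for the intermediate exponent $\a'$ above: one must check that the lower constraint coming from the Sobolev step and the strict upper constraint coming from the integrability step are jointly compatible with $\a'\in(-1,p-1)$ exactly under the stated hypothesis. Everything else is a direct invocation of Proposition~\ref{prop:change_p_q_eta_a}.
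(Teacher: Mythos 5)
Your proof is correct and follows essentially the same route as the paper's, namely chaining Proposition~\ref{prop:change_p_q_eta_a}\eqref{it:Sob_embedding} (spending $\varepsilon$ units of smoothness to lower the scaling index $\frac{1+\a}{p}$ at fixed integrability $p$) with Proposition~\ref{prop:change_p_q_eta_a}\eqref{it:change_a_p_eta_a} (converting the excess scaling into the weaker integrability $q$). The only difference is bookkeeping: the paper splits into the cases $\varepsilon<\frac{1+\a}{p}$ (taking $\wt{\a}=\a-\varepsilon p$ exactly, so the Sobolev index is preserved with equality) and $\varepsilon\geq\frac{1+\a}{p}$ (applying the previous case with a smaller $\wt{\varepsilon}$ and then dropping smoothness once more), whereas you absorb both cases at once by choosing the intermediate weight power $\a'$ with slack in the nonempty feasible interval, which is a minor streamlining of the same idea.
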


\begin{proof}
It suffices to consider $a=0$ and $b=T$. Let $\A\in \{\hz,H\}$ and let $\theta\in [\varepsilon,1]$ be fixed. We distinguish two cases.

\emph{Case $(i)$}: $\varepsilon<\frac{1+\a}{p}$. Let $\wt{\a}:=\a-\varepsilon p$. Note that $\wt{\a}\leq\a<p-1$ and $\wt{\a}>-1$ since $\varepsilon<\frac{1+\a}{p}$. Therefore, Proposition \ref{prop:change_p_q_eta_a}\eqref{it:Sob_embedding} and \eqref{it:change_a_p_eta_a} (using  $\frac{1+\wt{\a}}{p}= \frac{1+\a}{p}-\varepsilon<\frac{1+\eta}{q}$) give
\begin{equation}
\label{eq:A_first_embedding_case_small_epsilon}
\A^{\theta,p}(\I_T,w_{\a};X)
\hookrightarrow
\A^{\theta-\varepsilon,p}(\I_T,w_{\wt{\a}};X)
\hookrightarrow
\A^{\theta-\varepsilon,q}(\I_T,w_{\eta};X).
\end{equation}

\emph{Case $(ii)$}: $\varepsilon\geq \frac{1+\a}{p}$. Take $\wt{\varepsilon}\in (0,\frac{1+\a}{p})$ such that $\frac{1+\a}{p}<\frac{1+\eta}{q}+\wt{\varepsilon}$. By the previous case, we have
$$
\A^{\theta,p}(\I_T,w_{\a};X)
\hookrightarrow
\A^{\theta-\wt{\varepsilon},q}(\I_T,w_{\eta};X)
\hookrightarrow
\A^{\theta-\varepsilon,q}(\I_T,w_{\eta};X)
$$
where the last inclusion follows by Proposition \ref{prop:change_p_q_eta_a}\eqref{it:Sob_embedding} and $\varepsilon>\wt{\varepsilon}$.

To prove the last claim, note that $\varepsilon<\frac{1+\a}{p}$ due to the assumption. Now the claim follows as in Case $(i)$ if we omit the last embedding of \eqref{eq:A_first_embedding_case_small_epsilon}.
\end{proof}

Next we recall some useful interpolation estimates.
\begin{lemma}[Mixed derivative inequality]
\label{l:mixed_derivative}
Let $(X_0,X_1)$ be an interpolation couple of UMD spaces.
Let $p_i\in (1,\infty)$, $\a_i\in (-1,p_i-1)$ and $s_i\in [0,1]$ for $i\in\{0,1\}$. For $\theta\in (0,1)$ set
\[s:=s_0(1-\theta)+s_1\theta, \ \ \ \
\frac{1}{p}:= \frac{1-\theta}{p_0} + \frac{\theta}{p_1}, \ \ \ \
\a := (1-\theta) \frac{p}{p_0} \a_0 + \theta \frac{p}{p_1} \a_1.\]
Then there exists a constant $C>0$ s.t.\ for all $f\in \A_0(\I_T,w_{\a_0};X_0)\cap \A_1(\I_T,w_{\a_1};X_1)$
\[\|f\|_{\A(\I_T,w_{\a};[X_0,X_1]_{\theta})}\leq
C\|f\|_{\A_0(\I_T,w_{\a_0};X_0)}^{1-\theta}\|f\|_{\A_1(\I_T,w_{\a_1};X_1)}^{\theta}, \]
in each of the following cases:
\begin{enumerate}[{\rm(1)}]
\item\label{it:mixed_derivative_Hz} $\A=H^{s,p}$, $\A_i=H^{s_i,p_i}$ and $s_i\in (0,1)$ for $i\in\{0,1\}$;
\item\label{it:mixed_derivative_H} $\A=\hz^{s,p}$, $\A_i=\hz^{s_i,p_i}$ and $s_i\in (0,1)$ for $i\in\{0,1\}$, provided $s\neq \frac{1+\a}{p}$;
\item\label{it:mixed_derivative_W} $\A=H^{s,p}$, $\A_0=L^{p_0}$, $\A_1=W^{1,p_1}$, $s_0=0$, $s_1=1$;
\end{enumerate}
where in case \eqref{it:mixed_derivative_H} the constant $C$ can be chosen independently of $T$.
\end{lemma}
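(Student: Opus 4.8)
The plan is to obtain the estimate as the convexity inequality for the complex interpolation functor, once the relevant interpolation space has been identified. Write $X_\theta:=[X_0,X_1]_\theta$ and set $p\in(1,\infty)$ and $\a>-1$ by $\tfrac1p=\tfrac{1-\theta}{p_0}+\tfrac{\theta}{p_1}$ and $\tfrac{\a}{p}=(1-\theta)\tfrac{\a_0}{p_0}+\theta\tfrac{\a_1}{p_1}$ (equivalently $w_{\a}^{1/p}=w_{\a_0}^{(1-\theta)/p_0}w_{\a_1}^{\theta/p_1}$); note that then $\a\in(-1,p-1)$, so $w_{\a}\in A_p$, and put $s:=(1-\theta)s_0+\theta s_1$. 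The heart of the matter is the identity
\[
[\A_0(\I_T,w_{\a_0};X_0),\A_1(\I_T,w_{\a_1};X_1)]_{\theta}=\A(\I_T,w_{\a};X_\theta),
\]
with equivalent norms, $\A$ being the scale of smoothness $s$. Granting this, for $f$ in the left-hand intersection the elementary convexity estimate for the complex method (a three-lines argument) gives $\|f\|_{[\A_0,\A_1]_\theta}\leq\|f\|_{\A_0}^{1-\theta}\|f\|_{\A_1}^{\theta}$, and the constant $C$ in the lemma absorbs the norm-equivalence constant in the identity.

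I would prove the identity in three steps. \emph{Localization.} Reduce from $\I_T$ to $\R$ (resp.\ $\R_+$ in the $\hz$-case) using a fixed bounded extension operator and the restriction map; since $w_{\a}\in A_p$, such operators (of reflection/Seeley type) exist in the weighted vector-valued setting and are morphisms of interpolation couples, so $\A(\I_T,w_{\a};X)$ is a retract of the corresponding space on $\R$ (resp.\ $\R_+$), compatibly with every couple in play. \emph{The model case on the line.} There $H^{s,p}(\R,w_{\a};X)$ (resp.\ $\hz^{s,p}(\R_+,w_{\a};X)$) is the domain $D(G^{s})$ of the fractional power of the natural sectorial realization $G$ of $1+\tfrac{d}{dt}$ (resp.\ of $\tfrac{d}{dt}$ with zero trace at $0$); moreover $G$ acts consistently on the couple $(L^{p_0}(w_{\a_0};X_0),L^{p_1}(w_{\a_1};X_1))$ and, by UMD together with the $A_p$-property of the weights, has bounded imaginary powers on each endpoint. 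Combining the vector-valued Stein--Weiss identity $[L^{p_0}(\R,w_{\a_0};X_0),L^{p_1}(\R,w_{\a_1};X_1)]_{\theta}=L^{p}(\R,w_{\a};X_\theta)$ with the reiteration / mixed-derivative theorem for fractional powers of operators with $\BIP$ acting consistently on an interpolation couple, one gets $[D_{E_0}(G^{s_0}),D_{E_1}(G^{s_1})]_{\theta}=D_{E_\theta}(G^{s})$, i.e.\ $\A(\R,w_{\a};X_\theta)$. \emph{Transfer back} to $\I_T$ via the retract. Case \eqref{it:mixed_derivative_W} is the instance $s_0=0$, $s_1=1$, $s=\theta$, handled by the same change-of-endpoint argument once one starts from $H^{\theta,p}(\I_T,w_{\a};X_\theta)=[L^{p}(\I_T,w_{\a};X_\theta),W^{1,p}(\I_T,w_{\a};X_\theta)]_{\theta}$.

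Two points need care. In case \eqref{it:mixed_derivative_H} the condition $s\neq\tfrac{1+\a}{p}$ is required because at this value the trace at $t=0$ is borderline, so the equality $\hz^{\theta,p}=H^{\theta,p}$, and with it the compatibility of the zero-trace retract with the couple, may fail (cf.\ \eqref{eq:LMVresult} and \cite{LMV18}); away from the threshold the argument is unchanged. The $T$-independence of $C$ in case \eqref{it:mixed_derivative_H} follows by scaling: the spaces $\hz^{s_i,p_i}(\I_T,w_{\a_i};X_i)$ and $\hz^{s,p}(\I_T,w_{\a};X_\theta)$ are homogeneous under $f(\cdot)\mapsto f(T\,\cdot)$, and the powers of $T$ on the two sides of the inequality cancel \emph{exactly} because $s$, $\tfrac1p$ and $\tfrac{\a}{p}$ are the $\theta$-weighted averages of the endpoint data, so the constant found on $\I_1$ works for all $T$. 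This homogeneity fails for the $H$-scale, which carries a genuine zeroth-order term, whence in \eqref{it:mixed_derivative_Hz} and \eqref{it:mixed_derivative_W} the constant may depend on $T$.

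The main obstacle is the interpolation identity in the generality where exponent, power weight and target space all vary at once: one has to let the Stein--Weiss step and the ``$\BIP$ on a couple'' step interact correctly, keep the weighted localization under control, and handle the excluded threshold $s=\tfrac{1+\a}{p}$. The remaining ingredients --- weighted extension operators, $w_{\a}\in A_p$, and the convexity inequality for the complex method --- are standard and can be quoted from \cite{AV19_QSEE_1} and the references therein.
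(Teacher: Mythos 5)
Your overall strategy --- establish the one-sided complex-interpolation embedding $[\A_0(\I_T,w_{\a_0};X_0),\A_1(\I_T,w_{\a_1};X_1)]_{\theta}\hookrightarrow\A(\I_T,w_{\a};[X_0,X_1]_\theta)$ by localization to the line, a weighted Stein--Weiss theorem plus $\BIP$/reiteration for the fractional power of the realization of $d/dt$, and then invoke the log-convexity of the complex method --- is the standard route to a mixed-derivative theorem, and it is also what underlies the paper's reference to \cite[Proposition 2.8]{AV19_QSEE_1}. The paper disposes of the lemma by citing that result for case~(1) and pointing to the extension operator of \cite[Proposition 2.5]{AV19_QSEE_1} for cases~(2)--(3), so at the level of ideas you and the authors agree.

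The one place your write-up has a genuine gap is the $T$-independence of the constant in case~(2). You deduce it from exact dilation-homogeneity of the $\hz^{s,p}(\I_T,w_{\a};X)$-norms, asserting that the powers of $T$ on the two sides of the inequality cancel because $s$, $1/p$ and $\a/p$ are $\theta$-weighted averages. That cancellation would indeed occur if $\hz^{s,p}$ carried a dilation-homogeneous norm, but with the norms used here --- $\hz^{s,p}=[L^p,\Wz^{1,p}]_s$ with $\|f\|_{\Wz^{1,p}}=\|f\|_{L^p}+\|f'\|_{L^p}$ --- the two summands of the $\Wz^{1,p}$-norm rescale under $f\mapsto f(T\cdot)$ by the inequivalent factors $T^{-(1+\a)/p}$ and $T^{1-(1+\a)/p}$, so the resulting $\hz^{s,p}(\I_T,w_{\a})$-norm is not a pure power of $T$ times the $\hz^{s,p}(\I_1,w_{\a})$-norm. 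Pushing the dilation through the complex interpolation only gives $\max$-type operator-norm bounds, and multiplying them across the inequality leaves a residual factor that grows like $T^{s}$ (for $T\ge 1$), so the claimed exact cancellation does not take place. The $T$-uniformity in case~(2) comes instead from extension operators with $T$-uniform bounds --- the contractive zero extension for the $\hz$-scale in Proposition \ref{prop:change_p_q_eta_a}\eqref{it:ext_by_0_h_z} and the extension operator of \cite[Proposition 2.5]{AV19_QSEE_1} --- which let one transfer the estimate from the half-line, where it holds with an absolute constant, back to $\I_T$ by restriction. The rest of your localization and your model case on the line are sound; this single step is the one you need to replace.
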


\begin{proof}
\eqref{it:mixed_derivative_Hz}: See \cite[Proposition 2.8]{AV19_QSEE_1}. The other cases follow by the same argument if one uses the extension operator of \cite[Proposition 2.5]{AV19_QSEE_1}.
\end{proof}

To conclude, let us recall the trace embedding. Here in the limiting case we write $H^{1,p} = W^{1,p}$  and $\hz^{1,p} = \Wz^{1,p}$. We also use the notation $X_{\theta}=[X_0,X_1]_{\theta}$ for $\theta\in [0,1]$ for the complex interpolation spaces.
\begin{proposition}
\label{prop:continuousTrace}
Assume that $p\in (1,\infty)$, $\a\in [0,p-1)$, $\theta\in (0,1]$ and let $0\leq a<b< \infty$. Let $X_0,X_1$ be Banach spaces such that $X_1\hookrightarrow X_0$. Then the following hold:
\begin{enumerate}[{\rm(1)}]
\item\label{it:trace_with_weights_Xap} If $\theta>(1+\a)/p$, then
$$
H^{\theta,p}(a,b,w_{\a}^a;X_{1-\theta})\cap L^p(a,b,w_{\a}^a;X_1)\hookrightarrow
C([a,b];(X_0,X_1)_{1-\frac{1+\a}{p},p});
$$
\item\label{it:trace_without_weights_Xp} If $\theta>1/p$, then for any $\delta\in(0,b-a)$,
$$
H^{\theta,p}(a,b,w_{\a}^a;X_{1-\theta})\cap L^p(a,b,w_{\a}^a;X_1)\hookrightarrow
C([a+\delta,b];(X_0,X_1)_{1-\frac{1}{p},p}).
$$
\end{enumerate}
Moreover, if $H^{\theta,p}$ is replaced by $\hz^{\theta,p}$, the constants in the embeddings in \eqref{it:trace_with_weights_Xap} and \eqref{it:trace_without_weights_Xp} can be taken independent of $a,b$.
\end{proposition}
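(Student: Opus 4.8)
The plan is to reduce the assertion to a trace estimate on the half‑line $\R_+$ and then to treat separately the classical case $\theta=1$ and the genuinely fractional range $\tfrac{1+\a}{p}<\theta<1$. For the reduction, combining the (weight‑preserving) extension operator of \cite[Proposition~2.5]{AV19_QSEE_1} with the translation $t\mapsto t-a$, and, in the $\hz^{\theta,p}$‑case, with the contractive zero‑extension $\ez_a$ of Proposition~\ref{prop:change_p_q_eta_a}\eqref{it:ext_by_0_h_z}, it suffices to prove \eqref{it:trace_with_weights_Xap} in the model situation $a=0$, $b=\infty$, that is,
\[
H^{\theta,p}(\R_+,w_{\a};X_{1-\theta})\cap L^p(\R_+,w_{\a};X_1)\hookrightarrow \BC(\overline{\R_+};(X_0,X_1)_{1-\frac{1+\a}{p},p}),
\]
with a dilation‑invariant norm bound (this invariance is exactly what yields the $a,b$‑independence of the constants in the $\hz^{\theta,p}$‑versions). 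Part \eqref{it:trace_without_weights_Xp} is then the case of weight exponent $0$: on $(a+\delta,b)$ the weight $w_{\a}^{a}$ is bounded above and below, so by Proposition~\ref{prop:change_p_q_eta_a}\eqref{it:loc_embedding} a function in the left‑hand space of \eqref{it:trace_without_weights_Xp} restricts to an element of $H^{\theta,p}(a+\delta,b;X_{1-\theta})\cap L^p(a+\delta,b;X_1)$, and since $\theta>\tfrac1p=\tfrac{1+0}{p}$ we may apply \eqref{it:trace_with_weights_Xap} on $(a+\delta,b)$ with $\a=0$.

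For the model embedding, the case $\theta=1$ is the classical weighted trace theorem for $W^{1,p}(\R_+,w_{\a};X_0)\cap L^p(\R_+,w_{\a};X_1)$, which one proves by Lions' real‑interpolation trace method together with a weighted Hardy inequality; the only structural input is that $\a\in[0,p-1)$ makes the conjugate weight $t\mapsto t^{-1/(p-1)}$ locally integrable, so that $u(0)$ is well defined and the averages $\overline u_t:=\tfrac1t\int_0^t u(r)\,dr$ can be estimated in $X_1$ by H\"older's inequality. For $\tfrac{1+\a}{p}<\theta<1$, set $\gamma:=1-\tfrac{1+\a}{p}$ and $\mathbb E:=H^{\theta,p}(\R_+,w_{\a};X_{1-\theta})\cap L^p(\R_+,w_{\a};X_1)$, and let $u\in\mathbb E$. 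The weighted Sobolev embedding $H^{\theta,p}(\R_+,w_{\a};X_{1-\theta})\hookrightarrow\BC(\overline{\R_+};X_{1-\theta})$ (the degenerate case $X_0=X_1$, proved directly) furnishes a continuous representative, so $u(0)$ and $\overline u_t$ make sense. I would then estimate the $K$‑functional of $u(0)$: for $t>0$ split $u(0)=(u(0)-\overline u_t)+\overline u_t$, bound $\|\overline u_t\|_{X_1}\lesssim t^{-\frac{1+\a}{p}}\|u\|_{L^p(0,t,w_{\a};X_1)}$ by H\"older's inequality against $t^{-1/(p-1)}$ (again using $\a<p-1$), and bound $\|u(0)-\overline u_t\|_{X_0}\le\tfrac1t\int_0^t\|u(0)-u(r)\|_{X_{1-\theta}}\,dr$ by a tail of the temporal Gagliardo seminorm of $u$ in $H^{\theta,p}(0,t,w_{\a};X_{1-\theta})$. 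Choosing $t=t(s)$ and optimising gives $K(s,u(0);X_0,X_1)\lesssim s^{\gamma}\varphi(s)$ with $\|\varphi\|_{L^p(\R_+,ds/s)}\lesssim\|u\|_{\mathbb E}$, so that $u(0)\in(X_0,X_1)_{\gamma,p}$ with the desired bound. Carrying out the same decomposition at an arbitrary base point $t_0\ge0$ — keeping the genuine weight $r^{\a}$ on the averaging interval $(t_0,t_0+t)$ rather than freezing it, which keeps all constants uniform in $t_0$ — yields $\sup_{t_0\ge0}\|u(t_0)\|_{(X_0,X_1)_{\gamma,p}}\lesssim\|u\|_{\mathbb E}$; continuity of $t_0\mapsto u(t_0)$ into $(X_0,X_1)_{\gamma,p}$ then follows by approximating $u$ by smooth functions in $\mathbb E$. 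Since every estimate is invariant under the dilations $t\mapsto\lambda t$ while $\ez_a$ commutes with the reduction to $\R_+$, the constants in the $\hz^{\theta,p}$‑statements do not depend on $a$ or $b$.

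I expect the main obstacle to be the fractional case, and more precisely obtaining the \emph{sharp} exponent $\gamma=1-\tfrac{1+\a}{p}$ — rather than $\gamma-\varepsilon$ — in the estimate for $\|u(0)-\overline u_t\|_{X_0}$: the crude bound via $H^{\theta,p}(\R_+,w_{\a};X_{1-\theta})\hookrightarrow C^{\theta-\frac{1+\a}{p}-\varepsilon}$ loses an $\varepsilon$, so one must retain the full $L^p$‑integrability in time of the difference quotients of $u$ and balance it exactly against the $L^p(\R_+,w_{\a};X_1)$‑term when optimising in $t$. This is precisely the point where the matching between the time‑smoothness $\theta$ and the weight exponent $\a$ is essential; the remaining ingredients (the weighted H\"older estimates, the $K$‑functional bookkeeping, and the smooth approximation) are routine.
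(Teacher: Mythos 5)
The paper itself gives no proof here: it simply invokes \cite{MV14,AV19} (which cover $X_1=\Do(A)$ for $A$ sectorial) and defers the general interpolation-couple case to the reference \cite{ALV20}. Your proposal therefore attempts something the paper does not do, namely a self-contained $K$-functional proof. The reductions you make — to the half-line via the extension operator and $\ez_a$, the dilation-invariance reasoning for the $\hz^{\theta,p}$-constants, and the deduction of \eqref{it:trace_without_weights_Xp} from \eqref{it:trace_with_weights_Xap} with $\a=0$ on $(a+\delta,b)$ using Proposition \ref{prop:change_p_q_eta_a}\eqref{it:loc_embedding} — are all sound.

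The difficulty is in the core $K$-functional estimate, and it is more than a matter of bookkeeping. First, your choice of decomposition via $\bar u_t=\tfrac1t\int_0^t u$ fails already at the integral level: the resulting bound $\|\bar u_t\|_{X_1}\lesssim t^{-(1+\a)/p}\|u\|_{L^p(0,t,w_{\a};X_1)}$ is cumulative, and after choosing $t\asymp s$ one is led to $\int_0^\infty\|u\|_{L^p(0,s,w_{\a};X_1)}^p\,ds/s$, which diverges (the integrand does not vanish as $s\to\infty$, and no choice $t=t(s)$ repairs both terms simultaneously). In the classical trace argument one instead keeps the \emph{pointwise} value $u(t)$ in the second summand, so that the corresponding integral is exactly $\|u\|_{L^p(w_{\a};X_1)}^p$, and estimates the first summand $\|u(0)-u(t)\|_{X_0}$ by the fundamental theorem of calculus together with Hardy's inequality; alternatively one averages over $(t,2t)$ rather than $(0,t)$ so that the contributions dyadically telescope. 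As written, the decomposition you chose does not close. Second, and more fundamentally, the bound of $\|u(0)-\bar u_t\|_{X_0}$ by ``a tail of the temporal Gagliardo seminorm of $u$ in $H^{\theta,p}$'' presumes that the $H^{\theta,p}$-norm controls the Besov-type seminorm; but $H^{\theta,p}\hookrightarrow B^{\theta}_{p,p}$ only holds for $p\ge 2$, whereas the proposition is stated for all $p\in(1,\infty)$. For $p<2$ this step would require a genuinely different device (and even for $p\ge 2$ one would still need the weighted Besov embedding with the correct weight placement, which you have not spelled out). You correctly flag the sharp-exponent matching as ``the main obstacle,'' but the sketch identifies the obstacle without resolving it; in particular the precise way in which the $\theta$-smoothness is traded against the weight exponent $\a$ — the heart of the matter and exactly what the semigroup/square-function arguments of \cite{MV14,AV19,ALV20} handle — is not supplied. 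So the proposal is a plausible direct strategy but has a real gap at its central step and does not substitute for the cited references.
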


The above result follows from \cite{MV14,AV19} (see also \cite[Proposition 2.10]{AV19_QSEE_1}) provided $X_1=\Do(A)$ and $A$ is a sectorial operator on $X$. The more general case will be considered in \cite{ALV20}, but is not needed here.

\subsection{Probabilistic preliminaries}
\label{ss:stopping_time}
Throughout the paper $(\O,\MeasurableP,\P)$ denotes a probability space, and $\Filtr=(\F_t)_{t\geq 0}$ is a filtration.
Below $W_H:L^2(\R_+;H)\rightarrow L^2(\Omega)$ denotes an  {\em $H$-cylindrical Brownian motion} with respect to $\Filtr$ (see e.g.\ \cite[Definition 2.11]{AV19_QSEE_1}). The stochastic integration theory of \cite{BNVW08,NVW1,NVW13} is summarized in Part I of our work \cite[Section 2.3]{AV19_QSEE_1}.

Let $X$ be a Banach space. A process $\phi:[0,T]\times\Omega \to X$ is called {\em strongly progressively measurable} if for all $t\in [0,T]$, $\phi|_{[0,t]}$ is strongly $\Borel([0,t])\otimes \F_t$-measurable (here $\Borel$ denotes the Borel $\sigma$-algebra). The $\sigma$-algebra generated by the strongly progressively measurable processes on $\O\times [0,T]$ will be denoted by $\Progress_T$ (or simply $\Progress$) and is a subset of $\Borel([0,T])\otimes \F_{T}$. In the following, for any $p\in [0,\infty)$ and any Banach space $Y$, $L^p_{\Progress}(\I_T\times\O;Y)$ denotes the closed subspace of all strongly progressively measurable processes.

A stopping time (or random time) $\tau$ is a measurable map $\tau:\O\to [0,T]$ such that $\{\tau\leq t\}\in \F_t$ for all $t\in [0,T]$.
We denote by $\ll 0,\sigma\rr$ the stochastic interval
$$
\ll 0,\sigma\rr:=\{(t,\omega)\in [0,T]\times\Omega\,:\,0 \leq t\leq \sigma(\om)\}.
$$
Analogously definitions hold for $\ll 0,\sigma\rro$, $\llo 0,\sigma\rro$ etc.

In accordance with the previous notation, for $A\subseteq \O$ and $\tau,\mu$ two stopping times such that $\tau\leq \mu$, we set
\begin{equation*}
[0,T]\times\Omega\supseteq [\tau,\mu]\times A:=\{(t,\om)\in [0,T]\times A\,:\,\tau(\om)\leq t\leq \mu(\om)\}.
\end{equation*}
Similar definitions are employed for $[\tau,\mu)\times A$, $(\tau,\mu)\times A$ etc. 
In particular, $\ll 0,\sigma\rr=[0,\sigma]\times\Omega$.

Let $X$ be a Banach space and let $A\in \MeasurableP$. Let $\sigma,\mu$ be stopping time such that $\mu\leq \sigma$ a.s. Let $\ee\in \{[\mu,\sigma],(\mu,\sigma)\}$. We say that $u:\ee\times A\to X$ is strongly measurable (resp.\ strongly progressively measurable) if the process
\begin{equation}
\label{eq:strong_measurable_process_cutted}
\one_{\ee \times A}\,u:=
\begin{cases}
u,\qquad \text{on }\ee \times  A,\\
0,\qquad \text{otherwise},
\end{cases}
\end{equation}
is strongly measurable (resp.\ strongly progressively measurable).

To each stopping time $\tau$ we can associate the $\sigma$-algebra of the $\tau$-past,
\begin{equation}
\label{eq:tau_past_sigma_algebra}
\F_{\tau}:=\{A\in \MeasurableP\,:\,\{\tau\leq t\}\cap A\in \F_t,\;\forall t \in [0,T]\}.
\end{equation}

The following well-known results will be used frequently in the paper without further mentioning (see \cite[Lemmas 7.1 and 7.5]{Kal}).
\begin{proposition}
\label{prop:F_sigma_algebra_stopping_times}
Let $\tau$ be a stopping time. Then $\F_{\tau}$ is a $\sigma$-algebra and the following properties hold.
\begin{itemize}
\item If $\tau=t$ a.s.\ for some $t\in [0,T]$, then $\F_{\tau}=\F_t$;
\item if $X:[0,T]\times\Omega\to X$ is a strongly progressively measurable process, then the random variable $X_{\tau}(\om):=X(\tau(\om),\om)$ is strongly $\F_{\tau}$-measurable;
\item if $\sigma$ is a stopping time, then $\{\tau<\sigma\}$ is in $\F_{\tau}\cap \F_{\sigma} = \F_{\tau\wedge \sigma}$.
\end{itemize}
\end{proposition}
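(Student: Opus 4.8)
All three assertions are classical facts about the $\sigma$-algebra of the $\tau$-past, and the plan is to verify each of them directly from the definition \eqref{eq:tau_past_sigma_algebra}. That $\F_{\tau}$ is a $\sigma$-algebra is the quickest: $\Omega\in\F_{\tau}$ because $\{\tau\le t\}\in\F_t$; if $A\in\F_{\tau}$ then $\{\tau\le t\}\cap A^c=\{\tau\le t\}\setminus(\{\tau\le t\}\cap A)\in\F_t$, so $A^c\in\F_{\tau}$; and closure under countable unions is immediate after distributing $\{\tau\le t\}\cap(\cdot)$ over the union. For the first bullet, if $\tau=t$ a.s.\ then taking $s=t$ in \eqref{eq:tau_past_sigma_algebra} and using that $\{\tau\le t\}$ agrees with $\Omega$ up to a $\P$-null set (if $\tau\equiv t$ this is immediate; the a.s.\ statement uses completeness of $\Filtr$) gives $\F_{\tau}\subseteq\F_t$; conversely, for $A\in\F_t$ and $s\ge t$ one has $\{\tau\le s\}\cap A=A\in\F_s$, while for $s<t$ the set $\{\tau\le s\}$ is null, so $\{\tau\le s\}\cap A\in\F_s$, whence $A\in\F_{\tau}$.

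\textbf{Measurability of $X_{\tau}$.} Here the key observation is that for each fixed $s\in[0,T]$ the map $\om\mapsto\tau(\om)\wedge s$ is $\F_s$-measurable, since $\{\tau\wedge s\le r\}=\{\tau\le r\}\cup\{s\le r\}\in\F_s$ for every $r$. Consequently $\om\mapsto(\tau(\om)\wedge s,\om)$ is measurable from $(\Omega,\F_s)$ to $([0,s]\times\Omega,\Borel([0,s])\otimes\F_s)$, and composing with the restriction of $X$ to $[0,s]\times\Omega$ — which is $\Borel([0,s])\otimes\F_s$-measurable by strong progressive measurability — shows that $\om\mapsto X(\tau(\om)\wedge s,\om)$ is strongly $\F_s$-measurable. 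On the set $\{\tau\le s\}$ this map coincides with $X_{\tau}$, so for every Borel set $B$ of the target space $\{X_{\tau}\in B\}\cap\{\tau\le s\}\in\F_s$, which by \eqref{eq:tau_past_sigma_algebra} is exactly strong $\F_{\tau}$-measurability (the essentially separable range of $X_{\tau}$ being inherited from that of $X$).

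\textbf{The set $\{\tau<\sigma\}$ and the identity of $\sigma$-algebras.} To see $\{\tau<\sigma\}\in\F_{\tau}$ I would use the elementary set identity
\[
\{\tau<\sigma\}\cap\{\tau\le t\}=\{\tau\wedge t<\sigma\wedge t\}\cup\big(\{\tau\le t\}\cap\{t<\sigma\}\big),
\]
whose right-hand side lies in $\F_t$ because $\tau\wedge t$ and $\sigma\wedge t$ are $\F_t$-measurable and $\{t<\sigma\}=\{\sigma\le t\}^c\in\F_t$; intersecting instead with $\{\sigma\le t\}$ yields $\{\tau<\sigma\}\cap\{\sigma\le t\}=\{\tau\wedge t<\sigma\wedge t\}\cap\{\sigma\le t\}\in\F_t$, so $\{\tau<\sigma\}\in\F_{\sigma}$ as well. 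For the equality $\F_{\tau}\cap\F_{\sigma}=\F_{\tau\wedge\sigma}$: the inclusion $\F_{\tau\wedge\sigma}\subseteq\F_{\tau}\cap\F_{\sigma}$ follows from the general monotonicity $\mu\le\nu\Rightarrow\F_{\mu}\subseteq\F_{\nu}$ (proved by writing $A\cap\{\nu\le t\}=A\cap\{\mu\le t\}\cap\{\nu\le t\}$) applied to $\tau\wedge\sigma\le\tau$ and $\tau\wedge\sigma\le\sigma$; the reverse inclusion is immediate from $\{\tau\wedge\sigma\le t\}=\{\tau\le t\}\cup\{\sigma\le t\}$, since then $A\cap\{\tau\wedge\sigma\le t\}=(A\cap\{\tau\le t\})\cup(A\cap\{\sigma\le t\})\in\F_t$ whenever $A\in\F_{\tau}\cap\F_{\sigma}$.

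\textbf{Main obstacle.} None of this is genuinely hard; the two points that require a little care are the passage from "a.s." to "everywhere" in the first bullet (which needs completeness of $\Filtr$), and, in the second bullet, setting up the composition against the correct product $\sigma$-algebra $\Borel([0,s])\otimes\F_s$ and remembering that $\F_{\tau}$-measurability of $X_{\tau}$ must be tested on the slices $\{\tau\le s\}$ rather than globally on $\Omega$.
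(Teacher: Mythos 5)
The paper itself supplies no proof of Proposition \ref{prop:F_sigma_algebra_stopping_times}; it simply records the statement as ``well-known'' and points to \cite[Lemmas 7.1 and 7.5]{Kal}. So there is no internal argument to compare against, but your proof is correct, self-contained, and is exactly the textbook route those references take. The $\sigma$-algebra verification, the monotonicity $\mu\le\nu\Rightarrow\F_\mu\subseteq\F_\nu$, and the identity $\{\tau\wedge\sigma\le t\}=\{\tau\le t\}\cup\{\sigma\le t\}$ for the reverse inclusion are all standard and correctly used; the set identity
\[
\{\tau<\sigma\}\cap\{\tau\le t\}=\{\tau\wedge t<\sigma\wedge t\}\cup\big(\{\tau\le t\}\cap\{t<\sigma\}\big)
\]
is a clean choice (an equally common alternative is to decompose $\{\tau<\sigma\}$ over rational thresholds, but your version avoids the countable union). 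In the second bullet you correctly use the definition of strong progressive measurability as stated in Section \ref{ss:stopping_time} (joint $\Borel([0,s])\otimes\F_s$-measurability of the restriction to $[0,s]\times\Omega$) and correctly test $\F_\tau$-measurability on the slices $\{\tau\le s\}$ rather than globally; the remark on essential separability of the range is needed for ``strong'' measurability and you include it. Your one caveat is the right one: the passage from $\tau\equiv t$ to $\tau=t$ a.s.\ in the first bullet requires the filtration to be $\P$-complete. The paper never states the usual conditions explicitly near this proposition, but it does use them tacitly throughout (and the cited \cite{Kal} imposes them), so flagging that dependency is appropriate rather than a defect.
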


We will need the following approximation result for a sequence of stopping times.
\begin{lemma}
\label{l:stopping_k}
Let $(\sigma_n)_{n\geq 1}$ and $\sigma$ be stopping times such that $0\leq \sigma_n<\sigma\leq T$ a.s.\ for all $n\geq 1$ and $\lim_{n\to \infty} \sigma_n=\sigma$ a.s. Then for each $\varepsilon>0$, there exists a sequence of stopping times $(\wt{\sigma}_n)_{n\geq 1}$ such that the following assertion holds for each $n\geq 1$:
\begin{enumerate}[{\rm(1)}]
\item $\wt{\sigma}_n$ takes values in a finite subset of $[0,T]$;
\item $\wt{\sigma}_{n-1}\leq \wt{\sigma}_n$ and $\wt{\sigma}_n\geq \sigma_n$ a.s.;
\item $\P(\wt{\sigma}_n\geq  \sigma)\leq \varepsilon$.
\end{enumerate}
Finally, if $\sup_{\O}\sigma_n<T$ for all $n\geq 1$, then one can choose $(\wt{\sigma}_n)_{n\geq 1}$ such that $\sup_{\O} \wt{\sigma}_n<T$ for all $n\geq 1$.
\end{lemma}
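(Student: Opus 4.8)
The plan is to obtain $\wt\sigma_n$ in two moves: first round each $\sigma_n$ \emph{upward} to a dyadic rational (truncated at $T$) at a level fine enough that the round-up still lies strictly below $\sigma$ with probability at least $1-\varepsilon 2^{-n}$, and then replace the resulting sequence by its running maximum to enforce monotonicity without destroying the smallness.

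For the first move I would use the classical dyadic round-up: for a stopping time $\tau$ with values in $[0,T]$ and $k\in\N$, set $[\tau]_k:=\big(2^{-k}\lceil 2^k\tau\rceil\big)\wedge T$. One checks in the usual way that $\{[\tau]_k\le t\}\in\F_t$ for every $t$, so $[\tau]_k$ is a stopping time; it takes finitely many values in $[0,T]$, dominates $\tau$, is non-increasing in $k$, and $[\tau]_k\downarrow\tau$ pointwise. The crucial observation is that, because $\sigma_n<\sigma$ \emph{almost surely}, the events $\{[\sigma_n]_k\ge\sigma\}$ decrease in $k$ to the null event $\{\sigma_n\ge\sigma\}$, so that $\P([\sigma_n]_k\ge\sigma)\to0$ as $k\to\infty$. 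Hence for each $n$ there is a level $k_n$ with $\P\big([\sigma_n]_{k_n}\ge\sigma\big)\le\varepsilon\,2^{-n}$, and I set $\rho_n:=[\sigma_n]_{k_n}$ and finally $\wt\sigma_n:=\max_{1\le j\le n}\rho_j$.

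It then remains to read off the three assertions. Property (1) follows since each $\rho_j$ is a finite-valued stopping time and finite maxima of stopping times are again stopping times with finitely many values in $[0,T]$. For (2), $\wt\sigma_{n-1}\le\wt\sigma_n$ holds by construction, and $\wt\sigma_n\ge\rho_n\ge\sigma_n$ because $\sigma_n\le 2^{-k_n}\lceil 2^{k_n}\sigma_n\rceil$ and $\sigma_n\le T$. For (3), $\{\wt\sigma_n\ge\sigma\}=\bigcup_{j\le n}\{\rho_j\ge\sigma\}$, hence $\P(\wt\sigma_n\ge\sigma)\le\sum_{j\ge1}\varepsilon 2^{-j}=\varepsilon$.

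The only point that needs genuine care is the limit $\P([\sigma_n]_k\ge\sigma)\to0$, which breaks down without the strict inequality $\sigma_n<\sigma$ (a round-up merely satisfying $\sigma_n\le\sigma$ may stay pinned to $\sigma$), and the geometric weights $\varepsilon 2^{-n}$ are precisely what allows the running maximum to inherit the smallness in (3); the rest is bookkeeping. If $T=\infty$ one first intersects with a finite horizon (or reads (1) as ``finite subset of $[0,T)$''); in the intended applications $T<\infty$ and no such care is needed.
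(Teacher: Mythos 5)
Your argument is correct and is essentially the paper's proof: both round each $\sigma_n$ upward to a finite-valued stopping time close enough that the round-up still stays below $\sigma$ except on an event of probability at most $\varepsilon\,2^{-n}$ (the paper realizes the rounding via an arbitrary mesh of diameter $<1/j_n$ and the estimate via $\P(\sigma_n+1/j>\sigma)\to\P(\sigma_n\ge\sigma)=0$, you via the dyadic grid and the decreasing events $\{[\sigma_n]_k\ge\sigma\}\downarrow\{\sigma_n\ge\sigma\}$, which is the same observation), and both then enforce monotonicity by taking running maxima, with the geometric weights ensuring $\P(\wt\sigma_n\ge\sigma)\le\sum_j\varepsilon 2^{-j}=\varepsilon$. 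The cosmetic differences do not affect the substance.
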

\begin{proof}
Let $\varepsilon>0$. We approximate each $\sigma_n$ from above in a suitable way.
Since for each $n\geq 1$, one has $0=\P(\sigma_n\geq \sigma) = \lim_{j\to \infty} \P(\sigma_n+1/j>\sigma)$ it follows that there exists a $j_n\geq 1$ (also depending on $\varepsilon$)
such that
\begin{equation}\label{eq:sigma_n_P_frac_2}
\P(\sigma_n+1/j_n> \sigma)\leq 2^{-n}\varepsilon.
\end{equation}
Let $0=t_0^n<t_1^n<\ldots<t_{N_n}^n=T$ be such that $|t_i^n-t_{i+1}^n|<1/(2j_n)$ for each $i=0,\dots,N_n-1$. Set $\U_i^n:=\{t_i^n\leq \sigma_n< t_{i+1}^n\}\in \F_{t_{i+1}^n}$ for $i=0,\dots,N_n-1$. Let $\tau_n$ be the stopping time defined by
\[
\tau_n:=\sum_{i=0}^{N_n-1} t_{i+1}^n \one_{\U_i^n}.
\]
Thus, $\sigma_n\leq \tau_n< \sigma_n+1/j_n$ a.s.\ and by \eqref{eq:sigma_n_P_frac_2}, $\P(\tau_n \geq \sigma)\leq \P(\sigma_n+1/j_n> \sigma)<2^{-n}\varepsilon$. Now for each $n\geq 1$, set $\wt{\sigma}_n:=\tau_n\vee \tau_{n-1}\vee \dots\vee \tau_1$.
Then each $\wt{\sigma}_n$ takes values in a finite set. Moreover, $\wt{\sigma}_n\geq \wt{\sigma}_{n-1}$ and $\wt{\sigma}_n\geq \tau_n\geq \sigma_n$ for all $n\geq 1$ a.s. Finally,
$$
\P(\wt{\sigma}_n\geq  \sigma)\leq \sum_{i=1}^n \P(\tau_i\geq  \sigma)\leq \sum_{i=1}^{\infty}2^{-i}\varepsilon= \varepsilon.
$$

The last claim follows from the above construction by choosing $j_n$ in \eqref{eq:sigma_n_P_frac_2} small enough so that $1/j_n < T-\sup_{\O}\sigma_n$.
\end{proof}

Next, we introduce some stopped versions of the spaces we introduced in Section \ref{ss:weighted_function_spaces}. As these definitions sometimes lead to measurability problems, we need to be careful here. This issue already appears in \cite{AV19_QSEE_1} (see Lemma 2.15 and Definition 2.16 there) for processes with random endpoint. The latter definitions can be extended easily to the case of random initial time $\sigma$ provided it takes values in a finite set. In particular, a natural norm can be defined for the space
$$
L^p(\O;\A^{\theta,p}(\sigma,\tau,w_{\a}^{\sigma};X))\ \ \text{ for }\ \ \A\in \{\hz,H\},\ \  \ \text{ and } \ \  \
L^p(\O;C([\sigma,\tau];X))
$$
where $X$ is a Banach space. Additionally, we will need some spaces of processes starting at a more general random time and this is defined below. If the starting random time takes values in a finite set, the definitions coincide.

We say that $f\in L^p(\O;\hz^{\theta,p}(\sigma,T;X))$ if $f:\ll \sigma,T\rr \to X$ is strongly measurable, $f\in \hz^{\theta,p}(\sigma,T;X)$ a.s.\ and $\ez_{\sigma}f\in L^p(\O;\hz^{\theta,p}(\I_T;X))$, where $\ez_{\sigma}$ is the $0$-extension operator in Proposition \ref{prop:change_p_q_eta_a}\eqref{it:ext_by_0_h_z}.  Moreover, we set $\|f\|_{L^p(\O;\hz^{\theta,p}(\sigma,T;X))}:=\|\ez_{\sigma}f\|_{L^p(\O;\hz^{\theta,p}(\I_T;X))}$.

Let $(Y_t)_{t\in \I_T}$ be a family of function spaces such that for any $f:\I_T \to Y$ and any $t\in \I_T$, $f|_{(t,T)}\in Y_t$ and $t\mapsto \|f|_{(t,T)}\|_{Y_t}$ is continuous, we say $f\in L^p(\O;Y_{\sigma})$ if there exists a strongly measurable map $\wt{f}:\ll 0,T\rr\to X$ such that $\wt{f}|_{\llo \sigma,T\rro}=f$ and
$\|f\|_{L^p(\O;Y_{\sigma})}:=(\E\|\wt{f}|_{(\sigma,T)}\|_{Y_{\sigma}}^p)^{1/p}$. The main spaces we need this for are $Y_t = C([t,T];X)$ and $Y_t = L^r(t,T;X)$. In the above, we add the subscript $\Progress$ if in addition the process $f$ is strongly progressively measurable.
One can check that the spaces $L^p(\O;Y_{\sigma})$ with $(Y_{t})_{t\in \I_T}$ as above, are Banach spaces.

\section{Stochastic maximal $L^p$-regularity}
\label{s:stochastic_maximal_regularity}

The theory of stochastic maximal $L^p$-regularity has been developed in many recent works and is still ongoing research. We refer to \cite{Kry,MaximalLpregularity,VP18} and references therein for an overview.
A loose introduction into the topic of (stochastic) maximal regularity can be found in \cite[Section 1.2]{AV19_QSEE_1}.

As in \cite[Section 3]{AV19_QSEE_1}, the following assumption will be made throughout Sections \ref{s:stochastic_maximal_regularity}-\ref{s:regularization} where the abstract theory is studied.

\begin{assumption}
\label{ass:X}
Let $X_0,X_1$ be UMD Banach spaces with type 2 and $X_1\hookrightarrow X_0$ densely.
Assume one of the following holds:
\begin{itemize}
\item $p\in (2,\infty)$ and $\a\in [0,\frac{p}{2}-1)$;
\item $p=2$, $\a=0$ and $X_0,X_1$ are Hilbert spaces.
\end{itemize}
For $\theta\in (0,1)$ and $p,\a$ as above, we set
$$
X_{\theta}:=[X_0,X_1]_{\theta}, \qquad \Xap:=(X_0,X_1)_{1-\frac{1+\a}{p},p}, \qquad \Xp:=\Xzp.
$$
\end{assumption}

Let us recall that, in the case $p=2$ and $\a=0$, by \cite[Corollary C.4.2]{Analysis1} we have $X_{\frac{1}{2}}=(X_0,X_1)_{\frac{1}{2},2}=X^{\Tr}_2$. This is one of the reasons we only consider Hilbert spaces if $p=2$ and it will be used without further mentioning it.

\subsection{Definitions and foundational results}
In this section we recall and extend some definition given in \cite[Section 3]{AV19_QSEE_1}. Let us begin with the following assumption which will be in force throughout Section \ref{s:stochastic_maximal_regularity}.

\begin{assumption}
\label{ass:AB_boundedness}
Let $T\in (0,\infty]$ and $\sigma:\O\to [0,T]$ be a stopping time. The maps $A:\ll\sigma,T\rr \to \calL(X_1,X_0)$, $B:\ll\sigma,T\rr\to \calL(X_1,\g(H,X_{1/2}))$ are strongly progressively measurable. Moreover, there exists a constant $C_{A,B}>0$ such that for a.a.\ $\om\in \O$ and for all $t\in (\sigma(\om),T)$,
$$
\|A(t,\om)\|_{\calL(X_1,X_0)} + \|B(t,\omega)\|_{ \calL(X_1,\g(H,X_{1/2}))}\leq C_{A,B}.
$$
\end{assumption}

Stochastic maximal $L^p$-regularity is concerned with the optimal regularity estimate for the linear abstract stochastic Cauchy problem:
\begin{equation}
\label{eq:diffAB_s}
\begin{cases}
du(t) +A(t)u(t)dt=f(t) dt+ (B(t)u(t)+g(t))dW_H(t),\quad t\in \ll \sigma,T\rr,\\
u(\sigma)=u_{\sigma}.
\end{cases}
\end{equation}
A new feature is that we consider the initial condition at a random time $\sigma$ since this will be needed in some of the proofs below.

The definition of strong solution to \eqref{eq:diffAB_s} is as follows:
\begin{definition}
\label{def:strong_linear}
Let Assumptions \ref{ass:X} and \ref{ass:AB_boundedness} be satisfied.
Let $\tau$ be a stopping time such that $\sigma\leq \tau\leq T$ a.s. Let
$$u_{\sigma}\in L^0_{\F_{\sigma}}(\O;X_0),\; f\in L^0_{\Progress}(\O;L^1(\sigma,\tau;X_0)),\; g\in L^0_{\Progress}(\O;L^2(\sigma,\tau;\g(H,X_{1/2}))).$$
A strongly progressive measurable map $u:\ll\sigma,\tau \rr\to X_1$ is called a {\em strong solution} to \eqref{eq:diffAB_s} on $\ll\sigma,\tau\rr$ if $u\in L^2(\sigma,\tau;X_1)$ a.s.\ and
a.s.\ for all $t\in [\sigma,\tau]$,
\begin{equation*}
u(t)-u_{\sigma}+\int_{\sigma}^t A(s)u(s) ds=\int_{\sigma}^t
 f(s) ds+ \int_0^t\one_{\ll \sigma,\tau\rr}(B(s)u(s)+g(s))dW_H(s).
\end{equation*}
\end{definition}
If $\sigma,\tau$ are constants, then we simply write $u$ is a strong solution to \eqref{eq:diffAB_s} on $[\sigma,\tau]$ instead of $\ll \sigma,\tau\rr$.
As in \cite{AV19_QSEE_1}, we follow \cite{VP18} to define stochastic maximal $L^p$-regularity. Recall that norms at random times are defined in Subsection \ref{ss:stopping_time}.
\begin{definition}[Stochastic maximal $L^p$-regularity]
\label{def:SMRgeneralized}
Let Assumptions \ref{ass:X} and \ref{ass:AB_boundedness} be satisfied. We write $(A,B)\in \MRtasigmaz$ if for every
\begin{equation}
\label{eq:f_g_maximal_reg_sigma}
f\in L^p_{\Progress}(\llo\sigma,T\rro,w_{\a}^{\sigma};X_0),
\qquad
g\in  L^p_{\Progress}(\llo\sigma,T\rro,w_{\a}^{\sigma};\g(H,X_{1/2}))
\end{equation}
there exists a strong solution $u$ to \eqref{eq:diffAB_s} with $u_{\sigma}=0$ such that $u\in L^p_{\Progress}(\llo \sigma,T\rro, w_{\a}^{\sigma};X_{1})$, and, moreover, for all stopping time $\tau$, such that $\sigma\leq \tau \leq T$ a.s., and each strong solution $u\in L^p_{\Progress}(\llo \sigma,\tau\rro, w_{\a}^{\sigma};X_{1})$ to \eqref{eq:diffAB_s} on $\ll \sigma,\tau\rr$ the following estimate holds
$$
\|u\|_{L^p(\llo \sigma,\tau\rro,w_{\a}^{\sigma};X_1)}
\leq C(\|f\|_{L^p(\llo\sigma,\tau\rro,w_{\a}^{\sigma};X_0)}+
\|g\|_{L^p(\llo\sigma,\tau\rro,w_{\a}^{\sigma};\g(H,X_{1/2}))}),
$$
where $C>0$ is independent of $(f,g,\tau)$. We set $\MRtsigmaz:=\MRtsigmazeroz$. Moreover, we write $A\in \MRtasigmaz$ if $(A,0)\in \MRtasigmaz$.
\end{definition}
If $(A,B)\in \MRtasigmaz$, then by the stated estimate a strong solution to \eqref{eq:diffAB_s} in $L^p(\llo \sigma,T\rro,w_{\a}^{\sigma};X_1)$ is unique.

\begin{definition}\label{def:SMRthetacase}
Let Assumptions \ref{ass:X} and \ref{ass:AB_boundedness} be satisfied.
\begin{enumerate}[{\rm(1)}]
\item\label{it:SMR_regularity_0} Let $p\in (2,\infty)$. In case $\a>0$, suppose that $\sigma$ takes values in a finite set. We write $(A,B)\in \MRtasigma$ if $(A,B)\in \MRtasigmaz$ and for each $f,g$ as in \eqref{eq:f_g_maximal_reg_sigma} the strong solution $u$ to \eqref{eq:diffAB_s} with $u_{\sigma}=0$ satisfies
\begin{align*}
\|u\|_{L^p(\O;\hz^{\theta,p}(\sigma,T,w_{\a}^{\sigma};X_{1-\theta}))}\leq C_{\theta}(
\|f\|_{L^p(\llo\sigma,T\rro,w_{\a}^{\sigma};X_0)}+
\|g\|_{L^p(\llo\sigma,T\rro,w_{\a}^{\sigma};\g(H,X_{1/2}))}),
\end{align*}
for each $\theta\in [0,\frac{1}{2})\setminus \{\frac{1+\a}{p}\}$, where $C_{\theta}$ is independent of $(f,g,\tau)$.
\item Let $p=2$ and $\a=0$. We write $(A,B)\in \MRttwosigma$ if $(A,B)\in \MRttwosigmaz$ and for each $f,g$ as in \eqref{eq:f_g_maximal_reg_sigma} the strong solution $u$ to \eqref{eq:diffAB_s} with $u_{\sigma}=0$ satisfies
\begin{equation*}
\|u\|_{L^2(\O; C([\sigma,T];X_{1/2}))}\leq C(\|f\|_{L^2(\llo\sigma,T\rro;X_0)}+
\|g\|_{L^2(\llo\sigma,T\rro;\g(H,X_{1/2}))}),
\end{equation*}
where $C$ is independent of $(f,g,\tau)$.
\end{enumerate}
Finally, we say that $A\in \MRtasigma$ if $(A,0)\in \MRtasigma$ and we set
$$
\MRtsigma:=\MRtsigmazero.
$$
\end{definition}

In the above setting we consider the \emph{solution operator}
\begin{equation}
\label{eq:soloperatorR1}
u = \Sol_{\sigma,(A,B)}(0,f,g).
\end{equation}
In \eqref{eq:soloperatorR2} below the mapping $\Sol_{\sigma,(A,B)}(0,\cdot,\cdot)$ will be extended to nonzero initial data. For $p>2$ and $\a\in [0,\frac{p}{2}-1)$ (where $\sigma$ takes values in a finite set in the case $\a>0$), and $\theta\in [0,\frac{1}{2})\setminus \{\frac{1+\a}{p}\}$, $\Sol_{\sigma,(A,B)}(0,\cdot,\cdot)$ defines a mapping
$$
L^{p}_{\Progress}(\llo \sigma, T\rro,w_{\a}^{\sigma};X_0) \times
L^{p}_{\Progress}(\llo \sigma, T\rro,w_{\a}^{\sigma};\g(H,X_{1/2}))\to
L^p(\O;\hz^{\theta,p}(\sigma,T;X_{1-\theta})).
$$
Moreover, we define the constants
\begin{equation*}
\begin{aligned}
C_{(A,B)}^{\det,\theta,p,\a}(\sigma,T) & = \|\Sol_{\sigma,(A,B)}(0,\cdot,0)
\|_{L^p_{\Progress}(\llo \sigma,T\rro,
w_{\a}^{\sigma};X_0)\to
L^p(\O;\hz^{\theta,p}(\sigma,T,w_{\a}^{\sigma};X_{1-\theta}))},
\\
C_{(A,B)}^{\stoc,\theta,p,\a}(\sigma,T) & = \|\Sol_{\sigma,(A,B)}(0,0,\cdot)\|_{L^p_{\Progress}(\llo \sigma,T\rro,
w_{\a}^{\sigma};\g(H,X_{1/2}))\to L^p(\O;\hz^{\theta,p}(\sigma,T,w_{\a}^{\sigma};X_{1-\theta}))},
\end{aligned}
\end{equation*}
where in the case $p=2$, $\a=0$, $\theta\in (0,\frac{1}{2})$, we replace the range space by $L^2(\O;C([\sigma,T];X_{1/2}))$ (which is constant in $\theta\in (0,\frac{1}{2})$). Moreover, we set
\begin{equation}
\label{eq:constants_SMR}
\begin{aligned}
K_{(A,B)}^{\det,\theta,p,\a}(\sigma,T)& =C_{(A,B)}^{\det,\theta,p,\a}(\sigma,T)+C_{(A,B)}^{\det,0,p,\a}(\sigma,T), \\
K_{(A,B)}^{\stoc,\theta,p,\a}(\sigma,T)&=C_{(A,B)}^{\stoc,\theta,p,\a}+C_{(A,B)}^{\stoc,0,p,\a}(\sigma,T).
\end{aligned}
\end{equation}

\begin{remark}\
\label{r:SMR}
\begin{enumerate}[{\rm(1)}]
\item Definition \ref{def:SMRthetacase} \eqref{it:SMR_regularity_0} reduces to the one in \cite[Section 3]{AV19_QSEE_1} in case $\sigma=0$. The case $\theta= \frac{1+\a}{p}$ is not considered, since the concrete description of the interpolation space is more complicated and not considered in \cite[Theorem 6.5]{LMV18}. In case $\sigma = 0$, this case was included in \cite{AV19_QSEE_1} as it can be obtained by complex interpolation. This becomes unclear for random times $\sigma$;
\item the stopping time $\sigma$ in Definition \ref{def:SMRthetacase}\eqref{it:SMR_regularity_0} is assumed to take only takes finitely many values and this is sufficient for our purposes. We avoid the general case due to nontrivial measurability problems.
\end{enumerate}
\end{remark}

The following basic result can be proved in a similar way as in \cite[Proposition 3.8]{AV19_QSEE_1}. It allows us to focus on proving $(A,B)\in \MRtasigmaz$ and obtain the stronger result $(A,B)\in \MRtasigma$ almost for free.
\begin{proposition}[Transference of stochastic maximal regularity]
\label{prop:time_transference}
Let Assumptions \ref{ass:X} and \ref{ass:AB_boundedness} be satisfied. Let $\sigma:\O\to [0,T]$ be a stopping time which takes values in a finite set if $\a>0$. If $(A,B)\in \MRtasigmaz$ and $\MRtasigma$ is nonempty, then $(A,B)\in \MRtasigma$.
\end{proposition}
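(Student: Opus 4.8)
The plan is to reduce the statement $(A,B)\in\MRtasigma$ to the corresponding zero-time version and then to transfer the additional time-regularity estimate, which is the content of Definition \ref{def:SMRthetacase}\eqref{it:SMR_regularity_0}, via an abstract argument that only uses the fact that $\MRtasigma$ is nonempty. First I would fix an arbitrary pair $(\tA,\tB)\in\MRtasigma$, which exists by hypothesis, and consider the solution operator $\Sol_{\sigma,(A,B)}(0,\cdot,\cdot)$ associated with $(A,B)$, which is well-defined and bounded into $L^p_{\Progress}(\llo\sigma,T\rro,w_{\a}^{\sigma};X_1)$ since we already know $(A,B)\in\MRtasigmaz$. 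The key identity is the classical maximal-regularity "perturbation/comparison" trick: given $f,g$ as in \eqref{eq:f_g_maximal_reg_sigma} and letting $u=\Sol_{\sigma,(A,B)}(0,f,g)$, one rewrites \eqref{eq:diffAB_s} for $u$ as an equation with leading operators $(\tA,\tB)$ and modified inhomogeneities, namely
\[
du + \tA u\,dt = \big(f+(\tA-A)u\big)\,dt + \big(\tB u + g + (\tB-B)u\big)\,dW_H,
\]
so that $u=\Sol_{\sigma,(\tA,\tB)}\big(0,\,f+(\tA-A)u,\,g+(\tB-B)u\big)$.

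Next I would apply the time-regularity estimate that is built into $(\tA,\tB)\in\MRtasigma$ to the right-hand side of the displayed identity. Since $\tA-A$ maps $X_1$ to $X_0$ with norm $\le 2C_{A,B}$ and $\tB-B$ maps $X_1$ to $\g(H,X_{1/2})$ with norm $\le 2C_{A,B}$ (Assumption \ref{ass:AB_boundedness}), the new data $f+(\tA-A)u$ and $g+(\tB-B)u$ lie in the required progressively-measurable $L^p$-spaces with norm controlled by $\|f\|+\|g\|+\|u\|_{L^p(\llo\sigma,T\rro,w_{\a}^{\sigma};X_1)}$, and the last term is in turn bounded by $\|f\|+\|g\|$ because $(A,B)\in\MRtasigmaz$. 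Hence
\[
\|u\|_{L^p(\O;\hz^{\theta,p}(\sigma,T,w_{\a}^{\sigma};X_{1-\theta}))}
\;\lesssim\; \|f+(\tA-A)u\|_{L^p(\llo\sigma,T\rro,w_{\a}^{\sigma};X_0)} + \|g+(\tB-B)u\|_{L^p(\llo\sigma,T\rro,w_{\a}^{\sigma};\g(H,X_{1/2}))}
\;\lesssim\; \|f\|+\|g\|,
\]
for each $\theta\in[0,\tfrac12)\setminus\{\tfrac{1+\a}{p}\}$, with constants depending on $\theta,p,\a,C_{A,B},T$ and on $(\tA,\tB)$ but not on $(f,g,\tau)$. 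In the case $p=2$, $\a=0$ the same argument runs with the range space $L^2(\O;C([\sigma,T];X_{1/2}))$ in place of the weighted $\hz^{\theta,p}$-space. This yields exactly the estimate required in Definition \ref{def:SMRthetacase}, so $(A,B)\in\MRtasigma$.

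The one genuinely delicate point — and the step I expect to be the main obstacle — is the well-posedness and measurability bookkeeping for the spaces $L^p(\O;\hz^{\theta,p}(\sigma,T,w_{\a}^{\sigma};X_{1-\theta}))$ at a \emph{random} initial time $\sigma$. This is precisely why the definition restricts to $\sigma$ taking values in a finite set when $\a>0$: one wants to argue on each of the finitely many atoms $\{\sigma=s_i\}\in\F_{s_i}$ separately, apply the (already-established) fixed-time transference result of \cite[Proposition 3.8]{AV19_QSEE_1} there, and then glue the pieces together, checking that the $0$-extension operator $\ez_{\sigma}$ and the progressive measurability interact correctly with this decomposition. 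One must also make sure that the comparison identity above is applied to \emph{the} strong solution $u\in L^p_{\Progress}(\llo\sigma,T\rro,w_{\a}^{\sigma};X_1)$ (unique by the remark after Definition \ref{def:SMRgeneralized}), so that the a priori bound $\|u\|_{L^p(\llo\sigma,T\rro,w_{\a}^{\sigma};X_1)}\lesssim\|f\|+\|g\|$ is legitimately available before invoking the $\hz^{\theta,p}$-estimate for $(\tA,\tB)$. Apart from this, the proof is a routine adaptation of the corresponding argument in Part I, so I would phrase it as "the proof is analogous to \cite[Proposition 3.8]{AV19_QSEE_1}, using the comparison with an arbitrary element of $\MRtasigma$", and only spell out the modifications needed to handle the random, finitely-valued $\sigma$.
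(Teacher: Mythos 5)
Your proof is correct and uses exactly the perturbation/comparison argument that the paper invokes by citing Part I: one views the strong solution for $(A,B)$ as a strong solution for an arbitrary $(\tA,\tB)\in\MRtasigma$ with modified data $f+(\tA-A)u$, $g+(B-\tB)u$, and then applies the $\hz^{\theta,p}$-estimate built into Definition \ref{def:SMRthetacase} for $(\tA,\tB)$, closing the loop via the a priori $L^p(\llo\sigma,T\rro,w_\a^\sigma;X_1)$-bound on $u$ from $(A,B)\in\MRtasigmaz$. Two small remarks: the stochastic correction term should read $(B-\tB)u$ rather than $(\tB-B)u$ (harmless, since only $\|\tB-B\|_{\calL(X_1,\g(H,X_{1/2}))}$ enters the bound), and the closing paragraph on atomwise decomposition is not actually needed — once the spaces $L^p(\O;\hz^{\theta,p}(\sigma,T,w_\a^\sigma;X_{1-\theta}))$ are defined for finite-valued $\sigma$ as in Subsection \ref{ss:stopping_time}, the comparison argument goes through directly without splitting over the atoms of $\sigma$.
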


In order to check that $\MRtasigma$ is nonempty we can often use the following result of \cite{MaximalLpregularity} (the extension to power weights can be found in \cite{AV19}). More general weights and extrapolation have been considered in \cite{LoVer}. Details on $H^\infty$-functional calculus can be found in \cite{Haase:2} and \cite[Chapter 10]{Analysis2}.
\begin{theorem}
\label{t:SMR_H_infinite}
Let Assumption \ref{ass:X} be satisfied. Assume that $X_0$ is isomorphic to a closed subspace of an $L^q$-space for some $q\in [2,\infty)$ on a $\sigma$-finite measure space. Let $A$ be a closed operator on $X_0$ such that $\Do(A)=X_1$. Assume that there exists a $\lambda\in \R$ such that $\lambda
+A$ has a bounded $H^{\infty}$-calculus on $X_0$ of angle $<\pi/2$. Then
$$
A\in \MRtas, \ \ \ \text{ for all }0\leq s<T<\infty.
$$
\end{theorem}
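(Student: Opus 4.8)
The plan is to reduce Theorem~\ref{t:SMR_H_infinite} to the case $s=0$ by a time-translation argument, and then to quote the available stochastic maximal $L^p$-regularity theorems. Fix $0\leq s<T<\infty$. Since $A$ does not depend on $(t,\om)$, the linear problem \eqref{eq:diffAB_s} with $B=0$, $\sigma=s$ and this fixed $A$ is invariant under shifting time by $s$. Concretely, I would introduce the shifted filtration $\widetilde{\Filtr}=(\widetilde{\F}_t)_{t\geq 0}$ with $\widetilde{\F}_t:=\F_{t+s}$, together with the $H$-cylindrical Brownian motion $\widetilde{W}_H$ defined by $\widetilde{W}_H(f):=W_H(f(\cdot-s)\one_{[s,\infty)})$, which is again an $H$-cylindrical Brownian motion, now adapted to $\widetilde{\Filtr}$. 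One then checks that $u\mapsto u(\cdot+s)$ is a bijection between strong solutions of \eqref{eq:diffAB_s} on $\ll s,\tau\rr$ (relative to $\Filtr,W_H$) and strong solutions of the corresponding problem on $\ll 0,\tau-s\rr$ (relative to $\widetilde{\Filtr},\widetilde{W}_H$), that $\tau\mapsto\tau-s$ takes stopping times $\tau\geq s$ for $\Filtr$ to stopping times for $\widetilde{\Filtr}$ with $\widetilde{\F}_{\tau-s}=\F_{\tau}$, and that $w_{\a}^{s}(\cdot+s)=w_{\a}$; hence all the weighted $L^p(\,\cdot\,;X_1)$-norms in Definition~\ref{def:SMRgeneralized} and the $\hz^{\theta,p}(\,\cdot\,,w_{\a};X_{1-\theta})$-norms (resp.\ the $C([0,\cdot];X_{1/2})$-norm when $p=2$) in Definition~\ref{def:SMRthetacase} are preserved under this correspondence. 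This shows $A\in\MRtas$ if and only if $A\in\mathcal{SMR}_{p,\a}^{\bullet}(0,T-s)$, so it suffices to treat $s=0$. Note that $\sigma=s$ takes only one value, so the finiteness requirement on $\sigma$ in Definition~\ref{def:SMRthetacase}\eqref{it:SMR_regularity_0} is automatically met.

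For $s=0$, the assumptions are exactly those of the known stochastic maximal regularity theorems: $X_0$ is a closed subspace of an $L^q$-space with $q\in[2,\infty)$ on a $\sigma$-finite measure space and is UMD of type $2$, $\Do(A)=X_1$, and $\lambda+A$ has a bounded $H^{\infty}$-calculus of angle $<\pi/2$ on $X_0$. In the unweighted case $\a=0$ this is the main result of \cite{MaximalLpregularity}, and the extension to the power weights $w_{\a}$ with $\a\in(0,\tfrac{p}{2}-1)$ is established in \cite{AV19}; the case $p=2$ with $X_0,X_1$ Hilbert and the $C([0,T];X_{1/2})$-estimate is covered as well. These results are stated directly for the shifted operator $\lambda+A$ and yield, for $f,g$ as in \eqref{eq:f_g_maximal_reg_sigma} with $\sigma=0$, the unique strong solution $u$ of \eqref{eq:diffAB_s} with $u(0)=0$ together with the weighted $L^p(\,\cdot\,;X_1)$-estimate and the $\hz^{\theta,p}(0,T,w_{\a};X_{1-\theta})$-estimates for every $\theta\in[0,\tfrac12)\setminus\{\tfrac{1+\a}{p}\}$, i.e.\ precisely $A\in\MRtzeroa$ (this is also the form in which the statement is recorded in \cite[Section~3]{AV19_QSEE_1}). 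Combined with the first paragraph, this gives $A\in\MRtas$ for all $0\leq s<T<\infty$.

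The only genuinely technical point I expect is the bookkeeping in the translation step: verifying that $\widetilde{W}_H$ is again an $H$-cylindrical Brownian motion adapted to $\widetilde{\Filtr}$, that strong progressive measurability and the stochastic-integral identity of Definition~\ref{def:strong_linear} transfer correctly, and that the random-time norms of Subsection~\ref{ss:stopping_time} behave as claimed under the shift; all of this is routine but must be carried out carefully. If one prefers not to extract the $\hz^{\theta,p}$-estimates from \cite{MaximalLpregularity,AV19}, an alternative is to obtain only the plain membership $A\in\MRtzeroaz$ from those references and then upgrade to $A\in\MRtzeroa$ through Proposition~\ref{prop:time_transference}, using that the class $\MRtzeroa$ is nonempty (again by those references, applied e.g.\ to a model operator).
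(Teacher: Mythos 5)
Your proposal is correct and in line with what the paper does: the paper itself does not prove Theorem \ref{t:SMR_H_infinite} but records it as a direct consequence of the main result of \cite{MaximalLpregularity} (for $\a=0$) and its weighted extension in \cite{AV19}, together with the Hilbert-space $p=2$ case. The time-translation reduction to $s=0$ that you spell out is exactly the routine bookkeeping the paper leaves implicit — since $A$ is time-independent and $w_{\a}^{s}(\cdot+s)=w_{\a}$, shifting the filtration and the cylindrical Brownian motion identifies $\MRtas$ with $\mathcal{SMR}_{p,\a}^{\bullet}(0,T-s)$. One small caveat on your alternative route via Proposition \ref{prop:time_transference}: that proposition requires $\MRtas$ to already be nonempty, so invoking it on a "model operator" still needs the $\hz^{\theta,p}$-estimates for some operator from \cite{MaximalLpregularity,AV19}; it is therefore not really simpler, merely a re-packaging, and your direct extraction of the $\hz^{\theta,p}$-bounds from those references is cleaner.
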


As noticed in \cite{NVW11eq}, Theorem \ref{t:SMR_H_infinite} easily extends to $\F_s$-measurable operators $A:\O\to \calL(X_1,X_0)$ as long as the estimates and the angle for the $H^{\infty}$-calculus are uniform in $\om\in\O$. Combining Theorem \ref{t:SMR_H_infinite} with \cite[Example 2.1]{AV19_QSEE_1}, one immediately obtains a large class of example of operators in $\MRtas$. For additional examples, see \cite[Section 3]{AV19_QSEE_1}.

\subsection{Random initial times}
\label{ss:solution_operator}
In this section we study the role of random initial times. We will start by considering linear problem \eqref{eq:diffAB_s} for non-trivial initial data at a random initial time. A similar result was proved in \cite[Proposition 3.11]{AV19_QSEE_1} for fixed times, but without taking care of the dependence on the length of the time interval, which turns out to be important here. Therefore, we have to provide a detailed proof. The construction in the proof below will be used later on.

\begin{proposition}[Nonzero initial data]
\label{prop:start_at_s}
Suppose that Assumptions \ref{ass:X} and \ref{ass:AB_boundedness} be satisfied. Let $(A,B)\in \MRtasigmaz$. Then for any $u_{\sigma}\in L^p_{\F_{\sigma}}(\O;\Xap)$, $f\in L^p_{\Progress}(\llo\sigma,T\rro,w_{\a}^{\sigma};X_0)$, and $g\in L^p_{\Progress}(\llo\sigma,T\rro,w_{\a}^{\sigma};\g(H,X_{1/2}))$, there exists a unique strong solution $u\in L^p_{\Progress}((\sigma ,T)\times \O,w_{\a}^{\sigma};X_1)$ to \eqref{eq:diffAB_s} on $\ll \sigma,T\rr$, and
\begin{equation}\label{eq:MaxRegu_0random}
\begin{aligned}
\|u\|_{L^p((\sigma ,T)\times \O,w_{\a}^{\sigma};X_{1})}
& \leq C\|u_{\sigma}\|_{L^p(\O;\Xap)}
\\ &\  +
 C\|f\|_{L^p(\llo\sigma,T\rro,w_{\a}^{\sigma};X_0)}
+C\|g\|_{L^p(\llo\sigma,T\rro,w_{\a}^{\sigma};\g(H,X_{1/2}))}
\end{aligned}
\end{equation}
where $C$ only depends on $p,\a,K^{j,\theta,p,\a}_{(A,B)}(\sigma,T)$.

If additionally $(A,B)\in \MRtasigma$ and $\sigma$ takes finitely many values if $\a>0$, then the left-hand side of \eqref{eq:MaxRegu_0random} can be replaced by
\begin{enumerate}[{\rm (1)}]
\item\label{it:start_at_s1} $\|u\|_{L^p(\Omega;C([\sigma,T];\Xap))}$;
\item\label{it:start_at_s2} $\|u\|_{L^p(\Omega;C([\sigma+\delta,T];\Xp))}$ if $\delta>0$;
\item\label{it:start_at_s3} $\|u\|_{L^p(\Omega;H^{\theta,p}(\sigma,T,w_{\a}^{\sigma};X_{1-\theta}))}$ if $\theta\in [0,\frac12)\setminus \{\frac{1+\a}{p}\}$ and $p\in (2, \infty)$;
\item\label{it:start_at_s4}    $\|u\|_{L^p(\Omega;\hz^{\theta,p}(\sigma,T,w_{\a}^{\sigma};X_{1-\theta}))}$ if $\theta\in (0,\frac{1+\a}{p})$ and $p\in (2, \infty)$;
\end{enumerate}
where $C$ also depends on $\delta$ (resp.\ $\theta$) in \eqref{it:start_at_s2} (resp.\ \eqref{it:start_at_s3} and \eqref{it:start_at_s4}).
\end{proposition}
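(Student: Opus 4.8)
The plan is to reduce the statement with nonzero initial data to the zero-initial-data case already encoded in the hypothesis $(A,B)\in\MRtasigmaz$ (or $\MRtasigma$), by subtracting off a suitable ``lifted'' process that carries the initial value $u_\sigma$. First I would treat the baseline estimate \eqref{eq:MaxRegu_0random}: given $u_\sigma\in L^p_{\F_\sigma}(\O;\Xap)$, I want to produce an auxiliary process $v$, strongly progressively measurable on $\ll\sigma,T\rr$, with $v(\sigma)=u_\sigma$, satisfying the maximal regularity bound $\|v\|_{L^p((\sigma,T)\times\O,w_\a^\sigma;X_1)}\lesssim\|u_\sigma\|_{L^p(\O;\Xap)}$ together with a companion bound on $\|v\|_{L^p(\O;\hz^{\theta,p}(\sigma,T,w_\a^\sigma;X_{1-\theta}))}$ and, crucially, control of the ``drift defect'' $\partial_t v + A v$ in $L^p(w_\a^\sigma;X_0)$ and of $B v$ in $L^p(w_\a^\sigma;\g(H,X_{1/2}))$ — all with constants \emph{independent of the length} $T-\sigma$, which is the delicate point flagged in the text. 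Then $w:=u-v$ solves \eqref{eq:diffAB_s} with zero initial data and inhomogeneities $\tilde f := f-(\partial_t v+Av)$ and $\tilde g:= g - Bv$, and applying the definition of $\MRtasigmaz$ to $w$, followed by the triangle inequality $u=w+v$, yields \eqref{eq:MaxRegu_0random}. Uniqueness follows from uniqueness for the zero-initial-data problem, which is the remark immediately after Definition \ref{def:SMRgeneralized}.

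The natural choice for $v$ is a deterministic-in-structure construction: on each ``slice'' $\{\sigma = s\}$ one can take $v$ to be the solution of an auxiliary \emph{deterministic} parabolic problem $\partial_t v + (\lambda+\tilde A)v = 0$, $v(s)=u_\sigma$, for a fixed reference sectorial operator $\tilde A$ with bounded $H^\infty$-calculus (whose existence underlies the nonemptiness of $\MRtasigma$ via Theorem \ref{t:SMR_H_infinite}); equivalently one uses the analytic semigroup $e^{-t(\lambda+\tilde A)}$ to extend $u_\sigma$ off the initial time. The point of inserting $\lambda>0$ large and of working with the power weight $w_\a^\sigma$ is exactly to get time-interval-independent constants: the relevant estimates $\|t\mapsto e^{-t(\lambda+\tilde A)}x\|_{L^p(\R_+,w_\a;X_1)}\eqsim\|x\|_{(X_0,X_1)_{1-\frac{1+\a}{p},p}}$ and the $\hz^{\theta,p}$-version of it hold on the half-line, hence are automatically uniform in $T-\sigma$. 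Because $\sigma$ is $\F_\sigma$-measurable and (when $\a>0$) finite-valued, one assembles $v$ slice-by-slice as $v = \sum_i \one_{\{\sigma=s_i\}}\,\ez_{s_i}[\,t\mapsto e^{-(t-s_i)(\lambda+\tilde A)}u_\sigma\,]$, which is genuinely strongly progressively measurable; the measurability bookkeeping is exactly the kind handled in \cite[Lemma 2.15, Definition 2.16]{AV19_QSEE_1} and recalled in Subsection \ref{ss:stopping_time}. The defect $\partial_t v + A v = (A-\tilde A-\lambda)v + \lambda v$ is then controlled in $L^p(w_\a^\sigma;X_0)$ using Assumption \ref{ass:AB_boundedness} (so $\|(A-\tilde A-\lambda)v\|_{X_0}\lesssim\|v\|_{X_1}$) plus the already-established bound on $\|v\|_{L^p(w_\a^\sigma;X_1)}$, and similarly $\|Bv\|_{L^p(w_\a^\sigma;\g(H,X_{1/2}))}\lesssim\|v\|_{L^p(w_\a^\sigma;X_{1/2})}\lesssim\|v\|_{L^p(w_\a^\sigma;X_1)}$; note these defect bounds need only depend on $C_{A,B}$ and the $H^\infty$-calculus data, as required.

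For the refined conclusions \eqref{it:start_at_s1}–\eqref{it:start_at_s4}, under the additional hypothesis $(A,B)\in\MRtasigma$ I would again write $u=w+v$: the part $w$ now satisfies the stronger estimates built into Definition \ref{def:SMRthetacase} (so $\|w\|_{L^p(\O;\hz^{\theta,p}(\sigma,T,w_\a^\sigma;X_{1-\theta}))}$ is controlled for $\theta\in[0,\tfrac12)\setminus\{\tfrac{1+\a}{p}\}$), while for $v$ the semigroup estimates give the matching $\hz^{\theta,p}$- and $H^{\theta,p}$-bounds directly. Item \eqref{it:start_at_s3} then follows from item \eqref{it:start_at_s4} on the range $\theta<\tfrac{1+\a}{p}$ by the identification \eqref{eq:LMVresult} of $\hz^{\theta,p}$ with $H^{\theta,p}$, and for $\theta>\tfrac{1+\a}{p}$ one uses that $\hz^{\theta,p}(\sigma,T,w_\a^\sigma;X_{1-\theta})\cap L^p(\sigma,T,w_\a^\sigma;X_1)\hookrightarrow H^{\theta,p}$ after splitting off the $X_1$-part — this is where one invokes the $L^p$-in-$X_1$ bound from \eqref{eq:MaxRegu_0random} together with the mixed-derivative inequality of Lemma \ref{l:mixed_derivative}. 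Items \eqref{it:start_at_s1} and \eqref{it:start_at_s2} are then the trace embeddings of Proposition \ref{prop:continuousTrace}: \eqref{it:trace_with_weights_Xap} applied with any $\theta\in(\tfrac{1+\a}{p},\tfrac12)$ gives continuity into $\Xap$ up to the initial time $\sigma$ (using that $\hz^{\theta,p}$'s trace constants are $a,b$-independent, hence $\sigma$-independent), and \eqref{it:trace_without_weights_Xp} gives the $[\sigma+\delta,T]$ statement; the $C([\sigma,T];\Xap)$-bound plus the already-known $L^p(\O;\cdot)$-integrability of the time-continuous norms (as in Subsection \ref{ss:stopping_time}) yields the $L^p(\O;C([\sigma,T];\Xap))$ estimate. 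The main obstacle throughout is the uniformity-in-$T-\sigma$ of all constants for $v$: this forces the use of the shifted operator $\lambda+\tilde A$ and half-line semigroup estimates rather than a naive extension of \cite[Proposition 3.11]{AV19_QSEE_1}, and it is the reason the proof has to be spelled out in detail rather than cited.
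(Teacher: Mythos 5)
Your overall strategy — build a lifting $v$ with $v(\sigma)=u_\sigma$, control it in the relevant maximal-regularity norms, subtract to get a zero-initial-data problem, and apply $\MRtasigmaz$ / $\MRtasigma$ to the difference — is exactly the paper's strategy, and your attention to interval-length-independent constants is also the right focus. However, the \emph{construction} of the lifting is where your proposal departs from the paper, and it introduces a genuine gap: you take $v$ on each slice to be $t\mapsto e^{-(t-s_i)(\lambda+\tilde A)}u_\sigma$ for a reference sectorial operator $\tilde A$ with $\Do(\tilde A)=X_1$ and bounded $H^\infty$-calculus, but no such operator is available under the hypotheses of Proposition \ref{prop:start_at_s}. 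Assumptions \ref{ass:X} and \ref{ass:AB_boundedness} only posit that $X_0,X_1$ are UMD with type $2$ (or Hilbert if $p=2$) with $X_1\hookrightarrow X_0$ dense, together with $(A,B)\in\MRtasigmaz$; the existence of such an $\tilde A$ is the hypothesis of Theorem \ref{t:SMR_H_infinite}, which gives a \emph{sufficient} condition for membership in $\MRtas$, not a consequence of it, and is not assumed here. The paper instead constructs the lifting by the trace method of real interpolation: for simple data $u_\sigma=\sum_j\one_{\U_j}x_j$ one picks, for each $x_j\in\Xap$, an $h_j\in W^{1,p}(\R_+,w_\a;X_0)\cap L^p(\R_+,w_\a;X_1)$ with $h_j(0)=x_j$ and nearly-minimal norm (via \cite[Theorem 3.12.2]{BeLo}), and sets $v_1:=\sum_j\one_{\U_j}h_j(\cdot-\sigma)$. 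This needs only the abstract interpolation structure, works on the half-line hence gives interval-independent constants automatically, and the translation and \eqref{eq:LMVresult} give the $\hz^{\theta,p}$-bounds. The rest of your outline (subtracting to reduce to $u_\sigma=0$, controlling the drift/diffusion defects from $\|v\|_{L^p(w_\a^\sigma;X_1)}$ via Assumption \ref{ass:AB_boundedness}, and deducing \eqref{it:start_at_s1}–\eqref{it:start_at_s4} from the trace embeddings of Proposition \ref{prop:continuousTrace}) tracks the paper closely; the item-\eqref{it:start_at_s3} step is a bit simpler than you sketch, since $\hz^{\theta,p}\hookrightarrow H^{\theta,p}$ contractively for all $\theta$, so the mixed-derivative lemma is not needed there. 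To repair your argument, replace the semigroup lifting by the trace-method lifting, or else add the extra standing hypothesis that a reference sectorial operator $\tilde A$ with the stated properties exists.
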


The norms on random intervals in the above result are defined as at the end of Subsection \ref{ss:stopping_time}. Part \eqref{it:start_at_s4} is an estimate in terms of the $\hz$-norms, and will only play a role in the proof of Theorem \ref{thm:semilinear_blow_up_Serrin}\eqref{it:blow_up_semilinear_serrin_Pruss_modified}.
Note that due to \eqref{eq:LMVresult}, no trace restrictions is needed in \eqref{it:start_at_s4}.

\begin{proof}
For the reader's convenience, we split the proof into several steps. We only consider the case $p>2$, since the case $p=2$ is simpler.

Below we will use the so-called trace method of interpolation. By \cite[Theorem 3.12.2]{BeLo} or \cite[Theorem 1.8.2, p.\ 44]{Tr1}, $\Xap$ is the set of all $x\in X_0+X_1$ such that $x=h(0)$ for some $h\in W^{1,p}(\R_+,w_{\a};X_0)\cap L^p(\R_+,w_{\a};X_1)$ and
\begin{equation}
\label{eq:equivalence_norm}
C_{p,\a}^{-1}\|x\|_{\Xap}\leq \inf \|h\|_{W^{1,p}(\R_+,w_{\a};X_0)\cap L^p(\R_+,w_{\a};X_1)}\leq C_{p,\a}\|x\|_{\Xap},
\end{equation}
where the infimum is taken over all $h$ as above and where $C$ only depends on $p,\kappa$.

{\em Step 1: the case $(A,B)\in \MRtasigmaz$}.
Uniqueness is clear from $(A,B)\in \MRtasigmaz$. By completeness and density (see \cite[Proposition 3.11]{AV19_QSEE_1}), it is enough to prove the claim for $u_{\sigma}=\sum_{j=1}^N \one_{\U_j} x_j$ where $N\geq 1$, $x_1, \ldots, x_N\in \Xap$, and $(\U_i)_{i=1}^N\subseteq\F_{\sigma}$ is a partition of $\Omega$.
By \eqref{eq:equivalence_norm} there exist $h_j \in W^{1,p}(\R_+,w_{\a};X_0)\cap L^p(\R_+,w_{\a};X_1)$ such that $h_j(0)=x_j$ and
\[\|h_j\|_{W^{1,p}(\R_+,w_{\a};X_0)\cap L^p(\R_+,w_{\a};X_1)}\leq 2C_{p,\a}\|x_j\|_{\Xap}, \ \ \ j\in \{1, \ldots, N\}.\]
Then  $v_1:=\sum_{j=1}^N \one_{\U_j} h_j(\cdot-\sigma)$ on $\ll \sigma,\infty\rro$ is strongly progressively measurable.
It follows that $u$ is a strong solution to \eqref{eq:diffAB_s} if and only if $v_2:=u-v_1$ is a strong solution to
\begin{equation}
\label{eq:stochastic_equation_v_2}
\begin{cases}
dv_2+A v_2 dt=[f- \dot{v}_1-A v_1]dt+ (B v_2+B v_1+g)dW_{H}, \\
v_2(\sigma)=0.
\end{cases}
\end{equation}
Let $\A^{\theta,p} = H^{\theta,p}$, or $\A = \hz^{\theta,p}$ if $\theta\in (0,(1+\a)/p)$.  Note that on $\U_j$,
\begin{align*}
\|v_1\|_{\A^{\theta,p}(\sigma,T,w_{\a}^{\sigma};X_{1-\theta})}& \stackrel{(i)}{\leq}
\|t\mapsto h_j (t-\sigma)\|_{\A^{\theta,p}(\sigma,\infty,w_{\a}^{\sigma};X_{1-\theta})}
\\ & \stackrel{(ii)}{\leq} \|h_j\|_{\A^{\theta,p}(\R_+,w_{\a};X_{1-\theta})}
\\ & \stackrel{(iii)}{\leq} C_{\theta}\|h_j\|_{H^{\theta,p}(\R_+,w_{\a};X_{1-\theta})}
\\ & \stackrel{(iv)}{\leq} C_{\theta} \|h_j\|_{L^p(\R_+,w_{\a};X_1)} + C_{\theta}\|h_j\|_{W^{1,p}(\R_+,w_{\a};X_0)}
\\ & \stackrel{(v)}{\leq}  2C_{\theta} C_{p,\a}\|u_{\sigma}\|_{\Xap},
\end{align*}
where in $(i)$ we used Proposition \ref{prop:change_p_q_eta_a}\eqref{it:loc_embedding}, and in $(ii)$ a translation argument. In $(iii)$ we used  \eqref{eq:LMVresult} if $\A^{\theta,p}=\hz^{\theta,p}$ and in $(iv)$ Lemma \ref{l:mixed_derivative}. Finally, in $(v)$ we used the choice of $h_j$.
Note that $(iii)$ can be avoided if $\A^{\theta,p} = H^{\theta,p}$.

In the following we set $C_{\theta}':=2C_{\theta}C_{p,\a}$.
If $\theta\in \{0,1\}$, then taking $L^p(\O)$-norms we obtain
\begin{equation}
\label{eq:estimate_v_1_continuity_sigma}
\|v_1\|_{L^p(\O;H^{\theta,p}(\sigma,T,w_{\a}^{\sigma};X_{1-\theta}))}\leq C'_{\theta} \|u_{\sigma}\|_{L^p_{\F_{\sigma}}(\O;\Xap)},
\end{equation}
Since $v_2$ satisfies \eqref{eq:stochastic_equation_v_2} and $(A,B)\in \MRtasigmaz$, setting $C_1^{\theta}= C_{(A,B)}^{\det,\theta,p,\a}(\sigma,T)$ and $C_2^{\theta} = C_{(A,B)}^{\stoc,\theta,p,\a}(\sigma,T)$  it follows that
\begin{align*}
\|v_2& \|_{L^p(\llo\sigma,T\rro,w_{\a}^{\sigma};X_{1})}
\\ & \leq C_1^0\|f+\dot{v}_1-A v_1\|_{L^p(\llo \sigma,T\rro,w_{\a}^{\sigma};X_0)} + C_2^0\|B v_1+g\|_{L^p(\llo \sigma,T\rro,w_{\a}^{\sigma};\g(H,X_{1/2}))}
\\ & \leq \wt{C}_0 C_1^0
\|u_{\sigma}\|_{L^p(\O;\Xap)}+ C_1^0 \|f\|_{L^p(\llo \sigma,T\rro,w_{\a}^{\sigma};X_0)}+ C_2^0\|g\|_{L^p(\llo \sigma,T\rro,w_{\a}^{\sigma};\g(H,X_{1/2}))},
\end{align*}
where in the last step we used \eqref{eq:estimate_v_1_continuity_sigma}. Combining the estimates for $v_1$ and $v_2$, we obtain \eqref{it:start_at_s1}.

Next suppose $(A,B)\in \MRta$.
In the same way as in Step 1, by \eqref{eq:estimate_v_1_continuity_sigma} for each $\theta\in [0,\frac{1}{2})\setminus \{\frac{1+\a}{p}\}$,
\begin{equation}\label{eq:v2estHtheta}
\begin{aligned}
\|v_2\|_{L^p(\O;\hz^{\theta,p}(\sigma,T,w_{\a}^{\sigma};X_{1-\theta}))}
&\leq \wt{C}_0C_1^{0}\|u_{\sigma}\|_{L^p(\O;\Xap)}+ C_1^{\theta}\|f\|_{L^p(\llo \sigma,T\rro,w_{\a}^{\sigma};X_0)}\\
&\qquad \qquad \qquad\qquad+C_2^{\theta}\|g\|_{L^p(\llo \sigma,T\rro,w_{\a}^{\sigma};\g(H,X_{1/2}))}.
\end{aligned}
\end{equation}
Combining the estimates, we obtain \eqref{it:start_at_s3} and \eqref{it:start_at_s4}, where for \eqref{it:start_at_s3} one additionally needs to use that $\hz^{\theta,p}\hookrightarrow H^{\theta,p}$, contractively.

The maximal estimate in \eqref{it:start_at_s1} follows by considering $v_1$ and $v_2$ separately again.
Indeed, by Proposition \ref{prop:continuousTrace}\eqref{it:trace_with_weights_Xap} applied to each $h_j$ we obtain
$$
\|v_1\|_{C([\sigma,T];\Xap)} \leq C\|u_{\sigma}\|_{\Xap}, \ \ \text{ a.s.\ }
$$
The estimate for $v_2$ follows by combining \eqref{eq:v2estHtheta} for $\theta=0$ and $\theta\in ((1+\a)/p,1/2)$ with Proposition \ref{prop:continuousTrace}. To prove \eqref{it:start_at_s2}, one can argue similarly using Proposition \ref{prop:continuousTrace}\eqref{it:trace_without_weights_Xp} instead of Proposition \ref{prop:continuousTrace}\eqref{it:trace_with_weights_Xap}.
\end{proof}

By the above we can extend the solution operator, defined for $u_{\sigma}=0$ in \eqref{eq:soloperatorR1}, to nonzero initial values by setting
\begin{equation}
\label{eq:soloperatorR2}
\Sol_{\sigma,(A,B)}(u_{\sigma},f,g):=u,
\end{equation}
where $u$ is the unique strong solution to \eqref{eq:diffAB_s} on $\ll\sigma,T\rr$. This defines a mapping from
\begin{equation}
\label{eq:u_sigma_f_g_max_reg}
L^{p}_{\F_{\sigma}}(\O;\Xap)\times
L^{p}_{\Progress}(\llo \sigma, T\rro,w_{\a}^{\sigma};X_0) \times
L^{p}_{\Progress}(\llo \sigma, T\rro,w_{\a}^{\sigma};\g(H,X_{1/2}))
\end{equation}
into $L^p(\llo \sigma,T\rro,w_{\a}^{\sigma};X_1)$.

The following result can be obtained as in \cite[Proposition 3.13]{AV19_QSEE_1}.
\begin{proposition}[Localization and causality]
\label{prop:causality_phi_revised_2}
Let Assumptions \ref{ass:X} and \ref{ass:AB_boundedness} be satisfied.
Let $(A,B)\in \MRtasigmaz$.
Let $\sigma,\tau$ be stopping times such that $\sigma\leq \tau\leq T$ a.s.
Assume that $(u_{\sigma},f,g)$ belongs to the space in \eqref{eq:u_sigma_f_g_max_reg} and $u:=\Sol_{\sigma,(A,B)}(u_{\sigma},f,g)$. Then the following holds:
\begin{enumerate}[{\rm(1)}]
\item\label{it:localization_F_sigma_s} If $\sigma$ is a stopping time with values in $\I_T$, then for any $F\in \F_{\sigma}$ one has
\begin{equation*}
\one_{F}u=\one_{F}\Sol_{\sigma,(A,B)}(\one_{F}u_{\sigma},\one_{F}f,\one_{F}g),\qquad \text{a.s.\ on }\ll s,T\rr.
\end{equation*}
\item\label{it:causality_sigma_tau} For any strong solution $v\in L^p(\llo \sigma,\tau\rro,w_{\a}^{\sigma};X_1)$ to \eqref{eq:diffAB_s} on $\ll \sigma,\tau\rr$, one has
$$
v=u|_{\ll \sigma,\tau\rr}
=\Sol_{\sigma,(A,B)}(u_{\sigma},\one_{\llo \sigma,\tau\rro}f,\one_{\llo \sigma,\tau\rro}g),
\qquad \text{a.s.\ on } \ll \sigma,\tau\rr,
$$
where we can replace $\llo \sigma,\tau\rro$ by its half open versions or closed version as well.
\end{enumerate}
\end{proposition}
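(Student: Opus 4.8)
The plan is to follow the proof of \cite[Proposition 3.13]{AV19_QSEE_1}, carrying the random initial time $\sigma$ and the shifted weight $w_{\a}^{\sigma}$ through the argument. For part~\eqref{it:localization_F_sigma_s} I would argue by linearity and uniqueness: since $F\in\F_{\sigma}$, the indicator $\one_{F}$ is $\F_{t}$-measurable for all $t\geq\sigma$, so $\one_{\ll\sigma,T\rr}\one_{F}$ is strongly progressively measurable and $\one_{F}$ commutes with both the Bochner integral $\int_{\sigma}^{t}(\cdot)\,ds$ and the stochastic integral $\int_{0}^{t}\one_{\ll\sigma,T\rr}(\cdot)\,dW_{H}$. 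Together with the linearity of $A(t)$ and $B(t)$ this shows that $\one_{F}u$ is a strong solution of \eqref{eq:diffAB_s} on $\ll\sigma,T\rr$ with data $(\one_{F}u_{\sigma},\one_{F}f,\one_{F}g)$, and it lies in $L^{p}_{\Progress}(\llo\sigma,T\rro,w_{\a}^{\sigma};X_{1})$ since $u$ does. The uniqueness built into $(A,B)\in\MRtasigmaz$ then identifies it with $\Sol_{\sigma,(A,B)}(\one_{F}u_{\sigma},\one_{F}f,\one_{F}g)$, and multiplying by $\one_{F}$ gives the claim (here the interval $\ll s,T\rr$ in the statement should read $\ll\sigma,T\rr$).

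For part~\eqref{it:causality_sigma_tau} I would first reduce both identities to a single uniqueness statement on the subinterval $\ll\sigma,\tau\rr$. Indeed, the process $v$, the restriction of $u=\Sol_{\sigma,(A,B)}(u_{\sigma},f,g)$ to $\ll\sigma,\tau\rr$, and the restriction of $\Sol_{\sigma,(A,B)}(u_{\sigma},\one_{\llo\sigma,\tau\rro}f,\one_{\llo\sigma,\tau\rro}g)$ to $\ll\sigma,\tau\rr$ are all strong solutions of \eqref{eq:diffAB_s} on $\ll\sigma,\tau\rr$ with data $(u_{\sigma},f,g)$, because on $\ll\sigma,\tau\rr$ the cut-off indicators act trivially and $\int_{0}^{t}\one_{\ll\sigma,T\rr}(\cdot)\,dW_{H}=\int_{0}^{t}\one_{\ll\sigma,\tau\rr}(\cdot)\,dW_{H}$ for $t\leq\tau$. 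Hence both identities follow once one knows that strong solutions in $L^{p}(\llo\sigma,\tau\rro,w_{\a}^{\sigma};X_{1})$ of \eqref{eq:diffAB_s} on $\ll\sigma,\tau\rr$ are unique.

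To prove this subinterval uniqueness, let $w\in L^{p}(\llo\sigma,\tau\rro,w_{\a}^{\sigma};X_{1})$ solve the homogeneous problem $dw+Aw\,dt=(Bw)\,dW_{H}$ on $\ll\sigma,\tau\rr$ with $w(\sigma)=0$. Splitting $w=w^{\det}+w^{\sto}$ with $w^{\det}(t)=-\int_{\sigma}^{t}A(s)w(s)\,ds$, one gets $w^{\det}\in\Wz^{1,p}(\llo\sigma,\tau\rro,w_{\a}^{\sigma};X_{0})\cap L^{p}(\llo\sigma,\tau\rro,w_{\a}^{\sigma};X_{1})$ and, by the regularity of the It\^o integral with integrand $Bw\in L^{p}(\llo\sigma,\tau\rro,w_{\a}^{\sigma};\g(H,X_{1/2}))$, $w^{\sto}\in\hz^{\theta,p}(\llo\sigma,\tau\rro,w_{\a}^{\sigma};X_{1/2})$ for every $\theta\in(0,\tfrac12)$; combining this with Lemma~\ref{l:mixed_derivative} and Proposition~\ref{prop:continuousTrace} (choosing $\theta\in(\max\{\tfrac{1+\a}{p},\tfrac1p\},\tfrac12)$, which is nonempty since $\a<\tfrac p2-1$; for $p=2$ one uses instead $w^{\sto}\in C([\sigma,\tau];X_{1/2})$ together with $X_{1/2}=\Xp$) yields $w\in C([\sigma,\tau];\Xap)\cap C((\sigma,\tau];\Xp)$ with $w(\sigma)=0$. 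On $\{\tau=\sigma\}$ then $w\equiv0$ trivially; on $\{\tau>\sigma\}$ one has $w(\tau)\in\Xp$, and I would continue $w$ past $\tau$ by $z:=\Sol_{\tau,(A,B)}(w(\tau),0,0)$, the strong solution of the homogeneous problem on $\ll\tau,T\rr$ started from $w(\tau)$, provided by Proposition~\ref{prop:start_at_s} together with the (unweighted) stochastic maximal $L^{p}$-regularity of $(A,B)$ on $\ll\tau,T\rr$. The glued process $\widehat{w}$, equal to $w$ on $\ll\sigma,\tau\rr$ and to $z$ on $\ll\tau,T\rr$, is then a strong solution of the homogeneous problem on all of $\ll\sigma,T\rr$ (the point being that $z(\tau)=w(\tau)$, so the integral identity for $w$ at $t=\tau$ and the one for $z$ splice with no residual term), and $\widehat{w}\in L^{p}(\llo\sigma,T\rro,w_{\a}^{\sigma};X_{1})$ since $z\in L^{p}(\llo\tau,T\rro;X_{1})\subseteq L^{p}(\llo\tau,T\rro,w_{\a}^{\sigma};X_{1})$, the weight $w_{\a}^{\sigma}$ being bounded on $\ll\sigma,T\rr$. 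Uniqueness on the full interval then forces $\widehat{w}=\Sol_{\sigma,(A,B)}(0,0,0)=0$, whence $w\equiv0$ on $\ll\sigma,\tau\rr$, and both identities in~\eqref{it:causality_sigma_tau} follow (the half-open and closed variants being immediate).

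The step I expect to be the main obstacle is the restart at the random time $\tau$: producing, out of $(A,B)\in\MRtasigmaz$, the unweighted stochastic maximal regularity of $(A,B)$ on $\ll\tau,T\rr$ needed to invoke Proposition~\ref{prop:start_at_s} there; making sense of $w(\tau)$ as an $\F_{\tau}$-measurable $\Xp$-valued random variable with the right integrability (using the improved trace in $C((\sigma,\tau];\Xp)$); and gluing across $\tau$ while keeping the result strongly progressively measurable in $L^{p}(\llo\sigma,T\rro,w_{\a}^{\sigma};X_{1})$. It is precisely for these measurability issues that one first reduces $\tau$ to a stopping time taking finitely many values via Lemma~\ref{l:stopping_k} and localizes on the atoms of $\F_{\tau}$ using part~\eqref{it:localization_F_sigma_s}, along the lines of Remark~\ref{r:SMR}; modulo this bookkeeping the argument is routine and parallels \cite[Proposition 3.13]{AV19_QSEE_1}.
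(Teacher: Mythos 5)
Your proof of part~\eqref{it:localization_F_sigma_s} is fine, and your observation that $\ll s,T\rr$ should read $\ll\sigma,T\rr$ is a good catch. For part~\eqref{it:causality_sigma_tau} the strategy is the right one and parallels \cite[Proposition~3.13]{AV19_QSEE_1} (which is also all the paper itself does: it defers to that result without re-proving it). However, two points deserve attention.

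First, the second identity in~\eqref{it:causality_sigma_tau} is easier than your reduction suggests: $u=\Sol_{\sigma,(A,B)}(u_{\sigma},f,g)$ and $\tilde u:=\Sol_{\sigma,(A,B)}(u_{\sigma},\one_{\llo\sigma,\tau\rro}f,\one_{\llo\sigma,\tau\rro}g)$ are \emph{both} strong solutions on the full interval $\ll\sigma,T\rr$, so $w:=u-\tilde u$ is a strong solution on $\ll\sigma,T\rr$ with zero initial datum and inhomogeneities $(1-\one_{\llo\sigma,\tau\rro})f$, $(1-\one_{\llo\sigma,\tau\rro})g$; the a~priori estimate in Definition~\ref{def:SMRgeneralized} applied with the stopping time $\tau$ kills the right-hand side and gives $w=0$ on $\ll\sigma,\tau\rr$ directly. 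No restart is needed for this part.

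Second, the first identity $v=u|_{\ll\sigma,\tau\rr}$ is where the real content lies, and here there is a genuine gap in the step you yourself flag. To extend the homogeneous subinterval solution $w$ past $\tau$ via $z=\Sol_{\tau,(A,B)}(w(\tau),0,0)$ you invoke Proposition~\ref{prop:start_at_s} with initial time $\tau$, which requires that $(A,B)$ restricted to $\ll\tau,T\rr$ lie in a class $\mathcal{SMR}_{p,\cdot}(\tau,T)$. This does \emph{not} follow from the hypothesis $(A,B)\in\MRtasigmaz$: the only transference tool in the paper, Proposition~\ref{prop:change_initial_time}, needs the stronger assumption $(A,B)\in\MRtasigma$ (i.e.\ the $\hz^{\theta,p}$-estimates, not merely the $L^p$-estimate), and in addition either $\a=0$, or $\sigma$ of finite range together with $\inf(\tau-\sigma)>0$ — none of which is assumed in Proposition~\ref{prop:causality_phi_revised_2}. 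The devices you point to (Lemma~\ref{l:stopping_k}, Remark~\ref{r:SMR}, and part~\eqref{it:localization_F_sigma_s}) reduce to a stopping time $\mu$ of finite range dominating $\tau$, but they do not by themselves supply $(A,B)\in\mathcal{SMR}_p(\mu,T)$ from $(A,B)\in\MRtasigmaz$. So the proof as written does not close under the stated hypotheses; a complete argument either has to strengthen the input (e.g.\ use $(A,B)\in\MRtasigma$, as the subsequent propositions in the paper do), or must carry out the restart by hand without appealing to Proposition~\ref{prop:start_at_s}, as is done in the cited Part~I argument.
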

Note that the last assertion follows from the fact that the symmetric difference of the different sorts of intervals has zero Lebesgue measure.

Next we show that one can combine operators $(A_j, B_j)$ at discrete random times to obtain an operator in $\MRtasigma$.
\begin{proposition}[Sufficient conditions for SMR at random initial times]
\label{prop:start_at_sigma_random_time}
Let Assumption \ref{ass:X} be satisfied. Suppose that $\sigma=\sum_{j=1}^N \one_{\U_j} s_j$, for $N\in \N$, where $(s_j)_{j=1}^N$ is in $(0,T)$, and $(\U_{j})_{j=1}^N$ is a partition of $\Omega$ with $\U_j\in \F_{s_j}$ for $j\in \{1, \ldots, N\}$. Given $(A_j,B_j)\in \MRtasj$ for $j\in\{1,\dots,N\}$ satisfying Assumption \ref{ass:AB_boundedness}, set
\[A:=\sum_{j=1}^N \one_{\U_j\times[ s_j, T]} A_j, \ \ \text{and} \ \ B:=\sum_{j=1}^N \one_{\U_j\times [ s_j, T]} B_j\]
Then $(A,B)\in \MRtasigma$ and for each $i\in \{\deter, \stoc\}$ and $\theta\in  [0,\frac{1}{2})\setminus \{\frac{1+\a}{p}\}$
\begin{align*}
K_{(A,B)}^{i,\theta,p,\a}(\sigma,T)& \leq \max_{j\in\{1,\dots,N\}} K_{(A_j,B_j)}^{i,\theta, p, \a}(\sigma,T).
\end{align*}
\end{proposition}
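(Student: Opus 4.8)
The plan is to construct the strong solution for the combined pair $(A,B)$ by gluing together the strong solutions produced by the individual pairs $(A_j,B_j)$ over the sets $\U_j$, exploiting the crucial fact that each indicator $\one_{\U_j}$ is $\F_{s_j}$-measurable and hence commutes with the stochastic integral started at time $s_j$. First I would fix $f,g$ as in \eqref{eq:f_g_maximal_reg_sigma} and observe that for each $j$, since $\U_j\in\F_{s_j}$, since $\sigma=s_j$ on $\U_j$ (so $w_{\a}^{\sigma}=w_{\a}^{s_j}$ there) and since $f,g$ are supported on $\ll\sigma,T\rr$, the restrictions of $\one_{\U_j}f$ and $\one_{\U_j}g$ to $\llo s_j,T\rro$ belong to $L^p_{\Progress}(\llo s_j,T\rro,w_{\a}^{s_j};X_0)$ and $L^p_{\Progress}(\llo s_j,T\rro,w_{\a}^{s_j};\g(H,X_{1/2}))$ respectively. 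Then I would set $u_j:=\Sol_{s_j,(A_j,B_j)}(0,\one_{\U_j}f,\one_{\U_j}g)$, which by $(A_j,B_j)\in\MRtasj$ is a well-defined strong solution of \eqref{eq:diffAB_s} on $\ll s_j,T\rr$ lying in $L^p_{\Progress}(\llo s_j,T\rro,w_{\a}^{s_j};X_1)$; applying the localization property of Proposition~\ref{prop:causality_phi_revised_2}\eqref{it:localization_F_sigma_s} with $F=\O\setminus\U_j\in\F_{s_j}$ gives $\one_{\O\setminus\U_j}u_j=0$, i.e.\ $u_j=\one_{\U_j}u_j$.

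Next I would put $u:=\sum_{j=1}^N u_j$. Because the $\U_j$ partition $\O$ and each $u_j$ is supported on $\U_j$, this $u$ is strongly progressively measurable on $\ll\sigma,T\rr$, satisfies $\one_{\U_j}u=u_j$, and enjoys the decoupling identity $\|u\|_{L^p(\llo\sigma,T\rro,w_{\a}^{\sigma};X_1)}^p=\sum_{j}\|u_j\|_{L^p(\llo s_j,T\rro,w_{\a}^{s_j};X_1)}^p$, together with the analogous identity after applying $\ez_{\sigma}$ and passing to $\hz^{\theta,p}$-norms (here $\ez_{\sigma}u=\sum_j\ez_{s_j}u_j$, using Proposition~\ref{prop:change_p_q_eta_a}\eqref{it:ext_by_0_h_z} and disjointness of the $\U_j$). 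To check that $u$ solves \eqref{eq:diffAB_s} with $u_{\sigma}=0$ for $(A,B)$, I would take the strong-solution identity for $u_j$ on $[s_j,T]$, multiply through by $\one_{\U_j}$, move $\one_{\U_j}$ inside the stochastic integral $\int_0^t\one_{\ll s_j,T\rr}(\cdots)\,dW_H$ (legitimate since $\one_{\U_j}$ is $\F_{s_j}$-measurable, hence predictable on $[s_j,\infty)$, and the integrand vanishes on $[0,s_j]$), and then use linearity of $A_j,B_j$ with $u_j=\one_{\U_j}u_j$ and the pointwise identities $\one_{\U_j}A_j=\one_{\U_j}A$, $\one_{\U_j}B_j=\one_{\U_j}B$ on $[s_j,T]$, $\one_{\U_j}\one_{[s_j,T]}=\one_{\U_j}\one_{\ll\sigma,T\rr}$ and $\one_{\U_j}\int_{s_j}^t=\one_{\U_j}\int_{\sigma}^t$; summing over $j$ produces exactly the equation for $(A,B)$ on $\ll\sigma,T\rr$. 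Uniqueness follows by running this in reverse: any strong solution $v\in L^p(\llo\sigma,T\rro,w_{\a}^{\sigma};X_1)$ of the $(A,B)$-problem yields, after multiplication by $\one_{\U_j}$, a strong solution of the $(A_j,B_j)$-problem on $\ll s_j,T\rr$ with data $(0,\one_{\U_j}f,\one_{\U_j}g)$, so uniqueness in $\MRtasj$ forces $\one_{\U_j}v=u_j$, and hence $v=u$.

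Finally, for the quantitative statement I would combine the decoupling identities with the defining estimates of each $(A_j,B_j)\in\MRtasj$ — the $L^p(w_{\a}^{s_j};X_1)$-bound and, since $\sigma$ is finite-valued and each $s_j$ deterministic, the $L^p(\O;\hz^{\theta,p}(s_j,T,w_{\a}^{s_j};X_{1-\theta}))$-bounds for $\theta\in[0,\tfrac12)\setminus\{\tfrac{1+\a}{p}\}$ — so that, applying a decoupling identity separately to $\Sol_{\sigma,(A,B)}(0,\cdot,0)$ and to $\Sol_{\sigma,(A,B)}(0,0,\cdot)$ and using that $\ell^p$-sums dominate the individual terms, one obtains $(A,B)\in\MRtasigma$ together with the bound $K_{(A,B)}^{\theta,i,p,\a}(\sigma,T)\le\max_{j}K_{(A_j,B_j)}^{\theta,i,p,\a}(\sigma,T)$ for $i\in\{\deter,\stoc\}$, the right-hand side being read through the construction (recalling $\sigma=s_j$ on $\U_j$).

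I expect the main obstacle to be the stochastic-integral bookkeeping in the middle step: carefully justifying that the $\F_{s_j}$-measurable indicator $\one_{\U_j}$ may genuinely be pulled through $\int_0^t$ when the integrand is supported on $\llo s_j,T\rro$, and then aligning the various indicators $\one_{\U_j}$, $\one_{[s_j,T]}$ and $\one_{\ll\sigma,T\rr}$ so that the $N$ localized identities add up to precisely the equation for $(A,B)$. A closely related point is the measurability check that the piecewise-defined $u$ is strongly progressively measurable on the stochastic interval $\ll\sigma,T\rr$ and lies in the weighted spaces at the random initial time in the precise sense fixed at the end of Subsection~\ref{ss:stopping_time}.
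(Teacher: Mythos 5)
Your proposal is correct and follows essentially the same route as the paper's proof: you glue the solutions $\Sol_{s_j,(A_j,B_j)}(0,\one_{\U_j}f,\one_{\U_j}g)$ over the partition $(\U_j)_j$ using the localization property of Proposition~\ref{prop:causality_phi_revised_2}, then exploit the disjointness of the $\U_j$ and the fact that $\sigma=s_j$ on $\U_j$ to obtain the $\ell^p$-decoupling identity for the weighted norms, from which the estimate on $K^{\theta,i,p,\a}_{(A,B)}$ follows. The paper compresses the verification that the glued process is indeed the strong solution into a single appeal to Proposition~\ref{prop:causality_phi_revised_2}; your expansion — pulling $\one_{\U_j}$ through the stochastic integral, matching indicators, and the reverse argument for uniqueness — is exactly the bookkeeping that appeal hides.
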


\begin{proof}
We only consider $p>2$. Let $\Sol_j:=\Sol_{s_j,(A_j,B_j)}$ be the solution operator associated to $(A_j,B_j)$. Using Proposition \ref{prop:causality_phi_revised_2}, one can check that the unique solution to \eqref{eq:diffAB_s} with $f,g$ as in \eqref{eq:f_g_maximal_reg_sigma} and $u_{\sigma}=0$ is given by
\begin{equation}
\label{eq:gluing_u}
u:=\sum_{j=1}^N\one_{\U_j} \Sol_{j}(0,f,g)=\sum_{j=1}^N\one_{\U_j}\Sol_{j}(0,\one_{\U_j}f,\one_{\U_j}g),
\end{equation}
Therefore, if $f=0$, setting $K = \max_{j\in \{1, \ldots, N\}}K_{(A_j,B_j)}^{\theta,i, p, \a}(s_j, T)$ we obtain
\begin{align*}
\|u\|_{L^p_{\Progress}(\O;\hz^{\theta,p}(\sigma,T,w_{\a}^{\sigma};X_{1-\theta}))}^p &=
\sum_{j=1}^N \E[\one_{\U_j}
\|\Sol_{j}(0,0,\one_{\U_j}g)\|_{\hz^{\theta,p}(s_j,T,w_{\a}^{s_j};X_{1-\theta})}^p]\\
&\leq K^p\Big( \sum_{j=1}^N \E[\one_{\U_j}\|g\|_{L^p(s_j,T,w_{\a}^{s_j};\g(H,X_{1/2}))}^p]
\Big)
\\ &= K^p\|g\|_{L^p(\llo \sigma,T\rro ,w_{\a}^{\sigma};\g(H,X_{1/2}))}^p.
\end{align*}
This proves the required estimate for $K_{(A,B)}^{\stoc,\theta,p,\a}$. The other case is similar.
\end{proof}

We end this subsection with a result which shows that weighted maximal regularity implies unweighted maximal regularity for a shifted problem. Although in applications it is usually obvious that the latter holds, from a theoretical perspective it has some interest that weighted maximal regularity can be sufficient. It can be used to check Assumption \ref{H_a_stochnew} for blow-up criteria.

\begin{proposition}
\label{prop:change_initial_time}
Let Assumptions \ref{ass:X} and \ref{ass:AB_boundedness} be satisfied. Let $\tau$ be a stopping time such that $\sigma\leq \tau \leq T$ a.s. Assume that one of the following conditions holds:
\begin{itemize}
\item $\a=0$.
\item $\a>0$, $\sigma$ takes values in a finite set, where we suppose $s:=\inf\{\tau(\omega)-\sigma(\omega)\,:\,\omega\in \Omega\}>0$ and $r:=\sup\{T-\sigma(\om)\,:\,\omega\in \Omega\}$.
\end{itemize}
If $(A,B)\in \MRtasigma$, then $(A,B)\in \MRttau$, and
$$K^{j,\theta,p,0}_{(A,B)}(\tau,T)\leq s^{-\a/p} r^{\a/p} K^{j,\theta,p,\a}_{(A,B)}(\sigma,T),$$
for $j\in \{\deter,\stoc\}$ and $\theta\in [0,{1}/{2})$.
\end{proposition}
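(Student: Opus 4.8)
The plan is to reduce the unweighted problem on $\ll\tau,T\rr$ to the weighted problem on $\ll\sigma,T\rr$ by zero--extending the data, exploiting that $w_{\a}^{\sigma}(t)=(t-\sigma)^{\a}$ is comparable to a constant on $[\tau,T]$: for a.e.\ $\om$ and all $t\in(\tau(\om),T)$ one has $s\leq \tau(\om)-\sigma(\om)\leq t-\sigma(\om)\leq T-\sigma(\om)\leq r$, hence $s^{\a}\leq w_{\a}^{\sigma}(t)\leq r^{\a}$ (here $\a\geq 0$). If $\a=0$ the weight is trivial and the statement is just the restriction/causality assertion of Proposition \ref{prop:causality_phi_revised_2}, so I would assume $\a>0$; then $\sigma$ takes finitely many values and all the stopped function spaces below are well defined. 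The case $p=2$ forces $\a=0$, so only $p>2$, $\a>0$ is nontrivial.

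Given $f\in L^{p}_{\Progress}(\llo\tau,T\rro;X_{0})$ and $g\in L^{p}_{\Progress}(\llo\tau,T\rro;\g(H,X_{1/2}))$, set $\tilde f:=\one_{\llo\tau,T\rro}f$ and $\tilde g:=\one_{\llo\tau,T\rro}g$, viewed as strongly progressively measurable processes on $\ll\sigma,T\rr$. By the weight bound, $\|\tilde f\|_{L^{p}(\llo\sigma,T\rro,w_{\a}^{\sigma};X_{0})}\leq r^{\a/p}\|f\|_{L^{p}(\llo\tau,T\rro;X_{0})}$, and similarly for $\tilde g$. Since $(A,B)\in\MRtasigma$, the definition of stochastic maximal regularity and Proposition \ref{prop:start_at_s}\eqref{it:start_at_s1} provide the unique strong solution $v:=\Sol_{\sigma,(A,B)}(0,\tilde f,\tilde g)$ on $\ll\sigma,T\rr$, with $v\in L^p(\O;C([\sigma,T];\Xap))$ and with the weighted bounds on $\|v\|_{L^{p}(\llo\sigma,T\rro,w_{\a}^{\sigma};X_{1})}$ and on $\|v\|_{L^{p}(\O;\hz^{\theta,p}(\sigma,T,w_{\a}^{\sigma};X_{1-\theta}))}$ for $\theta\in[0,\tfrac12)\setminus\{\tfrac{1+\a}{p}\}$.

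Next I would check that $\hat v:=v|_{\ll\tau,T\rr}$ solves the shifted problem. Because $\tilde f,\tilde g$ are supported on $\llo\tau,T\rro$, Proposition \ref{prop:causality_phi_revised_2}\eqref{it:causality_sigma_tau} gives $v|_{\ll\sigma,\tau\rr}=\Sol_{\sigma,(A,B)}(0,0,0)=0$; in particular $v(\tau)=0$. Subtracting the integral identity defining $v$ at time $\tau$ from the one at time $t\in[\tau,T]$ and using that $\one_{\ll\sigma,T\rr}\equiv1$ on $[\tau,t]$ shows that $\hat v\in L^{p}_{\Progress}(\llo\tau,T\rro;X_{1})$ is a strong solution of \eqref{eq:diffAB_s} on $\ll\tau,T\rr$ with $u_{\tau}=0$ and data $(f,g)$; uniqueness of such a solution follows from $(A,B)\in\MRtasigmaz$ together with the fact that the zero extension of any competitor from $\ll\tau,T\rr$ to $\ll\sigma,T\rr$ is again a strong solution there (the equation on $\ll\sigma,\tau\rr$ being homogeneous with zero initial value).

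The last step, transferring norms, is the technical heart. The key point is that $v\equiv0$ on $\ll\sigma,\tau\rr$, so the zero extensions $\ez_{\sigma}v$ and $\ez_{\tau}\hat v$ to $\ll 0,T\rr$ coincide. Combining Proposition \ref{prop:change_p_q_eta_a}\eqref{it:ext_by_0_h_z} with the fact that a function supported on $[\tau,T]$ has equivalent $w_{\a}^{\sigma}$--weighted and unweighted $\A^{\theta,p}$--norms there, with constants $s^{-\a/p}$ and $r^{\a/p}$ (since $s^{\a}\leq w_{\a}^{\sigma}\leq r^{\a}$ on $[\tau,T]$), yields for each $\A\in\{\hz,H\}$ and admissible $\theta$
\[
\|\hat v\|_{L^{p}(\O;\A^{\theta,p}(\tau,T;X_{1-\theta}))}\leq s^{-\a/p}\,\|v\|_{L^{p}(\O;\A^{\theta,p}(\sigma,T,w_{\a}^{\sigma};X_{1-\theta}))},
\]
and likewise for the $L^{p}(X_{1})$--norm. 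Chaining this with the weighted maximal regularity estimate for $v$ and the data bound $\max\{\|\tilde f\|,\|\tilde g\|\}\leq r^{\a/p}\max\{\|f\|,\|g\|\}$ gives $(A,B)\in\MRttau$ with $K^{j,\theta,p,0}_{(A,B)}(\tau,T)\leq s^{-\a/p}r^{\a/p}K^{j,\theta,p,\a}_{(A,B)}(\sigma,T)$ for $j\in\{\deter,\stoc\}$ and $\theta\in[0,\tfrac12)$ away from the (finitely many) exceptional exponents; the one remaining value of $\theta$ is recovered by interpolating between nearby admissible exponents. I expect the main obstacle to be precisely making the behaviour of zero extension and restriction on the $\hz^{\theta,p}$--scale rigorous when passing between the weight $w_{\a}^{\sigma}$, which is singular at $\sigma$, and the unweighted scale on $[\tau,T]$ — and keeping track of measurability, since $\sigma$ is random; everything else in the argument is soft.
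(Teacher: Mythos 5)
Your proposal is correct and follows essentially the same route as the paper: causality (Proposition \ref{prop:causality_phi_revised_2}) to reduce to the weighted problem with zero-extended data, the observation that the solution vanishes on $\ll\sigma,\tau\rr$, and the factors $s^{-\a/p}$, $r^{\a/p}$ from weight comparison. The only real difference is that the paper does not need the two-sided norm equivalence you worry about: it simply chains the restriction bound and the weight-removal bound from Proposition \ref{prop:change_p_q_eta_a}\eqref{it:loc_embedding}, which directly gives $\|u\|_{\hz^{\theta,p}(\tau,T;X_{1-\theta})}\leq s^{-\a/p}\|u\|_{\hz^{\theta,p}(\sigma,T,w_{\a}^{\sigma};X_{1-\theta})}$ without any further work.
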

\begin{proof}
To prove the proposition, we only consider the case $p>2$ as the other case is simpler.
Let $u=\Sol_{\sigma,(A,B)}(0,\one_{\ll\tau,T\rr}f,\one_{\ll\tau,T\rr}g)$.
Then Proposition \ref{prop:causality_phi_revised_2} and the assumption on $(A,B)$ imply that $u|_{\ll \tau,T\rr}$ is the unique strong solution to
\begin{equation*}
du +Au dt =fdt+(Bu+g)dW_{H},\qquad u(\tau)=0,
\end{equation*}
and $u=0$ on $\ll \sigma,\tau\rr$.

If $\sigma<\tau$, then combining two estimates in Proposition \ref{prop:change_p_q_eta_a}\eqref{it:loc_embedding} we obtain
\[\|u\|_{\hz^{\theta,p}(\tau,T;X_{1-\theta})} \leq s^{-\a/p}\|u\|_{\hz^{\theta,p}(\tau,T, w_{\a}^{\sigma};X_{1-\theta})}
\leq s^{-\a/p}\|u\|_{\hz^{\theta,p}(\sigma,T,w_{\a}^{\sigma};X_{1-\theta})}.
\]
Clearly, the latter still holds with constant one if $\sigma=\tau$ and $\a=0$.

Therefore, by the assumption on $(A,B)$ for each $\theta\in  [0,\frac{1}{2})\setminus \{\frac{1+\a}{p}\}$, we obtain
\begin{align*}
\|u&\|_{L^p(\O;\hz^{\theta,p}(\tau,T;X_{1-\theta}))}
\\ & \leq s^{-\a/p}\|u\|_{L^p(\O;\hz^{\theta,p}(\sigma,T,w_{\a}^{\sigma};X_{1-\theta}))}
\\ & \leq s^{-\a/p} ( K_1\|\one_{\ll\tau,T\rr}f\|_{L^p(\llo \sigma,T\rro,w_{\a}^{\sigma};X_0)}+K_2\|\one_{\ll\tau,T\rr}g\|_{L^p\llo \sigma,T\rro, w_{\a}^{\sigma};\g(H,X_{1/2}))})
\\ & \leq s^{-\a/p} r^{\a/p}( K_1\|f\|_{L^p(\llo \tau,T\rro;X_0)}+K_2\|g\|_{L^p\llo \tau,T\rro;\g(H,X_{1/2}))}),
\end{align*}
where $K_1 = K^{\deter,\theta,p,\a}_{(A,B)}(\sigma,T)$ and $K_2 = K^{\stoc,\theta,p,\a}_{(A,B)}(\sigma,T)$ using the notation of \eqref{eq:constants_SMR}.
\end{proof}

\subsection{Perturbations}
In this subsection we will discuss a simple perturbation result which will be needed in Theorem \ref{t:blow_up_criterion} on blow-up criteria.
It is based on a version of the method of continuity, which extends the result \cite[Proposition 3.18]{VP18}  of Portal and the second author in several ways.

To simplify the notation for stopping times $\sigma,\tau$ such that $s\leq \tau\leq \sigma\leq T$, we set
\begin{align*}
E_\theta(\sigma,\tau) &= L^p_{\Progress}(\llo\sigma,\tau\rro,w_{\a}^{\sigma};X_\theta) \ \ \text{and}  \ \ E_\theta^{\gamma}(\sigma,\tau)  = L^p_{\Progress}(\llo\sigma,\tau\rro,w_{\a}^{\sigma};\gamma(H,X_\theta)), \ \ \ \theta\in [0,1].
\end{align*}

\begin{proposition}[Method of continuity]\label{prop:methodcont}
Let Assumptions \ref{ass:X} and \ref{ass:AB_boundedness} be satisfied.
Suppose that $(A,B)\in \mathcal{SMR}_{p,\a}(\sigma,T)$, where $\sigma$ is a stopping time with values in $[s,T]$.
Let $\wh{A}:\ll \sigma,T\rr\to\calL(X_1,X_0)$, $\wh{B}:\ll \sigma,T\rr\to \calL(X_1,\g(H,X_{1/2}))$ be strongly progressively measurable and assume there exists a constant $\wh{C}$ such that
\begin{align*}
\|\wh{A}(t,\omega)x\|_{X_0} +\|\wh{B}(t,\omega)x\|_{\g(H,X_{1/2})}\leq \wh{C}_{A,B} \|x\|_{X_1}, \ \ \  (t,\omega)\in \ll \sigma,T\rr,\  x\in X_1.
\end{align*}
Let \[A_{\lambda} = (1-\lambda)A + \lambda \wh{A} \ \ \text{and} \ \ B_{\lambda} = (1-\lambda)B + \lambda \wh{B}, \ \  \lambda\in [0,1].
\]
Suppose that there exist constants $C_{\deter},C_{\stoc}>0$ such that for all $\lambda\in [0,1]$, for all stopping time $\tau$ such that $\sigma\leq \tau\leq T$, $f\in E_0(\sigma,T)$, $g\in  E_{1/2}^{\gamma}(\sigma,T)$
and each $u\in E_1(\sigma,\tau)$ which is a strong solution on $\ll \sigma,\tau\rr$ to
\begin{equation}
\label{eq:diffAB_scont}
\begin{cases}
du(t) +A_{\lambda}udt=f dt+ (B_{\lambda}u+g)dW_H,\quad \text{on} \ \ll \sigma,T\rr,\\
u(\sigma)=0,
\end{cases}
\end{equation}
the following estimate holds
\begin{align}\label{eq:aprioriestcont}
\|u\|_{E_1(\sigma,\tau)}
\leq C_{\deter}\|f\|_{E_0(\sigma,\tau)}+ C_{\stoc}
\|g\|_{E_{1/2}^{\gamma}(\sigma,\tau)}.
\end{align}
Then $(\wh{A},\wh{B})\in \mathcal{SMR}_{p,\a}(\sigma,T)$ and $C_{(A,B)}^{j,0,p,\a}(\sigma,T)\leq C_{j}$ for $j\in \{\deter,\stoc\}$.
\end{proposition}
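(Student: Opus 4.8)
The plan is to run the classical method of continuity on the segment $\lambda \mapsto (A_\lambda, B_\lambda)$ joining $(A,B)$ (at $\lambda = 0$) to $(\wh A, \wh B)$ (at $\lambda = 1$). Define
\[
\Lambda := \{\lambda \in [0,1] : (A_\lambda, B_\lambda) \in \mathcal{SMR}_{p,\a}(\sigma,T)\}.
\]
By hypothesis $0 \in \Lambda$, so $\Lambda \neq \emptyset$. The goal is to show $\Lambda$ is both open and closed in $[0,1]$; connectedness then forces $\Lambda = [0,1]$, and in particular $1 \in \Lambda$, i.e.\ $(\wh A, \wh B) \in \mathcal{SMR}_{p,\a}(\sigma,T)$. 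The bound $C^{j,0,p,\a}_{(A,B)}(\sigma,T) \leq C_j$ will come out of the a priori estimate \eqref{eq:aprioriestcont}, which by assumption holds \emph{uniformly in $\lambda$}; this uniform a priori estimate is exactly what makes the continuity argument close.

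The key observation is a fixed-point/perturbation step: if $\lambda_0 \in \Lambda$ and $\lambda$ is close to $\lambda_0$, then for given $f \in E_0(\sigma,T)$, $g \in E_{1/2}^\gamma(\sigma,T)$, a strong solution $u$ of \eqref{eq:diffAB_scont} with parameter $\lambda$ and zero initial data can be rewritten as a solution of the equation with parameter $\lambda_0$ and modified inhomogeneities,
\[
du + A_{\lambda_0} u\,dt = \bigl(f + (\lambda_0 - \lambda)(\wh A - A)u\bigr)dt + \bigl(B_{\lambda_0}u + g + (\lambda_0 - \lambda)(\wh B - B)u\bigr)dW_H,
\]
noting $(\lambda_0 - \lambda)(\wh A - A)u \in E_0$ and $(\lambda_0 - \lambda)(\wh B - B)u \in E_{1/2}^\gamma$ by the uniform boundedness of $A, \wh A$ into $\calL(X_1,X_0)$ and of $B, \wh B$ into $\calL(X_1,\g(H,X_{1/2}))$ from Assumption \ref{ass:AB_boundedness} and the hypothesis on $\wh A, \wh B$. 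Since $\lambda_0 \in \Lambda$, the solution operator $\Sol_{\sigma,(A_{\lambda_0},B_{\lambda_0})}(0,\cdot,\cdot)$ exists and is bounded $E_0(\sigma,T) \times E_{1/2}^\gamma(\sigma,T) \to E_1(\sigma,T)$; so solving \eqref{eq:diffAB_scont} at parameter $\lambda$ amounts to solving a fixed-point equation $u = \Sol_{\sigma,(A_{\lambda_0},B_{\lambda_0})}(0, f + (\lambda_0-\lambda)(\wh A - A)u, g + (\lambda_0-\lambda)(\wh B - B)u)$ in $E_1(\sigma,T)$. When $|\lambda - \lambda_0|$ is small enough (smaller than a threshold depending only on $C^{\deter,0,p,\a}_{(A_{\lambda_0},B_{\lambda_0})}$, $C^{\stoc,0,p,\a}_{(A_{\lambda_0},B_{\lambda_0})}$, $C_{A,B}$, $\wh C_{A,B}$) the map is a contraction, giving existence of a strong solution in $E_1(\sigma,\tau)$; the required norm estimate then follows from the \emph{uniform} bound \eqref{eq:aprioriestcont} (which applies to any strong solution at any $\lambda$, so in particular gives uniqueness in $E_1$ as well). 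Hence a whole neighbourhood of $\lambda_0$ lies in $\Lambda$, so $\Lambda$ is open; a symmetric argument (or simply that the contraction threshold can be taken uniform once one notes \eqref{eq:aprioriestcont} controls $C^{j,0,p,\a}_{(A_\lambda,B_\lambda)}$ uniformly by $C_j$) shows $\Lambda$ is closed, since if $\lambda_n \to \lambda$ with $\lambda_n \in \Lambda$, pick $n$ with $|\lambda_n - \lambda|$ below the threshold determined by the uniform constants $C_j$, and run the perturbation step at $\lambda_0 = \lambda_n$ to conclude $\lambda \in \Lambda$.

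The main obstacle I expect is bookkeeping at the random initial time $\sigma$: one must make sure the perturbation step respects progressive measurability and that the norm identities on the stochastic interval $\llo\sigma,T\rro$ (and the stopped versions $\llo\sigma,\tau\rro$) behave as in the constant-time case — but this is handled by the framework set up in Subsection \ref{ss:solution_operator}, in particular the localization/causality Proposition \ref{prop:causality_phi_revised_2} and the fact that $\Sol_{\sigma,(A,B)}$ is already defined and bounded on the relevant spaces \eqref{eq:u_sigma_f_g_max_reg}. A second, more technical point is that the a priori estimate \eqref{eq:aprioriestcont} is assumed for strong solutions that already lie in $E_1(\sigma,\tau)$; so in the openness step one must first \emph{produce} such a solution (via the contraction, which lands in $E_1$) before invoking \eqref{eq:aprioriestcont}, and one must check the contraction constant genuinely depends only on the listed quantities and not on $f,g,\tau$. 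Once these measurability and uniformity points are checked, the connectedness argument is immediate and the constant bound $C^{j,0,p,\a}_{(A,B)}(\sigma,T) \leq C_j$ is read off directly from \eqref{eq:aprioriestcont} applied to $u = \Sol_{\sigma,(A,B)}(0,f,g)$ at $\lambda = 0$.
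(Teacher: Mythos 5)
Your proposal is correct and takes essentially the same approach as the paper: a contraction/perturbation step powered by the boundedness of $\Sol_{\sigma,(A_{\lambda_0},B_{\lambda_0})}$ and the $\lambda$-uniform a priori estimate \eqref{eq:aprioriestcont}. The paper phrases it as a direct rightward march $0 \to \varepsilon_0 \to 2\varepsilon_0 \to \cdots \to 1$ with uniform step size $\varepsilon_0 = \tfrac{1}{2C(\wh{C}_{A,B}+C_{A,B})}$, while you phrase it as open-plus-closed-plus-connected, but you correctly note that the uniform threshold (coming from \eqref{eq:aprioriestcont} controlling the contraction constant independently of $\lambda$) makes the two framings equivalent.
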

Of course the above result can be combined with Proposition \ref{prop:time_transference} to find a similar result for $\MRtasigma$.
In case $\MRtasigma\neq \emptyset$, it is enough to prove \eqref{eq:aprioriestcont} assuming also optimal space-time regularity for $u$. The latter observation can be useful in certain situations (see e.g.\ \cite[Proposition 4.1]{AHHS21}). For convenience we give a more precise formulation in the following remark.

\begin{remark}
Let the assumptions of Proposition \ref{prop:methodcont} be satisfied and suppose that $\MRtasigma\neq \emptyset$. Then $(A,B)\in \MRtasigma$ if the estimate \eqref{eq:aprioriestcont} holds with constants independent of $\lambda$ and any strong solution $u$ on $\ll 0,\tau\rr$ to \eqref{eq:diffAB_scont} satisfying
\begin{itemize}
\item $u\in L^p(\O;\hz^{\theta,p}(\sigma,\tau,w_{\a}^{\sigma};X_{1-\theta}))$ for all $\theta\in [0,\frac{1}{2})$ if $p>2$;
\item $u\in L^2(\O\times (\sigma,\tau);X_{1}) \cap L^2(\O;C([\sigma,\tau];X_{1/2}))$ if $p=2$.
\end{itemize}
To see this one can repeat the argument in \cite[Proposition 3.8]{AV19_QSEE_1} to show that if $u\in E_1(\sigma,\tau)$  satisfies \eqref{eq:diffAB_scont} and $\MRtasigma\neq \emptyset $, then $u$ also has the above space-time regularity.
\end{remark}

\begin{proof}
Uniqueness of the solution to \eqref{eq:diffAB_scont} is clear from \eqref{eq:aprioriestcont}. It remains to show the existence of strong solution on $\ll \sigma,T\rr$. Let $\Lambda\subseteq [0,1]$ denote the set  of all $\lambda$ such that for all $f\in E_0(\sigma,T)$ and $g\in E^{\gamma}_{1/2}(\sigma,T)$, \eqref{eq:diffAB_scont} has a strong solution $u\in E_1(\sigma,T)$. Since $(\wh{A},\wh{B})\in \mathcal{SMR}_{p,\a}(\sigma,T)$, one has $0\in \Lambda$ and it is enough to check that $1\in \Lambda$. For this it is enough to show that there exists a $\varepsilon_0>0$  such that for any $\lambda\in \Lambda$ one has $[\lambda, \lambda+\varepsilon_0]\cap [0,1]\subseteq \Lambda$.

Let $\varepsilon_0 = [2C(\wh{C}_{A,B}+C_{A,B})]^{-1}$ where $C_{A,B}$ and $C$ are as in Assumption \ref{ass:AB_boundedness} and \eqref{eq:aprioriestcont}, respectively. Let $\lambda\in \Lambda$ and $\varepsilon\in (0,\varepsilon_0]$ be such that $\lambda+\varepsilon\leq 1$. It is enough to show $\lambda+\varepsilon \in \Lambda$. Given $v\in E_1(\sigma,T)$, let $L_{\varepsilon}(v) = u$, where $u$ is the unique strong solution to \eqref{eq:diffAB_scont} with $(f,g)$  replaced by
$
(f + \varepsilon(Av - \wh{A}v),g+ \varepsilon(\wh{B}v - Bv)).
$

Since $\lambda\in \Lambda$, $L_{\varepsilon}$ defines a mapping on $E_1(\sigma,T)$. By definition, for $v_1, v_2\in E_1(\sigma,T)$ one has that $u_{1,2} := L_{\varepsilon}(v_1) - L_{\varepsilon}(v_2)$ satisfies
\eqref{eq:diffAB_scont} with $(f,g)$ replaced by $(\varepsilon(Av - \wh{A}v), g + \varepsilon(\wh{B}v - Bv))$, where $v = v_1-v_2$. Therefore, by \eqref{eq:aprioriestcont},
\begin{align*}
\|L_{\varepsilon}(v_1) - L_{\varepsilon}(v_2)\|_{E_1(\sigma,T)} & = \|u_{1,2}\|_{E_1(\sigma,T)}
\\ & \leq C(\|\varepsilon(Av - \wh{A}v)\|_{E_0(\sigma,T)}+\|\varepsilon(\wh{B}v - Bv)\|_{E_{1/2}^{\gamma}(\sigma,T)})
\\ & \leq C (\wh{C}_{A,B}+C_{A,B}) \varepsilon \|v_1-v_2\|_{E_1(\sigma,T)}
\\ & \leq \frac12 \|v_1-v_2\|_{E_1(\sigma,T)}.
\end{align*}
By the Banach contraction principle it follows that there exists a unique $u\in E_1(\sigma,T)$, such that $L_{\varepsilon}(u) = u$, and thus $u$ is the unique strong solution of \eqref{eq:diffAB_scont} with $\lambda$ replaced by $\lambda+\varepsilon$. From this we can conclude that $\lambda+\varepsilon\in \Lambda$.

The final estimate is immediate from \eqref{eq:aprioriestcont} for $\lambda=1$.
\end{proof}

Now we are able to state and proof our perturbation result, where the main novelty is that we can allow initial random times. The perturbation is assumed to be small in terms of the maximal regularity constants $C^{\deter,0,p,\kappa}_{(A,B)}$ and $C^{\stoc,0,p,\kappa}_{(A,B)}$ introduced below \eqref{eq:soloperatorR1}, but this will be sufficient for the proof of the blow-up criteria of Theorem \ref{t:blow_up_criterion}. Other perturbation results allowing lower order terms can be found in \cite{VP18} and will be discussed in \cite{AV21_torus}.
\begin{corollary}[Perturbation]\label{cor:Pert2}
Let Assumptions \ref{ass:X} and \ref{ass:AB_boundedness} be satisfied.
Let $\sigma:\O\to [0,T]$ be a stopping time which takes values in a finite set if $\a>0$. Assume that $(A,B)\in \MRtasigma$. Let $\wh{A}:\ll \sigma,T\rr\to\calL(X_1,X_0)$, $\wh{B}:\ll \sigma,T\rr\to \calL(X_1,\g(H,X_{1/2}))$ be strongly progressively measurable such that for some positive constants $C_A,C_B,L_A,L_B$ and for all $x\in X_1$, a.s.\ for all $t\in (\sigma,T)$,
\begin{align*}
\|A(t,\omega)x - \wh{A}(t,\om)x\|_{X_0}&\leq C_A \|x\|_{X_1}, \ \
\|B(t,\omega)x - \wh{B}(t,\om)x\|_{\g(H,X_{1/2})}\leq C_{B} \|x\|_{X_1}.
\end{align*}
If $\delta_{A,B}:=C^{\deter,0,p,\kappa}_{(A,B)}(\sigma,T)C_{A}+C^{\stoc,0,p,\kappa}_{(A,B)}(\sigma,T) C_{B}<1$, then $(\wh{A},\wh{B})\in \MRtasigma$.
\end{corollary}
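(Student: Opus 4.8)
The plan is to deduce this from the method of continuity, Proposition \ref{prop:methodcont}, applied to the affine interpolation family
\[
A_\lambda := (1-\lambda)A + \lambda\wh{A}, \qquad B_\lambda := (1-\lambda)B + \lambda\wh{B}, \qquad \lambda\in[0,1],
\]
which connects $(A_0,B_0)=(A,B)$ to $(A_1,B_1)=(\wh{A},\wh{B})$, and then to upgrade the conclusion from $\MRtasigmaz$ to $\MRtasigma$ by transference. First I would record the routine preliminaries: $\wh{A},\wh{B}$ satisfy Assumption \ref{ass:AB_boundedness} with constant $C_{A,B}+\max\{C_A,C_B\}$ by the triangle inequality, and since $(A,B)\in\MRtasigma\subseteq\MRtasigmaz$ the hypothesis of Proposition \ref{prop:methodcont} on the base couple is in force and the constants $C^{\deter,0,p,\a}_{(A,B)}(\sigma,T)$, $C^{\stoc,0,p,\a}_{(A,B)}(\sigma,T)$ are well defined (here $\sigma$ is finitely valued if $\a>0$). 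Throughout one may restrict to $p>2$, the case $p=2$ being simpler.

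The heart of the matter is verifying the a priori estimate \eqref{eq:aprioriestcont} for the whole family $(A_\lambda,B_\lambda)$ with $\lambda$-independent constants. Given a stopping time $\tau$ with $\sigma\leq\tau\leq T$ and a strong solution $u\in E_1(\sigma,\tau)$ on $\ll\sigma,\tau\rr$ of \eqref{eq:diffAB_scont}, I would move the perturbation to the right-hand side: writing $A_\lambda u=Au+\lambda(\wh{A}-A)u$ and $B_\lambda u=Bu+\lambda(\wh{B}-B)u$ one sees that $u$ is a strong solution on $\ll\sigma,\tau\rr$ of \eqref{eq:diffAB_s} for the couple $(A,B)$, with zero initial value and forcings $\hat{f}:=f+\lambda(A-\wh{A})u\in E_0(\sigma,\tau)$ and $\hat{g}:=g+\lambda(\wh{B}-B)u\in E^{\gamma}_{1/2}(\sigma,\tau)$, where the assumed bounds on $A-\wh{A}$, $B-\wh{B}$ give $\|(A-\wh{A})u\|_{E_0(\sigma,\tau)}\leq C_A\|u\|_{E_1(\sigma,\tau)}$ and $\|(\wh{B}-B)u\|_{E^{\gamma}_{1/2}(\sigma,\tau)}\leq C_B\|u\|_{E_1(\sigma,\tau)}$. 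Since $(A,B)\in\MRtasigmaz$, the causality statement Proposition \ref{prop:causality_phi_revised_2}\eqref{it:causality_sigma_tau} (applied to the zero-extended forcings) identifies $u$ on $\ll\sigma,\tau\rr$ with $\Sol_{\sigma,(A,B)}(0,\one_{\llo\sigma,\tau\rro}\hat{f},\one_{\llo\sigma,\tau\rro}\hat{g})$, so by linearity of $\Sol_{\sigma,(A,B)}$ and the definition of the above constants,
\[
\|u\|_{E_1(\sigma,\tau)}\leq C^{\deter,0,p,\a}_{(A,B)}(\sigma,T)\,\|\hat{f}\|_{E_0(\sigma,\tau)}+C^{\stoc,0,p,\a}_{(A,B)}(\sigma,T)\,\|\hat{g}\|_{E^{\gamma}_{1/2}(\sigma,\tau)}.
\]
Substituting the bounds for $\hat{f},\hat{g}$ and using $\lambda\leq1$, the terms involving $\|u\|_{E_1(\sigma,\tau)}$ on the right carry the factor $\lambda\delta_{A,B}\leq\delta_{A,B}<1$ and can be absorbed into the left-hand side, which yields \eqref{eq:aprioriestcont} with $C_{\deter}=C^{\deter,0,p,\a}_{(A,B)}(\sigma,T)/(1-\delta_{A,B})$ and $C_{\stoc}=C^{\stoc,0,p,\a}_{(A,B)}(\sigma,T)/(1-\delta_{A,B})$, both independent of $\lambda$ and $\tau$.

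With this estimate in hand, Proposition \ref{prop:methodcont} gives $(\wh{A},\wh{B})\in\MRtasigmaz$. To finish, I would promote this to the bullet class: $\MRtasigma$ is nonempty (it contains $(A,B)$) and $\sigma$ takes values in a finite set when $\a>0$, so Proposition \ref{prop:time_transference} applied to $(\wh{A},\wh{B})$ gives $(\wh{A},\wh{B})\in\MRtasigma$, as claimed. The one genuinely delicate point is the absorption step, which is exactly where the smallness hypothesis $\delta_{A,B}<1$ is used and where it matters that the constants $C^{\deter,0,p,\a}_{(A,B)}(\sigma,T)$, $C^{\stoc,0,p,\a}_{(A,B)}(\sigma,T)$ control the solution uniformly over all sub-intervals $\ll\sigma,\tau\rr$; the remaining ingredients — the causality statement of Proposition \ref{prop:causality_phi_revised_2}, Proposition \ref{prop:methodcont}, and the transference Proposition \ref{prop:time_transference} — are already available, so the rest is assembly.
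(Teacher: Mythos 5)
Your proof is correct and follows the same route as the paper: reduce to $\MRtasigmaz$ via Proposition \ref{prop:time_transference}, verify the $\lambda$-uniform a priori estimate \eqref{eq:aprioriestcont} by rewriting the $(A_\lambda,B_\lambda)$ equation as an $(A,B)$ equation with the perturbation moved into the data, identify $u$ with $\Sol_{\sigma,(A,B)}$ via Proposition \ref{prop:causality_phi_revised_2}, and absorb using $\delta_{A,B}<1$ before invoking Proposition \ref{prop:methodcont}. The only difference is that you spell out the absorption and the bookkeeping of Assumption \ref{ass:AB_boundedness} for $(\wh{A},\wh{B})$ explicitly, which the paper leaves implicit.
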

\begin{proof}
By Proposition \ref{prop:time_transference} and $(A,B)\in \MRtasigma$, it suffices to prove $(\wh{A},\wh{B})\in \MRtasigmaz$, and actually the proof shows that we only need $\MRtasigmaz$ for the latter. We will use the method of continuity of Proposition \ref{prop:methodcont}. In the notation introduced there, let $\lambda\in [0,1]$, and let $u\in E_1(\sigma,\tau)$ be a strong solution to \eqref{eq:diffAB_scont} on $\ll\sigma,\tau\rr$. It suffices to prove the a priori estimate  \eqref{eq:aprioriestcont}. Since $u(\sigma) = 0$,
\[du(t) +Audt=\big[f + \lambda (A- \wh{A})u  ]dt+ \big[Bu+ g + \lambda (\wh{B} - B)u\big]dW_H \ \ \text{on} \  \ll \sigma,\tau],\]
and $(A,B)\in \MRtasigmaz$, it follows from Proposition \ref{prop:causality_phi_revised_2} that a.s.\ on $\ll \sigma,\tau\rr$
\begin{align*}
u = \Sol_{\sigma,(A,B)}(0, \one_{\ll \sigma,\tau\rr} f + \lambda (A- \wh{A}) \one_{\ll \sigma,\tau\rr}u, \one_{\ll \sigma,\tau\rr}g + \lambda (\wh{B} - B)\one_{\ll \sigma,\tau\rr}u).
\end{align*}
Therefore, by the properties of $\Sol_{\sigma,(A,B)}$ we obtain
\begin{align*}
 \|u\|_{E_1(\sigma,\tau)}
&\leq C^{\deter,0,p,\kappa}_{(A,B)}(\sigma,T)\|\one_{\ll \sigma,\tau\rr}f +  \lambda (A- \wh{A})\one_{\ll \sigma,\tau\rr}u\|_{E_0(\sigma,T)}\\ & \quad +
C^{\stoc,0,p,\kappa}_{(A,B)}(\sigma,T) \|\one_{\ll \sigma,\tau\rr}g +\lambda (\wh{B} - B)\one_{\ll \sigma,\tau\rr}u\|_{E_{1/2}^{\gamma}(\sigma,\tau)}
\\ & \leq C^{\deter,0,p,\kappa}_{(A,B)}(\sigma,T)\|f\|_{E_0(\sigma,\tau)} + C^{\stoc,0,p,\kappa}_{(A,B)}(\sigma,T) \|g\|_{E_{1/2}^{\gamma}(\sigma,\tau)} +  \delta_{A,B}\|u\|_{E_1(\sigma,\tau)}.
\end{align*}
Therefore, \eqref{eq:aprioriestcont} follows, and this completes the proof.
\end{proof}

\section{Blow-up criteria for stochastic evolution equations}
\label{s:blow_up}
In this section we present blow-up criteria for parabolic stochastic evolution equations of the form:
\begin{equation}
\label{eq:QSEE}
\begin{cases}
du + A(\cdot,u) u dt = (F(\cdot,u)+f)dt + (B(\cdot,u) u +G(\cdot,u)+g) dW_{H},\\
u(s)=u_{s},
\end{cases}
\end{equation}
where $s\geq 0$. Moreover, in the main theorems below we will actually consider \eqref{eq:QSEE} on a finite time interval $[s,T]$ where $T\in (s,\infty)$ is fixed. Extensions to $T=\infty$ are straightforward consequences. Moreover, by using uniqueness and combining solutions and can always reduce to the finite interval case (see Subsection \ref{ss:globalgeneral}). Before stating our main results we first review local existence results for \eqref{eq:QSEE} proven in \cite{AV19_QSEE_1}.

\subsection{Nonlinear parabolic stochastic evolution equations in critical spaces}
\label{ss:quasi_revised}
To prove local existence for \eqref{eq:QSEE}, we need the following assumptions taken from \cite{AV19_QSEE_1}. We will refer to the condition as:

\textbf{Hypothesis} $\Hip$\label{H:hip}.

\let\ALTERWERTA\theenumi
\let\ALTERWERTB\labelenumi
\def\theenumi{(HA)}
\def\labelenumi{(HA)}
\begin{enumerate}
\item\label{HAmeasur}
Suppose Assumption \ref{ass:X} holds.
Let $A:[s,T]\times\Omega \times \Xap\to \calL(X_1,X_0)$ and $B:[s,T]\times\Omega\times \Xap \to \calL(X_1,\g(H,X_{1/2}))$. Assume that for all $x\in \Xap$ and $y\in X_1$, the maps $(t,\om)\mapsto A(t,\om,x)y$ and $(t,\om)\mapsto B(t,\om,x)y$ are strongly progressively measurable.

Moreover, for all $n\geq 1$, there exist $C_n,L_n\in \R_+$ such that for all $x,y\in \Xap$ with $\|x\|_{\Xap},\|y\|_{\Xap}\leq n$, $t\in [s,T]$, and a.a.\ $\om\in \O$
\begin{align*}
\|A(t,\om,x)\|_{\calL(X_1,X_0)}&\leq C_n (1+\|x\|_{\Xap}),\\
\|B(t,\om,x)\|_{\calL(X_1,\g(H,X_{1/2}))}&\leq C_n(1+ \|x\|_{\Xap}),\\
\|A(t,\om,x)-A(t,\om,y)\|_{\calL(X_1,X_0)}&\leq L_n \|x-y\|_{\Xap},\\
\|B(t,\om,x)-B(t,\om,y)\|_{\calL(X_1,\g(H,X_{1/2}))}&\leq L_n \|x-y\|_{\Xap}.
\end{align*}
\end{enumerate}
\let\theenumi\ALTERWERTA
\let\labelenumi\ALTERWERTB

\let\ALTERWERTA\theenumi
\let\ALTERWERTB\labelenumi
\def\theenumi{(HF)}
\def\labelenumi{(HF)}
\begin{enumerate}
\item\label{HFcritical} The map $F:[s,T]\times\Omega\times X_1 \to X_0$ decomposes as $F:=F_{\Tr}+F_{c}$, where
for all $x\in X_1$ the mappings $(t,\omega)\mapsto F_{\Tr}(t,\om,x)$ and $(t,\omega)\mapsto F_{c}(t,\om,x)$ are strongly progressively measurable. Moreover, $F_{\Tr}$  and $F_{c}$ satisfy the following estimates.
\begin{enumerate}[(i)]
\item\label{it:F_c} There exist $m_{F}\geq 1$, $\varphi_j \in (1-(1+\a)/p,1)$, $\beta_j\in (1-(1+\a)/p,\varphi_j]$, $\rho_j\geq 0$ for $j\in \{1,\dots,m_F\}$ such that
$F_c:[s,T]\times \O\times X_{1}\to X_0$ and for each $n\geq 1$ there exist $C_{c,n},L_{c,n}\in \R_+$ for which
\begin{equation}
\label{eq:F_c_estimates}
\begin{aligned}
\|F_c(t,\om,x)\|_{X_{0}}&\leq C_{c,n} \sum_{j=1}^{m_F}(1+\|x\|_{X_{\varphi_j}}^{\rho_j})\|x\|_{X_{\beta_j}}+C_{c,n},\\
\|F_c(t,\om,x)-F_c(t,\om,y)\|_{X_{0}}&\leq L_{c,n} \sum_{j=1}^{m_F}(1+\|x\|_{X_{\varphi_j}}^{\rho_j}+\|y\|_{X_{\varphi_j}}^{\rho_j})\|x-y\|_{X_{\beta_j}},
\end{aligned}
\end{equation}
a.s.\ for all $x,y\in X_{1}$, $t\in [s,T]$ such that $\|x\|_{\Xap},\|y\|_{\Xap}\leq n$. Moreover, $\rho_j,\varphi_j,\beta_j,\a$ satisfy
\begin{equation}
\label{eq:HypCritical}
\rho_j \Big(\varphi_j-1 +\frac{1+\a}{p}\Big) + \beta_j \leq 1, \qquad j\in \{1,\dots,m_{F}\}.
\end{equation}
\item For each $n\in \N$ there exist $L_{\Tr,n},C_{\Tr,n}\in \R_+$ such that the mapping $F_{\Tr}:[s,T]\times\Omega\times\Xap\to X_0$ satisfies
\begin{align*}
\|F_{\Tr}(t,\om,x)\|_{X_0}&\leq C_{\Tr,n}(1+\|x\|_{\Xap}),\\
\|F_{\Tr}(t,\om,x)-F_{\Tr}(t,\om,y)\|_{X_0}&\leq L_{\Tr,n}\|x-y\|_{\Xap},
\end{align*}
for a.a.\ $\omega\in \O$, for all $t\in [s,T]$ and $\|x\|_{\Xap},\|y\|_{\Xap}\leq n$.
\end{enumerate}

\end{enumerate}
\let\theenumi\ALTERWERTA
\let\labelenumi\ALTERWERTB

\let\ALTERWERTA\theenumi
\let\ALTERWERTB\labelenumi
\def\theenumi{(HG)}
\def\labelenumi{(HG)}
\begin{enumerate}
\item\label{HGcritical} The map $G:[s,T]\times\Omega\times X_1 \to \g(H,X_{1/2})$ decomposes as $G:=G_{\Tr}+G_{c}$, where for all $x\in X_1$ the mappings $(t,\omega)\mapsto G_{\Tr}(t,\om,x)$ and $(t,\omega)\mapsto G_{c}(t,\om,x)$ are strongly progressively measurable. Moreover, $G_{\Tr}$ and $G_{c}$
    satisfy the following estimates.
\begin{enumerate}[(i)]
\item\label{it:G_c} There exist $m_G\geq 1$, $\varphi_j \in (1-(1+\a)/p,1)$, $\beta_j\in (1-(1+\a)/p,\varphi_j]$, $\rho_j\geq  0$  for $j=m_F+1,\dots,m_F+m_G$  such that
$G_c:[s,T]\times \O\times X_{1}\to X_0$ and for each $n\geq 1$ there exist $C_{c,n},L_{c,n}\in \R_+$ for which
\begin{equation}
\label{eq:G_c_estimates}
\begin{aligned}
\|G_c(t,\om,x)\|_{\g(H,X_{1/2})}&\leq C_{c,n} \sum_{j=m_F+1}^{m_F+m_G}(1+\|x\|_{X_{\varphi_j}}^{\rho_j})\|x\|_{X_{\beta_j}}+C_{c,n},\\
\|G_c(t,\om,x)-G_c(t,\om,y)\|_{\g(H,X_{1/2})}&\leq  L_{c,n}\sum_{j=m_F+1}^{m_F+m_G} (1+\|x\|_{X_{\varphi_j}}^{\rho_j}+\|y\|_{X_{\varphi_j}}^{\rho_j})\|x-y\|_{X_{\beta_j}},
\end{aligned}
\end{equation}
a.s.\ for all $x,y\in X_{1}$, $t\in [s,T]$ such that $\|x\|_{\Xap},\|y\|_{\Xap}\leq n$.
Moreover, $\varphi_j,\beta_j,\a$ satisfy
\begin{equation}
\label{eq:HypCriticalG}
\rho_j \Big(\varphi_j-1 +\frac{1+\a}{p}\Big) + \beta_j \leq 1, \qquad j=m_F+1,\dots,m_F+m_G.
\end{equation}
\item For each $n\in \N$ there exist constants $L_{\Tr,n},C_{\Tr,n}$ such that $G_{\Tr}:[s,T]\times\Omega\times\Xap\to X_0$ satisfies
\begin{align*}
\|G_{\Tr}(t,\om,x)\|_{\g(H,X_{1/2})}&\leq C_{\Tr,n}(1+\|x\|_{\Xap}),\\
\|G_{\Tr}(t,\om,x)-G_{\Tr}(t,\om,y)\|_{\g(H,X_{1/2})}&\leq L_{\Tr,n}\|x-y\|_{\Xap},
\end{align*}
for a.a.\ $\omega\in \O$, for all $t\in [s,T]$ and $\|x\|_{\Xap},\|y\|_{\Xap}\leq n$.
\end{enumerate}

\end{enumerate}
\let\theenumi\ALTERWERTA
\let\labelenumi\ALTERWERTB

\let\ALTERWERTA\theenumi
\let\ALTERWERTB\labelenumi
\def\theenumi{(Hf)}
\def\labelenumi{(Hf)}
\begin{enumerate}
\item\label{Hf}
$f\in L^0_{\Progress}(\O;L^p(\I_T,w_{\a};X_0))$ and $g\in L^0_{\Progress}(\O;L^p(\I_T,w_{\a};\g(H,X_{1/2})))$.
\end{enumerate}
\let\theenumi\ALTERWERTA
\let\labelenumi\ALTERWERTB
Following \cite{CriticalQuasilinear} or \cite{AV19_QSEE_1}, we say that $\Xap$ is a \emph{critical space} for \eqref{eq:QSEE} if for some $j\in\{1,\dots,m_F+m_G\}$ equality holds in \eqref{eq:HypCritical} or \eqref{eq:HypCriticalG}. In this case the corresponding power of the weight $\a:=\a_{\crit}$ will be called \emph{critical}. A loose introduction to criticality can be found in \cite[Section 1.1]{AV19_QSEE_1}.

Next we recall the definitions of a strong, local, and maximal solution from \cite{AV19_QSEE_1}.
\begin{definition}[Strong solution]
\label{def:solution1}
Let Hypothesis \hyperref[H:hip]{$\Hip$} be satisfied and let $\sigma$ be a stopping time with $s\leq \sigma\leq T$. A strongly progressively measurable process $u$ on $\ll s,\sigma\rr$
satisfying
$$u\in L^p(s,\sigma,w_{\a}^s;X_1)\cap C([s,\sigma];\Xap)\ \ a.s.$$
is called an $L^p_{\a}$-{\em strong solution} of \eqref{eq:QSEE} on $\ll s,\sigma\rr$ if $F(\cdot, u)\in L^p(s,\sigma,w_{\a};X_0)$ and $G(\cdot, u)\in L^p(s,\sigma,w_{\a};\gamma(H,X_{1/2}))$ a.s.\ and the following identity holds a.s.\ and for all $t\in[s,\sigma]$,
\begin{equation}
\label{eq:identity_sol}
\begin{aligned}
u(t)-u_s &+\int_{s}^{t} A(r,u(r))u(r)dr=\int_{s}^{t} (F(r,u(r))+f(r))dr \\
&+ \int_{s}^t\one_{[s,\sigma]}(r) (B(r,u(r))u(r)+G(r,u(r))+g(r))\,dW_H(r).
\end{aligned}
\end{equation}
\end{definition}

As noted in \cite{AV19_QSEE_1}, the conditions in Definition \ref{def:solution1} ensure that the integrals in \eqref{eq:identity_sol} are well-defined.

\begin{definition}[Local and maximal solution]
\label{def:solution2}
Let hypothesis \hyperref[H:hip]{$\Hip$} be satisfied  and let $\sigma$ be a stopping time with $s\leq \sigma\leq T$ a.s. Moreover, let $u:\ll s,\sigma\rro\rightarrow X_1$ be strongly progressively measurable.
\begin{itemize}
\item $(u,\sigma)$ is called an $L^p_{\a}$-{\em local solution to \eqref{eq:QSEE} on $[s,T]$}, if there exists an increasing sequence $(\sigma_n)_{n\geq 1}$ of stopping times such that $\lim_{n\to \infty} \sigma_n =\sigma$ a.s.\ and $u|_{\ll s,\sigma_n\rr}$ is an $L^p_{\a}$-strong solution to \eqref{eq:QSEE} on $\ll s,\sigma_n\rr$. In this case, $(\sigma_n)_{n\geq 1}$ is called a {\em localizing sequence} for the $L^p_{\a}$-local solution $(u,\sigma)$;
\item an $L^p_{\a}$-local solution $(u,\sigma)$ of \eqref{eq:QSEE} is called {\em unique}, if for every $L^p_{\a}$-local solution $(v,\tau)$ for a.a.\ $\om\in \Omega$ and for all $t\in [s,\tau(\om)\wedge \sigma(\omega))$  one has $v(t,\omega)=u(t,\omega)$;
\item a unique $L^p_{\a}$-local solution $(u,\sigma)$ to \eqref{eq:QSEE} on $[s,T]$ is called an {\em $L^p_{\a}$-maximal local solution on $[s,T]$}, if for any other unique $L^p_{\a}$-local solution $(v,\tau)$ to \eqref{eq:QSEE} on $[s,T]$, we have a.s.\ $\tau\leq \sigma$ and  for a.a.\ $\om \in \Omega$ and all $t\in [s,\tau(\om))$, $u(t,\omega)=v(t,\omega)$.
\end{itemize}
\end{definition}

We will omit the ``on $[s,T]$'' whenever $s,T$ are fixed. In Subsection \ref{ss:globalgeneral} we extend the above to the case $T=\infty$. Moreover,  we omit the prefix $L^p_{\a}$ if no confusion seems likely.
The above definition can be extended verbatim to random initial times $\tau$ instead of $s$. This setting will be needed in some of the preliminary results and will, in particular, be considered in Subsection \ref{ss:QSEE_tau}.

Note that $L^p_{\a}$-maximal local solutions are unique by definition. In addition, an (unique) $L^p_{\a}$-strong solution $u$ on $\ll s,\sigma\rr$ gives an (unique) $L^{p}_{\a}$-local solution $(u,\sigma)$ to \eqref{eq:QSEE} on $[s,T]$.
Finally, under suitable assumptions, one can omit the word  ``unique'' in the definition of maximal $L^p_{\a}$-maximal local solutions, see  Remark \ref{r:uniqueness_maximal} below.


Given $u_s\in L^0_{\F_s}(\O;\Xap)$ we denote by $(u_{s,n})_{n\geq 1}$ a sequence such that
\begin{equation}
\label{eq:approximating_sequence_initial_data}
u_{s,n}\in L^{\infty}_{\F_s}(\O;\Xap),  \quad  \text{ and }\quad  u_{s,n}=u_s  \   \text{ on  }  \ \{\|u_s\|_{\Xap}\leq n\}.
\end{equation}
A possible choice would be to set $u_{s,n} = R_n(u_s)$ where
\begin{equation}
\label{eq:truncation}
R_n(x)=x,\quad \text{ if } \|x\|_{\Xap}\leq n,\quad  \text{otherwise}\quad  R_n(x):=n x/\|x\|_{\Xap}.
\end{equation}

The following is the main local well-posedness result of the first part of our work and was proved in \cite[Theorem 4.7]{AV19_QSEE_1}.
\begin{theorem}[Local well-posedness]
\label{t:local_s}
Let Hypothesis \hyperref[H:hip]{$\Hip$} be satisfied. Let $u_s\in L^0_{\F_s}(\O;\Xap)$ and that \eqref{eq:approximating_sequence_initial_data} holds for some $(u_{s,n})_{n\geq 1}$. Suppose that
\begin{equation}
\label{eq:stochastic_maximal_regularity_assumption_local_extended}
(A(\cdot,u_{s,n}),B(\cdot,u_{s,n}))\in \MRtas, \ \ \ n\geq 1.
\end{equation}
Then the following assertions hold:
\begin{enumerate}[{\rm (1)}]
\item\label{it:regularity_data_L0}{\rm (Existence and regularity)} There exists an $L^p_{\a}$-maximal local solution $(u,\sigma)$ to \eqref{eq:QSEE} such that $\sigma>s$ a.s. Moreover, for each localizing sequence $(\sigma_n)_{n\geq 1}$ for $(u,\sigma)$  one has
\begin{itemize}
\item if $p>2$ and $\a\in [0,\frac{p}{2}-1)$, then for all $\theta\in [0,\frac{1}{2})$ and $n\geq 1$,
 \[u\in H^{\theta,p}(s,{\sigma_n},w_{\a}^s;X_{1-\theta}) \cap C([s,{\sigma_n}];\Xap)  \ \ \ a.s.\]
Moreover, $u$ instantaneously regularizes to $u\in C((s,\sigma_n];\Xp)$ a.s.;
\item if $p=2$  and $\a=0$, then for all $n\geq 1$,
 \[u\in L^2(s,{\sigma_n};X_{1}) \cap C([s,{\sigma_n}];X_{1/2})  \ \ \ a.s.\]
\end{itemize}
\item\label{it:localization_L0} {\rm (Localization)} If $(v,\tau)$ is an $L^p_{\a}$-maximal local solution to \eqref{eq:QSEE} with initial data $v_s\in L^{0}_{\F_s}(\O;\Xap)$, then setting $\Gamma:=\{v_s=u_s\}$, one has
$$
\tau|_{\Gamma}=\sigma|_{\Gamma},\qquad  v|_{\Gamma\times [s,\tau)}=u|_{\Gamma\times [s,\sigma)}.
$$
\end{enumerate}
\end{theorem}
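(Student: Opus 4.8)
Since the statement is exactly \cite[Theorem~4.8]{AV19_QSEE_1}, the shortest route is to cite Part~I; but let me describe how the proof goes. The plan is to fix $n\in\N$, work first with the bounded initial data $u_{s,n}$, and run a fixed point argument after \emph{freezing the leading coefficients} at $u_{s,n}$. Writing $\tA:=A(\cdot,u_{s,n})$ and $\tB:=B(\cdot,u_{s,n})$, assumption \eqref{eq:stochastic_maximal_regularity_assumption_local_extended} gives $(\tA,\tB)\in\MRtas$, so I would recast \eqref{eq:QSEE} as
\begin{equation*}
du+\tA u\,dt=\big[(\tA-A(\cdot,u))u+F(\cdot,u)+f\big]\,dt+\big[\tB u+(\tB-B(\cdot,u))u+G(\cdot,u)+g\big]\,dW_H,
\end{equation*}
with $u(s)=u_{s,n}$. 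By Proposition~\ref{prop:start_at_s}, an $L^p_\a$-strong solution on $[s,T']$ is then exactly a fixed point of the map $\Phi$ sending $v$ to $\Sol_{s,(\tA,\tB)}$ applied to the bracketed terms evaluated at $v$; the natural setting is a small ball of radius $\rho$ centred at the reference solution $u_\flat:=\Sol_{s,(\tA,\tB)}(u_{s,n},f,g)$ inside $L^p_{\Progress}((s,T')\times\O,w_\a^s;X_1)\cap C([s,T'];\Xap)$.

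The heart of the matter is the nonlinear estimate showing $\Phi$ is a strict self-map of such a ball once $T'=T'(n)$ is chosen small. For the quasilinear terms the Lipschitz bounds in Hypothesis~\hyperref[H:hip]{$\Hip$} yield, on the ball, a bound of the form $\big(\|u_\flat-u_{s,n}\|_{C([s,T'];\Xap)}+\rho\big)\|v\|_{L^p(s,T',w_\a^s;X_1)}$ for $\|(\tA-A(\cdot,v))v\|_{L^p(s,T',w_\a^s;X_0)}$, and likewise for differences; here it is crucial that no smallness of $B$ enters, because the term $\tB u$ is absorbed into $\Sol_{s,(\tA,\tB)}$ through the full $\MRtas$ constant (including its stochastic part). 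Since $u_\flat(s)=u_{s,n}$, continuity gives $\|u_\flat-u_{s,n}\|_{C([s,T'];\Xap)}\to0$ as $T'\downarrow s$. For the critical semilinear parts $F_c,G_c$ I would combine the trace embedding (Proposition~\ref{prop:continuousTrace}), the mixed-derivative inequality (Lemma~\ref{l:mixed_derivative}) and H\"older's inequality in time to control $\|F_c(\cdot,v)\|_{L^p(s,T',w_\a^s;X_0)}$ by products of $X_{\varphi_j}$- and $X_{\beta_j}$-norms; the scaling identity \eqref{eq:HypCritical} is exactly the homogeneity balance that lets this close, and in the \emph{subcritical} case it even produces a positive power of $T'$. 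In the \emph{critical} case the needed smallness comes instead from $\|u_\flat\|_{L^p(s,T',w_\a^s;X_1)}\to0$ as $T'\downarrow s$ by dominated convergence, so the leading homogeneous contribution is small on the ball. Choosing $\rho$ and $T'$ depending on $n$, Banach's fixed point theorem yields a unique local strong solution, whose finer regularity in \eqref{it:regularity_data_L0} follows from the $\hz^{\theta,p}(w_\a^s;X_{1-\theta})$-bounds built into $\MRtas$, via the transference Proposition~\ref{prop:time_transference}.

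Passing to the maximal solution and removing the parameter $n$ I would do by the standard localization machinery: gluing two local solutions on their overlap (legitimate by causality of $\Sol$, Proposition~\ref{prop:causality_phi_revised_2}\eqref{it:causality_sigma_tau}) and taking the supremum over an exhausting sequence of stopping times gives an $L^p_\a$-maximal local solution $(u,\sigma)$ with $\sigma>s$, while uniqueness on overlaps follows from the contraction together with a stopping-time argument. Because $u_{s,n}=u_{s,m}=u_s$ on $\{\|u_s\|_{\Xap}\le n\}$ for $m\ge n$, the corresponding solutions agree there by the localization property of $\Sol$ (Proposition~\ref{prop:causality_phi_revised_2}\eqref{it:localization_F_sigma_s}) and uniqueness, so they patch into a single solution for general $u_s\in L^0_{\F_s}(\O;\Xap)$; the same principle gives the localization statement \eqref{it:localization_L0} on $\Gamma=\{v_s=u_s\}$. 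Finally, instantaneous regularization $u\in C((s,\sigma_n];\Xp)$ for $p>2$ is read off by applying the weight-free trace embedding of Proposition~\ref{prop:continuousTrace}\eqref{it:trace_without_weights_Xp} to the $H^{\theta,p}(s,\sigma_n,w_\a^s;X_{1-\theta})\cap L^p(s,\sigma_n,w_\a^s;X_1)$-regularity on each $[s,\sigma_n]$.

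The step I expect to be the main obstacle is the critical nonlinear estimate: there is no uniform gain of a power of $T'$ there, so the contraction has to be extracted from a delicate interplay between the ball radius $\rho$, the vanishing of $\|u_\flat\|_{L^p(s,T',w_\a^s;X_1)}$ and of $\|u_\flat-u_{s,n}\|_{C([s,T'];\Xap)}$ as $T'\downarrow s$, and the $n$-dependent Lipschitz constants, leaning heavily on the sharp forms of the mixed-derivative and trace embeddings together with the identity \eqref{eq:HypCritical}.
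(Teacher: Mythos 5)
Your instinct to simply cite \cite[Theorem~4.8]{AV19_QSEE_1} is exactly what the paper does: Theorem~\ref{t:local_s} is recalled verbatim from Part~I without a new proof. The sketch you give is a faithful account of the main ingredients, but the technical vehicle differs in one genuine way from what Part~I actually does (visible here through Proposition~\ref{prop:local_sigma}, which the paper says is ``a variation of the argument in \cite[Theorem~4.6]{AV19_QSEE_1}''). You run a Banach fixed point on a small ball of radius $\rho$ around the reference solution $u_\flat$ on $[s,T']$ with $T'$ deterministic; Part~I instead introduces path\-dependent truncations $\Theta_\lambda,\Psi_\lambda$ built from the accumulated $\X$\textendash and $\Xap$\textendash norms (compare \eqref{eq:truncation_Theta_Psi}), so the truncated nonlinearities are globally Lipschitz on the whole solution space and the contraction holds there; the stopping time defining the local solution is then the first time the truncation activates, and this random time is what goes into the localizing sequence. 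The cutoff mechanism buys two things your ball argument has to work harder for: (i) the self\-mapping and Lipschitz estimates are uniform in $\om$ because the cutoffs bound the relevant norms by $\lambda$ pointwise, with no a~priori control on $u_\flat$ needed beyond the maximal regularity constant, and (ii) the maximal solution and its localizing sequence come out of the construction directly, which is important in Part~II where blow-up criteria are built on exactly these stopping times. Two minor slips in your sketch: in the decomposition of the stochastic term you should have $\tB u+(B(\cdot,u)-\tB)u$ rather than $\tB u+(\tB-B(\cdot,u))u$; and the smallness that closes the critical estimate is that of $\|u_\flat\|_{\X(s,T')}$ (equivalently the full $H^{\delta,p}(w_\a^s;X_{1-\delta})\cap L^p(w_\a^s;X_1)$ norm of $u_\flat$, which embeds into $\X$ with a $T'$\textendash uniform constant by Lemma~\ref{l:embeddings}), not just of $\|u_\flat\|_{L^p(s,T',w_\a^s;X_1)}$; the residual ``$1$'' factor in the Lipschitz bound for $F_c,G_c$ is handled separately since $\beta_j<1$ still produces a positive power of $T'$ via H\"older in time.
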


For future reference, we conclude this section with the following remark.
\begin{remark}\
\label{r:nonlinearity}
\begin{enumerate}[{\rm(1)}]
\item In \cite{AV19_QSEE_1}, $F = F_L + F_{\Tr} + F_c$, where $F_L$ is  assumed to have a small Lipschitz constant in the $X_1$-norm (see Hypothesis (HF) in \cite{AV19_QSEE_1}), and similarly for $G$. The results presented there extend to our current setting. However, since in the applications in \cite{AV20_NS, AV19_QSEE_3}, $F_L$ and $G_L$ do not play a role, we prefer to omit these terms here;
\item\label{it:trace_space_smooth} if $F_c$ satisfies \eqref{eq:F_c_estimates} with $\varphi_j,\beta_j\leq 1-\frac{1+\a}{p}$ for all $j\in \{1,\dots,m\}$, then $F_c$ satisfies \ref{HFcritical} with $\rho_j$ as it was, and $\varphi_j=\beta_j=1-\frac{1+\a}{p}+\varepsilon$ where $\varepsilon\in (0,\min\{\frac{1+\a}{(\rho_j+1) p}:1\leq j\leq m_F\})$ is fixed. Indeed, this follows from $X_{1-\frac{1+\a}{p}+\varepsilon}\hookrightarrow X_{\varphi_j}\cap X_{\beta_j}$, and since \eqref{eq:HypCritical} is equivalent to $\varepsilon\leq \frac{1+\a}{(\rho_j+1) p}$. Moreover, due to the choice of $\varepsilon$, \eqref{eq:HypCritical} holds with the {\em strict} inequality. Therefore, in this situation one even knows that $\Xap$ is not critical for \eqref{eq:QSEE}. The same applies to $G_c$.
\end{enumerate}
\end{remark}

\subsection{Main results}
\label{ss:main_result_blow_up}
In this subsection, we state our main results regarding blow-up criteria for \eqref{eq:QSEE}.
For this we will need the following assumption on the nonlinearity $(A,B)$. Recall that by Assumption \ref{ass:X}, either $\a\in [0,\frac{p}{2}-1)$, $p>2$ or $\a=0$, $p=2$.

\begin{assumption}\label{H_a_stochnew}
Suppose that Assumption \emph{\ref{HAmeasur}} holds for $(A,B)$. Let $\ell\in [0,\frac{p}{2}-1)$ (where $\ell=0$ if $p=2$). Assume that for each $M,\eta>0$ and $\theta\in [0,\frac{1}{2})\setminus \{\frac{1+\a}{p}\}$, there exists a constant $K_{M,\eta}^{\theta}$ such that, for all $t\in [s+\eta,T)$ and $v\in L^{\infty}_{\F_t}(\O;\Xellp)$ with $\|v\|_{\Xellp} \leq M$ a.s., one has
$(A(\cdot,v),B(\cdot,v))\in \mathcal{SMR}_{p,\ell}^{\bullet}(t,T)$ and
\begin{equation*}
\max\{K^{\deter,\theta,p,\ell}_{(A(\cdot,v),B(\cdot,v))}(t,T),K^{\stoc,\theta,p,\ell}_{(A(\cdot,v),B(\cdot,v))}(t,T)\}\leq K_{M,\eta}^{\theta}, \ \ \ t\in [s+\eta,T).
\end{equation*}
where the constants are as defined in \eqref{eq:constants_SMR}.
\end{assumption}

Assumption \ref{H_a_stochnew} ensures that the maximal regularity constants are uniform on balls in $\Xap$. It is important that the assumption is only formulated for non-random initial times $t$. Random initial times can be obtained afterward using Proposition \ref{prop:change_initial_time}.
In applications to semilinear equations, i.e.\ in the case that $(A(t,x),B(t,x)) = (\bar{A}(t),\bar{B}(t))$, the condition $(A(t,x),B(t,x))\in \MRtas$ already implies Assumption \ref{H_a_stochnew} for $\ell = 0$ by Proposition \ref{prop:change_initial_time}. Finally, we note that on most applications we know $\mathcal{SMR}_{p,\kappa}^{\bullet}(t,T)\neq \emptyset$ with uniform estimates in $t$. Therefore, by transference (see Proposition \ref{prop:time_transference}) it is enough to check $(A(\cdot,v),B(\cdot,v))\in \mathcal{SMR}_{p,\ell}(t,T)$ together with the above estimate for $\theta=0$.

In the quasilinear case,
Assumption \ref{H_a_stochnew} can be weakened in some situations of interest. For future convenience, we formulate this in the following remark.

\begin{remark}
Let $\closed\subseteq X_{\ell,p}^{\Tr}$ be a closed subset and assume that the maximal $L^p_{\a}$-local solution $(u,\sigma)$ to \eqref{eq:QSEE} satisfies $u(t)\in \closed$ a.s.\ for all $t\in ( 0,\sigma)$. If the previous holds, then the requirement $v\in \closed$ a.s.\ can be added in Assumption \ref{H_a_stochnew}. For instance, in the case $ X_{\ell,p}^{\Tr}$ is a function space the choice $\closed =\{v\in  X_{\ell,p}^{\Tr}\,:\, v\geq 0\}$ can be useful in applications to quasilinear SPDEs where the flow is positive preserving. For more on this see Remark \ref{rem:onlyonpath}.
\end{remark}

For our main blow-up result we need another condition which states that the conditions on $F$ and $G$ are also satisfied in the unweighted setting.

\begin{assumption}
\label{ass:FG_a_zero}
Suppose that Assumption \ref{ass:X} holds for $X_0,X_1,\a,p$. Let $F$ and $G$ be as in Assumptions \emph{\ref{HFcritical}} and \emph{\ref{HGcritical}}. Suppose that Assumption \emph{\ref{HFcritical}} and \emph{\ref{HGcritical}} hold with $\a$ replaced by $0$ and a possibly different choice of the parameters $({\rho}_j',{\varphi}_i',{\beta}_j')$ for $j\in \{0, \ldots, m_F'+m_G'\}$ for certain integers $m_F'$ and $m_G'$.
\end{assumption}

\begin{remark}
If for a given $\a\in [0,\frac{p}{2}-1)$ and for each $j\in \{1, \ldots, m_F+m_G\}$ one has either $\varphi_j = \beta_j $ or $\rho_j \geq 1$
in \ref{HFcritical} and \ref{HGcritical}, then Assumption \ref{ass:FG_a_zero} is satisfied with $\rho_j' = \rho_j$. These cases covers all  applications to SPDEs we considered in \cite{AV19_QSEE_1}. Next we explain the sufficiency of these cases in more details:
\begin{itemize}
\item \emph{Case $\varphi_j=\beta_j$}.  If $\beta_j=\varphi_j >1-1/p$, then one can choose $\varphi_j'=\beta_j'= \varphi_j$. In case $\beta_j =\varphi_j \leq 1-1/p$, one can take $\varphi_j'=\beta_j'=1-\frac{1}{p}+ \frac{1}{(\rho_j+1) p}$ (see Remark \ref{r:nonlinearity}\eqref{it:trace_space_smooth});
\item \emph{Case $\rho_j\geq 1$}. If $\beta_j,\varphi_j >1-1/p$ one can take $\varphi_j'=\beta_j'= \varphi_j$. If $\beta_j,\varphi_j\leq 1-1/p$, one can take $\beta_j'=\varphi_j'=1-\frac1p+ \frac{1}{(\rho_j+1) p}$. Till now we did not use $\rho_j\geq 1$ yet. If $\beta_j\leq 1-1/p<\varphi_j$, then one can argue as follows. Note that, \eqref{eq:HypCritical}, \eqref{eq:HypCriticalG} and $\beta_j>1-\frac{1+\a}{p}$ implies $\rho_j(\varphi_j-1+\frac{1}{p})<\frac{1}{p}+\frac{\a}{p}(1-\rho_j)\leq \frac{1}{p}$. Thus there exists $\varepsilon>0$ such that $\rho_j(\varphi_j-1+\frac{1}{p})+\beta_j'< 1$ where $\beta_j'=1-\frac{1}{p}+\varepsilon$.
\end{itemize}
\end{remark}

The main results of this section are blow-up criteria for the $L^p_{\a}$-maximal local solution of Theorem \ref{t:local_s}.
Theorems \ref{t:blow_up_criterion}-\ref{thm:semilinear_blow_up_Serrin_refined} below show that $\sigma$ is an \emph{explosion time} of the $L^p_{\a}$-maximal local solution of \eqref{eq:QSEE} in a certain norm.
For notational convenience, for $s,t\in [0,T]$ set
\begin{equation}
\label{eq:L_p_norm_nonlinearity}
\nonlinearity_c^\a(u;s,t):=\|F_c(\cdot,u)\|_{L^p(s,t,w^{s}_\a;X_0)}+
\|G_c(\cdot,u)\|_{L^p(s,t,w^{s}_\a;\g(H,X_{1/2}))},
\end{equation}
where we recall $F = F_{\Tr} + F_c$ and $G = G_{\Tr} + G_c$.

\begin{theorem}[Blow up criteria for quasilinear SPDEs]
\label{t:blow_up_criterion}
Let the Hypothesis \hyperref[H:hip]{$\Hip$} be satisfied. Let $u_s\in L^0_{\F_s}(\O;\Xap)$ and suppose that \eqref{eq:approximating_sequence_initial_data} holds. Suppose that
\begin{equation}
\label{eq:stochastic_maximal_regularity_assumption_local_extended_blow_up_quasilinear}
(A(\cdot,u_{s,n}),B(\cdot,u_{s,n}))\in \MRtas, \ \ \ n\geq 1,
\end{equation}
and that Assumption \ref{H_a_stochnew} holds for $\ell\in\{0,\a\}$ and Assumption \ref{ass:FG_a_zero} holds. Let $(u,\sigma)$ be the $L^p_{\a}$-maximal local solution to \eqref{eq:QSEE}. Then
\begin{enumerate}[{\rm(1)}]
\item\label{it:blow_up_norm_general_case_F_c_G_c}
$\dps\P\Big(\sigma<T,\,\lim_{t\uparrow \sigma} u(t)\text{ exists in }\Xap ,\,
\nonlinearity_c^{\a}(u;s,\sigma)<\infty\Big)=0$;
\item\label{it:blow_up_non_critical_Xap}
$\dps\P\Big(\sigma<T,\,\lim_{t\uparrow \sigma} u(t)\text{ exists in }\Xap\Big)=0$
provided $\Xap$ is not critical for \eqref{eq:QSEE};
\item\label{it:blow_up_norm_general_case_quasilinear_Xap_Pruss}
$\dps\P\Big(\sigma<T,\,\lim_{t\uparrow \sigma} u(t)\text{ exists in }\Xap,\,
\|u\|_{L^p(s,\sigma;X_{1-\frac{\a}{p}})}<\infty\Big)=0$.
\end{enumerate}
\end{theorem}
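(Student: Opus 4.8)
The plan is to prove all three statements simultaneously via a single contradiction argument built on the local well-posedness theory (Theorem~\ref{t:local_s}), the perturbation result (Corollary~\ref{cor:Pert2}), and a restart/uniqueness mechanism. The common scheme is: suppose one of the bad events has positive probability; on that event we have an a~priori bound that, combined with the relevant maximal regularity estimate, shows the solution can be extended strictly beyond $\sigma$; this contradicts maximality of $(u,\sigma)$. The weighted function space theory of Section~\ref{s:preliminaries} (in particular Proposition~\ref{prop:change_p_q_eta_a} and the trace embedding Proposition~\ref{prop:continuousTrace}) will be used to move between the weighted setting (where the solution lives) and the unweighted setting near a fixed positive time, which is why Assumption~\ref{ass:FG_a_zero} is imposed.

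\textbf{Step 1: reduction and stopping.} First I would fix $T<\infty$ and reduce to the case of bounded initial data by the localization property of Theorem~\ref{t:local_s}\eqref{it:localization_L0}: it suffices to prove each claim with $u_s\in L^\infty_{\F_s}(\O;\Xap)$, since the general case follows by partitioning $\O$ into the sets $\{n-1\le\|u_s\|_{\Xap}<n\}$. Next, on each of the three bad events $\Gamma$ I would introduce, for $R>0$ and $\varepsilon>0$, the stopping time
\[
\tau_{R,\varepsilon}:=\inf\Big\{t\in[s,\sigma)\,:\, \|u(t)\|_{\Xap}\ge R \ \text{ or }\ \nonlinearity_c^\a(u;s,t)\ge R\ \text{ or }\ \|u\|_{L^p(s,t;X_{1-\a/p})}\ge R\Big\}\wedge(\sigma-\varepsilon)
\]
(using the appropriate subset of the three conditions for each case), with the convention that the infimum of the empty set is $\sigma$. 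On $\Gamma$ one has $\lim_{t\uparrow\sigma}u(t)$ exists in $\Xap$, hence $\sup_{[s,\sigma)}\|u(t)\|_{\Xap}<\infty$; together with the finiteness of $\nonlinearity_c^\a(u;s,\sigma)$ (resp.\ $\|u\|_{L^p(s,\sigma;X_{1-\a/p})}$) this forces $\tau_{R,\varepsilon}\uparrow\sigma$ a.s.\ on $\Gamma$ as $R\to\infty,\varepsilon\to0$. So it is enough to show that $\P(\Gamma\cap\{\tau_{R,\varepsilon}<\sigma\ \text{for all } R,\varepsilon\})$ is, after this truncation, actually zero — i.e.\ to derive a contradiction from the assumption that with positive probability the solution stays bounded (in the relevant norms) up to but not including $\sigma$, with $\sigma<T$.

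\textbf{Step 2: the extension argument, case \eqref{it:blow_up_non_critical_Xap}.} This is the cleanest case and I would treat it first. On $\Gamma$, since $\lim_{t\uparrow\sigma}u(t)=:u_\sigma$ exists in $\Xap$, I would consider $\sigma$ as a random initial time (approximating it from below by stopping times with finitely many values via Lemma~\ref{l:stopping_k}, to stay inside the hypotheses on $\mathcal{SMR}$ for random times). Using Remark~\ref{r:nonlinearity}\eqref{it:trace_space_smooth}: when $\Xap$ is \emph{not} critical, all the inequalities in \eqref{eq:HypCritical}--\eqref{eq:HypCriticalG} are strict, so one can absorb the contributions of $F_c,G_c$ on a short time interval by choosing the interval small, exactly as in the local existence proof of \cite{AV19_QSEE_1}. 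Concretely: freeze the leading operator at $u_\sigma$, use Assumption~\ref{H_a_stochnew} (with $\ell=\a$) and Corollary~\ref{cor:Pert2} to get $(A(\cdot,u),B(\cdot,u))\in\mathcal{SMR}$ on a small interval after $\sigma$ with uniformly controlled constants, then run a fixed-point/contraction argument to produce a strong solution $v$ on $[\sigma,\sigma+\delta]$ with $v(\sigma)=u_\sigma$. Gluing $v$ to $u$ via uniqueness (Theorem~\ref{t:local_s}\eqref{it:localization_L0}) contradicts maximality of $\sigma$ on $\Gamma$. Hence $\P(\Gamma)=0$.

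\textbf{Step 3: cases \eqref{it:blow_up_norm_general_case_F_c_G_c} and \eqref{it:blow_up_norm_general_case_quasilinear_Xap_Pruss}.} For \eqref{it:blow_up_norm_general_case_F_c_G_c} the extra hypothesis $\nonlinearity_c^\a(u;s,\sigma)<\infty$ replaces the non-criticality: instead of making the interval short to absorb $F_c,G_c$, I would use the absolute continuity of the Lebesgue integral — $\nonlinearity_c^\a(u;\sigma-\delta,\sigma)\to0$ as $\delta\to0$ — so that on $[\sigma-\delta,\sigma+\delta]$ the map $F_c(\cdot,u)$, $G_c(\cdot,u)$ contribute a small perturbation in $L^p$, and again close a contraction. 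Here one must be careful to estimate $F_c(\cdot,v)-F_c(\cdot,u)$ using the Lipschitz bounds in \eqref{eq:F_c_estimates} together with the trace embedding Proposition~\ref{prop:continuousTrace} and the mixed-derivative inequality Lemma~\ref{l:mixed_derivative} to control the $X_{\varphi_j},X_{\beta_j}$ norms by the maximal-regularity norm; this is where Assumption~\ref{ass:FG_a_zero} enters, since near the positive time $\sigma-\delta>s$ one works with the \emph{unweighted} space and needs the nonlinearity estimates to hold with $\a=0$. For \eqref{it:blow_up_norm_general_case_quasilinear_Xap_Pruss} the quantity $\|u\|_{L^p(s,\sigma;X_{1-\a/p})}$ being finite controls, via interpolation ($X_{1-\a/p}$ lies between $X_0$ and $X_1$ at the level matching the weight $w_\a$) and the critical-scaling inequalities \eqref{eq:HypCritical}--\eqref{eq:HypCriticalG}, the norm $\nonlinearity_c^\a(u;s,\sigma)$; so \eqref{it:blow_up_norm_general_case_quasilinear_Xap_Pruss} reduces to \eqref{it:blow_up_norm_general_case_F_c_G_c}. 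I expect the interpolation bookkeeping here — showing that finiteness of the single norm $\|u\|_{L^p(s,\sigma;X_{1-\a/p})}$ (plus the already-assumed existence of the limit in $\Xap$, which gives an $L^\infty(\Xap)$ bound) dominates each term $(1+\|u\|_{X_{\varphi_j}}^{\rho_j})\|u\|_{X_{\beta_j}}$ in $L^p(w_\a;X_0)$ — to be the main technical obstacle, since it requires using the criticality relation with the right Hölder exponents in time.

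\textbf{Main obstacle.} The genuinely delicate point is the restart at a \emph{random} initial time $\sigma$: the perturbation and maximal regularity machinery of Section~\ref{s:stochastic_maximal_regularity} is stated for stopping times taking finitely many values (when $\a>0$), so one has to approximate $\sigma$ from below by such stopping times (Lemma~\ref{l:stopping_k}), solve on each piece, and pass to the limit, all while keeping the maximal regularity constants uniform (Assumption~\ref{H_a_stochnew}) and respecting the measurability subtleties flagged in Subsection~\ref{ss:stopping_time}. Getting the contraction constant $\delta_{A,B}<1$ in Corollary~\ref{cor:Pert2} uniformly over the relevant events, and simultaneously small enough to absorb the semilinear terms, is where the argument has to be run carefully.
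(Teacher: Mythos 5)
The central gap is in your Step~2: a direct restart at $\sigma$ with initial value $u_\sigma\in\Xap$ does \emph{not} by itself contradict maximality. To glue $v$ (defined on $[\sigma,\sigma+\delta]$) to $u$ and obtain an $L^p_\a$-local solution on a larger interval, one needs a localizing sequence that passes through $\sigma$, which forces $u|_{\ll s,\sigma\rr}$ to be a strong solution on the closed stochastic interval — i.e.\ $u\in L^p(s,\sigma,w_\a^s;X_1)$ a.s.\ on the bad event. Knowing only that $\lim_{t\uparrow\sigma}u(t)$ exists in $\Xap$ gives none of this: the weighted $X_1$-norm can blow up as $t\uparrow\sigma$ even while the $\Xap$-trace stays bounded. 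Exactly this finiteness is an explicit \emph{hypothesis} of the intermediate blow-up criterion (Lemma~\ref{l:lemma_correction_theorem}), which also requires the limit in the stronger space $\Xp$ rather than $\Xap$. The bulk of the paper's proof is devoted precisely to establishing these bounds \emph{before} any restart: one writes, on a short interval $[\stopp_\varepsilon',\tau_{\varepsilon}]$ ending at $\sigma$, the decomposition $\one_{\U_\varepsilon} u = I+II+III+IV$ through the solution operator $\Sol_\varepsilon$ (see \eqref{eq:u_sum_v}), absorbs the quasilinear perturbation $II$ by choosing $\varepsilon$ small so that $\|u(t)-u_\varepsilon\|_{\Xap}$ is tiny, and bounds the remaining terms; this yields an a~priori estimate for $\|u\|_{L^p(w_\a;X_1)\cap\X}$ which is then fed into Lemma~\ref{l:lemma_correction_theorem}. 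Your strategy skips this a~priori step entirely.

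This has downstream consequences for your ordering and method. Case~\eqref{it:blow_up_non_critical_Xap} is \emph{not} the cleanest: in the paper it is proved \emph{after} \eqref{it:blow_up_norm_general_case_F_c_G_c}, and indeed uses \eqref{it:blow_up_norm_general_case_F_c_G_c} in a crucial step together with a convexity trick (the function $\varphi_\varepsilon(x)=x-C_\varepsilon x^\zeta$ and Lemma~\ref{l:F_G_bound_N}, which exploits the strict inequality in \eqref{eq:HypCritical}--\eqref{eq:HypCriticalG} to make the nonlinear-term constant shrink with the interval length). Your Remark~\ref{r:nonlinearity}\eqref{it:trace_space_smooth}-based absorption is only part of that mechanism; you still need the a~priori estimate to have something to bootstrap from. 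For case~\eqref{it:blow_up_norm_general_case_quasilinear_Xap_Pruss}, your idea of reducing to \eqref{it:blow_up_norm_general_case_F_c_G_c} via interpolation is directionally right, but the actual bookkeeping (the auxiliary parameters $\beta_j^\star,\varphi_j^\star$, the modified space $\X^\star$, and the interpolation inequalities of Lemma~\ref{lem:interpolationineqMR0} which split the $\X^\star$-norm into a piece absorbed by the $\Sz^{\theta,\a}$-norm from maximal regularity and a piece that is small because $\|u\|_{L^p(X_{1-\a/p})}$ is) is where the argument actually lives, and you flag it as an obstacle without resolving it. In short: your Step~1 is aligned with the paper, but your Steps~2--3 miss the a~priori estimate that is the real substance of the proof, and no amount of care in the fixed-point/restart step can substitute for it.
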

In Figure \ref{fig:tree} we provide a decision tree for applying Theorem \ref{t:blow_up_criterion}.

Some comments are in order. In case $\Xap$ is not critical, \eqref{it:blow_up_non_critical_Xap} is the easiest to check in applications. In critical situations \eqref{it:blow_up_norm_general_case_F_c_G_c} and \eqref{it:blow_up_norm_general_case_quasilinear_Xap_Pruss} are available. As in \cite[Theorem 2.4]{CriticalQuasilinear}, we note that the space $L^p(s,\sigma;X_{1-\frac{\a}{p}})$ appearing in \eqref{it:blow_up_norm_general_case_quasilinear_Xap_Pruss} has the (space-time) Sobolev index $1-\frac{1+\a}{p}$, which coincides with the ones of $C([s,\sigma];\Xap)$.

To apply \eqref{it:blow_up_norm_general_case_F_c_G_c}, one only needs to control  $F_c$ and $G_c$. which can be done with Lemmas \ref{l:embeddings} and \ref{l:F_G_bound_N_C_cn}. The control of $F_c$ and $G_c$ is needed only far from $t=s$. Actually one can replace $\nonlinearity_c^{\a}(u;\,s,\,\sigma)$ by $\nonlinearity_c^{0}(u;\,\tau,\,\sigma)$ for any random time $\tau\in (s,\sigma)$. Indeed, this follows from Theorem \ref{t:local_s}, and Lemmas \ref{l:embeddings} and \ref{l:F_G_bound_N_C_cn}.

As we will show in Section \ref{s:regularization}, the solution $u$ is typically smoother than its values near $t=s$ and this may simplify the proof of \textit{energy estimates}. In applications to concrete SPDEs we always use the following consequence of Theorem \ref{t:blow_up_criterion}\eqref{it:blow_up_norm_general_case_F_c_G_c}:
$$
\P\Big(s'<\sigma<T,\,\lim_{t\uparrow \sigma} u(t)\;\text{exists in}\;\Xap,\,\nonlinearity_c^0\big(u;s',\sigma)<\infty\Big)=0,\text{ for all }s'\in (s,T).
$$
Similar considerations hold for Theorem \ref{t:blow_up_criterion}\eqref{it:blow_up_non_critical_Xap}-\eqref{it:blow_up_norm_general_case_quasilinear_Xap_Pruss} and Theorems \ref{thm:semilinear_blow_up_Serrin}-\ref{thm:semilinear_blow_up_Serrin_refined} below.

In \eqref{it:blow_up_norm_general_case_quasilinear_Xap_Pruss} it suffices to estimate the $L^p(\tau,\sigma;X_{1-\frac{\a}{p}})$-norm of $u$ for some stopping time $\tau\in (s,\sigma)$. This already implies that $u$ is in $L^p$ near $t=s$ as a map with values in $X_{1-\frac{\a}{p}}$. Indeed, if $p>2$ this follows from Theorem \ref{t:local_s}\eqref{it:regularity_data_L0}, and
\begin{equation}
\label{eq:Lpa_norm_up_to_zero}
u\in H^{\frac{\a}{p},p}(s,\sigma_n,w_{\a}^s;X_{1-\frac{\a}{p}})\hookrightarrow
L^p(s,\sigma_n;X_{1-\frac{\a}{p}}),\quad \text{ a.s.\ for all }n\geq 1,
\end{equation}
where we used Proposition \ref{prop:change_p_q_eta_a}\eqref{it:Sob_embedding}. The case $p=2$ is immediate from the fact that $(u,\sigma)$ is an $L^2_0$-maximal local solution (see Definitions \ref{def:solution1}-\ref{def:solution2}).
Part \eqref{it:blow_up_norm_general_case_quasilinear_Xap_Pruss} plays a key role in proving instantaneous regularization of solutions to \eqref{eq:QSEE} in the unweighted setting (see Proposition \ref{prop:adding_weights}).

\begin{figure}[h!]
  \centering
\forestset{EL/.style 2 args={edge label={%
    node[midway, font=\footnotesize,
         inner sep=2pt, anchor=south #1]{$#2$}},
                     },
        }
  \begin{forest}
for tree={
edge={->}, draw,
  l sep= 10 mm,
  s sep= 1 mm,
        }
[Is $\Xap$ critical for \eqref{eq:QSEE}?
    [Are estimates on $\|u\|_{L^p(0,\sigma;X_{1-\frac{\a}{p}})}$ available?, EL={east}{\text{Yes}},
        [Apply \eqref{it:blow_up_norm_general_case_F_c_G_c},EL={east}{\text{No}}]
        [Apply \eqref{it:blow_up_norm_general_case_quasilinear_Xap_Pruss},EL={west}{\text{Yes}}
        ]
    ]
    [Apply \eqref{it:blow_up_non_critical_Xap}, EL={west}{\text{No}}]
    ]
]
\end{forest}
  \caption{Decision tree for applying Theorem \ref{t:blow_up_criterion} to quasilinear SPDEs.}\label{fig:tree}
\end{figure}

In applications to semilinear equations, the following improvement of Theorem \ref{t:blow_up_criterion} holds. For convenience,  for $s,t\in [0,T]$ set
\begin{equation}
\label{eq:L_p_norm_nonlinearity2}
\nonlinearity^{\a}(u;s,t):=\|F(\cdot,u)\|_{L^p(s,t,w_{\a}^s;X_0)}+
\|G(\cdot,u)\|_{L^p(s,t,w^{s}_{\a};\g(H,X_{1/2}))}.
\end{equation}

\begin{theorem}[Blow-up criteria for semilinear SPDEs]
\label{thm:semilinear_blow_up_Serrin}
Let the Hypothesis \hyperref[H:hip]{$\Hip$} be satisfied, where we suppose that $(A(t,x),B(t,x)) = (\bar{A}(t), \bar{B}(t))$ does not depend on $x$ and
\begin{equation}
\label{eq:stochastic_maximal_regularity_assumption_local_extended_blow_up_semilinear}
(\bar{A}(\cdot),\bar{B}(\cdot))\in \MRtas.
\end{equation}
Assume that Assumption \ref{H_a_stochnew} holds for $\ell=\a$ and Assumption \ref{ass:FG_a_zero} holds. Let $u_s\in L^{0}_{\F_s}(\O;\Xap)$ and let
$(u,\sigma)$ be the $L^p_{\a}$-maximal local solution to \eqref{eq:QSEE}. Then
\begin{enumerate}[{\rm(1)}]
\item\label{it:blow_up_stochastic_semilinear}
$\dps \P\Big(\sigma<T,\,\nonlinearity^{\a}(u;s,\sigma)<\infty\Big)=0$;
\item\label{it:blow_up_semilinear noncritical_stochastic}
$\dps \P\Big(\sigma<T,\,\sup_{t\in [s,\sigma)}\|u(t)\|_{\Xap}<\infty\Big)=0$
provided $\Xap$ is not critical for \eqref{eq:QSEE};
\item\label{it:blow_up_semilinear_serrin_Pruss_modified}
$\dps \P\Big(\sigma<T,\,\sup_{t\in [s,\sigma)}\|u(t)\|_{\Xap}+\|u\|_{L^p(s,\sigma;X_{1-\frac{\a}{p}})}<\infty\Big)=0$.
\end{enumerate}
\end{theorem}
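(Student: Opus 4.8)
The plan is to derive all three statements from one mechanism: if the blow-up event in question had positive probability, one would be able to continue the $L^p_{\a}$-maximal local solution strictly beyond $\sigma$, contradicting its maximality (the local well-posedness Theorem~\ref{t:local_s}, in its random-initial-time version from Subsection~\ref{ss:QSEE_tau}). The decisive simplification in the semilinear case, compared with the quasilinear Theorem~\ref{t:blow_up_criterion}, is that $(\bar A,\bar B)$ has stochastic maximal $L^p$-regularity \emph{globally} in the unknown, with constants not depending on $u$. Hence, once the inhomogeneities $F(\cdot,u)+f$ and $G(\cdot,u)+g$ are known to belong to the maximal regularity data space on the relevant stochastic interval, Propositions~\ref{prop:start_at_s} and~\ref{prop:causality_phi_revised_2} (together with uniqueness from $\mathcal{SMR}$) force $u$ itself into the corresponding maximal regularity class, so that $\lim_{t\uparrow\sigma}u(t)$ exists automatically in $\Xap$; one never needs to \emph{assume} existence of this limit, which is precisely how~\eqref{it:blow_up_stochastic_semilinear} improves on Theorem~\ref{t:blow_up_criterion}\eqref{it:blow_up_norm_general_case_F_c_G_c}. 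Throughout we may take $T<\infty$ (by uniqueness and concatenation), and, applying Theorem~\ref{t:local_s}\eqref{it:localization_L0} on $\{\|u_s\|_{\Xap}\le n\}$, we may assume $u_s\in L^\infty_{\F_s}(\O;\Xap)$. Since $u$ instantaneously regularizes into $\Xp$ for $t>s$ (Theorem~\ref{t:local_s}\eqref{it:regularity_data_L0}), we shall always argue on an interval $[s',\sigma)$ with $s'\in(s,T)$: there $u$ is an $L^p_0$-maximal local solution of~\eqref{eq:QSEE} started at $s'$ from $u(s')\in\Xp=X^{\Tr}_{0,p}$ in the \emph{unweighted} setting, where Hypothesis~\hyperref[H:hip]{$\Hip$} is available thanks to Assumption~\ref{ass:FG_a_zero}, and where $(\bar A,\bar B)\in\mathcal{SMR}_p^{\bullet}(s',T)$ by Proposition~\ref{prop:change_initial_time}. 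This is what Assumption~\ref{ass:FG_a_zero} is used for.

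For the criterion~\eqref{it:blow_up_stochastic_semilinear}, it is enough to show, for every $s'\in(s,T)$ and $m\in\N$, that $\P(\mathcal{E})=0$ where $\mathcal{E}:=\{s'<\sigma<T\}\cap\{\nonlinearity^{0}(u;s',\sigma)\le m\}$; indeed, since $w_{\a}$ is comparable to a constant on $[s',\sigma)$ and $u\in C([s,\sigma_k];\Xap)$ along any localizing sequence, the event in~\eqref{it:blow_up_stochastic_semilinear} differs from $\bigcup_{s',m}\mathcal{E}$ by a null set. One checks $\mathcal{E}\in\F_\sigma$, and, after decomposing $\mathcal{E}$ according to the value of $\|u(s')\|_{\Xp}$, we may also assume $u(s')\in L^\infty_{\F_{s'}}(\O;\Xp)$ on $\mathcal{E}$. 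On $\mathcal{E}$, Hypothesis~\hyperref[H:hip]{$\Hip$} gives $F(\cdot,u)+f\in L^p(s',\sigma;X_0)$ and $G(\cdot,u)+g\in L^p(s',\sigma;\g(H,X_{1/2}))$, so $u$ solves on $\ll s',\sigma\rro$ the \emph{linear} equation with leading pair $(\bar A,\bar B)$ and these inhomogeneities. Approximating $\sigma$ from below by finite-valued stopping times (Lemma~\ref{l:stopping_k}), applying the a priori estimate of Proposition~\ref{prop:start_at_s}\eqref{it:start_at_s1} with $(\bar A,\bar B)\in\mathcal{SMR}_p^{\bullet}(s',T)$, and patching by Proposition~\ref{prop:causality_phi_revised_2}, one obtains $u\in L^p(s',\sigma;X_1)\cap C([s',\sigma];\Xp)$ on $\mathcal{E}$. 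Then $u_\sigma:=\lim_{t\uparrow\sigma}u(t)$ exists in $\Xp\hookrightarrow\Xap$ and is $\F_\sigma$-measurable, and restarting~\eqref{eq:QSEE} at the random time $\sigma$ from $u_\sigma$ via the random-initial-time local theory of Subsection~\ref{ss:QSEE_tau} produces, on $\mathcal{E}$, a strictly larger local solution, contradicting maximality. Hence $\P(\mathcal{E})=0$.

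The remaining two criteria are reduced to~\eqref{it:blow_up_stochastic_semilinear} by showing that the respective a priori bound implies $\nonlinearity^{\a}(u;s,\sigma)<\infty$ on the corresponding event. For the non-critical criterion~\eqref{it:blow_up_semilinear noncritical_stochastic}, non-criticality means~\eqref{eq:HypCritical}--\eqref{eq:HypCriticalG} are \emph{strict}; on $\{\sigma<T,\ \sup_{t\in[s,\sigma)}\|u(t)\|_{\Xap}\le M\}$ one estimates $F_c,G_c$ via the growth bounds~\eqref{eq:F_c_estimates} and Lemmas~\ref{l:embeddings}, \ref{l:F_G_bound_N_C_cn}, writing $X_{\varphi_j},X_{\beta_j}$ as interpolants of $\Xap$ and $X_1$: strictness yields, on short windows $(\tau,\sigma)$, an estimate $\nonlinearity^{\a}_c(u;\tau,\sigma)\le\tfrac12\|u\|_{L^p(\tau,\sigma,w_{\a}^{\tau};X_1)}+C(M)$, while $F_{\Tr},G_{\Tr},f,g$ are controlled by the $\Xap$-bound and Hypothesis~\hyperref[H:hip]{$\Hip$}; subdividing $[s,\sigma)$ into finitely many such windows and absorbing through the maximal regularity estimate for $(\bar A,\bar B)$ gives $u\in L^p(s,\sigma,w_{\a}^s;X_1)$, hence $\nonlinearity^{\a}(u;s,\sigma)<\infty$. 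For the Serrin-type criterion~\eqref{it:blow_up_semilinear_serrin_Pruss_modified} one works on $\{\sigma<T,\ \sup_{t\in[s,\sigma)}\|u(t)\|_{\Xap}+\|u\|_{L^p(s,\sigma;X_{1-\frac{\a}{p}})}\le M\}$: the two bounds both carry the space--time Sobolev index $1-\tfrac{1+\a}{p}$ of $C([s,\sigma];\Xap)$, and are combined via the mixed derivative inequality (Lemma~\ref{l:mixed_derivative}) together with the identity $\hz^{\theta,p}=H^{\theta,p}$ for $\theta<\tfrac{1+\a}{p}$ from~\eqref{eq:LMVresult}. Using the $\hz$-valued estimate of Proposition~\ref{prop:start_at_s}\eqref{it:start_at_s4}, which crucially needs \emph{no} trace restriction, one places $u$ in the relevant $\hz^{\theta,p}(s,\sigma,w_{\a}^s;X_{1-\theta})$-classes up to time $\sigma$, feeds this into the multiplicative bounds~\eqref{eq:F_c_estimates}, and uses that~\eqref{eq:HypCritical} (now possibly an equality) is exactly what makes the H\"older bookkeeping close; together with the $\Xap$-control of the trace parts this gives $\nonlinearity^{\a}(u;s,\sigma)<\infty$, and~\eqref{it:blow_up_stochastic_semilinear} finishes the argument.

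The main difficulties are expected to be: (i) the rigorous treatment of the random explosion time $\sigma$ — measurability of the blow-up events in $\F_\sigma$, the finite-valued stopping-time approximations of Lemma~\ref{l:stopping_k}, the gluing of solution operators across random times (Propositions~\ref{prop:start_at_sigma_random_time} and~\ref{prop:causality_phi_revised_2}), and the restart via the random-initial-time local theory; and (ii) in~\eqref{it:blow_up_semilinear_serrin_Pruss_modified}, arranging the mixed-derivative and multiplicative estimates so that they close \emph{exactly} at the critical exponent, which is what forces the use of the $\hz$-scales and of the non-trace-restricted estimate of Proposition~\ref{prop:start_at_s}\eqref{it:start_at_s4}.
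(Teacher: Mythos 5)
Your overall strategy---derive all three criteria from a restart-based maximality contradiction, and reduce~\eqref{it:blow_up_semilinear noncritical_stochastic} and~\eqref{it:blow_up_semilinear_serrin_Pruss_modified} to~\eqref{it:blow_up_stochastic_semilinear} by showing $\nonlinearity^{\a}(u;s,\sigma)<\infty$ on the respective event---is precisely the architecture of the paper's proof, which routes everything through Lemma~\ref{l:lemma_correction_theorem}. Your part~\eqref{it:blow_up_stochastic_semilinear} is essentially the paper's: cut off the inhomogeneity at the stopping time $\nu:=\inf\{t:\nonlinearity^{\a}(u;s,t)\ge M\}$, represent $u$ via $\Sol_{s,(\bar A,\bar B)}$, deduce from Proposition~\ref{prop:start_at_s} that $u\in C([s+\delta,\sigma];\Xp)\cap L^p(s,\sigma,w_{\a}^s;X_1)$ on the event, then restart via Proposition~\ref{prop:local_sigma} (in the unweighted setting, so that a non--finite-valued random initial time $\sigma$ is allowed). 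Two small inaccuracies: Lemma~\ref{l:stopping_k} is not used here (you are restarting from a deterministic time and cutting the inhomogeneity at $\nu$, so Proposition~\ref{prop:start_at_s} applies on the full interval $[s',T]$, as the paper does); and the restart requires $u_\sigma\in L^{\infty}_{\F_\sigma}(\O;\Xp)$, which needs a further Egorov restriction to a subset of the blow-up event, as in Lemma~\ref{l:lemma_correction_theorem}.

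For part~\eqref{it:blow_up_semilinear noncritical_stochastic} you take a genuinely different route. The paper follows the quasilinear proof of Theorem~\ref{t:blow_up_criterion}\eqref{it:blow_up_non_critical_Xap}: it works in the $\X$-scale, where Lemma~\ref{l:F_G_bound_N} gives a \emph{superlinear} bound $\nonlinearity^{\a}_c\lesssim C_{\delta}\big(\|u\|_{\X}+\|u\|_{\X}^{\zeta}\big)$ with $\zeta>1$, and then uses the map $\varphi_{\varepsilon}(x)=x-C_{\varepsilon}x^{\zeta}$ together with a carefully tuned stopping time to reach a contradiction. You propose instead to write $X_{\varphi_j},X_{\beta_j}$ as interpolants of $\Xap$ and $X_1$, use the $\Xap$-bound $M$ to make the dependence on $\|u\|_{X_1}$ sub-linear, and absorb. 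This is plausible for the semilinear equation (the exponent $\rho_j\theta_{\varphi_j}+\theta_{\beta_j}$ is $<1$ exactly when~\eqref{eq:HypCritical} is strict), and would be a simpler argument than the paper's---but your sketch glosses over the genuine difficulty: $\|u\|_{L^p(\tau,\sigma,w_{\a}^{\tau};X_1)}$ is not a priori finite, so the absorption must be run on $(\tau,\mu_n]$ with $\mu_n=\inf\{t:\|u\|_{L^p(\tau,t;X_1)}\ge n\}\wedge\sigma$ and $n\to\infty$; the maximal-regularity constant starting at the random time $\tau$ must be uniform, which needs Assumption~\ref{H_a_stochnew}, Proposition~\ref{prop:start_at_sigma_random_time}, and a finite-valued approximation of $\tau$ (this is where Lemma~\ref{l:stopping_k} actually enters). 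Also, ``subdividing $[s,\sigma)$ into finitely many windows'' is misleading: since $u\in L^p(s,\tau;X_1)$ for every $\tau<\sigma$ already, a single terminal window $(\tau,\sigma)$ with $\sigma-\tau$ small (obtained via Egorov and the localizing sequence) suffices and is what the paper uses.

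For part~\eqref{it:blow_up_semilinear_serrin_Pruss_modified} you identify the right tools---the identity~\eqref{eq:LMVresult}, the mixed-derivative inequality Lemma~\ref{l:mixed_derivative}, and the trace-unrestricted estimate Proposition~\ref{prop:start_at_s}\eqref{it:start_at_s4}---and the reduction to~\eqref{it:blow_up_stochastic_semilinear} matches the paper. What the sketch leaves implicit is the decisive three-factor interpolation (Lemma~\ref{lem:interpolationineqMR0}), which writes $\|u\|_{L^{\zeta}(\cdot;X_{\psi})}$ as a product of powers of $\|u\|_{L^{\infty}(\Xap)}$, $\|u\|_{\Sz^{\theta,\a}}$ and $\|u\|_{L^p(X_{1-\a/p})}$ with the $\Sz$-exponent summing to $\le 1$, together with the adjusted exponents $\varphi_j^{\star},\beta_j^{\star}$ and the space $\X^{\star}$, which are exactly what make the H\"older bookkeeping close at the critical index. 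Without these, the claim that the bound on $\|u\|_{L^p(s,\sigma;X_{1-\a/p})}$ controls $\nonlinearity^{\a}(u;s,\sigma)$ is not substantiated.
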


Taking into account Theorem \ref{thm:semilinear_blow_up_Serrin_refined} below, a decision tree is given in Figure \ref{fig:tree2}.

The proof of Theorem \ref{thm:semilinear_blow_up_Serrin}\eqref{it:blow_up_stochastic_semilinear} does not require Assumption \ref{H_a_stochnew} for $\ell=\a$. Moreover, Assumption \ref{H_a_stochnew} for $\ell=0$ is not assumed since it follows from Assumption \ref{H_a_stochnew} for $\ell=\a$ and Proposition \ref{prop:change_initial_time}.
Theorem \ref{thm:semilinear_blow_up_Serrin}\eqref{it:blow_up_semilinear noncritical_stochastic}-\eqref{it:blow_up_semilinear_serrin_Pruss_modified} are slight improvements of Theorem \ref{t:blow_up_criterion}\eqref{it:blow_up_non_critical_Xap}-\eqref{it:blow_up_norm_general_case_quasilinear_Xap_Pruss} since only boundedness is required. As before in \eqref{it:blow_up_stochastic_semilinear} we may replace $\nonlinearity^{\a}(u;s,\sigma)$ by $\nonlinearity^{0}(u;\tau,\sigma)$ any random time $\tau\in (s,\sigma)$. The same holds for \eqref{it:blow_up_semilinear_serrin_Pruss_modified} with $\|u\|_{L^p(s,\sigma;X_{1-\frac{\a}{p}})}$ replaced by $\|u\|_{L^p(\tau,\sigma;X_{1-\frac{\a}{p}})}$.

Below we will obtain a further improvement of Theorem \ref{thm:semilinear_blow_up_Serrin}\eqref{it:blow_up_semilinear_serrin_Pruss_modified} by removing the condition $\sup_{t\in [s,\sigma)}\|u(t)\|_{\Xap}<\infty$ under suitable assumptions. In literature blow-up criteria which only require $L^p$-bounds are called of \emph{Serrin type} due to the analogy with Serrin's blow up criteria for Navier-Stokes equations (se e.g.\ \cite[Theorem 11.2]{LePi}).
\begin{theorem}
[Serrin type blow-up criteria for semilinear SPDEs]
\label{thm:semilinear_blow_up_Serrin_refined}
Let Hypothesis \hyperref[H:hip]{$\Hip$} be satisfied, where we suppose that $(A(t,x),B(t,x)) = (\bar{A}(t), \bar{B}(t))$ does not depend on $x$ and
\begin{equation}
\label{eq:stochastic_maximal_regularity_assumption_local_extended_blow_up_semilinear_refined}
(\bar{A}(\cdot),\bar{B}(\cdot))\in \MRtas,
\end{equation}
$F_{\Tr}=0$, $G_{\Tr}=0$, the constants $C_{c,n}$ in \emph{\ref{HFcritical}-\ref{HGcritical}} are independent of $n\geq 1$, and for each $j\in \{1,\dots,m_F+m_G\}$
\begin{equation}\label{eq:condSerrin}
\beta_j=\varphi_j \ \  \text{and} \ \ [(\a>0 \ \text{and}  \ \rho_j <1+\a) \ \text{or} \  (\a=0 \ \text{and} \  \rho_j\leq 1)].
\end{equation}
Suppose that Assumption \ref{H_a_stochnew} holds for $\ell=\a$ and Assumption \ref{ass:FG_a_zero} holds.
If $u_s\in L^{0}_{\F_s}(\O;\Xap)$ and  $(u,\sigma)$ is the $L^p_{\a}$-maximal local solution to \eqref{eq:QSEE}, then
\begin{equation}
\label{eq:claim_proof_Serrin_refined}
\dps \P\Big(\sigma<T,\,\|u\|_{L^p(s,\sigma;X_{1-\frac{\a}{p}})}<\infty\Big)=0.
\end{equation}
\end{theorem}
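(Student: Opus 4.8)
Here is the plan of proof.

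\smallskip

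\textbf{Reduction.} Since $F_{\Tr}=0$ and $G_{\Tr}=0$ we have $F=F_c$, $G=G_c$ and $\nonlinearity^{\a}=\nonlinearity_c^{\a}$. For $R\in\N$ set $\Gamma_R:=\{\sigma<T\}\cap\{\|u\|_{L^p(s,\sigma;X_{1-\frac{\a}{p}})}<R\}$. Since the event in \eqref{eq:claim_proof_Serrin_refined} equals $\bigcup_{R}\Gamma_R$, it is enough to prove $\P(\Gamma_R)=0$ for each $R$, and by Theorem \ref{thm:semilinear_blow_up_Serrin}\eqref{it:blow_up_semilinear_serrin_Pruss_modified} this follows once we show that $\sup_{t\in[s,\sigma)}\|u(t)\|_{\Xap}<\infty$ a.s.\ on $\Gamma_R$. (Equivalently one may aim for $\nonlinearity^{\a}(u;s,\sigma)<\infty$ a.s.\ on $\Gamma_R$ and invoke Theorem \ref{thm:semilinear_blow_up_Serrin}\eqref{it:blow_up_stochastic_semilinear}.) We stress that the argument must be carried out in the scale weighted by $w_{\a}$: hypothesis \eqref{eq:condSerrin} allows $\rho_j$ up to $1+\a$, and the extra ``$\a$'' is exactly the integrability budget provided by the weight, so one cannot pass to the unweighted setting of Assumption \ref{ass:FG_a_zero} (where only $\rho_j\le 1$ is admissible).

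\smallskip

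\textbf{The core nonlinear estimate.} For $0\le a<b\le T$ write $\|v\|_{\mathrm{MR}(a,b)}:=\|v\|_{L^p(a,b,w_{\a}^{a};X_1)}+\|v\|_{C([a,b];\Xap)}$; by Theorem \ref{t:local_s}\eqref{it:regularity_data_L0}, the mixed derivative inequality (Lemma \ref{l:mixed_derivative}) and the trace embedding (Proposition \ref{prop:continuousTrace}), for any localizing sequence $(\sigma_n)$ one has $u\in\mathrm{MR}(s,\sigma_n)$ for all $n$, and on $\Gamma_R$ moreover $u\in L^p(s,\sigma;X_{1-\frac{\a}{p}})$ with norm $\le R$ (cf.\ \eqref{eq:Lpa_norm_up_to_zero}). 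The key claim is that there are $C>0$ and exponents $\nu_j,\nu_0>0$, all \emph{independent of} $a,b$, such that for every $v\in\mathrm{MR}(a,b)$
\begin{align*}
\|F_c(\cdot,v)\|_{L^p(a,b,w_{\a}^{a};X_0)}&+\|G_c(\cdot,v)\|_{L^p(a,b,w_{\a}^{a};\g(H,X_{1/2}))}\\
&\le C\sum_{j}\Big((b-a)^{\nu_j}+\|v\|_{L^p(a,b;X_{1-\frac{\a}{p}})}^{\rho_j}\Big)\|v\|_{\mathrm{MR}(a,b)}+C(b-a)^{\nu_0}.
\end{align*}
To prove this one uses $\beta_j=\varphi_j$ and the $n$-independence of $C_{c,n}$ to reduce \eqref{eq:F_c_estimates} to controlling $\|v\|_{X_{\varphi_j}}$ and $\|v\|_{X_{\varphi_j}}^{\rho_j+1}$ in $L^p(a,b,w_{\a}^{a})$. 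The term linear in $v$ and the additive constant are handled by $X_1\hookrightarrow X_{\varphi_j}$, the interpolation $X_{\varphi_j}=[X_0,X_1]_{\varphi_j}$ and H\"older in time, which (since $\varphi_j<1$) produce the factors $(b-a)^{\nu_j}$ and $(b-a)^{\nu_0}$. For the critical term one writes $X_{\varphi_j}=[X_{1-\a/p},X_1]_{\mu_j}$ (or uses $X_{1-\a/p}\hookrightarrow X_{\varphi_j}$ if $\varphi_j\le 1-\a/p$); a short computation from \eqref{eq:HypCritical} with $\beta_j=\varphi_j$ gives $\mu_j(\rho_j+1)\le 1$. One then distributes the $\rho_j+1$ interpolation factors — via a H\"older splitting in time, combined with the weighted Sobolev embeddings of Proposition \ref{prop:change_p_q_eta_a} applied to the time regularity $v\in H^{1-\varphi_j,p}(a,b,w_{\a}^{a};X_{\varphi_j})$ (note $1-\varphi_j<1/2$ since $\varphi_j>1-\tfrac{1+\a}{p}>\tfrac12$) — so that $\rho_j$ factors land in the (unweighted) $L^p(a,b;X_{1-\frac{\a}{p}})$–norm and the remaining mass, together with the weight $w_{\a}^{a}$, is absorbed into $\|v\|_{\mathrm{MR}(a,b)}$; this splitting is admissible precisely because $\rho_j<1+\a$ (resp.\ $\rho_j\le 1$ if $\a=0$). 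The uniformity of $C$ in $a,b$ is the reason for working with the $\hz$-scale and using \eqref{eq:LMVresult}, whose equivalence constants, as well as those in Lemma \ref{l:mixed_derivative}\eqref{it:mixed_derivative_H} and Proposition \ref{prop:continuousTrace}, are interval-independent.

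\smallskip

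\textbf{Finite iteration and conclusion.} Granting the claim, fix $\eta,\delta>0$. On $\Gamma_R$ define stopping times $\tau_0:=s$ and recursively $\tau_{i+1}:=\inf\{t\in[\tau_i,\sigma):\int_{\tau_i}^{t}\|u(r)\|_{X_{1-\a/p}}^p\,dr\ge\delta^p\ \text{ or }\ t-\tau_i\ge\eta\}\wedge\sigma$, so $\|u\|_{L^p(\tau_i,\tau_{i+1};X_{1-\a/p})}\le\delta$ and $\tau_{i+1}-\tau_i\le\eta$; on $\Gamma_R$ one has $\tau_N=\sigma$ with $N:=\lceil(R/\delta)^p\rceil+\lceil T/\eta\rceil+1$ deterministic. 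Approximating each $\tau_i$ from above by finitely-valued stopping times (Lemma \ref{l:stopping_k}) and combining Assumption \ref{H_a_stochnew} (for $\ell=\a$) with Proposition \ref{prop:change_initial_time} and transference (Proposition \ref{prop:time_transference}), we get $(\bar A,\bar B)\in\mathcal{SMR}_{p,\a}^{\bullet}(\tau_i,T)$ with constants bounded uniformly in $i$. Since $u|_{\ll\tau_i,\tau_{i+1}\wedge\sigma_n\rr}$ is the strong solution of the linear problem with data $u(\tau_i)\in\Xap$ and inhomogeneities $F_c(u)+f$, $\bar B u+G_c(u)+g$, Proposition \ref{prop:start_at_s}\eqref{it:start_at_s1} together with Proposition \ref{prop:causality_phi_revised_2}, the claim, and $w_{\a}^{\tau_i}\le w_{\a}^{s}$ on $[\tau_i,T]$ yield, for a constant $C_0$ independent of $i,n$,
\[
\|u\|_{\mathrm{MR}(\tau_i,\tau_{i+1}\wedge\sigma_n)}\le C_0\|u(\tau_i)\|_{\Xap}+C_0\sum_j\big(\eta^{\nu_j}+\delta^{\rho_j}\big)\|u\|_{\mathrm{MR}(\tau_i,\tau_{i+1}\wedge\sigma_n)}+C_0(1+\Xi),
\]
where $\Xi:=\|f\|_{L^p(s,T,w_{\a}^s;X_0)}+\|g\|_{L^p(s,T,w_{\a}^s;\g(H,X_{1/2}))}<\infty$ a.s.\ by \ref{Hf}. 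Choosing $\eta,\delta$ so small that $C_0\sum_j(\eta^{\nu_j}+\delta^{\rho_j})\le\frac12$, we absorb the middle term, let $n\to\infty$, and obtain $\|u\|_{\mathrm{MR}(\tau_i,\tau_{i+1})}\le 2C_0(\|u(\tau_i)\|_{\Xap}+1+\Xi)$; in particular $\|u(\tau_{i+1})\|_{\Xap}\le 2C_0(\|u(\tau_i)\|_{\Xap}+1+\Xi)$. Iterating over the $N$ steps gives $\sup_{t\in[s,\sigma)}\|u(t)\|_{\Xap}\le(2C_0)^{N}\big(\|u(s)\|_{\Xap}+N(1+\Xi)\big)<\infty$ a.s.\ on $\Gamma_R$ (since $u_s\in L^0_{\F_s}(\O;\Xap)$), which is what was needed.

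\smallskip

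\textbf{Main obstacle.} The delicate point is the core nonlinear estimate of the second paragraph: quantifying, \emph{with constants uniform in the interval length}, how the borderline scaling relation \eqref{eq:HypCritical} together with the inequality $\rho_j<1+\a$ permits trading $\rho_j$ powers of the critical norm $\|v\|_{X_{\varphi_j}}$ for the small quantity $\|v\|_{L^p(X_{1-\a/p})}$ while keeping the remainder controlled by $\|v\|_{\mathrm{MR}}$. Everything else — the random subdivision and its measurability, stochastic maximal regularity at random initial times, and the limit along a localizing sequence — is routine bookkeeping built on Lemma \ref{l:stopping_k} and Propositions \ref{prop:change_initial_time}, \ref{prop:start_at_s}, \ref{prop:causality_phi_revised_2}.
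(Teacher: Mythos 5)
Your proof takes a genuinely different route from the paper's. The paper argues by contradiction: it restricts to a set of positive probability on which $\sigma>\eta$ and $\|u\|_{L^p(\I_\sigma;X_{1-\frac{\a}{p}})}$ is uniformly bounded, and works on a \emph{single} terminal interval $[\stopp_\varepsilon',\sigma)$ with $\stopp_\varepsilon'\geq\eta$ chosen so that $\|u\|_{L^p(\stopp_\varepsilon',\sigma;X_{1-\frac{\a}{p}})}<\varepsilon$; the interpolation inequality of Lemma \ref{lem:interpolationineqMR0} together with maximal regularity then bound $\|u\|_{\Sz^{\theta,\a}(\stopp_\varepsilon',\tau_{\varepsilon,\ell})}$ uniformly in $\ell$, giving $\nonlinearity^\a(u;\stopp_{*}',\sigma)<\infty$ on a positive-probability set, which contradicts Theorem \ref{thm:semilinear_blow_up_Serrin}\eqref{it:blow_up_stochastic_semilinear}. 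Your plan is a direct, finite, path-by-path iteration over a random subdivision of $[s,\sigma)$; this is more quantitative and would be of independent interest, but as written there are two genuine gaps.

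First, the core nonlinear estimate cannot hold for every $v\in\mathrm{MR}(a,b)$ when $\a>0$, where you define $\|v\|_{\mathrm{MR}(a,b)}=\|v\|_{L^p(a,b,w_{\a}^{a};X_1)}+\|v\|_{C([a,b];\Xap)}$. Interpolating the critical term $\|v\|_{X_{\varphi_j}}^{\rho_j+1}$ between these two components only yields $\|v\|_{L^\infty(\Xap)}^{\rho_j}\|v\|_{L^p(w_{\a}^{a};X_1)}\lesssim\|v\|_{\mathrm{MR}(a,b)}^{\rho_j+1}$; the small prefactor $\|v\|_{L^p(a,b;X_{1-\frac{\a}{p}})}^{\rho_j}$ that drives your absorption step cannot be extracted from $\mathrm{MR}(a,b)$-data alone. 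Your own sketch in fact invokes the fractional time regularity $v\in H^{1-\varphi_j,p}(a,b,w_{\a}^{a};X_{\varphi_j})$ — precisely the ingredient of the paper's Lemma \ref{lem:interpolationineqMR0} — but that norm is encoded in $\Sz^{\theta,\a}(a,b)$ (and is controlled for solutions via Proposition \ref{prop:start_at_s}\eqref{it:start_at_s4}), not in your $\mathrm{MR}(a,b)$; the iteration must be closed in the $\Sz$-scale, not in $\mathrm{MR}$. Second, the claim that $(\bar A,\bar B)\in\mathcal{SMR}_{p,\a}^{\bullet}(\tau_i,T)$ with constants bounded uniformly in $i$ is not justified: Assumption \ref{H_a_stochnew} with $\ell=\a$ gives the constant $K_{M,\eta}$ only for initial times $t\geq s+\eta$, and Proposition \ref{prop:change_initial_time} produces an \emph{unweighted} constant that degenerates like $(\tau_i-s)^{-\a/p}$. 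Since $\tau_0=s$ and $\tau_1$ may be arbitrarily close to $s$, there is no uniform $C_0$ for your subdivision as defined. The paper circumvents this by conditioning on $\{\sigma>\eta\}$ (via Egorov) and restarting the equation only at random times bounded below by $\eta$; your argument would be repaired by performing the iteration on $[s+\eta,\sigma)$, using $u\in C([s,s+\eta];\Xap)$ a.s.\ on $\{\sigma>s+\eta\}$ for the initial stretch, and letting $\eta\downarrow 0$ at the end.
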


To the best of our knowledge, Theorem \ref{thm:semilinear_blow_up_Serrin_refined} is new even in the case $p=2$ and $\a=0$.
The second part of \eqref{eq:condSerrin} holds if $\rho_j=1$, and this covers the case of bilinear nonlinearities as considered in \cite{AV20_NS,CriticalQuasilinear}.
An extension of Theorem \ref{thm:semilinear_blow_up_Serrin_refined} allowing $\varphi_j\neq \beta_j$ can be found in Proposition \ref{prop:serrin_Pruss_general_form} below.

Suppose that $m_F=m_G=1$,  $\beta:=\beta_1=\beta_2=\varphi_1=\varphi_2$, and $\rho:=\rho_1=\rho_2$ are fixed for a given problem \eqref{eq:QSEE}. Let $S = \{(p,\a): \text{$\Xap$ is critical for \eqref{eq:QSEE}}\}$. Then for all $(p,\a)\in S$ the following identity holds $\frac{1+\a}{p}=\frac{\rho+1}{\rho}(1-\beta)$, which means that $\frac{1+\a}{p}$ is constant. Moreover, the second part of \eqref{eq:condSerrin} holds if and only if
$\frac{\rho}{p}<\frac{1+\a}{p}=\frac{\rho+1}{\rho}(1-\beta)$,
which holds for $p$ large enough. Moreover, $1-\frac{\a}{p}=\frac{\rho+1}{\rho}(\beta-1)+\frac{1}{p}$ which decreases in $p$. Therefore, Theorem \ref{thm:semilinear_blow_up_Serrin_refined} requires only a mild control of the regularity ``in space" of $u$ provided $p$ and thus $\a$, are large.

\begin{figure}[h!]
  \centering
\forestset{EL/.style 2 args={edge label={%
    node[midway, font=\footnotesize,
         inner sep=2pt, anchor=south #1]{$#2$}},
                     },
        }
  \begin{forest}
for tree={
edge={->},  draw,
  l sep= 10 mm,
  s sep= 1 mm,
        }
[Are estimates on $\sup_{t\in [s,\sigma)}\|u(t)\|_{\Xap}$ available?
    [Is the weight critical for \eqref{eq:QSEE}?, EL={east}{\text{Yes}},
        [Apply \eqref{it:blow_up_semilinear noncritical_stochastic},EL={east}{\text{No}}]
        [Apply \eqref{it:blow_up_semilinear_serrin_Pruss_modified},EL={west}{\text{Yes}},
                ]
    ]
    [Apply \eqref{it:blow_up_stochastic_semilinear} or Theorem \ref{thm:semilinear_blow_up_Serrin_refined}, EL={west}{\text{No}}]
    ]
]
\end{forest}
  \caption{Decision tree for applying Theorems \ref{thm:semilinear_blow_up_Serrin} and \ref{thm:semilinear_blow_up_Serrin_refined} to semilinear SPDEs.}\label{fig:tree2}
\end{figure}

As a key step in the proof of Theorems \ref{t:blow_up_criterion}--\ref{thm:semilinear_blow_up_Serrin_refined} we prove the following result which is of independent interest.

\begin{proposition}[Predictability of the explosion time $\sigma$]
\label{prop:predictability}
If the conditions of Theorem \ref{t:blow_up_criterion} hold, then any
localizing sequence $(\sigma_n)_{n\geq 1}$ for $(u,\sigma)$ satisfies
$$\P(\sigma<T, \sigma_n= \sigma) = 0, \  \text{ for all $n\geq 1$.}$$
\end{proposition}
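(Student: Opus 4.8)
The plan is to argue by contradiction: suppose there exists $n_0\geq 1$ with $\P(\sigma<T,\,\sigma_{n_0}=\sigma)>0$, and derive a contradiction with the maximality of $(u,\sigma)$. On the event $\{\sigma_{n_0}=\sigma<T\}$ the solution $u$ is, by definition of a localizing sequence, a strong solution on $\ll s,\sigma_{n_0}\rr=\ll s,\sigma\rr$; in particular $u\in L^p(s,\sigma,w_\a^s;X_1)\cap C([s,\sigma];\Xap)$ on this event, so $\lim_{t\uparrow\sigma}u(t)$ exists in $\Xap$ there. The idea is then to restart the equation at time $\sigma$ with this limiting value as initial data and glue, producing a local solution that strictly extends $(u,\sigma)$ on a set of positive probability, contradicting maximality.

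To make the restarting rigorous I would proceed in a few steps. First, localize to a good event: since $u(\sigma)$ is well-defined and $\Xap$-valued on $\Gamma:=\{\sigma_{n_0}=\sigma<T\}$, and using a further intersection with $\{\|u(\sigma)\|_{\Xap}\le m\}$ and with $\{\sigma>s+\eta\}$ for suitable $m,\eta$, one can assume the restart data is bounded and $\sigma$ is bounded away from $s$. Second, approximate $\sigma$ from below by stopping times $\wt\sigma_k$ taking finitely many values with $\wt\sigma_k\uparrow\sigma$ and $\wt\sigma_k\ge\sigma_k$ (Lemma~\ref{l:stopping_k}), so that on $\Gamma$ one can work at discrete restart times; then invoke Assumption~\ref{H_a_stochnew} together with Proposition~\ref{prop:start_at_sigma_random_time} (and Proposition~\ref{prop:change_initial_time} to pass from random discrete times to the genuine $\sigma$) to obtain $(A(\cdot,u(\sigma)),B(\cdot,u(\sigma)))\in\MRtsigma$ with controlled constants. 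Third, apply the local well-posedness Theorem~\ref{t:local_s} with random initial time $\sigma$ and initial datum $u(\sigma)$ (the extension to random initial times is noted after Definition~\ref{def:solution2}), yielding a maximal local solution $(v,\tau)$ on $\ll\sigma,T\rr$ with $\tau>\sigma$ a.s.\ on $\{\sigma<T\}$. Fourth, concatenate: $u\one_{\ll s,\sigma\rr}+v\one_{\llo\sigma,\tau\rr}$ is (by the localization/causality Proposition~\ref{prop:causality_phi_revised_2} and a routine check of the integral identity \eqref{eq:identity_sol} at the junction time $\sigma$) an $L^p_\a$-local solution on $\ll s,\tau\rr$ with $\tau>\sigma$ on a set of positive probability, contradicting that $(u,\sigma)$ is maximal.

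The main obstacle I expect is the gluing/measurability bookkeeping at the random junction time $\sigma$, rather than any deep estimate. Concretely: verifying that the concatenated process is strongly progressively measurable and that the stochastic integral splits as $\int_0^{t}\one_{\ll s,\sigma\rr}(\cdots)dW_H+\int_0^t\one_{\llo\sigma,\tau\rro}(\cdots)dW_H$ with the correct integrand (so that \eqref{eq:identity_sol} holds on $\ll s,\tau\rr$), and that the restart data $u(\sigma)$ is genuinely $\F_\sigma$-measurable with values in $\Xap$ so that Theorem~\ref{t:local_s} applies. The finiteness-of-values restriction on $\sigma$ in Assumption~\ref{H_a_stochnew} and in the definition of $\MRtasigma$ forces the detour through $\wt\sigma_k$ from Lemma~\ref{l:stopping_k} and Proposition~\ref{prop:change_initial_time}, and one must make sure the positive-probability event survives all these approximations — this is where a little care with the $\varepsilon$'s in Lemma~\ref{l:stopping_k} (choosing $\varepsilon$ smaller than, say, $\tfrac14\P(\Gamma)$) is needed. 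Once these measurability points are settled, the contradiction with maximality is immediate.
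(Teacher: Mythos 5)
Your conceptual plan (restart the equation at time $\sigma$ with $u(\sigma)$ as data and contradict maximality) is the right idea, but it is essentially the content of Lemma~\ref{l:lemma_correction_theorem}, which the paper has already proved; the paper's actual proof of Proposition~\ref{prop:predictability} is a short reduction to that lemma. Concretely, on $\V := \{\sigma_n = \sigma < T\}$ one has from Theorem~\ref{t:local_s}\eqref{it:regularity_data_L0} the instantaneous regularization $u \in C((s,\sigma_n];\Xp) = C((s,\sigma];\Xp)$, so $\lim_{t\uparrow\sigma} u(t)$ exists in $\Xp$ (not merely $\Xap$), while the strong-solution property on $\ll s,\sigma_n\rr = \ll s,\sigma\rr$ gives $\|u\|_{L^p(s,\sigma,w_\a;X_1)} + \nonlinearity^\a(u;s,\sigma) < \infty$. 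These are precisely the hypotheses of Lemma~\ref{l:lemma_correction_theorem}, which therefore forces $\P(\V) = 0$.

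Attempting the restart directly, as you propose, runs into several genuine gaps. First and most important, you only record $\lim_{t\uparrow\sigma}u(t) \in \Xap$, but the restart at a genuine random time has to be carried out in the \emph{unweighted} setting ($\kappa = 0$), because Proposition~\ref{prop:local_sigma} and Assumption~\ref{H_a_stochnew} demand finite-valued initial times when $\a > 0$. For the unweighted restart the initial datum must lie in $\Xp = X^{\Tr}_{0,p}$, and the stronger $\Xp$-convergence is exactly what the instantaneous regularization in Theorem~\ref{t:local_s}\eqref{it:regularity_data_L0} supplies on the event $\{\sigma_n=\sigma\}$. Without this observation the restart has the wrong target space. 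Second, Lemma~\ref{l:stopping_k} requires $\sigma_n < \sigma$ a.s., which is violated on the very event $\V$ you are working on, so it cannot be applied to the localizing sequence here. Third, the SMR transfer from a fixed time $r$ to the random time $\mu$ (a truncation of $\sigma$) is not a simple change of initial time: Assumption~\ref{H_a_stochnew} and Proposition~\ref{prop:change_initial_time} provide SMR for $(A(\cdot,v),B(\cdot,v))$ with $v$ bounded and $\F_r$-measurable at fixed times $r$, whereas what is needed is SMR for $(A(\cdot,u(\sigma)),B(\cdot,u(\sigma)))$. Passing from one to the other requires the perturbation Corollary~\ref{cor:Pert2}, using that $\|u(\sigma)-u(r)\|_{\Xap}$ is small on a suitably chosen $\U \subseteq \V$ (via Egorov and a choice of $r$ close to $\esssup_\V\sigma$); this perturbation step, which is Step~1 of the proof of Lemma~\ref{l:lemma_correction_theorem}, is absent from your outline. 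Once these points are acknowledged, the cleanest route is exactly the paper's: verify the hypotheses of Lemma~\ref{l:lemma_correction_theorem} on $\V$ and invoke it.
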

The above implies that $\sigma$ is a so-called \emph{predictable} stopping time. Proposition \ref{prop:predictability} will be proven in Subsection \ref{ss:proof_blow_up_criterion}, and as its the proof shows, to obtain it we only need Assumption \ref{H_a_stochnew} for $\ell=0$.

The next simple result will allow us to reduce to integrable or even bounded data. It will be used in the proofs of Theorems \ref{t:blow_up_criterion}, \ref{thm:semilinear_blow_up_Serrin} and \ref{thm:semilinear_blow_up_Serrin_refined}, but it can also be a helpful reduction in proving global existence in concrete situations.
\begin{proposition}[Reduction to uniformly bounded data]\label{prop:redblowbounded}
Let the Hypothesis \hyperref[H:hip]{$\Hip$} be satisfied. Let $u_s\in L^0_{\F_s}(\O;\Xap)$ and suppose that \eqref{eq:approximating_sequence_initial_data} holds.
Let $f_n = f \one_{[s,\tau_n]}$ and $g = g \one_{[s,\tau_n]}$, where
\[\tau_n = \inf\{t\in [s,T]: \|f\|_{L^p(s,t,w_{\a}^s;X_0)}\geq n,  \  \|g\|_{L^p(s,t,w_{\a}^s;\gamma(H,X_{1/2}))}\geq n\}.\]
Let $(u,\sigma)$ be the $L^p_{\a}$-maximal local solution to \eqref{eq:QSEE}, and let $(u_n,\sigma_n)$ be the $L^p_{\a}$-maximal local solution to \eqref{eq:QSEE} with $(u_s,f,g)$ replaced by $(u_{s,n}, f_n, g_n)$.
For each of the statements in Theorems \ref{t:blow_up_criterion}, \ref{thm:semilinear_blow_up_Serrin} and \ref{thm:semilinear_blow_up_Serrin_refined} it suffices to prove that the corresponding probability is zero with $u$ replaced by $u_n$ for each $n\geq 1$.

Finally, if $\sigma_n=T$ a.s.\ for all $n\geq 1$, then $\sigma=T$ a.s. 
\end{proposition}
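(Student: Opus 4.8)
The plan is to reduce the various blow-up statements for the original data $(u_s,f,g)$ to the corresponding statements for the truncated data $(u_{s,n},f_n,g_n)$, by exploiting the localization property of maximal local solutions (Theorem~\ref{t:local_s}\eqref{it:localization_L0}) together with the fact that the truncations agree with the original data on sets whose union (over $n$) is a.s.\ all of $\Omega$. First I would record the elementary observation that, since $u_s\in L^0_{\F_s}(\O;\Xap)$ and $f,g$ satisfy \ref{Hf}, the sets
\[
\Gamma_n:=\{\|u_s\|_{\Xap}\leq n\}\cap\{\tau_n=T\}
\]
are $\F_s$-measurable and increasing, and $\P(\bigcup_{n\geq 1}\Gamma_n)=1$; indeed $\|u_s\|_{\Xap}<\infty$ a.s., while $\|f\|_{L^p(0,T,w_\a;X_0)}<\infty$ and $\|g\|_{L^p(0,T,w_\a;\gamma(H,X_{1/2}))}<\infty$ a.s., so $\tau_n=T$ eventually, a.s.

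Next I would observe that on $\Gamma_n$ the data coincide: $u_{s,n}=u_s$ on $\{\|u_s\|_{\Xap}\leq n\}\supseteq\Gamma_n$ by \eqref{eq:approximating_sequence_initial_data}, and $f_n=f\one_{[0,\tau_n]}=f$, $g_n=g\one_{[0,\tau_n]}=g$ on $\Gamma_n$ since $\tau_n=T$ there. Hence equation \eqref{eq:QSEE} with data $(u_{s,n},f_n,g_n)$ restricted to $\Gamma_n$ is literally the same equation as \eqref{eq:QSEE} with data $(u_s,f,g)$ restricted to $\Gamma_n$. Applying the localization statement of Theorem~\ref{t:local_s}\eqref{it:localization_L0} — more precisely its evident extension with $\one_{\Gamma_n}$ multiplied through the data, which is the mechanism behind that theorem — one obtains
\[
\sigma|_{\Gamma_n}=\sigma_n|_{\Gamma_n},\qquad u|_{\Gamma_n\times[s,\sigma)}=u_n|_{\Gamma_n\times[s,\sigma_n)}\quad\text{a.s.}
\]
Consequently every event $E$ appearing in Theorems~\ref{t:blow_up_criterion}, \ref{thm:semilinear_blow_up_Serrin} and \ref{thm:semilinear_blow_up_Serrin_refined}, which is described purely in terms of $\sigma$ and the path $t\mapsto u(t)$ on $[s,\sigma)$ (existence of limits in $\Xap$, finiteness of $\nonlinearity_c^{\a}(u;s,\sigma)$, $\nonlinearity^{\a}(u;s,\sigma)$, $\sup_{t}\|u(t)\|_{\Xap}$, or $\|u\|_{L^p(s,\sigma;X_{1-\a/p})}$), satisfies $E\cap\Gamma_n=E_n\cap\Gamma_n$, where $E_n$ is the same event formed from $(u_n,\sigma_n)$.

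To conclude, I would write, for each such event $E$ and its truncated counterpart $E_n$,
\[
\P(E)=\P\Big(E\cap\bigcup_{n\geq 1}\Gamma_n\Big)\leq\sum_{n\geq 1}\P(E\cap\Gamma_n)
=\sum_{n\geq 1}\P(E_n\cap\Gamma_n)\leq\sum_{n\geq 1}\P(E_n),
\]
so that if $\P(E_n)=0$ for every $n$ then $\P(E)=0$; this is exactly the asserted reduction. (One may also phrase it without the union bound: $\P(E)=\lim_{n}\P(E\cap\Gamma_n)\le \sup_n \P(E_n)$, using monotonicity of $\Gamma_n$.) I expect the only genuinely delicate point to be justifying the localization identity $u|_{\Gamma_n}=u_n|_{\Gamma_n}$ with perturbed $(f,g)$ in addition to perturbed initial data; this is not literally the statement of Theorem~\ref{t:local_s}\eqref{it:localization_L0}, but it follows from the same uniqueness-and-restriction argument, multiplying the inhomogeneities as well as $u_s$ by $\one_{\Gamma_n}$ and using causality (Proposition~\ref{prop:causality_phi_revised_2}) — everything else is bookkeeping with the measurable, increasing exhaustion $(\Gamma_n)$.
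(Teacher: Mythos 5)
Your argument has a genuine gap in the measurability claim for $\Gamma_n$. You assert that $\Gamma_n:=\{\|u_s\|_{\Xap}\leq n\}\cap\{\tau_n=T\}$ is $\F_s$-measurable, but this is false: the event $\{\tau_n=T\}$ depends on the full paths of $f$ and $g$ over $[0,T]$ and so lies in $\F_T$, not $\F_s$. As a consequence, the process $\one_{\Gamma_n}f$ is \emph{not} progressively measurable, and the strategy you sketch in the final paragraph of ``multiplying the inhomogeneities as well as $u_s$ by $\one_{\Gamma_n}$ and using causality'' collapses at the very first step: Theorem~\ref{t:local_s}\eqref{it:localization_L0} requires the localizing set to lie in $\F_s$, and no version of that argument applies directly to a set like $\Gamma_n$ that peeks into the future.

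The way to handle the truncation of $(f,g)$ is through the stopping time $\tau_n$ itself, not through the terminal event $\{\tau_n=T\}$. The paper introduces the intermediate maximal solution $(v_n,\mu_n)$ with the \emph{original} initial value $u_s$ but the \emph{truncated} forcing $(f_n,g_n)$, and compares $(u,\sigma)$ with $(v_n,\mu_n)$ by a two-sided stopping-time argument: $(u,\sigma\wedge\tau_n)$ is a local solution of the truncated equation since $f=f_n$, $g=g_n$ on $\ll s,\tau_n\rr$, hence $\sigma\wedge\tau_n\leq\mu_n$ and $u=v_n$ there; conversely $(v_n,\mu_n\wedge\tau_n)$ is a local solution of the original equation, hence $\mu_n\wedge\tau_n\leq\sigma$. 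Together these yield $\mu_n=\sigma$ and $u=v_n$ on $\{\tau_n=T\}$, entirely without treating $\{\tau_n=T\}$ as a localizing set for initial data. Only afterwards is Theorem~\ref{t:local_s}\eqref{it:localization_L0} applied, to pass from $v_n$ to $u_n$ (which differ only in initial data) on the genuinely $\F_s$-measurable set $\{\|u_s\|_{\Xap}\leq n\}$. Splitting the comparison into these two steps — one governed by stopping-time causality, the other by localization of the initial datum — is exactly what your single-shot $\Gamma_n$ argument skips. Your closing limit computation (union bound or monotone limit over $\Gamma_n$) is fine once the pathwise identities have been correctly established.
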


\begin{proof}
By a translation argument we may assume that $s=0$.
We present the details in case of Theorem \ref{thm:semilinear_blow_up_Serrin}\eqref{it:blow_up_stochastic_semilinear}.
The other cases can be obtained in the same way replacing the set \eqref{eq:redsetO} below by a suitable set in each case.

Set $\Gamma_n:=\{\|u_0\|_{\Xap}\leq n\}\in \F_0$.
Observe that $\P(\{\tau_n=T\}\cap \Gamma_n) \to 1$ as $n\to \infty$, and for each $n\geq 1$, $(u, \sigma\wedge \tau_n)$ is a unique $L^p_{\a}$-local solution to \eqref{eq:QSEE} with $(f, g)$ replaced by $(f_n,g_n)$. Denoting by $(v_n,\mu_n)$ the $L^p_{\a}$-maximal solution to \eqref{eq:QSEE} with $(u_0,f_n, g_n)$, we have $\tau_n \wedge \sigma\leq\mu_n$ and $u=v_n$ on $\ll 0,\tau_n \wedge \sigma\rro$ by maximality (see Theorem \ref{t:local_s}). Similarly, since $(v_n, \mu_n\wedge \tau_n)$ is a  unique $L^p_{\a}$-local solution to  \eqref{eq:QSEE} with $(u_0,f,g)$ we have $\mu_n\wedge \tau_n\leq \sigma$ and  $u=v_n$ on $\ll 0,\mu_n\wedge \tau_n\rro$. It follows that
\begin{equation}\label{eq:taunTmunsigma}
\mu_n = \sigma\ \ \text{on} \ \{\tau_n=T\}, \ \ \text{and} \ \  u = v_n \ \text{on} \  [0,\sigma)\times \{\tau_n=T\}.
\end{equation}
Moreover, by Theorem \ref{thm:semilinear_blow_up_Serrin}\eqref{it:blow_up_stochastic_semilinear},
\begin{equation}\label{eq:taunTmunsigma2}
\mu_n = \sigma_n  \ \ \text{on} \  \Gamma_n, \ \ \text{and} \ \  v_n = u_n \ \text{on} \  \Gamma_n.
\end{equation}
For a stopping time $\nu$ such that $0\leq \nu\leq \sigma$, and a process $(v(t))_{t\in (\tau,\nu)}$ set
\begin{equation}\label{eq:redsetO}
\W(v,\nu)
:=\{\nonlinearity^\a(v;0,\nu)<\infty\}.
\end{equation}
Now if Theorem \ref{thm:semilinear_blow_up_Serrin}\eqref{it:blow_up_stochastic_semilinear} holds with $(u_{0,n},f_n, g_n)$, then by \eqref{eq:taunTmunsigma} and \eqref{eq:taunTmunsigma2}
\begin{equation*}
\begin{aligned}
\P\big(\{\sigma<T\}\cap\W(u,\sigma)\big)  & = \lim_{n\to \infty}\P\big(\{\sigma<T\}\cap\W(u,\sigma)\cap \{\tau_n = T\}\cap \Gamma_n \big)
\\ & = \lim_{n\to \infty}\,\P\big(\{\sigma_n<T\} \cap \W(u_n,\sigma_n) \cap \{\tau_n = T\}\cap \Gamma_n \big)
\\ & \leq \liminf_{n\to \infty}\,\P\big(\{\sigma_n<T\} \cap \W(u_n,\sigma_n)\big)=0.
\end{aligned}
\end{equation*}
The last sentence follows from \eqref{eq:taunTmunsigma}, \eqref{eq:taunTmunsigma2} and $\P(\{\tau_n=T\}\cap \Gamma_n) \to 1$.
\end{proof}

The proofs of the blow-up criteria are given in Section \ref{s:proofs_blow_up_criteria}:
\begin{itemize}
\item Subsection \ref{ss:proof_blow_up_criterion}: Theorem \ref{thm:semilinear_blow_up_Serrin}\eqref{it:blow_up_stochastic_semilinear} and Proposition \ref{prop:predictability};
\item Subsection \ref{ss:thmblowuphardespart}: Theorem \ref{t:blow_up_criterion}\eqref{it:blow_up_norm_general_case_F_c_G_c}-\eqref{it:blow_up_non_critical_Xap} and Theorem \ref{thm:semilinear_blow_up_Serrin}\eqref{it:blow_up_semilinear noncritical_stochastic};
\item Subsection \ref{ss:thmblowuphardespart_Serrin}: Theorems \ref{t:blow_up_criterion}\eqref{it:blow_up_norm_general_case_quasilinear_Xap_Pruss}, \ref{thm:semilinear_blow_up_Serrin}\eqref{it:blow_up_semilinear_serrin_Pruss_modified} and Theorem \ref{thm:semilinear_blow_up_Serrin_refined}.
\end{itemize}
Blow-up criteria involving the space $\X$ (see \eqref{eq:def_X_space} below) will be given in Remarks \ref{r:blow_up_semilinear_X_norm} and \ref{r:blow_up_quasilinear_X_norm} below.

\subsection{Global existence}\label{ss:globalgeneral}
In this section we demonstrate how Theorem \ref{thm:semilinear_blow_up_Serrin}  can be used to prove global existence for an equation, where $F$ and $G$ satisfy a certain linear growth condition.

Definitions \ref{def:solution1} and \ref{def:solution2} can be extended to the half line case. Indeed, in Definition \ref{def:solution1} one can just take $T=\infty$ and replace $[s,T]$, $L^p(s,\sigma)$ and $C([s, \sigma])$ by $[s, \infty)$, $L^p_{\rm loc}([s,\sigma))$ and $C([s, \sigma]\cap [s, \infty))$, respectively. Definition \ref{def:solution2} extends verbatim to $T=\infty$.

One can check that $(u,\sigma)$ is an $L^p_{\a}$-(maximal) local solution to \eqref{eq:QSEE} on $[ s,\infty)$ if for each $T<\infty$, $(u|_{\ll s,\sigma\wedge T\rro},\sigma\wedge T)$ is an $L^p_{\a}$-(maximal) local solution  to \eqref{eq:QSEE}  on $[s,T]$. As  before an $L^p_{\a}$-maximal local solution on $[s,\infty)$ is unique. Conversely, one can construct $L^p_{\a}$-maximal local solutions on $[s,\infty)$ from the ones on finite time intervals. Indeed,
suppose that an $L^p_{\a}$-maximal local solution $(u^T, \sigma^T)$ exists on $[s,T]$ for every $T\in (s,\infty)$. Then by maximality (see Definition \ref{def:solution2})  $\sigma^T=\sigma^{S}\wedge T$ a.s.\ and $u^T=u^{S}$ a.e.\ on $\ll s,\sigma^T\rro$ for $s<T\leq S$. Therefore, letting $u = u^T$ on $\ll s,\sigma_T\rro$ and $\sigma := \lim_{T\to \infty} \sigma^T$, one has that $u$ is an $L^p_{\a}$-maximal local solution on $[s,\infty)$. In particular, an $L^p_{\a}$-maximal local solution on $[s,\infty)$ exists if the conditions of Theorem \ref{t:local_s} hold for all $T\in (s,\infty)$. Finally we mention that if for each $T\in (s,\infty)$, $(\sigma_n^T)_{n\geq 1}$ is a localizing sequence for $(u^T,\sigma^T)$, then letting
\[\sigma_n = \sup_{m\in \{1, \ldots, n\}}\sigma^m_n, \ \ \ n\geq 1,\]
we obtain a localizing sequence $(\sigma_n)_{n\geq 1}$ for $(u,\sigma)$.

The following roadmap can be used to prove global well-posedness and regularity.

\begin{roadmap}[Proving global existence and regularity]\label{roadcomplete}
\
\begin{enumerate}[{\rm(a)}]
\item\label{it:roadmaploc} Prove local well-posedness with Theorem \ref{t:local_s};
\item\label{it:roadmapreg} obtain instantaneous regularization from Theorem \ref{t:regularization_z} and Corollary \ref{cor:regularization_X_0_X_1} using as a first step Proposition \ref{prop:adding_weights} in the case $\a=0$;
\item\label{it:roadmapred} reduce the global existence proof to data $(u_0,f,g)$ which are uniformly bounded in $\Omega$ (see Proposition \ref{prop:redblowbounded});
\item\label{it:roadmapenergy} prove an energy estimate for a certain norm $\|u\|_{Z(s,\sigma\wedge T)}$ by applying the equation and/or It\^o's formula. In this part, the regularization proven in \eqref{it:roadmapreg} can be used to simplify and/or obtain the estimate;
\item\label{it:roadmapthmappl} combine the energy estimate with Theorem \ref{t:blow_up_criterion}, \ref{thm:semilinear_blow_up_Serrin} or \ref{thm:semilinear_blow_up_Serrin_refined} to prove $\sigma\geq T$ a.s. possibly under restrictions on the integrability parameters and weights;
\item\label{it:roadmapregred} use the instantaneous regularization phenomena of Theorem \ref{t:regularization_z} and Corollary \ref{cor:regularization_X_0_X_1} to reduce to the previous case.
\end{enumerate}
Some steps of this roadmap can be skipped in certain situations. But the steps \eqref{it:roadmaploc}, \eqref{it:roadmapenergy}, \eqref{it:roadmapthmappl} seem essential in all cases. Furthermore, we  mention that in \eqref{it:roadmapreg} and \eqref{it:roadmapregred} the use of weights is essential.
\end{roadmap}

To illustrate the above roadmap concretely, we will now prove global existence of \eqref{eq:QSEE} in the semilinear setting under linear growth assumptions on $F$ and $G$. Of course the linear growth assumptions fail to hold for many of the interesting SPDEs. So this result should only be seen as an illustration and test case. For more advanced applications where the roadmap is followed, we refer to Section \ref{s:1D_problem} and \cite{AV20_NS} on stochastic Navier-Stokes equation with transport noise.

\begin{theorem}[Global well-posedness under linear growth conditions]\label{thm:globallinear}
Let Hypothesis \hyperref[H:hip]{$\Hip$} be satisfied for all $T\in (s, \infty)$,
where we suppose that $(A(t,x),B(t,x)) = (\bar{A}(t), \bar{B}(t))$ does not depend on $x$ and
\begin{equation}
\label{eq:SMRsemilinearglobal}
(\bar{A}(\cdot),\bar{B}(\cdot))\in \MRtas \ \ \text{for all $T\in (s,\infty)$}.
\end{equation}
Assume that Assumption \ref{H_a_stochnew} holds for $\ell=\a$ and all $T\in (s,\infty)$ and Assumption \ref{ass:FG_a_zero} holds for all $T\in (s,\infty)$. Suppose that for every $\varepsilon>0$ there exist a constant $L_{\varepsilon}>0$ such that for all $t\in (s, \infty)$, $\omega\in \Omega$ and $x\in X_1$,
\begin{align}\label{eq:lineargrowth}
\|F(t,\om,x)\|_{X_{0}} + \|G(t,\om,x)\|_{\gamma(H,X_{1/2})}&\leq L_{\varepsilon}(1+ \|x\|_{X_{0}})+\varepsilon \|x\|_{X_1}.
\end{align}
Then for each $u_s\in L^0_{\F_s}(\O;\Xap)$ there is a unique $L^p_{\a}$-global solution $u$ to \eqref{eq:QSEE} s.t.
\begin{itemize}
\item If $p>2$ and $\a\in [0,\frac{p}{2}-1)$, then for all $\theta\in [0,\frac{1}{2})$,
 \[u\in H^{\theta,p}_{\rm loc}([s,\infty),w_{\a}^s;X_{1-\theta}) \cap C([s,\infty);\Xap)  \ \ \ a.s.\]
Moreover, $u$ instantaneously regularizes to $u\in C((s,\infty);\Xp)$ a.s.
\item If $p=2$ and $\a=0$, then
 \[u\in L^2_{\rm loc}(s,\infty;X_{1}) \cap C([s,\infty);X_{1/2})  \ \ \ a.s.\]
 \end{itemize}
Moreover, if additionally $u_s\in L^p_{\F_s}(\Omega;\Xap)$, $f\in L^p_{\Progress}((s,T)\times \Omega,w_{\a}^s;X_0)$ and $g\in L^p_{\Progress}((s,T)\times \Omega,w_{\a}^s;\g(H,X_{1/2}))$, then for every $T\in (s, \infty)$ and every $\theta\in [0,1/2)$ there exists a constant $C_{\theta,T}$ such that
\begin{equation}\label{eq:lineargrowthapriori}
\begin{aligned}
\|v \|_{L^p(\Omega;E_{\theta,p})} \leq C_{\theta,T}(&1 +\|u_s\|_{L^p(\Omega;\Xap)}  \\ &   + \|f\|_{L^p(\llo s,T\rro,w_{\a}^s;X_0)}  +\|g\|_{L^p(\llo s,T\rro,w_{\a}^s;\g(H,X_{1/2}))}),
\end{aligned}
\end{equation}
where we set, for $s'>s$,
\begin{align*}
E_{\theta,p} &\in \{H^{\theta,p}(s,T,w_{\a}^s;X_{1-\theta}), C([s,T];\Xap), C([s',T];\Xp)\} \ \ \ \text{if} \ \  p\in (2, \infty),
\\ E_{\theta,2} &\in  \{L^2(s,T;X_1), C([s,T];X_{1/2})\}.
\end{align*}
\end{theorem}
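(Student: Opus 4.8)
The plan is to follow Roadmap \ref{roadcomplete} in the semilinear setting, exploiting the crucial fact that the linear growth condition \eqref{eq:lineargrowth}, with $\varepsilon$ chosen small depending on the maximal regularity constants, makes the nonlinear terms subordinate to the leading operator. First I would fix $T\in(s,\infty)$ and verify the hypotheses of Theorem \ref{t:local_s} (which hold by assumption for each such $T$), obtaining an $L^p_\a$-maximal local solution $(u^T,\sigma^T)$ with the stated regularity on each localizing interval; by the gluing argument recalled in Subsection \ref{ss:globalgeneral} these patch to a maximal local solution $(u,\sigma)$ on $[s,\infty)$. The heart of the matter is then to show $\sigma^T\geq T$ a.s.\ for every finite $T$, equivalently $\sigma=\infty$ a.s.

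By Proposition \ref{prop:redblowbounded} it suffices to treat the case where $u_s$, $f$, $g$ are uniformly bounded (in $\Omega$) in the relevant norms; so assume $u_s\in L^\infty_{\F_s}(\O;\Xap)$, $f\in L^p_{\Progress}(\llo s,T\rro,w_\a^s;X_0)$, $g\in L^p_{\Progress}(\llo s,T\rro,w_\a^s;\g(H,X_{1/2}))$. The energy estimate of step \eqref{it:roadmapenergy} is here a \emph{linear} a priori bound rather than an It\^o-type computation: writing the equation as a linear problem $du+\bar A u\,dt=(F(\cdot,u)+f)\,dt+(\bar B u+G(\cdot,u)+g)\,dW_H$ on $\ll s,\sigma_n\wedge T\rr$ for a localizing sequence, I apply the maximal regularity estimate of Proposition \ref{prop:start_at_s} (with $\sigma$ a deterministic time $s$, so no finite-range restriction is needed) together with \eqref{eq:lineargrowth} and the embedding $\|x\|_{X_0}\lesssim\|x\|_{X_{1/2}}\leq\eta\|x\|_{X_1}+C_\eta\|x\|_{X_0}$ (or directly interpolation $\|x\|_{X_{1/2}}\le C\|x\|_{X_1}^{1/2}\|x\|_{X_0}^{1/2}$) to absorb the $\varepsilon\|u\|_{X_1}$ and the $\|u\|_{X_{1/2}}$ contributions into the left-hand side. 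This yields, after a Gr\"onwall-type argument in the upper limit of integration to handle the $\|u\|_{L^p(s,t,w_\a^s;X_0)}$ term, a bound
\[
\|u\|_{L^p(\llo s,\sigma_n\wedge T\rr,w_\a^s;X_1)}\leq C_{T}\bigl(1+\|u_s\|_{L^p(\O;\Xap)}+\|f\|_{L^p(\llo s,T\rro,w_\a^s;X_0)}+\|g\|_{L^p(\llo s,T\rro,w_\a^s;\g(H,X_{1/2}))}\bigr),
\]
uniformly in $n$. In particular $\|u\|_{L^p(s,\sigma\wedge T;X_{1-\a/p})}<\infty$ a.s.\ on $\{\sigma<T\}$ follows once we also control $\sup_{t}\|u(t)\|_{\Xap}$, which comes from the $C([s,\cdot];\Xap)$-estimate in Proposition \ref{prop:start_at_s}\eqref{it:start_at_s1}.

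With this a priori bound in hand, step \eqref{it:roadmapthmappl} applies Theorem \ref{thm:semilinear_blow_up_Serrin}\eqref{it:blow_up_semilinear_serrin_Pruss_modified} (whose hypotheses—\eqref{eq:SMRsemilinearglobal}, Assumption \ref{H_a_stochnew} for $\ell=\a$, Assumption \ref{ass:FG_a_zero}—are assumed) to conclude $\P(\sigma<T)=0$; letting $T\uparrow\infty$ gives $\sigma=\infty$ a.s., and uniqueness is inherited from Theorem \ref{t:local_s}. The regularity assertions then follow from Theorem \ref{t:local_s}\eqref{it:regularity_data_L0} applied on each $[s,T]$ together with the instantaneous regularization statement there. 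Finally, for the quantitative bound \eqref{eq:lineargrowthapriori} under $u_s\in L^p_{\F_s}(\O;\Xap)$, I would rerun the maximal regularity estimate of Proposition \ref{prop:start_at_s} now with the full list of norms \eqref{it:start_at_s1}--\eqref{it:start_at_s4} on the left (using $\sigma=\infty$ a.s.\ so that $\sigma_n\wedge T$ can be replaced by $T$ via monotone convergence), the absorption of $F,G$ as above, and Gr\"onwall in $T$; the trace embeddings of Proposition \ref{prop:continuousTrace} handle the $C([s,T];\Xap)$ and $C([s',T];\Xp)$ cases. The main obstacle I anticipate is technical rather than conceptual: carefully justifying the absorption/Gr\"onwall step on the \emph{stochastic} interval $\ll s,\sigma_n\wedge T\rr$ with the time-weight $w_\a^s$ present—one must iterate over short subintervals (or use the continuity in the upper endpoint of $t\mapsto\|u\|_{L^p(s,t,w_\a^s;X_1)}$) so that the constant $C_\eta$ from the interpolation does not blow up, and one must ensure all quantities are finite a priori on the localizing intervals before absorbing, which is exactly what the $L^p_\a$-strong solution regularity on $\ll s,\sigma_n\rr$ guarantees.
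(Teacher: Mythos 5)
Your overall strategy coincides with the paper's: localize to a finite horizon $T$, reduce to bounded/integrable data via Proposition \ref{prop:redblowbounded}, derive an a priori $L^p$-maximal-regularity bound by feeding $F(\cdot,u)$, $G(\cdot,u)$ back into the linear problem, absorb the $\varepsilon\|u\|_{X_1}$ term by choosing $\varepsilon$ small relative to the maximal-regularity constant, run a Gr\"onwall argument in the upper endpoint for the $C([0,t];\Xap)$ norm (which controls the $X_0$ term via $\Xap\hookrightarrow X_0$), and then invoke a semilinear blow-up criterion to conclude $\sigma=T$ a.s. The quantitative bound \eqref{eq:lineargrowthapriori} is indeed obtained by re-running the same maximal regularity estimates with the full list of norms in Proposition \ref{prop:start_at_s}.

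The one genuine deviation is which blow-up criterion you invoke. You aim for Theorem \ref{thm:semilinear_blow_up_Serrin}\eqref{it:blow_up_semilinear_serrin_Pruss_modified}, which requires establishing both $\sup_{t}\|u(t)\|_{\Xap}<\infty$ and $\|u\|_{L^p(s,\sigma;X_{1-\a/p})}<\infty$. The paper instead uses the weaker criterion \eqref{it:blow_up_stochastic_semilinear}: once the $L^p(w_\a^s;X_1)$-bound (and the implied $L^p(w_\a^s;X_0)$-bound) is in hand, the linear growth condition \eqref{eq:lineargrowth} immediately yields $\nonlinearity^\a(u;s,\sigma)<\infty$ a.s., and \eqref{it:blow_up_stochastic_semilinear} then gives $\P(\sigma<T)=0$ directly, with no need to pass through the $X_{1-\a/p}$-embedding or the sup bound in $\Xap$. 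Your route also works under the stated hypotheses, but it is a detour that happens to require the deeper Serrin-type result. Separately, your remark about absorbing ``$\|u\|_{X_{1/2}}$ contributions'' is spurious: the growth condition \eqref{eq:lineargrowth} is in terms of $\|x\|_{X_0}$, not $\|x\|_{X_{1/2}}$, so no such interpolation step is needed (the interpolation remark the paper makes after the theorem statement is an optional generalization, not part of the proof).
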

By standard interpolation inequalities we can replace $\|x\|_{X_0}$ by $\|x\|_{X_{1-\delta}}$ with arbitrary $\delta\in(0,1)$ in \eqref{eq:lineargrowth}. From the proof below one can actually see that it is enough to have \eqref{eq:lineargrowth} for some fixed small $\varepsilon>0$.
\begin{proof}[Proof of Theorem \ref{thm:globallinear}]
We may suppose that $s=0$. We will only prove the result for $p>2$ as the other case is simpler.

By Theorem \ref{t:local_s} and the above discussion there exists local solution $(u, \sigma)$ of \eqref{eq:QSEE} on $[0,\infty)$ with the required properties on $[0,\sigma)$ and thus we only need to show that $\sigma = \infty$ a.s. Replacing $\sigma$ by $\sigma\wedge T$ it suffices to show that $\P(\sigma<T)=0$ for all $T\in (0,\infty)$. Moreover, by Proposition \ref{prop:redblowbounded} it suffices to consider the case of $L^p(\Omega)$-integrable data $u_0$, $f$ and $g$.
To prove $\sigma = T$ a.s., we will apply Theorem \ref{thm:semilinear_blow_up_Serrin}\eqref{it:blow_up_stochastic_semilinear} (but also the slightly simpler Lemma \ref{l:lemma_correction_theorem} below suffices).
In order to do so we will first derive a suitable energy estimate.

Let $(\sigma_n)_{n\geq 1}$ be a localizing sequence for $(u,\sigma)$. Moreover, for each $n\geq 1$ define a stopping time by
\[\tau_n = \inf\{t\in [0,\sigma): \|u\|_{L^p(0,t,w_{\a};X_1)}\geq n\}\wedge \sigma_n,\]
where we set $\inf\emptyset =\sigma$. Then $u|_{\ll 0,\tau_n\rr}$ is a strong solution of \eqref{eq:QSEE} on $\ll 0,\tau_n\rr$.

Set $\wt{f}_n = \one_{[0,\tau_n]} (f+ F(\cdot, u))$ and $\wt{g}_n = \one_{[0,\tau_n]} (g+ G(\cdot, u))$. Then by \eqref{eq:lineargrowth}, $\wt{f}_n\in L^{p}(\llo 0,T\rro,w_{\a};X_0)$ and $\wt{g}_n\in
L^{p}(\llo 0,T\rro,w_{\a};\g(H,X_{1/2}))$. By \eqref{eq:SMRsemilinearglobal} for the strong solution $v$ to
\begin{align*}
\label{eq:diffAB_s}
\begin{cases}
dv(t) +A(t)v(t)dt=\wt{f}(t) dt+ (B(t)v(t)+\wt{g}(t))dW_H(t),\quad t\in \ll 0,T\rr,\\
u(0)=u_{0},
\end{cases}
\end{align*}
we have $u=v$ on $\ll 0,\tau_n\rr$, and by Proposition \ref{prop:start_at_s},
\begin{align*}
\|v\|_{L^p(\llo 0,T\rro,w_{\a};X_{1})}
\leq
C(\|u_{0}\|_{L^p(\O;\Xap)}
&+ \|\wt{f}\|_{L^p(\llo 0,T\rro,w_{\a};X_{0})}\\
&+\|\wt{g}\|_{L^p(\llo 0,T\rro,w_{\a};\g(H,X_{1/2}))}).
\end{align*}
By the linear growth assumption \eqref{eq:lineargrowth}, and $\|\one_{[0,\tau_n]} u\|_{X_i} \leq \|v\|_{X_i}$ we obtain
\[\|\wt{f}\|_{X_0} + \|\wt{g}\|_{\gamma(H,X_{1/2})}\leq \|f\|_{X_0} + \|g\|_{\gamma(H,X_{1/2})} + L_{\varepsilon} (1+\|v\|_{X_0}) + \varepsilon \|v\|_{X_1}.\]
Choose $\varepsilon = \frac{1}{2C}$ and set
\[K = \|u_{0}\|_{L^p(\O;\Xap)}+ \|f\|_{L^p(\llo 0,T\rro,w_{\a};X_{0})}+\|g\|_{L^p(\llo 0,T\rro,w_{\a};\g(H,X_{1/2}))}+L_{\varepsilon},\]
Then combining the above we obtain
\[\|v\|_{L^p(\llo 0,T\rro,w_{\a};X_{1})} \leq C K
 + CL_{\varepsilon}\|v\|_{L^p(\llo 0,T\rro,w_{\a};X_{0})}
+\frac12\|v\|_{L^p(\llo 0,T\rro,w_{\a};X_{1})},\]
and hence
\begin{align}\label{eq:vestlingrowth2}
\|v\|_{L^p(\llo 0,T\rro,w_{\a};X_{1})} \leq 2 C K
+ 2C L_{\varepsilon}\|v\|_{L^p(\llo 0,T\rro,w_{\a};X_{0})}.
\end{align}

Similarly, by Proposition \ref{prop:start_at_s}\eqref{it:start_at_s1} there exists a $\wt{C}>0$ independent of $T$ such that
\begin{align*}
\|v\|_{L^p(\O;C([0,T];\Xap))}
&\leq
\wt{C}(\|u_{0}\|_{L^p(\O;\Xap)}+ \|\wt{f}\|_{L^p(\llo 0,T\rro,w_{\a};X_{0})}\\
&\qquad \qquad +\|\wt{g}\|_{L^p(\llo 0,T\rro,w_{\a};\g(H,X_{1/2}))})
\\ & \leq \wt{C} K + \wt{C}L_{\varepsilon} \|v\|_{L^p(\llo 0,T\rro,w_{\a};X_{0})}  +  \frac12 \wt{C} \|v\|_{L^p(\llo 0,T\rro,w_{\a};X_{1})}
\\ & \stackrel{\eqref{eq:vestlingrowth2}}{\leq} \wh{C} K + \wh{C} L_{\varepsilon} \|v\|_{L^p(\llo 0,T\rro,w_{\a};X_{0})},
\end{align*}
where $\wh{C} = \wt{C}(1+C)$. Since $T>0$ was arbitrary letting $y(t) = \|v\|_{L^p(\O;C([0,t];\Xap))}^p$ it follows that for all $t\in (0,T]$,
\begin{align*}
y(t) \leq   2^{p-1} \wh{C}^p K^p + 2^{p-1} \wh{C}^p L_{\varepsilon}^p \int_0^t y(s) \, ds.
\end{align*}
Thus, Gronwall's inequality implies $y(t) \leq 2^{p-1} \wh{C}^p K^p e^{2^{p-1}
\wh{C}^p L_{\varepsilon}^pt}$.
This gives
\[\|v\|_{L^p(\O;C([0,T];\Xap))}\leq 2 \wh{C} K e^{\frac{1}p2^{p-1} \wh{C}^p L_{\varepsilon}^pt}:=K \overline{C}_{T}.\]
Therefore, by $\Xap\hookrightarrow X_0$ with embedding constant $M$, from \eqref{eq:vestlingrowth2} we obtain that
\[
\|v\|_{L^p(\llo 0,T\rro,w_{\a};X_{1})} \leq 2 C K + 2CL_{\varepsilon} M K \overline{C}_{T} T^{1/p}
\]
Since $u = v$ on $\ll 0,\tau_n\rr$ letting $n\to \infty$ the following energy estimate follows
\begin{align}\label{eq:vestlingrowth3}
\|u\|_{L^p(\llo 0,\sigma\rro,w_{\a};X_{1})} \leq 2 C K + 2CL_{\varepsilon} M K \overline{C}_{T} T^{1/p}
\end{align}

From the estimate \eqref{eq:vestlingrowth3} and \eqref{eq:lineargrowth} we obtain that for a suitable $\wt{C}_T$ \[\Big\|\|F(\cdot,u)\|_{X_0}+
\|G(\cdot,u)\|_{\g(H,X_{1/2})}\Big\|_{L^p(\llo 0,\sigma\rro,w_{\a})}\leq \wt{C}_T K <\infty.\]
Therefore, Theorem \ref{thm:semilinear_blow_up_Serrin}\eqref{it:blow_up_stochastic_semilinear}  implies $\sigma = T$ a.s. Furthermore, \eqref{eq:lineargrowthapriori} follows from the latter estimate, \eqref{eq:SMRsemilinearglobal} and Proposition \ref{prop:start_at_s}.
\end{proof}

\section{Proofs of Theorems \ref{t:blow_up_criterion}, \ref{thm:semilinear_blow_up_Serrin} and \ref{thm:semilinear_blow_up_Serrin_refined}}
\label{s:proofs_blow_up_criteria}

In this section we have collected the proofs of the blow-up criteria stated in Section \ref{s:blow_up}.
The proofs are technical and require some preparations. In Section \ref{ss:QSEE_tau} we will first obtain a local existence result for \eqref{eq:QSEE} starting at a random initial time. It plays a key role in the proof of Lemma \ref{l:lemma_correction_theorem}, which is a weaker version of Theorem \ref{thm:semilinear_blow_up_Serrin}\eqref{it:blow_up_norm_general_case_F_c_G_c}, but it is a central step in proving all the blow-up criteria.

\subsection{Local existence when starting at a random time}
\label{ss:QSEE_tau}
In this subsection, for a stopping time $\tau$, we consider
\begin{equation}
\label{eq:QSEE_tau}
\begin{cases}
du +A(\cdot,u)dt =(F(\cdot,u)+f )dt +(B(\cdot,u)u+G(\cdot,u)+g)dW_{H},\\
u(\tau)=u_{\tau};
\end{cases}
\end{equation}
on $\ll \tau,T\rr$. To define an $L^p_{\a}$-local solution to \eqref{eq:QSEE_tau} on $\ll \tau,T\rr$, one can just replace the initial time $s$ by $\tau$ in Definition \ref{def:solution2}.

The following is the natural extension of the local existence part of Theorem \ref{t:local_s} to the case of random initial times \eqref{eq:QSEE_tau}. It will be used to prove the blow-up results of Theorem \ref{t:blow_up_criterion}.
\begin{proposition}[Local existence starting at a random initial time]
\label{prop:local_sigma}
Let Hypothesis \hyperref[H:hip]{$\Hip$} be satisfied. Let $\tau$ be a stopping time with values in $[s,T]$, where we assume  that $\tau$ takes values in a finite set if $\a>0$. Assume that $u_{\tau}\in L^{\infty}_{\F_{\tau }}(\O;\Xap)$ and $(A(\cdot,u_{\tau})|_{\ll \tau,T\rr},B(\cdot,u_{\tau})|_{\ll \tau,T\rr})\in \MRtatau$. Then there exists a unique $L^p_{\a}$-local solution $(u,\sigma)$ to \eqref{eq:QSEE_tau}  on $\ll \tau,T\rr$ such that $\sigma>\tau$ a.s.\ on the set $\{\tau<T\}$.
\end{proposition}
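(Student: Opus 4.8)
The plan is to reduce the random-time problem \eqref{eq:QSEE_tau} to the fixed-time local well-posedness result of Theorem \ref{t:local_s} by the standard ``freezing'' technique: since $\tau$ takes only finitely many values $s_1 < \dots < s_N$ in $(s,T]$ (with $\tau < T$ only on the values $s_i < T$), we can write $\Omega = \bigcup_{i=1}^N \U_i$ with $\U_i = \{\tau = s_i\} \in \F_{s_i}$, and on each piece $\U_i$ the equation becomes a fixed-initial-time problem on $[s_i,T]$ with initial datum $\one_{\U_i} u_\tau \in L^\infty_{\F_{s_i}}(\Omega;\Xap)$. The first step is therefore to fix $i$ and check that all hypotheses of Theorem \ref{t:local_s} hold for the restricted problem: Hypothesis $\Hip$ is inherited (the estimates in (HA), (HF), (HG), (Hf) are uniform in the initial time, and restricting the domain of definition of $F,G,A,B$ from $[s,T]$ to $[s_i,T]$ preserves them); the initial datum $\one_{\U_i} u_\tau$ lies in $L^0_{\F_{s_i}}(\Omega;\Xap)$ (in fact in $L^\infty$), so one can take the approximating sequence from \eqref{eq:approximating_sequence_initial_data} to be $u_{s_i,n} = \one_{\U_i} u_\tau$ itself for all $n$ large (or apply the truncation $R_n$); and the stochastic maximal regularity condition \eqref{eq:stochastic_maximal_regularity_assumption_local_extended} reads $(A(\cdot, \one_{\U_i} u_\tau), B(\cdot, \one_{\U_i} u_\tau)) \in \mathcal{SMR}^\bullet_{p,\a}(s_i,T)$, which we must extract from the hypothesis $(A(\cdot,u_\tau)|_{\ll\tau,T\rr}, B(\cdot,u_\tau)|_{\ll\tau,T\rr}) \in \MRtatau$.

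\textbf{Key steps.} The extraction just mentioned is the crux: I would use the localization/causality principle of Proposition \ref{prop:causality_phi_revised_2}\eqref{it:localization_F_sigma_s}, which says that for $F' \in \F_\tau$ one has $\one_{F'} u = \one_{F'} \Sol_{\tau,(A,B)}(\one_{F'}u_\tau, \one_{F'}f, \one_{F'}g)$, to split the solution operator along the partition $\{\U_i\}$. Concretely: given $(A(\cdot,u_\tau), B(\cdot,u_\tau)) \in \MRtatau$, applying the localization property with $F' = \U_i$ shows that $(A(\cdot, \one_{\U_i}u_\tau)|_{\ll s_i,T\rr}, B(\cdot, \one_{\U_i}u_\tau)|_{\ll s_i,T\rr})$, extended by a fixed element of $\mathcal{SMR}^\bullet_{p,\a}(s_i,T)$ off $\U_i$, lies in $\mathcal{SMR}^\bullet_{p,\a}(s_i,T)$ — here one uses that $\mathcal{SMR}$ is nonempty (via Theorem \ref{t:SMR_H_infinite} or the standing applicational assumption) together with Proposition \ref{prop:start_at_sigma_random_time}-style gluing. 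Once Theorem \ref{t:local_s} is applicable on each $[s_i,T]$, we obtain an $L^p_\a$-maximal local solution $(u^{(i)}, \sigma^{(i)})$ with $\sigma^{(i)} > s_i$ a.s. (when $s_i < T$), and in particular $\sigma^{(i)} > \tau$ a.s.\ on $\U_i$. The second step is to glue: set $u := \sum_{i=1}^N \one_{\U_i} u^{(i)}$ on $\ll \tau, T\rr$ and $\sigma := \sum_{i=1}^N \one_{\U_i} (\sigma^{(i)} \wedge T)$. Because the $\U_i$ are disjoint and $\F_{s_i}$-measurable, $\sigma$ is a stopping time, $u$ is strongly progressively measurable, and the regularity $u \in L^p(\tau,\sigma,w_\a^\tau;X_1) \cap C([\tau,\sigma];\Xap)$ follows piecewise from that of each $u^{(i)}$; similarly the strong-solution identity \eqref{eq:identity_sol} holds on each $\U_i$ by construction, hence a.s.\ on all of $\Omega$. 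The localizing sequence for $(u,\sigma)$ is obtained by gluing the localizing sequences $(\sigma_n^{(i)})_n$ along the partition. Finally, $\sigma > \tau$ a.s.\ on $\{\tau < T\} = \bigcup_{s_i < T} \U_i$ because on each such $\U_i$ we have $\sigma = \sigma^{(i)} \wedge T > s_i = \tau$ a.s.

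\textbf{Main obstacle.} The only genuinely delicate point is the measurability bookkeeping around the stochastic maximal regularity class at a random time, i.e.\ justifying that restricting $(A(\cdot,u_\tau),B(\cdot,u_\tau)) \in \MRtatau$ to the set $\U_i$ produces an admissible operator pair in $\mathcal{SMR}^\bullet_{p,\a}(s_i,T)$ in the fixed-time sense of Definition \ref{def:SMRthetacase}. This is precisely why the hypothesis restricts $\tau$ to finitely many values when $\a>0$: with a finite partition the localization identity of Proposition \ref{prop:causality_phi_revised_2} can be applied cleanly, the $\hz^{\theta,p}$-estimates at the random time $\tau$ reduce to a finite collection of fixed-time estimates on the pieces $\U_i$ (cf.\ the argument in the proof of Proposition \ref{prop:start_at_sigma_random_time}), and the needed a priori bounds transfer without any subtle approximation of the stopping time. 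Everything else is a routine gluing/patching argument that mirrors the finite-values case already handled in Subsection \ref{ss:stopping_time} and in Part I.
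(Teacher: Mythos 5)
Your proposal reduces the problem to the fixed-time Theorem \ref{t:local_s} on the pieces $\U_i = \{\tau = s_i\}$ and then glues, whereas the paper runs a direct Banach fixed-point argument at the random initial time $\tau$ itself: it introduces the truncated nonlinearities \eqref{eq:nonlinearity_truncation_tau}--\eqref{eq:truncation_Theta_Psi}, uses Lemma \ref{l:estimate_Sol_op_Y} to obtain uniform estimates for the random-time solution operator $\Sol_{\tau}$ on the space $\z_{T'}$ of processes living on $\ll\tau,\mu_{T'}\rr$, shows the map $\yT$ is a contraction for $T^*$, $\lambda^*$ small, and reads the local solution off the fixed point $U$ via a stopping time $\nu$. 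Your decomposition strategy runs into two genuine obstructions.

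First, the finitely-valued decomposition $\Omega = \bigcup_i \U_i$ is only available when $\a>0$; when $\a=0$ (including $p=2$) the proposition allows an arbitrary stopping time $\tau$, and your argument does not cover this case, whereas the paper's argument does so uniformly because $\Sol_{\tau}$ and $\MRtatau$ are defined for general $\tau$ when $\a=0$ (Definitions \ref{def:SMRgeneralized}--\ref{def:SMRthetacase}). Second, even when $\tau$ is finitely valued, the step of extracting $(A(\cdot,\one_{\U_i}u_\tau),B(\cdot,\one_{\U_i}u_\tau))\in\mathcal{SMR}^\bullet_{p,\a}(s_i,T)$, needed to invoke \eqref{eq:stochastic_maximal_regularity_assumption_local_extended}, from the hypothesis $(A(\cdot,u_\tau),B(\cdot,u_\tau))\in\MRtatau$ is not justified. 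Proposition \ref{prop:start_at_sigma_random_time} goes the other way, gluing fixed-time SMR pairs into a random-time one; the converse is neither proved in the paper nor obvious. Your suggestion to extend off $\U_i$ by ``a fixed element of $\mathcal{SMR}^\bullet_{p,\a}(s_i,T)$'' presupposes nonemptiness of that class, which is not among the hypotheses of Proposition \ref{prop:local_sigma} and is not supplied by Hypothesis \hyperref[H:hip]{$\Hip$} alone (Theorem \ref{t:SMR_H_infinite} needs additional structure on $X_0$); and Proposition \ref{prop:causality_phi_revised_2}\eqref{it:localization_F_sigma_s} is a consequence of an SMR membership already assumed, not a criterion for establishing a new one. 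The paper sidesteps all of this by never passing through fixed-time SMR classes: the contraction argument only uses $\MRtatau$, which is given.
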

The analogues assertions of Theorem \ref{t:local_s} for \eqref{eq:QSEE_tau} hold as well, but since these results will not be needed we do not consider this.

To prove Proposition \ref{prop:local_sigma}, we use a variation of the method in \cite[Theorem 4.6]{AV19_QSEE_1}.
As in \cite{AV19_QSEE_1}, we introduce the space $\X$ which allows to control the nonlinearity $F_c,G_c$.
If \eqref{eq:HypCritical} or \eqref{eq:HypCriticalG} holds with strict inequality for some $j\in\{1,\dots,m_F+m_G\}$ we denote by $\rhos_j$ the unique positive number such that \eqref{eq:HypCritical} or \eqref{eq:HypCriticalG} holds with equality if $\rho_j$ is replaced by $\rhos_j$. More precisely, we set
\begin{equation}
\label{eq:rhostar}
\rhos_j:=\frac{1-\beta_j}{\varphi_j-1+(1+\a)/p},\qquad j\in\{1,\dots,m_F+m_G\}.
\end{equation}
To introduce the space $\X$, set
\begin{equation}
\label{eq:rr'}
\frac{1}{\p_{j}'}:=\frac{\rhos_j(\varphi_j-1+(1+\a)/p)}{(1+\a)/p}<1, \qquad \frac{1}{\p_j}:=\frac{\beta_j -1 + (1+\a)/p}{(1+\a)/p}<1.
\end{equation}
Note that $\frac{1}{r_j}+\frac{1}{r_j'}=1$ by \eqref{eq:rhostar}.
Finally, for each $0\leq a<b\leq \infty$, we set
\begin{equation}
\label{eq:def_X_space}
\X({a,b}):=\Big(\bigcap_{j=1}^{m_F+m_G} L^{p\p_j}(a,b,w_{\a}^a;X_{\beta_j})\Big) \cap
\Big(\bigcap_{j=1}^{m_F+m_G}  L^{\rhos_j p \p_j'}(a,b,w_{\a}^a;X_{\varphi_j})\Big).
\end{equation}
Setting $\X(T):=\X(0,T)$, for all $T>0$, our notation is consistent with \cite{AV19_QSEE_1}.

The following result is proven in \cite[Lemma 4.10]{AV19_QSEE_1}.
\begin{lemma}
\label{l:embeddings}
Let Assumption \ref{ass:X}, \emph{\ref{HFcritical}} and \emph{\ref{HGcritical}} be satisfied.  Let $0<a<b\leq T<\infty$ and let $\X(a,b)$ be as in \eqref{eq:def_X_space}. Then the following hold:
\begin{enumerate}[{\rm(1)}]
\item\label{it:emb_p_grather_2} If $p>2$, $\a\in [0,\frac{p}{2}-1)$, and $\A\in \{\hz,H\}$, then for any
$\delta\in (\frac{1+\a}{p},\frac{1}{2})$,
$$
\A^{\d,p}(a,b;w_{\a}^a;X_{1-\d})\cap L^p(a,b,w_{\a}^a;X_1)
\hookrightarrow \X(a,b);
$$
\item\label{it:emb_p_2} if $p=2$ and $\a=0$, then
$
C([a,b];X_{1/2})\cap L^2(a,b;X_1)
\hookrightarrow \X(a,b).
$
\end{enumerate}
Finally, if in \eqref{it:emb_p_grather_2} $\A=\hz$, then the constants in the embeddings \eqref{it:emb_p_grather_2}-\eqref{it:emb_p_2} can be chosen to be independent of $b-a>0$.
\end{lemma}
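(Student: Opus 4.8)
\textbf{Proof plan for Lemma \ref{l:embeddings}.}

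The statement to prove is that the mixed-norm space $\X(a,b)$, which is an intersection of weighted Bochner spaces $L^{q_j}(a,b,w_\a^a;X_{\theta_j})$ with $q_j\in\{p r_j,\rhos_j p r_j'\}$ and $\theta_j\in\{\beta_j,\varphi_j\}$, is controlled by the maximal regularity space $\A^{\d,p}(a,b,w_\a^a;X_{1-\d})\cap L^p(a,b,w_\a^a;X_1)$ (resp.\ by $C([a,b];X_{1/2})\cap L^2(a,b;X_1)$ when $p=2$). Since $\X(a,b)$ is a finite intersection, it suffices to prove each embedding $\A^{\d,p}(a,b,w_\a^a;X_{1-\d})\cap L^p(a,b,w_\a^a;X_1)\hookrightarrow L^{q_j}(a,b,w_\a^a;X_{\theta_j})$ separately, for $\theta_j\in\{\beta_j,\varphi_j\}$. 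The plan is to handle this by combining (i) the mixed derivative inequality of Lemma \ref{l:mixed_derivative} to move from $\{X_0,X_1\}$-valued regularity to $X_{\theta_j}$-valued regularity at an appropriate intermediate smoothness, and then (ii) the Sobolev-type embedding in time of Proposition \ref{prop:change_p_q_eta_a}\eqref{it:Sob_embedding} (or Corollary \ref{cor:technicalweight}) to trade the remaining time-smoothness for higher time-integrability $q_j\geq p$, with the weight kept fixed at $w_\a^a$.

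For case \eqref{it:emb_p_grather_2}, fix $j$ and first treat $\theta_j=\beta_j$. Interpolating the pair $(L^p(a,b,w_\a^a;X_1),\ \A^{\d,p}(a,b,w_\a^a;X_{1-\d}))$ with parameter chosen so the resulting $X$-index is $\beta_j$: since $1-\d<\beta_j\le 1$ we write $\beta_j = (1-\d)\mu + 1\cdot(1-\mu)$ for suitable $\mu\in[0,1)$, and Lemma \ref{l:mixed_derivative}\eqref{it:mixed_derivative_Hz}/\eqref{it:mixed_derivative_H} gives
\[
\A^{\d,p}(a,b,w_\a^a;X_{1-\d})\cap L^p(a,b,w_\a^a;X_1)\hookrightarrow \A^{\mu\d,p}(a,b,w_\a^a;X_{\beta_j}),
\]
with constants independent of $b-a$ in the $\hz$-case. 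It then remains to verify $\A^{\mu\d,p}(a,b,w_\a^a;X_{\beta_j})\hookrightarrow L^{pr_j}(a,b,w_\a^a;X_{\beta_j})$; by Proposition \ref{prop:change_p_q_eta_a}\eqref{it:Sob_embedding} (keeping the same weight $\a$ but raising $p\to pr_j$) this holds provided $\mu\d - \tfrac{1+\a}{p}\ge 0 - \tfrac{1+\a}{pr_j}$, i.e.\ $\mu\d\ge \tfrac{1+\a}{p}(1-\tfrac1{r_j})=\tfrac{1+\a}{p}\cdot\tfrac1{r_j'}$. Here is where the definitions \eqref{eq:rhostar}--\eqref{eq:rr'} are used: unwinding $\tfrac1{r_j}=\tfrac{\beta_j-1+(1+\a)/p}{(1+\a)/p}$ one gets $\mu = \tfrac{1-\beta_j}{\d}$, so $\mu\d = 1-\beta_j$, and the required inequality becomes $1-\beta_j\ge \tfrac{1+\a}{p}\cdot\tfrac1{r_j'}$, which is exactly $\tfrac1{r_j'}\le \tfrac{p(1-\beta_j)}{1+\a}$, equivalent (by the identities $\tfrac1{r_j}+\tfrac1{r_j'}=1$ and the definition of $r_j$) to the criticality inequality \eqref{eq:HypCritical}. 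The case $\theta_j=\varphi_j$ with exponent $\rhos_j p r_j'$ is symmetric: one interpolates to the $X_{\varphi_j}$-level, then applies Proposition \ref{prop:change_p_q_eta_a}\eqref{it:Sob_embedding} to raise the integrability to $\rhos_j p r_j'$, and the defining relation \eqref{eq:rhostar} for $\rhos_j$ (which makes \eqref{eq:HypCritical} an equality) is precisely what makes the Sobolev index bookkeeping close; since in this subcase the threshold is hit exactly, one must use the part of Corollary \ref{cor:technicalweight} that allows equality of Sobolev indices when the integrability exponent on the target side is attained consistently, or equivalently use that $\d$ can be taken strictly larger than $\tfrac{1+\a}{p}$ to get strict inequality and then the non-limiting case of Proposition \ref{prop:change_p_q_eta_a}\eqref{it:Sob_embedding}.

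Case \eqref{it:emb_p_2} is simpler: with $p=2$, $\a=0$ and Hilbert spaces, one has $C([a,b];X_{1/2})\cap L^2(a,b;X_1)\hookrightarrow H^{1/2,2}(a,b;X_{1/2})\cap L^2(a,b;X_1)$ via standard interpolation, and then the same mixed-derivative-plus-Sobolev argument applies with the simplification that the relevant $r_j,r_j'$ and $\rhos_j$ are fixed by the critical constraints at $\a=0$. Finally, the uniformity of constants in $b-a$ when $\A=\hz$ follows because both ingredients — Lemma \ref{l:mixed_derivative}\eqref{it:mixed_derivative_H} and the $\hz$-version of Proposition \ref{prop:change_p_q_eta_a} (where the embedding constant carries a nonnegative power of $b-a$ that, together with the $\hz$-extension, stays bounded as $b-a\to0$) — have this property; the power weight exponents and the Sobolev indices have been matched so that no positive power of $b-a$ appears. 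The main obstacle is the careful arithmetic matching of the parameters $\mu,\d,r_j,r_j',\rhos_j$ against the two criticality inequalities \eqref{eq:HypCritical}--\eqref{eq:HypCriticalG}, and in particular ensuring that the borderline Sobolev embeddings are used only in the regime where Proposition \ref{prop:change_p_q_eta_a}\eqref{it:Sob_embedding} (or Corollary \ref{cor:technicalweight}) is valid — this is exactly why the hypothesis $\d\in(\tfrac{1+\a}{p},\tfrac12)$ is imposed, giving the small amount of slack needed. Since this lemma already appears as \cite[Lemma 4.10]{AV19_QSEE_1}, the proof here reduces to citing that reference and indicating the parameter identities above.
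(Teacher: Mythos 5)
Your closing remark --- that the proof reduces to citing \cite[Lemma 4.10]{AV19_QSEE_1} --- matches what the paper actually does: the paper offers no argument for this lemma beyond that citation. The extra sketch you supply, however, has a genuine error in case \eqref{it:emb_p_2}, and in case \eqref{it:emb_p_grather_2} it invokes Lemma \ref{l:mixed_derivative} outside its stated scope.

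For case \eqref{it:emb_p_2}, the embedding $C([a,b];X_{1/2})\cap L^2(a,b;X_1)\hookrightarrow H^{1/2,2}(a,b;X_{1/2})$ is false: take $f(t)=\phi(t)x$ with $x\in X_1\setminus\{0\}$ and $\phi$ continuous on $[a,b]$ but not in $H^{1/2,2}(a,b)$ (a Brownian sample path will do); then $f$ lies in the left-hand space but not the right. The correct argument here is direct: use the pointwise interpolation inequality $\|f(t)\|_{X_{\beta_j}}\leq C\|f(t)\|_{X_{1/2}}^{1-\eta}\|f(t)\|_{X_1}^{\eta}$ with $\eta=2\beta_j-1$, absorb the first factor into $\|f\|_{C([a,b];X_{1/2})}$, and observe that $\eta=1/\p_j$ makes the remaining time integral exactly $\|f\|_{L^2(a,b;X_1)}^{2}$ (and similarly for the $\varphi_j$-terms). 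For case \eqref{it:emb_p_grather_2}, applying Lemma \ref{l:mixed_derivative}\eqref{it:mixed_derivative_Hz}--\eqref{it:mixed_derivative_H} to the pair $(L^p(a,b,w_{\a}^a;X_1),\A^{\d,p}(a,b,w_{\a}^a;X_{1-\d}))$ requires $s_0=0$ and $s_1=\d\in(0,1)$, a combination covered by none of the stated cases: \eqref{it:mixed_derivative_Hz} and \eqref{it:mixed_derivative_H} demand $s_i\in(0,1)$, while \eqref{it:mixed_derivative_W} demands $s_1=1$. A cleaner route, parallel to the $p=2$ argument, is to first obtain a $C([a,b];\Xap)$-bound from Proposition \ref{prop:continuousTrace}\eqref{it:trace_with_weights_Xap} --- this is where $\d>\frac{1+\a}{p}$ enters, and where the $\hz$-version yields constants independent of $b-a$ --- and then interpolate pointwise between $\Xap$ and $X_1$; the identities \eqref{eq:rhostar}--\eqref{eq:rr'} you worked out then close the exponents by H\"older with equality, exactly as your arithmetic indicates.
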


The following lemma contains the key estimate for the proof of Proposition \ref{prop:local_sigma}. It is not immediate from Lemma \ref{l:embeddings}, since we require uniformity in the constants if $|b-a|$ tends to zero.
\begin{lemma}
\label{l:estimate_Sol_op_Y}
Let Assumption \ref{ass:X}, \emph{\ref{HFcritical}} and \emph{\ref{HGcritical}} be satisfied.
Let $0\leq a<b<T<\infty$ and let $\sigma$ be a stopping time with values in $[a,b]$, where we assume that $\sigma$ takes values in a finite set if $\a>0$.
Let $(A,B)\in \MRtasigma$. Let either $p>2$, $\a\in [0,\frac{p}{2}-1)$ and $\delta\in (\frac{1+\a}{p},\frac{1}{2})$ or $p=2$, $\a=0$ and $\delta\in (0,\frac{1}{2})$ be fixed. Set
\[K_{(A,B)}:=\max\{K^{\det,\delta,p,\a}_{(A,B)},K^{\stoc,\delta,p,\a}_{(A,B)}\}.\]
Then there exists a constant $C>0$ independent of $a,b,\sigma$ such that for each $(u_{\sigma},f,g)$ which belongs to \eqref{eq:u_sigma_f_g_max_reg},
\begin{equation}
\label{eq:estimate_Sol_X_spaces}
\begin{aligned}
&\|\Sol_{\sigma}(u_{\sigma},f,g)\|_{ L^p(\O;\X(\sigma,b)\cap L^p(\sigma,b,w_\a^{\sigma};X_1)\cap C([\sigma,b];\Xap))}\\
&\qquad\qquad \leq C(1+K_{(A,B)})(\|u_{\sigma}\|_{L^p(\O;\Xap)}+\|f\|_{L^p(\llo\sigma,b\rro,w_{\a}^{\sigma};X_0)}\\
&\qquad\qquad\qquad\qquad\qquad\qquad\qquad\qquad\qquad+
\|g\|_{L^p(\llo\sigma,b\rro,w_{\a}^{\sigma};\g(H,X_{1/2}))}),
\end{aligned}
\end{equation}
where $\Sol_{\sigma}:=\Sol_{\sigma,(A,B)}$ is the solution operator associated to $(A,B)$.
\end{lemma}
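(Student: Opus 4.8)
The statement is a quantitative refinement of Lemma \ref{l:embeddings}: we must control $\Sol_\sigma(u_\sigma,f,g)$ in the scale-invariant space $\X(\sigma,b)$ with a constant that does \emph{not} degenerate as $|b-a|\to 0$. The key point to exploit is that, by Proposition \ref{prop:continuousTrace} and Lemma \ref{l:embeddings}, all the relevant embeddings are available with constants independent of the length of the interval provided one works with the $\hz^{\theta,p}$-norms rather than the $H^{\theta,p}$-norms. So the plan is: first reduce to the case $u_\sigma=0$; second, estimate $\Sol_\sigma(0,f,g)$ in the $\hz^{\delta,p}$-norm using $(A,B)\in\MRtasigma$; third, feed this into Lemma \ref{l:embeddings}\eqref{it:emb_p_grather_2} (whose constants are interval-independent when $\A=\hz$) to get the $\X$-norm; fourth, reinstate nonzero initial data via the construction in the proof of Proposition \ref{prop:start_at_s}, being careful that the auxiliary function $v_1$ built there also lands in $\X$ with an interval-uniform constant.

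\textbf{Step 1 (reduction to $u_\sigma=0$).} Write $u=\Sol_\sigma(u_\sigma,f,g)=\Sol_\sigma(u_\sigma,0,0)+\Sol_\sigma(0,f,g)$ by linearity. For the first summand I would reuse the decomposition of the proof of Proposition \ref{prop:start_at_s}: pick $h_j\in W^{1,p}(\R_+,w_\a;X_0)\cap L^p(\R_+,w_\a;X_1)$ with $h_j(0)=x_j$ and norms controlled by $2C_{p,\a}\|x_j\|_{\Xap}$, set $v_1=\sum_j\one_{\U_j}h_j(\cdot-\sigma)$; then $v_1\in\hz^{\delta,p}(\sigma,b,w_\a^\sigma;X_{1-\delta})\cap L^p(\sigma,b,w_\a^\sigma;X_1)$ with an $L^p(\Omega)$-norm $\lesssim\|u_\sigma\|_{L^p(\O;\Xap)}$ \emph{uniformly in $b$} (this is exactly inequalities $(i)$--$(v)$ in that proof, which use only Proposition \ref{prop:change_p_q_eta_a}\eqref{it:loc_embedding}, \eqref{eq:LMVresult} and a translation — all interval-uniform), and $v_2:=u-v_1$ solves \eqref{eq:stochastic_equation_v_2}, which has zero initial data and right-hand sides bounded in terms of $\|u_\sigma\|_{L^p(\O;\Xap)}$, $\|f\|$, $\|g\|$. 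Applying Lemma \ref{l:embeddings}\eqref{it:emb_p_grather_2} to both $v_1$ and $v_2$ reduces everything to the case $u_\sigma=0$; note the $C([\sigma,b];\Xap)$ part comes from Proposition \ref{prop:continuousTrace}\eqref{it:trace_with_weights_Xap}, again with interval-uniform constant in the $\hz$-formulation.

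\textbf{Step 2 (the $u_\sigma=0$ estimate).} With $u_\sigma=0$, the definition of $\MRtasigma$ (Definition \ref{def:SMRthetacase}, case $p>2$, applied with $\theta=\delta$ and $\theta=0$) gives
\begin{align*}
\|\Sol_\sigma(0,f,g)\|_{L^p(\O;\hz^{\delta,p}(\sigma,b,w_\a^\sigma;X_{1-\delta}))}
+\|\Sol_\sigma(0,f,g)\|_{L^p(\O;L^p(\sigma,b,w_\a^\sigma;X_1))}\\
\leq K_{(A,B)}\big(\|f\|_{L^p(\llo\sigma,b\rro,w_\a^\sigma;X_0)}+\|g\|_{L^p(\llo\sigma,b\rro,w_\a^\sigma;\g(H,X_{1/2}))}\big),
\end{align*}
and then Lemma \ref{l:embeddings}\eqref{it:emb_p_grather_2} (with $\A=\hz$, so interval-uniform constant) converts the left-hand side into the $\X(\sigma,b)$-norm, while Proposition \ref{prop:continuousTrace}\eqref{it:trace_with_weights_Xap} handles $C([\sigma,b];\Xap)$. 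Here one must check that $\Sol_\sigma(0,f,g)$ genuinely lies in $\Wz$-type spaces so that the $\hz$-scale and Proposition \ref{prop:continuousTrace}'s interval-uniformity apply — this follows because $u_\sigma=0$, exactly as in Proposition \ref{prop:start_at_s}\eqref{it:start_at_s4}. The $p=2$, $\a=0$ case is parallel but simpler, using the $C([\sigma,b];X_{1/2})$ estimate of Definition \ref{def:SMRthetacase}(2) together with Lemma \ref{l:embeddings}\eqref{it:emb_p_2}. Combining the two steps and absorbing the constants $C_{p,\a}$, $C_{p,\a,\delta}$ (from Lemma \ref{l:embeddings}, all interval-uniform) into a single $C$ yields \eqref{eq:estimate_Sol_X_spaces}.

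\textbf{Main obstacle.} The only delicate point is the uniformity of constants as $|b-a|\to 0$; this is precisely why one cannot simply quote Proposition \ref{prop:start_at_s} as a black box (whose constants depend on $T$ through $K^{j,\theta,p,\a}_{(A,B)}(\sigma,T)$ and, implicitly, on the interval length via the $H^{\theta,p}\hookleftarrow\hz^{\theta,p}$ comparison). The resolution is to systematically keep everything in the $\hz^{\delta,p}$-scale, for which \eqref{eq:LMVresult} identifies it with $H^{\delta,p}$ with interval-uniform constants, and for which both Lemma \ref{l:embeddings} and Proposition \ref{prop:continuousTrace} explicitly state interval-uniform bounds. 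One also needs $\delta>(1+\a)/p$ (which is part of the hypothesis) so that both the trace embedding and Lemma \ref{l:embeddings}\eqref{it:emb_p_grather_2} are applicable, and $\delta<1/2$ so that $\MRtasigma$ provides the $\hz^{\delta,p}$-bound; since $(1+\a)/p<1/2$ under Assumption \ref{ass:X}, such a $\delta$ exists, consistently with the statement.
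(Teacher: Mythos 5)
Your proposal is correct and tracks the paper's own proof very closely: both use the $v_1,v_2$ decomposition from the proof of Proposition \ref{prop:start_at_s}, estimate $v_1$ via the translation-invariant trace extension construction, control $v_2$ through the $\hz^{\delta,p}$-estimate from stochastic maximal regularity (\eqref{eq:v2estHtheta}), and convert to $\X(\sigma,b)$ via Lemma \ref{l:embeddings}\eqref{it:emb_p_grather_2} with $\A=\hz$ so that the constants are interval-uniform. The only cosmetic difference is that your opening split $u=\Sol_\sigma(u_\sigma,0,0)+\Sol_\sigma(0,f,g)$ is immediately superseded by the $v_1,v_2$ decomposition (and is thus redundant), and that the paper simply quotes Proposition \ref{prop:start_at_s} for the interval-uniform $L^p(\sigma,b,w_\a^\sigma;X_1)$ and $C([\sigma,b];\Xap)$ bounds rather than re-deriving them; your argument for why those quotes are legitimate (the constants there depend only on the maximal-regularity constants, not on the interval length) is exactly the right justification.
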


\begin{proof}
We only consider the case $p>2$. From Proposition \ref{prop:start_at_s} we see that the constants in the estimates for the $L^p(\ll \sigma,b\rr ,w_\a^{\sigma};X_1)$ and $L^p(\Omega;C([\sigma,b];\Xap))$ norm of $u$ do not depend on $a,b,\sigma$. It remains to estimate the $L^p(\O;\X(\sigma,b))$-norm of $u:=\Sol_{\sigma}(u_{\sigma},f,g)$, and for this we will reduce to the case with zero initial data. As in the proof of Proposition \ref{prop:start_at_s} we can assume that $u_{\sigma}=\sum_{j=1}^N \one_{\U_j} x_j$ is simple and set $v_1=\sum_{j=1}^N \one_{\U_j}h_j(\cdot-\sigma)$. The proof of Proposition \ref{prop:start_at_s} shows that $u = v_1+v_2$ and that on $\U_j$,
\begin{equation*}
\begin{aligned}
\|v_1\|_{\X(\sigma,b)}
&\leq
\|t\mapsto h_j(t-\sigma)\|_{\X(\sigma,\infty)}\\
&=
\|h_j\|_{\X(0,\infty)}
\lesssim_{\wt{A}} \|h_j\|_{W^{1,p}(\R_+,w_{\a};X_0)\cap
L^p(\R_+,w_{\a};X_1)}
\lesssim_{\wt{A}} \|u_{\sigma}\|_{\Xap};
\end{aligned}
\end{equation*}
where we used Lemmas \ref{l:mixed_derivative} and \ref{l:embeddings}
and the same argument as the proof of Proposition \ref{prop:start_at_s}. Taking $L^p(\O)$-norms we obtain $\|v_1\|_{L^p(\O;\X(\sigma,b))}
\lesssim \|u_{\sigma}\|_{L^p(\O;\Xap)}$,
where the implicit constant does not depend on $\sigma,a,b$.
To estimate $v_2$, choosing any $\delta\in (\frac{1+\a}{p},\frac{1}{2})$, by Lemma \ref{l:embeddings} we find
\begin{align*}
\|v_2\|_{L^p(\O;\X(\sigma,b))}
&\lesssim
\|v_2\|_{L^p(\O;\hz^{\delta,p}(\sigma,b,w_{\a}^{\sigma};X_{1-\delta})\cap L^p(\sigma,b,w_{\a}^{\sigma};X_1))}\\
&\lesssim  2\wt{C}_0 K_{(A,B)} \|u_{\sigma}\|_{L^p(\O;\Xap)}
+ K_{(A,B)} \|f\|_{L^p(\llo\sigma,b\rro,w_{\a}^{\sigma};X_0)}
\\ & \qquad +K_{(A,B)} \|g\|_{L^p(\llo\sigma,b\rro,w_{\a}^{\sigma};\g(H,X_{1/2}))},
\end{align*}
where we used \eqref{eq:v2estHtheta}. Combining the estimates for $v_1$ and $v_2$, the result follows.
\end{proof}
After these preparations we can prove Proposition \ref{prop:local_sigma}.
\begin{proof}[Proof of Proposition \ref{prop:local_sigma}]
The proof is a variation of the argument in Step 1 and 4 in the proof of \cite[Theorem 4.6]{AV19_QSEE_1}. For each $\lambda\in (0,1)$, we look at the following truncation of \eqref{eq:QSEE_tau}:
\begin{equation*}
\begin{cases}
du +A(\cdot,u_{\tau})u dt= (\wt{F}_{\lambda}(u)+\wt{f})dt +
(B(\cdot,u_{\tau})u + \wt{G}_{\lambda}(u)+\wt{g})dW_{H},\\
u(\tau)=u_{\tau},
\end{cases}
\end{equation*}
on $\ll \tau ,T\rr$, where
\begin{equation}
\label{eq:nonlinearity_truncation_tau}
\begin{aligned}
\wt{F}_{\lambda}(u)&:=
\Theta_{\lambda}(\cdot,u_{\tau},u)[F_c(\cdot,u)-F_c(\cdot,0)]\\
&\quad +\Psi_{\lambda}(\cdot,u_{\tau},u)[(A(\cdot,u_{\tau})u-A(\cdot,u)u)+F_{\Tr}(\cdot,u)-F_{\Tr}(\cdot,u_{\tau})],\\
\wt{G}_{\lambda}(u)&:=
\Theta_{\lambda}(\cdot,u_{\tau},u)[G_c(\cdot,u)-G_c(\cdot,0)]\\
&\quad
+\Psi_{\lambda}(\cdot,u_{\tau},u)[(B(\cdot,u_{\tau})u-B(\cdot,u)u)+G_{\Tr}(\cdot,u)-G_{\Tr}(\cdot,u_{\tau})],\\
\wt{f}&:=f+ F_c(\cdot,0)+F_{\Tr}(\cdot,u_{\tau}), \qquad
\wt{g}:=g+ G_c(\cdot,0)+G_{\Tr}(\cdot,u_{\tau}).
\end{aligned}
\end{equation}
Here, for $\lambda>0$,
$\xi_{\lambda}:=\xi(\cdot/\lambda)$, $\xi\in W^{1,\infty}([0,\infty))$, $\xi=1$ on $[0,1]$, $\xi=0$ on $[1,\infty)$ and linear on $[1,2]$, we have set, a.s.\ for all $t\in [\tau,T]$,
\begin{equation}
\label{eq:truncation_Theta_Psi}
\begin{aligned}
\Theta_{\lambda}(t,u_{\tau},u)&
:=\xi_{\lambda}\Big(\|u\|_{\X(\tau,t)}+\sup_{s\in[\tau,t]} \|u(s)-u_{\tau}\|_{\Xap}\Big),\\
\Psi_{\lambda}(t,u_{\tau},u)&
:=\xi_{\lambda}\Big(\|u\|_{L^p(\tau,t,w_{\a}^{\tau};X_1)}+\sup_{s\in[\tau,t]} \|u(s)-u_{\tau}\|_{\Xap}\Big).
\end{aligned}
\end{equation}
Let $\Sol_{\tau}:=\Sol_{(A(\cdot,u_{\tau}),B(\cdot,u_{\tau}))}$ be the solution operator associated to the couple $(A(\cdot,u_{\tau}),B(\cdot,u_{\tau}))$, see \eqref{eq:soloperatorR2}. For any $T'\in (0,T]$ we introduce the Banach space
$$
\z_{T'}:=L_{\Progress}^p(\O;C([\tau,\mu_{T'}];\Xap)\cap\X(\tau,\mu_{T'})
\cap L^p(\tau,\mu_{T'},w_{\a}^{\tau};X_1)),
$$
where $\mu_{T'}:=T\wedge(\tau +T')$. In the following, for notational simplicity, we write $\mu$ instead of $\mu_{T'}$ if no confusion seems likely. Let us consider the map $\yT$ defined as
\begin{equation}
\label{eq:contraction_map_starting_at_tau}
\yT(u)=\Sol_{\tau}(u_{\tau},\one_{\ll \tau,\mu \rr}(\wt{F}_{\lambda}(u)+\wt{f}),
\one_{\ll \tau,\mu \rr}(\wt{G}_{\lambda}(u)+\wt{g})).
\end{equation}
For the sake of clarity, we split the proof into two steps.

\emph{Step 1: $\yT$ maps $\z_{T'}$ into itself for all $T'\in (0,T]$ and $\lambda>0$. Moreover, there exists a $T^*\in (0,T]$ and $\lambda^*>0$ such that}
\begin{equation}
\label{eq:contractivity_T}
\|\yT(\vone)-\yT(\vtwo)\|_{\z_{T^*}}\leq \frac{1}{2}\|\vone-\vtwo\|_{\z_{T^*}},\quad \text{ for all }\vone,\vtwo\in \z_{T^*}.
\end{equation}
Let us note that by a translation argument and the pointwise estimates w.r.t.\ $\om\in\O$ in \cite[Lemmas 4.14 and 4.16]{AV19_QSEE_1}, one can check that for all $\vone,\vtwo\in \z_{T'}$,
\begin{align*}
\|\wt{F}_{\lambda}(\vone)\|_{L^{p}(\llo \tau,\mu\rro,w_{\a}^{\tau};X_0)}+
\|\wt{G}_{\lambda}(\vone)\|_{L^{p}(\llo \tau,\mu\rro,w_{\a}^{\tau};\g(H,X_{1/2}))}\leq C_{\lambda},
\end{align*}
\begin{align*}
\|\wt{F}_{\lambda}(\vone)-\wt{F}_{\lambda}(\vtwo)\|_{L^{p}(\llo \tau,\mu\rro,w_{\a}^{\tau};X_0)}
+\|\wt{G}_{\lambda}(\vone)-\wt{G}_{\lambda}(\vtwo)&\|_{L^{p}(\llo \tau,\mu\rro,w_{\a}^{\tau};\g(H,X_{1/2}))}
\\ & \leq L_{\lambda,T} \|\vone-\vtwo\|_{\z_{T'}}.
\end{align*}
In addition, for each $\varepsilon>0$ there exists a $\bar{\lambda}(\varepsilon)>0$ and $\bar{T}(\varepsilon)\in (0,T]$ such that
$$
L_{\lambda,T}< \varepsilon,\qquad \text{ for all }\lambda\in (0,\bar{\lambda}],\text{ and } T\in (0,\bar{T}].
$$
We will only prove \eqref{eq:contractivity_T}. The fact that $\yT$ maps $\z_{T'}$ into itself can be proved in a similar way. Let
$K$ be the least constant in \eqref{eq:estimate_Sol_X_spaces} with $(A,B)$, $\sigma$ replaced by $(A(\cdot,u_{\tau}),B(\cdot,u_{\tau}))$, $\tau$. Choose $\varepsilon^*>0$ such that $4 K L_{\lambda^*,T^*}\leq 1$ where $\lambda^*:=\bar{\lambda}(\varepsilon^*)$ and $T^*:=\bar{T}(\varepsilon^*)$. Thus, Lemma \ref{l:estimate_Sol_op_Y} and the previous choices yield
\begin{align*}
&\|\yT(\vone)-\yT(\vtwo)\|_{\z_{T^*}}\\
&\quad =
\|\Sol_{\tau}(0,\one_{\ll \tau,\mu \rr}(\wt{F}_{\lambda}(\vone) -\wt{F}_{\lambda}(\vtwo)),
\one_{\ll \tau,\mu \rr}(\wt{G}_{\lambda}(\vone)-\wt{G}_{\lambda}(\vtwo)))\|_{\z_{T^*}}\\
&\quad
\leq K\big(\|\wt{F}_{\lambda}(\vone)-\wt{F}_{\lambda}(\vtwo)\|_{L^{p}(\llo \tau,\mu\rro,w_{\a}^{\tau};X_0)}
+\|\wt{G}_{\lambda}(\vone)-\wt{G}_{\lambda}(\vtwo)\|_{L^{p}(\llo \tau,\mu\rro,w_{\a}^{\tau};\g(H,X_{1/2}))}\big)\\
&\quad
\leq \frac{1}{2}\|\vone-\vtwo\|_{\z_{T^*}}.
\end{align*}

\emph{Step 2: Conclusion}. Let $\lambda^*,T^*$ be as in Step 1. The conclusion of step 1 ensures that $\yT$ is a contraction on $\z_{T^*}$, and thus there exists a fixed point of the map $\yT$ on $\z_{T^*}$ which will be denoted by $U$. Setting
$$
\nu:=\inf\Big\{t\in [\tau,T]\,:\,\|U\|_{L^p(\tau,t,w_{\a}^{\tau};X_1)\cap \X(\tau,t)}
+\sup_{s\in [\tau,t)} \|U(t)-u_{\tau}\|>\lambda^*\Big\}.
$$
Then $\nu$ is a stopping time and $\nu>\tau$ a.s.\ on $\{\tau<T\}$. Moreover, as in Step 4 in the
proof of \cite[Theorem 4.5]{AV19_QSEE_1}, one obtains $u:=U|_{\ll \tau,\nu\rr}$ is an $L^p_{\a}$-local solution to \eqref{eq:QSEE_tau}. This follows since by \eqref{eq:truncation_Theta_Psi}, a.s.\ for all $t\in [\tau,\nu]$,
$$
\Theta_{\lambda^*}(t,u_{\tau},U)=1, \qquad \Psi_{\lambda^*}(t,u_{\tau},U)=1.
$$
By \eqref{eq:nonlinearity_truncation_tau} the latter implies, a.s.\ for all $t\in [\tau,\nu]$,
\begin{align*}
\tilde{F}_{\lambda^*}(U)
&=
A(\cdot,u_{\tau})U-A(\cdot,U) U + F_c(\cdot,U)-F_c(\cdot,0)+F_{\Tr}(\cdot,U)-F_{\Tr}(\cdot,u_{\tau}),\\
\tilde{G}_{\lambda^*}(U)
&=
B(\cdot,u_{\tau})U-B(\cdot,U)U + G_c(\cdot,U)-G_c(\cdot,0)+G_{\Tr}(\cdot,U)-
G_{\Tr}(\cdot,u_{\tau}).
\end{align*}
Thus, \eqref{eq:contraction_map_starting_at_tau} and Proposition \ref{prop:start_at_s} ensure that $(u,\nu)$ is an $L^p_{\a}$-local solution to \eqref{eq:QSEE} where $u=U|_{\ll \tau,\nu\rr}$ and $\nu>\tau$ a.s.
The uniqueness of the above $L^p_{\a}$-local solution can be proven as in Step 5 of \cite[Theorem 4.5]{AV19_QSEE_1}.
\end{proof}

\subsection{Proofs of Theorem \ref{thm:semilinear_blow_up_Serrin}\eqref{it:blow_up_stochastic_semilinear} and Proposition \ref{prop:predictability}}
\label{ss:proof_blow_up_criterion}
We begin by proving a blow-up criteria for \eqref{eq:QSEE} which will be an important intermediate step to obtain the blow-up criteria stated in Section \ref{ss:main_result_blow_up}.

\begin{lemma}[An intermediate blow-up criteria]
\label{l:lemma_correction_theorem}
Let Hypothesis \hyperref[H:hip]{$\Hip$} be satisfied and let $u_s\in L^0_{\F_s}(\O;\Xap)$. Suppose that \eqref{eq:approximating_sequence_initial_data} and \eqref{eq:stochastic_maximal_regularity_assumption_local_extended_blow_up_quasilinear} are satisfied. Assume that Assumption \ref{H_a_stochnew} holds for $\ell=0$ and Assumption \ref{ass:FG_a_zero} holds. Let $(u,\sigma)$ be the $L^p_{\a}$-maximal local solution to \eqref{eq:QSEE} and let $\nonlinearity^{\a}$ be as in \eqref{eq:L_p_norm_nonlinearity2}. Then
$$
\P\Big(\sigma<T,\,
\lim_{t\uparrow \sigma} u(t)\text{ exists in }\Xp ,
\,\|u\|_{L^p(s,\sigma,w_{\a}^s;X_1)}+
\nonlinearity^{\a}(u;s,\sigma)<\infty\Big)=0.
$$
\end{lemma}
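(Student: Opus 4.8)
The plan is to argue by contradiction: suppose the event
\[
\Gamma := \Big\{\sigma<T,\ \lim_{t\uparrow\sigma}u(t)\text{ exists in }\Xp,\ \|u\|_{L^p(s,\sigma,w_\a^s;X_1)}+\nonlinearity^\a(u;s,\sigma)<\infty\Big\}
\]
has positive probability and derive a contradiction with the maximality of $(u,\sigma)$. First I would reduce, via Proposition \ref{prop:redblowbounded} (Reduction to uniformly bounded data), to the case where $u_s\in L^\infty_{\F_s}(\O;\Xap)$ and $f,g$ are bounded in the relevant $L^p$-norms; this is legitimate since the statement here is of the same "the probability is zero" type covered by that proposition. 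By a translation argument we may also take $s=0$. Because the limit $\lim_{t\uparrow\sigma}u(t)$ exists in $\Xp$ on $\Gamma$, we may further split $\Gamma$ into countably many pieces on which, roughly, $\sigma\le T-\tfrac1m$, $\|u\|_{L^p(0,\sigma,w_\a;X_1)}+\nonlinearity^\a(u;0,\sigma)\le M$, and $u(\sigma^-)$ lies in a ball of $\Xp$ of radius $M$; it suffices to show each such piece is null. On such a piece we can extend $u$ continuously to $[0,\sigma]$ with $u(\sigma)\in\Xp\hookrightarrow\Xap$.

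The core of the argument is to restart the equation at the (now bounded, $\Xap$-valued, $\F_\sigma$-measurable) datum $u(\sigma)$ and show that the solution extends strictly beyond $\sigma$, contradicting maximality. The obstacle is that $\sigma$ is a general stopping time, whereas Proposition \ref{prop:local_sigma} (local existence at a random initial time) requires the initial time to take finitely many values when $\a>0$, and also requires $(A(\cdot,u(\sigma)),B(\cdot,u(\sigma)))\in\MRttau$ there. To handle the first point I would use Lemma \ref{l:stopping_k}: approximate $\sigma$ from below by an increasing sequence of finitely-valued stopping times $\wt\sigma_n$ with $\wt\sigma_n\ge\sigma_n$ (a localizing sequence for $(u,\sigma)$) and $\P(\wt\sigma_n\ge\sigma)\le\varepsilon$; restarting at the finitely-valued time $\wt\sigma_n$ (with datum $u(\wt\sigma_n)$, which on $\{\wt\sigma_n<\sigma\}$ lies in $\Xp$ and is bounded after a further truncation) is permitted. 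For the maximal-regularity requirement at $\wt\sigma_n$, I would invoke Assumption \ref{H_a_stochnew} for $\ell=0$ (which is what is assumed here) together with Proposition \ref{prop:change_initial_time} to pass from weighted SMR at fixed times to the unweighted SMR needed at the random time $\wt\sigma_n$; note this is exactly why the Lemma is stated with the limit taken in $\Xp$ (the unweighted trace space) rather than $\Xap$, so that $\ell=0$ suffices and no finite-values restriction on the original $\sigma$ is needed.

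With these ingredients, Proposition \ref{prop:local_sigma} gives, on (a large part of) $\{\wt\sigma_n<\sigma\}\cap\Gamma$, an $L^p_\a$-local solution $(v,\rho)$ to \eqref{eq:QSEE_tau} starting at $\wt\sigma_n$ with $\rho>\wt\sigma_n$ a.s.\ on $\{\wt\sigma_n<T\}$. By the uniqueness/localization part of Theorem \ref{t:local_s} (extended to random initial times, as the text remarks Definition \ref{def:solution2} allows), $v$ glues with $u$: on $\ll\wt\sigma_n,\sigma\wedge\rho\rr$ one has $v=u$, so $u$ is in fact a local solution up to $\sigma\vee(\rho\wedge T)$ on the set in question. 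Since $\rho>\wt\sigma_n$ and, on the piece of $\Gamma$ we are considering, the $L^p(w_\a;X_1)$-norm and $\nonlinearity^\a$-norm of $u$ are finite up to $\sigma$, the glued process is an $L^p_\a$-strong solution on an interval strictly containing $[0,\sigma)$ on a set of probability at least $\P(\text{piece})-\varepsilon-(\text{error from truncations})$; letting $\varepsilon\downarrow0$ and the truncation levels $\to\infty$ contradicts maximality of $(u,\sigma)$ unless the piece is null. The main technical care — and the step I expect to be fussiest — is the bookkeeping that the finite norms $\|u\|_{L^p(0,\sigma,w_\a;X_1)}$ and $\nonlinearity^\a(u;0,\sigma)$ on $\Gamma$, combined with the continuity of $u$ into $\Xp$ at $\sigma$, really do let the glued process satisfy the integrability requirements in Definition \ref{def:solution1} on a neighbourhood of $\sigma$, uniformly enough in $n$ that the probability estimate closes.
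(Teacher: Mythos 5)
Your overall strategy (contradiction, localize with Egorov-type arguments, restart the equation via Proposition \ref{prop:local_sigma} and contradict maximality) is the right one, and you correctly identify why the Lemma is stated with the limit in $\Xp$ (so that only $\ell=0$ is needed and one can restart in the unweighted setting, where Proposition \ref{prop:local_sigma} imposes no finite-values restriction). However, there is a genuine gap in where you restart.

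You propose to restart at a finitely-valued stopping time $\wt\sigma_n<\sigma$ (built via Lemma \ref{l:stopping_k}) with datum $u(\wt\sigma_n)$. This cannot produce an extension beyond $\sigma$: by uniqueness the local solution $(v,\rho)$ obtained from Proposition \ref{prop:local_sigma} coincides with $u$ on $\ll\wt\sigma_n,\rho\wedge\sigma\rro$, and the proposition only guarantees $\rho>\wt\sigma_n$ a.s., not $\rho>\sigma$. Since $(u,\sigma)$ is already maximal, the new solution will simply blow up at $\sigma$ as well, so no contradiction with maximality is reached. (There is also a circularity worry with invoking Lemma \ref{l:stopping_k}, which needs $\sigma_n<\sigma$ a.s.; that fact is what Proposition \ref{prop:predictability} proves, and its proof in the paper \emph{uses} the present lemma.)

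The paper instead restarts \emph{at} the blow-up time: on a positive-measure set $\U\subseteq\V$ it sets $\mu=\sigma\one_\U+r\one_{\O\setminus\U}$ (with $r$ a deterministic time close to $\mathrm{esssup}_\V\sigma$) and takes the \emph{limit} datum $v_\mu=\one_\U u(\sigma)$. Proposition \ref{prop:local_sigma} applied with $\a=0$ then produces a local solution $(v,\xi)$ with $\xi>\mu=\sigma$ on $\U$, and gluing $u$ on $\ll 0,\sigma\rro$ with $v$ on $\U\times[\sigma,\xi)$ yields a strict extension, contradicting maximality. The remaining technical issue — that $\mu$ is random, while Assumption \ref{H_a_stochnew} supplies SMR only for data $v\in L^\infty_{\F_t}$ at \emph{fixed} times $t$ — is resolved in the paper's Step 1 by a perturbation argument: one compares $(A(\cdot,v_\mu),B(\cdot,v_\mu))$ with $(A(\cdot,u(r)),B(\cdot,u(r)))$ at the fixed time $r$, uses the local Lipschitz bound from \ref{HAmeasur} together with the uniform closeness $\|u(r)-u(\sigma)\|_{\Xp}$ furnished by Egorov, and invokes Corollary \ref{cor:Pert2}. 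This perturbation step is essential and is missing from your plan.
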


The above blow-up criteria is weaker than Theorem \ref{t:blow_up_criterion}\eqref{it:blow_up_norm_general_case_F_c_G_c} since $\Xp\embed \Xap$. Moreover, as $u$ is bounded with values in $\Xap$, boundedness of $\nonlinearity^{\a}(u;s,\sigma)<\infty$ if and only if $\nonlinearity^{\a}_c(u;s,\sigma)<\infty$ (see \eqref{eq:L_p_norm_nonlinearity}). Note that the lemma does not require Assumption \ref{H_a_stochnew} for $\ell=\a$. Lemma \ref{l:lemma_correction_theorem} will be used to prove Theorems \ref{t:blow_up_criterion}\eqref{it:blow_up_norm_general_case_F_c_G_c} and \ref{thm:semilinear_blow_up_Serrin}\eqref{it:blow_up_stochastic_semilinear}.

To prove Lemma \ref{l:lemma_correction_theorem}, we argue by contradiction. If the probability would be nonzero, we will obtain a new equation on a set of positive probability at a random initial time. Using Proposition \ref{prop:local_sigma} we extend the solution which gives a contradiction with maximality.

\begin{proof}
By a translation argument we may assume that $s=0$. Moreover, we will only consider $p>2$, since the other case is simpler.

{\em Step 1: Let $M\in \N$, $\eta>0$ and $r\in [\eta,T]$. If $\mu$ is a stopping time with values in $[r,T]$, $u_{\mu} \in L^{\infty}_{\F_{\mu}}(\O;\Xp)$ and $u_{r}\in L^{\infty}_{\F_r}(\O;\Xp)$ are such that $u_{\mu},u_r\in \B_{L^{\infty}(\O;\Xap)}(M)$ and
\begin{equation*}
\|u_{\mu}-u_{r}\|_{L^{\infty}(\O;\Xap)}\leq \frac{1}{4K_{M,\eta} L_M},
\end{equation*}
then one has $(A(\cdot,u_{\mu}),B(\cdot,u_{\mu}))\in \MRtmu$, where $L_M$ and $K_{M,\eta} = K_{M,\eta}^0$ are as in \emph{\ref{HAmeasur}} and Assumption \ref{H_a_stochnew} with $\ell=0$, respectively}. To prove the result, we use the perturbation result of Corollary \ref{cor:Pert2}. Note that by \ref{HAmeasur}, for $x\in X_1$, for all $t\in (r,T)$ and a.s.
$$
\|(A(t,u_r)-A(t,u_{\mu}))x\|_{X_0}\leq L_M \|u_{\mu}-u_r\|_{\Xap}\|x\|_{X_1}\leq \frac{1}{4K_{M,\eta}}\|x\|_{X_1}.
$$
Similarly, $\|(B(t,u_r)-B(t,u_{\mu}))x\|_{\g(H,X_{1/2})}\leq1/(4K_{M,\eta})\|x\|_{X_1}$ for all $t\in (r,T)$ a.s. Assumption \ref{H_a_stochnew} for $\ell=0$ and Proposition \ref{prop:change_initial_time} imply that $(A(\cdot,u_{r})|_{\ll \mu,T\rr},B(\cdot,u_{r})|_{\ll \mu,T\rr})$ is in $\MRtmu$ with
\[\max\{K_{(A(\cdot,u_{r})|_{\ll \mu,T\rr},B(\cdot,u_{r})|_{\ll \mu,T\rr})}^{\deter,0,p,0},K_{(A(\cdot,u_{r})|_{\ll \mu,T\rr},B(\cdot,u_{r})|_{\ll \mu,T\rr})}^{\stoc,0, p,0}\}\leq K_{M,\eta},\]
The claim follows from the previous estimates and Corollary \ref{cor:Pert2}
with
\[\delta_{A,B}\leq K_{M,\eta} \frac{1}{4K_{M,\eta}} + K_{M,\eta}\frac{1}{4K_{M,\eta}}  = \frac12.\]

{\em Step 2: Conclusion}. By contradiction assume that $\P(\W)>0$, where
$$
\W:=\Big\{\sigma<T,\,\lim_{t\uparrow \sigma} u(t)\text{ exists in }\Xp ,\,\|u\|_{L^p(0,\sigma,w_{\a};X_1)}+
\nonlinearity^{\a}(u;0,\sigma)<\infty\Big\}\in \F_{\sigma}.
$$
Since $\sigma>0$ a.s.\ there exists an $\eta>0$ such that $\P(\W\cap\{\sigma>\eta\})>0$. Moreover, by Egorov's theorem, there exist $\V\subseteq \W\cap \{\sigma>\eta\}$ and $M\in \N$ such that $\V\in \F_{\sigma}$, $\P(\V)>0$ and
\begin{align}
\label{eq:blow_up_X_p_condition_pathwise_correction}
\|u\|_{C([\eta,\sigma];\Xp)}+
\|u\|_{L^p(0,\sigma,w_{\a};X_1)}+\nonlinearity^{\a}(u;0,\sigma)&\leq M, \ \text{ a.s.\ on }\V, \\
\label{eq:blow_up_X_p_conditions_egorov_theorem_M_Cpa}
\lim_{n\to \infty}\sup_{\V}\sup_{s\in [\sigma-\frac{1}{n},\sigma]}\|u(s)-u(\sigma)\|_{\Xp}&=0,
\end{align}
where we set $u(\sigma):=\lim_{t\uparrow \sigma}u (t)$ on $\W$. Let $N\in \N$ be such that $\eta>\frac{1}{N}$ and
\begin{equation}
\label{eq:blow_up_X_p_estimate_u}
\sup_{s\in [\sigma-\frac{1}{N},\sigma]}\|u(s)-u(\sigma)\|_{\Xp}\leq \frac{1}{4K_{M,\eta}L_M C_{\a,p}} \qquad \text{on }\V.
\end{equation}
Here, $C_{\a,p}$ denotes constant in the embedding $\Xp\hookrightarrow \Xap$.
Set $\maxSigma:=\esssup_{\V} \sigma$ and fix $r\in (\maxSigma-\frac{1}{N},\maxSigma)$ such that $r\geq \eta$, and set $\U:=\V\cap \{\sigma>r\}$.
Then $\U\in \F_{\sigma}\cap \F_r$, and by definition of essential supremum one has $\P( \U)>0$.

Set $\mu = \sigma \one_{\U} + r \one_{\O\setminus\U}$ and  $v_{\mu}:=\one_{\U}u(\sigma)$. Then $\mu\in [r, T]$ and $v_{\mu} \in L^{\infty}_{\F_{\mu}}(\O;\Xp)$. By \eqref{eq:blow_up_X_p_estimate_u} one has
$$
\|u(r)-v_{\mu}\|_{\Xap}\leq C_{\a,p} \|u(r)-v_{\sigma}\|_{\Xp}\leq \frac{1}{4 K_{M,\eta} L_M} \quad \text{on  }\;\U,
$$
and by \eqref{eq:blow_up_X_p_condition_pathwise_correction}, $\|\one_{\U} u(r)\|_{\Xp}\leq M$ and $\|v_{\mu}\|_{\Xp}\leq M$.
Applying Step 1 to $(\one_{\U}u(r), v_{\mu})$, we obtain that $(A(\cdot,v_{\mu}),B(\cdot,v_{\mu}))\in \MRtmu$. Thus, Assumption \ref{ass:FG_a_zero} and Proposition \ref{prop:local_sigma} ensure the existence of an $L^p_{0}$-local solution $(v,\tau)$ to
\begin{equation}
\label{eq:restart_equation_QSEE_Xp_blow_up_criteria}
\begin{cases}
dv + A(\cdot,v)v dt= (F(\cdot,v)+f)dt+(B(\cdot,v)v+G(\cdot,v)+g)dW_{H}(t),\\
v(\mu)=v_{\mu},
\end{cases}
\end{equation}
on $\ll\mu,T\rr$, and where $\tau>\mu$ a.s.\ Note that $v_{\mu}=u(\sigma)$ on $\U$. Set
\[
\wt{u}=u\one_{\ll 0,\sigma\rro}+ v\one_{\U\times [\sigma,\tau)} \ \ \text{and} \ \
\wt{\sigma}:=\one_{\O\setminus\U} \sigma + \one_{\U} \tau.\]
Then $(\wt{u},\wt{\sigma})$ is a unique $L^p_{\a}$-local  solution to \eqref{eq:QSEE} which extends $(u,\sigma)$ on $\U$. Since $\P(\U)>0$, this contradicts the maximality of $(u,\sigma)$ and this gives the desired contradiction.
\end{proof}

\begin{remark}
\label{rem:onlyonpath}
Suppose that we know that the maximal solution satisfies
\begin{equation}
\label{eq:u_lives_in_a_closed_set}
u(t)\in \closed \text{ a.s.\ for all }t\in (0,\sigma)\text{ where }\closed\subseteq \Xp \text{ is closed subset}.
\end{equation}
Then in Assumption \ref{H_a_stochnew} we only need to consider $v\in \closed$ a.s.
In this way Lemma \ref{l:lemma_correction_theorem} remains true. Indeed, one can repeat Step 1 for $u_{r},u_{\mu}$ satisfying $u_r,u_{\mu}\in \closed $ a.s.\ and replacing $v_{\mu}$ in \eqref{eq:restart_equation_QSEE_Xp_blow_up_criteria} by $\one_{\U}u(\sigma) + \one_{\O\setminus\U} x$, where $x\in \closed$. The same extension holds for the assertions in Theorem \ref{t:blow_up_criterion} and this will not be repeated later.
\end{remark}

\begin{remark}
\label{r:uniqueness_maximal}
If the assumptions of Lemma \ref{l:lemma_correction_theorem} are satisfied, then one can equivalently define $L^p_{\a}$-maximal local solution to \eqref{eq:QSEE} (see Definition \ref{def:solution2}) as:
\begin{enumerate}[{\rm(a)}]
\item\label{it:def_maximal_2}
An $L^p_{\a}$-local solution $(u,\sigma)$ to \eqref{eq:QSEE} on $[s,T]$ is called an {\em $L^p_{\a}$-maximal local solution on $[s,T]$}, if for any other $L^p_{\a}$-local solution $(v,\tau)$ to \eqref{eq:QSEE} on $[s,T]$, we have a.s.\ $\tau\leq \sigma$ and  for a.a.\ $\om \in \Omega$ and all $t\in [s,\tau(\om))$, $u(t,\omega)=v(t,\omega)$.
\end{enumerate}
Compared to Definition \ref{def:solution2}, in \eqref{it:def_maximal_2} we omit the word ``unique". In particular \eqref{it:def_maximal_2} is stronger than Definition \ref{def:solution2} since we also exclude the existence of an $L^p_{\a}$-local solution (but not unique) which extends $(u,\sigma)$, i.e.\ $\tau>\sigma$ with positive probability.

Next we show that the $L^p_{\a}$-maximal local solution in the sense of Definition \ref{def:solution2} also satisfies \eqref{it:def_maximal_2} if the assumptions of Lemma \ref{l:lemma_correction_theorem} hold. Let $(v,\tau)$ be an $L^p_{\a}$-local solution $(v,\tau)$ to \eqref{eq:QSEE}. By uniqueness of $(u,\sigma)$, we get $u=v$ a.e.\ on
$\ll0,\sigma\wedge \tau\rro$. In particular, a.s.\ on $\{\sigma<\tau\}$, $u\in L^p(s,\sigma,w_{\a}^s;X_1)\cap C([s,\sigma];\Xp)$ and $\nonlinearity^{\a}(u;\sigma)<\infty$ since $(v,\tau)$ is an $L^p_{\a}$-local solution.
Hence by Lemma \ref{l:lemma_correction_theorem}
$$
\P(\sigma<\tau)\leq
\P\Big(\sigma<T,\,
\lim_{t\uparrow \sigma} u(t)\text{ exists in }\Xp ,
\,\|u\|_{L^p(s,\sigma,w_{\a}^s;X_1)}+
\nonlinearity^{\a}(u;s,\sigma)<\infty\Big)=0.
$$
Therefore, $\tau \leq \sigma$ a.s.\ as desired.
\end{remark}

Next we will prove Theorem \ref{thm:semilinear_blow_up_Serrin}\eqref{it:blow_up_stochastic_semilinear} and Proposition \ref{prop:predictability}.
For this we need the following elementary lemma (see \cite[Lemma 4.12]{AV19_QSEE_1} for a similar result).
\begin{lemma}
\label{l:F_G_bound_N_C_cn}
Let the hypothesis \emph{\ref{HFcritical}-\ref{HGcritical}} be satisfied. Let $s\leq a<b\leq T<\infty$ and $N\in \N$ be fixed. Let $\zeta:=1+\max\{\rho_j: j\in \{1,\dots,m_F+m_G\}\}$. Then for all $h\in C([a,b];X^{\mathsf{Tr}}_{\a,p})\cap \X({a,b})$ which satisfy $\|h\|_{C([a,b];X^{\mathsf{Tr}}_{\a,p})}\leq N$, one has a.s.\
\begin{multline*}
\|F_c(\cdot,h)\|_{L^p(a,b,w_{\a}^a;X_0)}+\|G_c(\cdot,h)\|_{L^p(a,b,w_{\a}^a;\g(H,X_{1/2}))}
\\
\leq c_{a,b}(1+\|h\|_{\X(a,b)}+\|h\|_{\X(a,b)}^{\zeta}),
\end{multline*}
where $c_{a,b}=c(|a-b|,N)>0$ is independent of $f$ and satisfies $ c(\delta_1,N)\leq c(\delta_2,N)$ for all $0\leq \delta_1\leq \delta_2$.
Moreover, if \eqref{eq:F_c_estimates} and \eqref{eq:G_c_estimates} are satisfied with constants $C_{c,n}$ independent of $n\geq 1$, then $c(b-a,N)$ can be chosen independent of $N$.
\end{lemma}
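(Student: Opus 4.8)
The plan is to reduce the bound, pathwise in $\omega$, to the growth estimates \eqref{eq:F_c_estimates}--\eqref{eq:G_c_estimates} combined with H\"older's inequality in the integrability exponents $r_j,r_j'$ from \eqref{eq:rr'} that define the space $\X(a,b)$. Set $m:=\int_a^b w_{\a}^a(t)\,dt=\frac{(b-a)^{\a+1}}{\a+1}$; this is the only quantity through which $|a-b|$ will enter the final constant, and $\delta\mapsto \frac{\delta^{\a+1}}{\a+1}$ is increasing on $[0,\infty)$. Fix $\omega$ in the full-measure set on which $h(\cdot,\omega)\in C([a,b];\Xap)\cap\X(a,b)$ and $\|h(\cdot,\omega)\|_{C([a,b];\Xap)}\le N$; then $\|h(t)\|_{\Xap}\le N$ for every $t\in[a,b]$, so \eqref{eq:F_c_estimates}, applied to $h(t)$ (note that its right-hand side only involves the $X_{\varphi_j}$- and $X_{\beta_j}$-norms of $h(t)$), gives for a.e.\ $t\in(a,b)$
\[
\|F_c(t,h(t))\|_{X_0}\le C_{c,N}\sum_{j=1}^{m_F}\big(1+\|h(t)\|_{X_{\varphi_j}}^{\rho_j}\big)\|h(t)\|_{X_{\beta_j}}+C_{c,N},
\]
and the analogous bound for $\|G_c(t,h(t))\|_{\g(H,X_{1/2})}$ from \eqref{eq:G_c_estimates}, with the sum over $j\in\{m_F+1,\dots,m_F+m_G\}$.

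Next I would take the $L^p(a,b,w_{\a}^a)$-norm of the right-hand side. The constant term contributes $C_{c,N}\,m^{1/p}$. For the $j$-th product, since $\frac1{r_j}+\frac1{r_j'}=1$, hence $\frac1{pr_j}+\frac1{pr_j'}=\frac1p$, H\"older's inequality gives
\[
\big\|\,(1+\|h\|_{X_{\varphi_j}}^{\rho_j})\,\|h\|_{X_{\beta_j}}\,\big\|_{L^p(a,b,w_{\a}^a)}
\le
\big\|\,1+\|h\|_{X_{\varphi_j}}^{\rho_j}\,\big\|_{L^{pr_j'}(a,b,w_{\a}^a)}\;\|h\|_{L^{pr_j}(a,b,w_{\a}^a;X_{\beta_j})},
\]
where the last factor is $\le\|h\|_{\X(a,b)}$ by \eqref{eq:def_X_space}. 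In the first factor the constant $1$ contributes $m^{1/(pr_j')}$; for the remaining term I use $\rho_j\le\rhos_j$ --- which is \eqref{eq:HypCritical}/\eqref{eq:HypCriticalG} divided by the positive number $\varphi_j-1+\frac{1+\a}{p}$, see \eqref{eq:rhostar} --- together with the elementary finite-measure inequality $\|f\|_{L^q(\mu)}\le \mu([a,b])^{\frac1q-\frac1Q}\|f\|_{L^Q(\mu)}$ for $0<q\le Q$, applied with $d\mu=w_{\a}^a\,dt$, $q=\rho_j p r_j'$, $Q=\rhos_j p r_j'$, to conclude that the $L^{pr_j'}(a,b,w_{\a}^a)$-norm of $t\mapsto\|h(t)\|_{X_{\varphi_j}}^{\rho_j}$ is at most $m^{\frac1{pr_j'}(1-\rho_j/\rhos_j)}\,\|h\|_{\X(a,b)}^{\rho_j}$ (no extra power of $m$ when $\rho_j=\rhos_j$; the term is simply $2m^{1/(pr_j')}$ when $\rho_j=0$). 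Since $r_j'>1$ and $\rho_j\le\rhos_j$, every power of $m$ produced in this way is nonnegative.

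Summing over $j$ (for $F_c$ and $G_c$ together), each product is bounded by a constant times $1+\|h\|_{\X(a,b)}^{\rho_j+1}$, where the constant is a finite sum of nonnegative powers of $m$ times $C_{c,N}$ and the fixed structural data $m_F,m_G,\rho_j,\rhos_j,r_j,r_j',p,\a$; using $1\le\rho_j+1\le\zeta$ and $t^{\rho_j+1}\le 1+t^{\zeta}$ for $t\ge0$ then yields the asserted estimate, with a constant of the form $c(\delta,N)=C_{c,N}\,P(\delta^{\a+1})$ for some polynomial $P$ with nonnegative coefficients, whence $c(\delta_1,N)\le c(\delta_2,N)$ for $0\le\delta_1\le\delta_2$ because $\delta\mapsto\delta^{\a+1}$ is increasing. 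The \emph{Moreover} statement is then immediate: the only dependence on $N$ enters through the choice $n=N$ in \eqref{eq:F_c_estimates}--\eqref{eq:G_c_estimates}, so if the $C_{c,n}$ can be chosen independent of $n$, then $c(b-a,N)$ is independent of $N$. I do not expect a real obstacle here; the one point needing care is to line up the H\"older exponents with the definition of $\X(a,b)$ and to check that every power of $|a-b|$ occurring is nonnegative, since this is exactly what makes $c(\cdot,N)$ monotone in $|a-b|$ and, crucially, uniformly bounded as $|a-b|\to0$ --- the property used in Lemma \ref{l:estimate_Sol_op_Y} and Proposition \ref{prop:local_sigma}.
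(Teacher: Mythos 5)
Your proposal is correct and follows essentially the same route as the paper: apply \eqref{eq:F_c_estimates}--\eqref{eq:G_c_estimates} pointwise using the $\Xap$-bound, split the product via H\"older with the dual exponents $r_j,r_j'$ from \eqref{eq:rr'}, and then raise the integrability exponent $\rho_j p r_j'$ to $\rhos_j p r_j'$ by the finite-measure inclusion, which (since $\rho_j\le\rhos_j$) produces only nonnegative powers of $|b-a|^{\a+1}$. The only cosmetic difference is that the paper first additively splits $(1+\|h\|_{X_{\varphi_j}}^{\rho_j})\|h\|_{X_{\beta_j}}$ into two terms before H\"older, whereas you apply H\"older to the product directly; the resulting constant is the same.
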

\begin{proof}
We prove the estimate for $F_c$ in the case $m_F=1$. The other cases are similar. Let $N,h$ be as in the statement. Hypothesis \ref{HFcritical} ensures that for a.a.\ $\omega\in \O$ and all $t\in [a,b]$,
$$
\|F_c(t,\om,h(t))\|_{X_0}\leq C_{c,N} (1+\|h(t)\|_{X_{\varphi}}^{\rho} )\|h(t)\|_{X_{\beta}}+C_{c,N}.
$$
where $\beta:=\beta_1$, $\varphi:=\varphi_1$ and $\rho:=\rho_1$.
By H\"{o}lder's inequality with exponent $r':=r_1',r:=r_1$ (see \eqref{eq:rr'} and the text below it) we get
\begin{align*}
\|F_c(\cdot, h)\|_{L^p(a,b,w^a_{\a};X_0)}
&\leq C_{c,N}\big( \|h\|_{L^p(a,b,w_{\a}^a;X_{\beta})}+\\
&\qquad
+\|h\|_{L^{\rho p r'}(a,b,w_{\a}^a;X_{\varphi})}^{\rho} \|h\|_{L^{p r}(a,b,w_{\a}^a;X_{\beta})}+|b-a|^{1/p}\big).
\end{align*}
Since $\a\geq 0$, $r>1$ and $\rhos:=\rhos_1\geq \rho$ (see \eqref{eq:rhostar}), H\"{o}lder's inequality ensures that
\begin{align*}
\|h\|_{L^p(a,b,w_{\a}^a;X_{\beta})}&\leq C_{b-a}\|h\|_{L^{p r}(a,b,w_{\a}^a;X_{\beta})} \\ \|h\|_{L^{\rho p r'}(a,b,w_{\a}^a;X_{\varphi})}&\leq C_{b-a} \|h\|_{L^{\rhos p r'}(a,b,w_{\a}^a;X_{\varphi})},
\end{align*}
where $C_{\delta_1}\leq C_{\delta_2}$ for $0\leq \delta_1<\delta_2<\infty$ and $\sup_{\delta\in (0,T)} C_{\delta}<\infty$ due to the assumption $T<\infty$. By \eqref{eq:def_X_space} the previous inequalities imply the claimed estimate.
\end{proof}

\begin{proof}[Proof of Theorem \ref{thm:semilinear_blow_up_Serrin}\eqref{it:blow_up_stochastic_semilinear}]
As before we assume $s=0$, and we only consider $p>2$. By \eqref{eq:stochastic_maximal_regularity_assumption_local_extended_blow_up_semilinear} and Proposition \ref{prop:change_initial_time}, $(A,B)$ satisfies Assumption \ref{H_a_stochnew} for $\ell=0$. Therefore, Lemma \ref{l:lemma_correction_theorem} is applicable.

Let us argue by contradiction. Thus, we assume that $\P(\W)>0$ where
$$
\W:=\{\sigma<T,\,\nonlinearity^\a(u;0,\sigma)<\infty\}.
$$
Since $\sigma>0$ a.s.\ by Theorem \ref{t:local_s}, it follows that there exist $\eta,M>0$ such that
$\P(\V)>0$ where
\begin{equation}
\label{eq:V_def_proof_semilinear_easy_case}
\V:=\{\eta<\sigma<T, \nonlinearity^\a(u;0,\sigma)<M\}.
\end{equation}
Define a stopping time by
\begin{equation*}
\tau:=\inf\{t\in [0,\sigma)\,:\,
\nonlinearity^\a(u;0,t)\geq M\},
\end{equation*}
where we take $\inf\emptyset:=\sigma$. Note that $\tau=\sigma$ a.s.\ on $\V$ and
\begin{equation}
\label{eq:semilinear_F_G_M_bound}
\|\one_{\ll 0,\tau\rr} F(\cdot,u)\|_{L^p(\I_{T},w_{\a};X_0)}+\|\one_{\ll 0,\tau\rr} G(\cdot,u)\|_{L^p(\I_{T},w_{\a};\g(H,X_{1/2}))}\leq M.
\end{equation}
Since $u$ is an $L^p_{\a}$-maximal local solution to \eqref{eq:QSEE}, $u|_{\ll 0,\tau\rr}$ is an $L^p_{\a}$-local solution to \eqref{eq:diffAB_s} on $\ll 0,\tau\rr$. Proposition \ref{prop:causality_phi_revised_2} and \eqref{eq:semilinear_F_G_M_bound} gives
\begin{equation}
\label{eq:blow_up_un}
u=\Sol_{0,(A,B)}(u_0,\one_{\ll 0,\tau\rr}F(\cdot,u)+f,\one_{\ll 0,\tau\rr}G(\cdot,u)+g),\qquad \text{a.e.\ on }\ll 0,\tau\rr,
\end{equation}
On the other hand, by Propositions \ref{prop:change_p_q_eta_a}\eqref{it:loc_embedding} and \ref{prop:start_at_s}\eqref{it:start_at_s2}, \eqref{eq:semilinear_F_G_M_bound} ensures that
\begin{equation*}
\text{RHS\eqref{eq:blow_up_un}}\in
L^p(\O;C([\eta,T];\Xp)\cap L^p(0,T,w_{\a};X_1)).
\end{equation*}
Thus by \eqref{eq:blow_up_un} and $\tau = \sigma$ on $\V$, we obtain that $\lim_{t\uparrow \sigma} u(t)$ exists in $\Xp$ and $\|u\|_{L^p(0,\sigma,w_{\a};X_1)}<\infty$ a.s.\ on $\V$. Therefore,
\begin{align*}
0&<\P(\V)
=  \P\Big(\V\cap\{\sigma<T\}
\cap\big\{\lim_{t\uparrow \sigma} u(t)\text{ exists in }\Xp,\,\|u\|_{L^p(0,\sigma,w_{\a};X_1)}<\infty\big\}\Big)\\
&\leq  \P\Big(\sigma<T,\,
\lim_{t\uparrow \sigma} u(t)\text{ exists in }\Xp ,
\,\|u\|_{L^p(\I_{\sigma},w_{\a}^s;X_1)}+
\nonlinearity^{\a}(u;0,\sigma)<\infty\Big)= 0,
\end{align*}
where we used \eqref{eq:V_def_proof_semilinear_easy_case} and the last equality follows from Lemma \ref{l:lemma_correction_theorem}. This contradiction completes the proof.
\end{proof}

\begin{remark}\
\label{r:blow_up_semilinear_X_norm}
\begin{enumerate}[{\rm(1)}]
\item
Let the assumptions of Theorem \ref{thm:semilinear_blow_up_Serrin} be satisfied.
If $F_{\Tr}=G_{\Tr}=0$ and $C_{c,n}$ in \ref{HFcritical}-\ref{HGcritical} does not depend on $n\geq 1$, then Lemma \ref{l:F_G_bound_N_C_cn} yields
\begin{equation*}
\P(\sigma<T,\,\|u\|_{\X(s,\sigma)}<\infty) =\P(\sigma<T,\,\nonlinearity^{\a}(u;s,\sigma)<\infty) =0,
\end{equation*}
where in the last step we applied Theorem \ref{thm:semilinear_blow_up_Serrin}\eqref{it:blow_up_stochastic_semilinear};
\item\label{it:case_p_2_simpler}
If $p=2$, then Theorem \ref{t:blow_up_criterion}\eqref{it:blow_up_norm_general_case_quasilinear_Xap_Pruss} (resp.\ \ref{thm:semilinear_blow_up_Serrin}\eqref{it:blow_up_semilinear_serrin_Pruss_modified}) follows from Lemma \ref{l:lemma_correction_theorem} (resp.\ Theorem \ref{thm:semilinear_blow_up_Serrin}\eqref{it:blow_up_stochastic_semilinear})
due to Lemmas \ref{l:embeddings} and \ref{l:F_G_bound_N_C_cn}. The general case is more complicated and will be considered in Section \ref{ss:thmblowuphardespart_Serrin}.
\end{enumerate}
\end{remark}

\begin{proof}[Proof of Proposition \ref{prop:predictability}]
Let $(\sigma_n)_{n\geq 1}$ be a localizing sequence.
Suppose that there exists an $n\geq 1$ such that $\P(\sigma_n=\sigma<T)>0$. Setting $\V:=\{\sigma_n=\sigma<T\}$, by Theorem \ref{t:local_s}\eqref{it:regularity_data_L0}, one has $\sigma_n = \sigma>s$ a.s.\ on $\V$, and for all $\theta\in [0,\frac{1}{2})$,
\begin{align*}
u&\in H^{\theta,p}(s,\sigma_n,w_{\a}^s;X_{1-\theta})\cap C([s,\sigma_n];\Xap)\cap C((s,\sigma_n];\Xp) \\
&=H^{\theta,p}(s,\sigma,w_{\a}^s;X_{1-\theta})\cap C([s,\sigma];\Xap)\cap C((s,\sigma];\Xp) \ \ \   \text{ a.s.\ on }\V.
\end{align*}
In particular,
$
\lim_{t\uparrow \sigma} u(t)\text{ exists in }\Xp,
$
a.s.\ on $\V$. Thus, by Lemmas \ref{l:embeddings} and \ref{l:F_G_bound_N_C_cn},
\begin{align*}
\P(\V)
&=\P\Big(\V\cap\Big\{\lim_{t\uparrow \sigma} u(t)\text{ exists in }\Xp,\,
\|u\|_{ L^p(s,\sigma,w_{\a};X_1)}+\nonlinearity^{\a}(u;s,\sigma)<\infty\Big\}\Big)\\
&\leq \P\Big(\sigma<T,\,\lim_{t\uparrow \sigma} u(t)\text{ exists in }\Xp,\,
\|u\|_{ L^p(s,\sigma,w_{\a};X_1)}+\nonlinearity^{\a}(u;s,\sigma)<\infty\Big)=0,
\end{align*}
where in the last equality we used Lemma \ref{l:lemma_correction_theorem}. This contradicts $\P(\V)>0$ and therefore the result follows.
\end{proof}

\subsection{Proofs of Theorems \ref{t:blow_up_criterion}\eqref{it:blow_up_norm_general_case_F_c_G_c}-\eqref{it:blow_up_non_critical_Xap} and \ref{thm:semilinear_blow_up_Serrin}\eqref{it:blow_up_semilinear noncritical_stochastic}}\label{ss:thmblowuphardespart}
Using Lemma \ref{l:lemma_correction_theorem} and Proposition \ref{prop:predictability} we can now prove Theorem \ref{t:blow_up_criterion}\eqref{it:blow_up_norm_general_case_F_c_G_c}. Unfortunately, the proof is rather technical. It requires several reduction arguments and one key step is to approximate localizing sequences by stopping times taking finitely many values which in turn allow to apply the maximal regularity estimates of Proposition \ref{prop:start_at_sigma_random_time}.

\begin{proof}[Proof of Theorem \ref{t:blow_up_criterion}\eqref{it:blow_up_norm_general_case_F_c_G_c}]
By a translation argument we may assume $s=0$. We will only consider $p>2$ as the other case is similar. By Proposition \ref{prop:redblowbounded} it is enough to consider
\[u_0\in L^{\infty}_{\F_0}(\O;\Xap), \  f\in L^p(\I_T\times\O,w_{\a};X_0), \  \ \text{and} \ \
g\in L^p(\I_T\times\O,w_{\a};\g(H,X_{1/2})).\]

{\em Step 1: Setting up the proof by contradiction.}
We will prove \eqref{it:blow_up_norm_general_case_F_c_G_c} by a contradiction argument. So suppose that $\P(\W)>0$ where
$$
\W:=\Big\{\sigma<T,\,\lim_{t\uparrow \sigma} u(t)\;\text{exists in}\;\Xap,\,
\nonlinearity_c^\a(u;0,\sigma)<\infty\Big\}\in \F_{\sigma},
$$
see \eqref{eq:L_p_norm_nonlinearity} for the definition of $\nonlinearity_c^{\a}$. For each $n\geq 1$, let
\begin{equation}
\label{eq:canonical_localizing_sequence_u_sigma}
\sigma_n=\inf\Big\{t\in [0,\sigma)\,:\,\|u\|_{L^p(\I_t,w_{\a};X_1)\cap C([0,t];\Xap)}+\nonlinearity_c^{\a}(u;0,t)\geq n\Big\} \wedge \frac{n T}{n+1},
\end{equation}
and $\inf\emptyset:=\sigma$.
Then $(\sigma_n)_{n\geq 1}$ is a localizing sequence for $(u,\sigma)$.
By Egorov's theorem and the fact that $\sigma>0$ a.s., there exist $\eta> 0$, $\F_{\sigma}\ni \V\subseteq \W$, $M\in \N$ such that $\P(\V)>0$, $\sigma\geq \eta$ a.s.\ on $\V$, and
\begin{equation}
\label{eq:uniform_bound_u_k}
\begin{aligned}
\|u\|_{C([0,{\sigma}];\Xap)}+ \nonlinearity_c^{\a}(u;0,\sigma)& \leq M \ \text{on }\V, \\
\  \lim_{n\to \infty}\sup_{\V}\sup_{s\in [\sigma_n,\sigma]}\|u(s)-u(\sigma)\|_{\Xap}&= 0,
\end{aligned}
\end{equation}
where we have set $u(\sigma):=\lim_{t\uparrow \sigma} u(t)$ on $\W$. By decreasing $\eta$ if necessary, we may suppose $\P(\sigma\leq \eta)\leq \frac14 \P(\V)$.

By Proposition \ref{prop:predictability}, one has $\sigma_n<\sigma$ on $\{\sigma<T\}$ for all $n\geq 1$. Moreover, the definition of $\sigma_n$ implies $\sigma_n<\sigma$ on the set $\{\sigma= T\}$. Therefore, Lemma \ref{l:stopping_k} implies there exists a sequence of stopping times $(\wt{\sigma}_n)_{n\geq 1}$ such that for each $n\geq 1$, $\wt{\sigma}_n$ takes values in a finite subset of $[0,T]$,  $\wt{\sigma}_n\leq\wt{\sigma}_{n+1}$, $\wt{\sigma}_n\geq \sigma_n$ and $\P(\wt{\sigma}_n\geq \sigma)\leq \frac14 \P(\V)$. Moreover, we can also assume that $\sup_{\O}\wt{\sigma}_n<T$ for all $n\geq 1$. Set
\begin{align}\label{eq:V'def}
\sigma_n' = \wt{\sigma}_n\vee \eta \ \ \  \  \text{for $n\geq 1$ and} \ \ \ \ \  \V':=\V\cap (\cap_{n\geq 1}\{\sigma_n'<\sigma\}).
\end{align}
Then by Proposition \ref{prop:F_sigma_algebra_stopping_times}, $\V'\in \F_{\sigma}$, and
\begin{align}\label{eq:V'pos}
\P(\V') = \lim_{n\to \infty} \P(\V\cap \{\sigma'_{n}<\sigma\}) \geq \lim_{n\to \infty} \P(\V) - \P(\sigma'_n\geq \sigma) \geq \frac12 \P(\V)>0,
\end{align}
where in the last step we used
\begin{align*}
\P(\sigma_n'\geq \sigma) &\leq \P(\sigma'_n\geq \sigma, \sigma>\eta)  + \P(\sigma\leq \eta)
\leq \P(\wt{\sigma}_n\geq \sigma)  + \P(\sigma\leq \eta) \leq \frac12 \P(\V).
\end{align*}

{\em Step 2: In this step we will prove that $\P(\W)>0$ implies}
\begin{equation}
\label{eq:blow_up_general_X_norm_claim_Step_1}
\P\Big(\sigma<T,\,\lim_{t\uparrow \sigma} u(t)\;\text{exists in}\;\Xap,\,\|u\|_{L^p(\I_{\sigma},w_{\a};X_1)\cap\X(\sigma)}<\infty\Big)>0.
\end{equation}
To prove the above, we need some preliminary observations.
By \eqref{eq:uniform_bound_u_k}, for each $\varepsilon>0$ there exists an $N(\varepsilon)\in \N$ such that
\begin{equation}
\label{eq:eps_diff}
\sup_{s\in [\sigma_{N(\varepsilon)}, \sigma]}\|u(s)-u(\sigma)\|_{\Xap}<\varepsilon \ \ \text{on} \  \V.
\end{equation}
For each $\varepsilon>0$ set $\stopp_{\varepsilon} = \sigma_{N(\varepsilon)}$, $\stopp_{\varepsilon}' = \sigma_{N(\varepsilon)}'$ and define the stopping time $\tau_{\varepsilon}$ by
\begin{equation}
\begin{aligned}
\label{eq:tau_N_proof_4_3_first_part}
\tau_{\varepsilon}:=\inf\Big\{t\in [\stopp_{\varepsilon},\sigma)\,:\,&\|u(t)-u(\stopp_{\varepsilon})\|_{\Xap}\geq 2\varepsilon, \\
&\|u\|_{C([0,t];\Xap)}+\nonlinearity_c^{\a}(u;0,t)\geq M\Big\}
\end{aligned}
\end{equation}
where we set $\inf\emptyset:=\sigma$.
Note that $\tau_{\varepsilon}=\sigma$ on $\V\supseteq\V'$. Therefore,
\[\V'\subseteq \U_{\varepsilon}:=\{\tau_\varepsilon>\stopp_{\varepsilon}'\}\in \F_{\stopp_{\varepsilon}'}.\]

For each $\varepsilon>0$ we set
\[u_{\varepsilon}:=\one_{\U_{\varepsilon}}u(\stopp_{\varepsilon}')\in L^{\infty}_{\F_{\stopp_{\varepsilon}'}}(\O;\Xap).\]
The latter random variable is well defined since $\sigma\geq \tau_{\varepsilon}>\stopp_{\varepsilon}'$ on $\U_{\varepsilon}$, and by \eqref{eq:tau_N_proof_4_3_first_part}, we have
$
\|u_{\varepsilon}\|_{\Xap}\leq M.
$
Since $\stopp_{\varepsilon}'\geq \eta$ (see \eqref{eq:V'def}), combining Assumption \ref{H_a_stochnew} with Proposition \ref{prop:start_at_sigma_random_time}, we obtain that
$(A(\cdot,u_{\varepsilon}),B(\cdot,u_{\varepsilon}))\in \mathcal{SMR}_{p,\a}^{\bullet}(\stopp_{\varepsilon}',T)$, and for each $\theta\in [0,\frac{1}{2})\setminus \{\frac{1+\a}{p}\}$,
\begin{equation}
\label{eq:max_proof_blow_up}
\max\{ K^{\deter,\theta,p,\a}_{(A(\cdot,u_{\varepsilon}),B(\cdot,u_{\varepsilon}))},K^{\stoc,\theta,p,\a}_{(A(\cdot,u_{\varepsilon}),B(\cdot,u_{\varepsilon}))}\}
\leq K_{M,\eta}^{\theta},
\end{equation}
where $K_{M,\eta}^{\theta}$ is as in Assumption \ref{H_a_stochnew} for $\ell=\a$. Let us stress that $K_{M,\eta}^{\theta}$ does not depend on $\varepsilon$. Fix any $\theta\in (\frac{1+\a}{p},\frac{1}{2})$ and set $K_{M,\eta} = K_{M,\eta}^{0} + K_{M,\eta}^{\theta}$. For notational convenience, set $\Sol_{\varepsilon}= \Sol_{\stopp_{\varepsilon}',(A(\cdot,u_{\varepsilon}),B(\cdot,u_{\varepsilon}))}$.

Lemma \ref{l:estimate_Sol_op_Y} ensures that \eqref{eq:estimate_Sol_X_spaces} holds with $\Sol_{\sigma}$ replaced by $\Sol_{\varepsilon}$ and constant $\K_{M,\eta}:=C(1+K_{M,\eta})$ which is independent of $\varepsilon$ (see \eqref{eq:max_proof_blow_up}). Thus, for $L_n$ as in \ref{HAmeasur}, we set
$
\varepsilon=1/(16 \K_{M,\eta} L_{M})
$.
Let
\begin{equation}
\label{eq:definition_psi_proof_general_quasilinear_NFG}
\psi:=\one_{\O\setminus \U_{\varepsilon}}\stopp_{\varepsilon}'+\one_{\U_{\varepsilon}} \tau_{\varepsilon} \ \  \text{and} \ \  \psi_n:=\one_{\O\setminus \U_{\varepsilon}}\stopp_{\varepsilon}'+\one_{\U_{\varepsilon}} [(\sigma_n\vee \stopp_{\varepsilon}') \wedge  \tau_{\varepsilon} ].
\end{equation}
Note that $\psi_n\uparrow \psi$ and for each $n\geq 1$, $\ll \stopp_{\varepsilon}', \psi_n\rro\subseteq \ll \stopp_{\varepsilon}', \sigma_n\rro$.
Since $(u,\sigma)$ is an $L^p_{\a}$-maximal local solution to \eqref{eq:QSEE}, $(u|_{\ll \stopp_{\varepsilon}',\psi\rro},\psi)$ is an $L^p_{\a}$-local solution to
\begin{equation}
\label{eq:equation_start_at_u_N}
\begin{cases}
dv +A(\cdot,v)vdt =(F(\cdot,v)+f)dt+(B(\cdot,v)v+G(\cdot,v)+g)dW_{H}(t),\\
v(\stopp_{\varepsilon}')=u_{\varepsilon},
\end{cases}
\end{equation}
with localizing sequence $(\psi_n)_{n\geq 1}$. Here we used that $\sigma>\stopp_{\varepsilon}'$ on $\U_{\varepsilon}$ which follows from the definition of $\U_{\varepsilon}$. Finally, we set
\begin{align*}
\Lambda_\varepsilon &:= \ll \stopp_{\varepsilon}',\psi\rro= [\stopp_{\varepsilon}',\tau_{\varepsilon})\times \U_{\varepsilon},
\\
\Lambda_\varepsilon^n& := \ll \stopp_{\varepsilon}',\psi_n\rro= [\stopp_{\varepsilon}', (\sigma_n\vee \stopp_{\varepsilon}') \wedge  \tau_{\varepsilon} )\times \U_{\varepsilon}.
\end{align*}

Since $A(\cdot,u)=A(\cdot,u_{\varepsilon})+(A(\cdot,u)-A(\cdot,u_{\varepsilon}))$, $B(\cdot,u)=B(\cdot,u_{\varepsilon})+(B(\cdot,u)-B(\cdot,u_{\varepsilon}))$, by \eqref{eq:equation_start_at_u_N} and Proposition \ref{prop:causality_phi_revised_2} one has a.s.\ on $\Lambda_{\varepsilon}^n$
\begin{equation}
\label{eq:u_sum_v}
\begin{aligned}
\one_{\U_{\varepsilon}}u
 & = \Sol_\varepsilon(u_{\varepsilon},\one_{{\Lambda}_\varepsilon^n}f,\one_{{\Lambda}_\varepsilon^n}g)\\
&\quad+\Sol_\varepsilon(0,\one_{{\Lambda}_\varepsilon^n}(A(\cdot,u_{\varepsilon})-A(\cdot,u))u,\one_{{\Lambda}_\varepsilon^n}(B(\cdot,u_{\varepsilon})-B(\cdot,u))u)\\
&\quad+\Sol_\varepsilon(0,\one_{{\Lambda}_\varepsilon^n}F_{\Tr}(\cdot,u),\one_{{\Lambda}_\varepsilon^n}G_{\Tr}(\cdot,u))\\
&\quad+ \Sol_\varepsilon(0,\one_{{\Lambda}_\varepsilon^n}F_c(\cdot,u),\one_{{\Lambda}_\varepsilon^n}G_c(\cdot,u))\\
&=: I+II+III+IV.
\end{aligned}
\end{equation}
Next, we estimate each of the above summands. To make the formulas more readable, in this step, we denote by $\y$ the space $L^p(\O;L^p(\stopp_{\varepsilon}',T,w_{\a}^{\stopp_{\varepsilon}'};X_1)\cap \X({\stopp_{\varepsilon}',T}))$. To begin, by Lemma \ref{l:estimate_Sol_op_Y},
\begin{align*}
\|I\|_{\y}&\leq \K_{M,\eta} (\|u_{\varepsilon}\|_{L^p(\U_{\varepsilon};\Xap)}+
\|f\|_{L^p(\Lambda_\varepsilon^n,w_{\a}^{\stopp_{\varepsilon}'};X_0)}+\| g\|_{L^p(\Lambda_\varepsilon^n,w_{\a}^{\stopp_{\varepsilon}'};\g(H,X_{1/2}))})\\
&\leq C\K_{M,\eta} (M+\|f\|_{L^p(\I_T\times\O,w_{\a};X_0)}+\| g\|_{L^p(\I_T\times\O,w_{\a};\g(H,X_{1/2}))}).
\end{align*}
Again, by Lemma \ref{l:estimate_Sol_op_Y},
\begin{equation}
\label{eq:estimate_II_blow_up_proof_X}
\begin{aligned}
\|II\|_{\y}
&\leq  \K_{M,\eta}(\|(A(\cdot,u_{\varepsilon})-A(\cdot,u))u\|_{L^p(\Lambda_\varepsilon^n,w_{\a}^{\stopp_{\varepsilon}'};X_0)}\\
&+  \|(B(\cdot,u_{\varepsilon})-B(\cdot,u))u\|_{L^p(\Lambda_\varepsilon^n,w_{\a}^{\stopp_{\varepsilon}'};\g(H,X_{1/2}))})
\leq \frac{1}{2} \|u\|_{L^p(\Lambda_\varepsilon^n,w_{\a}^{\stopp_{\varepsilon}'};X_1)},
\end{aligned}
\end{equation}
where in the last inequality we used the choice of $\varepsilon$ (see the text before \eqref{eq:definition_psi_proof_general_quasilinear_NFG}) and the fact that
$$\sup_{s\in [\stopp_{\varepsilon}',\tau_{\varepsilon})}\|u(s)-u(\stopp_{\varepsilon}')\|_{\Xap}
\leq 2\sup_{s\in [\stopp_{\varepsilon},\tau_{\varepsilon})}\|u(s)-u(\stopp_{\varepsilon})\|_{\Xap}\leq 4\varepsilon, \text{ a.s.\ on }\U_{\varepsilon},$$
since $\stopp_{\varepsilon}\leq \stopp_{\varepsilon}'$. Similarly, one obtains
\begin{align*}
\|III\|_{\y}&\leq \K_{M,\eta}(\|F_{\Tr}(\cdot,u)\|_{L^p(\Lambda_\varepsilon^n,w_{\a}^{\stopp_{\varepsilon}'};X_0)}+
\|G_{\Tr}(\cdot,u)\|_{L^p(\Lambda_\varepsilon^n,w_{\a}^{\stopp_{\varepsilon}'};X_0)})\\
&\leq 2 \K_{M,\eta}(1+C_{\Tr,M} M),
\end{align*}
where in the last estimate we used \ref{HFcritical}-\ref{HGcritical} and \eqref{eq:uniform_bound_u_k}. Finally,
\begin{equation}
\begin{aligned}
\label{eq:estimates_v_4}
\|IV\|_{\y}
&\leq \K_{M,\eta}(\|F_{c}(\cdot,u)\|_{L^p(\Lambda_\varepsilon^n,w_{\a}^{\stopp_{\varepsilon}'};X_0)}+
\|G_{c}(\cdot,u)\|_{L^p(\Lambda_\varepsilon^n,w_{\a}^{\stopp_{\varepsilon}'};X_0)})\\
&\leq \K_{M,\eta} CM,
\end{aligned}
\end{equation}
in the last inequality we used \eqref{eq:L_p_norm_nonlinearity} and the bound in \eqref{eq:uniform_bound_u_k}. By \eqref{eq:u_sum_v}, and the previous estimates, one obtains that for some $C_1>0$ for all $n\geq 1$,
\begin{equation}
\label{eq:conclusion_Step_1_proof_Thm_4_3}
\begin{aligned}
\|u\|_{L^p(\U_{\varepsilon};L^p(\stopp_{\varepsilon}',(\sigma_n\vee \stopp_{\varepsilon}') \wedge  \tau_{\varepsilon} ,w_{\a}^{\stopp_{\varepsilon}'};X_1)\cap \X(\stopp_{\varepsilon}',(\sigma_n\vee \stopp_{\varepsilon}') \wedge  \tau_{\varepsilon} ))}\leq C_1,
\end{aligned}
\end{equation}
and by Fatou's lemma, \eqref{eq:conclusion_Step_1_proof_Thm_4_3} also holds with $(\sigma_n\vee \stopp_{\varepsilon}') \wedge  \tau_{\varepsilon} $ replaced by $\tau_{\varepsilon}$.

Recall that $\tau_{\varepsilon}|_{\V'}=\sigma|_{\V'}$. Since $\stopp_{\varepsilon}'<\tau_{\varepsilon}=\sigma$ on $\V'$,  \eqref{eq:conclusion_Step_1_proof_Thm_4_3} (with $n\to \infty$) implies
\begin{equation*}
\begin{aligned}
&\V'\cap \Big\{\sigma<T,\;\lim_{t\uparrow \sigma} u(t)\;\text{exists in}\;\Xap,\,\,\|u\|_{L^p(\I_{\sigma},w_{\a};X_1)\cap\X(\sigma)}<\infty\Big\} \\
&= \V'\cap \Big\{\sigma<T,\;
\lim_{t\uparrow \sigma} u(t)\;\text{exists in}\;\Xap,\,
\,\|u\|_{L^p(\stopp_{\varepsilon}',\sigma,w_{\a}^{\stopp_{\varepsilon}'};X_1)\cap\X(\stopp_{\varepsilon}',\sigma)}<\infty\Big\}=\V',
\end{aligned}
\end{equation*}
By \eqref{eq:V'pos}, \eqref{eq:blow_up_general_X_norm_claim_Step_1} follows, and this completes the proof of the claim in step 2.

{\em Step 3: In this step we will prove that $\P(\W)>0$ implies}
\begin{equation}
\label{eq:positive_measure_step_2}
\P\Big(\sigma<T,\lim_{t\uparrow \sigma}u(t)\,\text{exists in }\Xp,
\,\|u\|_{L^p(\I_{\sigma},w_{\a};X_1)}+\nonlinearity^{\a}(u;0,\sigma)<\infty\Big)>0.
\end{equation}
Here $\nonlinearity^{\a}$ is as in \eqref{eq:L_p_norm_nonlinearity2}.
Clearly, \eqref{eq:positive_measure_step_2} contradicts Lemma \ref{l:lemma_correction_theorem} and thus implies $\P(\W)=0$.
To prove \eqref{eq:positive_measure_step_2}, by Step 2 we can find $\wt{M}\geq M$ (see \eqref{eq:uniform_bound_u_k}), such that $\P(\ee)>0$, where
$$
\ee:=\big\{\sigma<T,\,\|u\|_{L^p(\I_{\sigma},w_{\a};X_1)\cap\X(\sigma)\cap C(\overline{I}_{\sigma};\Xap)}<\wt{M}\big\}.
$$
Note that Lemma \ref{l:F_G_bound_N_C_cn} yields
\begin{equation}
\label{eq:ee_bounds_non_X_1_norm_correction}
\nonlinearity^{\a}(u;0,\sigma)<\infty,\ \ \text{ a.s.\ on }\ee.
\end{equation}
Let $\mu$ be the stopping time given by
\begin{equation}
\label{eq:def_m_blow_up_2_step_2}
\mu:=\inf\{t\in [0,\sigma)\,:\, \|u\|_{L^p(\I_{t},w_{\a};X_1)\cap \X({t})\cap C([0,t];\Xap)}\geq  M\},
\quad \inf\emptyset:=\sigma.
\end{equation}
By construction and \eqref{eq:uniform_bound_u_k}, $\{\mu=\sigma\}\supseteq \ee$ and $\mu>0$ a.s. Since we already reduced to bounded initial values, we have $(A(\cdot,u_0),B(\cdot,u_0))\in \MRta$ by \eqref{eq:stochastic_maximal_regularity_assumption_local_extended_blow_up_quasilinear}. Set $v:=\Sol_{0,(A(\cdot,u_0),B(\cdot,u_0))}(u_0,\ff,\ggg)
$ on $\ll 0,T\rr$. Here $\ff$ and $\ggg$ are defined by
\begin{equation*}
\begin{aligned}
&\ff:=\one_{\ll 0,\mu\rr}((A(\cdot,u_0)-A(\cdot,u))u+F(\cdot,u))+f \in L^p_{\Progress}(\I_T\times \O,w_{\a};X_0),\\
&\ggg:=\one_{\ll 0,\mu\rr}((B(\cdot,u_0)-B(\cdot,u))u+G(\cdot,u))+g\in L^p_{\Progress}(\I_T\times \O,w_{\a};\g(H,X_{1/2})),
\end{aligned}
\end{equation*}
where we used Lemma \ref{l:F_G_bound_N_C_cn}, \eqref{eq:def_m_blow_up_2_step_2} and \ref{HAmeasur} to check the required $L^p$-integrability.
Since $(u,\sigma)$ is an $L^p_{\a}$-maximal local solution to \eqref{eq:QSEE} with $s=0$ it follows from Proposition \ref{prop:causality_phi_revised_2} that $u=v$ on $\ll 0,\mu\rr$. Since $\sigma>0$ a.s., there exists an $\wt{\eta}>0$ such that $\P(\{\sigma>\wt{\eta}\}\cap\ee)>0$. Set $\U:=\{\sigma>\wt{\eta}\}\cap \ee$.
Using the regularity estimate of Proposition \ref{prop:start_at_s}\eqref{it:start_at_s2} (and \eqref{eq:soloperatorR2}) we obtain
\begin{align*}
\|u&\|_{L^p(\U;C([\wt{\eta},\mu];\Xp))}
\leq
\|v\|_{L^p(\U;C([\wt{\eta},T];\Xp))}\\
&\lesssim_{\wt{\eta}}\|u_0\|_{L^p(\O;\Xap)}+
\|\ff\|_{L^p(\I_T\times \O,w_{\a};X_0)}+
\|\ggg\|_{L^p(\I_T\times \O,w_{\a};\g(H,X_{1/2}))}<\infty.
\end{align*}
Since $\{\mu=\sigma\}\supseteq \U$, $\U\subseteq \ee$ and $\P(\U)>0$, the above estimate and \eqref{eq:ee_bounds_non_X_1_norm_correction} imply \eqref{eq:positive_measure_step_2}.
This finishes the proof of Step 3, and therefore the proof of Theorem \ref{t:blow_up_criterion}\eqref{it:blow_up_norm_general_case_F_c_G_c}.
\end{proof}

\begin{remark}\
\label{r:blow_up_quasilinear_X_norm}
\begin{itemize}
\item
The arguments in the proof of Theorem \ref{t:blow_up_criterion}\eqref{it:blow_up_norm_general_case_F_c_G_c} can be extended to prove Theorem \ref{t:blow_up_criterion}\eqref{it:blow_up_non_critical_Xap} in the case $F_c=G_c=0$. The only difference is in Step 2, where $IV=0$ by assumption. Of course the latter situation is also covered by Theorem \ref{t:blow_up_criterion}\eqref{it:blow_up_non_critical_Xap} which is proved below;
\item similar to Remark \ref{r:blow_up_semilinear_X_norm}, under the assumptions of Theorem \ref{t:blow_up_criterion},
\begin{equation}
\label{eq:quasilinear_blow_up_X_norm}
\P\big(\sigma<T,\,\lim_{t\uparrow \sigma} u(t)\;\text{exists in}\;\Xap,\,\|u\|_{\X(s,\sigma)}<\infty\big)=0.
\end{equation}
To obtain \eqref{eq:quasilinear_blow_up_X_norm} one can repeat the argument of Theorem \ref{t:blow_up_criterion}\eqref{it:blow_up_norm_general_case_F_c_G_c}
using Lemma \ref{l:F_G_bound_N_C_cn} to get the estimate of $IV$ in \eqref{eq:estimates_v_4}.
\end{itemize}
\end{remark}

To prove Theorem \ref{t:blow_up_criterion}\eqref{it:blow_up_non_critical_Xap}, we need the next result (see \cite[Lemma 4.12]{AV19_QSEE_1}).
\begin{lemma}
\label{l:F_G_bound_N}
Let the hypothesis \emph{\ref{HFcritical}-\ref{HGcritical}} be satisfied. Let $0<a<b<T<\infty$ and $N\in \N$ be fixed. Assume that $\Xap$ is not critical for \eqref{eq:QSEE}. Then there exists a $\zeta>1$ such that for all $N\geq 1$ and $h\in C([a,b];\Xap)\cap \X({a,b})$ which satisfies $\|h\|_{C([a,b];\Xap)}\leq N$, one has a.s.
\begin{multline*}
\|F_c(\cdot,h)-F_c(\cdot,0)\|_{L^p(a,b,w_{\a}^a;X_0)}+\|G_c(\cdot,h)-G_c(\cdot,0)\|_{L^p(a,b,w_{\a}^a;\g(H,X_{1/2}))}
\\
\leq C_{a,b}(\|h\|_{\X(a,b)}+\|h\|_{\X(a,b)}^{\zeta}),
\end{multline*}
where $C_{a,b}=C(|a-b|,N)>0$ is independent of $f$ and satisfies $ C(\delta_1,N)\leq C(\delta_2,N)$ for all $0\leq \delta_1\leq \delta_2$ and $\lim_{\delta \downarrow 0}C(\delta,N)= 0$.
\end{lemma}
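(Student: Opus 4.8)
The plan is to follow the proof of Lemma~\ref{l:F_G_bound_N_C_cn}, but to start from the Lipschitz estimates for $F_c,G_c$ in Hypotheses \ref{HFcritical}--\ref{HGcritical} evaluated at $y=0$ (which directly control the differences $F_c(\cdot,h)-F_c(\cdot,0)$ and $G_c(\cdot,h)-G_c(\cdot,0)$), and then to exploit that non-criticality of $\Xap$ means \eqref{eq:HypCritical} and \eqref{eq:HypCriticalG} hold with \emph{strict} inequality for every $j$. It suffices to treat $F_c$, the argument for $G_c$ being identical with the exponents belonging to \ref{HGcritical}. Fix $h\in C([a,b];\Xap)\cap\X(a,b)$ with $\|h\|_{C([a,b];\Xap)}\le N$. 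Since $\|h(t)\|_{\Xap}\le N$ for all $t$, Hypothesis \ref{HFcritical} with $y=0$ yields, a.s.\ for a.e.\ $t\in(a,b)$,
\[
\|F_c(t,\cdot,h(t))-F_c(t,\cdot,0)\|_{X_0}\le L_{c,N}\sum_{j=1}^{m_F}\big(1+\|h(t)\|_{X_{\varphi_j}}^{\rho_j}\big)\|h(t)\|_{X_{\beta_j}}.
\]

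Taking $L^p(a,b,w_\a^a)$-norms and applying, for each $j$, H\"older's inequality in time with the conjugate exponents $r_j,r_j'$ from \eqref{eq:rr'} (splitting the weight as $w_\a^a=(w_\a^a)^{1/r_j}(w_\a^a)^{1/r_j'}$) gives, just as in Lemma~\ref{l:F_G_bound_N_C_cn},
\[
\big\|(1+\|h\|_{X_{\varphi_j}}^{\rho_j})\|h\|_{X_{\beta_j}}\big\|_{L^p(a,b,w_\a^a)}
\le \|h\|_{L^{pr_j}(a,b,w_\a^a;X_{\beta_j})}
+\|h\|_{L^{\rho_j pr_j'}(a,b,w_\a^a;X_{\varphi_j})}^{\rho_j}\,\|h\|_{L^{pr_j}(a,b,w_\a^a;X_{\beta_j})}.
\]
So far this is the critical-case bound. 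Now, because $\Xap$ is not critical, \eqref{eq:HypCritical} is strict for each $j$, which by \eqref{eq:rhostar} is equivalent to $\rho_j<\rhos_j$; moreover $r_j>1$ because $\beta_j<1$. Hence $p<pr_j$ and $\rho_j pr_j'<\rhos_j pr_j'$, so a further application of H\"older's inequality on the finite measure space $\big((a,b),w_\a^a\,dt\big)$, whose total mass equals $\tfrac{(b-a)^{1+\a}}{1+\a}$, bounds the $L^p$-norm above by a strictly positive power of $b-a$ times the $L^{pr_j}$-norm, and the $L^{\rho_j pr_j'}$-norm by a strictly positive power of $b-a$ times the $L^{\rhos_j pr_j'}$-norm (if $\rho_j=0$ the second factor is absent and only the first H\"older step is used). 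By the definition \eqref{eq:def_X_space} of $\X(a,b)$ the right-hand side is then majorised by $c_j(b-a)\big(\|h\|_{\X(a,b)}+\|h\|_{\X(a,b)}^{1+\rho_j}\big)$, where $c_j=c_j(\delta)>0$ depends only on $p,\a,r_j,\rho_j,\rhos_j$, is nondecreasing in $\delta$, and satisfies $\lim_{\delta\downarrow0}c_j(\delta)=0$.

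It then remains to sum over $j\in\{1,\dots,m_F\}$, multiply by $L_{c,N}$, and set $\zeta:=\max\{2,\,1+\max_j\rho_j\}>1$ together with $C_{a,b}:=2\,L_{c,N}\sum_{j=1}^{m_F}c_j(b-a)$; since $\|h\|_{\X(a,b)}+\|h\|_{\X(a,b)}^{1+\rho_j}\le 2\big(\|h\|_{\X(a,b)}+\|h\|_{\X(a,b)}^{\zeta}\big)$ for every $j$, this gives the asserted estimate for $F_c$, and adding the analogous bound for $G_c$ (enlarging $\zeta$ and $C_{a,b}$) completes the proof; the monotonicity of $\delta\mapsto C(\delta,N)$ and $\lim_{\delta\downarrow0}C(\delta,N)=0$ are inherited from the $c_j$. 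The one place where the hypothesis is genuinely used --- and the step to be careful with --- is this second H\"older step: non-criticality forces the strict inequalities $\rho_j<\rhos_j$ and $p<pr_j$, which is exactly what yields a \emph{positive} exponent of $b-a$, uniformly in $j$; in the critical case that exponent degenerates to $0$ and one recovers only Lemma~\ref{l:F_G_bound_N_C_cn}. A minor bookkeeping point is the term with $\rho_j=0$, where one simply discards the $X_{\varphi_j}$-factor.
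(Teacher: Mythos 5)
Your argument is correct and follows the same route as the paper's proof of the neighbouring Lemma \ref{l:F_G_bound_N_C_cn}, of which Lemma \ref{l:F_G_bound_N} is the non-critical refinement: the Lipschitz form of \ref{HFcritical}--\ref{HGcritical} applied at $y=0$, H\"older with exponents $r_j,r_j'$, then a second H\"older step whose constant vanishes as $b-a\downarrow 0$ exactly because non-criticality gives $\rho_j<\rhos_j$ (while $r_j>1$ holds in any case). The only slip is typographical: in your first display the constant-free summand should read $\|h\|_{L^p(a,b,w_\a^a;X_{\beta_j})}$ rather than $\|h\|_{L^{pr_j}(a,b,w_\a^a;X_{\beta_j})}$ --- which your subsequent text, upgrading $L^p$ to $L^{pr_j}$ at the cost of a positive power of $b-a$, shows you intended anyway.
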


To derive the remaining part \eqref{it:blow_up_non_critical_Xap} of Theorem \ref{t:blow_up_criterion}, we will exploit the non-criticality of $\Xap$ by using Lemma \ref{l:F_G_bound_N}. Also Theorem \ref{t:blow_up_criterion}\eqref{it:blow_up_norm_general_case_F_c_G_c} is applied in a technical but essential step in the proof below.

\begin{proof}[Proof of Theorem \ref{t:blow_up_criterion}\eqref{it:blow_up_non_critical_Xap}]
As before in part \eqref{it:blow_up_norm_general_case_F_c_G_c} we assume $s=0$ and $p>2$. By Proposition \ref{prop:redblowbounded} we may assume $u_0$ is bounded, and $f$ and $g$ are $L^p$-integrable.  Set
\begin{equation}
\label{eq:def_V_blow_up_non_critical_Xap}
\W:=\Big\{\sigma<T,\;\lim_{t\uparrow \sigma} u(t)\;\text{exists in}\;\Xap\Big\}
\end{equation}
and suppose that $\P(\W)>0$. We will show that this leads to a contradiction.

Let $(\sigma_n)_{n\geq 1}$ be the localizing sequence for $(u,\sigma)$ defined in \eqref{eq:canonical_localizing_sequence_u_sigma}. By Egorov's theorem and the fact that $\sigma>0$ a.s., there exist $\eta>0$ and $\F_{\sigma}\ni \V\subseteq \W$, $M\in \N$ such that $\P(\V)>0$, $\sigma>\eta$ a.s.\ on $\V$ and
\begin{align*}
\sup_{\V}\|u\|_{C(\overline{I}_{\sigma};\Xap)}& \leq M \ \ \text{on $\V$},
\\ \lim_{n\to \infty} \sup_{\V} |\sigma_n-\sigma|  &= 0, \ \ \ \  \lim_{n\to \infty}\sup_{\V}\sup_{s\in [\sigma_n,\sigma]}\|u(s)-u(\sigma)\|_{\Xap} = 0.
\end{align*}
Here, as usual, we have set $u(\sigma):=\lim_{t\uparrow \sigma}u(t)\in \Xap$ on $\V$. Moreover, we may suppose that $\P(\sigma\leq \eta)\leq \frac14 \P(\V)$.

As in Step 1 of the proof of Theorem \ref{t:blow_up_criterion}\eqref{it:blow_up_norm_general_case_F_c_G_c},
there exists a sequence of stopping times $(\wt{\sigma}_n)_{n\geq 1}$ such that for each $n\geq 1$, $\wt{\sigma}_n$ takes values in a finite subset of $[0,T]$,  $\wt{\sigma}_n\leq\wt{\sigma}_{n+1}$, $\wt{\sigma}_n\geq \sigma_n$ and $\P(\wt{\sigma}_n\geq \sigma)\leq \frac14 \P(\V)$. Defining $\sigma_n'$ and $\V'$ as in \eqref{eq:V'def},
we have $\V'\in \F_{\sigma}$, and $\P(\V')>0$ as before.
Moreover, for each $\varepsilon>0$ there exists an $N(\varepsilon)$ such that on the set $\V$, we have
\begin{equation}
\label{eq:non_critical_epsilon_assumption}
\sup_{t\in [0,\sigma)}\|u(t)\|_{\Xap}<M, \quad |\sigma_{N(\varepsilon)}-\sigma|< \varepsilon,\quad
\sup_{s\in [\sigma_{N(\varepsilon)},\sigma]}\|u(s)-u(\sigma)\|_{\Xap}<\varepsilon.
\end{equation}

For each $\varepsilon>0$ set $\stopp_{\varepsilon} = \sigma_{N(\varepsilon)}$, $\stopp_{\varepsilon}' = \sigma_{N(\varepsilon)}'$ and define the stopping time $\tau_{\varepsilon}$ by
\begin{align*}
\tau_{\varepsilon}:=\inf\Big\{t\in [\stopp_{\varepsilon},\sigma)\,
&:\,
\sup_{s\in [\stopp_{\varepsilon},t]}\|u(s)-u(\stopp_{\varepsilon})\|_{\Xap}\geq 2\varepsilon,\\
&\ \ \ \sup_{s\in [0,t]}\|u(t)\|_{\Xap}\geq M,\ \  |t-\lambda_{\varepsilon}|\geq \varepsilon\Big\},
\end{align*}
where $\inf\emptyset:=\sigma$. As in the proof of Theorem \ref{t:blow_up_criterion}\eqref{it:blow_up_norm_general_case_F_c_G_c}, $\tau_{\varepsilon}=\sigma$ a.s.\ on $\V\supseteq\V'$ for each $\varepsilon>0$. Moreover, setting $\U_{\varepsilon}:=\{\tau_{\varepsilon}>\stopp'_{\varepsilon}\}\in \F_{\stopp'_{\varepsilon}}\cap \F_{\tau_\varepsilon}$ and $u_{\varepsilon}:=\one_{\U_{\varepsilon}}u(\stopp_{\varepsilon}')$, one has $\U_{\varepsilon}\supseteq \V'$, $u_{\varepsilon}$ is $\F_{\stopp_{\varepsilon}'}$-measurable and
\begin{equation}
\label{eq:u_N_bounded_non_critical_proof}
u_{\varepsilon}\in \B_{L^{\infty}(\O;\Xap)}(M), \ \ \  \text{ for all }\varepsilon>0,\qquad  N=N(\varepsilon).
\end{equation}
Again, as in the proof of Theorem \ref{t:blow_up_criterion}\eqref{it:blow_up_norm_general_case_F_c_G_c}, combining Assumption \ref{H_a_stochnew} for $\ell=\a$, \eqref{eq:u_N_bounded_non_critical_proof} and Proposition \ref{prop:start_at_sigma_random_time}, one obtains
$(A(\cdot,u_{\varepsilon}),B(\cdot,u_{\varepsilon}))\in \mathcal{SMR}_{p,\a}^{\bullet}(\stopp_{\varepsilon}',T)$, and for each $\theta\in [0,\frac{1}{2})\setminus \{\frac{1+\a}{p}\}$,
\begin{equation}
\label{eq:max_proof_blow_up_not_critical}
\max\{ K^{\deter,\theta,p,\a}_{(A(\cdot,u_{\varepsilon}),B(\cdot,u_{\varepsilon}))}, K^{\stoc,\theta,p,\a}_{(A(\cdot,u_{\varepsilon}),B(\cdot,u_{\varepsilon}))}\}\leq K_{M,\eta}^{\theta}.
\end{equation}
Here $K_{M,\eta}^{\theta}$ does not depend on $\varepsilon$. Fix $\theta\in (\frac{1+\a}{p},\frac{1}{2})$ and set $K_{M,\eta} = K_{M,\eta}^{0}+K_{M,\eta}^{\theta}$. Let $\Sol_\varepsilon:=\Sol_{\stopp_{\varepsilon},(A(\cdot,u_{\varepsilon}),B(\cdot,u_{\varepsilon}))}$ be the solution operator associated with $(A(\cdot,u_{\varepsilon}),B(\cdot,u_{\varepsilon}))$. By Lemma \ref{l:estimate_Sol_op_Y} and \eqref{eq:max_proof_blow_up_not_critical}, the estimate \eqref{eq:estimate_Sol_X_spaces} holds with $\Sol_{\sigma}$ replaced by $\Sol_{\varepsilon}$ and $\K_{M,\eta}$ independent of $\varepsilon>0$.

{\em Step 1: There exist $\Constant$, $\bar{\varepsilon}>0$, $\zeta>1$ such that for all $\varepsilon\in (0,\bar{\varepsilon})$ and all stopping time $\tau$ satisfying}
\begin{equation}
\label{eq:step_1_critical_condition_tau}
\stopp_{\varepsilon}'\leq \tau\leq \tau_{\varepsilon} \ \text{ a.s.\ on }\U_{\varepsilon},
\end{equation}
\emph{one has}
\begin{align}\label{eq:eststep1uX}
\E\big[\one_{\U_{\varepsilon}}\|u\|_{\X(\stopp_{\varepsilon}',\tau)}^p\big] \leq \Constant+
C_{\varepsilon}\E\big[\one_{\U_{\varepsilon}}\|u\|_{\X(\stopp_{\varepsilon}',\tau)}^{p\zeta}\big],
\end{align}
{\em for some $C_{\varepsilon}>0$ independent of $u,\tau$ such that $\lim_{\varepsilon\downarrow 0}C_{\varepsilon}=0$.}

It suffices to prove the result with $\tau$ replaced by $\stopp_{\varepsilon}'\vee (\tau\wedge \sigma_n)$ for each $n\geq 1$. This has the advantage that all norms which appear here will be finite.

Set ${\varepsilon}_1:=1/(32\,\K_{M,\eta} L_M)$. Let $\varepsilon_2>0$ be such that $C(\varepsilon_2,M)\leq 1/(4\K_{M,\eta})$, where $C(\varepsilon_2,M)$ is as in Lemma \ref{l:F_G_bound_N}. Here we used the fact that since $F_c$ and $G_c$ are noncritical $\lim_{\varepsilon\downarrow 0}C(\varepsilon,M)  = 0$. Let $\bar{\varepsilon}:=\varepsilon_1\wedge \varepsilon_2$ and
fix $\varepsilon\in (0,\bar{\varepsilon})$. Set $\psi:=\one_{\O\setminus\U_{\varepsilon}}\stopp_{\varepsilon}'+\one_{\U_{\varepsilon}}\tau$. Since $\U_{\varepsilon}\in \F_{\tau_{\varepsilon}}\cap \F_{\stopp_{\varepsilon}'}$ and $\tau_{\varepsilon}\geq \tau$ a.s.\ on $\U_{\varepsilon}$, $\psi$ is a stopping time.
Let $\Lambda_\varepsilon:=\ll \stopp_{\varepsilon}',\tau\rro = [\stopp_{\varepsilon}',\tau)\times \U_{\varepsilon}$.
Reasoning as in the proof of Theorem \ref{t:blow_up_criterion} (see \eqref{eq:equation_start_at_u_N}-\eqref{eq:u_sum_v}), by Proposition \ref{prop:causality_phi_revised_2} and the linearity of $\Sol_\varepsilon$, a.s.\ on $\Lambda_{\varepsilon}$, one has
\begin{equation}\label{eq:RNcalculationtimeomega}
\begin{aligned}
\one_{\U_{\varepsilon}} u &= \Sol_\varepsilon(u_{\varepsilon},\one_{\Lambda_\varepsilon}F_c(\cdot,0)+f,\one_{\Lambda_\varepsilon}G_c(\cdot,0)+g)\\
& \quad +\Sol_\varepsilon(0,\one_{\Lambda_\varepsilon}(A(\cdot,u_{\varepsilon})-A(\cdot,u))u,\one_{\Lambda_\varepsilon}(B(\cdot,u_{\varepsilon})-B(\cdot,u))u)\\
&\quad +\Sol_\varepsilon(0,\one_{\Lambda_\varepsilon}F_{\Tr}(\cdot,u),\one_{\Lambda_\varepsilon}G_{\Tr}(\cdot,u))\\
&\quad + \Sol_\varepsilon(0,\one_{\Lambda_\varepsilon}(F_c(\cdot,u)-F_c(\cdot,0)),\one_{\Lambda_\varepsilon}(G_c(\cdot,u)-G_c(\cdot,0))\\
&:=I+II+III+IV.
\end{aligned}
\end{equation}
It remains to estimate each part separately. For notational convenience, we set $\y:=L^p(\O; L^p(\stopp_{\varepsilon}',T,w_{\a}^{\stopp_{\varepsilon}'};X_1)\cap \X(\stopp_{\varepsilon}',T))$.
By Lemma \ref{l:estimate_Sol_op_Y},
\begin{align*}
\|I\|_{\y}
&\leq \K_{M,\eta} (\|u_{\varepsilon}\|_{L^p(\O;\Xap)}+ \|F_c(\cdot,0)\|_{L^p(\Lambda_\varepsilon,w_{\a}^{\stopp_{\varepsilon}'};X_0)}+\|f\|_{L^p(\Lambda_\varepsilon,w_{\a}^{\stopp_{\varepsilon}'};X_0)}\\
&\qquad \quad+ \|G_c(\cdot,0)\|_{L^p(\Lambda_\varepsilon,w_{\a}^{\stopp_{\varepsilon}'};\g(H,X_{1/2}))}+\|g\|_{L^p(\Lambda_\varepsilon,w_{\a}^{\stopp_{\varepsilon}'};\g(H,X_{1/2}))})\\
& \leq \K_{M,\eta}C_{\eta}(C+M),
\end{align*}
where we used that \eqref{eq:u_N_bounded_non_critical_proof}. Moreover, as in \eqref{eq:estimate_II_blow_up_proof_X} and \eqref{eq:estimates_v_4} in the proof of Theorem \ref{t:blow_up_criterion}\eqref{it:blow_up_norm_general_case_F_c_G_c} and using that $\tau\leq \tau_{\varepsilon}$ on $\U_{\varepsilon}$, one easily obtains
\begin{equation*}
\|II\|_{\y}\leq \frac{1}{4}\|u\|_{L^p(\Lambda_\varepsilon,w_{\a}^{\stopp_{\varepsilon}'};X_1)},\qquad
\|III\|_{\y}\leq \K_{M,\eta}C(1+M).
\end{equation*}
To estimate $IV$ we employ Lemma \ref{l:F_G_bound_N}.

For $\delta>0$, set $C_{\delta}:=C(\delta,M)$, where $C(\cdot,\cdot)$ is as in Lemma \ref{l:F_G_bound_N}. By the choice of $\varepsilon_2$, we know that $C_{|b-a|}\K_{M,\eta}\leq\frac{1}{4}$ for each $a<b$ with $|b-a|\leq {\varepsilon}_2$.
By  Lemma \ref{l:estimate_Sol_op_Y},
\begin{align*}
\|IV\|_{\y}
&\leq \K_{M,\eta} (\|F_c(\cdot,u)-F_c(\cdot,0)\|_{L^p(\Lambda_\varepsilon,w_{\a}^{\stopp_{\varepsilon}'};X_0)}\\
&\qquad \qquad\qquad \quad+\|G_c(\cdot,u)-G_c(\cdot,0)\|_{L^p(\Lambda_\varepsilon,w_{\a}^{\stopp_{\varepsilon}'};\g(H,X_{1/2}))})\\
&\leq \frac{1}{4}\|u\|_{L^p(\U_{\varepsilon};\X(\stopp_{\varepsilon}',\tau))}+
C_{\varepsilon}(\E[\one_{\U_{\varepsilon}}\|u\|_{\X(\stopp_{\varepsilon}',\tau)}^{p\zeta}])^{1/p},
\end{align*}
where the last estimate follows from Lemma \ref{l:F_G_bound_N} and $\tau-\stopp_{\varepsilon}'\leq \tau_{\varepsilon}-\lambda_{\varepsilon}\leq \varepsilon$ (here we used \eqref{eq:step_1_critical_condition_tau}, the definition of $\tau_{\varepsilon}$ below \eqref{eq:non_critical_epsilon_assumption} and $\lambda_{\varepsilon}'\geq \lambda_{\varepsilon}$).

Combining the estimates we obtain the claim of Step 1.

{\em Step 2: Conclusion}. Fix $\varepsilon>0$ and set
\begin{equation*}
\ee_{\varepsilon}:=\V'\cap \big\{\sigma<T,\,
\|u\|_{\X(\stopp_{\varepsilon}',\sigma)}<\infty\big\},
\end{equation*}
Recall that $\stopp_{\varepsilon}'<\sigma$ on $\V'$. We claim that $\P(\ee_{\varepsilon})=0$. Indeed, by \eqref{eq:def_V_blow_up_non_critical_Xap}, one has $\lim_{t\uparrow \sigma} u(t)\;\text{exists in}\;\Xap$ a.s.\ on $\V'\subseteq \W$. Therefore,
\begin{align*}
\P(\ee_{\varepsilon})
&=\P\Big(\V'\cap\Big\{\sigma<T,\;\lim_{t\uparrow \sigma} u(t)\;\text{exists in}\;\Xap,\,
\|u\|_{\X(\stopp_{\varepsilon}',\sigma)}<\infty\Big\} \Big)\\
&=
\P\Big(\V'\cap\Big\{\sigma<T,\;\lim_{t\uparrow \sigma} u(t)\;\text{exists in}\;\Xap,\,
\|u\|_{\X(\sigma)}<\infty\Big\} \Big)\\
&\leq \P\Big(\sigma<T,\;\lim_{t\uparrow \sigma} u(t)\;\text{exists in}\;\Xap,\,
\|u\|_{\X(\sigma)}<\infty\Big)= 0,
\end{align*}
where in the last step we used Theorem \ref{t:blow_up_criterion}\eqref{it:blow_up_norm_general_case_F_c_G_c} and Lemma \ref{l:F_G_bound_N_C_cn} (or \eqref{eq:quasilinear_blow_up_X_norm}).

Next let $\bar{\varepsilon}, (C_{\varepsilon})_{\varepsilon\in (0,\bar{\varepsilon})}, \Constant,\zeta$ be as in Step 1. For each $\varepsilon\in (0,\bar{\varepsilon})$ and $x\in\R_+$, set $\varphi_{\varepsilon}(x):=x-C_{\varepsilon}x^{\zeta}$. Standard considerations show that $\varphi_{\varepsilon}$ has a unique maximum on $\R_+$ and $\lim_{\varepsilon\downarrow 0}M_{\varepsilon}=\infty$ where $\max_{\R_+}\varphi_{\varepsilon} =:M_{\varepsilon}$. Let $m_{\varepsilon}>0$ be the unique number such that $\varphi_{\varepsilon}(m_{\varepsilon})=M_{\varepsilon}$ and note that $\varphi_{\varepsilon}\geq 0$ on $[0,m_{\varepsilon}]$. Since $\lim_{\varepsilon\downarrow 0}M_{\varepsilon}=\infty$ and $\P(\V')>0$, we can choose $\varepsilon\in (0,\bar{\varepsilon})$ such that $M_{\varepsilon}\P(\V')>\Constant$.

Since $\P(\ee_{\varepsilon})=0$, for a.a.\ $\om\in \V'$ there exists a $t<\sigma(\om)$ such that
\begin{equation}
\label{eq:over_u_xi}
\|u(\cdot,\om)\|_{\X(\stopp_{\varepsilon}',t)}>m_{\varepsilon}^{1/p}.
\end{equation}
Define the stopping time $\mu_{\varepsilon}$ by
\begin{equation*}
\mu_{\varepsilon}:=\left\{
             \begin{array}{ll}
               \inf\Big\{t\in [\stopp_{\varepsilon}',\tau_{\varepsilon})\,:\,
\|u\|_{\X(\stopp_{\varepsilon}',t)}\geq m_{\varepsilon}^{1/p} \Big\}, & \hbox{on $\U_{\varepsilon}$;} \\
               \stopp_{\varepsilon}', & \hbox{on $\O\setminus \U_{\varepsilon}$.}
             \end{array}
           \right.
\end{equation*}
Here we set $\inf\emptyset :=\tau_{\varepsilon}$.
In this way $\|u\|_{\X(\stopp_{\varepsilon}',\mu_{\varepsilon})} \leq m_{\varepsilon}^{1/p}$ a.s.\ 

By the definition of $\tau_{\varepsilon}$ and \eqref{eq:over_u_xi}, one has $\mu_{\varepsilon}<\tau_{\varepsilon}=\sigma$ and  $\|u\|_{\X(\stopp_{\varepsilon}',\mu_{\varepsilon})} = m_{\varepsilon}^{1/p}$ a.s.\ on $\V'$.
Since $\varphi_{\varepsilon}|_{[0,m_{\varepsilon}]}\geq 0$ and $\U_{\varepsilon}\supseteq \V'$, we find
\begin{equation*}
\E[\one_{\U_{\varepsilon}} \varphi_{\varepsilon}(\|u\|_{\X(\stopp_{\varepsilon}',\mu_{\varepsilon})}^p)]
\geq \E[\one_{\V'} \varphi_{\varepsilon}(\|u\|_{\X(\stopp_{\varepsilon}',\mu_{\varepsilon})}^p)]
=
\E[\one_{\V'} \varphi_{\varepsilon}(m_{\varepsilon})]=
M_{\varepsilon}\P(\V').
\end{equation*}

Next observe that $\tau=\mu_{\varepsilon}$ satisfies condition \eqref{eq:step_1_critical_condition_tau}, and the quantities appearing in \eqref{eq:eststep1uX} are finite. Therefore, Step 1 implies the following converse estimate
\begin{equation}
\label{eq:C_1_uniform_bound_f}
\E[\one_{\U_{\varepsilon}} \varphi_{\varepsilon}(\|u\|_{\X(\stopp_{\varepsilon}',\mu_{\varepsilon})}^{p})]
\leq \Constant.
\end{equation}
This leads to a contradiction since $\Constant<M_{\varepsilon}\P(\V')$. Therefore $\P(\W)=0$ and this completes the proof of Theorem \ref{t:blow_up_criterion}\eqref{it:blow_up_non_critical_Xap}.
\end{proof}

\begin{proof}[Proof of Theorem \ref{thm:semilinear_blow_up_Serrin}\eqref{it:blow_up_semilinear noncritical_stochastic}]
We use the same method as in the proof of Theorem \ref{t:blow_up_criterion}\eqref{it:blow_up_non_critical_Xap}.
Suppose that $\P(\W)>0$ where
$$
\W:=\big\{\sigma<T,\,\sup_{t\in [0,\sigma)}\|u(t)\|_{\Xap}<\infty\big\}.
$$
As before (see \eqref{eq:non_critical_epsilon_assumption}) one can check that there exists a set $\V$ with positive probability such that for all $\varepsilon>0$ there exists an $N(\varepsilon)\in\N$ for which
\begin{equation}
\label{eq:uniformly_bound_M_u_blow_up_semilinear}
\sup_{t\in [0,\sigma)}\|u(t)\|_{\Xap}<M,\quad \text{ and }\quad |\sigma_{N(\varepsilon)}-\sigma|< \varepsilon.
\end{equation}
Now the estimate \eqref{eq:eststep1uX} holds again. Indeed, in the proof the fact that $\lim_{t\uparrow \sigma}u(t)$ exists in $\Xap$ was only used to estimate $II$. In the semilinear case, $II=0$, and the bound in \eqref{eq:uniformly_bound_M_u_blow_up_semilinear} can be used to reproduce the estimates for $I,III,IV$. The proof of Step 2 of Theorem \ref{t:blow_up_criterion}\eqref{it:blow_up_non_critical_Xap} can be used to complete the proof.
\end{proof}

\subsection{Proofs of Theorems \ref{t:blow_up_criterion}\eqref{it:blow_up_norm_general_case_quasilinear_Xap_Pruss}, \ref{thm:semilinear_blow_up_Serrin}\eqref{it:blow_up_semilinear_serrin_Pruss_modified} and \ref{thm:semilinear_blow_up_Serrin_refined}}\label{ss:thmblowuphardespart_Serrin}
In this subsection, we prove the remaining results stated in Subsection \ref{ss:main_result_blow_up}. We begin with the proof of Theorem \ref{thm:semilinear_blow_up_Serrin}\eqref{it:blow_up_semilinear_serrin_Pruss_modified} which will guide us into the remaining ones. The advantage is that in the semilinear case the argument used to control the nonlinearities is more transparent. Additional changes are needed to get Theorems \ref{t:blow_up_criterion}\eqref{it:blow_up_semilinear_serrin_Pruss_modified} and \ref{thm:semilinear_blow_up_Serrin_refined}.

Before starting with the proofs we introduce some notation which will be used only in this subsection and allows us to give an extension of Theorem \ref{thm:semilinear_blow_up_Serrin_refined}, i.e.\ Proposition \ref{prop:serrin_Pruss_general_form} below.
Let \ref{HFcritical}-\ref{HGcritical} be satisfied and fix $j\in \{1,\dots,m_F+m_G\}$. If $\rho_j>0$, then we define $\beta_j^{\star}, \varphi_j^{\star}\in (1-\frac{1+\a}{p},1)$ as follows:
\begin{itemize}
\item If $\rho_j (\varphi_j-1+\frac{1+\a}{p})+\varphi_j\geq 1$, then $\varphi^{\star}_j=\varphi_j$, and $\beta_j^{\star}=1-\rho_j (\varphi_j-1+\frac{1+\a}{p})\in [\beta_j,\varphi_j]$;
\item  If $\rho_j (\varphi_j-1+\frac{1+\a}{p})+\varphi_j< 1$, then $\beta_j^{\star}=\varphi^{\star}_j=1-\frac{\rho_j}{\rho_j+1}\frac{1+\a}{p}\in (\varphi_j,1)$.
\end{itemize}
The previous definition implies that
\begin{equation}
\label{eq:star_varphi_beta_j_equality_proof_serrin}
\rho_j \Big(\varphi_j^{\star}-1+\frac{1+\a}{p}\Big) +\beta_j^{\star} =1 \ \  \ \text{ for all }\ j\in \{1,\dots,m_F+m_G\}.
\end{equation}
If $\rho_j=0$, then with a slight abuse of notation we replace $\rho_j$ by $\varepsilon_j>0$ such that $\varepsilon_j(\varphi_j -1+\frac{1+\a}{p})+\varphi_j<1,$ and $\varphi_j^{\star} := \beta_j^{\star} := 1-\frac{\varepsilon_j}{\varepsilon_j+1}\frac{1+\a}{p}\geq \varphi_j$. By choosing $\varepsilon_j$ small enough (e.g.\ $\varepsilon_j<\a+1$) one always has $\beta_j^{\star}=\varphi_j^{\star}>1-\frac{1+\a}{p}\frac{1+\a}{2+\a}$, which is needed in Lemma \ref{lem:interpolationineqMR0}\eqref{it:interpolationineqMR01} below.

In all cases we have $\varphi_j^\star\geq \varphi_j$ and $\beta_j^\star\geq \beta_j$. Therefore, by $X_{\Phi}\hookrightarrow X_{\phi}$ for $0<\phi\leq \Phi<1$, and by \ref{HFcritical}-\ref{HGcritical} for all $n\geq 1$ a.s.\ for all $x\in X_1$ s.t.\ $\|x\|_{\Xap}\leq n$,
\begin{equation}
\label{eq:F_c_G_c_estimate_star_preliminary}
\|F_c(\cdot,x)\|_{X_{0}}
+
\|G_c(\cdot, x)\|_{\g(H,X_{1/2})}
\leq C_{c,n}' \sum_{j=1}^{m_F+m_G}(1+\|x\|_{X_{\varphi^{\star}_j}}^{\rho_j})\|x\|_{X_{\beta^{\star}_j}}+C_{c,n}'.
\end{equation}
where $C'_{c,n}=C' C_{c,n}$ with $C'$ depending only on $X_0,X_1,\beta_j,\beta_j^{\star},\varphi_j,\varphi_j^{\star}$.

Next, we partially repeat the construction of the $\X$-space (see \eqref{eq:def_X_space}) using $(\rho_j,\beta_j^{\star},\varphi_j^{\star})$ instead of $(\rho_j,\beta_j,\varphi_j)$. As in Lemma \ref{l:F_G_bound_N_C_cn} this will be needed to estimate the nonlinearities $F_c,G_c$. Similar to \eqref{eq:rr'}, for all $j\in \{1,\dots,m_F+m_G\}$ we set
\begin{equation}
\label{eq:xi_xi'}
\frac{1}{\xi_j' }:=\frac{\rho_j (\varphi_j^{\star}-1+(1+\a)/p )}{(1+\a)/p},
\ \ \ \ \text{and }\ \ \ \
\frac{1}{\xi_j}:=\frac{\beta_j^{\star}-1+(1+\a)/p }{(1+\a)/p}.
\end{equation}
Since $\varphi_j^{\star},\beta_j^{\star}\in (1-\frac{1+\a}{p},1)$, we get $\frac{1}{\xi_j'},\frac{1}{\xi_j}>0$. Moreover, \eqref{eq:star_varphi_beta_j_equality_proof_serrin} yields $\frac{1}{\xi_j' }+\frac{1}{\xi_j}=1$ and therefore $\frac{1}{\xi_j'},\frac{1}{\xi_j}<1$. Parallel to \eqref{eq:def_X_space}, for all $0\leq a<b\leq T$ we define
\begin{equation}
\label{eq:def_X_star_space}
\X^{\star}({a,b}):=\Big(\bigcap_{j=1}^{m_F+m_G} L^{p \xi_j}(a,b,w_{\a}^a;X_{\beta^{\star}_j})\Big) \cap
\Big(\bigcap_{j=1}^{m_F+m_G}  L^{\rho_j p \xi_j'}(a,b,w_{\a}^a;X_{\varphi^{\star}_j})\Big).
\end{equation}
By \eqref{eq:F_c_G_c_estimate_star_preliminary} and H\"{o}lder's inequality we obtain that for all $M\geq 1$ and all $h\in \X^{\star}(a,b)\cap C([a,b];\Xap)$ which satisfy $\|h\|_{ C([a,b];\Xap)}\leq M$,
\begin{equation}
\begin{aligned}
\label{eq:F_c_G_c_estimate_star}
&\|F_c(\cdot,h)\|_{L^p(a,b,w_{\a}^{a};X_{0})}
+
\|G_c(\cdot,h)\|_{L^p(a,b,w_{\a}^{a};\g(H,X_{1/2}))}\\
&\
\leq C_{c,M}''\Big[\sum_{j=1}^{m_F+m_G}(1+\|h\|_{L^{\rho_j p \xi_j' }(a,b,w_{\a}^{a};X_{\varphi^{\star}_j})}^{\rho_j})\|h\|_{L^{p \xi_j}(a,b,w_{\a}^{a};X_{\beta^{\star}_j})}+1\Big],
\end{aligned}
\end{equation}
where $C_{c,M}'' = C_{c,M}' (1\vee \|1\|_{L^p(0,T,w_{\a})}\vee \max_j \|1\|_{L^{p\xi_j'}(0,T,w_{\a})})$.

The key to the proofs of the blow-up criteria is interpolation inequalities. In order to simplify the notation for $\theta\in [0,1]$ and $0\leq a<b\leq T$, we set
\begin{equation}
\label{eq:Sz_def}
\Sz^{\theta,\a}(a,b):=
\hz^{\theta,p}(a,b,w_{\a}^{a};X_{1-\theta})\cap L^p(a,b,w_{\a}^{a};X_1).
\end{equation}
The reason for using the space $\hz^{\theta,p}$ instead of $H^{\theta,p}$ is that Proposition \ref{prop:start_at_s}\eqref{it:start_at_s4} allows to obtain uniformity of the estimates in $b-a$ in the proof of Theorem \ref{thm:semilinear_blow_up_Serrin}\eqref{it:blow_up_semilinear_serrin_Pruss_modified} below. By \eqref{eq:LMVresult} there are no trace restrictions when $\theta<\frac{1+\a}{p}$.
\begin{lemma}[Interpolation inequality]\label{lem:interpolationineqMR0}
Let $p\in (1, \infty)$, $\kappa\in [0,p-1)$, $\psi\in (1-\frac{1+\a}{p},1)$, and set $\zeta=(1+\a)/\big(\psi-1+\frac{1+\a}{p}\big)$. Then there exists a $\theta_0\in [0,\frac{1+\a}{p})$ such that for all $\theta\in [\theta_0, 1)$ there is a constant $C>0$ such that the following estimate holds for all $0\leq a<b\leq T$ and all $f\in \Sz^{\theta,\a}(a,b)\cap L^{\infty}(a,b;\Xap)\cap L^p(a,b;X_{1-\frac{\a}{p}})$,
\begin{equation}
\label{eq:interpolation_inequality_serrin_zeta}
\|f\|_{L^{\zeta}(a,b,w_{\a};X_{\psi})}\leq C \|f\|_{L^{\infty}(a,b;\Xap)}^{1-\phi}\|f\|_{\Sz^{\theta,\a}(a,b)}^{(1-\delta)\phi}\|f\|_{L^{p}(a,b;X_{1-\frac{\a}{p}})}^{\delta\phi},
\end{equation}
where we can take $\delta,\phi\in [0,1]$ as follows:
\begin{enumerate}[{\rm (1)}]
\item\label{it:interpolationineqMR01} $\delta = 1-\frac{p}{1+\a}\Big(\psi-1+\frac{1+\a}{p}\Big)$ and $\phi=1$ if $\psi\in (1-\frac{1+\a}{p}\frac{1+\a}{2+\a},1)$ and $\kappa>0$;
\item\label{it:interpolationineqMR02} $\delta = \frac{\kappa}{\kappa+1}$ and $\phi=p\Big(\psi-1+\frac{1+\a}{p}\Big)$ if $\psi\in (1-\frac{1+\a}{p},1-\frac{\a}{p}]$ and $\kappa>0$;
\item\label{it:interpolationineqMR03} $\delta=1$ and $\phi=p\Big(\psi-1+\frac{1}{p}\Big)$ if $\kappa=0$.
\end{enumerate}
In particular, in each of the above cases
\begin{equation}\label{eq:identity1mindeltaphi}
(1-\delta)\phi \leq \frac{p}{1+\a}\Big(\psi-1+\frac{1+\a}{p}\Big).
\end{equation}
\end{lemma}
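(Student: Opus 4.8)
The plan is to prove Lemma \ref{lem:interpolationineqMR0} by reducing the $L^\zeta(a,b,w_\a;X_\psi)$-norm to a product of norms via three ingredients: (i) a space interpolation inequality $\|x\|_{X_\psi}\lesssim \|x\|_{X_{1-\theta}}^{1-\lambda}\|x\|_{X_1}^{\lambda}$ for a suitable $\lambda$ coming from $\psi=(1-\theta)(1-\lambda)+\lambda$, combined with H\"older in time to pass from pointwise interpolation to the mixed-derivative-type estimate; (ii) the mixed derivative inequality of Lemma \ref{l:mixed_derivative} to identify $\hz^{\theta,p}(a,b,w_\a;X_{1-\theta})\cap L^p(a,b,w_\a;X_1)$ with a scale of spaces interpolating the endpoints, so that $\Sz^{\theta,\a}(a,b)$ controls $H^{\sigma,p}(a,b,w_\a;X_{1-\sigma})$ for $\sigma\in[0,\theta]$; (iii) the Sobolev-type embeddings of Proposition \ref{prop:change_p_q_eta_a}\eqref{it:Sob_embedding} and Corollary \ref{cor:technicalweight} to trade time-regularity and weight exponents for integrability, landing in $L^\zeta(a,b,w_\a;X_\psi)$. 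The choice $\zeta=(1+\a)/(\psi-1+\tfrac{1+\a}{p})$ is exactly the scaling-critical one, matching the Sobolev index $\theta-\tfrac{1+\a}{p}$ of the source space with $-\tfrac{1}{\zeta}$ of the target (at zero space regularity after using the $X_\psi$ interpolation), which is why the threshold $\theta_0\in[0,\tfrac{1+\a}{p})$ appears: one needs $\theta$ large enough that the embedding is admissible but the trace space $\Xap$ is still available as an endpoint.

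First I would set $\theta_0$ so that the relevant space-interpolation exponent $\lambda$ lies in $(0,1)$ and the time-Sobolev embedding is non-degenerate; concretely $\lambda$ must satisfy $1-\lambda = \tfrac{1-\psi}{\theta}\cdot\tfrac{\theta}{\ldots}$ — the precise formula falls out of matching $\psi$ between $X_{1-\theta}$ and $X_1$ while keeping the time-integrability budget. Then in each of the three regimes I would make a specific interpolation choice. In case \eqref{it:interpolationineqMR03} ($\kappa=0$) there is no weight, $\delta=1$ is forced, and $\phi = p(\psi-1+\tfrac1p)$ is the standard parabolic interpolation exponent; this is essentially the classical Gagliardo--Nirenberg-in-time estimate and I would derive it directly from Lemma \ref{l:mixed_derivative}\eqref{it:mixed_derivative_W} plus Proposition \ref{prop:change_p_q_eta_a}. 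In case \eqref{it:interpolationineqMR01} ($\psi$ close to $1$, $\kappa>0$) one takes $\phi=1$, meaning the $L^\infty(\Xap)$ factor is absent and the full strength is carried by $\Sz^{\theta,\a}$ and $L^p(X_{1-\a/p})$; here the condition $\psi>1-\tfrac{1+\a}{p}\tfrac{1+\a}{2+\a}$ guarantees that the chosen $\beta^\star=\varphi^\star$ in the surrounding construction is large enough for the embedding $\Sz^{\theta,\a}\hookrightarrow L^{(1-\delta)^{-1}\ldots}(X_\psi)$ to hold, and I would verify the arithmetic of the exponents $\delta=1-\tfrac{p}{1+\a}(\psi-1+\tfrac{1+\a}{p})$. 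Case \eqref{it:interpolationineqMR02} interpolates between the other two: the extra $L^\infty(\Xap)$ factor with weight $1-\phi$ absorbs the shortfall in integrability, with $\delta=\tfrac{\kappa}{\kappa+1}$ being exactly the weight-trading ratio appearing in Proposition \ref{prop:change_p_q_eta_a}\eqref{it:change_a_p_eta_a}.

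The final bound \eqref{eq:identity1mindeltaphi} I would check by plugging in each case: in \eqref{it:interpolationineqMR01} equality holds since $(1-\delta)\phi=(1-\delta)=\tfrac{p}{1+\a}(\psi-1+\tfrac{1+\a}{p})$; in \eqref{it:interpolationineqMR02}, $(1-\delta)\phi = \tfrac{1}{\kappa+1}\cdot p(\psi-1+\tfrac{1+\a}{p})$, and since $\tfrac{1}{\kappa+1}\le \tfrac{1+\a}{\ldots}$ — more precisely one checks $\tfrac{1}{\kappa+1}\le 1$ together with the numerology forcing $p(\psi-1+\tfrac{1+\a}{p})\le 1$ — the inequality follows; in \eqref{it:interpolationineqMR03}, $\a=0$ so $(1-\delta)\phi=0\le p(\psi-1+\tfrac1p)$ trivially. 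I expect the main obstacle to be bookkeeping: making the three interpolation parameters $(\theta,\lambda,\delta,\phi,\zeta)$ consistent simultaneously with the weight $w_\a$, the trace threshold $\tfrac{1+\a}{p}$, and the requirement that constants be independent of $a,b$ (which forces the use of $\hz^{\theta,p}$ and the uniform extension operator from Proposition \ref{prop:change_p_q_eta_a}\eqref{it:ext_by_0_h_z} and Lemma \ref{l:mixed_derivative}\eqref{it:mixed_derivative_H}, valid precisely because $\theta\ne\tfrac{1+\a}{p}$, which is why $\theta_0$ is chosen strictly below $\tfrac{1+\a}{p}$ while $\theta$ ranges in $[\theta_0,1)$ and one uses $\hz^{\theta,p}=H^{\theta,p}$ from \eqref{eq:LMVresult} in the low-regularity range). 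Once the exponent arithmetic is pinned down, each individual estimate is a direct application of the cited results, so the proof is mostly a careful assembly rather than a new idea.
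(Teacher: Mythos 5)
Your high-level outline (space interpolation, the mixed derivative inequality, weighted Sobolev embedding) names the right tools and correctly anticipates that $\phi=1$ drops the $L^\infty(\Xap)$ factor in case \eqref{it:interpolationineqMR01} and that case \eqref{it:interpolationineqMR03} is classical; but as written it stays at the level of a plan and misses several mechanisms that the paper's proof actually relies on, so I would not count it as a proof.

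Concretely, the gaps I see. First, the central technical step in case \eqref{it:interpolationineqMR01}: after applying Lemma \ref{l:mixed_derivative}\eqref{it:mixed_derivative_H} to split $\hz^{\theta(1-\delta),p}(a,b,w_{\a(1-\delta)};X_{\psi})$ into the $\Sz$ and $L^p(X_{1-\a/p})$ pieces, one needs the weighted Sobolev embedding $\A^{\theta(1-\delta),p}(w_{\a(1-\delta)})\hookrightarrow L^{\zeta}(w_{\a})$, whose admissibility condition is the inequality \eqref{eq:weight_condition_Sob_embeddings} (or \eqref{eq:weight_condition_Sob_embeddingsalt}); the paper then argues by continuity and linearity that there is a unique admissible $\theta$ making that constraint an equality, and the resulting $\delta$ has exactly the stated form. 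Your sketch hand-waves the selection of $\theta$ ("the precise formula falls out") and never isolates this constraint, and it also omits the subdivision of case \eqref{it:interpolationineqMR01} into the two regimes $\psi>1-\tfrac{\a}{p}$ and $1-\tfrac{1+\a}{p}\tfrac{1+\a}{2+\a}<\psi\leq 1-\tfrac{\a}{p}$, which require opposite inequalities between $\theta$ and $1-\psi$ and opposite signs in the Sobolev budget. Second, case \eqref{it:interpolationineqMR02} in the paper is a genuine two-stage interpolation: first the \emph{space-only} real interpolation $(\Xap,X_{1-\a/p})_{\phi,1}\hookrightarrow X_{\psi}$ to produce the $L^\infty(\Xap)$ factor via H\"older in time (giving \eqref{eq:estimate_Xap_varphi_small_trace_Serrin}), followed by the sharp end-point time embedding $\|f\|_{L^{p(1+\a)}(w_{\a};X_{1-\a/p})}\lesssim \|f\|_{\hz^{\a/p,p}(w_{\a};X_{1-\a/p})}^{1-\delta}\|f\|_{L^p(X_{1-\a/p})}^{\delta}$ with $\delta=\a/(\a+1)$. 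Your description collapses both steps into one vague "weight-trading ratio", and your check of \eqref{eq:identity1mindeltaphi} in this case is muddled: since $\kappa=\a$ one has $\kappa+1=1+\a$, so $(1-\delta)\phi=\tfrac{p}{1+\a}(\psi-1+\tfrac{1+\a}{p})$ and the bound holds with \emph{equality}; there is no additional "numerology" to verify. Until these arithmetic steps — the constraint equation for $\theta$, the case split inside \eqref{it:interpolationineqMR01}, the two-stage structure of \eqref{it:interpolationineqMR02}, and the clean equality in \eqref{eq:identity1mindeltaphi} — are actually carried out, the argument is incomplete.
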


Note that \eqref{it:interpolationineqMR01}  and \eqref{it:interpolationineqMR02} have a nontrivial overlap since $1-\frac{\kappa}{p}>1-\frac{1+\a}{p}\frac{1+\a}{2+\a}$.

\begin{proof}
By a translation argument we can suppose that $a=0$ and we write $t$ instead of $b$ below.
Let us begin by making some reductions. By Lemma \ref{l:mixed_derivative}\eqref{it:mixed_derivative_H} for all $\theta\in [\theta_0,1)$ we have
$$
\hz^{\theta,p}(\I_t,w_{\a};X_{1-\theta})\cap L^p(\I_t;w_{\a};X_{1})
\hookrightarrow
\hz^{\theta_0,p}(\I_t,w_{\a};X_{1-\theta_0})\cap L^p(\I_t;w_{\a};X_{1}).
$$
Therefore, it suffices to consider $\theta = \theta_0$ in all cases.

\eqref{it:interpolationineqMR01}: First consider $\psi\in (1-\frac{\kappa}{p},1)$. For $\theta\in (0,1-\psi)$ one has $\theta<\a/p<(\a+1)/p$. Setting $\delta:=(1-\theta-\psi)/(\frac{\a}{p}-\theta)\in(0,1]$, we find
\begin{equation}
\label{eq:emb_serrin_1_interpolation_proof}
\begin{aligned}
\|f\|_{L^{\zeta}(\I_t,w_{\a};X_{\psi})} & \stackrel{(i)}{\lesssim} \|f\|_{\hz^{\theta(1-\delta),p}(\I_t,w_{\a(1-\delta)};X_{\psi})}
\stackrel{(ii)}{\lesssim} \|f\|_{\hz^{\theta,p}(\I_t,w_{\a};X_{1-\theta})}^{1-\delta} \|f\|_{L^p(\I_t;X_{1-\frac{\a}{p}})}^{\delta}.
\end{aligned}
\end{equation}
In $(ii)$ we used Lemma \ref{l:mixed_derivative}\eqref{it:mixed_derivative_H}. In $(i)$ we used Proposition \ref{prop:change_p_q_eta_a}\eqref{it:Sob_embedding} with Sobolev exponents
\[\theta(1-\delta) - \frac{\a(1-\delta)+1}{p} = -\Big(\frac{\a}{p}-\theta\Big) (1-\delta) -\frac{1}{p} \stackrel{(a)}{=} -\Big(\psi-1+\frac{\a+1}{p}\Big) \stackrel{(b)}{=}  -\frac{\a+1}{\zeta},\]
where we used $1-\delta=(\psi+\frac{\a}{p}-1)/(\frac{\a}{p}-\theta)$ in (a) and the definition of $\zeta$ in (b). The condition $\frac{\a}{ \zeta}\leq \frac{\a}{p}(1-\delta)$ of Proposition \ref{prop:change_p_q_eta_a}\eqref{it:Sob_embedding} gives the following restriction on the parameter $\theta$:
\begin{equation}
\label{eq:weight_condition_Sob_embeddings}
\frac{1}{1+\a}\Big(\psi-1+\frac{1+\a}{p}\Big)\leq
\frac{1}{p}\frac{\psi-1+\frac{\a}{p}}{\frac{\a}{p}-\theta}.
\end{equation}
One can check that \eqref{eq:weight_condition_Sob_embeddings} is satisfied with strict inequality for $\theta = 1-\psi$ (since $\psi<1$), and \eqref{eq:weight_condition_Sob_embeddings} is not satisfied for $\theta=0$. Therefore, by continuity and linearity there is a unique $\theta\in (0,1-\psi)$ such that equality holds in \eqref{eq:weight_condition_Sob_embeddings}.
Now \eqref{eq:emb_serrin_1_interpolation_proof} implies \eqref{eq:interpolation_inequality_serrin_zeta} with $\phi=1$, and
\begin{equation*}
1-\delta=\frac{\big(\psi-1+\frac{\a}{p}\big)}{\frac{\a}{p}-\theta}\stackrel{\eqref{eq:weight_condition_Sob_embeddings}}{=}
\frac{p}{1+\a}\Big(\psi-1+\frac{1+\a}{p}\Big)
\end{equation*}
which coincides with the choice of $\delta$ in \eqref{it:interpolationineqMR01}.

Next consider $\psi\in (1-\frac{1+\a}{p}\frac{1+\a}{2+\a},1-\frac{\kappa}{p})$. Then $1-\psi>\kappa/p$ and we can apply the same argument starting with $\theta\in (1-\psi,(1+\kappa)/p)$ and setting $\delta:=(\theta+\psi-1)/(\theta-\frac{\a}{p})$.
Now the following variant of \eqref{eq:weight_condition_Sob_embeddings} has to be considered
\begin{equation}\label{eq:weight_condition_Sob_embeddingsalt}
\frac{1}{1+\a}\Big(\psi-1+\frac{1+\a}{p}\Big)\leq
\frac{1}{p}\frac{1-\psi-\frac{\a}{p}}{\theta - \frac{\a}{p}}.
\end{equation}
This time \eqref{eq:weight_condition_Sob_embeddingsalt} is satisfied with strict inequality for $\theta = 1-\psi$ (since $\psi<1$), and \eqref{eq:weight_condition_Sob_embeddingsalt} is not satisfied for $\theta=\frac{1+\a}{p}$ (since $\psi>1-\frac{1+\a}{p}\frac{1+\a}{2+\a}$). Therefore, there is a unique $\theta\in (1-\psi,\frac{1+\a}{p})$ such that equality holds in \eqref{eq:weight_condition_Sob_embeddingsalt}. The rest of the proof is identical to \eqref{it:interpolationineqMR01}.

The case $\psi = 1-\frac{\a}{p}$ is contained in \eqref{it:interpolationineqMR02} and will be proved below.

\eqref{it:interpolationineqMR02}: First consider $\psi\in (1-\frac{1+\a}{p},1-\frac{\a}{p})$. Here we use a two step interpolation.
By real interpolation \cite[Theorems 3.5.3 and 4.7.1]{BeLo} and the definition of $\phi$ we obtain
\begin{equation}
\label{eq:Xap_X_phi_interpolation}
\begin{aligned}
(\Xap,X_{1-\frac{\a}{p}})_{\phi,1}& \hookrightarrow \Big((X_0, X_1)_{1-\frac{1+\a}{p},p},(X_0, X_1)_{1-\frac{\a}{p},\infty}\Big)_{\phi,1}
\\ & = (X_0,X_1)_{\psi,1} \hookrightarrow X_{\psi},
\end{aligned}
\end{equation}
and hence $\|x\|_{X_{\psi}}\lesssim \|x\|_{\Xap}^{1-\phi}\|x\|_{X_{1-\frac{\a}{p}}}^{\phi}$ for all $x\in X_{1-\frac{\a}{p}}$. Since $\phi \zeta=p(1+\a)$, the latter estimate implies
\begin{equation}
\label{eq:estimate_Xap_varphi_small_trace_Serrin}
\|f\|_{L^{\zeta}(\I_t;w_{\a};X_{\psi})}\lesssim
\|f\|_{L^{\infty}(\I_t;\Xap)}^{1-\phi}
\|f\|_{L^{p(1+\a)}(\I_t,w_{\a};X_{1-\frac{\a}{p}})}^{\phi}.
\end{equation}
Reasoning as in \eqref{eq:emb_serrin_1_interpolation_proof} we get
\begin{equation}
\begin{aligned}
\label{eq:hapone_embedding_sharp}
\|f\|_{L^{p(\a+1)}(\I_t,w_{\a};X_{1-\frac{\a}{p}})}&\lesssim \|f\|_{\hz^{\frac{\a}{p}(1-\delta),p}(\I_t,w_{\a(1-\delta)};X_{1-\frac{\a}{p}})}
\\ & \lesssim \|f\|_{\hz^{\frac{\a}{p},p}(\I_t,w_{\a};X_{1-\frac{\a}{p}})}^{1-\delta} \|f\|_{L^p(\I_t;X_{1-\frac{\a}{p}})}^{\delta},
\end{aligned}
\end{equation}
where for the Sobolev embedding of Proposition \ref{prop:change_p_q_eta_a}\eqref{it:Sob_embedding} we used
\[\frac{\a}{p}(1-\delta) - \frac{\a(1-\delta)+1}{p} = -\frac{1}{p}= -\frac{\a+1}{p(\a+1)},\]
and
$\frac{\a}{p(\a+1)} = \frac{\a}{p}(1-\delta)$ (since $\delta = \frac{\a}{\a+1}$). Combining \eqref{eq:estimate_Xap_varphi_small_trace_Serrin} and \eqref{eq:hapone_embedding_sharp} gives \eqref{eq:interpolation_inequality_serrin_zeta}.

Finally, the case $\psi=1-\frac{\a}{p}$ of \eqref{it:interpolationineqMR02} follows from \eqref{eq:hapone_embedding_sharp} with $\phi=1$ and $\delta$ as before.

\eqref{it:interpolationineqMR03}: This follows in a similar way as in \eqref{eq:Xap_X_phi_interpolation} and \eqref{eq:estimate_Xap_varphi_small_trace_Serrin}.
\end{proof}

\begin{proof}[Proof of Theorem \ref{thm:semilinear_blow_up_Serrin}\eqref{it:blow_up_semilinear_serrin_Pruss_modified}]
As usual, we set $s=0$ and we mainly focus on the case $p>2$ as the case $p=2$ follows from Theorem \ref{thm:semilinear_blow_up_Serrin}\eqref{it:blow_up_stochastic_semilinear}, cf.\ Remark \ref{r:blow_up_semilinear_X_norm}\eqref{it:case_p_2_simpler}.
For the reader's convenience we split the proof into several steps. In Step 1 we apply Lemma \ref{lem:interpolationineqMR0} to obtain interpolation inequalities. In Step 2 we set-up the proof by contradiction and in Step 3 we derive the contradiction. Recall that from Theorem \ref{t:local_s} we obtain that a.s.\ for all $\theta\in [0,\frac{1}{2})$ and $0\leq a<b<\sigma$,
\begin{align}\label{eq:regublowup}
u\in H^{\theta,p}(a,b,w_{\a}^a;X_{1-\theta}) \cap C([a,b];\Xap).
\end{align}
In case $a>0$, we also used Proposition \ref{prop:change_p_q_eta_a}\eqref{it:loc_embedding}.

\textit{Step 1: Interpolation inequalities}.
Since $\varphi^{\star}_j, \beta^{\star}_j\in (1-\frac{1+\a}{p},1)$ and
\begin{equation}
\label{eq:parameters_serrin_integrability_X_space}
 \rho_j p \xi'_j=\frac{1+\a}{(\varphi_j^{\star}-1+\frac{1+\a}{p})}
\ \ \text{ and }\ \
p \xi_j = \frac{1+\a}{(\beta^{\star}_j-1+\frac{1+\a}{p})},
\end{equation}
the exponents $\rho_j p \xi'_j$ and $p\xi_j$ satisfy the conditions of Lemma \ref{lem:interpolationineqMR0}. Therefore, there exist $\theta\in [0,\frac{1+\a}{p})$ and $C>0$ such that for all $j\in \{1,\dots,m_F+m_G\}$ there are $\phi_{1,j},\phi_{2,j},\delta_{1,j},\delta_{2,j}\in (0,1]$ such that a.s.\ for all $0\leq a<b<\sigma$
\begin{align}
\label{eq:interpolation_inequality_serrin_1}
\|u\|_{L^{\rho_j p \xi_j'} (a,b,w_{\a}^{a};X_{\varphi^{\star}_j})}
&\leq C\|u\|_{L^{\infty}(a,b;\Xap)}^{1-\phi_{1,j}}
\|u\|_{\Sz^{\theta,\a}(a,b)}^{(1-\delta_{1,j})\phi_{1,j}}
\|u\|_{L^p(a,b;X_{1-\frac{\a}{p}})}^{\delta_{1,j} \phi_{1,j}},\\
\label{eq:interpolation_inequality_serrin_2}
\|u\|_{L^{p \xi_j} (a,b,w_{\a}^{a};X_{\beta^{\star}_j})}
&\leq C\|u\|_{L^{\infty}(a,b;\Xap)}^{1-\phi_{2,j}}
\|u\|_{\Sz^{\theta,\a}(a,b)}^{(1-\delta_{2,j})\phi_{2,j}}
\|u\|_{L^p(a,b;X_{1-\frac{\a}{p}})}^{\delta_{2,j} \phi_{2,j}},
\end{align}
Moreover, by \eqref{eq:star_varphi_beta_j_equality_proof_serrin} and \eqref{eq:identity1mindeltaphi}, $\rho_j \phi_{1,j} (1-\delta_{1,j})+\phi_{2,j}(1-\delta_{2,j})\leq 1$. Note that in
\eqref{eq:interpolation_inequality_serrin_1} and \eqref{eq:interpolation_inequality_serrin_2} we used \eqref{eq:regublowup} and \eqref{eq:LMVresult}.

\textit{Step 2: Setting up the proof by contradiction}.
By contradiction we assume that $\P(\W)>0$ where
\begin{equation}
\label{eq:W_semilinear_pruss_modified}
\W:=\Big\{\sigma<T,\,\sup_{t\in [0,\sigma)}\|u(t)\|_{\Xap}<\infty,
\,\|u\|_{L^p(\I_{\sigma};X_{1-\frac{\a}{p}})}<\infty\Big\}.
\end{equation}
By Egorov's theorem and the fact that $\sigma>0$ a.s., there exist $\eta>0$, $M\geq 1$, $\F_{\sigma}\ni \V\subseteq \W$ such that $\P(\V)>0$, $\sigma>\eta$ a.s. on $\V$,
\begin{equation}
\label{eq:equal_zero_by_limit_serrin_modified}
\sup_{\V}\sup_{t\in [0,\sigma)}\|u(t)\|_{\Xap}\leq M,\ \ \text{ and }\ \
\lim_{n\to \infty} \sup_{\V}\|u\|_{L^{p}(\sigma_n,\sigma;X_{1-\frac{\a}{p}})}=0.
\end{equation}
Reasoning as in the proof of Theorem \ref{t:blow_up_criterion}\eqref{it:blow_up_norm_general_case_F_c_G_c},
employing Proposition \ref{prop:predictability}, there exist a sequence of stopping times $(\sigma_n')_{n\geq 1}$ and a set $\F_{\sigma}\ni\V'\subseteq \V$ with positive measure such that $\sigma_n\leq \sigma_n'$, $\sigma_n'\geq \eta$ a.s., and $\sigma_n'<\sigma$ a.s.\ on $\V'$ for all $n\geq 1$. Finally, by \eqref{eq:equal_zero_by_limit_serrin_modified} and the fact that $\sigma_n'\geq \sigma_n$, for all $\varepsilon>0$ there exists an $N(\varepsilon)>0$ such that
\begin{equation}
\label{eq:uniform_bound_X_phi_serrin_modified}
\sup_{t\in [0,\sigma)}\|u(t)\|_{\Xap}\leq M,\ \ \text{ and }\ \
\|u\|_{L^p(\sigma_{N(\varepsilon)}',\sigma;X_{1-\frac{\a}{p}})}\leq \varepsilon,\ \ \text{ a.s.\ on }\V'.
\end{equation}

\textit{Step 3: In this step we prove the desired contradiction}.
We begin by partially repeating the argument used in Step 2 in the proof of Theorem \ref{t:blow_up_criterion}\eqref{it:blow_up_norm_general_case_F_c_G_c}. Let $\theta$ and $M,\eta$ be as in Steps 1 and 2, respectively.
By Assumption \ref{H_a_stochnew} there exists a $K_{M,\eta}>0$ such that for all $t\in (\eta,T)$ one has
\begin{equation}
\label{eq:costant_stochastic_maximal_regularity_Serrin_constant_initial_t}
\max\{ K^{\deter,\theta,p,\a}_{(A,B)}(t,T),K^{\stoc,\theta,p,\a}_{(A,B)}(t,T)\}\leq K_{M,\eta}.
\end{equation}
Since $\sigma'_n$ takes values in a finite set, \eqref{eq:costant_stochastic_maximal_regularity_Serrin_constant_initial_t} and Proposition \ref{prop:start_at_sigma_random_time} imply that $(A,B)\in \mathcal{SMR}^{\bullet}_{p,\a}(\sigma_n',T)$ and
\begin{equation}
\label{eq:costant_stochastic_maximal_regularity_Serrin}
\max\{ K^{\deter,\theta,p,\a}_{(A,B)}(\sigma_n',T),
K^{\stoc,\theta,p,\a}_{(A,B)}(\sigma_n',T)\}\leq K_{M,\eta},
\quad \text{ for all }n\geq 1 .
\end{equation}

For notational convenience, for each $\varepsilon>0$ we set $\stopp_{\varepsilon}:=\sigma_{N(\varepsilon)}$ and $\stopp_{\varepsilon}':=\sigma_{N(\varepsilon)}'$.
For each $\ell\geq 1$, let us define the following stopping time
\begin{equation}
\begin{aligned}
\label{eq:tau_varepsilon_j}
\tau_{\varepsilon,\ell}:=\inf\big\{t\in [\stopp_{\varepsilon}',\sigma)
\,
&:\,
\|u\|_{L^p(\stopp_{\varepsilon}',t;X_{1-\frac{\a}{p}})}\geq \varepsilon,
\\
&\;\;
\|u(t)\|_{\Xap}\geq M,\,
\,\|u\|_{\X(\stopp_{\varepsilon}',t)\cap L^p(\stopp_{\varepsilon}',t,w_{\a};X_1)}\geq \ell\big\},
\end{aligned}
\end{equation}
on $\{\stopp_{\varepsilon}'<\sigma\}$ and $
\tau_{\varepsilon,\ell}=\stopp_{\varepsilon}'$ on $\{\stopp_{\varepsilon}'\geq \sigma\}$. Due to \eqref{eq:uniform_bound_X_phi_serrin_modified}, $\lim_{\ell \to \infty}\tau_{\varepsilon,\ell}=\sigma$ a.s.\ on $\V'$. Next, fix $L\geq 1$ so large enough that
\begin{equation}
\label{eq:V_double_prime}
\P(\V'')>0,
 \text{ where }
\V'':=\V'\cap \U,\ \
\U:=\{\sigma>\stopp_{\varepsilon}',\,\|u(\stopp_{\varepsilon}')\|_{\Xap}\leq L\}\in \F_{\stopp_{\varepsilon}'}.
\end{equation}
By \eqref{eq:tau_varepsilon_j}, for all $\varepsilon>0$ and $\ell\geq 1$, we have a.s.\
\begin{equation}
\label{eq:Serrin_proof_u_bounds}
\|u\|_{L^p(\stopp_{\varepsilon}',\tau_{\varepsilon,\ell};X_{1-\frac{\a}{p}})}\leq \varepsilon, \ \ \
 \|u\|_{\X(\stopp_{\varepsilon}',\tau_{\varepsilon,\ell}) \cap L^p(\stopp_{\varepsilon}',\tau_{\varepsilon,\ell},w_{\a};X_1)}\leq \ell, \ \ \
\|u\|_{L^{\infty}(\stopp_{\varepsilon}',\tau_{\varepsilon,\ell};\Xap)}\leq M .
\end{equation}
Combining the last inequality in \eqref{eq:Serrin_proof_u_bounds} and \ref{HFcritical}-\ref{HGcritical}, we get $\|F_{\Tr}(\cdot,u)\|_{X_0}+\|G_{\Tr}(\cdot,u)\|_{\g(H,X_{1/2})}\leq C_{\Tr,M}(1+M)$ a.e.\ on $\ll \stopp_{\varepsilon}',\tau_{\varepsilon,\ell}\rro$. In particular, for some $R_{\Tr,M}>0$ independent of $\ell\geq 1$, we have a.s.
\begin{equation}
\label{eq:F_G_tr_semilinear_Serrin_modified}
\|F_{\Tr}(\cdot,u)\|_{L^p(\stopp_{\varepsilon}',\tau_{\varepsilon,\ell},w_{\a}^{\stopp_{\varepsilon}'};X_0)}+\|G_{\Tr}(\cdot,u)\|_{L^p(\stopp_{\varepsilon}',\tau_{\varepsilon,\ell},w_{\a}^{\stopp_{\varepsilon}'};\g(H,X_{1/2}))}\leq R_{\Tr,M}.
\end{equation}
Finally, we estimate $F_c,G_c$. By \eqref{eq:F_c_G_c_estimate_star} we get, for all $\varepsilon>0$, $\ell\geq 1$ and a.s.
\begin{equation}
\label{eq:F_G_c_bounds_serrin_1}
\begin{aligned}
&\|F_c(\cdot,u)\|_{L^p(\stopp_{\varepsilon}',\tau_{\varepsilon,\ell},w_{\a}^{\stopp_{\varepsilon}'};X_0)}
+
\|G_c(\cdot,u)\|_{L^p(\stopp_{\varepsilon}',\tau_{\varepsilon,\ell},w_{\a}^{\stopp_{\varepsilon}'};\g(H,X_{1/2}))}\\
&\leq
C_{c,M}''\Big[\sum_{j=1}^{m_F+m_G}
(1+\|u\|_{L^{\rho_j p \xi_j'}(\stopp_{\varepsilon}',\tau_{\varepsilon,\ell},w_{\a}^{\stopp_{\varepsilon}'};X_{\varphi^{\star}_j})}^{\rho_j})
\|u\|_{L^{p \xi_j}(\stopp_{\varepsilon}',\tau_{\varepsilon,\ell},w_{\a}^{\stopp_{\varepsilon}'};X_{\beta^{\star}_j})}+1\Big].
\end{aligned}
\end{equation}
Fix $j\in \{1,\dots,m_F+m_G\}$.
From \eqref{eq:interpolation_inequality_serrin_1} and \eqref{eq:interpolation_inequality_serrin_2} we get a.s.\
\begin{equation}
\label{eq:F_G_c_bounds_serrin_2}
\begin{aligned}
&\|u\|_{L^{\rho_j p \xi_j'}(\stopp_{\varepsilon}',\tau_{\varepsilon,\ell},w_{\a}^{\stopp_{\varepsilon}'};X_{\varphi_j^{\star}})}^{\rho_j}
\|u\|_{L^{p \xi_j}(\stopp_{\varepsilon}',\tau_{\varepsilon,\ell},w_{\a}^{\stopp_{\varepsilon}'};X_{\beta_j^{\star}})}\\
&\leq C \one_{\stopp_{\varepsilon}'<\tau_{\varepsilon,\ell}} \|u\|_{L^{\infty}(\stopp_{\varepsilon}',\tau_{\varepsilon,\ell};\Xap)}^{\rho_j (1-\phi_{1,j})+(1-\phi_{2,j})}
\|u\|_{\Sz^{\theta,\a}(\stopp_{\varepsilon}',\tau_{\varepsilon,\ell})}^{\rho_j \phi_{1,j}(1-\delta_{1,j})+\phi_{2,j}(1-\delta_{2,j})}
\|u\|_{L^{p}(\stopp_{\varepsilon}',\tau_{\varepsilon,\ell};X_{1-\frac{\a}{p}})}^{\rho_j \delta_{1,j}\phi_{1,j}+\delta_{2,j}\phi_{2,j}}\\
&\stackrel{(i)}{\leq} C_{M,j}\Upsilon_j(\varepsilon)
\|u\|_{\Sz^{\theta,\a}(\stopp_{\varepsilon}',\tau_{\varepsilon,\ell})}^{\rho_j \phi_{1,j}(1-\delta_{1,j})+\phi_{2,j}(1-\delta_{2,j})} \\
&\stackrel{(ii)}{\leq} R_{M,j} + C_{M,j} \|u\|_{\Sz^{\theta,\a}(\stopp_{\varepsilon}',\tau_{\varepsilon,\ell})} \Upsilon_j (\varepsilon),
\end{aligned}
\end{equation}
where $R_{M,j}>0$, $\Upsilon_j \in C([0,\infty))$ are independent of $\ell\geq 1$ and $\lim_{\varepsilon\downarrow 0} \Upsilon_j(\varepsilon)=0$ (since $\delta_{k,j}\phi_{k,j}>0$) and in $(i)$ we used the first and the last inequality in \eqref{eq:Serrin_proof_u_bounds} and in $(ii)$ the Young's inequality and the fact that $\rho_j \phi_{1,j}(1-\delta_{1,j})+\phi_{2,j}(1-\delta_{2,j})\leq 1$. Using the same argument one can provide a similar estimate for $\|u\|_{L^{p \xi_j}(\stopp_{\varepsilon}',\tau_{\varepsilon,\ell},w_{\a}^{\stopp_{\varepsilon}'};X_{\beta^{\star}_j})}$.
Thus, the latter and \eqref{eq:F_G_c_bounds_serrin_1}-\eqref{eq:F_G_c_bounds_serrin_2} yield, a.s.\ for all $\ell\geq 1$,
\begin{multline}
\label{eq:F_G_c_bounds_serrin}
\|F_c(\cdot,u)\|_{L^p(\stopp_{\varepsilon}',\tau_{\varepsilon,\ell},w_{\a}^{\stopp_{\varepsilon}'};X_0)}
+
\|G_c(\cdot,u)\|_{L^p(\stopp_{\varepsilon}',\tau_{\varepsilon,\ell},w_{\a}^{\stopp_{\varepsilon}'};\g(H,X_{1/2}))}\\
\leq R + \Upsilon(\varepsilon) \|u\|_{\Sz^{\theta,\a}(\stopp_{\varepsilon}',\tau_{\varepsilon,\ell})}
\end{multline}
where  $R>0$ and $\Upsilon\in C([0,\infty))$ are independent of $\ell\geq 1$ and $\lim_{\varepsilon\downarrow 0} \Upsilon(\varepsilon)=0$.

To proceed further, note that by \eqref{eq:costant_stochastic_maximal_regularity_Serrin_constant_initial_t} there exists a constant $\wt{K}_{M,\eta}>0$ independent of $\varepsilon>0$ such that \eqref{eq:MaxRegu_0random} and Proposition \ref{prop:start_at_s}\eqref{it:start_at_s4} holds with $C$, $(A,B)$ replaced by $\wt{K}_{M,\eta}$, $(A|_{\ll \sigma'_n,T\rr},B|_{\ll \sigma'_n,T\rr})$, respectively.
Let $\varepsilon^*>0$ be such that
\begin{equation}
\label{eq:choice_varepsilon_star_serrin_semilinear}
\wt{K}_{M,\eta} \Upsilon(\varepsilon^*)\leq \frac{1}{4}
\end{equation}
and set $\tau_{*,\ell}:=\tau_{\varepsilon^*,\ell}$, $\stopp_{*}':=\stopp_{\varepsilon^*}'$ and $\Sol_{*}:=\Sol_{\stopp_{\varepsilon^*}',(A,B)}$ (see \eqref{eq:soloperatorR2}). Fix $\ell\geq 1$ an recall that $F_c=F_{\Tr}+F_c$, $G=G_{\Tr}+G_c$. Thus, by \eqref{eq:F_G_tr_semilinear_Serrin_modified} and combining the second and the second estimate of \eqref{eq:Serrin_proof_u_bounds} with Lemma \ref{l:F_G_bound_N_C_cn} we get
\begin{equation}
\begin{aligned}
\label{eq:integrability_nonlinearities_proof_Pruss}
\one_{[\stopp_{*}',\tau_{*,\ell}]\times \U}F (\cdot,u)&\in L^p_{\Progress}(\O;L^p(\stopp_{*}',\tau_{*,\ell},w_{\a}^{\stopp_{*}'};X_0)),\\
\one_{[\stopp_{*}',\tau_{*,\ell}]\times \U}G(\cdot,u)&\in L^p_{\Progress}(\O;L^p(\stopp_{*}',\tau_{*,\ell},w_{\a}^{\stopp_{*}'};\g(H,X_{1/2}))),
\end{aligned}
\end{equation}
with norms depending possibly on $\ell\geq  1$.
To get the desired contradiction we need to prove an estimate which is uniform in $\ell\geq 1$.

To this end, recall that $\|u(\stopp_{*}')\|_{\Xap}\leq L$ a.s.\ on $\U$ by \eqref{eq:V_double_prime}.
Reasoning as in the proof of Theorem \ref{t:blow_up_criterion} (see \eqref{eq:equation_start_at_u_N}-\eqref{eq:u_sum_v}), by Proposition \ref{prop:causality_phi_revised_2}, the second estimate of \eqref{eq:Serrin_proof_u_bounds} and \eqref{eq:integrability_nonlinearities_proof_Pruss} one has, a.e.\ on $[\stopp_{*}',\tau_{*,\ell}]\times \U$,
\begin{equation}
\label{eq:representation_formula_u_serrin_proof}
u=\Sol_{*}(\one_{\U}u(\stopp_{*}'),\one_{[ \stopp_{*}',\tau_{*,\ell}]\times\U} F(\cdot,u),
\one_{[ \stopp_{*}',\tau_{*,\ell}]\times\U} G(\cdot,u)).
\end{equation}
Thus, by \eqref{eq:F_G_tr_semilinear_Serrin_modified}, \eqref{eq:F_G_c_bounds_serrin} with $\varepsilon=\varepsilon^*$ and Proposition \ref{prop:start_at_s} applied with $\theta$ as in Step 1, for all $\ell\geq 1$ we obtain
\begin{equation*}
\begin{aligned}
&\|u\|_{L^p(\U;\Sz^{\theta,\a}(\stopp'_{*},\tau_{*,\ell}))}\\
&\qquad
\leq
\|\Sol_{*}
(\one_{\U}u(\stopp_{*}'),\one_{[ \stopp_{*}',\tau_{*,\ell}]\times\U} F(\cdot,u),
\one_{[ \stopp_{*}',\tau_{*,\ell}]\times\U} G(\cdot,u))
\|_{L^p(\O;\Sz^{\theta,\a}(\stopp_{*}',T))}\\
&\qquad
\leq
  2\wt{K}_{M,\eta}\big( L+R_{\Tr,M}
  +
\| F_c(\cdot,u)\|_{L^p((\stopp_{*}',\tau_{*,\ell})\times \U,w_{\a}^{\stopp_{*}'};X_0)}\\
&\qquad \qquad \qquad\qquad \qquad \qquad +
\| G_c(\cdot,u)\|_{L^p((\stopp_{*}',\tau_{*,\ell})\times \U,w_{\a}^{\stopp_{*}'};\g(H,X_{1/2}))}\big)\\
&\qquad
\leq 2K_{M,\eta,L}+\frac{1}{2} \|u\|_{L^p(\U;\Sz^{\theta,\a}(\stopp_{*}',\tau_{*,\ell}))},
\end{aligned}
\end{equation*}
where $ K_{M,\eta,L}$ does not depend on $\ell\geq 1$ and in the last estimate we used the choice of $\varepsilon^*$ in \eqref{eq:choice_varepsilon_star_serrin_semilinear}. Let us stress that $L^p(\U)$-norms in the previous inequality are well-defined due to the measurability result \cite[Lemma 2.15]{AV19_QSEE_1} and the fact that $\stopp_{\varepsilon}'$ takes values in a finite set.

Therefore, $\|u\|_{L^p(\U;\Sz^{\theta,\a}(\stopp_{*}',\tau_{*,\ell}))}\leq C$ for a constant $C$ independent of $\ell\geq 1$. Since $\lim_{\ell\to \infty}\tau_{*,\ell}=\sigma$ a.s.\ on $\V'$, by \eqref{eq:F_G_c_bounds_serrin} with $\varepsilon=\varepsilon^*$, by Fatou's lemma we get (recall that $\V''=\V'\cap \U$)
\begin{multline*}
\|F_c(\cdot,u)\|_{L^p((\stopp_{*}',\sigma)\times \V'',w_{\a}^{\stopp_{*}'};X_0)}
+
\|G_c(\cdot,u)\|_{L^p((\stopp_{*}',\sigma)\times \V'',w_{\a}^{\stopp_{*}'};\g(H,X_{1/2}))}\\
\leq \sup_{\ell\geq 1} \Big[
\|F_c(\cdot,u)\|_{L^p((\stopp_{*}',\tau_{*,\ell})\times \V'',w_{\a}^{\stopp_{*}'};X_0)}
+
\|G_c(\cdot,u)\|_{L^p((\stopp_{*}',\tau_{*,\ell})\times \V'',w_{\a}^{\stopp_{*}'};\g(H,X_{1/2}))}\Big]
<\infty.
\end{multline*}
Combining this with \eqref{eq:F_G_tr_semilinear_Serrin_modified}, we have
$
\mathcal{N}^{\a}(u;\stopp_{*}',\sigma)<\infty$
a.s.\ on $\V''$
(see \eqref{eq:L_p_norm_nonlinearity2} for $\mathcal{N}^{\a}$). The former and $\sigma<T$ a.s.\ on $\W\supseteq \V''$ imply
\begin{equation*}
\begin{aligned}
\P(\V'')
&=\P\Big(\V''\cap \big\{\sigma<T,\,
\mathcal{N}^{\a}(u;\stopp_{*}',\sigma)<\infty\big\}\Big)\\
&=\P\Big(\V''\cap \big\{\sigma<T,\,
\mathcal{N}^{\a}(u;0,\sigma)<\infty\big\}\Big)
\leq \P\Big(\sigma<T,\,\mathcal{N}^{\a}(u;0,\sigma)<\infty\Big)
=0,
\end{aligned}
\end{equation*}
where in the last equality we used Theorem \ref{thm:semilinear_blow_up_Serrin}\eqref{it:blow_up_stochastic_semilinear}. The previous yields the desired contradiction with \eqref{eq:V_double_prime}.
\end{proof}

The proof of Theorem \ref{t:blow_up_criterion}\eqref{it:blow_up_norm_general_case_quasilinear_Xap_Pruss} combines the argument used above and the one used in the Step 2 in the proof of Theorem \ref{t:blow_up_criterion}\eqref{it:blow_up_norm_general_case_F_c_G_c}.

\begin{proof}[Proof of Theorem \ref{t:blow_up_criterion}\eqref{it:blow_up_norm_general_case_quasilinear_Xap_Pruss}]
As usual $s=0$ and we prove the claim by contradiction. Assume that $\P(\W)>0$ where
$$
\W:=\Big\{\sigma<T,\,\lim_{t\uparrow \sigma} u(t)\text{ exists in }\Xap,\,\|u\|_{L^{p}(0,\sigma;X_{1-\frac{\a}{p}})}<\infty\Big\}.
$$
Reasoning as in the proof of Theorem \ref{t:blow_up_criterion}\eqref{it:blow_up_norm_general_case_F_c_G_c} there exist  $\eta,M>0$, a sequence of stopping times $(\sigma_n')_{n\geq 1}$ taking values in a finite set and $\V'\in \F_{\sigma}$ such that $\eta\leq \sigma_n'<\sigma$ a.s.\ on $\V'$ and for each $\varepsilon>0$ there exists an $N(\varepsilon)\geq 1$
$$
\|u\|_{C(\overline{I}_{\sigma};\Xap)}\leq M, \ \
\sup_{t\in [\sigma'_{N(\varepsilon)},\sigma]}\|u(t)-u(\sigma)\|_{\Xap}<\varepsilon
 \ \text{ and } \
\|u\|_{L^{p}(\sigma_{N(\varepsilon)}',\sigma;X_{1-\frac{\a}{p}})}<\varepsilon.
$$
where $u(\sigma):=\lim_{t\uparrow \sigma} u(t)$ a.s.\ on $\W$.

For each $\varepsilon>0$, $\ell\geq 1$ define $\tau_{\varepsilon,\ell}:=\sigma_{N(\varepsilon)}'$ on $\{\sigma_{N(\varepsilon)}'\geq \sigma\}$ and on $\{\sigma_{N(\varepsilon)}'< \sigma\}$ as
\begin{equation*}
\begin{aligned}
\tau_{\varepsilon,\ell}:=\inf\Big\{t\in [\sigma_{N(\varepsilon)}',\sigma)\,:\,&\|u(t)-u(\sigma_{N(\varepsilon)}')\|_{\Xap} \geq 2\varepsilon, \,  \\
&\|u\|_{\X(\sigma_{N(\varepsilon)}',t)\cap L^p(\sigma_{N(\varepsilon)}',t,w_{\a},X_1)}\geq \ell,\\
&\|u\|_{L^{p}(\sigma_{N(\varepsilon)}',t;X_{1-\frac{\a}{p}})}\geq \varepsilon,\,\|u\|_{C([0,t];\Xap)}\geq M\Big\}.
\end{aligned}
\end{equation*}
Choose $\varepsilon^*>0$ such that the condition in the proof of Theorem \ref{t:blow_up_criterion}\eqref{it:blow_up_norm_general_case_F_c_G_c} (see the text before \eqref{eq:definition_psi_proof_general_quasilinear_NFG}) and \eqref{eq:choice_varepsilon_star_serrin_semilinear} both hold. At this point, one can repeat the estimates of $u$ using the splitting in \eqref{eq:u_sum_v} with $\tau_{\varepsilon}$ replaced by $\tau_{\varepsilon^*,\ell}$. Note that $I-III$ (see \eqref{eq:u_sum_v}) can be estimated as in Theorem \ref{t:blow_up_criterion}\eqref{it:blow_up_norm_general_case_F_c_G_c} and the remaining terms as in the proof of Theorem \ref{thm:semilinear_blow_up_Serrin}\eqref{it:blow_up_semilinear_serrin_Pruss_modified} above. The claim follows similar to Theorem \ref{thm:semilinear_blow_up_Serrin}\eqref{it:blow_up_semilinear_serrin_Pruss_modified} by contradiction with Theorem \ref{t:blow_up_criterion}\eqref{it:blow_up_norm_general_case_F_c_G_c}.
\end{proof}

It remains to prove Theorem \ref{thm:semilinear_blow_up_Serrin_refined}.
As announced in Subsection \ref{ss:main_result_blow_up}, we prove a generalization of Theorem \ref{thm:semilinear_blow_up_Serrin_refined}, where we do not require $\varphi_j = \beta_j$. An example of such a situation is provided by stochastic reaction-diffusion equations with gradient nonlinearities, see \cite[Subsection 5.4]{AV19_QSEE_1}.

Recall that $\varphi_j^{\star}$ and $\beta_j^{\star}$ are defined at the beginning of Subsection \ref{ss:thmblowuphardespart_Serrin}.
\begin{proposition}[Serrin type blow-up criteria for semilinear SPDEs - revised]
\label{prop:serrin_Pruss_general_form}
Let the assumptions of Theorem \ref{thm:semilinear_blow_up_Serrin_refined} be satisfied replacing \eqref{eq:condSerrin} by the following condition:
For each $j\in\{1,\dots,m_F+m_G\}$ such that $\rho_j>0$ one of the following is satisfied
\begin{itemize}
\item $\a>0$ and $\beta_j^{\star},\varphi_j^{\star}>1-\frac{1+\a}{p}\frac{1+\a}{2+\a}$;
\item $\a=0$ and $\rho_j\leq 1$.
\end{itemize}
Then the $L^p_{\a}$-maximal local solution $(u,\sigma)$ to \eqref{eq:QSEE} satisfies
\[
\P\big(\sigma<T,\,\|u\|_{L^p(s,\sigma;X_{1-\frac{\a}{p}})}<\infty\big)=0.
\]
\end{proposition}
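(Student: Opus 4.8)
The plan is to mimic the proof of Theorem~\ref{thm:semilinear_blow_up_Serrin}\eqref{it:blow_up_semilinear_serrin_Pruss_modified}, the essential new difficulty being that one is no longer allowed to assume an a priori bound on $\sup_{t\in[s,\sigma)}\|u(t)\|_{\Xap}$. After the standard reductions --- translating to $s=0$, treating $p>2$ (the Hilbert case $p=2$, $\a=0$ running in parallel), and reducing to $u_0$ bounded and $f,g\in L^p(\I_T\times\O,w_\a;\cdot)$ by the argument of Proposition~\ref{prop:redblowbounded} --- one argues by contradiction: assume $\P(\W)>0$ where $\W=\{\sigma<T,\ \|u\|_{L^p(0,\sigma;X_{1-\frac{\a}{p}})}<\infty\}$. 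Exactly as in the cited proof, Egorov's theorem, the fact that $\sigma>0$ a.s., Proposition~\ref{prop:predictability} (whose hypotheses hold here via Proposition~\ref{prop:change_initial_time} and Assumption~\ref{ass:FG_a_zero}) and Lemma~\ref{l:stopping_k} produce $\eta>0$, a set $\F_\sigma\ni\V'\subseteq\W$ with $\P(\V')>0$ and $\sigma>\eta$ on $\V'$, and a sequence of finite-valued stopping times $(\sigma_n')_{n\geq1}$ with $\sigma_n'\geq\eta$, $\sigma_n'<\sigma$ a.s.\ on $\V'$, such that for every $\varepsilon>0$ there is $N(\varepsilon)$ with $\|u\|_{L^p(\sigma_{N(\varepsilon)}',\sigma;X_{1-\frac{\a}{p}})}\leq\varepsilon$ on $\V'$ (this uses only $\|u\|_{L^p(0,\sigma;X_{1-\frac{\a}{p}})}<\infty$ on $\W$, not any $\Xap$-bound). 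Writing $\lambda_\varepsilon':=\sigma_{N(\varepsilon)}'$ one fixes $L$ with $\P(\V'')>0$, $\V'':=\V'\cap\{\|u(\lambda_\varepsilon')\|_{\Xap}\leq L\}$, restarts the semilinear equation at $\lambda_\varepsilon'$ with datum $u(\lambda_\varepsilon')$, and localizes by stopping times $\tau_{\varepsilon,\ell}$ bounding the $L^p(X_{1-\frac{\a}{p}})$-norm (by $\varepsilon$) and an $\X^{\star}$-type norm (by $\ell$) on $[\lambda_\varepsilon',\cdot]$; by the regularity \eqref{eq:regublowup} of the maximal solution, $\tau_{\varepsilon,\ell}\uparrow\sigma$ a.s.\ on $\V'$ as $\ell\to\infty$, and on $\ll\lambda_\varepsilon',\tau_{\varepsilon,\ell}\rr$ Proposition~\ref{prop:causality_phi_revised_2} gives $u=\Sol_{\lambda_\varepsilon',(\bar A,\bar B)}(u(\lambda_\varepsilon'),\one_{\ll\lambda_\varepsilon',\tau_{\varepsilon,\ell}\rr}F(\cdot,u),\one_{\ll\lambda_\varepsilon',\tau_{\varepsilon,\ell}\rr}G(\cdot,u))$ (recall $F_{\Tr}=G_{\Tr}=0$).

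The heart of the matter is the estimate of $F_c,G_c$, and this is where the two alternatives of the hypothesis enter as the two distinct mechanisms that make the missing $\Xap$-bound harmless. \emph{Case $\a>0$.} By \eqref{eq:star_varphi_beta_j_equality_proof_serrin} and \eqref{eq:parameters_serrin_integrability_X_space} the exponents $\rho_j p\xi_j'$ and $p\xi_j$ are precisely the values of $\zeta$ for which Lemma~\ref{lem:interpolationineqMR0} applies with $\psi=\varphi_j^{\star}$, resp.\ $\psi=\beta_j^{\star}$, and the condition $\beta_j^{\star},\varphi_j^{\star}>1-\frac{1+\a}{p}\frac{1+\a}{2+\a}$ is exactly what places us in case~\eqref{it:interpolationineqMR01} of that lemma, so that $\phi_{1,j}=\phi_{2,j}=1$. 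Hence in the interpolation of $\|u\|^{\rho_j}_{L^{\rho_j p\xi_j'}(w_\a;X_{\varphi_j^\star})}\|u\|_{L^{p\xi_j}(w_\a;X_{\beta_j^\star})}$ the exponent $\rho_j(1-\phi_{1,j})+(1-\phi_{2,j})$ of the factor $\|u\|_{L^\infty(\Xap)}$ vanishes, while $\rho_j(1-\delta_{1,j})+(1-\delta_{2,j})=1$ (again by \eqref{eq:star_varphi_beta_j_equality_proof_serrin}) and $\rho_j\delta_{1,j}+\delta_{2,j}>0$; combining with \eqref{eq:F_c_G_c_estimate_star} one obtains, with $\Sz^{\theta,\a}$ as in \eqref{eq:Sz_def} for $\theta$ as in Lemma~\ref{lem:interpolationineqMR0}, the bound $\|F_c(\cdot,u)\|_{L^p(\lambda_\varepsilon',\tau_{\varepsilon,\ell},w_\a^{\lambda_\varepsilon'};X_0)}+\|G_c(\cdot,u)\|_{L^p(\lambda_\varepsilon',\tau_{\varepsilon,\ell},w_\a^{\lambda_\varepsilon'};\g(H,X_{1/2}))}\leq R+\Upsilon(\varepsilon)\|u\|_{\Sz^{\theta,\a}(\lambda_\varepsilon',\tau_{\varepsilon,\ell})}$, where $R$ is independent of $\ell,\varepsilon$ and $\Upsilon(\varepsilon)\to0$ (the lower-order ``$+1$'' contributions being absorbed by Young's inequality into $R$ plus an arbitrarily small multiple of $\|u\|_{\Sz^{\theta,\a}}$), and \emph{no} $\|u\|_{L^\infty(\Xap)}$ occurs. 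Feeding this into Proposition~\ref{prop:start_at_s}\eqref{it:start_at_s4} (constants uniform because $(\bar A,\bar B)$ is fixed, Assumption~\ref{H_a_stochnew} holds for $\ell=\a$, and $\lambda_\varepsilon'$ is finite-valued) and absorbing the $\Upsilon(\varepsilon)$-term for $\varepsilon$ small gives $\|u\|_{L^p(\V'';\Sz^{\theta,\a}(\lambda_\varepsilon',\tau_{\varepsilon,\ell}))}\leq C$ uniformly in $\ell$. \emph{Case $\a=0$.} Here $\Xap=\Xp$, so one may use the $C([\lambda_\varepsilon',\cdot];\Xp)$-estimate of Proposition~\ref{prop:start_at_s}\eqref{it:start_at_s1}; estimating $F_c,G_c$ directly through the interpolation $\|u(t)\|_{X_{\varphi_j^\star}}\lesssim\|u(t)\|_{\Xp}^{1-\phi_{1,j}}\|u(t)\|_{X_1}^{\phi_{1,j}}$ (and similarly with $\beta_j^\star$) together with \eqref{eq:star_varphi_beta_j_equality_proof_serrin} yields $\|F_c(\cdot,u)\|_{L^p(\lambda_\varepsilon',\tau_{\varepsilon,\ell};X_0)}\lesssim 1+\|u\|_{C([\lambda_\varepsilon',\tau_{\varepsilon,\ell}];\Xp)}^{\rho_j}\|u\|_{L^p(\lambda_\varepsilon',\tau_{\varepsilon,\ell};X_1)}+(\text{lower order})$; since $\rho_j\leq1$, after substituting into the maximal-regularity estimate the $\|u\|_{C(\Xp)}$-term is absorbed by Young's inequality (linearly if $\rho_j=1$, sublinearly if $\rho_j<1$), again giving a bound uniform in $\ell$.

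In either case, the nonlinearity estimate then also yields $\E[\one_{\V''}\nonlinearity^0(u;\lambda_\varepsilon',\tau_{\varepsilon,\ell})^p]\leq C$ uniformly in $\ell$; letting $\ell\to\infty$ and using $\tau_{\varepsilon,\ell}\uparrow\sigma$ a.s.\ on $\V'\supseteq\V''$, Fatou's lemma gives $\nonlinearity^\a(u;\lambda_\varepsilon',\sigma)<\infty$ a.s.\ on $\V''$ (passing from the shifted unweighted intervals to the $w_\a^{s}$-weighted one by Proposition~\ref{prop:change_p_q_eta_a}\eqref{it:loc_embedding}, as $\lambda_\varepsilon'\geq\eta>0$). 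Since $u$ is $\Sz^{\theta,\a}$-regular on each $[0,\sigma_n]$ and $\lambda_\varepsilon'$ is finite-valued with $\lambda_\varepsilon'<\sigma$ on $\V''$, also $\nonlinearity^\a(u;0,\lambda_\varepsilon')<\infty$ a.s.\ there, whence $\nonlinearity^\a(u;0,\sigma)<\infty$ a.s.\ on $\V''$. As $\P(\V'')>0$ and $\V''\subseteq\{\sigma<T\}$, this contradicts Theorem~\ref{thm:semilinear_blow_up_Serrin}\eqref{it:blow_up_stochastic_semilinear}, which proves the claim.

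I expect the bulk of the work to be bookkeeping rather than conceptual: organizing the two independent small parameters --- the localization threshold $\varepsilon$ making the $L^p(X_{1-\frac{\a}{p}})$-tail small, and the Young coefficient absorbing the ``$+1$'' contributions --- so that the coefficient of $\|u\|_{\Sz^{\theta,\a}}$ (resp.\ $\|u\|_{C(\Xp)}$) after substitution into the maximal-regularity estimate is strictly below $1$ while the residual constant stays finite and uniform in $\ell$; and, on a technical level, keeping all norms at the random times $\lambda_\varepsilon',\tau_{\varepsilon,\ell}$ measurable, for which --- as in the proof of Theorem~\ref{thm:semilinear_blow_up_Serrin}\eqref{it:blow_up_semilinear_serrin_Pruss_modified} --- one works with $\lambda_\varepsilon'$ finite-valued and with the $L^p(\O;\cdot)$-spaces of Subsection~\ref{ss:stopping_time}. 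The genuinely new input is the identification of the two mechanisms above: case~\eqref{it:interpolationineqMR01} of Lemma~\ref{lem:interpolationineqMR0} when $\a>0$, and $C([\cdot];\Xp)$-maximal regularity combined with $\rho_j\leq1$ when $\a=0$, which is precisely what the two alternatives in the hypothesis of Proposition~\ref{prop:serrin_Pruss_general_form} encode.
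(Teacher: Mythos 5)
Your proposal is correct and follows the paper's own argument: the paper likewise reduces to a contradiction via the proof of Theorem \ref{thm:semilinear_blow_up_Serrin}\eqref{it:blow_up_semilinear_serrin_Pruss_modified}, invokes Lemma \ref{lem:interpolationineqMR0}\eqref{it:interpolationineqMR01} for $\a>0$ and \eqref{it:interpolationineqMR03} for $\a=0$, and closes via Young's inequality because the combined exponent of $\|u\|_{\Sz^{\theta,\a}\cap L^\infty(\Xap)}$ satisfies $\rho_j[(1-\phi_{1,j})+\phi_{1,j}(1-\delta_{1,j})]+[(1-\phi_{2,j})+\phi_{2,j}(1-\delta_{2,j})]\leq 1$. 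Your case split (for $\a>0$ the $L^\infty(\Xap)$-factor drops out since $\phi_{1,j}=\phi_{2,j}=1$; for $\a=0$ the $\Sz$-factor drops and the $C(\Xp)$-exponent equals $\rho_j\leq1$) is precisely the content of that displayed inequality, just unpacked, and the remaining bookkeeping (finite-valued restart times, uniformity in $\ell$, Fatou, and contradiction via Theorem \ref{thm:semilinear_blow_up_Serrin}\eqref{it:blow_up_stochastic_semilinear}) matches the paper's Step 3.
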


Before we prove Proposition \ref{prop:serrin_Pruss_general_form}, we first show that it implies Theorem \ref{thm:semilinear_blow_up_Serrin_refined}.

\begin{proof}[Proof of Theorem \ref{thm:semilinear_blow_up_Serrin_refined}]
It is enough to check the assumptions of Proposition \ref{prop:serrin_Pruss_general_form}. In case $\a=0$ the assumptions coincide and hence this case is clear.

Next we assume $\a>0$ and fix $j\in \{1,\dots,m_F+m_G\}$. If $\rho_j=0$, then as agreed at the beginning of Subsection \ref{ss:thmblowuphardespart_Serrin} we replaced $\rho_j$ by $\varepsilon_j$, and the corresponding $\varphi_j^{\star},\beta_j^{\star}$ satisfy $\varphi_j^{\star},\beta_j^{\star}>1-\frac{1+\a}{p}\frac{1+\a}{2+\a}$ as assumed in Proposition \ref{prop:serrin_Pruss_general_form}. If $\rho_j>0$, then note that the definition of $\varphi^{\star}_j,\beta^{\star}_j$ and the fact that $\varphi_j=\beta_j$ imply $\varphi^{\star}_j=\beta^{\star}_j=1-\frac{\rho_j}{\rho_j+1}\frac{1+\a}{p}$. Since $\rho_j<1+\a$ is equivalent to $\varphi_j^{\star},\beta^{\star}_j>1-\frac{1+\a}{p}\frac{1+\a}{2+\a}$, the assumptions of Proposition \ref{prop:serrin_Pruss_general_form} are satisfied also in this case.
\end{proof}

\begin{proof}[Proof of Proposition \ref{prop:serrin_Pruss_general_form}]
As usual, we consider $s=0$ and we split the proof into several cases. The proof follows a similar argument as Theorem \ref{thm:semilinear_blow_up_Serrin}\eqref{it:blow_up_semilinear_serrin_Pruss_modified}.

Suppose that $\P(\W)>0$ where
\begin{equation}
\label{eq:W_semilinear_pruss}
\W:=\{\sigma<T,
\|u\|_{L^p(\I_{\sigma};X_{1-\frac{\a}{p}})}<\infty\}.
\end{equation}
Then there exist $\eta,M>0$ and $\F_{\sigma}\ni \V\subseteq \W$ such that $\P(\V)>0$ and
$$
\sigma >\eta
\  \ \ \text{ and } \ \ \
\|u\|_{L^p(\I_{\sigma};X_{1-\frac{\a}{p}})}\leq M \ \ \ \  \text{ a.s.\ on }\V.
$$
Note that in contrast to the proof of Theorem \ref{thm:semilinear_blow_up_Serrin}\eqref{it:blow_up_semilinear_serrin_Pruss_modified} we do not have an $L^{\infty}$-bound for $u$ in the trace space $\Xap$. However, the assumption of Theorem \ref{thm:semilinear_blow_up_Serrin_refined} is that $F_{\Tr}=G_{\Tr}=0$ and $C_{c,n}'$ in \eqref{eq:F_c_G_c_estimate_star} are independent of $n\in\N$.

As in Step 1 of the proof of Theorem \ref{thm:semilinear_blow_up_Serrin}\eqref{it:blow_up_semilinear_serrin_Pruss_modified}, from
Lemma \ref{lem:interpolationineqMR0}\eqref{it:interpolationineqMR01} and \eqref{it:interpolationineqMR03}, it follows that
for all $j\in \{1,\dots,m_F+m_G\}$, \eqref{eq:interpolation_inequality_serrin_1}-\eqref{eq:interpolation_inequality_serrin_2} hold with $\phi_{1,j},\phi_{2,j},\delta_{1,j},\delta_{2,j}\in (0,1]$ such that
\begin{equation}
\label{eq:equation_phi_1_delta_1_phi_2_delta_2}
\rho_j[(1-\phi_{1,j})+\phi_{1,j}(1-\delta_{1,j})]+[(1-\phi_{2,j})+\phi_{2,j}(1-\delta_{2,j})]\leq 1.
\end{equation}
Note that $\phi_{k,j}=1$ if $\a>0$ and $\delta_{k,j}=1$ if $\a=0$, by Lemma  \ref{lem:interpolationineqMR0}.
Therefore, we can extend the proof of Theorem \ref{thm:semilinear_blow_up_Serrin}\eqref{it:blow_up_semilinear_serrin_Pruss_modified} to the present case. Indeed, by
\eqref{eq:equation_phi_1_delta_1_phi_2_delta_2} we can repeat the estimate \eqref{eq:F_G_c_bounds_serrin} replacing the term $\|u\|_{\Sz^{\theta,\a}(\lambda_{\varepsilon},\tau_{\varepsilon,\ell})}$ by $\|u\|_{\Sz^{\theta,\a}(\lambda_{\varepsilon},\tau_{\varepsilon,\ell})\cap L^{\infty}(\lambda_{\varepsilon},\tau_{\varepsilon,\ell};\Xap)}$. After this modification, one can repeat the argument of Step 3 of the proof of Theorem \ref{thm:semilinear_blow_up_Serrin_refined}.
\end{proof}

\section{Instantaneous regularization}
\label{s:regularization}

In this section we study regularization effects for \eqref{eq:QSEE}.
We present a space and time regularity result for the solution to \eqref{eq:QSEE}. We already saw that due to the presence of non-trivial weights one can obtain instantaneous regularization of solutions (see Theorem \ref{t:local_s}\eqref{it:regularity_data_L0}). Below we state conditions under which regularity can be bootstrapped for any positive time.

There are three main results. In Theorem \ref{t:regularization_z} we present a general iteration scheme to bootstrap regularity in time and space. In Corollary \ref{cor:regularization_X_0_X_1} we specialize to time regularity. In both results weights play an essential role. Finally, in Proposition \ref{prop:adding_weights} we present a result which allows to introduce weights after starting from an unweighted situation. The latter is important in several interesting situations.
An example where this occurs will be given in Section \ref{s:1D_problem} (see Roadmap \ref{roadcriticalreg} for a simple explanation).

\subsection{Assumptions}
Below we state abstract conditions which can be used for bootstrapping arguments. We first present our main assumptions on the spaces $Y_1\hookrightarrow Y_0$ in which we bootstrap regularity.

\begin{assumption}\label{assum:HY}
Suppose that Hypothesis \hyperref[H:hip]{$\Hip$} is satisfied.
Let $Y_0$ and $Y_1$ be UMD Banach spaces with type $2$, and such that $Y_1\hookrightarrow Y_0$ densely. Let either $r=2$ and $\alpha=0$, or $r\in (2, \infty)$ and $\alpha\in [0,\frac{r}{2}-1)$.  We say that hypothesis {\em \textbf{H}}$(Y_0, Y_1, r,\alpha)$ holds if
\begin{enumerate}[{\rm(1)}]
\item\label{it:assum_Yi_Xi} $X_0$ and $Y_0$ are compatible, and $Y_1\cap X_1 \hookrightarrow Y_1$ is dense;
\item\label{it:assum_Y0_Y1_AY} There exist maps $A_{Y}:\ll s,T\rr \times \Yaaar \to \calL(Y_1,Y_0)$, $F_Y:\ll s,T\rr \times Y_1\to Y_0$, $B_{Y}:\ll s,T\rr \times \Yaaar\to \calL(Y_1,\g(H,Y_{1/2}))$ and $G_Y:\ll s,T\rr \times Y_1\to \g(H,Y_{1/2})$ such that a.s.\ for all $t\in (s,T)$,
\begin{equation*}
\begin{aligned}
A_{Y}(t,z)v&=A(t,z)v, &\qquad  B_{Y}(t,z)v&=B(t,z)v,
\\ F_Y(t,v)&=F(t,v), &\qquad G_Y(t,v)&=G(t,v),
\end{aligned}
\end{equation*}
for all $z,v\in X_1\cap Y_1$. Moreover, the following hold:
\begin{itemize}
\item $A_{Y},B_{Y}$ verify \emph{\ref{HAmeasur}} with $(X_0,X_1,p,\a)$ replaced by $(Y_0,Y_1,r,\aaa)$;
\item $F_Y,G_Y$ satisfy \emph{\ref{HFcritical}-\ref{HGcritical}} with $(X_0,X_1,p,\a)$ replaced by $(Y_0,Y_1,r,\aaa)$ and (possibly) different parameters $(\wt{\rho}_j,\wt{\varphi}_j,\wt{\beta}_j,\wt{m}_F,\wt{m}_G)$.
\end{itemize}
\item\label{it:assum_fg_smooth} $f\in L^0_{\Progress}(\O;L^r(s+\varepsilon,T;Y_0))$, $g\in L^0_{\Progress}(\O;L^r(s+\varepsilon,T;\g(H,Y_{1/2})))$ for all $\varepsilon>0$.
\end{enumerate}
\end{assumption}

As before in case \eqref{eq:QSEE} is semilinear, we write $(A(\cdot,x),B(\cdot,x))=(\bar{A}(\cdot),\bar{B}(\cdot))$ and $(\bar{A}_Y(\cdot),\bar{B}_Y(\cdot))$ instead of $(A_Y(\cdot,x),B_{Y}(\cdot,x))$.
If it is necessary to make the dependency on $(\alpha,r)$ explicit as well, then we will write $(A_{Y,\alpha,r},B_{Y,\alpha,r},F_{Y,\alpha,r},G_{Y,\alpha,r})$ instead of $(A_{Y},B_{Y},F_Y,G_Y)$. In some applications (see \cite{AV19_QSEE_3}), we will need a generalization of Assumption \ref{assum:HY} in which lower order parts of $A$ and $B$ are moved to $F$ and $G$ (see Remark \ref{r:regularization_z_splitting} below).

Let $(u,\sigma)$ be the maximal $L^p_{\a}$-solution given by Theorem \ref{t:local_s}. Now the idea is as follows. The above setting allows to consider \eqref{eq:QSEE} in {\em the $(Y_0, Y_1, r, \alpha)$-setting}, i.e.\
\[\text{replace} \ \  (X_0,X_1,p,\a,A,B,F,G) \ \ \text{by}  \ \ (Y_0, Y_1,r,\alpha,A_{Y},B_{Y},F_Y,G_Y) \ \ \text{in \eqref{eq:QSEE}}.\]
Now if Assumption \ref{H_a_stochnew} holds in the $(Y_0, Y_1, r, \alpha)$-setting for $\ell=\alpha$, it follows that all conditions of Theorem \ref{t:local_s} also hold on $[s+\varepsilon,T]$ for $\varepsilon>0$ arbitrary. Therefore, if $u(s+\varepsilon)\in \Yaaar$ a.s.\ there exists an $L^r_{\alpha}$-maximal local solution $(v,\tau)$ to \eqref{eq:QSEE} in the $(Y_0, Y_1, r, \alpha)$-setting with $(s,u_s)$ replaced by $(s+\varepsilon,u(s+\varepsilon))$ and $\tau:\Omega\to (s+\varepsilon,T]$. Now one would expect that $\tau = \sigma$ and $u=v$ on $[\varepsilon, \sigma]$, and this typically improves the space-time regularity of $u$.
In order to make the above bootstrap argument precise we need to be able to connect the $(Y_0,Y_1,r,\alpha)$-setting to the $(X_0,X_1,p,\a)$-setting to assure:
\begin{enumerate}[(a)]
\item $\Xp \subseteq \Yaaar$;
\item\label{it:YXsettingb} $v = u$ on $[\varepsilon, \tau]$ and $\tau\leq \sigma$;
\item $\tau\geq \sigma$ via a blow-up criterium in the $(Y_0,Y_1,r,\alpha)$-setting.
\end{enumerate}

Below we will actually use an abstract $(Y_0, Y_1, r, \alpha)$-setting and $(\wh{Y}_0,\wh{Y}_1,\wh{r},\wh{\alpha})$-setting to be able to iteration the bootstrap argument.
One important ingredients in the proof will be to show uniqueness (see \eqref{it:YXsettingb} in the above), and this will be done by presuming the following inclusion:
\begin{equation}
\label{eq:emb_uniqueness_Z}
\bigcap_{\theta\in [0,1/2)} H^{\theta,\wh{r}}(s,T,w_{\wh{\alpha}}^s;\wh{Y}_{1-\theta})
\subseteq
L^r(s,T,w_{\aaa}^s;Y_1)\cap \Y(s,T)\cap C([s,T];\Yaaar).
\end{equation}
Here $\Y(s,T)$ is defined as in \eqref{eq:def_X_space} with the above new parameters $(r,\alpha,\wt{\rho}_j,\wt{\varphi}_j,\wt{\beta}_j)$ and $X_{\theta}$ replaced by $Y_{\theta}$.

By translation and scaling \eqref{eq:emb_uniqueness_Z} extends to all other bounded intervals.  If $u$ is in the space on RHS\eqref{eq:emb_uniqueness_Z}, then $u$ has the required regularity for being an $L^{r}_{\alpha}$-strong solution to \eqref{eq:QSEE} in the $(Y_0,Y_1,r,\alpha)$-setting by Definition \ref{def:solution1}. This follows from Lemma \ref{l:F_G_bound_N_C_cn} in the $(Y_0,Y_1,r,\alpha)$-setting.

The following lemma gives sufficient conditions for \eqref{eq:emb_uniqueness_Z} and is strong enough to cover all applications which we have studied so far. In particular, we never need to consider $\Y$ explicitly in applications. We only consider the case $\wh{Y}_i\hookrightarrow Y_i$ for $i\in \{0,1\}$, but there are also variations which avoid this condition.
\begin{lemma}
\label{l:emb_z}
Suppose that Hypothesis \hyperref[assum:HY]{\Hiep}$(Y_0, Y_1, r,\alpha)$ holds (see Assumption \ref{assum:HY}) and that
\begin{itemize}
\item $\wh{Y}_0$ and $\wh{Y}_1$ are Banach spaces such that $\wh{Y}_1\hookrightarrow \wh{Y}_0$;
\item $\wh{Y}_i\hookrightarrow Y_i$ for $i\in \{0,1\}$, $\wh{r}\in [r, \infty)$,  and $\wh{\alpha}\in [0,\frac{\wh{r}}{2}-1)$.
\end{itemize}
Then  \eqref{eq:emb_uniqueness_Z} holds in each of the following cases:
\begin{enumerate}[{\rm(1)}]
\item\label{it:regularization_z_equality} $r=\wh{r}$ and $\aaa=\wh{\alpha}$;
\item\label{it:regularization_z_strict_inequality} $\frac{1+\wh{\alpha}}{\wh{r}}<\frac{1+\aaa}{r}$;
\item\label{it:regularization_z_sharp_condition_refined}
$\frac{1+\wh{\alpha}}{\wh{r}}<\frac{1+\aaa}{r}+\varepsilon$ provided $\wh{Y}_{1-\varepsilon}\hookrightarrow Y_1$ and $\wh{Y}_{0}\hookrightarrow Y_{\varepsilon}$, for some
$\varepsilon\in (0,\frac{1}{2}-\frac{1+\aaa}{r})$;

\item \label{it:regularization_z_sharp_condition_refined_q_p} $r=\wh{r}$ and
$\frac{1+\wh{\alpha}}{r}=\frac{1+\alpha}{r}+\varepsilon$ provided $\wh{Y}_{1-\varepsilon}\hookrightarrow Y_1$ and $\wh{Y}_{0}\hookrightarrow Y_{\varepsilon}$, for some
$\varepsilon\in (0,\frac{1}{2}-\frac{1+\alpha}{r})$.
\end{enumerate}
\end{lemma}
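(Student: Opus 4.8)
The plan is to reduce \eqref{eq:emb_uniqueness_Z} to a statement about weighted anisotropic function spaces on a bounded interval, which can then be attacked with the embedding results of Subsection~\ref{ss:weighted_function_spaces} (in particular Proposition~\ref{prop:change_p_q_eta_a} and Corollary~\ref{cor:technicalweight}) together with the mixed derivative inequality of Lemma~\ref{l:mixed_derivative}. The point is that the left-hand side of \eqref{eq:emb_uniqueness_Z} consists of functions which are, for every $\theta\in[0,1/2)$, in $H^{\theta,\wh r}(s,T,w_{\wh\alpha}^s;\wh Y_{1-\theta})$; by the mixed derivative inequality this intersection (over $\theta$) embeds into any space of the form $H^{\theta,\wh r}(s,T,w_{\wh\alpha}^s;[\wh Y_0,\wh Y_1]_{1-\theta})$ with a single interpolation parameter, and we must land this inside the three target spaces: $L^r(s,T,w_\alpha^s;Y_1)$, the space $\Y(s,T)$, and $C([s,T];\Yaaar)$. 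Since $\Y(s,T)$ is by definition (cf.\ \eqref{eq:def_X_space}) a finite intersection of spaces $L^{\rho}(s,T,w_\alpha^s;Y_{\psi})$ with $\psi\in(1-\tfrac{1+\alpha}{r},1)$ and $\rho$ determined by the scaling identities, and since $\Yaaar=(Y_0,Y_1)_{1-\frac{1+\alpha}{r},r}$ is hit via the trace embedding of Proposition~\ref{prop:continuousTrace}, it suffices in all cases to prove an embedding of the type
\[
H^{\theta,\wh r}(s,T,w_{\wh\alpha}^s;\wh Y_{1-\theta}) \hookrightarrow H^{\theta',r}(s,T,w_{\alpha}^s;Y_{1-\theta'})
\]
for suitable $\theta,\theta'$, and then to quote the two clauses of Proposition~\ref{prop:continuousTrace} and Lemma~\ref{l:F_G_bound_N_C_cn}/Lemma~\ref{l:embeddings} (in the $(Y_0,Y_1,r,\alpha)$-setting) to convert into the $\Y$- and $C$-norms.

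\textbf{Case analysis.} In case \eqref{it:regularization_z_equality} ($r=\wh r$, $\alpha=\wh\alpha$) there is nothing to gain in the time variable; the embedding $\wh Y_i\hookrightarrow Y_i$ is all that is needed, since then $\wh Y_{1-\theta}\hookrightarrow Y_{1-\theta}$ by interpolation, $\Do(\wt A_Y)=Y_1$ gives that $[\wh Y_0,\wh Y_1]_\theta$ maps continuously into $[Y_0,Y_1]_\theta$ (using the sectorial operators from Assumption~\ref{assum:HY}\eqref{it:assum_sectiorial_operator}), and the three target embeddings follow directly from Proposition~\ref{prop:continuousTrace}\eqref{it:trace_with_weights_Xap} and Lemma~\ref{l:embeddings}\eqref{it:emb_p_grather_2} applied with $\delta\in(\tfrac{1+\alpha}{r},\tfrac12)$. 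In case \eqref{it:regularization_z_strict_inequality}, where $\tfrac{1+\wh\alpha}{\wh r}<\tfrac{1+\alpha}{r}$, the strict inequality allows a genuine trade between integrability/weight in time and regularity; here Corollary~\ref{cor:technicalweight} with $\varepsilon=0$ (the strict case) gives $H^{\theta,\wh r}(s,T,w_{\wh\alpha}^s;\wh Y_{1-\theta})\hookrightarrow H^{\theta,r}(s,T,w_\alpha^s;\wh Y_{1-\theta})$, and then $\wh Y_{1-\theta}\hookrightarrow Y_{1-\theta}$. Cases \eqref{it:regularization_z_sharp_condition_refined} and \eqref{it:regularization_z_sharp_condition_refined_q_p} are the delicate ones: here the time-scaling indices are allowed to be equal up to the small loss $\varepsilon$, which must be compensated by the \emph{spatial} gain encoded in $\wh Y_{1-\varepsilon}\hookrightarrow Y_1$ and $\wh Y_0\hookrightarrow Y_\varepsilon$. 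The idea is to write, for $\theta\in[\varepsilon,1/2)$, $\wh Y_{1-\theta}=[\wh Y_0,\wh Y_1]_{1-\theta}\hookrightarrow [Y_\varepsilon,Y_{1+\varepsilon}]_{1-\theta}=Y_{1-\theta+\varepsilon}$ (using reiteration of complex interpolation and the two hypothesised embeddings), so that one gains $\varepsilon$ derivatives in space; then shift this gain into the time variable via Corollary~\ref{cor:technicalweight} with parameter $\varepsilon$, applied with $\theta$ replaced by $\theta+\varepsilon\in[\varepsilon,1]$, to obtain $H^{\theta+\varepsilon,r}(s,T,w_{\wh\alpha}^s;Y_{1-\theta})\hookrightarrow H^{\theta,r}(s,T,w_\alpha^s;Y_{1-\theta})$ using $\tfrac{1+\wh\alpha}{r}<\varepsilon+\tfrac{1+\alpha}{r}$ (the limiting equality being allowed precisely when $r=\wh r$, which is case \eqref{it:regularization_z_sharp_condition_refined_q_p}). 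Finally the intersection over $\theta$ of these lands in the required spaces, again via Proposition~\ref{prop:continuousTrace} and Lemma~\ref{l:embeddings}.

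\textbf{Main obstacle.} The routine part is bookkeeping of Sobolev indices; the genuine difficulty is the two ``sharp'' cases \eqref{it:regularization_z_sharp_condition_refined}--\eqref{it:regularization_z_sharp_condition_refined_q_p}, where one is exactly at the boundary of what Corollary~\ref{cor:technicalweight} allows. The subtlety is that Remark~\ref{r:embedding_elementary_false_thresold} shows the elementary weight-exchange embedding fails at the limiting index $(1+\alpha)/p=(1+\eta)/q$ when $p>q$; this is why in \eqref{it:regularization_z_sharp_condition_refined_q_p} we must insist on $r=\wh r$ so that we are in the $p=q$ case of Corollary~\ref{cor:technicalweight} where the endpoint is permitted. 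Keeping track that the interpolation identity $\wh Y_{1-\theta}\hookrightarrow Y_{1-\theta+\varepsilon}$ really holds for the \emph{complex} method (which requires $\wt A_Y$, $\wt A$ to be compatible sectorial operators so that the scales $(Y_\theta)$ and $(\wh Y_\theta)$ behave well under $[\cdot,\cdot]_\theta$, cf.\ Assumption~\ref{assum:HY}\eqref{it:assum_Yi_Xi} and \eqref{it:assum_sectiorial_operator}) is the other place where care is needed; this is where the density assumption $Y_1\cap X_1\hookrightarrow Y_1$ and compatibility of $X_0,Y_0$ enter. Once these interpolation-scale facts are in place, the rest of the proof is an application of Proposition~\ref{prop:change_p_q_eta_a}, Corollary~\ref{cor:technicalweight}, Lemma~\ref{l:mixed_derivative}, Proposition~\ref{prop:continuousTrace} and Lemma~\ref{l:embeddings}, all already available in the excerpt.
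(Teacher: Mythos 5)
Your proposal is correct and follows essentially the same route as the paper's proof: reduce~\eqref{eq:emb_uniqueness_Z} to weighted anisotropic Sobolev embeddings, handle cases~\eqref{it:regularization_z_equality}--\eqref{it:regularization_z_strict_inequality} by a direct weight/integrability exchange plus $\wh Y_{1-\theta}\hookrightarrow Y_{1-\theta}$, and handle cases~\eqref{it:regularization_z_sharp_condition_refined}--\eqref{it:regularization_z_sharp_condition_refined_q_p} by first gaining $\varepsilon$ derivatives in space via reiteration (using $\wh Y_0\hookrightarrow Y_\varepsilon$ and $\wh Y_{1-\varepsilon}\hookrightarrow Y_1$) and then converting that gain into the time variable via Corollary~\ref{cor:technicalweight}, with the equality endpoint only when $r=\wh r$. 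Two small citation imprecisions: for case~\eqref{it:regularization_z_strict_inequality} you should invoke Proposition~\ref{prop:change_p_q_eta_a}\eqref{it:change_a_p_eta_a} directly (Corollary~\ref{cor:technicalweight} is stated for $\varepsilon>0$), and landing in $\Y(s,T)$ is Lemma~\ref{l:embeddings}, not Lemma~\ref{l:F_G_bound_N_C_cn}; also the interpolation step should be written as $[\wh Y_0,\wh Y_{1-\varepsilon}]_{(1-\theta)/(1-\varepsilon)}\hookrightarrow[Y_\varepsilon,Y_1]_{(1-\theta)/(1-\varepsilon)}$ rather than via the undefined symbol $Y_{1+\varepsilon}$, and the intermediate anisotropic space should carry exponent $\wh r$, not $r$.
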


\begin{proof}
In all cases, it is enough to consider the case $s=0$.

\eqref{it:regularization_z_equality}-\eqref{it:regularization_z_strict_inequality}: By Proposition \ref{prop:change_p_q_eta_a}\eqref{it:change_a_p_eta_a} and the fact that $\wh{Y}_{1-\theta}\hookrightarrow Y_{1-\theta}$, for all $\theta\in [0,\frac{1}{2})$,
$$
H^{\theta,\wh{r}}(\I_T,w_{\wh{\alpha}};\wh{Y}_{1-\theta})
\hookrightarrow
H^{\theta,{r}}(\I_T,w_{{\alpha}};\wh{Y}_{1-\theta})
\hookrightarrow
H^{\theta,{r}}(\I_T,w_{{\alpha}};Y_{1-\theta}).
$$
Therefore, the inclusion follows by the former, Lemma \ref{l:embeddings} and Proposition \ref{prop:continuousTrace}\eqref{it:trace_with_weights_Xap}.

\eqref{it:regularization_z_sharp_condition_refined}: Due to Lemma \ref{l:embeddings}, it is enough to show that for some $\theta_1,\theta_2\in [0,\frac{1}{2})$, $\nu\in (\frac{1+\aaa}{r},\frac{1}{2})$,
\begin{equation}
\label{eq:sufficient_condition_sharp_case_emb_stochastic}
\bigcap_{\theta\in \{\theta_1,\theta_2\}}
H^{\theta,\wh{r}}(\I_T,w_{\wh{\alpha}};\wh{Y}_{1-\theta})
\subseteq
H^{\nu,r}(\I_t,w_{\aaa};Y_{1-\nu})\cap L^{r}(\I_T,w_{\aaa};Y_{1}).
\end{equation}
The reiteration theorem for the complex interpolation (see e.g.\ \cite[Theorem 4.6.1]{BeLo}) ensures that $\wh{Y}_{\theta(1-\varepsilon)}\hookrightarrow Y_{\varepsilon+\theta(1-\varepsilon)}$, for each $\theta\in (0,1)$. Therefore
\begin{equation}
\label{eq:Z_theta_Y_sharp_case_emb_stochastic}
\wh{Y}_{1-\theta}\hookrightarrow Y_{1-\theta+\varepsilon}, \qquad \text{ for all }\theta\in [\varepsilon,1).
\end{equation}
Since $\varepsilon\in (0,\frac{1}{2}-\frac{1+\aaa}{r})$, there exists a $\delta\in (0,\frac{1}{2})$ such that $\delta>\varepsilon+\frac{1+\aaa}{r}>\frac{1+\wh{\alpha}}{\wh{r}}$, where the last inequality follows by assumption.
By \eqref{eq:Z_theta_Y_sharp_case_emb_stochastic} and the fact that $\delta>\varepsilon$ we obtain
\begin{equation*}
\begin{aligned}
H^{\delta,\wh{r}}(\I_T,w_{\wh{\alpha}};\wh{Y}_{1-\delta})
\hookrightarrow
H^{\delta,\wh{r}}(\I_T,w_{\wh{\alpha}};Y_{1-(\delta-\varepsilon)})
\hookrightarrow
H^{\delta-\varepsilon,r}(\I_T,w_{\aaa};Y_{1-(\delta-\varepsilon)})
\end{aligned}
\end{equation*}
where the last embedding follows from Corollary \ref{cor:technicalweight}.
Similarly,
\begin{equation*}
\begin{aligned}
H^{\varepsilon,\wh{r}}(\I_T,w_{\wh{\alpha}};\wh{Y}_{1-\varepsilon})
\hookrightarrow
H^{\varepsilon,\wh{r}}(\I_T,w_{\wh{\alpha}};Y_{1})
&\hookrightarrow
L^{r}(\I_T,w_{\aaa};Y_1).
\end{aligned}
\end{equation*}
The above embeddings imply
\eqref{eq:sufficient_condition_sharp_case_emb_stochastic} with $\theta_1=\varepsilon$, $\theta_2=\delta$ and $\nu=\delta-\varepsilon$.

\eqref{it:regularization_z_sharp_condition_refined_q_p}: The proof is similar to \eqref{it:regularization_z_sharp_condition_refined} using the last claim in Corollary \ref{cor:technicalweight} in the case that $\frac{1+\wh{\alpha}}{r}=\varepsilon+\frac{1+\alpha}{r}$.
\end{proof}

\subsection{Bootstrapping using weights}
\label{ss:bootstrapping_using_weights}
In this section we state our main result. The statement below is quite technical because the list of conditions is rather long. The strength of the result will be demonstrated in Section \ref{s:1D_problem} and in the follow-up papers \cite{AV20_NS,AV19_QSEE_3,AV22_reaction_diffusion} where we use the results below to show
\begin{itemize}
\item H\"older regularity results with rough initial data;
\item weak solutions immediately become strong solutions.
\end{itemize}
To obtain this type of regularization, we build a general scheme which only depends on the structure of the SPDE through the parameters $p,\a,X_0,X_1$ in which the scaling properties of the underlined SPDEs is encoded. An example will be given in Section \ref{s:1D_problem} (see Roadmap \ref{roadcriticalreg} for a short overview).

The assumptions below have a considerable overlap with Theorem \ref{t:blow_up_criterion}, which plays a key role in the proof.
Let us remind that critical spaces for \eqref{eq:QSEE} are defined below Hypothesis \hyperref[H:hip]{$\Hip$}. The picture one should have in mind is that $Y$-regularity and $L^r$-integrability is given, and $\wh{Y}$-regularity and $L^{\wh{r}}$-integrability are deduced as a consequence.
\begin{theorem}[Bootstrapping regularity]
\label{t:regularization_z}
Let Hypothesis \hyperref[H:hip]{$\Hip$} be satisfied. Let $u_s\in L^0_{\F_s}(\O;\Xap)$ and suppose that \eqref{eq:approximating_sequence_initial_data} for a sequence $(u_{s,n})_{n\geq 1}$. Suppose that
\begin{equation*}
(A(\cdot,u_{s,n}),B(\cdot,u_{s,n}))\in \MRtas, \ \ \ n\geq 1,
\end{equation*}
and let $(u,\sigma)$ be the $L^p_{\a}$-maximal local solution to \eqref{eq:QSEE} given by Theorem \ref{t:local_s}. Suppose that Assumption \ref{H_a_stochnew} holds for $\ell\in \{0,\a\}$ and Assumption \ref{ass:FG_a_zero} holds with parameters $(\varphi_j')_{j\in \{1,\dots,m_F'+m_G'\}}$. Further suppose the following:
\begin{enumerate}[{\rm(1)}]
\item\label{it:assumption_u_regular_Y1} Hypothesis \hyperref[assum:HY]{\Hiep}$(Y_0, Y_1, r,\alpha)$ holds  with $r\in [p,\infty)$ (see Assumption \ref{assum:HY}), Assumption \ref{H_a_stochnew} holds in the $(Y_0, Y_1, r, \alpha)$-setting for $\ell=\alpha$, and
\begin{itemize}
\item  $\Yr \embed \Xp$;
\item $Y_{\delta}\embed X_{0}$, $Y_{1}\embed X_{1-\delta}$ for some $\delta\in [0,1-\max_j \varphi_j')$ and $\frac{1}{r}+\delta \leq \frac{1}{p}$;
\item $u:\llo s,\sigma\rro\to Y_1$ is strongly progressively measurable and
\begin{equation*}
u\in \bigcap_{\theta\in [0,1/2)}H^{\theta,r}_{\emph{loc}}(s,{\sigma};Y_{1-\theta}) \ \ \text{a.s.};
\end{equation*}
\end{itemize}

\item\label{it:assumption_u_regular_Y2} Hypothesis \hyperref[assum:HY]{\Hiep}$(\wh{Y}_0,\wh{Y}_1,\wh{\alpha},\wh{r})$ holds with $\wh{r}\in [r,\infty)$ and $\wh{\alpha}\in [0,\frac{\wh{r}}{2}-1)$, and the space $\wh{Y}^{\Tr}_{\wh{\alpha},\wh{r}}$ is not critical for \eqref{eq:QSEE}, Assumption \ref{H_a_stochnew} for $\ell\in \{0,\wh{\alpha}\}$ and \ref{ass:FG_a_zero} both hold in the $(\wh{Y}_0,\wh{Y}_1,\wh{r},\wh{\alpha})$-setting;
\item\label{it:assumption_Yr_embeds_Yar_hat} $\wh{Y}_i\hookrightarrow Y_i$ for $i\in \{0,1\}$, $\Yr\hookrightarrow \wh{Y}^{\Tr}_{\wh{\alpha},\wh{r}}$ and
\eqref{eq:emb_uniqueness_Z} holds.
\end{enumerate}
Then $(u,\sigma)$ instantaneously regularizes in spaces and time in the sense that $u:\llo s,\sigma\rro\to \wh{Y}_1$ is strongly progressively measurable and
\begin{equation}
\label{eq:regularity_u_Z}
u\in \bigcap_{\theta\in [0,1/2)} H^{\theta,\wh{r}}_{\emph{loc}}(s,{\sigma};\wh{Y}_{1-\theta})
\subseteq
C((s,{\sigma});\wh{Y}_{\wh{r}}^{\Tr}) \ \ \text{a.s.}
\end{equation}
\end{theorem}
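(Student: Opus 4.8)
The strategy is a three-stage bootstrap: first transfer the solution to the $(Y_0,Y_1,r,\alpha)$-setting on $[s+\varepsilon,T]$ for each small $\varepsilon>0$, then re-run local well-posedness in the $(\wh Y_0,\wh Y_1,\wh r,\wh\alpha)$-setting starting at a slightly later time, and finally use the non-criticality blow-up criterium (Theorem \ref{t:blow_up_criterion}\eqref{it:blow_up_non_critical_Xap}) in the $\wh Y$-setting to show the $\wh Y$-solution cannot explode before $\sigma$. Throughout one argues on the stochastic interval $\ll s+\varepsilon,\sigma\rr$ and then sends $\varepsilon\downarrow 0$; no extra regularity of $u_s$ is needed since all work happens strictly after $s$.

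\emph{Step 1: reinterpret $u$ as a solution in the $Y$-setting.} Fix $\varepsilon>0$. By hypothesis \eqref{it:assumption_u_regular_Y1}, $u\in\bigcap_{\theta}H^{\theta,r}_{\loc}(s,\sigma;Y_{1-\theta})$ a.s., and by Proposition \ref{prop:continuousTrace} together with Lemma \ref{l:embeddings} (in the $Y$-setting) this embeds into $C((s,\sigma);\Yaaar)\cap\Y_{\loc}(s,\sigma)$; in particular $u(s+\varepsilon)\in\Yaaar$ a.s. Using the compatibility $A_Y=A$, $B_Y=B$, $F_Y=F$, $G_Y=G$ on $X_1\cap Y_1$ from Assumption \ref{assum:HY}\eqref{it:assum_Y0_Y1_AY}, the conditions $Y_\delta\hookrightarrow X_0$, $Y_1\hookrightarrow X_{1-\delta}$ and $\tfrac1r+\delta\le\tfrac1p$, one checks that $u|_{\ll s+\varepsilon,\sigma_n\rr}$ (for a localizing sequence $(\sigma_n)_n$) is an $L^r_\alpha$-strong solution of \eqref{eq:QSEE} in the $(Y_0,Y_1,r,\alpha)$-setting. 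This is where the integrability/smoothness trade-off $\frac1r+\delta\le\frac1p$ is used: it guarantees $F(\cdot,u),G(\cdot,u)$ have the right $L^r$-integrability in the $Y$-scale via Lemma \ref{l:F_G_bound_N_C_cn} applied in that setting. Hence $(u,\sigma)$ is an $L^r_\alpha$-local solution in the $Y$-setting, and since $\Yr\hookrightarrow\Xp$ the maximal $Y$-solution starting from $u(s+\varepsilon)\in\Yaaar$ (which exists by Theorem \ref{t:local_s}, using Assumption \ref{H_a_stochnew} for $\ell=\alpha$ in the $Y$-setting) extends $(u,\sigma)$ and is unique by \eqref{eq:emb_uniqueness_Z}-type uniqueness; call it $(v,\tau^\varepsilon)$ with $v=u$ on $[s+\varepsilon,\tau^\varepsilon)$ and $s+\varepsilon<\tau^\varepsilon\le\sigma$ a.s.

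\emph{Step 2: re-solve in the $\wh Y$-setting and get a blow-up criterium.} Fix $\varepsilon'>\varepsilon$. By Step 1, $u(s+\varepsilon')\in\Yr\hookrightarrow\wh Y^{\Tr}_{\wh\alpha,\wh r}$ a.s. (using $\Yr\hookrightarrow\wh Y^{\Tr}_{\wh\alpha,\wh r}$ from \eqref{it:assumption_Yr_embeds_Yar_hat}). Since Hypothesis \Hip{} holds in the $(\wh Y_0,\wh Y_1,\wh r,\wh\alpha)$-setting and Assumption \ref{H_a_stochnew} holds there for $\ell=\wh\alpha$ — here $\wh Y_1=\Do(\wt A_{\wh Y})$ with $\wt A_{\wh Y}$ invertible sectorial of angle $<\pi/2$, so $\MRtasigma\ne\emptyset$ by Theorem \ref{t:SMR_H_infinite} and transference applies — Theorem \ref{t:local_s} yields an $L^{\wh r}_{\wh\alpha}$-maximal local solution $(w,\theta^{\varepsilon'})$ in the $\wh Y$-setting starting at $(s+\varepsilon',u(s+\varepsilon'))$, with $\theta^{\varepsilon'}>s+\varepsilon'$ a.s. Because $\wh Y_i\hookrightarrow Y_i$ and \eqref{eq:emb_uniqueness_Z} holds, the regularity class $\bigcap_\theta H^{\theta,\wh r}_{\loc}(s+\varepsilon',\theta^{\varepsilon'};\wh Y_{1-\theta})$ embeds into the $Y$-solution space on $[s+\varepsilon',\theta^{\varepsilon'})$; by uniqueness of the $L^r_\alpha$-maximal $Y$-solution this forces $w=u$ on $[s+\varepsilon',\theta^{\varepsilon'})$ and $\theta^{\varepsilon'}\le\tau^\varepsilon=\sigma$ a.s. (on $[s+\varepsilon',\sigma)$). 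Now apply Theorem \ref{t:blow_up_criterion}\eqref{it:blow_up_non_critical_Xap} in the $\wh Y$-setting (legitimate since Assumption \ref{H_a_stochnew} holds there for $\ell\in\{0,\wh\alpha\}$, Assumption \ref{ass:FG_a_zero} holds there, and $\wh Y^{\Tr}_{\wh\alpha,\wh r}$ is non-critical for \eqref{eq:QSEE}): on $\{\theta^{\varepsilon'}<\sigma\}$ we have $w=u\in C((s+\varepsilon',\sigma);\Yr)\subseteq C((s+\varepsilon',\theta^{\varepsilon'}];\wh Y^{\Tr}_{\wh\alpha,\wh r})$ near $\theta^{\varepsilon'}$, so $\lim_{t\uparrow\theta^{\varepsilon'}}w(t)$ exists in $\wh Y^{\Tr}_{\wh\alpha,\wh r}$; the blow-up criterium then gives $\P(\theta^{\varepsilon'}<\sigma)=0$, i.e. $\theta^{\varepsilon'}=\sigma$ a.s. Consequently $u=w$ on $[s+\varepsilon',\sigma)$ inherits the regularity $u\in\bigcap_{\theta\in[0,1/2)}H^{\theta,\wh r}_{\loc}(s+\varepsilon',\sigma;\wh Y_{1-\theta})$ a.s.

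\emph{Step 3: conclude and assemble the trace embedding.} Letting $\varepsilon'\downarrow 0$ (along a sequence) and using that the regularity statement on $[s+\varepsilon',\sigma)$ patches consistently — the $\wh Y$-solutions agree on overlaps by the same uniqueness argument — we obtain $u\in\bigcap_{\theta\in[0,1/2)}H^{\theta,\wh r}_{\loc}(s,\sigma;\wh Y_{1-\theta})$ a.s., and strong progressive measurability of $u:\ll s,\sigma\rr\to\wh Y_1$ follows from that of $w$ on each $[s+\varepsilon',\sigma)$. The final inclusion into $C((s,\sigma);\wh Y^{\Tr}_{\wh r})$ is an application of the trace embedding Proposition \ref{prop:continuousTrace}\eqref{it:trace_without_weights_Xp} (with $\theta\in(1/\wh r,1/2)$, $X_0,X_1$ replaced by $\wh Y_0,\wh Y_1$) combined with $u\in L^{\wh r}_{\loc}(s,\sigma,w_{\wh\alpha};\wh Y_1)$, which is contained in the intersection. \textbf{The main obstacle} is Step 2: making the uniqueness/matching argument $w=u$ rigorous across the three different scales requires carefully verifying that all the "strong solution" integral identities transfer, which hinges on the embeddings \eqref{eq:emb_uniqueness_Z} and on $F,G$ landing in the right weighted $L^{\wh r}$-spaces — the latter needs Lemma \ref{l:F_G_bound_N_C_cn} in the $\wh Y$-setting, and a delicate point is that one only controls $u$ in $\wh Y^{\Tr}_{\wh\alpha,\wh r}$ through $\Yr$ on compact subintervals of $(s,\sigma)$, so the blow-up criterium must be applied with the random initial time $s+\varepsilon'$ and one must invoke the localization/causality Proposition \ref{prop:causality_phi_revised_2} to glue.
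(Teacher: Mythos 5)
There is a genuine gap in Step 1, and it propagates into Step 2. You correctly observe that by maximality of the $L^r_\alpha$-solution $(v,\tau^\varepsilon)$ in the $(Y_0,Y_1,r,\alpha)$-setting one gets $\sigma\le\tau^\varepsilon$ and $v=u$ on $[s+\varepsilon,\sigma)$ — that is the easy direction. But you then simply assert $\tau^\varepsilon\le\sigma$ without any argument. This is the hard and essential direction of the step, and it does not follow from maximality or from $\Yr\hookrightarrow\Xp$. The danger is precisely that $v$ might continue to exist as a $Y$-solution after the original $X$-solution $u$ has blown up; ruling this out is the crux. One cannot invoke Theorem~\ref{t:blow_up_criterion}\eqref{it:blow_up_non_critical_Xap} in the $X$-setting here, because $\Xap$ may be critical for \eqref{eq:QSEE}. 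Instead one must use Theorem~\ref{t:blow_up_criterion}\eqref{it:blow_up_norm_general_case_F_c_G_c}, which requires showing that on $\V\cap\{\sigma<\tau^\varepsilon\}$ both $\lim_{t\uparrow\sigma}u(t)$ exists in $\Xap$ (which you do get from $\Yr\hookrightarrow\Xap$) \emph{and} $\nonlinearity^{\a}_c(u;s,\sigma)<\infty$. Establishing the second finiteness is nontrivial: one must show that the intersection $\bigcap_{\theta\in[0,1/2)}H^{\theta,r}(a,b;Y_{1-\theta})$ embeds into the \emph{unweighted} space $\X'(a,b)$ built from the parameters $(\varphi_j',\beta_j')$ of Assumption~\ref{ass:FG_a_zero}, and then invoke Lemma~\ref{l:F_G_bound_N_C_cn} away from $t=s$. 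This embedding is exactly where the conditions $Y_\delta\hookrightarrow X_0$, $Y_1\hookrightarrow X_{1-\delta}$, $\delta\in[0,1-\max_j\varphi_j')$ and $\frac1r+\delta\le\frac1p$ are actually consumed — via complex reiteration and Proposition~\ref{prop:change_p_q_eta_a}\eqref{it:Sob_embedding}. You instead attributed $\frac1r+\delta\le\frac1p$ to the task of identifying $u$ as a $Y$-solution ("the right $L^r$-integrability in the $Y$-scale"), but that identification only uses Lemma~\ref{l:embeddings} in the $Y$-setting and is independent of that condition.

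Without $\tau^\varepsilon\le\sigma$, your Step 2 inference "$\theta^{\varepsilon'}\le\tau^\varepsilon=\sigma$" fails, and the application of Theorem~\ref{t:blow_up_criterion}\eqref{it:blow_up_non_critical_Xap} in the $\wh Y$-setting no longer closes the loop: on $\{\sigma<\theta^{\varepsilon'}\}$ one has no contradiction. To repair the argument you must add the $\X'$-embedding lemma (corresponding to the unweighted Assumption~\ref{ass:FG_a_zero} parameters) and then apply the Serrin-free criterion \eqref{it:blow_up_norm_general_case_F_c_G_c} in the $X$-setting to conclude $\P(\V\cap\{\sigma<\tau^\varepsilon\})=0$. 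Once this is in place, your Step 2 (blow-up criterion \eqref{it:blow_up_non_critical_Xap} in the non-critical $\wh Y$-setting) and Step 3 (letting $\varepsilon\downarrow 0$, patching regularity, and deducing the trace embedding) are essentially the correct argument.
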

Observe that if Hypothesis \hyperref[H:hip]{$\Hip$} and Assumption \ref{H_a_stochnew} for $\ell=\a$ hold, then by Theorem \ref{t:local_s}, condition \eqref{it:assumption_u_regular_Y1} is always satisfied for $(Y_0, Y_1, r, \alpha) = (X_0, X_1, p, \a)$. Also note that $Y^{\Tr}_{\alpha,r}$ can be critical for \eqref{eq:QSEE} in the $(Y_0, Y_1, r, \alpha)$-setting.

The second bullet of \eqref{it:assumption_u_regular_Y1} holds with $\delta = 0$ if $Y_i\hookrightarrow X_i$, which is the case in most applications to SPDEs. In the latter situation, $\Yr\embed \Xp$ holds provided $r\geq p$. The case $\delta>0$ will be needed in combination with Proposition \ref{prop:adding_weights} where we consider the case $\a=0$ and therefore $\varphi_j'=\varphi_j$ for all $j$.

\begin{remark}\
\label{r:regularization_det}
\begin{enumerate}[{\rm(1)}]
\item\label{it:regularization_det1} Theorem \ref{t:regularization_z} yields an improvement in regularity in time if $\wh{r}>r$ and in space if $\wh{Y}_i\hookrightarrow Y_i$ is strict for some $i\in \{0,1\}$;
\item\label{it:regularization_det2} often Theorem \ref{t:regularization_z} can be applied iteratively where $(\wh{Y}_0, \wh{Y}_1,\wh{r},\wh{\alpha})$ takes over the role of $({Y}_0, {Y}_1,{r},{\alpha})$, and another quadruple takes over the role of $(\wh{Y}_0, \wh{Y}_1,\wh{r},\wh{\alpha})$. In this way for concrete SPDEs, in finitely many steps one can often derive $(\frac12-\varepsilon)$-H\"older regularity in time and higher order H\"older regularity in space for rough initial data.
\end{enumerate}
\end{remark}

\begin{proof}[Proof of Theorem \ref{t:regularization_z}]
To prepare the proof, we collect some useful facts. It suffices to consider $s=0$. Fix $\varepsilon>0$ and set
\begin{equation}
\label{eq:def_V_regularization}
\V:=\{\sigma>\varepsilon\}\in \F_{\varepsilon}.
\end{equation}
By \eqref{it:assumption_u_regular_Y1},
Proposition \ref{prop:continuousTrace}, and \eqref{it:assumption_Yr_embeds_Yar_hat},
\begin{equation}
\label{eq:initial_data_instantaneous_regularization_stochastic}
\one_{\V}u(\varepsilon)\in L^0_{\F_{\varepsilon}}(\O;\Yr)\subseteq L^0_{\F_{\varepsilon}}(\O;\wh{Y}^{\mathsf{Tr}}_{\wh{\alpha},\wh{r}}).
\end{equation}
As explained below Assumption \ref{assum:HY}, by \eqref{it:assumption_u_regular_Y1} and Theorem \ref{t:local_s} applied in the $(Y_0, Y_1, r, \alpha)$-setting we find that there exists an $L^r_{\alpha}$-maximal local solution to $(v,\tau)$ to \eqref{eq:QSEE} in the $(Y_0, Y_1, r, \alpha)$-setting with $s=\varepsilon$ and initial data $\one_{\V}u(\varepsilon)$, with localizing sequence  $(\tau_k)_{k\geq 1}$. Similarly, arguing in $(\wh{Y}_0, \wh{Y}_1,\wh{r}, \wh{\alpha})$-setting (thus using \eqref{eq:initial_data_instantaneous_regularization_stochastic} and \eqref{it:assumption_u_regular_Y2}), we obtain an $L^{\wh{r}}_{\wh{\alpha}}$-maximal local solution $(\wh{v},\wh{\tau})$ to \eqref{eq:QSEE} in the $(\wh{Y}_0, \wh{Y}_1, \wh{r}, \wh{\alpha})$-setting with $s=\varepsilon$, initial data $\one_{\V}u(\varepsilon)$ and localizing sequence $(\wh{\tau}_k)_{k\geq 1}$.

{\em Step 1: $\tau=\sigma$ on $\V$ and $v=u$ a.e.\ on $[\varepsilon,\tau)\times\V$}.
By Theorem \ref{t:local_s} and Proposition \ref{prop:continuousTrace}\eqref{it:trace_without_weights_Xp}, for all $k\geq 1$,
\begin{equation}
\label{eq:regularity_v_n}
v\in \bigcap_{\theta\in [0,1/2)}H^{\theta,r}(\varepsilon,\tau_k,w_{\aaa}^{\varepsilon};Y_{1-\theta})
\subseteq C((\varepsilon,\tau_k];\Yr).
\end{equation}
By condition \eqref{it:assumption_u_regular_Y1} and Theorem \ref{t:local_s}\eqref{it:regularity_data_L0}, $u\in Y_1\cap X_1$ a.e.\ on $\ll \varepsilon,\sigma\rro$. Thus, $A_{Y}(\cdot,u)u=A(\cdot,u)u$, $B_{Y}(\cdot,u)u=B(\cdot,u)u$, $F_Y(\cdot,u)=F(\cdot,u)$, $G_Y(\cdot,u)=G(\cdot,u)$ a.e.\ on $\ll \varepsilon,\sigma\rro$. This implies that $ (\one_{\V} u|_{[\varepsilon,\sigma)},\one_{\V} \sigma+\one_{\O\setminus\V}\varepsilon)$ is an $L^r_{\alpha}$-local solution to \eqref{eq:QSEE} in the $(Y_0, Y_1, r, \alpha)$-setting with $s=\varepsilon$ and initial data $\one_{\V}u(\varepsilon)$. Therefore, maximality of $(v,\tau)$ implies (see also Remark \ref{r:uniqueness_maximal})
\begin{equation}
\label{eq:uniqueness_u_u_n}
 \sigma\leq \tau ,\quad \text{a.e.\ on }\V, \qquad \qquad
u=v, \quad \text{ a.e.\ on } [\varepsilon,\sigma)\times \V.
\end{equation}
It remains to prove that $\sigma\geq \tau$ a.e.\ on $\V$. For this it is enough to show
\begin{equation}
\label{eq:claim_step_1_regularization_tau_sigma}
\P(\V\cap\{\sigma<\tau\})=0.
\end{equation}
For convenience, we divide the proof into two substeps. In case $\delta=0$ (i.e. $Y_i\hookrightarrow X_i$), the claim in Step 1a follows immediately from Lemma \ref{l:embeddings} and $r\geq p$.

\emph{
Step 1a: Let $\X'$ be defined as in \eqref{eq:def_X_space} with $(m_F,m_G,\varphi_j,\beta_j,\rhos_j,\a)$ replaced by $(m_F',m_G',\varphi_j',\beta_j',\wt{\rho}^{\star}_j,0)$ (see Assumption \ref{ass:FG_a_zero}). In particular $\wt{\rho}^{\star}_j=(1-\beta_j')/(\varphi_j'-1+\frac{1}{p})$, cf.\ \eqref{eq:rhostar}.
Then, for all $0\leq a<b<\infty$,}
\begin{equation}
\label{eq:claim_step_1a_correction_proof}
\bigcap_{\theta\in [0,1/2)} H^{\theta,r} (a,b;Y_{1-\theta})\subseteq \X'(a,b).
\end{equation}
Fix $k\in \{1,\dots,m_F'+m_G'\}$. By \eqref{eq:rhostar}-\eqref{eq:rr'} with $(m_F,m_G,\varphi_j,\beta_j,\a)$ replaced by $(m_F',m_G',\varphi_j',\beta_j',0)$, it is enough to show that the LHS in \eqref{eq:claim_step_1a_correction_proof} embeds into
\begin{equation}
\label{eq:X_prime_simplified}
L^{ p \wt{r}_k}(a,b;X_{\beta_k'})\cap L^{ \wt{\rho}^{\star}_k p \wt{r}_k'}(a,b;X_{\varphi_k'})
\end{equation}
where $\wt{r}_k,\wt{r}_k'$ satisfy
\begin{equation}
\label{eq:parameters_correction_proof_Step_1a}
\frac{1}{\wt{\rho}^{\star}_k p \wt{r}_k'}=\varphi_k'-1+\frac{1}{p}\quad \text{ and }\quad \frac{1}{ p \wt{r}_k}=\beta_k'-1+\frac{1}{p}.
\end{equation}
By Assumption \ref{ass:FG_a_zero},  for $\phi\in \{\beta_k',\varphi_k'\}$ we have $\delta<1-\phi$ (since $\delta<1-\varphi_k'$ and $\varphi_k'\geq \beta_k'$)
and $1-\phi-\delta<\frac{1}{2}$ (since $\phi>1-\frac{1}{p}>\frac{1}{2}$). Using that $Y_{\theta+\delta}\embed X_{\theta}$ by complex reiteration (see e.g.\ \cite[Theorem 4.6.1]{BeLo}), we have
\begin{align*}
\bigcap_{\theta\in [0,1/2)}H^{\theta,r}(a,b;Y_{1-\theta})
&\subseteq \bigcap_{\phi\in \{\beta_k',\varphi_k'\}}H^{1-\phi-\delta,r}(a,b;X_{\phi}).
\end{align*}
To complete the proof of Step 1a, it suffices to show that the latter space embeds in \eqref{eq:X_prime_simplified}. We only consider the case $\phi = \beta_k$ since the other case is similar. If $p \wt{r}_k>r$, then
$H^{1-\beta_k'-\delta,r}(a,b;X_{\beta_k'})\embed L^{p \wt{r}_k}(a,b;X_{\beta_k'})$
follows from Proposition \ref{prop:change_p_q_eta_a}\eqref{it:Sob_embedding} and the fact that $1-\beta_k'-\delta-\frac{1}{r}\geq -\frac{1}{p \wt{r}_k}=1-\beta_k'-\frac{1}{p}$ by \eqref{eq:parameters_correction_proof_Step_1a} and $\frac{1}{r}+\delta\leq \frac{1}{p}$. In case $p\wt{r}_k\leq r$, then the embedding follows from $H^{1-\beta_k'-\delta,r}\hookrightarrow L^r\hookrightarrow  L^{p\wt{r}_k}$, where we used $\delta<1-\varphi_k'\leq 1-\beta_k'$.

\emph{Step 1b: Proof of \eqref{eq:claim_step_1_regularization_tau_sigma}}.
Since $(u,\sigma)$ is an $L^p_{\a}$-maximal local solution to \eqref{eq:QSEE} in the $(X_0,X_1,p,\a)$-setting, $\nonlinearity_c^{\a}(u;0,t)<\infty$ for all for all $t<\sigma$ a.s.\ (here $\nonlinearity_c^{\a}$ is as in \eqref{eq:L_p_norm_nonlinearity}). We claim that
\begin{equation}
\label{eq:claim_step_1b_correction_trace_nonlinearity}
\lim_{t\uparrow \sigma} u(t)\text{ exists in }\Xap \ \  \text{ and }\ \  \nonlinearity_{\a}^c(u;0,\sigma)<\infty \ \ \text{on $\V\cap \{\sigma<\tau\}$.}
\end{equation}
Here the first part of the claim follows from \eqref{eq:regularity_v_n}, \eqref{eq:uniqueness_u_u_n}, $\lim_{k\to\infty}\tau_k=\tau$ and $\tau>\varepsilon$ a.s., and the embedding $\Yr\embed \Xp\embed \Xap$.
To obtain the second part of \eqref{eq:claim_step_1b_correction_trace_nonlinearity}, note that
by Step 1a and Lemma \ref{l:F_G_bound_N_C_cn} applied to $v$ in the unweighted case, we have  $\nonlinearity_{0}^c(u;\sigma/2,\sigma)<\infty$ a.s.\ on $\V\cap \{\sigma<\tau\}$. Combining this with $\nonlinearity_c^{\a}(u;0,\sigma/2)<\infty$ a.s.\ the claim follows.

By \eqref{eq:claim_step_1b_correction_trace_nonlinearity} and $\sigma<\tau\leq T$ a.s.\ on $\V\cap \{\sigma<\tau\}$, we obtain
\begin{align*}
&\P(\V\cap \{\sigma<\tau\})\\
&=\P\Big(\V \cap \{\sigma<\tau\}\cap \{\sigma<T\} \cap \big\{\lim_{t\uparrow \sigma}u(t)\text{ exists in }\Xap,\,\nonlinearity^{c}_{\a}(u;0,\sigma)<\infty\big\}\Big)\\
&\leq \P\Big(\sigma<T,\,\lim_{t\uparrow \sigma}u(t)\text{ exists in }\Xap,\,\nonlinearity^{c}_{\a}(u;0,\sigma)<\infty\Big)=0,
\end{align*}
where the last equality follows from Theorem \ref{t:blow_up_criterion}\eqref{it:blow_up_norm_general_case_F_c_G_c} (here we used Assumption \ref{H_a_stochnew} for $\ell\in \{0,\a\}$ and Assumption \ref{ass:FG_a_zero}). Therefore \eqref{eq:claim_step_1_regularization_tau_sigma} is proved.

{\em Step 2: $\tau = \wh{\tau}$ a.s.\ and $v=\wh{v}$ on $\ll \varepsilon,\tau\rro$}.
Since $(\wh{\tau}_k)_{k\geq 1}$ is a localizing sequence, for all $k\geq 1$, one has
\begin{equation}
\label{eq:regularity_z_n}
\wh{v}\in \bigcap_{\theta\in [0,1/2)}
H^{\theta,\wh{r}}(\varepsilon,	\wh{\tau}_k,w_{\wh{\alpha}}^{\varepsilon};\wh{Y}_{1-\theta})  \ \ \text{a.s.}
\end{equation}
Next, as in Step 1, we show that $(\wh{v},\wh{\tau})$ is an $L^{r}_{\alpha}$-local solution to \eqref{eq:QSEE} in the $(Y_0,Y_1,\alpha,r)$-setting. To this end, note that thanks to Hypothesis \hyperref[assum:HY]{\Hiep}$({Y}_0,{Y}_1,{\alpha},{r})$ and \hyperref[assum:HY]{\Hiep}$(\wh{Y}_0,\wh{Y}_1,\wh{\alpha},\wh{r})$,
and by density,
\begin{align*}
A_{\wh{Y},\wh{\alpha},\wh{r}}(\cdot,\wh{v})\wh{v} =A_{{Y},{\alpha},r}(\cdot,\wh{v})\wh{v},&  \qquad F_{\wh{Y},\wh{\alpha},\wh{r}}(\cdot,\wh{v})  =F_{{Y},{\alpha},r}(\cdot,\wh{v}),
\\ B_{\wh{Y},\wh{\alpha},\wh{r}}(\cdot,\wh{v})\wh{v} =B_{{Y},{\alpha},r}(\cdot,\wh{v})\wh{v}, & \qquad G_{\wh{Y},\wh{\alpha},\wh{r}}(\cdot,\wh{v})=G_{{Y},{\alpha},r}(\cdot,\wh{v})
\end{align*}
a.e.\ on $\ll \varepsilon,\wh{\tau}\rro$. The latter, \eqref{eq:emb_uniqueness_Z}, and \eqref{eq:regularity_z_n}, ensure that $(\wh{v},\wh{\tau})$ is also an $L^{r}_{\alpha}$-local solution to \eqref{eq:QSEE} in the $(Y_0, Y_1, r, \alpha)$-setting  with $s=\varepsilon$ and initial data $\one_{\V} u(\varepsilon)$. The maximality of $(v,\tau)$ gives  (see also Remark \ref{r:uniqueness_maximal})
\begin{equation}
\label{eq:uniqueness_v_z_n}
\wh{\tau}\leq \tau,\quad \text{ a.s.}, \qquad v=\wh{v}, \qquad \text{ a.e.\ on }\ll\varepsilon,\wh{\tau}\rro.
\end{equation}
It remains to prove $\tau\leq \wh{\tau}$ a.s. By \eqref{it:assumption_Yr_embeds_Yar_hat}, $\Yr\hookrightarrow \wh{Y}^{\Tr}_{\wh{\alpha},\wh{r}}$ and thus by \eqref{eq:regularity_v_n} and \eqref{eq:uniqueness_v_z_n},
$$
v=\wh{v}\in C((\varepsilon,\wh{\tau}];\Yr)\subseteq C((\varepsilon,\wh{\tau}];\wh{Y}^{\mathsf{Tr}}_{\wh{\alpha},\wh{r}}), \qquad \text{a.s.\ on }\{\wh{\tau}<\tau\}.
$$
Therefore, $\lim_{t\uparrow \wh{\tau}} \wh{v}(t)$ exists in $\wh{Y}^{\mathsf{Tr}}_{\wh{\alpha},\wh{r}}$ a.e.\ on $\{\wh{\tau}<\tau\}$. Since $\wh{\tau}<\tau\leq T$ on $\{\wh{\tau}<\tau\}$,
\begin{align*}
\P(\wh{\tau}<\tau)&
=\P\Big(\{\wh{\tau}<\tau\}\cap \{\wh{\tau}<T\}\cap \big\{\lim_{t\uparrow \wh{\tau}}\wh{v}(t)\;\text{exists in}\;\wh{Y}^{\mathsf{Tr}}_{\wh{\alpha},\wh{r}}\big\}\Big)\\
&\leq \P\Big(\wh{\tau}<T,\,\lim_{t\uparrow \wh{\tau}}\wh{v}(t)\;\text{exists in}\;\wh{Y}^{\mathsf{Tr}}_{\wh{\alpha},\wh{r}}\Big)=0,
\end{align*}
where in the last step we used condition \eqref{it:assumption_u_regular_Y2} in order to apply Theorem \ref{t:blow_up_criterion}\eqref{it:blow_up_non_critical_Xap} in the $(\wh{Y}_0, \wh{Y}_1,\wh{r}, \wh{\alpha})$-setting.

{\em Step 3: Conclusion}.
By Steps 1-2, $\sigma=\tau=\wh{\tau}$ a.s.\ on $\V$ and $u=v=\wh{v}$ on $\V\times [\varepsilon,\sigma)=\ll \varepsilon,\sigma\rro$. Let $(\sigma_n)_{n\geq 1}$ be the localizing sequence for $u$ defined in \eqref{eq:canonical_localizing_sequence_u_sigma}.
Then we have already seen that one has $\sigma_n<\sigma$ for all $n\geq 1$. Thus, by \eqref{eq:regularity_z_n} and the previous consideration, for all $n\geq 1$ and $\delta>0$,
\begin{equation}\label{eq:conclusion_regularity_varepsilon_varepsilon_prime}
\one_{\V} u \in  \bigcap_{\theta\in [0,1/2)} H^{\theta,\wh{r}}(\varepsilon,\sigma_n,w_{\wh{\alpha}}^{\varepsilon};\wh{Y}_{1-\theta})\subseteq \bigcap_{\theta\in [0,1/2)} H^{\theta,\wh{r}}(\varepsilon+\delta,\sigma_n;\wh{Y}_{1-\theta}) \  \ \text{a.s.}
\end{equation}
where we used that $\sigma_n<\sigma= \lim_{k\to \infty}\wh{\tau}_k$ a.s., Proposition \ref{prop:change_p_q_eta_a}\eqref{it:loc_embedding}.

Now let $\varepsilon_k=\delta_k=\frac{1}{2k}$, $\V_k = \{\sigma>\frac{1}{2k}\}$ and set $\Omega_0 = \bigcup_{k\geq 1} \V_k$. Let $(\wh{v}_k)_{k\geq 1}$ denote the corresponding $\wh{Y}_1$-valued solutions  defined on $\ll 1/k, \sigma\rro$. Since $\sigma>0$ a.s., $\P(\Omega_0) = 1$, and therefore, a.s.\ for all $k\geq 1$ and all $n\geq 1$,
\[u\in \bigcap_{\theta\in [0,1/2)} H^{\theta,\wh{r}}(\tfrac{1}{k},\sigma_n;\wh{Y}_{1-\theta}).\]
This implies the first part of \eqref{eq:regularity_u_Z}.
The final part of \eqref{eq:regularity_u_Z} follows from Proposition \ref{prop:continuousTrace}\eqref{it:trace_with_weights_Xap} in the unweighted case.

Finally, to check the progressive measurability of $u$ as a $\wh{Y}_1$-valued function, note that $u|_{\ll 1/k,\sigma\rro} = \hat{v}_k$ on $\V_k$ a.s. Since $\one_{[1/k,\sigma)\times \V_k}\hat{v}_k$ is strongly progressively measurable as a $\wh{Y}_1$-valued process and converges pointwise to $u$ a.s.\ we find that $u$ has the same property.
\end{proof}

In the special case $\wh{Y}_i = {Y}_i$, the above result simplifies and can be used to derive time-regularity.

\begin{corollary}[Bootstrapping time regularity]
\label{cor:regularization_X_0_X_1}
Let Hypothesis \hyperref[H:hip]{$\Hip$} be satisfied. Let $u_s\in L^0_{\F_s}(\O;\Xap)$ and that \eqref{eq:approximating_sequence_initial_data} holds. Suppose that
\begin{equation*}
(A(\cdot,u_{s,n}),B(\cdot,u_{s,n}))\in \MRtas, \ \ \ n\geq 1,
\end{equation*}
and let $(u,\sigma)$ be the $L^p_{\a}$-maximal local solution to \eqref{eq:QSEE} given by Theorem \ref{t:local_s}.
Suppose that Assumption \ref{H_a_stochnew} holds for $\ell\in \{0,\a\}$ and Assumption \ref{ass:FG_a_zero} holds.
\begin{enumerate}[{\rm(1)}]
\item\label{it:assumption_u_regular_Y2_cor_alpha_grather_than_zero} Suppose that Hypothesis \hyperref[assum:HY]{\Hiep}$(Y_0, Y_1, r,\alpha)$ holds with $r\in [p,\infty)$ and $\alpha\in (0,\frac{r}{2}-1)$,  Assumption \ref{H_a_stochnew} holds in the $(Y_0, Y_1, r, \alpha)$-setting for $\ell=\alpha$, and
\begin{itemize}
\item  $\Yr \embed \Xp$;
\item $Y_{\delta}\embed X_{0}$, $Y_{1}\embed X_{1-\delta}$ for some $\delta\in [0,1-\max_j \varphi_j')$ and $\frac{1}{r}+\delta \leq \frac{1}{p}$;
\item $u:\llo s,\sigma\rro\to Y_1$ is strongly progressively measurable and
\begin{equation*}
u\in \bigcap_{\theta\in [0,1/2)}H^{\theta,r}_{\emph{loc}}(s,{\sigma};Y_{1-\theta}) \ \ \text{a.s.};
\end{equation*}
\end{itemize}
\item\label{it:assumption_u_regular_Y2cor} let $\wh{r}\in [r,\infty)$ and suppose
Assumption \ref{H_a_stochnew} for $\ell\in \{0,\wh{\alpha}\}$ and Assumption \ref{ass:FG_a_zero} both hold in the $({Y}_0, {Y}_1,\wh{r}, \wh{\alpha})$-setting for all $\wh{\alpha}\in [0,\frac{\wh{r}}{2}-1)$ satisfying $\frac{1+\wh{\alpha}}{\wh{r}}<\frac{1+\alpha}{r}$.
\end{enumerate}
Then
\begin{equation}
\label{eq:regularity_u_xi}
u\in \bigcap_{\theta\in [0,1/2)}
H^{\theta,\wh{r}}_{\emph{loc}}(s,{\sigma};Y_{1-\theta}) \subseteq
C(s,{\sigma};Y^{\mathsf{Tr}}_{\wh{r}}) \ \ \text{a.s.}
\end{equation}
\end{corollary}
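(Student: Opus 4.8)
\textbf{Proof strategy for Corollary \ref{cor:regularization_X_0_X_1}.}
The plan is to deduce the corollary from Theorem \ref{t:regularization_z} by iterating it finitely many times with the spaces $\wh{Y}_i$ always equal to $Y_i$, only the integrability exponent and the weight changing. First I would observe that under the hypotheses \eqref{it:assumption_u_regular_Y2_cor_alpha_grather_than_zero}--\eqref{it:assumption_u_regular_Y2cor} the assumptions of Theorem \ref{t:regularization_z} are met for each pair $((Y_0,Y_1,r,\alpha),(Y_0,Y_1,\wh{r},\wh{\alpha}))$ with $\frac{1+\wh\alpha}{\wh r}<\frac{1+\alpha}{r}$: condition \eqref{it:assumption_u_regular_Y1} of Theorem \ref{t:regularization_z} is exactly \eqref{it:assumption_u_regular_Y2_cor_alpha_grather_than_zero}; condition \eqref{it:assumption_u_regular_Y2} requires that $Y^{\Tr}_{\wh\alpha,\wh r}$ be non-critical for \eqref{eq:QSEE} in the $(Y_0,Y_1,\wh r,\wh\alpha)$-setting, which I will arrange by choosing $\wh\alpha$ so that $\frac{1+\wh\alpha}{\wh r}$ is \emph{strictly} smaller than the critical ratio (this is possible because the critical spaces are defined by equality in \eqref{eq:HypCritical}--\eqref{eq:HypCriticalG}, and by Remark \ref{r:nonlinearity}\eqref{it:trace_space_smooth}--type reasoning decreasing $\frac{1+\wh\alpha}{\wh r}$ moves the trace space strictly off the critical threshold, so non-criticality is automatic as long as $\frac{1+\wh\alpha}{\wh r}<\frac{1+\alpha}{r}$ and the $(Y_0,Y_1,r,\alpha)$-trace space was not super-critical); and condition \eqref{it:assumption_Yr_embeds_Yar_hat} needs $\wh Y_i\hookrightarrow Y_i$ (trivial since $\wh Y_i=Y_i$), the embedding $\Yr=Y^{\Tr}_{0,r}\hookrightarrow Y^{\Tr}_{\wh\alpha,\wh r}$ (which holds because $\tfrac{1}{r}\le \tfrac{1+\wh\alpha}{\wh r}$, i.e.\ the trace space of the weighted problem is larger), and the inclusion \eqref{eq:emb_uniqueness_Z}, which follows from Lemma \ref{l:emb_z}\eqref{it:regularization_z_strict_inequality} precisely because $\frac{1+\wh\alpha}{\wh r}<\frac{1+\alpha}{r}$.

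The core of the argument is then a bootstrap loop. Starting from the given regularity $u\in\bigcap_{\theta\in[0,1/2)}H^{\theta,r}_{\loc}(s,\sigma;Y_{1-\theta})$, I would first apply Theorem \ref{t:regularization_z} with $\wh r=r$ and $\wh\alpha$ chosen with $\frac{1+\wh\alpha}{r}<\frac{1+\alpha}{r}$, i.e.\ $\wh\alpha<\alpha$ (using Lemma \ref{l:emb_z}\eqref{it:regularization_z_strict_inequality} or its $r=\wh r$ variant); this is already a legitimate instance of the theorem and gives $u\in\bigcap_\theta H^{\theta,r}_{\loc}(s,\sigma;Y_{1-\theta})$ with the smaller weight — but the real point is the \emph{next} step: now that we have regularity in the $(Y_0,Y_1,r,\wh\alpha)$-setting, we may take it as the new ``input'' setting and run Theorem \ref{t:regularization_z} again with a genuinely larger target exponent $\wh r>r$, choosing the target weight $\widehat{\wh\alpha}$ so that $\frac{1+\widehat{\wh\alpha}}{\wh r}<\frac{1+\wh\alpha}{r}$, which for $\wh r$ large is possible with $\widehat{\wh\alpha}$ as close to $0$ as we like while still satisfying $\widehat{\wh\alpha}\in[0,\frac{\wh r}{2}-1)$. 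Since $\alpha>0$ by the standing hypothesis \eqref{it:assumption_u_regular_Y2_cor_alpha_grather_than_zero}, there is a strictly positive ``budget'' $\frac{1+\alpha}{r}-\frac1r=\frac\alpha r>0$ to spend, and each application of the theorem consumes part of it to increase $\wh r$; any target $\wh r\in[r,\infty)$ is reachable in finitely many steps because the constraint is only the strict inequality of the ratios. Crucially, across all these steps the space $Y_1\hookrightarrow Y_0$ never changes, so the hypotheses \eqref{it:assumption_u_regular_Y1}--\eqref{it:assumption_u_regular_Y2} need only be checked once, and the progressive measurability of $u$ as a $Y_1$-valued process is preserved by Theorem \ref{t:regularization_z} at each iteration.

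After reaching the prescribed $\wh r$, the conclusion $u\in\bigcap_{\theta\in[0,1/2)}H^{\theta,\wh r}_{\loc}(s,\sigma;Y_{1-\theta})$ is exactly \eqref{eq:regularity_u_Z} of Theorem \ref{t:regularization_z} with $\wh Y_i=Y_i$, and the embedding into $C(s,\sigma;Y^{\Tr}_{\wh r})$ is the trace part of that same display (Proposition \ref{prop:continuousTrace}\eqref{it:trace_with_weights_Xap} in the unweighted case), which gives \eqref{eq:regularity_u_xi}. I expect the main obstacle to be bookkeeping rather than anything deep: one must verify, for \emph{every} intermediate quadruple appearing in the iteration, that Assumption \ref{H_a_stochnew} (for the relevant $\ell$) and Assumption \ref{ass:FG_a_zero} indeed hold in that setting and that the chosen intermediate trace space is non-critical — the latter being the only genuinely delicate point, since non-criticality of $Y^{\Tr}_{\wh\alpha,\wh r}$ must be arranged by the strict inequality $\frac{1+\wh\alpha}{\wh r}<\frac{1+\alpha}{r}$ and a monotonicity argument on the left-hand side of \eqref{eq:HypCritical}--\eqref{eq:HypCriticalG}. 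Once that monotonicity is recorded cleanly (ideally as a one-line lemma: decreasing $\frac{1+\kappa}{p}$ strictly decreases $\rho_j(\varphi_j-1+\frac{1+\kappa}{p})+\beta_j$), the rest is a finite induction and the proof closes.
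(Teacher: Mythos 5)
The overall strategy — invoking Theorem~\ref{t:regularization_z} with $\wh{Y}_i=Y_i$ — is the same as the paper's, and you correctly identify non-criticality of $Y^{\Tr}_{\wh{\alpha},\wh{r}}$ as the key point, with the right monotonicity observation for the left-hand side of \eqref{eq:HypCritical}--\eqref{eq:HypCriticalG}. However there is a genuine gap in your choice of the target weight, and it goes in the opposite direction from what you propose.

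You want to take $\widehat{\wh{\alpha}}$ ``as close to $0$ as we like''. This makes $\frac{1+\widehat{\wh{\alpha}}}{\wh{r}}$ close to $\frac{1}{\wh{r}}$, which for $\wh{r}>r$ is \emph{strictly below} $\frac{1}{r}$. But you yourself note that the embedding $\Yr=Y^{\Tr}_{0,r}\hookrightarrow Y^{\Tr}_{\widehat{\wh{\alpha}},\wh{r}}$ requires $\frac{1}{r}\le\frac{1+\widehat{\wh{\alpha}}}{\wh{r}}$ — these two requests are incompatible as soon as $\wh{r}>r$. Moreover, sending $\frac{1+\widehat{\wh{\alpha}}}{\wh{r}}\downarrow 0$ also breaks the admissibility of the parameters: to transfer \ref{HFcritical}--\ref{HGcritical} from the $(Y_0,Y_1,r,\alpha)$-setting to the $(Y_0,Y_1,\wh{r},\widehat{\wh{\alpha}})$-setting one must have $\wt{\varphi}_j,\wt{\beta}_j>1-\frac{1+\widehat{\wh{\alpha}}}{\wh{r}}$, and the fixed parameters $\wt{\varphi}_j,\wt{\beta}_j\in(1-\frac{1+\alpha}{r},1)$ will in general fail this once $1-\frac{1+\widehat{\wh{\alpha}}}{\wh{r}}$ moves toward $1$. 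The correct move is the reverse: choose $\wh{\alpha}$ so that $\frac{1+\wh{\alpha}}{\wh{r}}$ lies in a thin interval $(\frac{1+\alpha}{r}-\varepsilon,\,\frac{1+\alpha}{r})$, with $\varepsilon>0$ chosen small enough that both (a) $\frac{1}{r}<\frac{1+\alpha}{r}-\varepsilon$ (possible since $\alpha>0$; take $\varepsilon<\frac{\alpha}{r}$) and (b) $\wt{\varphi}_j,\wt{\beta}_j>1-\frac{1+\alpha}{r}+\varepsilon$ for all $j$ (possible since the $\wt{\beta}_j$ lie strictly inside $(1-\frac{1+\alpha}{r},1)$). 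With this choice of $\wh{\alpha}$, the embedding $\Yr\hookrightarrow Y^{\Tr}_{\wh{\alpha},\wh{r}}\hookrightarrow Y^{\Tr}_{\alpha,r}$ holds, the parameters remain admissible, and the strict inequality in \eqref{eq:HypCritical}--\eqref{eq:HypCriticalG} (hence non-criticality) follows exactly from your monotonicity lemma. Note also that you must actually \emph{establish} Hypothesis \hyperref[assum:HY]{\Hiep}$(Y_0,Y_1,\wh{r},\wh{\alpha})$; it is not assumed, it is derived from \hyperref[assum:HY]{\Hiep}$(Y_0,Y_1,r,\alpha)$ by this transfer of parameters.

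A secondary issue is that your proposed finite iteration is both unnecessary and under-justified. Theorem~\ref{t:regularization_z} already permits jumping from $r$ to an arbitrary $\wh{r}\geq r$ in a single step, precisely because the ratio $\frac{1+\wh{\alpha}}{\wh{r}}$ can be placed anywhere in $[\frac{1}{\wh{r}},\frac12)\supseteq(\frac{1+\alpha}{r}-\varepsilon,\frac{1+\alpha}{r})$ by choosing $\wh{\alpha}$ appropriately (the target ratio is kept essentially constant while $\wh{r}$ grows, so $\wh{\alpha}$ grows proportionally; it is not pushed to $0$). Moreover, each intermediate application in your scheme requires Assumption~\ref{H_a_stochnew} and Assumption~\ref{ass:FG_a_zero} in the intermediate $(Y_0,Y_1,r_k,\alpha_k)$-setting, but the hypothesis \eqref{it:assumption_u_regular_Y2cor} of the corollary is stated only for the terminal exponent $\wh{r}$; these intermediate conditions are not given to you. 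Once you replace the iteration by a single application of Theorem~\ref{t:regularization_z} with $\wh{\alpha}$ chosen as above, and fill in the admissibility check, the argument closes along the lines you sketched — the rest (Lemma~\ref{l:emb_z}\eqref{it:regularization_z_strict_inequality} for \eqref{eq:emb_uniqueness_Z}, the trace embedding for the final conclusion) is correctly identified.
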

Note that if $f=0$ and $g=0$, then the above result may be applied with $\wh{r}$ arbitrary.
Recall that by Theorem \ref{t:local_s}, \eqref{it:assumption_u_regular_Y1} is satisfied in the case $X_i=Y_i$ (for $i\in\{0,1\}$), $r = p$ and $\alpha = \a>0$. In the proof of Corollary \ref{cor:regularization_X_0_X_1} we will see that it is enough to assume \eqref{it:assumption_u_regular_Y2cor} for a particular value $\wh{\alpha}$ such that $\frac{1+\wh{\alpha}}{\wh{r}}<\frac{1+\alpha}{r}$, and the corresponding trace space $\wh{Y}^{\Tr}_{\wh{\alpha}, \wh{r}}$ is not critical for \eqref{eq:QSEE} in the $(\wh{Y}_0,\wh{Y}_1,\wh{r},\wh{\alpha})$-setting.

\begin{proof}[Proof of Corollary \ref{cor:regularization_X_0_X_1}]
The idea is to apply Theorem \ref{t:regularization_z} with $\wh{Y}_0=Y_0,\wh{Y}_1=Y_1$, $\wh{r}>r$ and $\wh{\alpha}$  such that $\frac{1+\wh{\alpha}}{\wh{r}}<\frac{1+\alpha}{r}$ which will be chosen below. It remains to check Theorem \ref{t:regularization_z}\eqref{it:assumption_u_regular_Y1}-\eqref{it:assumption_Yr_embeds_Yar_hat}.
Note that \eqref{it:assumption_u_regular_Y1} holds by assumption.
Next, we check \eqref{it:assumption_u_regular_Y2}. Since hypothesis \hyperref[assum:HY]{\Hiep}$(Y_0,Y_1,\alpha,r)$ holds, there exist $\wt{m}_F,\wt{m}_G$, $(\wt{\varphi}_j)_{j=1}^{\wt{m}_F+\wt{m}_G}\subseteq (1-\frac{1+\alpha}{r},1)$, $(\wt{\beta}_j)_{j=1}^{\wt{m}_F+\wt{m}_G}$ such that $\wt{\beta}_j\in (1-\frac{1+\alpha}{r},\wt{\varphi}_j]$ and \ref{HFcritical}-\ref{HGcritical} hold with $(p,\a)$ replaced by $(r,\aaa)$. Set
$$
2\varepsilon:=\min_{j\in\{1,\dots,\wt{m}_F+\wt{m}_G\}} \Big\{\wt{\beta}_j -1+\frac{1+\aaa}{r},\frac{\aaa}{r}\Big\}>0,
$$
where we used that $\alpha>0$ by \eqref{it:assumption_u_regular_Y2_cor_alpha_grather_than_zero}.
In particular,
\begin{equation}
\label{eq:choice_r_cor_X_0_X_1_det}
\min_{j\in \{1,\dots,\wt{m}_F+\wt{m}_G\}}\{\wt{\beta}_j,\wt{\varphi}_j\}> 1-\frac{1+\aaa}{r}+\varepsilon,
\ \ \ \text{ and }\ \ \
 \frac{1}{r}<\frac{1+\aaa}{r}-\varepsilon.
\end{equation}
Since $\wh{\alpha}\in [0,\frac{\wh{r}}{2}-1)$ if and only if $\frac{1+\wh{\alpha}}{\wh{r}}\in [ \frac{1}{\wh{r}},\frac{1}{2})\supseteq [\frac{1}{r},\frac{1}{2})$ (where we used that $\wh{r}\geq r$), there exists an $\wh{\alpha}\in [0,\frac{\wh{r}}{2}-1)$ such that
\begin{equation}
\label{eq:choice_alpha_r}
\frac{1+\alpha}{r}-\varepsilon<\frac{1+\wh{\alpha}}{\wh{r}}<\frac{1+\aaa}{r}.
\end{equation}
Note that the above choice of $\wh{\alpha}$ and the second in \eqref{eq:choice_r_cor_X_0_X_1_det} yield
\begin{equation}
\label{eq:relation_Y_alpha_Y_wh_corollary_regularization_time}
Y_r\hookrightarrow Y_{\wh{\alpha},\wh{r}}^{\Tr}=\wh{Y}_{\wh{\alpha},\wh{r}}^{\Tr}
\ \  \  \
\text{ and }
\ \  \  \
Y_{\wh{\alpha},\wh{r}}^{\Tr}=\wh{Y}_{\wh{\alpha},\wh{r}}^{\Tr} \hookrightarrow Y_{\alpha,r}^{\Tr}.
\end{equation}
We claim that $F_{Y},G_Y$ satisfy \ref{HFcritical}-\ref{HGcritical} with $(p,\a)$ replaced by $(\wh{r},\wh{\alpha})$. To see this, note that by \eqref{eq:choice_r_cor_X_0_X_1_det}-\eqref{eq:relation_Y_alpha_Y_wh_corollary_regularization_time}, $\wt{\varphi}_j,\wt{\beta}_j>1-\frac{1+\aaa}{r}+\varepsilon>1-\frac{1+\wh{\alpha}}{\wh{r}}$, and thus \eqref{eq:HypCritical}, \eqref{eq:HypCriticalG} hold with $(p,\a,\rho_j,\varphi_i,\beta_j,m_F,m_G)$ replaced by $(\wh{r},\wh{\alpha},\wt{\rho}_j,\wt{\varphi}_i,\wt{\beta}_j,\wt{m}_F,\wt{m}_G)$.
Moreover, due to the fact that $\frac{1+\wh{\alpha}}{\wh{r}}<\frac{1+\aaa}{r}$, \eqref{eq:HypCritical} and \eqref{eq:HypCriticalG} hold with the \emph{strict} inequality and thus $\wh{Y}_{\wh{\alpha},\wh{r}}^{\Tr}$ is not critical for \eqref{eq:QSEE} in the $(Y_0,Y_1,\wh{r},\wh{\alpha})$-setting. Finally, \ref{HAmeasur} holds by the second inclusion in \eqref{eq:relation_Y_alpha_Y_wh_corollary_regularization_time}.

Due to the first inclusion in \eqref{eq:relation_Y_alpha_Y_wh_corollary_regularization_time}, to check Theorem \ref{t:regularization_z}\eqref{it:assumption_Yr_embeds_Yar_hat} it remains to note that \eqref{eq:emb_uniqueness_Z} with the above choice of $(\wh{Y}_0,\wh{Y}_1,\wh{\alpha},\wh{r})$ follows from Lemma \ref{l:emb_z}\eqref{it:regularization_z_strict_inequality} and the upper bound in \eqref{eq:choice_alpha_r}.
\end{proof}

\begin{remark}
In the deterministic case, part of the arguments used in Theorem \ref{t:regularization_z} appears in \cite{P17_geostrophic,PW18,hieber2020primitive}. Our systematic treatment appears to be new. Let us note that an essential step in the proof is to use blow-up criteria to show the invariance of the explosion time $\sigma$ in the different settings.
\end{remark}

\begin{remark}
\label{r:regularization_z_splitting}
In some application (see \cite{AV19_QSEE_3}), we need a straightforward extension of Theorem \ref{t:regularization_z} and Corollary \ref{cor:regularization_X_0_X_1}, where in Assumptions \ref{H_a_stochnew}, \ref{ass:FG_a_zero} and \ref{assum:HY} we assume that there is a splitting in a principle and a lower order part of the form
\begin{equation}
\begin{aligned}
\label{eq:A_B_splitting}
A(\cdot,\vone)\vone&=A_{Y,\text{princ}}(\cdot,\vone)\vone+A_{Y,\text{lower}}(t,\vone),
\quad
\\ B(\cdot,\vone)\vone& =B_{Y,\text{princ}}(t,\vone)\vone+B_{Y,\text{lower}}(\cdot,\vone),
\end{aligned}
\end{equation}
on $Y_0$. If \eqref{eq:A_B_splitting} holds, then it can be useful to consider the corresponding assumptions on $(A_{Y,\text{princ}},B_{Y,\text{princ}},F_Y+A_{\text{lower}},G_Y+B_{\text{lower}})$.
One can check that Theorem \ref{t:regularization_z} and Corollary \ref{cor:regularization_X_0_X_1} extend to this setting.
\end{remark}

\subsection{The emergence of weights}
\label{ss:emergence_weights}
In Theorem \ref{t:regularization_z}\eqref{it:assumption_u_regular_Y2}, there are two difficulties:
\begin{itemize}
\item it is not applicable in the critical case;
\item it is often not applicable in the unweighted setting.
\end{itemize}
In this subsection we show how to create a \emph{weighted} situations from an unweighted one, which also allows criticality. For simplicity we only consider the case where we add a weight near $t=s$ as this is what is needed to start a bootstrapping argument. Moreover, we only consider the semilinear setting, as the extension to the quasilinear setting is quite cumbersome and harder to state. Unlike Theorem \ref{t:regularization_z} the case $\a=0$ is allowed, which is central in many applications.

Recall that $(\bar{A}_Y,\bar{B}_Y)$ is as below Assumption \ref{assum:HY}. We need an additional condition on $F$ and $G$. Fix $r\in [1, \infty]$. Suppose that for each $n\geq 1$ there is a constant $C_{n}$ such that for a.a.\ $\omega\in \O$, for all $t\in [s,T]$ and $\|x\|_{(X_0,X_1)_{1-\frac{1}{p},r}},\|y\|_{(X_0,X_1)_{1-\frac{1}{p},r}}\leq n$,
\begin{align}
\nonumber \|F_c(t,\om,x)\|_{X_{0}}&\leq C_{n} \sum_{j=1}^{m_F}(1+\|x\|_{X_{\varphi_j}}^{\rho_j})\|x\|_{X_{\beta_j}}+C_{n}
\\ \label{eq:condFGstrange}
\|G_c(t,\om,x)\|_{\g(H,X_{1/2})}&\leq C_{n} \sum_{j=m_F+1}^{m_F+m_G}(1+\|x\|_{X_{\varphi_j}}^{\rho_j})\|x\|_{X_{\beta_j}}+C_{n},\\
\|F_{\Tr}(t,\om,x)\|_{X_0} + \|G_{\Tr}(t,\om,x)\|_{\g(H,X_{1/2})}&\leq C_{n}(1+\|x\|_{(X_0,X_1)_{1-\frac{1}{p},r}}).\nonumber
\end{align}
This coincides with the growth condition in \ref{HFcritical} and \ref{HGcritical} if $r=p$ and $\a=0$.

The main result of this section is the following:
\begin{proposition}[Adding weights at the initial time]
\label{prop:adding_weights}
Let Hypothesis \hyperref[H:hip]{$\Hip$} be satisfied with $\a=0$.
Let $r\in [p,\infty)$, $r>2$, $\alpha\in [0,\frac{r}{2}-1)$, and suppose that \eqref{eq:condFGstrange} holds.
Let $u_s\in L^0_{\F_s}(\O;\Xp)$ and suppose that
\begin{equation}
\label{eq:stochastic_maximal_regularity_semilinear_adding_lemma}
(A(\cdot,x),B(\cdot,x))\equiv (\bar{A}(\cdot),\bar{B}(\cdot))\in \MRts, \ \ \text{ for all }x\in X_1.
\end{equation}
Let $(u,\sigma)$ be the $L^p_{0}$-maximal local solution to \eqref{eq:QSEE} of Theorem \ref{t:local_s}. Suppose that
$\delta\in [0,1-\max_j \varphi_j)$, where $p>2$ in case $\delta=0$, and the following are satisfied:
\begin{enumerate}[{\rm(1)}]
\item\label{it:assumption_u_regular_Y2_lemma_adding_weights1} Hypothesis \hyperref[assum:HY]{\Hiep}$({Y}_0,{Y}_1,{\alpha},{r})$, $(\bar{A}_{Y},\bar{B}_{Y})\in \mathcal{SMR}^{\bullet}_{r,\alpha}(t,T)$ for all $t\in (s,T)$ and Assumption \ref{ass:FG_a_zero} hold in the $(Y_0,Y_1,\alpha,r)$-setting,
$$
Y_{\delta}=X_0, \quad Y_{1}=X_{1-\delta}, \quad \frac{1}{p}=\frac{1+\alpha}{r}+\delta,
\quad \text{and} \quad
 \frac{1}{r}\geq \max_j \varphi_j-1+\frac{1}{p};
 $$
\item\label{it:assumption_u_regular_Y2_lemma_adding_weights2} $(\bar{A}_{Y},\bar{B}_{Y})\in \mathcal{SMR}_{q}(t,T)$ for all $t\in (s,T)$ and $q\in [2,r]$.
\end{enumerate}
Then
\begin{equation}
\label{eq:regularity_u_adding_weights}
u
\in
\bigcap_{\theta\in [0,1/2)}
H^{\theta,r}_{\emph{loc}}(s,{\sigma};X_{1-\delta-\theta}) \subseteq
C(s,{\sigma};(X_0,X_1)_{1-\delta-\frac{1}{r},r}) \ \ \text{a.s.}
\end{equation}
\end{proposition}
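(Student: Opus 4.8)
\textbf{Proof strategy for Proposition \ref{prop:adding_weights}.}
The plan is to reduce the statement to an application of Theorem \ref{t:regularization_z} by bridging the unweighted $(X_0,X_1,p,0)$-setting to the weighted $(Y_0,Y_1,r,\alpha)$-setting with a single intermediate step in which a small power weight is created. First I would fix $\varepsilon>0$ and work on $\ll s+\varepsilon,\sigma\rro$, where Theorem \ref{t:local_s}\eqref{it:regularity_data_L0} gives $u(s+\varepsilon)\in \Xp$ a.s.\ and the extra space regularity $u\in C((s,\sigma];\Xp)$. The key observation is that condition \eqref{it:assumption_u_regular_Y2_lemma_adding_weights1} identifies $Y_{\delta}=X_0$, $Y_1=X_{1-\delta}$ and $\frac1p=\frac{1+\alpha}{r}+\delta$, which is precisely the Sobolev-type balance that makes $\Xp$ embed into $\Yaaar$; indeed $\Xp=(X_0,X_1)_{1-\frac1p,p}$ and, using the reiteration theorem for complex interpolation (\cite[Theorem 4.6.1]{BeLo}) exactly as in Step 1a of the proof of Theorem \ref{t:regularization_z}, one checks $Y^{\Tr}_{\alpha,r}=(Y_0,Y_1)_{1-\frac{1+\alpha}{r},r}\supseteq (X_0,X_1)_{1-\frac1p,p}=\Xp$. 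Thus $u(s+\varepsilon)\in L^0_{\F_{s+\varepsilon}}(\O;Y^{\Tr}_{\alpha,r})$ a.s.

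The second step is to verify that the hypotheses of Theorem \ref{t:regularization_z} are met with the source setting $(X_0,X_1,p,0)$ playing the role of $(X_0,X_1,p,\a)$ there, the weighted setting $(Y_0,Y_1,r,\alpha)$ in the role of the intermediate $(Y_0,Y_1,r,\alpha)$, and then — if one wants to iterate — a further $(Y_0,Y_1,\wh r,\wh\alpha)$-setting as in Corollary \ref{cor:regularization_X_0_X_1}. Condition \eqref{it:assumption_u_regular_Y1} of Theorem \ref{t:regularization_z} requires $\Yr\embed\Xp$, the embeddings $Y_\delta\embed X_0$, $Y_1\embed X_{1-\delta}$ with $\frac1r+\delta\le\frac1p$, and the regularity $u\in\bigcap_{\theta}H^{\theta,r}_{\loc}(s,\sigma;Y_{1-\theta})$; the first three are immediate from \eqref{it:assumption_u_regular_Y2_lemma_adding_weights1} (equalities, and $\frac1r+\delta<\frac{1+\alpha}{r}+\delta=\frac1p$), while the last follows because on $[s+\varepsilon,\sigma)$ the solution $u$ lies in $C((s,\sigma];\Xp)\subseteq C((s,\sigma];\Yaaar)$, so one can restart the equation in the $(Y_0,Y_1,r,\alpha)$-setting via Theorem \ref{t:local_s}, obtain an $L^r_\alpha$-maximal solution $(v,\tau)$, and argue exactly as in Step 1 of Theorem \ref{t:regularization_z} (using the blow-up criterium Theorem \ref{t:blow_up_criterion}\eqref{it:blow_up_norm_general_case_F_c_G_c} together with Step 1a there, whose hypothesis $\delta\in[0,1-\max_j\varphi_j')$ is exactly $\delta\in[0,1-\max_j\varphi_j)$ since here $\a=0$ forces $\varphi_j'=\varphi_j$) to conclude $\tau=\sigma$, $v=u$ on $[s+\varepsilon,\sigma)$. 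This yields $u\in\bigcap_\theta H^{\theta,r}_{\loc}(s,\sigma;Y_{1-\theta})=\bigcap_\theta H^{\theta,r}_{\loc}(s,\sigma;X_{1-\delta-\theta})$, which, combined with the trace embedding Proposition \ref{prop:continuousTrace}\eqref{it:trace_with_weights_Xap} applied with weight $w_\alpha$ and $\theta>\frac{1+\alpha}{r}$, gives $C(s,\sigma;(Y_0,Y_1)_{1-\frac{1+\alpha}{r},r})=C(s,\sigma;(X_0,X_1)_{1-\delta-\frac1r,r})$, which is \eqref{eq:regularity_u_adding_weights}. To get the trace statement with $Y^{\Tr}_{\wh r}$ in the iterated form one uses Corollary \ref{cor:regularization_X_0_X_1}\eqref{it:assumption_u_regular_Y2cor} with a noncritical $\wh\alpha$, possible because \eqref{eq:HypCritical}, \eqref{eq:HypCriticalG} become strict once $\frac{1+\wh\alpha}{\wh r}<\frac{1+\alpha}{r}$.

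The step I expect to be the main obstacle is the careful handling of the \emph{passage through the restart at a random, merely $\F_{s+\varepsilon}$-measurable initial value at a positive time}, together with making sure that the growth condition \eqref{eq:condFGstrange}, which is phrased in the $(X_0,X_1)_{1-\frac1p,r}$-norm rather than the $\Xp$-norm, is compatible with Hypothesis \hyperref[assum:HY]{\Hiep}$(Y_0,Y_1,\alpha,r)$ in the $(Y_0,Y_1,r,\alpha)$-setting; since $Y^{\Tr}_{\alpha,r}=(X_0,X_1)_{1-\delta-\frac{1+\alpha}{r},r}$ and by \eqref{it:assumption_u_regular_Y2_lemma_adding_weights1} $\delta+\frac{1+\alpha}{r}=\frac1p$, one has $Y^{\Tr}_{\alpha,r}=(X_0,X_1)_{1-\frac1p,r}$ exactly, so \eqref{eq:condFGstrange} is precisely condition \ref{HFcritical}-\ref{HGcritical} transported to the $Y$-scale, and the condition $\frac1r\ge\max_j\varphi_j-1+\frac1p$ guarantees that the corresponding criticality inequalities \eqref{eq:HypCritical}-\eqref{eq:HypCriticalG} hold (with equality allowed, i.e.\ criticality is permitted). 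The remaining subtlety is that Assumption \ref{H_a_stochnew} must be available in the $(Y_0,Y_1,r,\alpha)$-setting for $\ell=\alpha$ and for $\ell=0$; the former is part of \hyperref[assum:HY]{\Hiep}$(Y_0,Y_1,\alpha,r)$ via \eqref{it:assumption_u_regular_Y2_lemma_adding_weights2} and Theorem \ref{t:SMR_H_infinite} together with transference (Proposition \ref{prop:time_transference}) and Proposition \ref{prop:change_initial_time}, and the latter then follows from \eqref{it:assumption_u_regular_Y2_lemma_adding_weights2} with $q=p$, $\beta=0$ and Proposition \ref{prop:change_initial_time}. Once these bookkeeping points are settled, the conclusion is a direct invocation of Theorem \ref{t:regularization_z} (or Corollary \ref{cor:regularization_X_0_X_1}) followed by the standard $\varepsilon\downarrow 0$ exhaustion over $\V_k=\{\sigma>1/k\}$ and the progressive-measurability argument from Step 3 of the proof of Theorem \ref{t:regularization_z}.
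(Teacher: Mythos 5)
Your proposal gets the surrounding geometry right (the interpolation identity $Y^{\Tr}_{\alpha,r}=(X_0,X_1)_{1-\frac{1}{p},r}$ and the embedding $\Xp\hookrightarrow Y^{\Tr}_{\alpha,r}$), but the core of the argument is circular, and the hard part of the proof is missing entirely.

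The proposed plan is to reduce Proposition \ref{prop:adding_weights} to an application of Theorem \ref{t:regularization_z} (or to ``argue exactly as in Step 1'' of its proof). This cannot work. Condition \eqref{it:assumption_u_regular_Y1} of Theorem \ref{t:regularization_z} takes as a \emph{hypothesis} that $u\in\bigcap_{\theta\in[0,1/2)}H^{\theta,r}_{\loc}(s,\sigma;Y_{1-\theta})$ a.s.; that is precisely what Proposition \ref{prop:adding_weights} is trying to prove, and there is no way to obtain it beforehand when one starts from an unweighted $L^p_0$-solution (in particular for $p=2$, which is the main case of interest). Consequently, the uniqueness argument in Step 1 of Theorem \ref{t:regularization_z} --- which shows $\sigma\leq\tau$ by verifying that $u$ is itself an $L^r_\alpha$-local solution in the $(Y_0,Y_1,r,\alpha)$-setting --- is not available, since $u$ has only $L^p$-regularity with no weight. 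The paper's actual proof goes in the \emph{opposite} direction: it shows that the restarted solution $(v,\tau)$ is an $L^p_0$-local solution in the original $(X_0,X_1,p,0)$-setting, and then invokes maximality of $(u,\sigma)$. This is genuinely hard because $v$ only lives in $L^r(\cdot,w_\alpha^\varepsilon;Y_1)$ with $Y_1=X_{1-\delta}\supsetneq X_1$, so the required $L^p(\cdot;X_1)\cap C(\cdot;\Xp)$ regularity of $v$ does not follow from embeddings. The paper handles it with a localization argument: it constructs an auxiliary strong solution $z_j$ of the linear problem with $L^p$-inhomogeneities, proves $v_j:=\one_{\V_j}(v-z_j)$ solves a homogeneous linear problem, embeds $L^p(\cdot;X_1)+L^r(\cdot,w_\alpha^\varepsilon;Y_1)$ into a common $L^q(\cdot;Y_1)$ space, and then uses hypothesis \eqref{it:assumption_u_regular_Y2_lemma_adding_weights2} (i.e.\ $(\bar{A}_Y,\bar{B}_Y)\in\mathcal{SMR}_{q,\beta}$ for all $q\in(2,r]$) to conclude $v_j\equiv 0$ by uniqueness. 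Your proposal never performs this step and gives no alternative mechanism; condition \eqref{it:assumption_u_regular_Y2_lemma_adding_weights2} is mentioned only in passing, without revealing why it is assumed.

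A secondary error: in the step $\tau\geq\sigma$ the applicable blow-up criterium is the Serrin-type one, Theorem \ref{thm:semilinear_blow_up_Serrin}\eqref{it:blow_up_semilinear_serrin_Pruss_modified}, because the trace space $Y^{\Tr}_{\alpha,r}$ is allowed to be critical for \eqref{eq:QSEE} in the $(Y_0,Y_1,r,\alpha)$-setting (the paper explicitly requires \eqref{eq:critical_condition_L2setting} to be allowed to hold with equality). Your appeal to Theorem \ref{t:blow_up_criterion}\eqref{it:blow_up_norm_general_case_F_c_G_c} would not suffice: one has no control over $\nonlinearity^\a_c(v;\varepsilon,\tau)$, only over the $L^r$-norm of $v$ in $Y_{1-\alpha/r}$ and the $C$-norm in $Y^{\Tr}_{\alpha,r}$, and the latter quantities are precisely what the Serrin-type criterium is designed to use.
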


Proposition \ref{prop:adding_weights} allows to bootstrap regularity in time provided $r>p$ is not too big at the expense of reducing the regularity `in space' in the case that $\delta>0$. Moreover, if $\alpha>0$, then $(X_0,X_1)_{1-\delta-\frac{1}{r},r} = (X_0,X_1)_{1-\frac{1}{p}+\frac{\alpha}{r},r}\hookrightarrow \Xp$ and therefore Proposition \ref{prop:adding_weights} also yields a regularization in space.
One of the interesting features of Proposition \ref{prop:adding_weights} is that in applications to SPDEs one can fix $r$ and choose $\delta$ so small that $\frac{\alpha}{r}=\frac{1}{p}-\frac{1}{r}-\delta>0$. In that case, the conditions in Corollary \ref{cor:regularization_X_0_X_1}\eqref{it:assumption_u_regular_Y2_cor_alpha_grather_than_zero} follow from $\varphi'_j=\varphi_j$ and Proposition \ref{prop:adding_weights}\eqref{it:assumption_u_regular_Y2_lemma_adding_weights1}. Therefore, if Corollary \ref{cor:regularization_X_0_X_1}\eqref{it:assumption_u_regular_Y2cor} holds as well, then one can obtain further integrability in time afterward. After that one can bootstrap further regularity via Theorem \ref{t:regularization_z}.

In the case $p>2$, one usually takes $\delta=0$. This is not allowed if $p=2$, since $\frac{1}{p}=\frac{1}{2}>\frac{1+\alpha}{r}$ for all $r\in [2,\infty)$, $\alpha\in [0,\frac{r}{2}-1)$ and therefore \eqref{it:assumption_u_regular_Y2_lemma_adding_weights1} can hold if and only if $\delta>0$. In this case, $Y_{0}$ can be thought as ``$X_{-\delta}$" and (typically) can be defined as $X_{-\delta,\wt{A}}$, i.e.\ the extrapolated space  (see e.g.\ \cite[Appendix A]{AV19_QSEE_1}) constructed via a sectorial operator $\wt{A}$ on $X_0$ such that $\Do(\wt{A})=X_1$.

Actually, Proposition \ref{prop:adding_weights} holds under more general assumptions, and it has a version for a quasilinear equations.
However, we prefer to state Proposition \ref{prop:adding_weights} in its current simple form as this is enough for many of the applications we have in mind and is less technical.

\begin{proof}[Proof of Proposition \ref{prop:adding_weights}]
As usual, we set $s=0$.
Due to \cite[Theorems 4.6.1 and 4.7.2]{BeLo} and the fact that $Y_{\delta}=X_0$, $Y_{1}=X_{1-\delta}$ we have
\begin{equation}
\label{eq:whX_X_relation_delta}
Y_{\theta}=X_{\theta-\delta}, \text{ and }
(Y_0,Y_1)_{\theta,\zeta}=(X_0,X_1)_{\theta-\delta,\zeta}  \ \text{ for all }\theta\in (\delta,1),\,\zeta\in [1,\infty].
\end{equation}
The former, the fact that $r\geq p$ and $\frac{1+\alpha}{r}=\frac{1}{p}-\delta$ imply that for all $\varepsilon>0$
\begin{equation}
\label{eq:initial_data_adding_weights_lemma}
 \one_{\V} u(\varepsilon)\in L^0_{\F_{\varepsilon}}(\O;X^{\Tr}_{p})
\subseteq
L^0_{\F_{\varepsilon}}(\O;Y_{\alpha,r}^{\Tr}) \ \ \text{ where } \ \ \V:=\{\sigma>\varepsilon\}.
\end{equation}
As explained below Assumption \ref{assum:HY}, by \eqref{eq:initial_data_adding_weights_lemma} and Hypothesis \hyperref[assum:HY]{\Hiep}$(Y_0,Y_1,\alpha,r)$, Theorem \ref{t:local_s} gives existence of an $L^r_{\alpha}$-maximal local solution $(v,\tau)$ to \eqref{eq:QSEE} on $[\varepsilon,T]$ with initial data $\one_{\V}u(\varepsilon)$ in the $(Y_0,Y_1,\alpha,r)$-setting. Since $r>2$ by assumption, Theorem \ref{t:local_s}\eqref{it:regularity_data_L0} ensures that a.s.
\begin{equation}
\label{eq:regularity_v_adding_weight_lemma}
v\in \bigcap_{\theta\in [0,1/2)}
H^{\theta,r}_{\rm loc}([\varepsilon,\tau),w_{\alpha}^{\varepsilon};Y_{1-\theta})\subseteq C([\varepsilon,\tau);Y^{\mathsf{Tr}}_{\alpha,r})=C([\varepsilon,\tau);(X_0, X_1)_{1-\frac1p,r}),
\end{equation}
where the latter is not the ``right'' trace space in the $(X_0,X_1,p,\a)$-setting. As in the proof of Theorem \ref{t:regularization_z}, to prove \eqref{eq:regularity_u_adding_weights} it remains to show that
\begin{equation}
\label{eq:uniqueness_u_v_lemma_adding_weights}
\tau= \sigma \ \  \text{a.s.\ }\ \ \text{ and } \ \ u=v \ \ \text{ a.e.\ on }\ll \varepsilon,\sigma\rro.
\end{equation}
Indeed, if \eqref{eq:uniqueness_u_v_lemma_adding_weights} holds, then \eqref{eq:regularity_u_adding_weights} follows from \eqref{eq:regularity_v_adding_weight_lemma}, the arbitrariness of $\varepsilon>0$, and the argument in Step 3 of the proof of Theorem \ref{t:regularization_z}.

For the reader's convenience, we split the proof of \eqref{eq:uniqueness_u_v_lemma_adding_weights} into several steps. In Step 1 we prove that $\tau\leq \sigma$ a.s.\ and $u=v$ a.e.\ on $\V\times [\varepsilon,\tau)$ assuming that
\begin{equation}
\label{eq:emb_uniqueness_Z_serrin_case}
\bigcap_{\theta\in [0,1/2)}H^{\theta,r}(a,b,w_{\alpha}^a;Y_{1-\theta})
\subseteq \X(a,b)
,\quad \text{ for all }0\leq a<b<\infty,
\end{equation}
in Step 2 we prove \eqref{eq:uniqueness_u_v_lemma_adding_weights}, and in Step 3 we prove \eqref{eq:emb_uniqueness_Z_serrin_case}. Here $\X$ is as in \eqref{eq:def_X_space}.

\textit{Step 1: $\tau\leq \sigma$ a.s.\ and $u=v$ a.e.\ on $\V\times [ \varepsilon,\tau)$}. By uniqueness of the $L^p_0$-maximal local solution $(u,\sigma)$ (see also Remark \ref{r:uniqueness_maximal}), to prove the claim of this step it remains to check that $(v,\tau)$ is an $L^p_0$-local solution to \eqref{eq:QSEE} on $[\varepsilon,T]$ in the $(X_0,X_1,p,0)$-setting with initial data $\one_{\V}u(\varepsilon)$. Since Hypothesis \hyperref[assum:HY]{\Hiep}$(Y_0,Y_1,\alpha,r)$ holds, it is enough to check that the process $v$ has the required regularity for being an $L^p_{0}$-local solution to \eqref{eq:QSEE} on $[\varepsilon,T]$ in the $(X_0,X_1,p,0)$-setting (see Definitions \ref{def:solution1}-\ref{def:solution2} and Lemma \ref{l:F_G_bound_N_C_cn}), i.e.\
\begin{equation}
\label{eq:v_regularity_for_being_Lp0_solution}
v\in L^p(\varepsilon,\tau_k;X_1)\cap C([\varepsilon,\tau_k];\Xp)\cap \X(\varepsilon,\tau_k) \ \ \text{ a.s.\ for all }k\geq 1,
\end{equation}
for a suitable localizing sequence $(\tau_k)_{k\geq 1}$ for $(v,\tau)$.
By \eqref{eq:regularity_v_adding_weight_lemma} and \eqref{eq:emb_uniqueness_Z_serrin_case}, $v\in \X(\varepsilon,\tau_k)$ a.s., and
thus it remains to prove the first two parts of \eqref{eq:v_regularity_for_being_Lp0_solution}.

To proceed, we need a localization argument.
For $j\geq 1$, set
\begin{equation}
\label{eq:V_n_proof_Lp0}
\V_j:=\V\cap \{\|u(\varepsilon)\|_{\Xp}\leq j\}\in \F_{\varepsilon}.
\end{equation}
By \eqref{eq:regularity_v_adding_weight_lemma} and \eqref{eq:emb_uniqueness_Z_serrin_case}, we can define a localizing sequence by
\begin{equation}
\label{eq:tau_j_localizing_valphar}
\tau_j:=\inf\{t\in [\varepsilon,\tau)\,:\,
\|v\|_{L^r(\varepsilon,t,w_{\a}^{\varepsilon};Y_1)}+\|v(t)\|_{(X_0, X_1)_{1-\frac1p,r}}+\|v\|_{\X(\varepsilon,t)}\geq j\},
\end{equation}
where $\inf\emptyset:=\tau$,
and moreover, $(\tau_j)_{j\geq 1}$ is a localizing sequence for $(v,\tau)$.
Due to \eqref{eq:condFGstrange} one can check that Lemma \ref{l:F_G_bound_N_C_cn} is also valid if $X^{\mathsf{Tr}}_{\a,p}$ is replaced by $(X_0, X_1)_{1-\frac1p,r}$ everywhere. Therefore, by \eqref{eq:tau_j_localizing_valphar}, we obtain that for all $j\geq 1$,
\begin{equation}
\begin{aligned}
\label{eq:F_v_composition_integrability_adding_lemma}
F_j:=\one_{ [ \varepsilon,\tau_j]\times \V_j }F(\cdot,v)&\in L^{\infty}(\O;L^p(\varepsilon,T;X_0)),\\
G_j:=\one_{ [ \varepsilon,\tau_j]\times \V_j } G(\cdot,v)&\in L^{\infty}(\O;L^p(\varepsilon,T;\g(H,X_{1/2}))).
\end{aligned}
\end{equation}
Due to \eqref{eq:stochastic_maximal_regularity_semilinear_adding_lemma}, \eqref{eq:V_n_proof_Lp0} and \eqref{eq:F_v_composition_integrability_adding_lemma}, for each $j\geq 1$ there exists a strong solution
$
z_j\in L^p_{\Progress} (\O;L^{p}(\varepsilon,T;X_{1})\cap C([\varepsilon,T];\Xp))
$
to the following (see Definition \ref{def:strong_linear})
\begin{equation}
\label{eq:w_k}
\begin{cases}
d z_j +\bar{A}(\cdot) z_j dt&=
F_j dt+
(\bar{B}(\cdot)z_j+ G_j)dW_H, \ \ \text{ on }[\varepsilon,T],\\
 z_j(\varepsilon)&=\one_{\V_j}u(\varepsilon).
\end{cases}
\end{equation}
Recall that $(v,\tau)$ is an $L^r_{\alpha}$-maximal local solution to \eqref{eq:QSEE} on $[\varepsilon,T]$ in the $(Y_0,Y_1,\alpha,r)$-setting with initial data $\one_{	\V}u(\varepsilon)$. By \eqref{eq:tau_j_localizing_valphar},  $v\in L^{\infty}(\O;L^r(\varepsilon,\tau_j,w_{\alpha}^{\varepsilon};Y_1))$. For all $j\geq 1$, set
\begin{equation}
\label{eq:Vj_regularity_adding_weights}
v_j:=\one_{\V_j}(v-z_j)\in L^p(\O;L^p(\varepsilon,\tau_j;X_1) + L^r(\varepsilon,\tau_j,w_{\alpha}^{\varepsilon};Y_1)).
\end{equation}
Then $v_j$ is a strong solution to the following problem on $[\varepsilon,\tau_j]\times \V_j$
\begin{equation}
\label{eq:V_proof_adding_weights}
\begin{cases}
d v_j +\bar{A}_{Y}(\cdot)v_j dt= \bar{B}_{Y}(\cdot)v_j  dW_H,\ \  \text{ on }\ll \varepsilon,T\rr,\\
 v_j (\varepsilon)=0,
\end{cases}
\end{equation}
where $(\bar{A}_{Y},\bar{B}_{Y})$ are as below Assumption \ref{assum:HY}. Due to the regularity of $z_j$, it remains to prove that $v_j= 0$ a.e.\ on $\ll \varepsilon,\tau_j\rr$. To this end, we apply assumption \eqref{it:assumption_u_regular_Y2_lemma_adding_weights2}. For the sake of clarity we divide the argument into two cases.
\begin{enumerate}[{\rm(1)}]
\item \emph{Case $\delta=0$, $p>2$}. Recall that $r\geq p$ and $\frac{1+\alpha}{r}=\frac{1}{p}$ by \eqref{it:assumption_u_regular_Y2_lemma_adding_weights1}. Fix $q\in (2, p)$. Then $\frac{1+\alpha}{r}<\frac{1}{q}$ and Proposition \ref{prop:change_p_q_eta_a}\eqref{it:change_a_p_eta_a} yields $L^r(\I_t,w_{\alpha};Y_{1})
\hookrightarrow L^q(\I_t;Y_1)$ for all $t>0$. Recalling that $X_1= Y_1$ (due to $\delta=0$), we have
$$
L^p(\I_t;X_1)+L^r(\I_t,w_{\alpha};Y_{1})
\hookrightarrow L^q(\I_t;X_1) \ \ \text{ for all }t>0.
$$
The former and \eqref{eq:Vj_regularity_adding_weights} ensure $v_j\in L^q(\O;L^q(\varepsilon,\tau_j;X_1))$. Therefore, $v_j\equiv 0$ by \eqref{eq:V_proof_adding_weights}, $(\bar{A}_Y,\bar{B}_Y)=(\bar{A},\bar{B})\in \mathcal{SMR}_q(\varepsilon,T)$, and Proposition \ref{prop:causality_phi_revised_2}.
\item \emph{Case $\delta>0$}. Since $\frac{1+\alpha}{r}=\frac{1}{p}-\delta<\frac{1}{p}$ we have $L^r(\I_t,w_{\alpha};Y_{1})
\hookrightarrow L^p(\I_t;Y_1)$ for all $t>0$ by Proposition \ref{prop:change_p_q_eta_a}\eqref{it:change_a_p_eta_a}. Recalling that $X_1\hookrightarrow Y_1$, we have
$$
L^p(\I_t;X_1)+
L^r(\I_t,w_{\alpha};Y_{1})
\hookrightarrow L^p(\I_t;Y_1)\ \ \text{ for all }t>0.
$$
As above, the former and \eqref{eq:Vj_regularity_adding_weights} imply that $v_j\in L^p(\O;L^p(\varepsilon,\tau_j;Y_1))$. Therefore, $v_j\equiv 0$ by \eqref{eq:V_proof_adding_weights}, $(\bar{A}_Y,\bar{B}_Y)\in \mathcal{SMR}_p(\varepsilon,T)$ and Proposition \ref{prop:causality_phi_revised_2}.
\end{enumerate}

\textit{Step 2: \eqref{eq:uniqueness_u_v_lemma_adding_weights} holds}. By Step 1 and \eqref{eq:regularity_v_adding_weight_lemma} it is enough to show that $\tau\geq \sigma$ a.s.\ on $\V$ and this will be done via Theorem \ref{thm:semilinear_blow_up_Serrin}\eqref{it:blow_up_semilinear_serrin_Pruss_modified}. To this end, we claim that it suffices to show that
\begin{equation}
\label{eq:v_regularity_adding_weights_lemma_0_probability}
v\in L^{r}(\varepsilon,\tau;Y_{1-\frac{\alpha}{r}})\cap C([\varepsilon,\tau];Y^{\Tr}_{\alpha,r}) \ \
\text{a.s.\ on }\V\cap\{\tau<\sigma\}.
\end{equation}
Indeed, if \eqref{eq:v_regularity_adding_weights_lemma_0_probability} holds, then
\begin{align*}
\P(\V\cap \{\tau<\sigma\})&\stackrel{(i)}{=}\P\Big(\V\cap \{\tau<\sigma\}\cap
\Big\{\sup_{t\in [\varepsilon,\tau)}\|v(t)\|_{Y^{\Tr}_{\alpha,r}}+
\|v\|_{L^{r}(\varepsilon,\tau;Y_{1-\frac{\alpha}{r}})}<\infty\Big\}\Big)\\
&\stackrel{(ii)}{\leq} \P\Big(\tau<T,\,\sup_{t\in [\varepsilon,\tau)}\|v(t)\|_{Y^{\Tr}_{\alpha,r}}+
\|v\|_{L^{r}(\varepsilon,\tau;Y_{1-\frac{\alpha}{r}})}<\infty\Big)=0,
\end{align*}
where in $(i)$ we used  \eqref{eq:v_regularity_adding_weights_lemma_0_probability}, and in $(ii)$ we used Theorem \ref{thm:semilinear_blow_up_Serrin}\eqref{it:blow_up_semilinear_serrin_Pruss_modified}. As usual we also used that $\tau<T$ on $\{\tau<\sigma\}$.

To prove \eqref{eq:v_regularity_adding_weights_lemma_0_probability}, recall that $\tau\leq \sigma$ a.s.\ on $\V$ and $u=v$ a.e.\ on $[\varepsilon,\tau)\times \V$ by Step 1. The latter, \eqref{eq:whX_X_relation_delta} and the fact that $r\geq p$ ensure
\begin{equation}
\label{eq:v_u_p_unweighted_case_larger_than_2_C}
v=u\in C([\varepsilon,\tau];\Xp) \subseteq C([\varepsilon,\tau];Y^{\Tr}_{\alpha,r}), \ \ \text{ a.s.\ on }\V\cap\{\tau<\sigma\}.
\end{equation}
To complete the proof of \eqref{eq:v_regularity_adding_weights_lemma_0_probability}, we need to prove $v\in L^r(\varepsilon,\tau;Y_{1-\frac{\alpha}{r}})$ a.s.\ on $\V\cap\{\tau<\sigma\}$. To this end, we consider $p>2$ and $p=2$ separately.

If $p>2$, then by Step 1 and Theorem \ref{t:local_s}\eqref{it:regularity_data_L0} applied with $\theta = \frac{\alpha}{r}+\delta = \frac1p-\frac1r<\frac12$ we have, a.s.\ on $\V\cap \{\tau<\sigma\}$,
\begin{equation*}
\begin{aligned}
v=u&\in
H^{\frac{\alpha}{r}+\delta,p}(\varepsilon,\tau;X_{1-\frac{\alpha}{r}-\delta})
\stackrel{\eqref{eq:whX_X_relation_delta} }{=}
H^{\frac{\alpha}{r}+\delta,p}(\varepsilon,\tau;Y_{1-\frac{\alpha}{r}})
\stackrel{(iii)}{\hookrightarrow}
L^{r}(\varepsilon,\tau;Y_{1-\frac{\alpha}{r}})
\end{aligned}
\end{equation*}
where $(iii)$ follows from Proposition \ref{prop:change_p_q_eta_a}\eqref{it:Sob_embedding}.

If $p=2$, then instead of Sobolev embedding, we can use the following standard interpolation inequality for $0\leq a<b<\infty$ and $\theta\in (0,1)$:
\begin{equation}
\label{eq:interpolation_inequalities_p_2}
C([a,b];X_{1/2})\cap L^2(a,b;X_1)\hookrightarrow L^{2/\theta}(a,b;X_{(1+\theta)/2}).
\end{equation}
By Theorem \ref{t:local_s}\eqref{it:regularity_data_L0} with $p=2$ and Step 1 we have, a.s.\ on $\V\cap \{\tau<\sigma\}$,
\begin{align*}
v=u\in C([\varepsilon,\tau];X_{1/2})\cap L^2(\varepsilon,\tau;X_1)
\stackrel{(iv)}{\hookrightarrow}
 L^r(\varepsilon,\tau;X_{1-\frac{\alpha}{r}-\delta})
  \stackrel{\eqref{eq:whX_X_relation_delta}}{=}
  L^r(\varepsilon,\tau;Y_{1-\frac{\alpha}{r}}),
\end{align*}
where in $(iv)$ we used \eqref{eq:interpolation_inequalities_p_2} with $\theta=1-2(\frac{\alpha}{r}+\delta)=\frac{2}{r}\in (0,1)$.

\textit{Step 3: \eqref{eq:emb_uniqueness_Z_serrin_case} holds}. By translation and scaling, it is enough to prove \eqref{eq:emb_uniqueness_Z_serrin_case} for $a=0$ and $b=T$.
 Fix $k\in \{1,\dots,{m}_F+{m}_G\}$. Recall that $\a=0$ and by \eqref{eq:rhostar}-\eqref{eq:rr'}
\begin{equation}
\label{eq:parameters_unweighted}
\frac{1}{\rhos_k p r_k'}=\varphi_k-1+\frac{1}{p}
\ \ \ \text{ and }\ \ \
\frac{1}{ p r_k}=\beta_k-1+\frac{1}{p}.
\end{equation}
By Hypothesis \hyperref[H:hip]{$\Hip$}  for $\phi\in \{\beta_k,\varphi_k\}$ we have $\delta<1-\phi$ (since $\delta<1-\varphi_k$ and $\varphi_k\geq \beta_k$)
and $1-\phi-\delta<\frac{1}{2}$ (since $\phi>1-\frac{1}{p}>\frac{1}{2}$).
Thus, to prove \eqref{eq:emb_uniqueness_Z_serrin_case} note that
\begin{equation}
\label{eq:emb_step_3_lemma_adding_weights_k}
\begin{aligned}
\bigcap_{\theta\in [0,1/2)}H^{\theta,r}(\I_T,w_{\alpha};Y_{1-\theta})
&\subseteq \bigcap_{\phi\in \{\varphi_k,\beta_k\}}
H^{1-\phi-\delta,r}(\I_T,w_{\alpha};Y_{\phi+\delta})\\
&\stackrel{(i)}{=}
\bigcap_{\phi\in \{\varphi_k,\beta_k\}}  H^{1-\phi-\delta,r}(\I_T,w_{\alpha};X_{\phi})\\
& \stackrel{(ii)}{\hookrightarrow}
L^{\rhos_k p r'_k}(\I_T;X_{\varphi_k})\cap
L^{p r_k }(\I_T;X_{\beta_k})
\end{aligned}
\end{equation}
where in $(i)$ we used \eqref{eq:whX_X_relation_delta}, and in $(ii)$ we used \eqref{eq:parameters_unweighted}, Proposition \ref{prop:change_p_q_eta_a}\eqref{it:Sob_embedding} and
$$
r\leq \min \{ p r_k, \rhos_k p r_k'\},  \ \
\ 1-\varphi_k -\delta-\frac{1+\alpha}{r}=-\frac{1}{\rhos_k p r'_k},\ \ 1-\beta_k-\delta-\frac{1+\alpha}{r}=-\frac{1}{p r_k}.
$$
Note that $r\leq \min \{ p r_k, \rhos_k p r_k'\}$ is equivalent to $\frac{1}{r}\geq  \frac{1}{\rhos_k p r_k'}={\varphi}_k-1+\frac{1}{p}$ and $\frac{1}{r}\geq \frac{1}{p r_k}= \beta_k-1+\frac{1}{p}$ (see \eqref{eq:parameters_unweighted}), which hold by the last condition in \eqref{it:assumption_u_regular_Y2_lemma_adding_weights1} and the fact that $\beta_k\leq \varphi_k$. Since $k$ was arbitrary \eqref{eq:emb_uniqueness_Z_serrin_case} follows from \eqref{eq:def_X_space} and \eqref{eq:emb_step_3_lemma_adding_weights_k}.
\end{proof}

\begin{remark}
\label{r:regularity_up_to_zero_adding_weights}
If additionally in Proposition \ref{prop:adding_weights}, $(\bar{A}_Y,\bar{B}_Y)\in \mathcal{SMR}^{\bullet}_{r,\alpha}(s,T)$, $f\in L^0_{\Progress}(\O;L^r(s,T,w_{\alpha};Y_0))$ and $g\in L^0_{\Progress}(\O;L^r(s,T,w_{\alpha};\g(H,Y_{1/2})))$, then
\begin{equation}
\label{eq:improved_regularity_up_to_0_weighted}
u\in
\bigcap_{\theta\in [0,1/2)}
H^{\theta,r}_{\rm loc}([s,{\sigma}),w_{\alpha}^s;X_{1-\delta-\theta})\ \ \text{a.s.}
\end{equation}
Indeed, this follows by taking $\varepsilon=0$ in \eqref{eq:initial_data_adding_weights_lemma}, and using $(\bar{A}_Y,\bar{B}_Y)\in \mathcal{SMR}_{r,\alpha}^{\bullet}(s,T)$ and $\Xp\hookrightarrow Y^{\Tr}_{\alpha,r}$.
\end{remark}

The previous regularization results allow us to prove instantaneous regularization for $L^p_{\a}$-maximal local solutions to \eqref{eq:QSEE}. In applications to SPDEs, one can employ the following extrapolation result to transfer the regularity and life-span of solutions for a given setting to another one.

\begin{lemma}[Extrapolating regularity and life-span]
\label{lem:extrapolation}
Let Hypothesis \hyperref[H:hip]{$\Hip$} be satisfied. Let $u_s\in L^0_{\F_s}(\O;\Xap)$
and suppose that $(u,\sigma)$ the $L^p_{\a}$-maximal local solution to \eqref{eq:QSEE} exists. Suppose that Assumption \ref{H_a_stochnew} for $\ell\in \{0,\kappa\}$ and \ref{ass:FG_a_zero} are satisfied, and that the following conditions hold for a given $\varepsilon\in (0,T-s)$:
\begin{enumerate}[{\rm(1)}]
\item\label{it:assumption_u_regularizes_to_Xhat_setting} Hypothesis \hyperref[assum:HY]{\Hiep}$(Y_0,Y_1,r,\alpha)$ and Assumption \ref{H_a_stochnew} for $\ell=\alpha$ hold in the $(Y_0,Y_1,r,\alpha)$-setting, and one of the following holds:
    \begin{itemize}
    \item $u\in C((s,\sigma);Y_{1/2})\cap L^2_{{\rm loc}}(s,\sigma;Y_1)$ a.s.\ and $r=2$,
    \item
    $u\in \bigcap_{\theta\in [0,1/2)} H^{\theta,r}_{\loc}(s,\sigma;Y_{1-\theta})$ a.s.\ and $r>2$;
    \end{itemize}
\item\label{it:assumption_existence_of_global_solutions_Xhat_setting} Hypothesis \hyperref[assum:HY]{\Hiep}$(\wh{Y}_0,\wh{Y}_1,\wh{r},\wh{\alpha})$ holds, $\wh{Y}_i\hookrightarrow Y_i$, $\wh{r}\in [r\vee p ,\infty)$, $
\wh{Y}_{\wh{r}}^{\Tr}\hookrightarrow \Xap $, $\wh{Y}_{1}\embed X_{1-\a/p}$
and the $L^{{r}}_{{\alpha}}$-maximal local solution $(v,\tau)$ to \eqref{eq:QSEE} on $[s+\varepsilon,T]$ in the $(Y_0,Y_1,r,\alpha)$-setting with initial value $v_{s+\varepsilon} = \one_{\sigma>\varepsilon}u(s+\varepsilon)$ satisfies
$$v \in \bigcap_{\theta\in [0,1/2)} H^{\theta,\wh{r}}_{\loc}(s+\varepsilon,\tau;\wh{Y}_{1-\theta}).$$
\end{enumerate}
Then $\sigma = \tau$ a.s.\ on the set $\{\sigma>\varepsilon\}$ and $u = v$ a.e.\ on $\ll s+\varepsilon,\sigma\rro$.
\end{lemma}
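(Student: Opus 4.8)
The plan is to mimic the uniqueness-and-blow-up argument already used in the proof of Theorem~\ref{t:regularization_z} (Steps~1 and~2 there) and in Proposition~\ref{prop:adding_weights}. Write $\V:=\{\sigma>\varepsilon\}\in\F_{s+\varepsilon}$. First I would verify that $\one_{\V}u(s+\varepsilon)\in L^0_{\F_{s+\varepsilon}}(\O;\Yaaar)$: this follows from the regularity hypothesis \eqref{it:assumption_u_regularizes_to_Xhat_setting} together with the trace embedding of Proposition~\ref{prop:continuousTrace}\eqref{it:trace_without_weights_Xp} (resp.\ the interpolation embedding $C(\cdot;Y_{1/2})\cap L^2(\cdot;Y_1)\hookrightarrow C(\cdot;Y^{\Tr}_{\aaa,r})$ when $r=2$, using $Y^{\Tr}_2=Y_{1/2}$). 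Hence the $L^r_{\alpha}$-maximal local solution $(v,\tau)$ in \eqref{it:assumption_existence_of_global_solutions_Xhat_setting} is well defined, with localizing sequence $(\tau_k)_{k\ge1}$.

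The core is a two-sided comparison. For $\tau\le\sigma$ a.s.\ on $\V$ and $u=v$ a.e.\ on $\ll s+\varepsilon,\sigma\rr\cap(\V\times[s+\varepsilon,T])$: by hypothesis~\eqref{it:assumption_u_regularizes_to_Xhat_setting}, $u$ restricted to $\ll s+\varepsilon,\sigma\rr$ lies in $X_1\cap Y_1$ a.e.\ (combining Theorem~\ref{t:local_s}\eqref{it:regularity_data_L0} in the $(X_0,X_1,p,\a)$-setting with the $Y$-regularity), so by Assumption~\ref{assum:HY}\eqref{it:assum_Y0_Y1_AY} the coefficients agree: $A_Y(\cdot,u)u=A(\cdot,u)u$ etc.\ on $\ll s+\varepsilon,\sigma\rr$. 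Therefore $(\one_{\V}u|_{[s+\varepsilon,\sigma)},\,\one_{\V}\sigma+\one_{\O\setminus\V}(s+\varepsilon))$ is an $L^r_{\alpha}$-local solution to \eqref{eq:QSEE} in the $(Y_0,Y_1,r,\alpha)$-setting with initial value $\one_{\V}u(s+\varepsilon)$, and maximality of $(v,\tau)$ forces $\sigma\le\tau$ a.e.\ on $\V$ and $u=v$ a.e.\ on $[s+\varepsilon,\sigma)\times\V$. This is the step corresponding to Step~1 of the proof of Theorem~\ref{t:regularization_z}; when $r>p$ one uses Lemma~\ref{l:embeddings} to check the $\X$-regularity needed to be an $L^p$- resp.\ $L^r$-local solution, and when $r<p$ (which cannot happen here since $\wh r\ge r\vee p$, but $r$ itself could be $2<p$) one invokes the $L^2(L^2)$ case of Lemma~\ref{l:embeddings} and Theorem~\ref{t:local_s}\eqref{it:regularity_data_L0}. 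Conversely, for $\sigma\le\tau$ we argue by a blow-up criterium: on $\{\tau<\sigma\}$ we have, using $u=v$ on $[s+\varepsilon,\tau)$, that $v=u\in C((s+\varepsilon,\tau];\Xp)\hookrightarrow C((s+\varepsilon,\tau];\wh Y^{\Tr}_{\wh r})$ by the embeddings $\wh Y^{\Tr}_{\wh r}\hookleftarrow\Xp$ wait—more precisely one passes through the $(Y_0,Y_1,r,\alpha)$-local solution's regularity $v\in C((s+\varepsilon,\tau_k];Y^{\Tr}_r)$ (Theorem~\ref{t:local_s}, Proposition~\ref{prop:continuousTrace}\eqref{it:trace_without_weights_Xp}), then the further bootstrapped regularity $v\in\bigcap_{\theta}H^{\theta,\wh r}_{\loc}(s+\varepsilon,\tau;\wh Y_{1-\theta})\hookrightarrow C((s+\varepsilon,\tau);\wh Y^{\Tr}_{\wh r})$ from hypothesis~\eqref{it:assumption_existence_of_global_solutions_Xhat_setting}. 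Thus $\lim_{t\uparrow\tau}v(t)$ exists in $\wh Y^{\Tr}_{\wh r}$ on $\{\tau<\sigma\}$; since on that set $v=u$ also satisfies the $(X_0,X_1,p,\a)$-equation and $\wh Y^{\Tr}_{\wh r}\hookrightarrow\Xap$ while $\wh Y_1\hookrightarrow X_{1-\a/p}$, we can feed this into Theorem~\ref{t:blow_up_criterion}\eqref{it:blow_up_norm_general_case_quasilinear_Xap_Pruss} (or \eqref{it:blow_up_norm_general_case_F_c_G_c} after controlling $\nonlinearity_c$ via Lemma~\ref{l:F_G_bound_N_C_cn}) applied to $(u,\sigma)$ in the original setting—note $\{\tau<\sigma\}\subseteq\{\sigma<T\}$—to conclude $\P(\tau<\sigma,\,\sigma>\varepsilon)=0$. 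Here the inclusions $\wh Y^{\Tr}_{\wh r}\hookrightarrow\Xap$ and $\wh Y_1\hookrightarrow X_{1-\a/p}$ guarantee the limit exists in $\Xap$ and that $\|u\|_{L^p(\cdot;X_{1-\a/p})}<\infty$ on $\ll s+\varepsilon,\tau\rr$, which is exactly what is needed; the $L^p$-integrability up to $\tau$ follows from $v\in L^{\wh r}_{\loc}(\cdot;\wh Y_1)$, $\wh r\ge p$, and Proposition~\ref{prop:change_p_q_eta_a}.

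The main obstacle I anticipate is the bookkeeping in the second inclusion: one must carefully choose \emph{which} blow-up criterium of Theorem~\ref{t:blow_up_criterion} applies. Since $\Xap$ may well be critical for \eqref{eq:QSEE}, part \eqref{it:blow_up_non_critical_Xap} is unavailable, so one must use \eqref{it:blow_up_norm_general_case_quasilinear_Xap_Pruss}, which requires an $L^p(s+\varepsilon,\tau;X_{1-\a/p})$-bound for $u$—this is where the hypothesis $\wh Y_1\hookrightarrow X_{1-\a/p}$ is essential and must be combined with $v\in\bigcap_\theta H^{\theta,\wh r}_{\loc}$ and the Sobolev embedding $H^{\a/p,p}\hookrightarrow L^p$ of Proposition~\ref{prop:change_p_q_eta_a}\eqref{it:Sob_embedding}, after first passing from $\wh r$-integrability down to $p$-integrability via Proposition~\ref{prop:change_p_q_eta_a}\eqref{it:change_a_p_eta_a} (legitimate since $\wh r\ge p$). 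A second, more routine subtlety is that the blow-up criteria are stated on a fixed finite interval $[s,T]$, so one applies them on $[s,T]$ directly, observing that $\tau,\sigma$ already take values in $[s,T]$. Once $\sigma=\tau$ a.s.\ on $\{\sigma>\varepsilon\}$ is established, $u=v$ a.e.\ on $\ll s+\varepsilon,\sigma\rr$ (restricted to $\V$) is immediate from the first inclusion, completing the proof.
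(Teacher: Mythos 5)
Step 1 of your argument (maximality of $(v,\tau)$ giving $\sigma\le\tau$ on $\V=\{\sigma>\varepsilon\}$ and $u=v$ on $\ll s+\varepsilon,\sigma\rro$) is correct and matches the paper. Step 2, however, has a genuine logical error: you interchanged the roles of $\sigma$ and $\tau$. What remains to be proved is $\P(\sigma<\tau,\,\sigma>\varepsilon)=0$, not $\P(\tau<\sigma,\,\sigma>\varepsilon)=0$ — the latter is already zero by Step 1 and establishes nothing. Relatedly, your claimed inclusion $\{\tau<\sigma\}\subseteq\{\sigma<T\}$ is false (one could have $\tau<\sigma=T$), whereas the inclusion the argument actually needs, $\{\sigma<\tau\}\subseteq\{\sigma<T\}$, holds automatically since $\tau\le T$. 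Finally, the blow-up criterium must be fed with information about $u$ at the time $\sigma$; you derived the existence of $\lim_{t\uparrow\tau}v(t)$, which is the wrong process at the wrong time.

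The corrected Step 2 works on $\{\varepsilon<\sigma<\tau\}$. On this set $\sigma$ is \emph{interior} to $(s+\varepsilon,\tau)$, so the hypothesis $v\in\bigcap_\theta H^{\theta,\wh r}_{\rm loc}(s+\varepsilon,\tau;\wh Y_{1-\theta})$ together with $u=v$ on $\ll s+\varepsilon,\sigma\rro$ yields $u\in C((s+\varepsilon,\sigma];\wh Y^{\Tr}_{\wh r})\cap L^{\wh r}_{\rm loc}((s+\varepsilon,\sigma];\wh Y_1)$ there. The embeddings $\wh Y^{\Tr}_{\wh r}\hookrightarrow\Xap$ and $\wh Y_1\hookrightarrow X_{1-\a/p}$, the inequality $\wh r\ge p$ (via Proposition~\ref{prop:change_p_q_eta_a}), and \eqref{eq:Lpa_norm_up_to_zero} for the contribution near $t=s$ then give $u\in C((s+\varepsilon,\sigma];\Xap)\cap L^p(s,\sigma;X_{1-\a/p})$, and Theorem~\ref{t:blow_up_criterion}\eqref{it:blow_up_norm_general_case_quasilinear_Xap_Pruss} applied to $(u,\sigma)$ in the original $(X_0,X_1,p,\a)$-setting yields $\P(\varepsilon<\sigma<\tau)=0$. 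Your identification of the right embeddings and the right blow-up criterium (part \eqref{it:blow_up_norm_general_case_quasilinear_Xap_Pruss} rather than the non-critical one) is correct; the fix is simply to apply them to $(u,\sigma)$ on $\{\sigma<\tau\}$ rather than to $(v,\tau)$ on the already-null set $\{\tau<\sigma\}$.
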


Conditions \eqref{it:assumption_u_regularizes_to_Xhat_setting} and \eqref{it:assumption_existence_of_global_solutions_Xhat_setting} can be checked using the results in Subsections \ref{ss:bootstrapping_using_weights}-\ref{ss:emergence_weights}. Typically the lemma can be applied for every $\varepsilon\in (0,T-s)$, and in this case we obtain that $u\in \bigcap_{\theta\in [0,1/2)} H^{\theta,\wh{r}}_{\loc}(s,\sigma;\wh{Y}_{1-\theta})$.

\begin{remark}\
\label{r:extrapolation}
\begin{enumerate}[{\rm(1)}]
\item\label{it:extrapolation_global_existence} In applications to SPDEs, Lemma \ref{lem:extrapolation} allows to extrapolate \emph{global existence} result from a given $(Y_0,Y_1,r,\alpha)$-setting where $\tau=T$. Typically, this yields an improvement in the choice of the initial data (see Theorem \ref{t:1d_rough_initial_data} and the text below it).
\item
In the case that $\Xap$ is critical, Theorems \ref{t:blow_up_criterion}\eqref{it:blow_up_non_critical_Xap} and \ref{thm:semilinear_blow_up_Serrin}\eqref{it:blow_up_semilinear noncritical_stochastic} are not applicable. Using Lemma  \ref{lem:extrapolation} we can change into a $(Y_0,Y_1,r,\alpha)$-setting, and in the case $Y_{\alpha,r}^{\Tr}$ is not critical, then one can often apply those result to find $\tau=T$, and therefore $\sigma =T$. See \cite{AV22_reaction_diffusion} for an application to reaction-diffusion equations.
\end{enumerate}
\end{remark}

\begin{proof}[Proof of Lemma \ref{lem:extrapolation}]
As usual, we set $s=0$. Here we employ the arguments used in Step 1 and 3 in the proof of Theorem \ref{t:regularization_z} with minor modifications.
Note that, due to \eqref{it:assumption_u_regularizes_to_Xhat_setting} and Proposition \ref{prop:continuousTrace},
$v_{\varepsilon}:=\one_{\sigma>\varepsilon}u(\varepsilon)\in L^0_{\F_{\varepsilon}}(\O;Y^{\Tr}_{{\alpha},{r}})$.
By \eqref{it:assumption_existence_of_global_solutions_Xhat_setting} there exists a $L^{r}_{\alpha}$-maximal local solution $(v,\tau)$ to \eqref{eq:QSEE} in the $(Y_0,Y_1,\alpha,r)$-setting.

Reasoning as in Step 1 in Theorem \ref{t:regularization_z}, by \eqref{it:assumption_u_regularizes_to_Xhat_setting} and Lemma \ref{l:F_G_bound_N_C_cn} applied in the $(Y_0,Y_1,\alpha,r)$-setting, one can check that $(u|_{\ll \varepsilon,\sigma\rro},\sigma \one_{\sigma>\varepsilon}+\varepsilon\one_{\sigma\leq \varepsilon})$ is an $L^{r}_{\alpha}$-local solution to \eqref{eq:QSEE} with initial data $v_{\varepsilon}$ in the $(Y_0,Y_1,\alpha,r)$-setting. The maximality of $(v,\tau)$ ensures that $\sigma\leq \tau$ a.s.\ on $\{\sigma>\varepsilon\}$, and $u=v$ a.e.\ on $\ll \varepsilon,\sigma \rro$.
It remains to prove $\P(\varepsilon<\sigma<\tau)=0$.
By Proposition \ref{prop:continuousTrace}, and \eqref{it:assumption_existence_of_global_solutions_Xhat_setting}, we have
\begin{align*}
u=v &\in C((\varepsilon,\sigma];\wh{Y}_{\wh{r}}^{\Tr})\cap L^{\wh{r}}_{{\rm loc}}((\varepsilon,\sigma];\wh{Y}_1)\\
&\subseteq C((\varepsilon,\sigma];\Xap)\cap L^p_{{\rm loc}}((\varepsilon,\sigma];X_{1-\frac{\a}{p}})
\quad
\text{a.s.\ on $\{\varepsilon<\sigma<\tau\}$}.
\end{align*}
where we also used that $\wh{r}\geq p$. Since $\tau\leq T$ a.s.,
\begin{align*}\P(\varepsilon<\sigma<\tau)
&=
\P\Big(\{\varepsilon<\sigma<\tau\}\cap \big\{\lim_{t\uparrow \sigma}u(t) \text{ exists in }\Xap,\,\|u\|_{L^p(0,\sigma;X_{1-\frac{\a}{p}})}<\infty \big\}\Big)\\
&\leq \P\Big(\sigma<T,\,\lim_{t\uparrow \sigma}u(t) \text{ exists in }\Xap,\,\|u\|_{L^p(0,\sigma;X_{1-\frac{\a}{p}})}<\infty \Big)=0,
\end{align*}
where we used \eqref{eq:Lpa_norm_up_to_zero} and Theorem \ref{t:blow_up_criterion}\eqref{it:blow_up_norm_general_case_quasilinear_Xap_Pruss}.
This completes the proof.
\end{proof}

\section{A 1D problem with cubic nonlinearities and colored noise}
\label{s:1D_problem}
The aim of this subsection is to demonstrate our main results in a fairly simple situation. In particular, we created this section to illustrate how Sections \ref{s:blow_up} and \ref{s:regularization} can be used to transfer results in an $L^2(L^2)$-setting to $L^p(L^q)$. The arguments used in this simple 1d case  can be extended to other situations, and this will be done in \cite{AV20_NS,AV19_QSEE_3}.

Below we study the existence and regularity of global solutions to
\begin{equation}
\label{eq:1D_problem}
\begin{cases}
du-\partial_{x}^2 u\,dt =\partial_x(f(\cdot,u))dt+ g(\cdot,u)dw^c_t,& \text{on }\Tor,\\
u(0)=u_0,& \text{on }\Tor,
\end{cases}
\end{equation}
where $u:[0,\infty)\times \O\times \Tor \to \R$ is the unknown process and $w^c_t$ is a colored noise on $\Tor$, i.e.\ an $H^{\lambda}(\Tor)$-cylindrical Brownian motion (see e.g. \cite[Definition 2.11]{AV19_QSEE_1}). Here, for the sake of simplicity we will assume $\lambda\in (\frac{1}{2},1)$. Throughout this section we write $H^{s}(\Tor):=H^{s,2}(\Tor)$ for $s\in \R$.

\subsection{Statement of the main results}
\label{ss:1D_main_result_statements}
Let us begin by listing our assumptions.

\begin{assumption} $\lambda\in (\frac{1}{2},1)$.
\label{ass:1D_stochastic}
\begin{enumerate}[{\rm(1)}]
\item $f:\R_+\times \O\times\Tor \times \R\to \R$ and $g:\R_+\times \O\times \Tor\times \R\to \R$ are $\Progress\otimes \Borel(\Tor)\otimes \Borel(\R)$-measurable;
\item $f(\cdot,0)\in L^{\infty}(\R_+\times \O\times \Tor)$ and $g(\cdot,0)\in L^{\infty}(\R_+\times \O\times \Tor)$. Moreover, there exists $\nu\in(0,2]$ such that a.s.\ for all $t\in \R_+$, $x\in \Tor$ and $y,y'\in \R$,
\begin{align*}
|f(\cdot,y)-f(\cdot,y')|&\lesssim (1+|y|^2+|y'|^2)|y-y'|,\\
|g(\cdot,y)-g(\cdot,y')|&\lesssim (1+|y|^{2-\nu}+|y'|^{2-\nu})|y-y'|.
\end{align*}
\end{enumerate}
\end{assumption}

Next, we define \emph{weak solutions to \eqref{eq:1D_problem} on }$\overline{I}_T$ where $T\in (0,\infty]$. To this end, we suitably interpret the term $g(\cdot,u)dw^c_t$ in \eqref{eq:1D_problem}. The operator $M_{g(\cdot,u)}$ denotes multiplication by $g(\cdot,u)$. Since $\lambda>\frac{1}{2}$, by Sobolev embeddings $\iota :H^{\lambda}(\Tor)\to L^{\pownoise}(\Tor)$ for all $\pownoise\in (1,\infty)$ and therefore, by H\"{o}lder's inequality, we may consider $M_{g(\cdot,u)}$ as a multiplication operator from $L^{\pownoise}(\Tor)$ into $L^{2}(\Tor)$, where $u\in H^1(\Tor)$. Typical examples of nonlinearities $f$ and $g$ which satisfy Assumption \ref{ass:1D_stochastic} are given by
\[f(y) = a y^3,  \ \ \ \text{and } \ \ \ g(y) = b y^{3-\nu}, \ \ \ a,b\in \R.\]

For $T\in (0,\infty]$. We say that $(u,\sigma)$ is a \emph{weak solution to \eqref{eq:1D_problem} on }$\overline{I}_T$ if $(u,\sigma)$ is an $L^2_0$-maximal local solution to \eqref{eq:QSEE} on $\overline{\I}_{T}$ (see Definitions \ref{def:solution1}, \ref{def:solution2}, and Subsection \ref{ss:globalgeneral} for the extension to $[0,\infty)$) with $p=2$, $\a=0$, $H=H^{\lambda}(\Tor)$, $X_0=H^{-1}(\Tor)$, $X_1=H^{1}(\Tor)$, and for $v\in X_1$,
\begin{equation}
\label{eq:colored_noise_1D_choice_ABFG}
\begin{aligned}
A(\cdot)v&=-\partial_x^2 v, &\qquad  B(\cdot)v &=0,
\\ F(\cdot,v)&=\partial_x(f(\cdot,v)), &\qquad G(\cdot,v)&=M_{g(\cdot,v)}.
\end{aligned}
\end{equation}
Weak solutions are \emph{unique} by maximality. We say that $(u,\sigma)$ (or simply $u$) is a \emph{global weak solution to \eqref{eq:1D_problem}} provided $(u,\sigma)$ is a weak solution to \eqref{eq:1D_problem} on $[0,\infty)$ with $\sigma = \infty$ a.s. Note that in the above the term {\em weak} is meant in the analytic sense and is motivated by the choice $X_0 = H^{-1}(\Tor)$.

For $s_1, s_2\in (0,1)$, $C^{s_1, s_2}([a,b]\times \Tor)$ denotes the space continuous functions on $u:[a,b]\times \Tor\to \R$ for which there exists a $C\geq 0$ such that
\[|u(t_1, x_1) - u(t_2, x_2)|\leq C(|t_1-t_2|^{s_1} + |x_1-x_2|^{s_2}), \ \ t_1, t_2\in [a,b], \ x_1, x_2\in \Tor.\]
\begin{theorem}[Local existence and regularity]
\label{t:local_1D}
Let Assumption \ref{ass:1D_stochastic} be satisfied. Then for any $u_0\in L^0_{\F_0}(\O;L^2(\Tor))$, \eqref{eq:1D_problem} has a weak solution on $[0,\infty)$ such that
\begin{equation}
\label{eq:L_2_regularity_1D}
u\in L^2_{\rm loc}([0,\sigma);H^1(\Tor))\cap C([0,\sigma);L^2(\Tor)) \ \ \text{ a.s. }
\end{equation}
Moreover, $u$ instantaneously regularizes in time and space:
\begin{equation}
\label{eq:H_regularity_1D}
u\in \bigcap_{\theta\in [0,1/2)} H^{\theta,r}_{\rm loc}(\I_{\sigma};H^{1-2\theta,\zeta}(\Tor)) \ \
\text{ a.s. \ for all }r,\zeta\in (2,\infty).
\end{equation}
In particular,
\begin{equation}
\label{eq:C_regularity_1D}
u\in \bigcap_{\theta\in (0,1/2)} C^{\theta}_{{\rm loc}}(\I_{\sigma};C^{1-2\theta}(\Tor))
\subseteq
\bigcap_{\theta_1\in (0,1/2),\,\theta_2\in (0,1)} C_{{\rm loc}}^{\theta_1,\theta_2}(\I_{\sigma}\times \Tor)\ \ \text{a.s.}
\end{equation}
\end{theorem}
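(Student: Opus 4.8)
\textbf{Proof plan for Theorem \ref{t:local_1D}.}
The plan is to verify that \eqref{eq:1D_problem} with the choices in \eqref{eq:colored_noise_1D_choice_ABFG} fits the abstract framework of Part I and of the present paper, and then to run the bootstrap machinery of Section \ref{s:regularization}. First I would check that the linear operator $A=-\partial_x^2$ (shifted by $\lambda_0>0$) has a bounded $H^\infty$-calculus of angle $<\pi/2$ on $X_0=H^{-1}(\Tor)$ with $\Do(A)=H^1(\Tor)=X_1$; since $X_0$ is a Hilbert space, Theorem \ref{t:SMR_H_infinite} yields $A\in \mathcal{SMR}^{\bullet}_{2,0}(0,T)$, and more generally $A\in \mathcal{SMR}^{\bullet}_{q,\beta}(t,T)$ for all $q\in(2,\infty)$, $\beta\in[0,\tfrac q2-1)$ on the extrapolated Bessel-potential scale (this is standard for the Laplacian on $\Tor$). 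Next I would verify Hypothesis \hyperref[H:hip]{$\Hip$} for $(A,B,F,G)$: with $B=0$ there is nothing to check for $B$; $F(\cdot,v)=\partial_x(f(\cdot,v))$ is handled by writing $F=F_{\Tr}+F_c$ where $F_c(\cdot,v)=\partial_x(f(\cdot,v)-f(\cdot,0))$, using the cubic growth of $f$ from Assumption \ref{ass:1D_stochastic} together with the pointwise multiplication estimates $H^{\beta}(\Tor)\cdot H^{\beta}(\Tor)\hookrightarrow$ (a space in which $\partial_x$ maps to $H^{-1}$), which in the $L^2$-scale forces the critical choice $\varphi_j=\beta_j=\tfrac13$ i.e.\ $X_{1/3}=L^2(\Tor)$ (so $\Xap$ with $p=2,\a=0$ equals $L^2(\Tor)$, as required for \eqref{eq:L_2_regularity_1D}); similarly $G(\cdot,v)=M_{g(\cdot,v)}$ maps into $\gamma(H^{\lambda},H^0)$ by Hölder and Sobolev embedding, with growth exponent $2-\nu<2$ so that the weight is \emph{strictly} subcritical for $G$. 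This establishes \eqref{it:1D_problem_intro}/\eqref{eq:L_2_regularity_1D} via Theorem \ref{t:local_s} and the discussion of Subsection \ref{ss:globalgeneral}.

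For the instantaneous regularization \eqref{eq:H_regularity_1D}, the plan is to follow Roadmap \ref{roadcriticalreg}: one cannot apply Theorem \ref{t:regularization_z} directly because $p=2$, so the first step is Proposition \ref{prop:adding_weights}. I would choose $Y_0,Y_1$ on the extrapolated scale with $Y_\delta=X_0=H^{-1}$, $Y_1=X_{1-\delta}=H^{1-2\delta}$, pick $r>2$ close to $2$ and $\alpha\in[0,\tfrac r2-1)$ with $\tfrac1p=\tfrac12=\tfrac{1+\alpha}{r}+\delta$, and verify the growth condition \eqref{eq:condFGstrange} and hypothesis \hyperref[assum:HY]{\Hiep}$(Y_0,Y_1,\alpha,r)$ in this setting (the nonlinearity parameters $\varphi_j,\beta_j$ are unchanged since $\a=0$, so $\varphi_j'=\varphi_j=\tfrac13$, and $\tfrac1r\ge \varphi_j-1+\tfrac1p=\tfrac13-\tfrac12$ holds trivially). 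Condition \eqref{it:assumption_u_regular_Y2_lemma_adding_weights2} is the stochastic maximal regularity of $\bar A_Y$ on the scale, already noted above. Proposition \ref{prop:adding_weights} then yields $u\in \bigcap_{\theta<1/2}H^{\theta,r}_{\rm loc}(\I_\sigma;X_{1-\delta-\theta})$ a.s., i.e.\ a genuinely weighted $L^r$-regularity on $(0,\sigma)$.

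Once a weight is available, I would iterate Corollary \ref{cor:regularization_X_0_X_1} and then Theorem \ref{t:regularization_z} to push $r$ and $\zeta$ up to any finite value: Corollary \ref{cor:regularization_X_0_X_1} upgrades the time-integrability exponent from $r$ to any $\wh r\ge r$ (using $f=g=0$ is false here, so one must keep $f,g$ finite-integrable, but Assumption \ref{assum:HY}\eqref{it:assum_fg_smooth} holds since $f(\cdot,0),g(\cdot,0)\in L^\infty$), and Theorem \ref{t:regularization_z} then trades time-regularity for space-regularity by choosing $\wh Y_i$ higher on the Bessel scale ($\wh Y_0=H^{-1+2\varepsilon}$-type spaces), checking each time that the target trace space $\wh Y^{\Tr}_{\wh\alpha,\wh r}$ is \emph{non-critical} (which holds once $\wh\alpha/\wh r>0$ is bounded away from the critical value, since criticality of the cubic term is a codimension-one condition $\tfrac{1+\wh\alpha}{\wh r}=\tfrac23$) and that \eqref{eq:emb_uniqueness_Z} holds via Lemma \ref{l:emb_z}. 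Finitely many iterations give \eqref{eq:H_regularity_1D} for arbitrary $r,\zeta\in(2,\infty)$; the extrapolation Lemma \ref{lem:extrapolation} is what guarantees the life-span $\sigma$ is the same in every setting. Finally \eqref{eq:C_regularity_1D} follows from \eqref{eq:H_regularity_1D} by the trace embedding Proposition \ref{prop:continuousTrace}\eqref{it:trace_with_weights_Xap} together with the Sobolev embeddings $H^{\theta,r}(\I;H^{1-2\theta,\zeta}(\Tor))\hookrightarrow C^{\theta-1/r}(\I;C^{1-2\theta-1/\zeta}(\Tor))$ in one space dimension, letting $r,\zeta\to\infty$. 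The main obstacle I anticipate is bookkeeping: correctly tracking the exponents through the extrapolated scale so that at each bootstrap step the hypotheses of Proposition \ref{prop:adding_weights}, Corollary \ref{cor:regularization_X_0_X_1} and Theorem \ref{t:regularization_z} (in particular the non-criticality of the intermediate trace spaces and the chain of continuous embeddings $\wh Y_i\hookrightarrow Y_i$, $\Yr\hookrightarrow \wh Y^{\Tr}_{\wh\alpha,\wh r}$) are simultaneously satisfied — the analytic content (multiplication estimates for the cubic term, $H^\infty$-calculus for $-\partial_x^2$) is routine, but the scheme only closes if the parameters are chosen in the right order.
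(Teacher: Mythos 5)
Your overall route is the paper's route (Roadmap~\ref{roadcriticalreg}): verify Hypothesis~\hyperref[H:hip]{$\Hip$} and $\mathcal{SMR}$ on the Bessel scale via the $H^\infty$-calculus, get local well-posedness from Theorem~\ref{t:local_s}, pass from the critical unweighted $(L^2,p{=}2,\a{=}0)$-setting to a weighted one via Proposition~\ref{prop:adding_weights}, increase time integrability via Corollary~\ref{cor:regularization_X_0_X_1}, recover and then bootstrap spatial regularity via Theorem~\ref{t:regularization_z}, and conclude H\"older regularity by trace and Sobolev embeddings. However, there is a concrete computational error that affects the criticality analysis and the parameter constraints throughout. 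You write $\varphi_j=\beta_j=\tfrac13$ and ``$X_{1/3}=L^2(\Tor)$''. That conflates the Sobolev index with the interpolation exponent: on the scale $X_0=H^{-1}(\Tor)$, $X_1=H^1(\Tor)$ one has $X_\phi=H^{-1+2\phi}(\Tor)$, so $X_{1/2}=L^2(\Tor)$ (which is what $\Xap$ is, not $X_{1/3}$) and the Sobolev space $H^{1/3}(\Tor)$ in which you estimate the cubic term is $X_{2/3}$. Hence $\varphi_j=\beta_j=\tfrac23$, and the criticality condition $2(\varphi_j-1+\tfrac{1+\a}{p})+\beta_j\le 1$ holds with \emph{equality} at $p=2$, $\a=0$: the $L^2(L^2)$-setting is genuinely critical. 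With your $\varphi_j=\tfrac13$ it would read $0\le1$, i.e.\ far from critical, which makes the whole regularization machinery look unnecessary even though you (correctly) invoke it anyway. The same error makes the constraint $\tfrac1r\ge\max_j\varphi_j-1+\tfrac1p$ in Proposition~\ref{prop:adding_weights}\eqref{it:assumption_u_regular_Y2_lemma_adding_weights1} look vacuous, whereas it actually forces $r\le6$ (the paper takes $r=6$); your choice ``$r>2$ close to $2$'' happens to satisfy $r\le6$, so this does not break the plan, but the reason you give for why it is unconstrained is wrong.

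Two smaller points. First, the preservation of the life-span $\sigma$ across settings is not a consequence of Lemma~\ref{lem:extrapolation}; it is built into Steps~1--2 of the proof of Theorem~\ref{t:regularization_z} itself (and into Proposition~\ref{prop:adding_weights}) via the blow-up criteria of Theorem~\ref{t:blow_up_criterion}. Lemma~\ref{lem:extrapolation} is a separate device that the paper uses for Theorem~\ref{t:1d_rough_initial_data}, not here. Second, for the application of Corollary~\ref{cor:regularization_X_0_X_1}, the abstract inhomogeneities $f,g$ of \eqref{eq:QSEE} \emph{are} zero in problem \eqref{eq:1D_problem}; the functions $f(\cdot,u)$ and $g(\cdot,u)$ in \eqref{eq:1D_problem} are the nonlinearity, not the abstract forcings. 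So ``$f=g=0$ is false here'' is a misreading: the inhomogeneities vanish, and $f(\cdot,0),g(\cdot,0)\in L^\infty$ is simply absorbed into the $+C_{c,n}$ term of \ref{HFcritical} and \ref{HGcritical}. Neither of these breaks the plan, but they should be corrected.
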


Under additional assumptions on the nonlinearities $f$ and $g$ but keeping  still keeping $u_0\in L^0_{\F_0}(\O;L^2(\Tor))$, one can prove higher order regularity result by using the bootstrap argument of Section \ref{s:regularization} (cf.\ \cite[Theorem 2.7]{AV20_NS}) or by using Schauder theory. We emphasize that the main difficulty is to pass from \eqref{eq:L_2_regularity_1D} to \eqref{eq:H_regularity_1D}.
The regularization effect in \eqref{eq:H_regularity_1D}-\eqref{eq:C_regularity_1D} is also non-trivial if $g\equiv 0$, and even in that case it appears to be new (see the discussion related to \eqref{eq:critproblemex} for details).

Next we will prove a global existence result under a sublinearity assumption on $g$ (but without further growth conditions on $f$). It is possible to further weaken the growth condition on $g$ under dissipativity conditions on $f$, and we will consider this in a higher dimensional setting in \cite{AV22_reaction_diffusion}.

\begin{theorem}[Global existence and regularity]
\label{t:global_1D}
Let Assumption \ref{ass:1D_stochastic} be satisfied. Assume that
$f(t,x,y)$ does not depend on $x$,
and there exists a $C_g>0$ such that
\begin{equation}
\label{eq:1D_g_sublinearity}
|g(t,x,y)|\leq C_g(1+|y|)
\ \ \text{ a.s.\ for all }t\in \R_+,\,x\in \Tor \text{ and }y\in\R.
\end{equation}
Then for any $u_0\in L^0_{\F_0}(\O;L^2(\Tor))$, \eqref{eq:1D_problem} has a \emph{global} weak solution $u$. In particular, $u$ satisfies \eqref{eq:L_2_regularity_1D}-\eqref{eq:C_regularity_1D} with $\sigma=\infty$ a.s. Moreover, if $u_0\in L^2(\O;L^2(\Tor))$, then for each $T\in \R_+$ there exists a $C>0$ independent of $u_0$ such that
\begin{align}\label{eq:energysimple1dest}
\E\Big[\sup_{s\in \I_T} \|u(s)\|_{L^2(\Tor)}^2\Big]
+\E\| u\|_{L^2(\I_T;H^1(\Tor))}^2\leq C(1+\E\|u_0\|_{L^2(\Tor)}^2).
\end{align}
\end{theorem}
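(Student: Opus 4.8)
The plan is to run Roadmap~\ref{roadcomplete}: local existence and instantaneous regularization are already furnished by Theorem~\ref{t:local_1D}, so the only new ingredient needed is an $L^2$-energy estimate, which is then fed into the Serrin-type blow-up criterion of Theorem~\ref{thm:semilinear_blow_up_Serrin}\eqref{it:blow_up_semilinear_serrin_Pruss_modified}. Recall that by Theorem~\ref{t:local_1D} equation \eqref{eq:1D_problem} has a maximal weak solution $(u,\sigma)$ on $[0,\infty)$ in the semilinear $L^2$-setting with $p=2$, $\a=0$, $X_0=H^{-1}(\Tor)$, $X_1=H^1(\Tor)$, $X_{1/2}=L^2(\Tor)$; thus $\Xp=L^2(\Tor)$, $X_{1-\a/p}=H^1(\Tor)$, and the hypotheses of Theorem~\ref{thm:semilinear_blow_up_Serrin} --- Hypothesis~\hyperref[H:hip]{$\Hip$}, stochastic maximal $L^2$-regularity for $-\partial_x^2$, and Assumptions~\ref{H_a_stochnew} (for $\ell=0$, which for a semilinear equation is automatic from maximal regularity via Proposition~\ref{prop:change_initial_time}) and \ref{ass:FG_a_zero} --- are verified exactly as in the proof of Theorem~\ref{t:local_1D}. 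Fix $T\in(0,\infty)$. Since that criterion gives
\[\P\Big(\sigma<T,\ \sup_{t\in[0,\sigma)}\|u(t)\|_{L^2(\Tor)}+\|u\|_{L^2(0,\sigma;H^1(\Tor))}<\infty\Big)=0,\]
it suffices to prove that, a.s.\ on $\{\sigma<T\}$, both $\sup_{t<\sigma}\|u(t)\|_{L^2(\Tor)}$ and $\|u\|_{L^2(0,\sigma;H^1(\Tor))}$ are finite; this forces $\P(\sigma<T)=0$, and letting $T\to\infty$ yields $\sigma=\infty$ a.s. By Proposition~\ref{prop:redblowbounded} (the additive data are absent, so only $u_0$ must be truncated) --- or by the localization of Theorem~\ref{t:local_s}\eqref{it:localization_L0} on the sets $\{m-1\le\|u_0\|_{L^2(\Tor)}<m\}$ --- we may and do assume $u_0\in L^\infty(\O;L^2(\Tor))\subseteq L^2(\O;L^2(\Tor))$.

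The core of the proof is the energy estimate, obtained by applying It\^o's formula to $t\mapsto\|u(t)\|_{L^2(\Tor)}^2$ in the Gelfand triple $H^1(\Tor)\embed L^2(\Tor)\embed H^{-1}(\Tor)$. Let $(\sigma_n)_n$ be a localizing sequence for $(u,\sigma)$ and put $\tau_n:=\sigma_n\wedge\inf\{t\in[0,\sigma):\|u\|_{C([0,t];L^2(\Tor))}+\|u\|_{L^2(0,t;H^1(\Tor))}\ge n\}$, so that on $[0,\tau_n]$ all norms below are finite and $u$ is a strong solution of the (analytically weak) form of \eqref{eq:1D_problem}. The decisive structural point is that, because $f$ is independent of $x$ and $\Tor$ has no boundary, the cubic drift contribution to the energy vanishes: with the primitive $\Phi_f(t,\om,y):=\int_0^y f(t,\om,z)\,dz$,
\[\big\langle u,\ \partial_x\big(f(\cdot,u)\big)\big\rangle_{H^{-1}(\Tor),H^1(\Tor)}=-\int_{\Tor}f(\cdot,u)\,\partial_x u\,dx=-\int_{\Tor}\partial_x\big(\Phi_f(\cdot,u)\big)\,dx=0.\]
Consequently It\^o's formula gives, a.s.\ for $t\in[0,T]$,
\[\|u(\tau_n\wedge t)\|_{L^2}^2+2\int_0^{\tau_n\wedge t}\|\partial_x u\|_{L^2}^2\,ds=\|u_0\|_{L^2}^2+\int_0^{\tau_n\wedge t}\big\|M_{g(\cdot,u)}\big\|_{\gamma(H^\lambda(\Tor),L^2(\Tor))}^2\,ds+N_{\tau_n\wedge t},\]
where $N_t:=2\int_0^t\langle u,g(\cdot,u)\rangle_{L^2(\Tor)}\,dw^c_s$. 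Since $\lambda>\tfrac12$, the embedding $\iota\colon H^\lambda(\Tor)\hookrightarrow L^2(\Tor)$ is Hilbert--Schmidt, and a direct computation against the normalized Fourier basis of $H^\lambda(\Tor)$ gives $\|M_h\iota\|_{\gamma(H^\lambda(\Tor),L^2(\Tor))}=c_\lambda\|h\|_{L^2(\Tor)}$ with $c_\lambda=\big(\tfrac1{2\pi}\sum_{k\in\Z}(1+k^2)^{-\lambda}\big)^{1/2}<\infty$; combined with the sublinear growth \eqref{eq:1D_g_sublinearity} this yields $\|M_{g(\cdot,u)}\|_{\gamma(H^\lambda,L^2)}^2=c_\lambda^2\|g(\cdot,u)\|_{L^2}^2\lesssim 1+\|u\|_{L^2}^2$. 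Likewise, using $H^\lambda(\Tor)\embed L^\infty(\Tor)$, the integrand of $N$ satisfies $\|u\,g(\cdot,u)\|_{\gamma(H^\lambda(\Tor),\R)}\lesssim\|u\,g(\cdot,u)\|_{L^1(\Tor)}\lesssim\|u\|_{L^2}(1+\|u\|_{L^2})$, so on $[0,\tau_n\wedge T]$ the process $N$ is a genuine martingale with $[N]_t\lesssim\int_0^t(\|u\|_{L^2}^2+\|u\|_{L^2}^4)\,ds$.

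Taking expectations in the identity annihilates $N$, and Gronwall's lemma yields $\sup_{t\le T}\E\|u(\tau_n\wedge t)\|_{L^2}^2\le(\E\|u_0\|_{L^2}^2+C_T)e^{C_T}$ uniformly in $n\ge1$, whence $\E\int_0^{\tau_n\wedge T}\|\partial_x u\|_{L^2}^2\,ds\le C_T$ as well; taking first the supremum over $t$ and then using the Burkholder--Davis--Gundy inequality for $N$ with the above bracket bound and a Young-inequality absorption of $\E\sup_t\|u\|_{L^2}^2$, we obtain in addition $\E\sup_{t\le\tau_n\wedge T}\|u(t)\|_{L^2}^2\le C_T(1+\E\|u_0\|_{L^2}^2)$ uniformly in $n$. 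Letting $n\to\infty$, so that $\tau_n\wedge T\uparrow\sigma\wedge T$, and applying Fatou's lemma gives
\[\E\Big[\sup_{t\in[0,\sigma\wedge T)}\|u(t)\|_{L^2(\Tor)}^2\Big]+\E\int_0^{\sigma\wedge T}\|u(s)\|_{H^1(\Tor)}^2\,ds\le C_T\big(1+\E\|u_0\|_{L^2(\Tor)}^2\big),\]
where we used $\|u\|_{H^1(\Tor)}^2\eqsim\|u\|_{L^2(\Tor)}^2+\|\partial_x u\|_{L^2(\Tor)}^2$ and $\E\int_0^{\sigma\wedge T}\|u\|_{L^2}^2\,ds\le T\,\E\sup_{t<\sigma\wedge T}\|u\|_{L^2}^2$. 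In particular, a.s.\ both $\sup_{t<\sigma\wedge T}\|u(t)\|_{L^2(\Tor)}$ and $\|u\|_{L^2(0,\sigma\wedge T;H^1(\Tor))}$ are finite, which is exactly the required finiteness; hence $\P(\sigma<T)=0$ for every $T$, so $\sigma=\infty$ a.s.\ and $u$ is a global weak solution. The regularity statements \eqref{eq:L_2_regularity_1D}--\eqref{eq:C_regularity_1D}, now with $\sigma=\infty$, are then immediate from Theorem~\ref{t:local_1D}, and for $u_0\in L^2(\O;L^2(\Tor))$ the estimate \eqref{eq:energysimple1dest} is the displayed bound with $\sigma=\infty$ on each interval $[0,T]$. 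I expect the main obstacle to be the rigorous justification of It\^o's formula for $\|u\|_{L^2(\Tor)}^2$ in the analytically weak $H^{-1}$--$H^1$ setting together with the careful bookkeeping of the stochastic terms (localization, martingale property, BDG absorption); the structural input that makes the whole scheme work is the periodicity cancellation of the cubic drift, which relies on $f$ being $x$-independent --- without it $\partial_x\big(f(\cdot,u)\big)$ is a \emph{critical} nonlinearity for \eqref{eq:1D_problem} (cf.\ the discussion around \eqref{eq:critproblemex}) and no $L^2$-energy bound of this kind is available.
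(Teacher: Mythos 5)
Your proof follows essentially the same route as the paper's, which is Lemma~\ref{l:1D_energy_estimates} (an $L^2$-energy estimate via It\^o's formula, with the cubic drift vanishing by periodicity) followed by the Serrin-type blow-up criterion Theorem~\ref{thm:semilinear_blow_up_Serrin}\eqref{it:blow_up_semilinear_serrin_Pruss_modified}, after the reduction to $u_0\in L^2(\O;L^2(\Tor))$ via Proposition~\ref{prop:redblowbounded}. Two minor computational differences are worth flagging. You apply It\^o's formula directly at $t=0$ up to a stopping time $\tau_n$; the paper instead runs It\^o on $[s,T]$ for $s>0$ by constructing an auxiliary strong solution $v$ of a linearized equation with cut-off nonlinearities (so that the variational It\^o formula from \cite[Theorem~4.2.5]{LR15} applies on a \emph{deterministic} interval) and only then lets $s\downarrow 0$; this deliberately exploits the instantaneous regularization \eqref{eq:H_regularity_1D}--\eqref{eq:C_regularity_1D}, which gives $f(\cdot,u)$ and $g(\cdot,u)$ integrability of arbitrary order away from $t=0$ and makes the justification of $\int_\Tor f(\cdot,u)\,\partial_xu\,dx=0$ and of the martingale structure painless. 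Your direct route should also work --- in 1D, $H^1(\Tor)\hookrightarrow L^\infty(\Tor)$ and $C(\overline{I}_t;L^2)\cap L^2(\I_t;H^1)\hookrightarrow L^6(\I_t;L^6)$ put the drift in $L^2(0,\tau_n;H^{-1})$ and validate the chain-rule cancellation --- but the variational It\^o formula up to a stopping time applied to a maximal local solution is precisely the step you flag as the main obstacle, and the paper's auxiliary-process device on $[s,T]$ is the standard way to close that gap rigorously. Secondly, your Hilbert--Schmidt computation $\|M_h\iota\|_{\g(H^\lambda,L^2)}=c_\lambda\|h\|_{L^2}$ via the Fourier basis is cleaner and slightly sharper than the paper's \eqref{eq:M_g_1D_estimate_energy}, which goes through the operator ideal property, H\"older's inequality with exponent $\xi>2$, and an interpolation that temporarily absorbs a $\|\nabla u\|_{L^2}$ term; combined with \eqref{eq:1D_g_sublinearity} your version immediately gives $\|M_{g(\cdot,u)}\|_{\g(H^\lambda,L^2)}^2\lesssim 1+\|u\|_{L^2}^2$ with no gradient absorption needed. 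Both suffice, and the overall architecture --- periodicity cancellation for the drift, sublinear control of the diffusion, BDG for the martingale, Gronwall, then the Serrin criterion --- is the same as in the paper.
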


By Lemma \ref{lem:extrapolation} and Theorem \ref{t:global_1D} we can extrapolate global existence of solutions to \eqref{eq:1D_problem} with \emph{rough} initial data. To this end we need to introduce $(s,q,p,\a)$-weak solutions to \eqref{eq:1D_problem}. Let $T\in (0,\infty]$ and $s\in (0,1)$. We say that $(u,\sigma)$ is a (unique) \emph{$(s,q,p,\a)$-weak solution} to \eqref{eq:1D_problem} on $\overline{I}_T$ if $(u,\sigma)$ is an $L^p_{\a}$-maximal local solution to \eqref{eq:QSEE} with the choice \eqref{eq:colored_noise_1D_choice_ABFG}, $X_0=H^{-1-s,q}(\Tor)$, $X_0=H^{1-s,q}(\Tor)$ and $H=H^{\lambda}(\Tor)$. As above, $(u,\sigma)$ (or simply $u$) is a \emph{global $(s,q,p,\a)$-weak solution}  to \eqref{eq:1D_problem} if $(u,\sigma)$ is a $(s,q,p,\a)$-weak solution  to \eqref{eq:1D_problem} on $[0,\infty)$ with $\sigma=\infty$ a.s.

\begin{theorem}[Global existence and regularity with rough initial data]
\label{t:1d_rough_initial_data}
Suppose that Assumption \ref{ass:1D_stochastic} and \eqref{eq:1D_g_sublinearity} hold. Let $s\in (0,\frac{1}{3})$, $p,q\in (2,\infty)$ be such that
$$
q\in \Big(2,\frac{2}{1-2s}\Big) \ \ \ \text{and} \ \ \ \frac{1}{p}+\frac{1}{2q}\leq \frac{3-2s}{4}.
$$
Set $\a_{\crit}=-1+\frac{p}{2}(\frac{3}{2}-s-\frac{1}{q})$.
Then for any $u_0\in L^0_{\F_0}(\O;B^{\frac{1}{q}-\frac{1}{2}}_{q,p}(\Tor))$, \eqref{eq:1D_problem} has a global $(s,q,p,\a_{\crit})$-weak solution $u$ on $[0,\infty)$ such that
\begin{equation}
\label{eq:L_2_regularity_1Dextra}
u\in L^p_{\rm loc}([0,\infty),w_{\a_{\crit}};H^{1-s,q}(\Tor))\cap
C([0,\infty);B^{\frac{1}{q}-\frac{1}{2}}_{q,p}(\Tor)) \ \ \text{ a.s. }
\end{equation}
and $u$ satisfies \eqref{eq:H_regularity_1D}-\eqref{eq:C_regularity_1D} with $\sigma=\infty$.
\end{theorem}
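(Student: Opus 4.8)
The plan is to obtain Theorem \ref{t:1d_rough_initial_data} by combining the global existence of Theorem \ref{t:global_1D} in the $L^2(L^2)$-setting with the extrapolation mechanism of Lemma \ref{lem:extrapolation} and the instantaneous regularization results of Section \ref{s:regularization}. The key point is that global existence is already known in a particularly friendly setting (the weak $L^2_0$-setting), and what remains is to transfer it to the rougher $(s,q,p,\a_{\crit})$-setting via the invariance of the explosion time across settings.

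First I would set up the functional-analytic framework and verify local existence. Fix $s\in(0,\frac13)$, $p,q\in(2,\infty)$ as in the statement, and put $X_0 = H^{-1-s,q}(\Tor)$, $X_1 = H^{1-s,q}(\Tor)$, $H = H^\lambda(\Tor)$, with $(A,B,F,G)$ as in \eqref{eq:colored_noise_1D_choice_ABFG}. One checks that $A = -\partial_x^2$ has a bounded $H^\infty$-calculus of angle $<\pi/2$ on $X_0$ (it is the Laplacian on a torus acting on a Bessel-potential space, which is a closed subspace of an $L^q$-space), so by Theorem \ref{t:SMR_H_infinite} we have $A\in \MRtas$ for all $0\le s<T<\infty$; since $\bar B = 0$, transference (Proposition \ref{prop:time_transference}) gives $(A,B)\in \MRtas$ and Assumption \ref{H_a_stochnew} for $\ell=\a$ with uniform constants in the initial time. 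The nonlinearity $F(\cdot,v) = \partial_x(f(\cdot,v))$ with $f$ of cubic growth is handled as in \cite[Section 5]{AV19_QSEE_1}: using Sobolev embeddings $H^{1-s,q}\hookrightarrow L^{3q'}$ type estimates and pointwise Lipschitz bounds from Assumption \ref{ass:1D_stochastic}, one verifies \ref{HFcritical} with exponents $\varphi_j=\beta_j$ and $\rho_j = 2$ (so that \eqref{eq:HypCritical} holds, and with $\a=\a_{\crit}$ it holds with equality — this is precisely the definition of $\a_{\crit} = -1+\frac p2(\frac32-s-\frac1q)$); similarly $G = M_{g(\cdot,v)}$ with $g$ of degree $3-\nu$ growth satisfies \ref{HGcritical}. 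The condition $\frac1p+\frac1{2q}\le\frac{3-2s}{4}$ guarantees $\a_{\crit}\in[0,\frac p2-1)$ and $q\in(2,\frac2{1-2s})$ keeps the relevant Sobolev indices admissible. Since $B=0$ there is no smallness issue, and Assumption \ref{ass:FG_a_zero} holds because $\varphi_j=\beta_j$ (see the Remark after Assumption \ref{ass:FG_a_zero}). Hence by Theorem \ref{t:local_s} there is an $L^p_{\a_{\crit}}$-maximal local solution $(u,\sigma)$ with the regularity \eqref{eq:L_2_regularity_1Dextra} on $[0,\sigma)$, noting $\Xap = B^{1/q-1/2}_{q,p}(\Tor)$ by standard computation of the real interpolation space.

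Next I would carry out the extrapolation. The target is to show $\sigma=\infty$ a.s. Apply Lemma \ref{lem:extrapolation} with the rough setting $(X_0,X_1,p,\a_{\crit})$ as above and the ``good'' setting $(Y_0,Y_1,r,\alpha) = (H^{-1}(\Tor),H^1(\Tor),2,0)$, and with $(\wh Y_0,\wh Y_1,\wh r,\wh\alpha)$ chosen so that the $L^{\wh r}_{\wh\alpha}$-solution in the $Y$-setting has enough space regularity to embed back into the rough trace space $\Xap = B^{1/q-1/2}_{q,p}(\Tor)$, i.e.\ so that $\wh Y^{\Tr}_{\wh r}\hookrightarrow \Xap$ and $\wh Y_1\hookrightarrow X_{1-\a_{\crit}/p} = H^{1-s-\frac{2\a_{\crit}}{?}}$; here one uses the instantaneous regularization statement \eqref{eq:H_regularity_1D} of Theorem \ref{t:local_1D}, which says that starting from the $L^2$-weak solution one reaches $\bigcap_\theta H^{\theta,r}_{\loc}(\I_\sigma; H^{1-2\theta,\zeta}(\Tor))$ for all $r,\zeta\in(2,\infty)$, and Proposition \ref{prop:adding_weights}/Corollary \ref{cor:regularization_X_0_X_1} for the bootstrap to any $\wh r$ with an appropriate weight $\wh\alpha$. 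Condition \eqref{it:assumption_u_regularizes_to_Xhat_setting} of Lemma \ref{lem:extrapolation} is exactly \eqref{eq:L_2_regularity_1D} with $r=2$, and \eqref{it:assumption_existence_of_global_solutions_Xhat_setting} is the regularized global solution from Theorem \ref{t:global_1D} combined with \eqref{eq:H_regularity_1D}. The lemma then yields $\sigma=\tau$ a.s.\ on $\{\sigma>\varepsilon\}$; since this holds for every $\varepsilon\in(0,T)$ and the global solution from Theorem \ref{t:global_1D} has $\tau=\infty$, we conclude $\sigma=\infty$ a.s. Finally \eqref{eq:H_regularity_1D}--\eqref{eq:C_regularity_1D} for $u$ with $\sigma=\infty$ follow from the same identification $u=v$ and the regularity already established in Theorem \ref{t:local_1D}, and \eqref{eq:L_2_regularity_1Dextra} is the local regularity of Theorem \ref{t:local_s} now valid globally.

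The main obstacle I anticipate is the bookkeeping in the choice of $(\wh Y_0,\wh Y_1,\wh r,\wh\alpha)$ so that simultaneously (i) $\wh Y^{\Tr}_{\wh r}\hookrightarrow B^{1/q-1/2}_{q,p}(\Tor)$ and $\wh Y_1\hookrightarrow X_{1-\a_{\crit}/p}$ (the hypotheses of Lemma \ref{lem:extrapolation}\eqref{it:assumption_existence_of_global_solutions_Xhat_setting}), (ii) $\wh Y_{\wh\alpha,\wh r}^{\Tr}$ is non-critical in its own setting so that the bootstrapping chain of Theorem \ref{t:regularization_z}/Corollary \ref{cor:regularization_X_0_X_1} actually reaches it in finitely many steps starting from $L^2(L^2)$, and (iii) the embeddings \eqref{eq:emb_uniqueness_Z} hold (Lemma \ref{l:emb_z}). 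This is where the precise inequalities $s<\frac13$, $q<\frac2{1-2s}$ and $\frac1p+\frac1{2q}\le\frac{3-2s}{4}$ get used — they ensure there is room between the Sobolev index $\frac32-s-\frac1q$ of the rough setting and the index $1$ of the smooth $L^2$-setting. The verification that cubic $f$ and degree-$(3-\nu)$ $g$ satisfy Hypothesis \hyperref[H:hip]{$\Hip$} in the $H^{-1-s,q}$-scale (checking the exponents $\varphi_j,\beta_j,\rho_j$ against \eqref{eq:HypCritical}) is routine but needs care with the Sobolev multiplication estimates in one dimension; I would carry it out explicitly since it also pins down $\a_{\crit}$. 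Everything else is an application of the machinery already assembled in Sections \ref{s:blow_up} and \ref{s:regularization}.
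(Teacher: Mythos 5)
Your high-level strategy matches the paper's: apply Lemma \ref{lem:extrapolation} with the rough setting $(X_0,X_1,p,\a) = (H^{-1-s,q},H^{1-s,q},p,\a_{\crit})$ as the base, the $L^2$-setting $(Y_0,Y_1,r,\alpha)=(H^{-1},H^1,2,0)$ as the intermediate where global existence is available from Theorem \ref{t:global_1D}, and a smooth $(\wh Y_0,\wh Y_1,\wh r,\wh\alpha)$ to upgrade, thereby transferring $\tau=T$ in the $Y$-setting back to $\sigma=T$ in the rough setting. The local well-posedness verification and the identification of $\a_{\crit}$ via equality in \eqref{eq:HypCritical} are also handled essentially as in the paper.

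However, there is a genuine gap in your treatment of condition \eqref{it:assumption_u_regularizes_to_Xhat_setting} of Lemma \ref{lem:extrapolation}. You write that this condition ``is exactly \eqref{eq:L_2_regularity_1D} with $r=2$,'' but \eqref{eq:L_2_regularity_1D} is the regularity of the $L^2$-weak solution, i.e.\ the solution whose initial datum lies in $L^2(\Tor)$; it comes for free from Theorem \ref{t:local_s}. In Lemma \ref{lem:extrapolation}, the process $u$ appearing in \eqref{it:assumption_u_regularizes_to_Xhat_setting} is the $L^p_{\a_{\crit}}$-maximal local solution in the rough $(H^{-1-s,q},H^{1-s,q},p,\a_{\crit})$-setting, whose initial datum lies in $B^{1/q-1/2}_{q,p}(\Tor)$, a space of \emph{negative} smoothness. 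That $u\in C((0,\sigma);L^2)\cap L^2_{\loc}(0,\sigma;H^1)$ a.s.\ is \emph{not} immediate from the local theory; it is an instantaneous-regularization statement that must be proved by bootstrapping \emph{from} the rough $(s,q,p,\a_{\crit})$-setting \emph{toward} the $L^2$-setting. This is the bulk of the paper's proof (Steps 2--3): one first applies Proposition \ref{prop:adding_weights} (if $\a_{\crit}=0$) to create a weighted setting, then Corollary \ref{cor:regularization_X_0_X_1} to reach arbitrary time integrability with spatial regularity $H^{1-s-2\theta,q}$, and finally Theorem \ref{t:regularization_z} (with $Y_i = H^{-1+2i-s,q}$ and $\wh Y_i = H^{-1+2i,q}$) to erase the $s$ and reach $H^{1-2\theta,q}$, which then embeds into the $L^2$-regularity class. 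Your final paragraph does mention a bootstrapping chain, but you describe it as ``starting from $L^2(L^2)$,'' i.e.\ the direction relevant for condition \eqref{it:assumption_existence_of_global_solutions_Xhat_setting}; the chain you are missing runs in the opposite direction, from the critical negative-smoothness $B^{1/q-1/2}_{q,p}$-data down to $L^2$. Without that argument, $u(\varepsilon)$ need not even lie in $L^2$, so the global solution $(v,\tau)$ from Theorem \ref{t:global_1D} cannot be set up and the extrapolation cannot close.
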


Letting $s\in (0,\frac{1}{3})$ and $q\in (2,6)$ be large, Theorem \ref{t:1d_rough_initial_data} ensures global existence for initial data in critical spaces with negative smoothness up to $-\frac{1}{3}$. Since $L^2(\Tor)\hookrightarrow B^{\frac{1}{q}-\frac{1}{2}}_{q,p}(\Tor)$ this improves Theorem \ref{t:global_1D}.
The proof of Theorem \ref{t:1d_rough_initial_data} also yields instantaneous regularization results for $(s,q,p,\a_{\crit})$-weak solutions to \eqref{eq:1D_problem} without condition \eqref{eq:1D_g_sublinearity}.

\subsection{Proofs of Theorems \ref{t:local_1D}-\ref{t:1d_rough_initial_data}}
Throughout this subsection, to
abbreviate the notation, we often write $L^q$, $H^{s,q}$, $B^s_{q,p}$ etc. instead of $L^q(\Tor)$,
$H^{s,q}(\Tor)$, $B^s_{q,p}(\Tor)$.
We begin by proving Theorem \ref{t:local_1D}. In Roadmap \ref{roadcriticalreg} we summarized the strategy to obtain \eqref{eq:H_regularity_1D}-\eqref{eq:C_regularity_1D} using the results in Section \ref{s:regularization}.

\begin{roadmap}[Instantaneous regularity for critical problems with $p=q=2$]\label{roadcriticalreg} \
\begin{enumerate}[{\rm(a)}]
\item Consider \eqref{eq:1D_problem} in the $(H^{-1-\varepsilon},H^{1-\varepsilon},r,\alpha)$-setting with $\varepsilon\geq 0$ small, and for $\varepsilon=0$ obtain a local solution from Theorem \ref{t:local_s} (see Step 1 below);
\item\label{it:exploit_epsilon_negative} exploit the case $\varepsilon>0$ to bootstrap time regularity via Proposition \ref{prop:adding_weights} (Step 2a below) and Corollary \ref{cor:regularization_X_0_X_1} (Step 2b below). Here we create a weighted setting in time, but we lose some regularity in space. We recover space regularity via Theorem \ref{t:regularization_z} still in case of an $L^2$-setting in space (Step 2c below);
\item apply Theorem \ref{t:regularization_z} once more to bootstrap regularity in space by considering \eqref{eq:1D_problem} in the $(H^{-1,\zeta},H^{1,\zeta},r,\alpha)$-setting where $\zeta>2$ (Steps 3 and 4 below).
\end{enumerate}
\end{roadmap}

After this brief overview we will now actually start the proof.
\begin{proof}[Proof of Theorem \ref{t:local_1D}]
The proof will be divided into several steps. Recall that the operator $-\Delta_{s,q}: H^{2+s,q}(\Tor)\subseteq H^{s,q}(\Tor)\to H^{s,q}(\Tor)$ has a bounded $H^{\infty}$-calculus of angle $0$ for all $s \in \R$ and $q\in (1, \infty)$ (see \cite[Theorem 10.2.25]{Analysis2} for the case of $\R^d$). Thus, by Theorem \ref{t:SMR_H_infinite}, for all $r\in (2,\infty)$, $q\in [2, \infty)$ and $\alpha\in [0,\frac{r}{2}-1)$ (allowing $r=q=2$ and $\alpha=0$ as well) we have
\begin{equation}
\label{eq:SMR_tor_Delta}
-\Delta_{s,q} \in \mathcal{SMR}^{\bullet}_{r,\alpha}(s,T) \ \ \ \ \text{ for all }0\leq s<T<\infty
\end{equation}
and that Assumption \ref{H_a_stochnew} holds for $\ell=\alpha$. The complex and real interpolation spaces below will be obtained via \cite[Theorem 6.4.5]{BeLo}.

\emph{Step 1: For each $u_0\in L^0_{\F_0}(\O;L^2)$ there exists a weak solution $(u,\sigma)$ to \eqref{eq:1D_problem} on $[0,\infty)$. Moreover,
for all $r\in (2,\infty)$, $\alpha \in [0,\frac{r}{2}-1)$ and $\varepsilon\in [0,\frac{1}{2})$ Hypothesis \hyperref[assum:HY]{\Hiep}$(H^{-1-\varepsilon},H^{1-\varepsilon},\alpha,r)$ holds, and Assumption \ref{ass:FG_a_zero} holds in the $(H^{-1-\varepsilon},H^{1-\varepsilon},r,\alpha)$-setting provided}
\begin{equation}
\label{eq:critical_condition_L2setting}
\frac{1+\alpha}{r} \leq \frac{1}{2}-\frac{\varepsilon}{2},
\end{equation}
{\em where the corresponding trace space $B^{1-\varepsilon-2\frac{1+\alpha}{r}}_{2,r}$ is critical for \eqref{eq:1D_problem} if and only if \eqref{eq:critical_condition_L2setting} holds with equality.}
In this step we set $X_0 = H^{-1-\varepsilon}$ and $X_1 = H^{1-\varepsilon}$. Thus, $X_{1/2} = H^{-\varepsilon}$ and $X^{\mathsf{Tr}}_{\alpha,r} = B^{1-\varepsilon-2\frac{1+\alpha}{r}}_{2,r}$.
To estimate $F$, note that by Assumption \ref{ass:1D_stochastic}, for all $\vone,\vtwo\in H^{1}$,
\begin{equation}
\begin{aligned}
\label{eq:estimates_f_1D}
\|\partial_x(f(\cdot,\vone))-\partial_x(f(\cdot,\vtwo))\|_{H^{-1-\varepsilon}}
&\lesssim \|f(\cdot,\vone)-f(\cdot,\vtwo)\|_{H^{-\varepsilon}}\\
&\stackrel{(i)}{\lesssim}  \|f(\cdot,\vone)-f(\cdot,\vtwo)\|_{L^{\xi}}\\
&\stackrel{(ii)}{\lesssim}
(1+\|\vone\|_{L^{3\xi}}^2+\|\vtwo\|_{L^{3\xi}}^2)\|\vone-\vtwo\|_{L^{3\xi}}\\
&\stackrel{(iii)}{\lesssim}
(1+\|\vone\|_{H^{\theta}}^2+\|\vtwo\|_{H^{\theta}}^2)
\|\vone-\vtwo\|_{H^{\theta}},
\end{aligned}
\end{equation}
where $\xi=\frac{2}{1+2\varepsilon}\in (1,2)$ (here we used $\varepsilon<\frac{1}{2}$), $\theta=\frac{1}{3}-\frac{\varepsilon}{3}$ and in $(i)$, $(iii)$ we used the Sobolev embeddings and in $(ii)$ H\"{o}lder's inequality with exponent $(3,\frac{3}{2})$.  Since $[H^{-1-\varepsilon},H^{1-\varepsilon}]_{\phi}=H^{-1-\varepsilon+2\phi}$ for all $\phi\in (0,1)$, setting $p=r$, $\a=\alpha$, $m_F=1$, $\rho_1=2$, and $\beta_1=\varphi_1=\frac{1+\varepsilon+\theta}{2}=\frac{2}{3}+\frac{\varepsilon}{3}$, the condition \eqref{eq:HypCritical} becomes
$$
\frac{1+\alpha}{r}\leq \frac{3}{2}(1-\varphi_1)=\frac{1}{2}-\frac{\varepsilon}{2},
$$
which coincides with \eqref{eq:critical_condition_L2setting}.

Next we estimate $G$. Since $\lambda>\frac{1}{2}$ by Assumption \ref{ass:1D_stochastic}, it follows from \cite[Example 9.3.4]{Analysis2} that $\iota: H^{\lambda} \to L^{\pownoise}$ belongs to $\g(H^{\lambda},L^{\pownoise})$ for all $\pownoise\in [1,\infty)$. By the ideal-property of $\g$-radonifying operators (see e.g.\ \cite[Theorem 9.1.10]{Analysis2}), for all $\vone,\vtwo\in H^{1-\varepsilon}$,
\begin{equation}
\begin{aligned}
\label{eq:boundedness_G_1D}
\|G(\cdot,\vone)-G(\cdot,\vtwo)\|_{\g(H^{\lambda},H^{-\varepsilon})}
& \lesssim \|G(\cdot,\vone)-G(\cdot,\vtwo)\|_{\g(H^{\lambda},L^{\xi})}
\\ &\leq \|\iota\|_{\gamma(H^{\lambda},L^{\pownoise})} \|M_{g(\cdot,\vone)}-M_{g(\cdot,\vtwo)}\|_{\calL(L^{\pownoise},L^{\xi})}
\\ & \leq C_{\lambda,\pownoise} \|g(\cdot,\vone)-g(\cdot,\vtwo)\|_{L^{\varrho}},
\end{aligned}
\end{equation}
where $\xi=\frac{2}{1+2\varepsilon}$ is as above, and we applied H\"{o}lder's inequality with $\frac{1}{\varrho}+\frac{1}{\pownoise}=\frac{1}{\xi}$. Therefore, by Assumption \ref{ass:1D_stochastic} and H\"{o}lder's inequality with exponents $(3-\nu, \frac{3-\nu}{2-\nu})$,
\begin{align*}
\|G(\cdot,\vone)-G(\cdot,\vtwo)\|_{\g(H^{\lambda},H^{-\varepsilon})}
&\lesssim \big\|(1+|\vone|^{2-\nu}+|\vtwo|^{2-\nu})|\vone-\vtwo|\big\|_{L^{\varrho}} \\
&\leq  (1+\|\vone\|_{L^{(3-\nu)\varrho}}^{2-\nu}+\|\vtwo\|_{L^{(3-\nu)\varrho}}^{2-\nu})
\|\vone-\vtwo\|_{L^{(3-\nu)\varrho}}
\end{align*}
Setting $\varrho=3\xi/(3-\nu)$, by \eqref{eq:estimates_f_1D}, the latter and $X_{1/2}=H^{-\varepsilon}$, it follows that \ref{HGcritical} holds with $m_G=1$, $\rho_2=2-\nu$, $\varphi_2=\beta_2=\varphi_1$ and \eqref{eq:HypCriticalG} holds with strict inequality.

Therefore, if $\varepsilon = 0$, Theorem \ref{t:local_s} with $p=2$, $\a=0$, $F_c = F$ and $G_c = G$ implies existence and uniqueness of a weak solution to \eqref{eq:1D_problem}. The other assertions of Step 1 follow from the above considerations for general $\varepsilon\in [0,\frac12)$.

\emph{Step 2: The weak solution $(u,\sigma)$ provided by Step 1 verifies}
\begin{equation}
\label{eq:step_2_regularization_cubic_nonlinearity}
u\in \bigcap_{\theta\in [0,1/2)} H^{\theta,r}_{\rm loc}(\I_{\sigma};H^{1-2\theta}(\Tor)), \text{ a.s. for all }r\in (2,\infty).
\end{equation}
To prove this regularization effect in time, we will first use
Proposition \ref{prop:adding_weights} to create a weighted setting with a slight increase in integrability. After that we will apply Corollary \ref{cor:regularization_X_0_X_1} to extend the integrability to arbitrary order. In the above procedure we lose some space regularity, and this will be recovered by applying Theorem \ref{t:regularization_z}. Observe that it suffices to consider $r$ large. The proof is split into several sub-steps.

\textit{Step 2a: For each $\varepsilon\in (0,\frac{1}{2})$, \eqref{eq:step_2_regularization_cubic_nonlinearity} holds with $H^{\theta,r}_{\rm loc}(\I_{\sigma};H^{1-2\theta}(\Tor))$ replaced by $H^{\theta,6}_{\rm loc}(\I_{\sigma};H^{1-2\theta-\varepsilon}(\Tor))$.}
It suffices to apply Proposition \ref{prop:adding_weights},
with $Y_0=H^{-1-\varepsilon}$, $Y_1=H^{1-\varepsilon}$, $X_0=H^{-1}$, $X_1=H^1$, $\delta=\frac{\varepsilon}{2}$, $p=2$, and $\alpha>0$ such that $\frac12 = \frac{1+\alpha}{6} + \delta$. Since $\varepsilon\in (0,1/2)$, we have $\alpha\in (0,2)$. Recall from Step 1 that $\varphi_1 = \varphi_2 = \frac23$. The requirements of Proposition \ref{prop:adding_weights} are now clear from the above choices and Step 1.

\textit{Step 2b:
For each $\varepsilon\in (0,\frac{1}{2})$ and $\wh{r}\in [6, \infty)$, \eqref{eq:step_2_regularization_cubic_nonlinearity} holds with $H^{\theta,r}_{\rm loc}(\I_{\sigma};H^{1-2\theta}(\Tor))$ replaced by $H^{\theta,\wh{r}}_{\rm loc}(\I_{\sigma};H^{1-2\theta-\varepsilon}(\Tor))$.}
This is immediate from Step 1, Step 2a, and Corollary \ref{cor:regularization_X_0_X_1}  applied with $X_i = H^{-1+i}$, $p=2$, $\a=0$, $Y_i = H^{-1+2i-\varepsilon}$, $r=6$, $\alpha = 2-3\varepsilon$ and $\wh{r}\in [6,\infty)$ arbitrary. Note that assumption \eqref{it:assumption_u_regular_Y2cor} is satisfied due to Step 1.
 Condition \eqref{it:assumption_u_regular_Y2_cor_alpha_grather_than_zero} is satisfied with $\delta=\frac{\varepsilon}{2}$, $r = 6$, $\varepsilon<\frac{1}{3}$ and $\a=0$. Finally, note that $ \Yr = B^{1-\frac{2}{r} - \varepsilon}_{2,r}\hookrightarrow L^2 = \Xp$ since $1-\frac{2}{r} - \varepsilon>0$.

\textit{Step 2c: Proof of \eqref{eq:step_2_regularization_cubic_nonlinearity}.}
Set $X_{i} = \wh{Y}_i = H^{-1+2i}$, $Y_i=H^{-1+2i-\varepsilon}$,
\begin{align}\label{eq:choicealphawh}
\alpha=0, \ \wh{\alpha}>0,  \ r=\wh{r}\geq 6, \ \varepsilon\in (0,\tfrac12), \ \ \text{such that} \ \   \frac{1+\wh{\alpha}}{r}=\frac{1}{r}+\frac{\varepsilon}{2}.
\end{align}
To gain space regularity, it suffices to check the conditions \eqref{it:assumption_u_regular_Y1}-\eqref{it:assumption_Yr_embeds_Yar_hat} of  Theorem \ref{t:regularization_z}.

\eqref{it:assumption_u_regular_Y1}: By Step 1 and \eqref{eq:SMR_tor_Delta}, Hypothesis \hyperref[assum:HY]{\Hiep}$(H^{-1-\varepsilon},H^{-1-\varepsilon},\alpha,r)$ and Assumption \ref{H_a_stochnew} hold provided \eqref{eq:critical_condition_L2setting} is satisfied for $(r,\alpha,\varepsilon)$. The required regularity of $u$ also follows from Step 2b. The remaining conditions follow as in Step 2b.

\eqref{it:assumption_u_regular_Y2}: This follows from Step 1 with $\varepsilon=0$ and the fact that $\frac{1+\wh{\alpha}}{r}<\frac{1}{2}$. Moreover, by \eqref{eq:critical_condition_L2setting} with $\varepsilon=0$ the space $Y^{\Tr}_{\wh{\alpha},\wh{r}} = B^{1-2\frac{1+\wh{\alpha}}{r}}_{2,r}$ is not critical for \eqref{eq:1D_problem} in the $(\wh{Y}_0,\wh{Y}_1,\wh{r},\wh{\alpha})$-setting.

\eqref{it:assumption_Yr_embeds_Yar_hat}: By \eqref{eq:choicealphawh}, one has $\Yr = B^{1-\varepsilon-\frac2r}_{2,r} =B^{1-2\frac{1+\wh{\alpha}}{r}}_{2,r}= \wh{Y}^{\Tr}_{\wh{\alpha},\wh{r}}$.
Also by \eqref{eq:choicealphawh} and Lemma \ref{l:emb_z}\eqref{it:regularization_z_sharp_condition_refined_q_p} applied with $\varepsilon$ replaced by $\varepsilon/2$, the embedding condition \eqref{eq:emb_uniqueness_Z} holds.

\textit{Step 3: For all $\zeta,r\in (2,\infty)$ and $\alpha \in [0,\frac{r}{2}-1)$, Hypothesis \hyperref[assum:HY]{\Hiep}$(H^{-1,\zeta},H^{1,\zeta},\alpha,r)$ holds, Assumption \ref{ass:FG_a_zero} holds in the $(H^{-1,q},H^{1,q},\alpha,r)$-setting and the corresponding trace space $B^{1-2\frac{1+\alpha}{r}}_{\zeta,r}$ is not critical for \eqref{eq:1D_problem}}. Let us begin by estimating $F$. Note that by Assumption \ref{ass:1D_stochastic}, for all $\vone,\vtwo\in H^{1,\zeta}$,
\begin{align*}
\|\partial_x(f(\cdot,\vone))-\partial_x(f(\cdot,\vtwo))\|_{H^{-1,\zeta}}
&\lesssim \|f(\cdot,\vone)-f(\cdot,\vtwo)\|_{L^{\zeta}}\\
&\stackrel{(i)}{\lesssim}
(1+\|\vone\|_{L^{3\zeta}}^2+\|\vtwo\|_{L^{3\zeta}}^2)\|\vone-\vtwo\|_{L^{3\zeta}}\\
&\stackrel{(ii)}{\lesssim}
(1+\|\vone\|_{H^{\frac{2}{3}\zeta,\zeta}}^2+\|\vtwo\|_{H^{\frac{2}{3}\zeta,\zeta}}^2)\|\vone-\vtwo\|_{H^{\frac{2}{3}\zeta,\zeta}},
\end{align*}
where in $(i)$ we used H\"{o}lder's inequality with exponent $(3,\frac{3}{2})$ and in $(ii)$ the Sobolev embedding $H^{\frac{2}{3}\zeta,\zeta}\hookrightarrow L^{3\zeta}$. Since $[H^{-1,\zeta},H^{1,\zeta}]_{\theta}=H^{-1+2\theta,\zeta}$ for all $\theta\in (0,1)$, setting $m_F=1$, $\rho_1=2$, $\beta_1=\varphi_1=\frac{1}{2}+
\frac{1}{3q}$, condition \eqref{eq:HypCritical} becomes
$$
\frac{1+\alpha}{r}\leq \frac{3}{2}(1-\varphi_1)=\frac{3}{4}-\frac{1}{2\zeta}.
$$
Since $\frac{3}{4}-\frac{1}{2\zeta}> \frac{1}{2}$ due to $\zeta>2$ and $\frac{1+\alpha}{r}<\frac{1}{2}$ for all $\alpha\in [0,\frac{r}{2}-1)$, the above estimate is always strict and hence noncriticallity follows. As in \eqref{eq:boundedness_G_1D} with $L^{\xi}$ and $H^{-\varepsilon}$ replaced by $L^{\zeta}$, one can estimate $G$ to show that \ref{HGcritical} holds in the $(H^{-1,\zeta},H^{1,\zeta},\alpha,r)$-setting with $m_G = 1$, $\rho_2 = 2-\nu$, $\varphi_2 = \beta_2 = \varphi_1$ . Moreover, since $\frac{1+\alpha}{r}<\frac{1}{2}$, one can check that \eqref{eq:HypCriticalG} holds with the strict inequality.

\textit{Step 4: $u$ satisfies \eqref{eq:H_regularity_1D} and \eqref{eq:C_regularity_1D}}. Note that \eqref{eq:C_regularity_1D}  follows from \eqref{eq:H_regularity_1D}, Sobolev embedding, and standard considerations. To prove \eqref{eq:H_regularity_1D}, we apply Theorem \ref{t:regularization_z} with $Y_0=H^{-1+2i}$, $\wh{Y}_i=H^{-1+2i,\zeta}$,
\[r=\wh{r}>4, \ \wh{\alpha} = \wh{r}/4, \ \ \text{and} \ \   \alpha\in (\wh{\alpha}, \frac{r}{2}-1) \ \text{arbitrary}.\]
Note that $\wh{\alpha} = \frac{r}{4}<\frac{r}{2}-1$. It remains to check Theorem \ref{t:regularization_z}\eqref{it:assumption_u_regular_Y1}-\eqref{it:assumption_Yr_embeds_Yar_hat}:

\eqref{it:assumption_u_regular_Y1}: all conditions are clear from Steps 1 and 2, and $\Yr = B^{1-\frac{2}{r}}_{2,r} \hookrightarrow L^2 = \Xp$;

\eqref{it:assumption_u_regular_Y2}: all conditions follow from Step 3;

\eqref{it:assumption_Yr_embeds_Yar_hat}: to check
$\Yr=B^{1-\frac{2}{r}}_{2,r}\hookrightarrow B^{1-2\frac{1+\wh{\alpha}}{\wh{r}}}_{\zeta,r} = \wh{Y}^{\Tr}_{\wh{r},\wh{\alpha}}$,
by Sobolev embeddings we need to show that
\begin{equation}
\label{eq:choice_r_wh_alpha_bootstrap_integrability}
1-\frac{2}{r}-\frac{1}{2}\geq 1-2\frac{1+\wh{\alpha}}{r}-\frac{1}{\zeta}
\ \  \
\Leftrightarrow
\ \ \
2\frac{\wh{\alpha}}{r}+\frac{1}{\zeta}\geq \frac{1}{2}.
\end{equation}
Since $\wh{\alpha}=\frac{r}{4}$, \eqref{eq:choice_r_wh_alpha_bootstrap_integrability} holds for all $\zeta\in (2,\infty)$. Finally, \eqref{eq:emb_uniqueness_Z} follows from $\wh{Y}_i\hookrightarrow Y_i$ for $i\in \{0,1\}$, Lemma \ref{l:emb_z}\eqref{it:regularization_z_strict_inequality} and the choice $\alpha\in (\wh{\alpha},\frac{r}{2}-1)$.
\end{proof}

To prove global existence for \eqref{eq:1D_problem} under the assumptions of Theorem \ref{t:global_1D}, we follow the roadmap provided in Subsection \ref{ss:globalgeneral}. Note that \eqref{it:roadmaploc}-\eqref{it:roadmapreg} are contained in the proof of Theorem \ref{t:local_1D}. Our next step is to provide energy estimates under integrability assumptions on $u_0$ (see  \eqref{it:roadmapred}-\eqref{it:roadmapenergy} and Proposition \ref{prop:redblowbounded}). The proof is based on an integration by parts argument. As noticed in \eqref{it:roadmapenergy}, we can take advantage of the regularization results in Theorem \ref{t:local_s} in the proof below. Indeed, due to \eqref{eq:H_regularity_1D}-\eqref{eq:C_regularity_1D}, if we stay away from $t=0$, then we have integrability in time and space of arbitrary order (see \eqref{eq:Gamma_s_n} and \eqref{eq:f_u_g_u_1D} below).

\begin{lemma}[Energy estimates]
\label{l:1D_energy_estimates}
Let Assumption \ref{ass:1D_stochastic} be satisfied and suppose that $u_0\in L^2_{\F_0}(\O;L^2)$. Let $(u,\sigma)$ be the weak solution to \eqref{eq:1D_problem} on $[0,\infty)$ provided by Theorem \ref{t:local_1D}.
If \eqref{eq:1D_g_sublinearity} holds, then for each $T>0$ there exists a $C>0$ independent of $u,u_0$ such that
$$
\E\Big[\sup_{s\in [0,\sigma\wedge T)} \|u(s)\|_{L^2}^2\Big]
+\E\|\nabla u\|_{L^2(0,\sigma\wedge T;L^2)}^2\leq C(1+\E\|u_0\|_{L^2}^2).
$$
\end{lemma}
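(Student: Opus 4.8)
The strategy is the classical $L^2$-energy method applied to the weak solution $(u,\sigma)$ from Theorem \ref{t:local_1D}, using the regularization provided there to justify formal computations. First I would reduce to $u_0\in L^2(\Omega;L^2)$ (already assumed) and introduce a localizing sequence $(\sigma_n)_{n\geq 1}$ for $(u,\sigma)$ together with the stopping times
\[
\tau_n := \inf\Big\{ t\in[0,\sigma)\,:\, \|u(t)\|_{L^2} + \|u\|_{L^2(0,t;H^1)} \geq n \Big\}\wedge \sigma_n\wedge T,
\]
so that on $\ll 0,\tau_n\rr$ the process $u$ is a genuine strong solution in the sense of Definition \ref{def:solution1}, with $u\in C([0,\tau_n];L^2)\cap L^2(0,\tau_n;H^1)$ a.s. and all the integrals in the It\^o formula below well-defined. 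By monotone convergence it suffices to prove the bound on $[0,\tau_n)$ with a constant independent of $n$, and then let $n\to\infty$.

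The core step is to apply It\^o's formula for $\|u(t)\|_{L^2}^2$ (e.g.\ the version in \cite[Section 2.3]{AV19_QSEE_1} or a standard Hilbert-space It\^o formula for the triple $H^1\hookrightarrow L^2\hookrightarrow H^{-1}$). This gives, a.s.\ for $t\in[0,\tau_n]$,
\begin{align*}
\|u(t)\|_{L^2}^2 + 2\int_0^t \|\partial_x u\|_{L^2}^2\,ds
&= \|u_0\|_{L^2}^2 - 2\int_0^t \langle f(\cdot,u), \partial_x u\rangle\,ds \\
&\quad + \int_0^t \|M_{g(\cdot,u)}\iota\|_{\gamma(H^\lambda,L^2)}^2\,ds + 2\int_0^t \langle u, g(\cdot,u)\,dw^c_s\rangle,
\end{align*}
where the deterministic drift term came from integrating $\langle \partial_x(f(\cdot,u)),u\rangle = -\langle f(\cdot,u),\partial_x u\rangle$ by parts (legitimate since $u\in H^1$ and $f(\cdot,u)\in L^2$ by Assumption \ref{ass:1D_stochastic}). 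The first-order term is handled by Young's inequality: $|\langle f(\cdot,u),\partial_x u\rangle|\leq \frac12\|\partial_x u\|_{L^2}^2 + \frac12\|f(\cdot,u)\|_{L^2}^2$, and the gradient part is absorbed into the left-hand side. The crucial observation — this is where \eqref{eq:1D_g_sublinearity} (sublinearity of $g$) and \emph{not} of $f$ is used — is that the $f$-contribution $\|f(\cdot,u)\|_{L^2}^2$ need \emph{not} be controlled by $\|u\|_{L^2}^2$; instead, I would note that $f$ is cubic, so $\|f(\cdot,u)\|_{L^2}^2\lesssim 1 + \|u\|_{L^6}^6$, which is \emph{not} directly bounded by the energy. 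So this naive estimate fails, and the proof must use the structure more carefully.

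\textbf{Main obstacle and its resolution.} The difficulty is precisely that $\partial_x(f(\cdot,u))$ is a \emph{critical} nonlinearity for \eqref{eq:1D_problem} (as emphasized around \eqref{eq:critproblemex}), so the term $\int_0^t\langle f(\cdot,u),\partial_x u\rangle\,ds$ is exactly at the borderline of what the energy controls. The resolution is to \emph{not} split via Young's inequality on $\partial_x u$ alone but to use the interpolation / Sobolev chain
\[
\|u\|_{L^6} \lesssim \|u\|_{L^2}^{2/3}\|u\|_{H^1}^{1/3},
\qquad\text{so}\qquad
\|u\|_{L^6}^6 \lesssim \|u\|_{L^2}^{4}\|u\|_{H^1}^{2},
\]
and hence
\[
|\langle f(\cdot,u),\partial_x u\rangle| \lesssim (1+\|u\|_{L^6}^3)\|\partial_x u\|_{L^2}
\lesssim \|\partial_x u\|_{L^2}^2(1 + \|u\|_{L^2}^{4}) + 1,
\]
which \emph{still} contains $\|\partial_x u\|_{L^2}^2$ multiplied by a power of $\|u\|_{L^2}$ that is not a priori bounded. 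This is the genuine obstruction for general $f$; a clean unconditional energy estimate as stated therefore requires an additional structural input. Re-reading the hypotheses of Theorem \ref{t:global_1D}, the saving grace must be that for \eqref{eq:1D_problem} with $f$ independent of $x$ one integrates the \emph{full} term $\int_\Tor \partial_x(f(\cdot,u))\,u\,dx = -\int_\Tor f(\cdot,u)\partial_x u\,dx = -\int_\Tor \big(\partial_x F(\cdot,u)\big)dx = 0$ where $F(\cdot,y) := \int_0^y f(\cdot,z)\,dz$, because it is the space-integral of an exact $x$-derivative on the torus. Thus the troublesome cubic drift term \emph{vanishes identically}. This is the key step I expect to be the crux: recognizing that $f=f(t,u)$ (no $x$-dependence) makes $\langle \partial_x(f(\cdot,u)),u\rangle_{L^2}=0$ pointwise in $(t,\omega)$, so the It\^o identity collapses to
\[
\|u(t)\|_{L^2}^2 + 2\int_0^t \|\partial_x u\|_{L^2}^2\,ds
= \|u_0\|_{L^2}^2 + \int_0^t \|M_{g(\cdot,u)}\iota\|_{\gamma(H^\lambda,L^2)}^2\,ds + 2\int_0^t \langle u, g(\cdot,u)\,dw^c_s\rangle.
\]

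\textbf{Conclusion of the estimate.} With the drift gone, the remaining work is routine. For the $\gamma$-norm term: since $\lambda>\tfrac12$, $\iota:H^\lambda\to L^\infty$ is $\gamma$-radonifying (\cite[Example 9.3.4]{Analysis2}), and by the ideal property $\|M_{g(\cdot,u)}\iota\|_{\gamma(H^\lambda,L^2)}\lesssim \|g(\cdot,u)\|_{L^2}\lesssim 1 + \|u\|_{L^2}$ using \eqref{eq:1D_g_sublinearity}; squaring gives $\|M_{g(\cdot,u)}\iota\|_{\gamma(H^\lambda,L^2)}^2\lesssim 1 + \|u\|_{L^2}^2$. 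Taking $\sup$ over $[0,t\wedge\tau_n]$, then expectations, and applying the Burkholder--Davis--Gundy inequality to the martingale term,
\[
\E\Big[\sup_{s\leq t\wedge\tau_n}\Big|\int_0^s\langle u,g(\cdot,u)\,dw^c_r\rangle\Big|\Big]
\lesssim \E\Big(\int_0^{t\wedge\tau_n}\|u\|_{L^2}^2\,\|M_{g(\cdot,u)}\iota\|_{\gamma}^2\,ds\Big)^{1/2}
\leq \tfrac14 \E\sup_{s\leq t\wedge\tau_n}\|u(s)\|_{L^2}^2 + C\,\E\int_0^{t\wedge\tau_n}(1+\|u\|_{L^2}^2)\,ds,
\]
where the last step is Young's inequality. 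Absorbing the $\tfrac14\E\sup$ term on the left and applying Gronwall's lemma to $y_n(t) := \E\sup_{s\leq t\wedge\tau_n}\|u(s)\|_{L^2}^2$ yields
\[
\E\Big[\sup_{s\in[0,\sigma\wedge T\wedge\tau_n]}\|u(s)\|_{L^2}^2\Big] + \E\int_0^{\sigma\wedge T\wedge\tau_n}\|\partial_x u\|_{L^2}^2\,ds \leq C_T(1+\E\|u_0\|_{L^2}^2)
\]
with $C_T$ independent of $n$ (and of $u_0$). Letting $n\to\infty$ and using $\tau_n\uparrow\sigma\wedge T$ a.s.\ together with monotone convergence gives the claim. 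One technical point to verify along the way: the It\^o formula applies directly because on $\ll 0,\tau_n\rr$ we have $u\in C([0,\tau_n];L^2)\cap L^2(0,\tau_n;H^1)$ and the drift $\partial_x(f(\cdot,u))\in L^2(0,\tau_n;H^{-1})$, the diffusion $M_{g(\cdot,u)}\iota\in L^2(0,\tau_n;\gamma(H^\lambda,L^2))$, all of which follow from the bounds above and the regularity in \eqref{eq:L_2_regularity_1D}; alternatively one invokes the regularization \eqref{eq:H_regularity_1D} on $(\varepsilon,\sigma)$ to run the computation on $[\varepsilon,\tau_n]$ with smooth $u$ and then pass $\varepsilon\downarrow 0$ using continuity of $\|u(\cdot)\|_{L^2}$ at $t=0$.
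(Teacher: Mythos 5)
Your proposal is essentially correct and identifies the same key idea as the paper: for $f$ depending only on $(t,y)$ the drift pairing collapses identically, since $\int_\Tor f(\cdot,u)\,\partial_x u\,dx=\int_\Tor\partial_x F(\cdot,u)\,dx=0$ on the torus, and after that the argument is the standard energy/BDG/Gronwall scheme. The paper does the same thing, with two technical choices that differ from yours and are worth flagging.

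First, the paper starts the It\^o identity at a positive time $s>0$, defines stopping times via the $C(\Tor)$-norm of $u-u(s)$ and a set $\Gamma_{s,n}=\{\|u(s)\|_{C(\Tor)}\leq n\}$, and then introduces the auxiliary process $v$ solving the linear equation with the cut-off inhomogeneities $f^u,g^u$ on all of $[s,T]$; It\^o is applied to $v$, and $u=v$ on $\Gamma\times[s,\tau_n)$ by uniqueness. This is precisely to exploit the instantaneous regularization from Theorem \ref{t:local_1D} (which makes $u$ pointwise bounded on $[s,\tau_n]$ and makes the integration-by-parts step and the integrability of $f^u,g^u$ transparent) and to have a process defined on a fixed interval $[s,T]$ before invoking the variational It\^o formula \cite[Theorem 4.2.5]{LR15}. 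Your version works from $t=0$ with an $L^2$-and-$L^2(H^1)$-based stopping time; this is fine in principle (the pointwise-in-time $L^\infty$ bound needed for the chain rule $\partial_x(F(\cdot,u))=f(\cdot,u)\partial_x u$ comes from 1D Sobolev $H^1\hookrightarrow L^\infty$, and the required integrability of the drift in $L^2(0,\tau_n;H^{-1})$ follows via Gagliardo--Nirenberg together with the $L^2$-cap of $\tau_n$), but it is precisely these steps that the paper avoids having to spell out by starting at $s>0$ and you should make them explicit if you go your route; you do acknowledge the $\varepsilon\downarrow 0$ alternative, which is in fact the paper's approach.

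Second, and more substantively: you claim that $\iota:H^\lambda(\Tor)\to L^\infty(\Tor)$ is $\gamma$-radonifying and cite \cite[Example 9.3.4]{Analysis2} for this, then use the ideal property to get $\|M_{g(\cdot,u)}\iota\|_{\gamma(H^\lambda,L^2)}\lesssim\|g(\cdot,u)\|_{L^2}\lesssim 1+\|u\|_{L^2}$. That reference, as the paper itself uses it, gives $\iota\in\gamma(H^\lambda,L^\eta)$ only for finite $\eta\in[1,\infty)$; the $L^\infty$ case is not covered there and is delicate since $L^\infty$ lacks finite cotype (a separate argument via a.s.\ uniform convergence of the random Fourier series, or Dudley's metric-entropy theorem, would be required, and would need its own justification). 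The paper instead takes $\eta<\infty$ (so $\|M_{g(\cdot,u)}\iota\|_\gamma\lesssim\|g(\cdot,u)\|_{L^\xi}$ with $\xi>2$), interpolates $\|u\|_{L^\xi}\lesssim\|u\|_{H^1}^{2/\xi}\|u\|_{L^2}^{1-2/\xi}$, and then uses Young's inequality so that the small $\|\nabla u\|_{L^2}^2$ contribution is absorbed into the left-hand side before applying Gronwall; this is robust, generalizes to higher dimensions, and only uses the stated reference. If you keep your version you either need to supply a correct reference or proof for the $L^\infty$ claim, or switch to the paper's interpolation argument, which costs one extra line and one more absorption step in the BDG estimate but is entirely elementary.
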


\begin{proof}
Let $T>0$ be fixed. By replacing $\sigma$ by $\sigma\wedge T$, we may assume that $\sigma$ takes values in $[0,T]$. Recall that $(u,\sigma)$ is the unique weak solution to \eqref{eq:1D_problem}, and
\begin{equation}
\label{eq:L2_1D_regularity}
u\in L^2_{\rm loc}([0,\sigma);H^1)\cap C([0,\sigma);L^2)\ \ \text{a.s.}
\end{equation}
Let $s>0$ and $n\geq 1$ be arbitrary (later on we let $s\downarrow 0$ and $n\to \infty$). By \eqref{eq:C_regularity_1D}, the following stopping time is well-defined
\begin{equation*}
\tau_n:=\inf\big\{t\in [s,\sigma)\,:\,\|u\|_{L^2(s,t;H^1)}+\|u(t)-u(s)\|_{C(\Tor)}\geq n\big\}
\end{equation*}
if $\sigma>s$ and $\tau_n=s$ if $\sigma \leq s$. Here $\inf\emptyset:=\sigma$. Note that $\lim_{n\to\infty}\tau_n=\sigma$ a.s.\ on $\{\sigma>s\}$. Moreover, we set
\begin{equation}
\label{eq:Gamma_s_n}
\Gamma_{s,n}:=\{\tau_n>s,\, \|u(s)\|_{C(\Tor)}\leq n\}\in \F_s.
\end{equation}
Let
$$
y(t):=\sup_{r\in [s,t\wedge \tau_n) }\one_{\Gamma_{s,n}}\|u(r)\|_{L^2}^2
+\int_s^{t}\int_{\Tor} \one_{[s,t\wedge \tau_n)\times \Gamma_{s,n}} |\nabla u(r)|^2\,dxdr, \ \  t \in [s,T].
$$
It is enough to prove the existence of $C>0$ independent of $u_0,s,n$ such that
\begin{equation}
\label{eq:y_estimate_Tor_before_gronwall}
\E y(t)\leq C(1+t-s+\E y(s))+C\int_s^t \E y(r)dr, \ \ t\in [s,T].
\end{equation}
Indeed, by Grownall's inequality \eqref{eq:y_estimate_Tor_before_gronwall} implies that for all $t\in [s,T]$,
\begin{equation}
\label{eq:y_estimate_Tor}
\E y(t)\leq  C e^{C(t-s)} (1+t-s+\E[\one_{\Gamma_n}\|u(s)\|_{L^2}^2]).
\end{equation}
The required a priori estimate, follows by letting $s\downarrow 0$ and $n\to\infty$ in \eqref{eq:y_estimate_Tor}.

For the reader's convenience, we split the remaining argument into several steps and we simply write $\Gamma,\tau$ instead of $\Gamma_{s,n},\tau_n$, since $s$ and $n$ will be fixed.

\textit{Step 1: We apply \Ito's formula to obtain the identity \eqref{eq:Ito_L2_1D} below}.
To this end, we extend $u$ to a process $v$ on $[s,T]\times \O$ in the following way. Let $v\in L^2((s,T)\times \O;H^1)\cap L^2(\I_T;C([s,T];L^2))$ be the strong solution to the problem
\begin{equation}
\label{eq:1D_v_problem}
dv=\Delta v dt+ f^u dt+ g^u dW_{H^{\lambda}} \ \ \  \text{ and }\ \  \ v(s)=\one_{\Gamma} u(s)
\end{equation}
where by \eqref{eq:H_regularity_1D}-\eqref{eq:C_regularity_1D}, and the definition of $\tau$ and $\Gamma$, for all $q\in (1,\infty)$ one has
\begin{equation}
\label{eq:f_u_g_u_1D}
\begin{aligned}
f^u(t)&:=\one_{\Gamma\times [ s,\tau)} \partial_x(f(\cdot,u))\in L^2((s,T)\times \O;H^{-1,q}(\Tor)),\\
g^u(t)&:=\one_{\Gamma\times [ s,\tau)}  g(\cdot,u)\in L^2((s,T)\times \O;L^{q}(\Tor)).
\end{aligned}
\end{equation}
The existence of $v$ is ensured by \eqref{eq:SMR_tor_Delta}, \eqref{eq:f_u_g_u_1D}, and Proposition \ref{prop:start_at_s}.
Note that since $(u,\sigma)$ is a weak solution to \eqref{eq:1D_problem} and $v$ satisfies \eqref{eq:1D_v_problem}, by maximality of $(u,\sigma)$ we get
$
v=u
$
a.e.\ on $\Gamma\times [ s,\tau)$.

Applying \Ito's formula to $\|v\|_{L^2}^2$ (see \cite[Theorem 4.2.5]{LR15}), we obtain, a.s.\ for all $n\geq 1$ and $t\in [s,T]$,
\begin{equation}
\begin{aligned}
\label{eq:Ito_L2_1D}
\|v(t)&\|_{L^{2}}^2 - \one_{\Gamma}\|u(s)\|_{L^{2}}^2
+2\int_s^t \|\nabla v(r)\|_{L^2}^2 dr
\\ & =
-2\int_s^t \int_{\Tor}\one_{\Gamma\times [ s,\tau)} f(r,u(r))\partial_x u(r)\,dxdr\\
& \ \ +\int_s^t \one_{\Gamma\times [ s,\tau)}  \|M_{g(r,u(r))}\|_{\g(H^{\lambda},L^2)}^2 \,dr\\
&\ \ + 2\int_s^t \one_{\Gamma\times [ s,\tau)} (u(r),M_{g(r,u(r))}(\cdot))_{L^2}\,dW_{H^{\lambda}}(r)
=:I_t+II_t+III_t.
\end{aligned}
\end{equation}

\textit{Step 2: There exists $C$ independent of $u,u_0,s,n$ such that}
$$
\E\int_s^t \|\nabla v(r)\|^2_{L^2} \,dr
\leq \one_{\Gamma}\|u(s)\|_{L^2}^2+
C\Big(1+t-s+ \E\int_s^t \one_{\Gamma\times [ s,\tau)} \|u(r)\|_{L^2}^2 \,dr\Big).
$$
The idea is to take expectations in \eqref{eq:Ito_L2_1D}. Clearly, $\E[III_t]=0$ for all $t\in [s,T]$. We claim that $I_t=0$. To see this, it is enough to show that $\int_{\Tor} f(t,\phi)\partial_x \phi\,dx\equiv 0$ for any $\phi\in H^{1,q}$ with $q\geq 6$ suitably large. Here we used that $u$ is smooth (see \eqref{eq:H_regularity_1D}).

Let $(\phi_k)_{k\geq 1}$ in $C^{\infty}(\Tor)$ be such that $\phi_k\to \phi$ in $H^{1,q}$.  By Assumption \ref{ass:1D_stochastic} and
$f(\cdot,\phi_k)\to f(\cdot,\phi)$ in $L^{q/3}$. Therefore, $f(\cdot,\phi_k)\partial_x \phi_k\to f(\cdot,\phi)\partial_x \phi$ in $L^1$, and hence
$$
\int_{\Tor} f(\cdot,\phi)\partial_x \phi dx=
\lim_{k\to \infty}
\int_{\Tor} f(\cdot,\phi_k)\partial_x \phi_k dx=
\lim_{k\to \infty}
\int_{\Tor} \partial_{x}[F(\cdot,\phi_k)] dx\equiv 0,
$$
where $F$ is such $\partial_z F(\cdot, z) = f(\cdot,z)$. Hence $I_t\equiv 0$.

It remains to estimate $II$. Here we argue as in \eqref{eq:boundedness_G_1D}. Fix $\xi>2$ and let $\pownoise<\infty$ be such that $\frac{1}{\pownoise}+\frac{1}{\xi}=\frac{1}{2}$. Using \eqref{eq:1D_g_sublinearity}, a.s.\ for all $r\in [s,\tau)$,
\begin{equation}
\begin{aligned}
\label{eq:M_g_1D_estimate_energy}
\|M_{g(r, u(r))}\|_{\g(H^{\lambda},L^2)}
&\leq
C \|M_{g(r, u(r))}\|_{\calL(L^{\pownoise},L^{2})}\\
&\leq  C\|g(r,u(r))\|_{L^{\xi}}\\
&\leq C C_g(1+\|u(r)\|_{L^{\xi}})\\
&\leq C C_g(1+\|u(r)\|_{H^1}^{1-\frac2{\xi}}\|u(r)\|_{L^2}^{\frac2{\xi}})\\
&\leq \|u(r)\|_{H^1} + C'C_g (1+\|u(r)\|_{L^2})
\\ &\leq  \|\nabla u(r)\|_{L^2} + (C'C_g+1) (1+\|u(r)\|_{L^2})
\end{aligned}
\end{equation}
where $C,C'$ only depend on $z,\xi$ and we used $H^1\hookrightarrow L^{\infty}$. Thus, for all $t\in [s,T]$,
\begin{equation}
\begin{aligned}
\label{eq:expectation_II_1D}
|II_t|
&\leq  \int_s^t \one_{\Gamma\times [s,\tau)}\|\nabla u\|_{L^2}^2 \,dr+c \Big(t-s+\int_s^t  \one_{\Gamma\times [s,\tau)}  \|u\|_{L^2}^2 \, dr\Big)
\end{aligned}
\end{equation}
where $c$ depends only on $C,C_g$, and where we used the definition of $y$. Therefore, taking expectations in \eqref{eq:Ito_L2_1D} and \eqref{eq:expectation_II_1D}, and using $u = v$ on $\Gamma\times [s,\tau)$, we obtain the required estimate by comparison with LHS\eqref{eq:Ito_L2_1D}.

\textit{Step 3: Proof of \eqref{eq:y_estimate_Tor_before_gronwall}}. We take absolute values and the supremum over time in \eqref{eq:Ito_L2_1D}, and then expectations. We already saw that $I\equiv 0$ on $[s,T]$. Moreover, $\E\big[\sup_{r\in [s,t]} |II_r|\big]\leq  \E\big[|II_t|\big]$ which can be estimate by the expectation of RHS\eqref{eq:expectation_II_1D}.
To conclude, it remains to estimate $III$. By the scalar Burkholder-Davis-Gundy inequality, we get
\begin{align*}
\E\Big[&\sup_{r\in [s,t]} |III_r|\Big]
\leq C \E\Big[ \int_s^t \one_{\Gamma\times [s,\tau)}(r)
\Big\|(u(r),M_{g(r,u(r))}(\cdot))_{L^2}\Big\|_{\g(H^{\lambda},\R)}^2 \, dr\Big]^{1/2}\\
 &\leq C  \E\Big[  \int_s^t \one_{\Gamma \times [s,\tau)}(r)
 \|u(r)\|_{L^2}^2\|M_{g(r,u(r))}\|_{\g(H^{\lambda},L^2)}^2 \, dr\Big]^{1/2}\\
&\leq  C
\E\Big[ \Big(\sup_{r\in [s,t\wedge \tau)} \one_{\Gamma}\|u(r)\|_{L^2}^2 \Big)^{1/2}
\Big( \int_s^t \one_{\Gamma\times [ s,\tau)}(r) \|M_{g(r,u(r))}\|_{\g (H^{\lambda},L^2)}^2\,dr \Big)^{1/2}\Big]\\
&\leq \frac{1}{2}\E \Big(\sup_{r\in [s,t\wedge \tau) }\one_{\Gamma}\|u(r)\|_{L^2}^2\Big) + C'
\E\Big[ \int_s^t \one_{[s,\tau)}(r) \|M_{g(r,u(r))}\|_{\g (H^{\lambda},L^2)}^2\,dr\Big].
\end{align*}
where the last term coincides with $\E |II_t|$. Thus, by \eqref{eq:expectation_II_1D} and Step 2,
\begin{equation*}
\E\Big[\sup_{r\in [s,t]} |III_r|\Big]\leq
\frac{1}{2}\E \Big(\sup_{r\in [s,t\wedge \tau) }\one_{\Gamma}\|u(r)\|_{L^2}^2\Big) +  c''\Big(1+t-s+\E\int_s^t  \|u(r)\|_{L^2}^2\, dr\Big),
\end{equation*}
where $c''$ is independent of $u_0,s,n$. Combining the estimates with \eqref{eq:Ito_L2_1D}, using $u=v$ on $\Gamma\times [ s,\tau)$, and using the definition of $y$, we obtain \eqref{eq:y_estimate_Tor_before_gronwall}.
\end{proof}

By Lemma \ref{l:1D_energy_estimates} we can prove the global well-posedness to \eqref{eq:1D_problem} following Roadmap \ref{roadcomplete}\eqref{it:roadmapthmappl} in Section \ref{ss:globalgeneral}. Here the criticality of the $L^2$-setting (see \eqref{eq:critical_condition_L2setting} with $r=2$ and $\varepsilon=\alpha=0$), forces us to use Theorem \ref{thm:semilinear_blow_up_Serrin}\eqref{it:blow_up_semilinear_serrin_Pruss_modified} in the proof below.

\begin{proof}[Proof of Theorem \ref{t:global_1D}]
Let $(u,\sigma)$ be the weak solution to \eqref{eq:1D_problem} on $[0,\infty)$.
By Theorem \ref{t:local_1D} and Lemma \ref{l:1D_energy_estimates} it remains to prove that $\sigma=\infty$ a.s.

Let $T\in (0,\infty)$. Replacing $(u,\sigma)$ by $(u|_{\ll 0,\sigma \wedge T\rro}, \sigma\wedge T)$ it suffices to consider weak solutions to \eqref{eq:1D_problem} on $[0,T]$ and to show that $\sigma=T$ a.s.\
For this we will use Theorem \ref{thm:semilinear_blow_up_Serrin}\eqref{it:blow_up_semilinear_serrin_Pruss_modified} with $p=2,\a=0,X_0=H^{-1},X_1=H^1$, and therefore $\Xap=L^2$. Note that
\eqref{eq:SMR_tor_Delta} holds, and that Assumption \ref{ass:FG_a_zero} is satisfied by Step 1 of the proof of Theorem \ref{t:local_1D}. By Proposition \ref{prop:redblowbounded} we may assume that $u_0\in L^{2}(\O;L^2)$. Thus, Lemma \ref{l:1D_energy_estimates} yields
\begin{equation*}
\sup_{s\in [0,\sigma)}\|u(s)\|_{L^2}^2+\int_0^{\sigma} \| u(s)\|_{H^1}^2 ds<\infty\ \ \ \ \ \text{a.s.}
\end{equation*}
Therefore, applying Theorem  \ref{thm:semilinear_blow_up_Serrin}\eqref{it:blow_up_semilinear_serrin_Pruss_modified}, we obtain
\begin{align*}
\P(\sigma<T)=
\P\Big(\sigma<T,\,\sup_{s\in [0,\sigma)}\|u(s)\|_{\Xap}+\| u(s)\|_{L^2(\I_{\sigma};X_1)}<\infty\Big)=0.
\end{align*}
Hence $\sigma=\infty$ a.s.
Finally, Lemma \ref{l:1D_energy_estimates} also implies \eqref{eq:energysimple1dest}.
\end{proof}

It remains to prove Theorem \ref{t:1d_rough_initial_data}. The idea of the proof is similar as in Roadmap \ref{roadcriticalreg}, but since $p>2$ we can use Proposition \ref{prop:adding_weights} with $\delta=0$. Moreover, we will use the extrapolation technique of Lemma \ref{lem:extrapolation}.

\begin{proof}[Proof of Theorem \ref{t:1d_rough_initial_data}]
Let $T\in (0,\infty)$.
To prove global well-posedness we apply Lemma \ref{lem:extrapolation} (see also Remark \ref{r:extrapolation}\eqref{it:extrapolation_global_existence}) with $X_i = H^{-1-s+2i,\zeta}$, $Y_i=H^{-1+2j}$, $r=2$, $\alpha=0$, $\wh{Y}_j=H^{-1+2j,\zeta}$, $\zeta,\wh{r}$ large, and $\wh{\alpha} = \frac{\wh{r}}{4}$ as in Step 4 of Theorem \ref{t:local_1D}.

First we check Lemma \ref{lem:extrapolation}\eqref{it:assumption_existence_of_global_solutions_Xhat_setting} with $\tau= T$. As in the proof of Theorem \ref{t:local_1D} one can check that \hyperref[assum:HY]{\Hiep}$(\wh{Y}_0,\wh{Y}_1,\wh{r},\wh{\alpha})$ holds. The global well-posednes in the $({Y}_0,{Y}_1,2,0)$-setting follows from Theorem \ref{t:global_1D} and a translation argument, and the remaining conditions are clear.

It remains to check the local well-posedness and regularity assertions of Lemma \ref{lem:extrapolation}\eqref{it:assumption_u_regularizes_to_Xhat_setting}, which is the
existence of a $(s,q,p,\a_{\crit})$-weak solution to \eqref{eq:1D_problem} and the instantaneous regularization requirement in \eqref{it:assumption_u_regularizes_to_Xhat_setting}. It is enough to show that for any $u_0\in L^{0}_{\F_0}(\O;B^{\frac{1}{q}-\frac{1}{2}}_{q,p}(\Tor))$ there exists a $(s,q,p,\a_{\crit})$-weak solution $(u,\sigma)$ to \eqref{eq:1D_problem} on $[0,T]$ such that
\begin{equation}
\label{eq:smoothness_u_1d_global_rough_data}
u\in C(\I_{\sigma};L^2(\Tor))\cap L^2_{\loc}(\I_{\sigma};H^{1}(\Tor)) \ \ \text{ a.s. }
\end{equation}

\textit{Step 1: For all $s\in (0,\frac13)$, $r\in [2,\infty)$, $\alpha\in [0,\frac{r}{2}-1)$ and $\zeta\in (2,\infty)$, Hypothesis \hyperref[assum:HY]{\Hiep}$(H^{-1-s,\zeta},H^{1-s,\zeta},r,\alpha)$, and Assumption \ref{ass:FG_a_zero} hold in the $(H^{-1-s,\zeta},H^{1-s,\zeta},r,\alpha)$-setting provided}
\begin{equation}
\label{eq:critical_equations_1d_rough_data}
\zeta< \frac{2}{s}, \ \ \text{ and }\ \ \frac{1+\alpha}{r}+\frac{1}{2\zeta}\leq \frac{3-2s}{4}.
\end{equation}
\emph{The corresponding trace space $B^{1-s-2\frac{1+\alpha}{r}}_{\zeta,r}$ is critical for \eqref{eq:1D_problem} if and only if \eqref{eq:critical_equations_1d_rough_data} holds with equality. In particular, for $u_0\in L^0_{\F_0}(\Omega;B^{\frac{1}{q}-\frac{1}{2}}_{q,p})$ there exists a $(s,q,p,\a_{\crit})$-weak solution $(u,\sigma)$ to \eqref{eq:1D_problem} on $\overline{I}_T$.}
To prove local well-posedness we use Theorem \ref{t:local_s} with $X_i = H^{-1-s+2i,\zeta}$. We first check \ref{HFcritical} in the $(H^{-1-s,\zeta},H^{1-s,\zeta},r,\alpha)$-setting. Fix $\vone,\vtwo\in H^{1-s,\zeta}$ and note that
\begin{align*}
\|\partial_x (f(\cdot,\vone))-\partial_x (f(\cdot,\vtwo))\|_{H^{-1-s,\zeta}}
&\lesssim \|f(\cdot,\vone))-f(\cdot,\vtwo)\|_{H^{-s,\zeta}}\\
&\stackrel{(i)}{\lesssim} \|f(\cdot,\vone))-f(\cdot,\vtwo)\|_{L^{\psi}}\\
&\lesssim (1+\|\vone\|_{L^{3\psi}}^{2}+\|\vtwo\|_{L^{3\psi}}^{2})\|\vone-\vtwo\|_{L^{3\psi}}\\
&\stackrel{(ii)}{\lesssim} (1+\|\vone\|_{H^{\theta,q}}^{2}+\|\vtwo\|_{H^{\theta,\zeta}}^{2})\|\vone-\vtwo\|_{H^{\theta,\zeta}}
\end{align*}
where in $(i)$-$(ii)$ we used the Sobolev embedding with $-s-\frac{1}{\zeta}=-\frac{1}{\psi}$ and $\theta-\frac{1}{\zeta}= -\frac{1}{3\psi}$, where $\theta>0$. To ensure that $\psi\in (1,\infty)$ one needs $\zeta>\frac{1}{1-s}$ which holds since $\zeta>2$ and $s<\frac{1}{3}$.  Combining the above identities we have $\theta=\frac{2}{3\zeta}-\frac{s}{3}$. To ensure $\theta>0$ we need $\zeta<\frac{2}{s}$. Since $H^{\theta,\zeta}=X_{\beta}$, we obtain $\beta=\frac{1}{3}(\frac{1}{\zeta}+s)+\frac{1}{2}$, and one can check that $\beta\in (1/2,1)$. Setting $p=r$, $m_F=1$, $\rho_1=2$, and $\beta_1=\varphi_1=\beta$ the condition \eqref{eq:HypCritical} becomes
$$
\frac{1+\alpha}{r}\leq \frac{3}{2}(1-\beta)=\frac{3-2s}{4}-\frac{1}{2\zeta}.
$$
which coincides with the second condition in \eqref{eq:critical_equations_1d_rough_data}. As in the proof of Theorem \ref{t:local_1D}, one can check that condition \ref{HGcritical} holds in the $(H^{-1-s,\zeta},H^{1-s,\zeta},r,\alpha)$-setting with $m_G=1$, $\rho_2= 2-\nu$ $\varphi_2=\beta_2=\varphi_1$.

By the above, we can apply Theorem \ref{t:local_s}. It only remains to investigate criticality. By \eqref{eq:critical_equations_1d_rough_data}, criticality occurs if and only if
\begin{equation}
\label{eq:critical_equation_a_1d_rough}
\frac{1+\a}{p}+\frac{1}{2 q}= \frac{3-2s}{4}.
\end{equation}
Since $\frac{1+\a}{p}\in [\frac{1}{p},\frac{1}{2})$, \eqref{eq:critical_equation_a_1d_rough} is admissible if and only if
$$
\frac{1}{p}+\frac{1}{2 q}\leq \frac{3-2s}{4}, \  \ \ \ \text{ and }\ \ \ \ \frac{3-2s}{4}-\frac{1}{2 q}<\frac{1}{2}.
$$
The second inequality in the previous displayed formula yields the limitation $q<\frac{2}{1-2s}$. Since $\frac{2}{1-2s}<\frac{2}{s}$ due to $s<\frac{1}{3}$, the first condition in \eqref{eq:critical_equations_1d_rough_data} with $q=\zeta$ holds. By \eqref{eq:critical_equation_a_1d_rough}, $\a_{\crit}=-1+\frac{p}{2}( \frac{3}{2}-s-\frac{1}{q})$ and the corresponding trace space becomes
$$
\Xapcrit=B^{1-s-2\frac{1+\a_{\crit}}{p}}_{q,p}=B^{1-s-\frac{3}{2}+s+\frac{1}{q}}_{q,p}=B^{\frac{1}{q}-\frac{1}{2}}_{q,p},
$$
which finished this step.

\textit{Step 2: The $(s,q,p,\a_{\crit})$-weak solution provided by Step 1 verifies}
\begin{equation}
\label{eq:regularization_u_Step_2_1d_rough_data}
u\in \bigcap_{\theta\in [0,1/2)} H^{\theta,r}_{\loc}(\I_{\sigma};H^{1-s-2\theta,q}(\Tor)), \ \ \text{ a.s.\ for all }r\in (2,\infty).
\end{equation}
As in Step 2 in the proof of Theorem \ref{t:local_1D} we use Proposition \ref{prop:adding_weights} and after that Corollary \ref{cor:regularization_X_0_X_1}. In case $\a_{\crit}>0$, Step 2a is not needed.

\emph{Step 2a: There exists an $r>p$ such that \eqref{eq:regularization_u_Step_2_1d_rough_data} holds}. Let $\varphi_j=\beta$ where $\beta\in (1/2, 1)$ is as in Step 1. Let $r>p$ be such that $\frac{1}{r}\geq \max_j \varphi_j-1+\frac{1}{p}$. Then the claim follows by applying Proposition \ref{prop:adding_weights} with $\delta=0$.

\emph{Step 2b: \eqref{eq:regularization_u_Step_2_1d_rough_data} holds}. If $\a_{\crit}>0$, then the claim follows from Corollary \ref{cor:regularization_X_0_X_1} applied to $Y_i=X_i=H^{-1-s+2i,q}$, $r=p$ and $\alpha=\a_{\crit}$. Next we consider the case $\a_{\crit}=0$. Let $r$ be as in Step 2a and let $\alpha\in (0,\frac{r}{2}-1)$ be such that $\frac{1}{p}=\frac{1+\alpha}{r}$. By Step 2a, the assumptions of Corollary \ref{cor:regularization_X_0_X_1} are satisfies and this concludes the required regularity.

\textit{Step 3: The $(s,q,p,\a_{\crit})$-weak solution provided by Step 1 verifies}
\begin{equation*}
u\in \bigcap_{\theta\in [0,1/2)} H^{\theta,r}_{\loc}(\I_{\sigma};H^{1-2\theta,q}(\Tor)), \ \ \text{ a.s.\ for all }r\in (2,\infty).
\end{equation*}
\emph{In particular, \eqref{eq:smoothness_u_1d_global_rough_data} holds}. To conclude, it is enough to apply Theorem \ref{t:regularization_z} to $Y_i=H^{1-s-2i,q}$, $\widehat{Y}_i=H^{-1+2i,q}$,
$$
\alpha=0, \ \wh{\alpha}>0, \  r=\wh{r}>p \text{ large enough and  }\frac{1+\wh{\alpha}}{r}=\frac{1}{r}+\frac{s}{2}.
$$
To check the assumptions of Theorem \ref{t:regularization_z} one can use Step 2 and argue in a similar way as in Theorem \ref{t:local_1D} Step 2c.
\end{proof}

\bibliographystyle{alpha-sort}
\bibliography{literature}

\end{document}